\newcommand{\titlename}{Global and Individualized Community Detection in Inhomogeneous Multilayer Networks}
\newcommand{\floor}[1]{{\left\lfloor {#1} \right \rfloor}}
\newcommand{\globest}{global~estimation}
\newcommand{\Globest}{Global~estimation}
\newcommand{\GlobEst}{Global~Estimation}
\newcommand{\indest}{individualized~estimation}
\newcommand{\Indest}{Individualized~estimation}
\newcommand{\IndEst}{Individualized~Estimation}
\newcommand{\sakura}{\mathscr{E}_{S, x, \xi}}
\newcommand{\clover}{\mathscr{F}_{S, \xi}}
\newcommand{\R}{\mathbb{R}}
\newcommand{\pell}{{p_{\ell}}}
\newcommand{\qell}{{q_{\ell}}}
\newcommand{\tpell}{{\tilde p_{\ell}}}
\newcommand{\tqell}{{\tilde q_{\ell}}}
\newcommand{\tran}{^\intercal}
\newcommand{\modelname}{IMLSBM}
\newcommand{\sname}{IMLSBM}
\newcommand{\mlsbm}{MLSBM}
\newcommand{\kl}{\textnormal{KL}}
\newcommand{\bz}{\mathbf{z}}
\newcommand{\bhz}{\mathbf{\hat z}}
\newcommand{\btz}{\mathbf{\tilde z}}
\newcommand{\zzz}[2][]{{\bz}^{{#1}(#2)}}
\newcommand{\hzzz}[2][]{{\bhz}^{{#1}(#2)}}
\newcommand{\tzzz}[2][]{{\btz}^{{#1}(#2)}}
\newcommand{\barzzz}[2][]{{\bar{\bz}}^{{#1}(#2)}}
\newcommand{\ppp}[2][]{{p}^{{#1}(#2)}}
\newcommand{\nnn}[2][]{n^{{#1}(#2)}}
\newcommand{\fff}[2][]{f^{{#1}(#2)}}
\newcommand{\XXX}[2][]{X^{{#1}(#2)}}
\newcommand{\tXXX}[2][]{\tilde{X}^{{#1}(#2)}}
\newcommand{\YYY}[2][]{Y^{{#1}(#2)}}
\newcommand{\tYYY}[2][]{\tilde{Y}^{{#1}(#2)}}
\newcommand{\ZZZ}[2][]{Z^{{#1}(#2)}}
\newcommand{\AAA}[2][]{A^{{#1}(#2)}}
\newcommand{\BBB}[2][]{B^{{#1}(#2)}}
\newcommand{\III}[2][]{{I}^{{#1}(#2)}}
\newcommand{\MMM}[2][]{{M}^{{#1}(#2)}}
\newcommand{\mmm}[2][]{{m}^{{#1}(#2)}}
\newcommand{\tmmm}[2][]{{\tilde m}^{{#1}(#2)}}
\newcommand{\weightvec}{\boldsymbol{\omega}}
\newcommand{\weight}{\omega}
\newcommand{\ham}{\textnormal{H}}
\newcommand{\RNum}[1]{\uppercase\expandafter{\romannumeral #1\relax}}
\newcommand{\Indicator}{\mathds{1}}
\newcommand{\indicator}[1]{{\mathds{1}{\left\{{#1}\right\}}}}
\newcommand{\globinfo}{\mathcal{I}}
\newcommand{\indinfo}{\mathcal{J}}
\newcommand{\sbm}{\textnormal{SBM}}
\newcommand{\oone}{\left(1+o(1)\right)}
\newcommand{\init}{{\texttt{init}}}            
\begin{document}
\title{\titlename}
% \author{Shuxiao Chen\thanks{Email:
% \href{mailto:shuxiaoc@wharton.upenn.edu}{shuxiaoc@wharton.upenn.edu}}}
% \affil{\textit{University of Pennsylvania}}

%%% Template for multiple authors %%%
\author[a]{Shuxiao Chen\thanks{Email:
\href{mailto:shuxiaoc@wharton.upenn.edu}{shuxiaoc@wharton.upenn.edu}}}
\author[b]{Sifan Liu\thanks{Email: 
\href{mailto:sfliu@stanford.edu}{sfliu@stanford.edu}}}
\author[a]{Zongming Ma\thanks{Email: 
\href{mailto:zongming@wharton.upenn.edu}{zongming@wharton.upenn.edu}}}
\affil[a]{\textit{University of Pennsylvania}}
\affil[b]{\textit{Stanford University}}
%%%%%%%%%%%%%%%%%%%%%%%%%%%%%%%%%%%%%%%

\date{}
\maketitle

\begin{abstract}
In network applications, it has become increasingly common to obtain datasets in the form of multiple networks observed on the same set of subjects, where each network is obtained in a related but different experiment condition or application scenario.
Such datasets can be modeled by multilayer networks where each layer is a separate network itself while different layers are associated and share some common information.
The present paper studies community detection in a stylized yet informative inhomogeneous multilayer network model. 
In our model, layers are generated by different stochastic block models, the community structures of which are (random) perturbations of a common global structure while the connecting probabilities in different layers are not related.
Focusing on the symmetric two block case, we establish minimax rates for both \emph{\globest} of the common structure and \emph{\indest} of layer-wise community structures. 
Both minimax rates have sharp exponents. 
In addition, we provide an efficient algorithm that is simultaneously asymptotic minimax optimal for both estimation tasks under mild conditions. 
The optimal rates depend on the \emph{parity} of the number of most informative layers, a phenomenon that is caused by inhomogeneity across layers.
The method is extended to handle multiple and potentially asymmetric community cases. We demonstrate its effectiveness on both simulated examples and a real multi-modal single-cell dataset.
\\~\\
\textbf{Keywords:} Integrative data analysis, minimax rate, R\'{e}nyi divergence, spectral clustering, stochastic block model.
\end{abstract}

\addtocontents{toc}{\protect\setcounter{tocdepth}{2}}
\tableofcontents

\section{Introduction}  

Network data is among the most common types of relational data.
As a fundamental task in network data analysis \citep{wasserman1994social,goldenberg2010survey}, 
\emph{community detection}
% , in its most basic form,
refers to the problem of partitioning the nodes of a network into clusters so that intra-cluster nodes are connected in a different way from inter-cluster nodes, usually more densely.
\emph{Stochastic block model} (SBM) \cite{holland1983stochastic}
is a canonical model for studying community detection.
 % problem is given by the celebrated ,
In an SBM, $n$ nodes are partitioned into $k$ disjoint subsets.
Each unordered pair of nodes are connected independently with probability $p$ if they come from the same community and with a different probability $q$ otherwise.
The observed connection pattern is encoded in an $n\times n$ symmetric adjacency matrix $A$.
Here the goal of community detection is to, upon observing $A$, estimate the partitioning of nodes.  
\iffalse
In the two block case, 
the model is also known as the planted partition model, and the community structure among the $n$ nodes is parameterized by an unknown binary vector
% \footnote{In this paper, we focus on the two-block case.}
$\bz\in\{\pm 1\}^n$. 
Pairs of nodes are connected independently with probability $p$ if they come from the same community and with probability $q$ otherwise. 
Such a data generating procedure gives rise to a symmetric adjacency matrix $A\in\{0, 1\}^{n\times n}$, whose diagonal entries are zeros and the off-diagonal entries satisfy
\begin{equation}
	\label{eq:sbm}
	A_{ij} = A_{ji} \stackrel{ind}{\sim} \Bern(p)\cdot \indc{\bz_i = \bz_j} + \Bern(q) \cdot \indc{\bz_i \neq \bz_j}, ~~ \forall i\neq j\in[n],
\end{equation}
where $\Bern(\cdot)$ is a Bernoulli random variable and $\indc{\cdot}$ is the indicator function.
The goal of the data analyst is to, upon observing $A$, estimate the vector $\bz$ up to a relabeling.  
\fi
The stochastic block model, albeit simple, has found its success in many fields of science (see, e.g., \cite{jackson2010social,lu2011link,jackson2011overview,garcia2018applications}).
% , the overt simplicity limits its applications to more complicated areas.  
It has also undergone a plethora of theoretical investigations \cite{abbe2017community}. 
With joint efforts from mathematics, statistics, and computer science, 
we not only have a large algorithmic toolbox for detecting communities in SBMs, but also know the {information-theoretic limits} of this task as well as
{which algorithms are optimal}. 
% For example, \citet{zhang2016minimax} proved that for assortative networks (i.e., $p > q$), the minimax misclustering error scales as
% \begin{equation}
% 	\label{eq:sbm_rate}
%  	\exp\bigg\{-\big(1 + o(1)\big) \cdot \frac{n}{2} (\sqrt{p}-\sqrt{q})^2\bigg\},
% \end{equation}
% as long as $p,q = o(1)$ and the exponent diverges as $n$ tends to infinity.
% Moreover, there are polynomial-time algorithms that achieve this rate (see, e.g., \cite{gao2017achieving,fei2019achieving}).

Despite its popularity, 
% there is a fundamental weakness in the \emph{modeling assumptions} of the SBM --- the subjects under study and the interaction among them, however complicated, are always represented by
stochastic block model focuses only on
a {single} adjacency matrix. 
This is in sharp contrast to the widely recognized fact that real world networks are often superpositions of multiple networks (layers), each encoding a potentially different but correlated interaction pattern among the same set of nodes \cite{kivela2014multilayer,boccaletti2014structure}. 
% Such \emph{multilayer networks} are ubiquitous in practice.
% That is, instead of given a single adjacency matrix $A$, the data analyst is now confronted with a collection of $L$ adjacency matrices $\{\AAA{\ell}\}_1^L$, each 
 % $n\times n \times L$ \emph{adjacency tensor} $\bsA = \{\{\}\}$ risking a significant loss of information, 
For example, in social network data, different layers could correspond to different types of relationship that link the social entities, and the information presented in the friendship network, compared to that in the professional network, is different, but not completely unrelated \cite{dickison2016multilayer}. 
Another example is given by the network representation of human brains, where each layer corresponds to an individual person's functional brain network.
It is well known that the parcellation of brain regions into different functional units are different but strongly correlated among human beings \cite{de2017multilayer}. 

A natural attempt at generalizing SBMs to multilayer networks is as follows.
Let us focus on the symmetric two block case where in each layer all nodes are partitioned into two blocks of roughly equal sizes.
Instead of observing a single adjacency matrix, the data analyst is now presented with a collection of $L$ adjacency matrices $\{\AAA{\ell}\}_{\ell=1}^L$. 
% where each $\AAA{\ell} \sim \sbm(\zzz{\ell}, p_\ell, q_\ell)$, i.e., an SBM whose community labels are given by $\zzz{\ell}$ and whose intra- and inter-cluster connecting probabilities are $p_\ell, q_\ell$, respectively.
To model that ``communities in different layers are different but correlated'', we take a hierarchical modeling approach. 
A \emph{global} community assignment vector $\bz^\star \in \{\pm 1\}^n$ is introduced in our model.
To ensure symmetry, we require $\bz^\star$ to have roughly equal numbers of $1$'s and $-1$'s.
We let the \emph{individual} community assignments $\{\zzz{\ell}\}_1^L$ be independent samples from the following distribution:
\begin{equation}
	\label{eq:label_flip}
	\forall\ell\in[L], i\in[n], \qquad \zzz{\ell}_i ~{\stackrel{ind}{\sim}}~  \bz^\star_i \times \big[2 \Bern(1-\rho) - 1\big]. %\rad(1-\rho),
\end{equation}
% \nb{bracket notation}
Here, $[L] = \{1, \hdots, L\}, [n] = \{1, \cdots, n\}$ and $\Bern(\cdot)$ is a Bernoulli random variable.
% where $\rad(1-\rho)$ represents a Rademacher random variable with head probability $1-\rho$. 
That is, in a specific layer $\ell\in[L]$, with probability $1-\rho$, the community membership of the $i$-th node agrees with the global one, $\bz^\star_i$, and with probability $\rho$, it ``flips'' to the opposite side $-\bz^\star_i$. The parameter $\rho$ controls the level of \emph{inhomogeneity} across layers. When $\rho = 0$, all layers share the same community structure, whereas when $\rho=1/2$, the community structures across layers are mutually uninformative.
% \nb{for any values of $\rho$, $\zzz{\ell}$'s are always statistically independent?}
Upon realizations of $\zzz{\ell}$'s, the adjacency matrices are independently generated by
\begin{equation}
	\label{eq:layerwise_sbm}
\begin{aligned}
	\AAA{\ell}_{ij} = \AAA{\ell}_{ji} \stackrel{ind}{\sim} \Bern(p_\ell)\cdot \indc{\zzz{\ell}_i = \zzz{\ell}_j} + \Bern(q_\ell) \cdot \indc{\zzz{\ell}_i \neq \zzz{\ell}_j},~~~~ & \\
	~~ \forall i\neq j\in[n],\,\ell\in [L],	&
\end{aligned}
	% \sim \sbm(\zzz{\ell}, p_\ell, q_\ell).
\end{equation}
where $\indc{\cdot}$ is the indicator function, and all diagonal entries are zeros.
In other words, the $\ell$-th layer network is generated by an SBM with community partitioning specified by $\zzz{\ell}$, intra-community connection probability $p_\ell$ and inter-community connection probability $q_\ell$.
Connection probabilities across different layers are not linked in any way.
The foregoing generalization, to the best of our knowledge, was first introduced by \citet{paul2018random}, which they termed as the \emph{random effects stochastic block model}. 
In \cite{paul2018random}, the random effects \eqref{eq:label_flip} can take other forms. Hence, to avoid confusion, we term the model in \eqref{eq:label_flip}--\eqref{eq:layerwise_sbm} as the \emph{inhomogeneous multilayer stochastic block model} (\modelname).
Clearly, the model can be generalized in obvious ways to include more than two communities and unequal community sizes. 
However, the present manuscript shall focus on the foregoing symmetric two block case as it is the simplest nontrivial model that reveals key new phenomena of community detection in inhomogeneous multilayer networks.
% \nb{this model is not clearly defined, esp.~in terms of community sizes.}

The goal of community detection in an \modelname~is now two-fold---upon observing $\{\AAA{\ell}\}_1^L$, we are interested in:
\begin{enumerate}
	\item \emph{\Globest.} Estimating the global community assignment $\bz^\star$;
	\item \emph{\Indest.} Estimating each of the individual assignments $\{\zzz{\ell}\}_1^L$.
\end{enumerate}
Global estimation needs to aggregate connection patterns across layers to better infer the global consensus structure, an instance of \emph{integrative data analysis} \cite{curran2009integrative}. On the other hand, \indest~requires borrowing information from different layers to better estimate the layer-wise community structure, an example of \emph{multi-task learning} \cite{caruana1997multitask}. 

Theoretical understanding of community detection in an \modelname~is lacking. Partial results exist in the homogeneous case ($\rho=0$) where \globest~and \indest~coincide. Here, by ``homogeneity'' we mean the layer-wise community structures are shared across all layers, and the connecting probabilities are allowed to differ. Under such a setup, it has been proved by \citet{paul2016consistent} that the minimax rates for expected proportion of  misclustered nodes scales as
% \footnote{Under homogeneity, \citet{paul2016consistent} established the rates for a more general setting than the symmetric two block case considered in this paper.}
\begin{equation}
	\label{eq:rho_zero_rate}
	\exp\bigg\{ -\big(1+o(1)\big) \cdot \frac{n}{2}\sum_{\ell\in[L]} (\sqrt{p_\ell} - \sqrt{q_\ell})^2 \bigg\},
\end{equation}
provided that the exponent diverges to infinity as $n$ tends to infinity. They in fact established the rates for a more general setting than the symmetric two block case considered in this paper.
Later, a polynomial-time algorithm that achieves this rate was proposed by \citet{xu2020optimal}. Nonetheless, it is unclear how to generalize their results to the inhomogeneous setting.
% Nonetheless, no polynomial-time algorithm was given for achieving it. 
From an algorithmic perspective, spectral clustering \cite{bhattacharyya2018spectral,lei2020tail,paul2020spectral} and least-square estimators \cite{lei2020consistent,wang2019multiway} have been proposed and justified to be consistent (i.e., achieving an $o(1)$ misclustering proportion with high probability) under homogeneity. 
However, it is unknown whether any of these methods attains the information-theoretic limit \eqref{eq:rho_zero_rate}.

Although the homogeneous case ($\rho=0$) is interesting in its own right, it is the inhomogeneous case ($\rho > 0$) that characterizes our inductive bias --- ``layers are different but correlated''. 
In \cite{paul2018random}, a few heuristic algorithms were introduced and their performances were assessed by simulations. 
To the best of our limited knowledge, in the inhomogeneous regime, no algorithm with provable guarantee for either global or individualized estimation is known in the literature, let alone any optimality statement.

\subsection{Main Contributions}

The main contributions of the present manuscript are two-fold.
% \begin{enumerate}
% 	\item
First, we give precise characterization of information-theoretic limits of both global and individualized community detection in a {symmetric two block} \modelname~when $\rho =o(1)$;
	 % (but the convergence can be arbitrarily slow), in terms of both global and \indest;
Moreover, we provide a polynomial-time algorithm that simultaneously attains information-theoretic limits for both global and \indest~under mild conditions.
	% in that it provably attains the information-theoretic lower bounds.
% \end{enumerate}
We reiterate that results in the present manuscript are obtained under the symmetric two block setting which has already posed highly nontrivial theoretical and algorithmic challenges.
We leave extensions to more general settings for future work.

% ~\\
% {\bf \noindent Key information-theoretic quantities.}
To provide an overview of our main results, we start with several key information-theoretic quantities that will appear throughout this paper.  
% We start by characterizing the \emph{signal strength} in an \modelname.
For any $\ell\in[L]$ and $t\in[0, 1]$, define
\begin{align}
	% \label{eq:info_as_renyi_div}
	% \III{\ell}_t & := (1-t) D_t(p_\ell\| q_\ell) + t D_{1-t}(p_\ell\| q_\ell),% \\
	\label{eq:layerwise_info}
	\III{\ell}_t & : = -\log\big[p_\ell^{1-t}q_\ell^t + (1-p_\ell)^{1-t}(1-q_\ell)^t\big]\big[p_\ell^tq_\ell^{1-t} + (1-p_\ell)^t(1-q_\ell)^{1-t}\big].
\end{align}
% where $D_{t}(p\| q)$ is the \emph{R\'enyi divergence of order $t$} between $\textnormal{Bern}(p)$ and $\textnormal{Bern}(q)$, defined as
% \begin{equation}
% 	\label{eq:renyi_div}
% 	D_t(p\| q) := -\frac{1}{1-t} \log (p^t q^{1-t} + (1-p)^t(1-q)^{1-t}).
% \end{equation}
The quantity $\III{\ell}_t$ can be regarded as the \emph{signal strength} of the $\ell$-th layer. 
When $p_\ell\asymp q_\ell =o(1)$, one can show that 
$
% \begin{equation}
	% \label{eq:layerwise_info_asymp_equiv}
	\III{\ell}_{1/2} = \oone (\sqrt{p_\ell} - \sqrt{q_\ell})^2,
% \end{equation}
$
and hence the minimax rate for community detection {(i.e., the worst-case misclustering proportion)} in an SBM with community assignment $\zzz{\ell}$, intra-community connection probability $p_\ell$ and the inter-community connection probability $q_\ell$ derived in \citet{zhang2016minimax} can be equivalently written as
{$e^{-\oone n\III{\ell}_{1/2}/2}$,}
% \nb{change the oversized parentheses?}
as long as the exponent tends to infinity.
% % \end{equation}	
% Generalizing the notion of signal strength to a collection of layers $S\subseteq[L]$ requires more work. 
For any collection of layers $S\subseteq[L]$, let
\begin{equation}
	\label{eq:cum_gen_fun}
	\psi_S(t) := -\frac{n}{2}\sum_{\ell \in S} \III{\ell}_t, \qquad    \psi_S^\star(a) := \sup_{0\leq t \leq 1} at-\psi_S(t).
\end{equation}
The function $\psi_S^\star$
% , which is the Legendre transform of $\psi_S$ restricted to the unit interval,
characterizes the \emph{collective signal strength for layers in $S$}.
Indeed, from the definition of $\III{\ell}_t$, one readily checks that 
$
% \begin{equation}
% \label{eq:rho_zero_rate_equiv}
	\psi_S^\star(0) = -\psi_S(1/2) =(n/2) \sum_{\ell\in S}\III{\ell}_{1/2},
% \end{equation}
$
and thus the minimax rate \eqref{eq:rho_zero_rate} for community detection in a homogeneous multilayer SBM can be expressed equivalently as 
$
% \begin{equation}
	% \label{eq:rho_zero_rate_equiv}
	e^{-\oone \psi_{[L]}^\star(0)}. 
% \end{equation}
$
Intuitively, inhomogeneity ($\rho > 0$) 
introduces additional noises. 
To characterize the \emph{noise level}, define 
% it happens that it is more convenient to work with the following \emph{``inverse'' noise level}: 
\begin{equation}
	\label{eq:J_rho}
	{J_\rho} :=  -\log 2\sqrt{\rho(1-\rho)}.%\frac{1}{2}D_{1/2}(\rho \| 1-\rho) =
\end{equation}
% The quantity $J_\rho$ 
Since $J_0= \infty$ and $J_{1/2} = 0$, one can effectively think of $J_\rho$ as a measure of proximity of the individual layer community assignments $\{\zzz{\ell}\}_1^L$ to the global assignment $\bz^\star$.
% For readers with an expertise in information theory,
{Note that both $\III{\ell}_t$ and $J_\rho$ can be written as convex combinations of R\'enyi divergences \cite{renyi1961measures}}
between Bernoulli distributions. 
Specifically, we have
$\III{\ell}_t = (1-t) D_t(p_\ell\| q_\ell) + t D_{1-t}(p_\ell\| q_\ell)$, and $J_\rho = \frac{1}{2}D_{1/2}(\rho \| 1-\rho)$, where $D_{t}(p\| q)$ is the \emph{R\'enyi divergence of order $t$} between $\textnormal{Bern}(p)$ and $\textnormal{Bern}(q)$.

% ~\\
% {\noindent \bf Minimax lower bound for \globest.}
\subsubsection{Global estimation error}
With the foregoing definitions,
we first show that, under certain regularity conditions, the minimax rate for \globest, measured in terms of proportion of misclustered nodes, is given by
% lower bounded by \nb{remove emphasis on lower bounds?}
\begin{equation}
	\label{eq:resbm_glob_rate}
	\exp\big\{-\oone \min_{S\subseteq[L]} \globinfo_S\big\},
\end{equation}
where $\globinfo_{S}$ represents the \emph{signal-to-noise ratio (SNR) for \globest} for layers in $S$, defined as
\begin{equation}
	\label{eq:globinfo}
	\globinfo_S := 
	\begin{cases}
		|S^c| J_\rho + \psi_S^\star(0) & \textnormal{ if } |S^c| \textnormal{ is even},\\
		(|S^c| + 1) J_\rho + \psi^\star_S(-2 J_\rho) & \textnormal{ if } |S^c| \textnormal{ is odd}.
	\end{cases}
\end{equation}

The minimax error rate \eqref{eq:resbm_glob_rate} exhibits two intriguing properties. First, when $\rho = 0$, we have $J_\rho = \infty$, and thus the only way to make $\globinfo_S$ finite is to choose $S = [L]$, which gives
$
% \begin{equation}
	% \label{eq:best_S_hom_glob}
	\min_{S\subseteq[L]} \globinfo_S = \globinfo_{[L]} = \psi_{[L]}^\star(0).
% \end{equation}
$
As the result, \eqref{eq:resbm_glob_rate} recovers the minimax rate in a homogeneous multilayer SBM given in \eqref{eq:rho_zero_rate}. Second, the SNR for layers in $S$ takes different forms according to the \emph{parity} of $S^c$, a phenomenon induced by inhomogeneity across layers.
% the revealing an interesting combinatorial structure brought about by inhomogeneity.

\subsubsection{Individualized estimation error}
In correspondence, the minimax rate for \indest~for the $\ell$-th layer, measured by proportion of misclustered nodes, is given by
% lower bounded by \nb{remove emphasis on lower bounds?}
\begin{equation}
	\label{eq:resbm_ind_rate}
	\exp\big\{-\oone \min_{S\subseteq[L]\setminus\{\ell\}} \globinfo_{S\cup\{\ell\}}\big\} + \exp\big\{ -\oone\indinfo_{\{\ell\}}\big\} ,
\end{equation}
where $\indinfo_{\{\ell\}}$ is a suitably defined quantity (see \eqref{eq:indinfo} for a precise definition) that measures the \emph{SNR for \indest} for the $\ell$-th layer. 
We briefly mention here that similar to 
% the lower bound for \globest~given in
\eqref{eq:resbm_glob_rate}, the last display can recover the minimax rate in a homogeneous multilayer SBM by setting $\rho = 0$, and it crucially depends on the parity of the ``most informative'' set $S^c$ as well. 
We refer readers to Sections \ref{sec:lower_bound} and \ref{sec:minimax_rate} for details.

\subsubsection{Algorithm}
We propose an algorithm that achieves the optimal rates in \eqref{eq:resbm_glob_rate} and \eqref{eq:resbm_ind_rate} simultaneously under mild conditions.
The idea stems from \emph{maximum a posteriori} (MAP) estimation. 
Note that \modelname~is a hierarchical model where individual community assignments $\{\zzz{\ell}\}_1^L$ are drawn from the prior distribution \eqref{eq:label_flip}. 
It is thus tempting to write out the posterior of $\{\zzz{\ell}\}_1^L$ given the observed data $\{\AAA{\ell}\}_1^L$ and maximize the posterior density with respect to the parameters $(\bz^\star, \{\zzz{\ell}\}_1^L)$. 
A naive implementation of this strategy is doomed to fail, due to the fact that the MAP objective function gives rise to a combinatorial optimization problem, whose search space has cardinality $2^{n(L+1)}$. 
To bypass the combinatorial search, we adopt a two-stage ``warm-start'' MAP algorithm. In the first stage, an initial estimator of $\bz^\star$ is obtained using spectral clustering on a trimmed version of the weighted average of layer-wise adjacency matrices. 
In the second stage, a refined estimator of $\bz^\star$ and estimators of $\{\zzz{\ell}\}$ are simultaneously obtained by optimizing a ``decoupled'' MAP objective function, which can be computed in linear time (in $n$ and $L$). 
Although the formal definition of the MAP refinement step requires knowledge of $\rho$, our numerical experiments later show that the outcome is not sensitive to misspecification of $\rho$.

While ``spectral clustering + refinement'' procedures have appeared in community detection in \sbm s and variants
(e.g., \cite{mossel2015consistency,gao2017achieving,yun2016optimal}),
% \nb{add more refs},
our algorithm has novelties in both stages, especially in their technical analysis.
% our algorithm is different in both stages, with new algorithmic ideas and technical tools needed to address the unique challenges brought by the inhomogeneity in \modelname s.
For Stage \RN{1}, compared to existing analyses of spectral clustering for homogeneous multilayer SBMs \cite{bhattacharyya2018spectral,lei2020tail,paul2020spectral}, 
our analysis is novel in that we establish a \emph{stability} result, asserting that inhomogeneity hurts spectral clustering error rate by at most an additive factor of $\textnormal{poly}(\rho)$. 
The proof is based on a new concentration inequality on the spectral norm of a weighted average of Bernoulli random matrices, which is derived via a nontrivial generalization of the graph decomposition approach in \citet{le2017concentration} to multilayer networks. 
The concentration inequality improves the ones used in the existing work (e.g., \cite{bhattacharyya2018spectral,paul2020spectral}) and could be of independent interest.
For Stage \RN{2}, due to presence of multiple layers, devising a refinement scheme with time complexity that is polynomial in the number of layers presents new challenges.
In addition, due to inhomogeneity, the analysis 
is considerably more involved. 
A key step towards establishing matching upper bounds lies in a novel application of Sion's minimax theorem \cite{sion1958general}.

\subsection{Related Work}
The past decade has witnessed a venerable line of work on the theoretical development of community detection for SBMs. 
Optimal algorithms have been developed under various criteria, including 
(1) weak recovery, where the best achievable goal is to cluster the nodes better than random guess \cite{decelle2011asymptotic,mossel2012stochastic,mossel2018proof,bordenave2015non,mossel2015reconstruction,chen2014statistical,neeman2014non,montanari2015finding,abbe2018proof}; 
(2) exact recovery, where the requirement is to reconstruct from data the ground truth up to relabeling \cite{mossel2015consistency,abbe2015exact,abbe2015community}; 
and more related to our formulation, 
(3) almost exact recovery, where the goal is to output a community assignment with vanishing misclustering error \cite{yun2014community,mossel2015consistency,abbe2015exact,gao2017achieving}. 
The study under the minimax framework was initiated by \cite{zhang2016minimax,abbe2015community,yun2016optimal} and was later extended to more general settings such as \cite{gao2018community,xu2020optimal}. 
The above list of work is by no means exhaustive and we refer the readers to the review papers \cite{abbe2017community,li2018convex,gao2018minimax} for a more systematic account.
% \nb{need to double check Yun \& Proutiere + Abbe's work}

In comparison, study of community detection in multilayer networks is still in its early stage. 
Initial works in this area have focused on algorithmic developments (see, e.g., \cite{mucha2010community,kumar2011co,de2013mathematical,de2015identifying,peixoto2015inferring}), and most theoretical studies are restricted to the homogeneous case where all layers share the same community structure \cite{paul2016consistent,bhattacharyya2018spectral,paul2018random,paul2020spectral,lei2020consistent,lei2020tail,zhang2020flexible}. 
There are a few exceptions, such as \cite{stanley2016clustering,le2018estimating,arroyo2019inference,jing2020community}, where consistency has been established under several inhomogeneity-aware variants of SBMs while optimality results are missing. 

In addition to estimating community structures, there is a related line of work aiming at testing whether community structures across layers are indeed correlated or equivalent. 
We refer interested readers to recent papers \cite{gao2019testing,gao2020clusterings,gao2019testingEquiv} and references therein.

\subsection{Paper Organization}
The rest of the paper is organized as follows. 
In Section \ref{sec:lower_bound}, we provide our construction of minimax lower bounds for both global and \indest. 
We present the two-stage algorithm in Section \ref{sec:alg}, and its theoretical analysis is given in Section \ref{sec:minimax_rate}. 
We conduct numerical experiments to corroborate our theoretical results in Section \ref{sec:exp}. 
{Section \ref{sec:multi_cluster} extends our algorithm to multi-cluster and asymmetric cases. 
We finally illustrate our algorithm in a multi-modal single-cell dataset in Section \ref{sec:real_data}.
% Finally, Section \ref{sec:discuss} concludes the paper with a discussion of open problems. 
For brevity, additional theoretical and numerical results, as well as the technical proofs are deferred to the supplementary material.}
\subsection{Notation} We conclude this section by introducing some notations that will be used throughout this paper. For a positive integer $n$, we write $[n]:=\{1, \hdots, n\}$. 
Given $a, b \in \bbR$, we denote $a\lor b := \max\{a, b\}$ and $a\land b := \min\{a, b\}$.
For a set $S$, we let $\Indc_S$ be its indicator function and we use $\# S$ and $|S|$ interchangeably to denote its cardinality. 
For two positive sequences $\{a_n\}$ and $\{b_n\}$, we write 
$a_n \lesssim b_n$ or $a_n = \calO(b_n)$ to denote $\limsup a_n/b_n < \infty$,  
and we let $a_n \gtrsim b_n$ or $a_n = \Omega(b_n)$ to denote $b_n \lesssim a_n$. 
Meanwhile, the notation $a_n\asymp b_n$ or $a_n = \Theta(b_n)$ means $a_n \lesssim b_n$ and $a_n \gtrsim b_n$ simultaneously.
% and we let $a_n\sim b_n$ to denote $\lim a_n/b_n = 1$. 
Moreover, we write $a_n\ll b_n$ to mean $b_n/a_n\to\infty$ and $a_n\gg b_n$ to mean $b_n \ll a_n$.
For a vector $x$, we let $\|x\|_p$ denote its $\ell_p$ norm, and we write $\|x\|_2 = \|x\|$ when there is no ambiguity. 
% For two vectors $x, y$, we write $x \succ y$ (resp. $\succeq, \prec, \preceq$) to denote $x_i>y_i$ (resp. $\geq, <, \leq$) for every coordinate $i$. 
For a matrix $A$, we let $\|A\|_F$ be its Frobenius norm and $\|A\|_{p\to q}$ be its $\ell_p$ to $\ell_q$ operator norm. We will write $\|A\|_{2\to 2} =  \|A\|_2 = \|A\|$ when there is no ambiguity.
% For two index sets $I, J$, we let $A_{I, J}$ denote the submatrix formed by stacking the elements from $I$ and $J$. We let $A_{I, \bigcdot}$ (resp. $A_{\bigcdot, J}$) be the submatrix formed by taking rows from $I$ (resp. rows from $J$) and taking all the columns (resp. all the rows). 

\section{Fundamental Limits and Costs of Inhomogeneity}\label{sec:lower_bound}

In this section, we present minimax lower bounds for estimating both $\bz^\star$ and individual $\zzz{\ell}$'s. To start with, let us recall that a two-block \sname~parameterized by $(\bz^\star, \rho, \{p_\ell\}_1^L, \{q_\ell\}_1^L)$ is a probability measure on a multilayer network, whose adjacency matrices $\{\AAA{\ell}\}_1^L$ 
are generated according to \eqref{eq:label_flip}--\eqref{eq:layerwise_sbm}.
% satisfy $\AAA{\ell}_{ii} = 0$ for any $\ell\in[L]$, $i\in[n]$ and
% \begin{equation}
% 	\label{eq:recall_model_setup_fund_limits}
% 	\AAA{\ell}_{ij} = \AAA{\ell}_{ji} \sim \textnormal{Bern}\bigg(p_\ell \cdot \indc{\zzz{\ell}_i = \zzz{\ell}_j} + q_\ell \cdot \indc{\zzz{\ell}_i \neq \zzz{\ell}_j} \bigg), \qquad \forall i\neq j \in [n],
% \end{equation}
% independently, where $\zzz{\ell}$ are generated from the global assignment $\bz^\star$ by flipping each coordinate independently with probability $\rho$.

\paragraph{Parameter space and the loss function.}
% {\noindent \bf Parameter space and the loss function.} 
Let $n^\star_\pm(\bz^\star) = \sum_{i\in[n]}\indc{\bz^\star = \pm1}$ be the sizes of the positive and negative clusters of $\bz^\star$, respectively. 
We propose to consider the following collection of \sname s:
\begin{align}
	\label{eq:param_space}
	\calP_n(\rho, \{p_\ell\}_1^L, \{q_\ell\}_1^L, \beta):= \bigg\{ \textnormal{\sname}(\bz^\star, \rho, \{p_\ell\}_1^L, \{q_\ell\}_1^L) : \ \frac{n}{2\beta} \leq n^\star_\pm(\bz^\star) \leq  \frac{n\beta}{2},  p_\ell > q_\ell \ \forall \ell \bigg\}.
	% below is the asymmetric param space
	% \calP_n(\rho, \{p_\ell\}_1^L, \{q_\ell\}_1^L,  \beta):= \bigg\{ \textnormal{\sname}(\bz^\star, \rho, \{p_\ell\}_1^L, \{q_\ell\}_1^L) : \ \lfloor\frac{n}{2\beta}\rfloor\leq n^\star_\pm(\bz^\star)\leq \lceil\frac{n\beta}{2}\rceil, p_\ell > q_\ell \ \forall \ell \bigg\},
\end{align}
As we focus on the symmetric case, the constant $\beta\geq 1$ is taken to be $1+o(1)$ as $n\to\infty$. The rest of the quantities appearing above, namely $L, \rho, \{p_\ell\}_1^L, \{q_\ell\}_1^L$, are all allowed to scale with $n$. For an estimator $\bhz^\star$ of $\bz^\star$, we evaluate its performance by the misclustering proportion, defined as
\begin{align}
	\label{eq:loss_func}
	\calL(\bhz^\star, \bz^\star) : = \frac{d_\ham(\bhz^\star, \bz^\star) \land d_\ham(-\bhz^\star, \bz^\star)}{n}, % \sum_{i\in[n]} \indc{\bhz^\star_i \neq \bz^\star_i}\bigg) \land \bigg(\frac{1}{n} \sum_{i\in[n]} \indc{-\bhz^\star_i \neq \bz^\star_i}\bigg),
\end{align}
where $d_\ham(\bz, \bz') = \sum_{i\in[n]}\indc{\bz_i\neq \bz_i'}$ is the Hamming distance between $\bz$ and $\bz'$, and the minimum is taken because $\bhz^\star$ and $-\bhz^\star$ give rise to the same partitioning of nodes. 
Similarly, for an estimator $\hzzz{\ell}$ of $\zzz{\ell}$, we evaluate its performance by $\calL(\hzzz{\ell}, \zzz{\ell})$.

\paragraph{An idealized setup.}
% {\noindent \bf An idealized setup.} 
To characterize the information-theoretic limits in estimating $\bz^\star$ and $\zzz{\ell}$'s, we will consider an idealized setup as follows. Suppose $n = 2m+1$ and the nodes are labeled as $0 , 1, \hdots, 2m$. Consider a global assignment vector $\bz^\star$ whose value on $i\in\{1, \hdots, 2m\}$ is known to us: 
\begin{equation}
\label{eq:idealized_setup}
\bz^\star_{i} = 
\begin{cases}
+1 & 1\leq i\leq m\\
-1 & m+1\leq i\leq 2m.
\end{cases}
\end{equation}
We further observe $\{\AAA{\ell}\}_1^L \sim \textnormal{\sname}(\bz^\star, \rho, \{p_\ell\}_1^L, \{q_\ell\}_1^L)$. And our goal is to estimate $\bz^\star_0$ as well as $\zzz{\ell}_0$'s, a substantially simplified task compared to the original one. 
Such a strategy of ``reducing'' the task of doing inference for the whole parameter vector to doing inference for each coordinate is an instance of the celebrated Assouad's method \cite{assouad1983deux}, and has been successfully used in many recent works on characterizing the fundamental limits of community detection in SBMs and variants (see, e.g., \cite{zhang2016minimax,gao2017achieving,gao2018community}).
% , where it is shown to correctly characterize the minimax rate of the original inference task. 
Our discussion in the rest of this section will largely rely on the intuitions built upon this idealized setup, and we refer the readers to Section \ref{append:prf_lower_bound} for fully rigorous proofs.

\subsection{Minimax Lower Bound for \GlobEst}\label{subsec:lower_bound_z_star}
Consider the task of estimating $\bz^\star$. Under the idealized setup \eqref{eq:idealized_setup}, our goal is to differentiate between $\bz^\star_0 = +1$ and $\bz^\star_0 = -1$ based on the data $\{\AAA{\ell}\}_1^L$, which gives rise to a binary hypothesis testing problem:
\begin{equation}
	\label{eq:hypothesis_z_star_intractable}
	H_0: \bz^\star_0 = +1 \ \ \ \textnormal{v.s.} \ \ \ H_1: \bz^\star_0 = -1.
\end{equation}
By Neyman--Pearson lemma, in principle, we can characterize the difficulty of the above testing problem by calculating the error made by the likelihood ratio test. However, due to the complicated structure of the likelihood function, this strategy is analytically intractable, calling for further simplifications.

% \smallskip
\subsubsection{The fundamental testing problem}
% Let us consider the following two extreme cases. On the one extreme, we will assume that $\zzz{\ell}_0$ is actually observed by us. 
If $\zzz{\ell}_0$ is actually observed by us, then since $\zzz{\ell}_0\sim2 \textnormal{Bern}(1-\rho)-1$ under $H_0$ and $\zzz{\ell}_0\sim 2\textnormal{Bern}(\rho)-1$ under $H_1$, deciding the value of $\bz^\star_0$ from $\zzz{\ell}_0$ is equivalent to the problem of differentiating between
\begin{equation}
	\label{eq:hypothesis_z_star_observed_zl}
	H_0: 2 \textnormal{Bern}(1-\rho)-1 \ \ \ \textnormal{v.s.} \ \ \ H_1:  2\textnormal{Bern}(\rho)-1.
\end{equation}
{In reality, we need to estimate each $\zzz{\ell}_0$ from its corresponding $\AAA{\ell}$}. This is the community detection problem in a vanilla two-block SBM, whose fundamental difficulty is characterized by the following testing problem \citep{gao2017achieving}:
\begin{equation}
	\label{eq:hypothesis_z_star_vanilla_sbm}
	H_0: \bigotimes_{i\in[m]} \textnormal{Bern}(p_\ell) \otimes \bigotimes_{i\in[m]} \textnormal{Bern}(q_\ell)  \ \ \ \textnormal{v.s.} \ \ \ H_1: \bigotimes_{i\in[m]} \textnormal{Bern}(q_\ell) \otimes \bigotimes_{i\in[m]} \textnormal{Bern}(p_\ell),
\end{equation}
where $\otimes$ denotes the product of probability measures.

Intuitively, if the signal strength in the $\ell$-th layer is strong enough, then we are close to the case of known $\zzz{\ell}_0$, in which the error for testing \eqref{eq:hypothesis_z_star_intractable} is mainly captured by that of testing \eqref{eq:hypothesis_z_star_observed_zl}. On the other hand, if we have barely any signal in the $\ell$-th layer, then we are in the unknown $\zzz{\ell}_0$ case, in which the error for testing \eqref{eq:hypothesis_z_star_intractable} mainly comes from testing \eqref{eq:hypothesis_z_star_vanilla_sbm}. This intuition is formalized in the following lemma.

\begin{lemma}[The fundamental testing problem for \globest]
\label{lemma:testing_problem_z_star}
{Assume $\log L \ll n^{c}$ for some $c \in (0, 1)$}. Then for any sequence $\delta_n=o(1)$, there exists another sequence $\delta'_n = o(1)$ satisfying $(1+\delta_n')n/2\in \bbN$, such that for any $S\subseteq [L]$,  
\begin{equation}
\label{eq:lower_bound_by_testing_problem_z_star}
  \inf_{\bhz^\star} \sup_{\bz^\star\in \calP_n} \bbE \calL(\bhz^\star, \bz^\star) \gtrsim \delta_n \inf_{\phi}\bigg(\bbE_{H_0}[\phi] + \bbE_{H_1}[1-\phi] \bigg),
\end{equation}
where $\phi$ is a testing function of the following problem:
\begin{align}
  & H_0: \bigg(\bigotimes_{\ell\in S^c} [2 \textnormal{Bern}(1-\rho) - 1]\bigg) \otimes \bigg(\bigotimes_{\ell\in S} \bigotimes_{i = 1}^{(1+\delta'_n)n/2} \textnormal{Bern}(p_\ell)\otimes \textnormal{Bern}(q_\ell)\bigg)\nonumber\\
  \label{eq:fund_testing_problem_z_star}
  & \qquad \textnormal{v.s. } H_1: \bigg(\bigotimes_{\ell\in S^c} [2 \textnormal{Bern}(\rho) - 1]\bigg) \otimes \bigg(\bigotimes_{\ell\in S} \bigotimes_{i = 1}^{(1+\delta'_n)n/2} \textnormal{Bern}(q_\ell)\otimes \textnormal{Bern}(p_\ell)\bigg).
\end{align}
\end{lemma}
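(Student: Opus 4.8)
The plan is to instantiate Assouad's method in the idealized single-coordinate form described above, and then reduce the resulting ``estimate $\bz^\star_0$'' problem to the two-part testing problem \eqref{eq:fund_testing_problem_z_star} by a conditioning argument. First I would use a standard reduction (as in \citet{zhang2016minimax,gao2017achieving}): since the loss $\calL$ averages per-node errors, the minimax risk over $\calP_n$ is lower bounded, up to a constant, by $\delta_n$ times the Bayes risk of deciding $\bz^\star_0\in\{\pm1\}$ under a uniform prior, where we plant the known block structure \eqref{eq:idealized_setup} on nodes $1,\dots,2m$ and treat node $0$ as the ``free'' coordinate; the factor $\delta_n=o(1)$ and the slightly perturbed cluster sizes $(1+\delta_n')n/2\in\bbN$ arise from the bookkeeping needed to keep $\bz^\star$ inside the parameter space $\calP_n$ (roughly equal cluster sizes, $p_\ell>q_\ell$) while still having a genuine testing alternative. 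This is routine and I would not belabor it.

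The substantive step is the second one. Having reduced to testing $H_0:\bz^\star_0=+1$ versus $H_1:\bz^\star_0=-1$ based on $\{\AAA{\ell}\}_1^L$, I would introduce, for the chosen subset $S\subseteq[L]$, the latent variables $\{\zzz{\ell}_0\}_{\ell\in S^c}$ and the edges incident to node $0$ within each layer. The key observation is that the data incident to node $0$ in layer $\ell$ — namely the vector $(\AAA{\ell}_{0j})_{j\in[2m]}$ — is, conditionally on $\zzz{\ell}_0$ and on the planted labels of $1,\dots,2m$, a product of $m$ $\Bern(p_\ell)$'s and $m$ $\Bern(q_\ell)$'s, with the roles of $p_\ell,q_\ell$ swapped when $\zzz{\ell}_0$ flips. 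Since $\zzz{\ell}_0$ depends on $\bz^\star_0$ only through the flip probability $\rho$ vs.\ $1-\rho$, for layers $\ell\in S$ I would \emph{keep the latent $\zzz{\ell}_0$ marginalized out}, obtaining a mixture over the two SBM-edge-configurations, while for layers $\ell\in S^c$ I would instead \emph{reveal $\zzz{\ell}_0$ to the statistician} — this can only make the testing problem easier, hence only lowers the Bayes risk, which is the direction we need for a lower bound. Revealing $\zzz{\ell}_0$ for $\ell\in S^c$ collapses the relevant information in those layers to the single bit $\zzz{\ell}_0$, which is distributed as $2\Bern(1-\rho)-1$ under $H_0$ and $2\Bern(\rho)-1$ under $H_1$; the edges incident to $0$ in those layers then carry no further information about $\bz^\star_0$ once $\zzz{\ell}_0$ is known, so they can be discarded. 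Putting the $S^c$-bits together with the $S$-layer edge vectors (and absorbing the perturbed count $(1+\delta_n')n/2$ into the number of Bernoulli factors) yields exactly the product form in \eqref{eq:fund_testing_problem_z_star}, and the Bayes risk of this reduced problem is $\inf_\phi\big(\bbE_{H_0}[\phi]+\bbE_{H_1}[1-\phi]\big)$, giving \eqref{eq:lower_bound_by_testing_problem_z_star}.

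Two points need care. First, one must check that across layers the data are conditionally independent given $(\bz^\star_0,\{\zzz{\ell}_0\}_{\ell\in[L]})$ and the planted labels, so that revealing $\zzz{\ell}_0$ for $\ell\in S^c$ decouples cleanly into the claimed tensor product; this follows from the generative model \eqref{eq:label_flip}--\eqref{eq:layerwise_sbm} since layers are generated independently given the $\zzz{\ell}$'s. Second, the ``revealing extra information lowers the risk'' step is a data-processing/monotonicity argument: any test in the original problem induces a test in the augmented (more-informative) problem with the same or smaller error, so the infimum over tests can only decrease — hence the inequality in \eqref{eq:lower_bound_by_testing_problem_z_star} points the right way. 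The main obstacle, and the part that deserves the most attention in the full proof, is precisely the bookkeeping that ties the Assouad reduction to a clean hypothesis test: choosing $\delta_n'$ so that $(1+\delta_n')n/2\in\bbN$, verifying the perturbed global vector still lies in $\calP_n(\rho,\{p_\ell\},\{q_\ell\},\beta)$ with $\beta=1+o(1)$, and confirming that the constant lost in passing from the full misclustering loss to the single-coordinate Bayes risk is harmless (only affecting the $\delta_n$ prefactor, which is immaterial since the eventual rate is exponential). The assumption $L\le n^c$ enters here to ensure that the per-layer and per-node perturbations, summed over all layers, remain $o(1)$ relative to $n$.
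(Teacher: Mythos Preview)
Your high-level plan (Assouad reduction to a single coordinate, then reveal auxiliary latent variables to obtain a simpler test that can only be easier) is the right one, and matches the paper. But there is a real gap in the second step, precisely where you say ``conditionally on $\zzz{\ell}_0$ and on the planted labels of $1,\dots,2m$, [the edge vector] is a product of $m$ $\Bern(p_\ell)$'s and $m$ $\Bern(q_\ell)$'s.'' In the idealized setup \eqref{eq:idealized_setup} only the \emph{global} labels $\bz^\star_j$ are planted; the per-layer labels $\zzz{\ell}_j$ for $j\neq 0$ are still random flips of $\bz^\star_j$. The law of $\AAA{\ell}_{0j}$ given $(\zzz{\ell}_0,\bz^\star_j)$ is $\Bern\big((1-\rho)p_\ell+\rho q_\ell\big)$ or $\Bern\big((1-\rho)q_\ell+\rho p_\ell\big)$, not $\Bern(p_\ell)$ or $\Bern(q_\ell)$. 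So your conditional-product claim is false as stated, and the subsequent identification with \eqref{eq:fund_testing_problem_z_star} does not follow.

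Relatedly, your treatment of the layers in $S$ is not consistent with the target: if you ``keep $\zzz{\ell}_0$ marginalized,'' the resulting law of the edge vector is a $\rho$-mixture of the two product configurations, not the single product $\bigotimes \Bern(p_\ell)\otimes\Bern(q_\ell)$ that appears in \eqref{eq:fund_testing_problem_z_star}. The paper handles both issues differently: for $\ell\in S$ it conditions on the \emph{flip} variable $\xi_\ell=\zzz{\ell}_0\cdot\bz^\star_0$ (whose distribution is the same under $H_0$ and $H_1$, so no information is gained from the conditioning itself), and then applies data processing to reveal the full $\{\zzz{\ell}\}$, including $\{\zzz{\ell}_j\}_{j\neq 0}$. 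Only then do the edges become genuine $\Bern(p_\ell)/\Bern(q_\ell)$ products, with a \emph{random} number $\mmm{\ell}_1,\mmm{\ell}_2$ of each. A concentration step (Hoeffding plus a union bound over the $|S|\le L\le n^c$ layers, which is the actual place the assumption $L\le n^c$ is used) shows $\mmm{\ell}_1\vee\mmm{\ell}_2=(1+o(1))n/2$ uniformly in $\ell$, and a final data-processing step replaces all the $\mmm{\ell}$'s by a common $m'=(1+\delta_n')n/2$. Your sketch skips the $\xi$-conditioning idea entirely and misattributes the role of $L\le n^c$; these are the pieces you need to add.
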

\begin{proof}
	See Section \ref{subappend:prf_testing_problem_z_star}.
\end{proof}

\begin{remark}
\label{rmk:opt_S_lower_bound_z_star}
Though the lower bound \eqref{eq:lower_bound_by_testing_problem_z_star} holds for an arbitrary $S\subseteq[L]$, by our previous intuition, it is the tightest if we choose $S$ such that $S^c$ corresponds to layers with high SNRs, where \eqref{eq:hypothesis_z_star_observed_zl} kicks in, and $S$ corresponds to layers with low SNRs, where the error from \eqref{eq:hypothesis_z_star_vanilla_sbm} dominates. 
\end{remark}

% \smallskip
\subsubsection{Optimal testing error and parity of \texorpdfstring{${|S^c|}$}{Sc}} 
% Optimal testing error and parity of $\boldsymbol{|S^c|}$.
By Neyman--Pearson lemma, the test that gives the optimal Type-I plus Type-II error is the likelihood ratio test with a cutoff of $1$. 
{For the problem \eqref{eq:fund_testing_problem_z_star}, it can be shown (see Section \ref{subappend:prf_testing_problem_z_star} for a detailed derivation) that the optimal error is given by
\begin{align}
	\label{eq:error_of_LR_test_z_star}
	\bbP\bigg(\sum_{\ell\in S} \log \bigg(\frac{q_\ell(1-p_\ell)}{p_\ell(1-q_\ell)}\bigg) \cdot \sum_{i=1}^{(1+\delta'_n)n/2} (\XXX{\ell}_i-\YYY{\ell}_i) \geq \log \bigg(\frac{1-\rho}{\rho}\bigg) \cdot \sum_{\ell\in S^c}\ZZZ{\ell}\bigg),
\end{align}
where 
\begin{equation}
\label{eq:def_of_rvs_in_testing}
\{\XXX{\ell}_i\}_{i=1}^n \overset{\textnormal{i.i.d.}}{\sim} \textnormal{Bern}(p_\ell), \ \ \ \{\YYY{\ell}_i\}_{i=1}^n \overset{\textnormal{i.i.d.}}{\sim} \textnormal{Bern}(q_\ell), \ \ \ \{\ZZZ{\ell}\}_{\ell=1}^L  \overset{\textnormal{i.i.d.}}{\sim} 2\textnormal{Bern}(1-\rho) - 1,
\end{equation}
all of which are mutually independent.}

Readers with an expertise in {large deviation principles} may have noticed that \eqref{eq:error_of_LR_test_z_star} is the tail probability of a sum of independent random variables, and a tight characterization of this probability should involve the cumulant generating functions (CGFs) of those random variables as well as their rate functions (i.e., the Legendre transforms of CGFs). 
This intuition explains the appearance of the two key information-theoretic quantities, namely $\psi_S^\star$ and $J_\rho$, in the definition of $\globinfo_S$ in \eqref{eq:globinfo}. As one can check, $\psi_S(t)$ is precisely the CGF of the random variable that appears to the left of the ``$\geq$'' sign in \eqref{eq:error_of_LR_test_z_star} ({if we set $\delta_n'=0$}), and $-J_\rho$ is the CGF of $\log\big((1-\rho)/\rho\big)\cdot \ZZZ{\ell}$ evaluated at $1/2$. 

The following lemma gives the asymptotically optimal testing error for \eqref{eq:fund_testing_problem_z_star}.

\begin{lemma}[Optimal testing error for \globest]
\label{lemma:opt_test_error_z_star}
Assume $\rho = o(1)$ and that {there exist constants $C_1, C_2 > 1, c\in(0, 1)$ such that $C_1 q_\ell \leq p_\ell \leq (C_2 q_\ell) \land (1-c)$ for any $\ell\in[L]$.} 
Then there exists a sequence $\delta''_n=o(1)$ which is independent of $S$, such that the probability \eqref{eq:error_of_LR_test_z_star} is lower bounded by
\begin{equation}
  \label{eq:opt_test_error_z_star}
  C\cdot \exp\big\{- (1+\delta_n'')\globinfo_S\big\},
\end{equation}
where $C>0$ is an absolute constant and $\calI_S$ is defined in \eqref{eq:globinfo}. 
\end{lemma}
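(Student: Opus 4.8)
The plan is to recognize the probability in \eqref{eq:error_of_LR_test_z_star} as the upper tail of a sum of two \emph{independent} pieces, read off their cumulant generating functions (CGFs), and obtain a matching lower bound by an exponential change of measure; the parity of $|S^c|$ enters through the value at which the Rademacher contribution is pinned. Write $\alpha_\ell := \log\frac{\qell(1-\pell)}{\pell(1-\qell)}<0$, $b_\rho := \log\frac{1-\rho}{\rho}>0$, and $N := (1+\delta'_n)n/2$, so that \eqref{eq:error_of_LR_test_z_star} equals $\bbP(U\ge b_\rho R)$ with $U := \sum_{\ell\in S}\alpha_\ell\sum_{i=1}^N(\XXX{\ell}_i-\YYY{\ell}_i)$ and $R := \sum_{\ell\in S^c}\ZZZ{\ell}$. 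A one-line computation with Bernoulli moment generating functions and the definition of $\III{\ell}_t$ gives $\log\bbE[e^{tU}]=-N\sum_{\ell\in S}\III{\ell}_t=(1+\delta'_n)\psi_S(t)$ for all real $t$, and $\log\bbE[e^{-tb_\rho R}]=|S^c|\log[(1-\rho)^{1-t}\rho^t+\rho^{1-t}(1-\rho)^t]$, which at $t=\tfrac12$ is $-|S^c|J_\rho$. Since $\III{\ell}_t=\III{\ell}_{1-t}$ and $-\III{\ell}_t$ is itself a CGF, $\psi_S$ is convex and symmetric about $t=\tfrac12$, so $\inf_t\psi_S(t)=\psi_S(\tfrac12)=-\psi_S^\star(0)$, and the symmetry yields both the reflection identity $\psi_S^\star(-a)=\psi_S^\star(a)-a$ and the fact that $(\psi_S^\star)'$, being the maximizing $t$, lies in $[0,1]$.

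By independence, for any integer $r$ with $r\equiv|S^c|\pmod 2$ and $|r|\le|S^c|$ we have $\bbP(U\ge b_\rho R)\ge\bbP(R=r)\,\bbP(U\ge b_\rho r)$; I would take $r^\star:=0$ if $|S^c|$ is even and $r^\star:=-1$ if $|S^c|$ is odd, so $b_\rho r^\star\le0$. Evaluating $\bbP(R=r^\star)$ exactly as a binomial probability and using $(\rho(1-\rho))^{|S^c|/2}=2^{-|S^c|}e^{-|S^c|J_\rho}$ with the central-binomial bound $\binom{|S^c|}{k}\ge 2^{|S^c|}/(|S^c|+1)$ at $k=(|S^c|-r^\star)/2$ yields $-\log\bbP(R=r^\star)\le|S^c|J_\rho-\tfrac12 b_\rho r^\star+\log(|S^c|+1)$, where $-\tfrac12 b_\rho r^\star$ is $0$ when $|S^c|$ is even and equals $J_\rho+O(1)$ when $|S^c|$ is odd (as $\tfrac12 b_\rho=J_\rho+\log(2(1-\rho))$). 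The combinatorial term is negligible: $\globinfo_S\ge|S^c|J_\rho$, and $\rho=o(1)$ forces $J_\rho\to\infty$, so $\log(|S^c|+1)/\globinfo_S\le 1/J_\rho=o(1)$ uniformly in $S$.

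For the remaining factor $\bbP(U\ge a)$ with $a:=b_\rho r^\star\le0$ — a shifted instance of the vanilla SBM testing problem \eqref{eq:hypothesis_z_star_vanilla_sbm} — I would use a Cram\'er-type lower bound. Let $t_a\in[0,1]$ attain $\Lambda_U^\star(a):=\sup_{0\le t\le1}\big(at-(1+\delta'_n)\psi_S(t)\big)=(1+\delta'_n)\psi_S^\star\big(a/(1+\delta'_n)\big)$; by convexity and the symmetry of $\psi_S$, $t_a\in[0,\tfrac12]$. Tilting $U$ exponentially by $t_a$ to a law $\tilde\bbP$ gives $\bbP(U\ge a)=e^{-\Lambda_U^\star(a)}\tilde\bbE[e^{-t_a(U-a)}\indc{U\ge a}]\ge e^{-\Lambda_U^\star(a)-t_a T}\,\tilde\bbP(a\le U\le a+T)$, and choosing a window of width $T\asymp\sigma_a$ (the standard deviation of $U$ under $\tilde\bbP$) makes $\tilde\bbP(a\le U\le a+T)$ bounded below by an absolute constant: this rests on a standard anticoncentration estimate for sums of independent uniformly bounded variables — legitimate because $\pell\asymp\qell=o(1)$ makes every $|\alpha_\ell|$ bounded, and the tilted Bernoulli parameters stay $\asymp p_\ell\asymp q_\ell$, so the tilted summands are close to symmetric ±spikes at the relevant tilt (exactly symmetric in the even case $t_a=\tfrac12$); when $a<\bbE U$ one instead has $t_a=0$ and $\bbP(U\ge a)\ge 1-o(1)$ directly by Chebyshev since $b_\rho\to\infty$. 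Computing the tilted parameters shows each tilted summand has variance $\lesssim\alpha_\ell^2\sqrt{\pell\qell}\asymp(\sqrt{\pell}-\sqrt{\qell})^2\asymp\III{\ell}_{1/2}$ — the step where $\pell\asymp\qell$ is genuinely used — so $\sigma_a^2\lesssim n\sum_{\ell\in S}\III{\ell}_{1/2}\asymp\psi_S^\star(0)\le\globinfo_S$, whence $t_a T=O(\sqrt{\globinfo_S})=o(\globinfo_S)$ once $\globinfo_S\to\infty$; and $|(\psi_S^\star)'|\le1$ with $|a|\le b_\rho\lesssim\globinfo_S$ let one replace $\Lambda_U^\star(a)$ by $\psi_S^\star(b_\rho r^\star)$ up to an $o(\globinfo_S)$ error.

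Multiplying the two bounds gives $\bbP(U\ge b_\rho R)\gtrsim\exp\{-|S^c|J_\rho+\tfrac12 b_\rho r^\star-\psi_S^\star(b_\rho r^\star)-o(\globinfo_S)\}$. For even $|S^c|$ ($r^\star=0$) the exponent is $-(|S^c|J_\rho+\psi_S^\star(0))-o(\globinfo_S)=-\globinfo_S-o(\globinfo_S)$; for odd $|S^c|$ ($r^\star=-1$), using $\tfrac12 b_\rho=J_\rho+O(1)$, the reflection identity $\psi_S^\star(-b_\rho)=\psi_S^\star(b_\rho)-b_\rho$, and $|\psi_S^\star(-b_\rho)-\psi_S^\star(-2J_\rho)|\le|b_\rho-2J_\rho|=O(1)$, it becomes $-((|S^c|+1)J_\rho+\psi_S^\star(-2J_\rho))-O(1)-o(\globinfo_S)=-\globinfo_S-o(\globinfo_S)$, the $O(1)$ being absorbed because $\globinfo_S\ge 2J_\rho\to\infty$ whenever $|S^c|\ge1$. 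Collecting all error terms into a single $\delta''_n=o(1)$ depending only on $\rho$, $\delta'_n$ and a uniform lower bound on $\globinfo_S$ (the bounds above all being uniform in $S$), and the multiplicative constants into an absolute $C$, yields \eqref{eq:opt_test_error_z_star}; the degenerate cases $S=\emptyset$ and $\globinfo_S=O(1)$ are immediate, so one may assume $S\ne\emptyset$ and $\globinfo_S\to\infty$. I expect the main obstacle to be the Cram\'er lower bound for $\bbP(U\ge a)$ with an \emph{absolute} constant uniform over $S$ and the admissible $(\pell,\qell)$ — in particular the sharp identification $\sigma_a^2\asymp\psi_S^\star(0)$ via tilting and the uniform anticoncentration of $U$ under $\tilde\bbP$ — together with verifying that every lower-order term is $o(\globinfo_S)$ uniformly, so that a single sequence $\delta''_n$ suffices.
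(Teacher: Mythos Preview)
Your approach mirrors the paper's: pin $R$ at $r^\star\in\{0,-1\}$ by parity, estimate the binomial probability, then lower-bound $\bbP(U\ge b_\rho r^\star)$ via exponential tilting with a window of width $\asymp\sigma_a$. The algebra connecting $\psi_S^\star(-b_\rho)$ to $\psi_S^\star(-2J_\rho)$ via the reflection identity and the $1$-Lipschitz property of $\psi_S^\star$ is correct, as is the identification of the tilted variance with $\sum_{\ell\in S}m\III{\ell}_{1/2}$ (the paper's Lemma \ref{lemma: exp_and_var_of_tilted_bernoulli}).

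One point needs sharpening. Your claim that ``$\bbP(U\ge a)\ge 1-o(1)$ by Chebyshev when $a<\bbE U$'' fails near the boundary $a\approx\bbE U$: Chebyshev gives $1-\Var(U)/(\bbE U-a)^2$, which is useless when $\bbE U-a$ is small relative to $\sqrt{\Var(U)}$. The paper handles the odd case by splitting into three regimes according to the size of $\sum_{\ell\in S} m\III{\ell}_{1/2}$ relative to $J_\rho$: when small, a direct Markov argument yields $\bbP(U\ge -b_\rho)\ge 1/2$; when moderate (the regime that breaks your Chebyshev step), Lindeberg--Feller CLT gives $\bbP(U\ge -b_\rho)\ge C$; and only when large does it use tilting plus CLT for the window probability. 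Since only $\bbP(U\ge a)\ge C$ is required when $a\le\bbE U$ (because $\psi_S^\star(a)=0$ there), your gap is patched by invoking CLT rather than Chebyshev---and CLT is also the ``standard anticoncentration'' you need for the tilted window in the odd large-signal regime, once you verify $\sigma_a^2\asymp\sum_{\ell\in S}m\III{\ell}_{1/2}\to\infty$ there.
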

\begin{proof}
	See Section \ref{subappend:prf_opt_test_error_z_star}.
\end{proof}
\begin{remark} 
\label{rmk:parity}
The SNR $\calI_S$ for \globest, which appears on the exponent in the optimal testing error \eqref{eq:opt_test_error_z_star}, takes different forms according to the \emph{parity} of $|S^c|$. There is a fundamental reason for this.
It happens that the dominating term in the probability \eqref{eq:error_of_LR_test_z_star} is given by the part with $\sum_{\ell\in S^c}\ZZZ{\ell}$ being \emph{non-positive and closest to zero}. Since $\ZZZ{\ell}$'s are $\{\pm 1\}$-valued, such a requirement translates to $\sum_{\ell\in S^c}\ZZZ{\ell} = 0$ when $|S^c|$ is even, and gives $\sum_{\ell\in S^c}\ZZZ{\ell} = -1$ when $|S^c|$ is odd. %We refer the readers to Appendix \ref{subappend:prf_opt_test_error_z_star} for details. 
\end{remark}

% On the other hand, when $|S^c|$ is odd, the 
% This is because the parity of $|S^c|$ determines whether $\sum_{\ell\in S^c}\ZZZ{\ell}$ 
% determines the location of the \emph{mode} of $\sum_{\ell\in S^c}\ZZZ{\ell}$. When $|S^c|$ is even, the distribution of $\sum_{\ell\in S^c}\ZZZ{\ell}$ has the most mass on $0$, so the dominating term in the probability \eqref{eq:error_of_LR_test_z_star} is given by the part with $\sum_{\ell\in S^c}\ZZZ{\ell} = 0$. 
% % $$
% % 	\bbP(\sum_{\ell\in S^c} \ZZZ{\ell} = 0) \cdot \bbP\bigg(\sum_{\ell\in S} \log \bigg(\frac{q_\ell(1-p_\ell)}{p_\ell(1-q_\ell)}\bigg) \cdot \sum_{i=1}^{(1+\delta'_n)n/2} (\XXX{\ell}_i-\YYY{\ell}_i) \geq 0\bigg).
% % $$
% On the other hand, the most mass of $\sum_{\ell\in S^c}\ZZZ{\ell}$ is located at $\pm 1$ when $|S^c|$ is odd, meaning that the dominating term in \eqref{eq:error_of_LR_test_z_star} is the parts with $\sum_{\ell\in S^c} \ZZZ{\ell} = \pm 1$.
% % $$
% % 	\bbP(\sum_{\ell\in S^c} \ZZZ{\ell} = -1) \cdot \bbP\bigg(\sum_{\ell\in S} \log \bigg(\frac{q_\ell(1-p_\ell)}{p_\ell(1-q_\ell)}\bigg) \cdot \sum_{i=1}^{(1+\delta'_n)n/2} (\XXX{\ell}_i-\YYY{\ell}_i) \geq \log \bigg(\frac{\rho}{1-\rho}\bigg)\bigg).
% % $$

% \smallskip
\subsubsection{Minimax lower bound for \globest} 
Equipped with Lemmas \ref{lemma:testing_problem_z_star} and \ref{lemma:opt_test_error_z_star}, we are ready to present the main result in this subsection.
\begin{theorem}[Minimax lower bound for \globest]
  \label{thm:lower_bound_z_star}
  Assume $\rho = o(1)$. Meanwhile, {assume there exist constants $C_1, C_2 > 1$ and $c_1, c_2 =(0, 1)$ such that $C_1 q_\ell \leq p_\ell \leq (C_2 q_\ell) \land (1-c_1), \forall \ell\in[L]$ and $\log L\ll  n^{c_2}$}. If 
  $
    \min_{S\subseteq[L]} \globinfo_S \to \infty,
  $
  then there exists a sequence $\underline{\delta}_n = o(1)$ such that 
  \begin{align}
    \label{eq:lower_bound_z_star_vanishing}
    & \inf_{\bhz^\star} \sup_{\bz^\star\in \calP_n} \bbE \calL(\bhz^\star, \bz^\star) \geq \exp\big\{-(1+\underline{\delta}_n) \min_{S\subseteq[L]}\globinfo_S\big\}.
  \end{align}
  On the other hand, if $\min_{S\subseteq[L]} \globinfo_S=\calO(1)$, then there exists some $c' >0$ such that
  \begin{equation}
    \label{eq:lower_bound_z_star_constant}
    \inf_{\bhz^\star} \sup_{\bz^\star\in \calP_n} \bbE \calL(\bhz^\star, \bz^\star) \geq c'.
  \end{equation}
\end{theorem}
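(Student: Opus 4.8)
The plan is to derive both bounds from the testing reduction in Lemma~\ref{lemma:testing_problem_z_star} combined with the sharp testing-error estimate in Lemma~\ref{lemma:opt_test_error_z_star}; the two regimes will differ only in how the auxiliary vanishing factor is absorbed.

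\textbf{The vanishing regime \eqref{eq:lower_bound_z_star_vanishing}.} Fix an arbitrary $S\subseteq[L]$. I would apply Lemma~\ref{lemma:testing_problem_z_star} with an input sequence $\delta_n=o(1)$ to be chosen later, obtaining $\delta_n'=o(1)$ (independent of $S$) with
\[
\inf_{\bhz^\star}\sup_{\bz^\star\in\calP_n}\bbE\,\calL(\bhz^\star,\bz^\star)\ \gtrsim\ \delta_n\,\inf_{\phi}\big(\bbE_{H_0}[\phi]+\bbE_{H_1}[1-\phi]\big),
\]
the infimum running over tests of \eqref{eq:fund_testing_problem_z_star}. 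Next I would use $\inf_\phi\big(\bbE_{H_0}[\phi]+\bbE_{H_1}[1-\phi]\big)=\int\min(\diff\bbP_{H_0},\diff\bbP_{H_1})\ge \bbP_{H_0}(\diff\bbP_{H_1}\ge \diff\bbP_{H_0})$, and observe that writing out the log-likelihood ratio for \eqref{eq:fund_testing_problem_z_star} — in which the deterministic $\log\frac{1-q_\ell}{1-p_\ell}$ contributions of the two Bernoulli blocks cancel — identifies $\bbP_{H_0}(\diff\bbP_{H_1}\ge \diff\bbP_{H_0})$ with the probability in \eqref{eq:error_of_LR_test_z_star}. Lemma~\ref{lemma:opt_test_error_z_star} then supplies $\delta_n''=o(1)$, independent of $S$, with
\[
\inf_{\bhz^\star}\sup_{\bz^\star\in\calP_n}\bbE\,\calL(\bhz^\star,\bz^\star)\ \gtrsim\ \delta_n\,C\,\exp\big\{-(1+\delta_n'')\,\globinfo_S\big\}.
\]
Since the left-hand side is free of $S$, taking the supremum over $S$ on the right replaces $\globinfo_S$ by $R_n:=\min_{S\subseteq[L]}\globinfo_S$, which tends to $\infty$ by assumption. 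I would then choose $\delta_n:=R_n^{-1}$ (so $\delta_n=o(1)$ and $\log(1/\delta_n)=o(R_n)$) and fold $\delta_n$, $C$ and the implied constant $c$ into the exponent, obtaining
\[
\inf_{\bhz^\star}\sup_{\bz^\star\in\calP_n}\bbE\,\calL(\bhz^\star,\bz^\star)\ \ge\ \exp\big\{-(1+\underline\delta_n)\,R_n\big\},\qquad \underline\delta_n:=\delta_n''+R_n^{-1}\log\!\big(R_n/(cC)\big)=o(1),
\]
which is \eqref{eq:lower_bound_z_star_vanishing}.

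\textbf{The bounded regime \eqref{eq:lower_bound_z_star_constant}.} Here $R_n=\min_S\globinfo_S=O(1)$. I would first note that $\rho=o(1)$ forces $J_\rho\to\infty$, whereas $\globinfo_S\ge |S^c|\,J_\rho$ for every $S$ (since $\psi_S^\star(\cdot)\ge-\psi_S(0)=0$, using $\III{\ell}_0=0$); hence the minimizer is $S=[L]$ for all large $n$ and $R_n=\globinfo_{[L]}=\psi_{[L]}^\star(0)=\tfrac n2\sum_{\ell\in[L]}\III{\ell}_{1/2}=O(1)$. By the reduction underlying Lemma~\ref{lemma:testing_problem_z_star} with $S=[L]$, deciding $\bz^\star_i$ in the idealized setup has Bayes error $\gtrsim$ the probability \eqref{eq:error_of_LR_test_z_star} with $S=[L]$, which by Lemma~\ref{lemma:opt_test_error_z_star} is at least $C\exp\{-(1+o(1))\psi_{[L]}^\star(0)\}\ge c''>0$; the analogous per-pair problem (deciding whether $\bz^\star_i=\bz^\star_j$) likewise has Bayes error bounded below by a constant. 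To convert this into a bound on the sign-symmetrized loss I would restrict to the sub-family of $\calP_n$ in which $\bz^\star$ agrees with a fixed balanced vector off a set $V$ of size $\gamma n$ ($\gamma$ a small absolute constant) and is free on $V$, with exact balance preserved by letting $V$ consist of $(+,-)$ coordinate pairs; using $\calL(\bhz^\star,\bz^\star)\ge\tfrac1{2n^2}\|M(\bhz^\star)-M(\bz^\star)\|_{\mathrm F}^2$ for the sign-invariant co-membership matrix $M(\bz)_{ij}=\indicator{\bz_i=\bz_j}$ and averaging the per-pair errors over the $\asymp\gamma^2 n^2$ pairs inside $V$ gives $\inf_{\bhz^\star}\sup_{\bz^\star\in\calP_n}\bbE\,\calL\gtrsim\gamma^2 c''$, which is \eqref{eq:lower_bound_z_star_constant}.

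\textbf{Main obstacle.} Given Lemmas~\ref{lemma:testing_problem_z_star}--\ref{lemma:opt_test_error_z_star}, the vanishing regime is essentially bookkeeping; its one subtlety is that the prefactor $\delta_n$ and the absolute constants must be swallowed by the $(1+o(1))$ in the exponent, which is legitimate only because $R_n\to\infty$ and $\delta_n$ may be tied to the (a priori unknown-rate) sequence $R_n$. The genuinely delicate part is the bounded regime: passing from a per-coordinate/per-pair testing lower bound to a lower bound on the sign-symmetrized misclustering proportion while remaining inside the near-balanced space $\calP_n$. Routing through the sign-invariant co-membership matrix and anchoring the sub-family at a balanced base vector handles both difficulties simultaneously, but establishing the per-pair testing lower bound itself — an analogue of Lemmas~\ref{lemma:testing_problem_z_star}--\ref{lemma:opt_test_error_z_star} for co-membership rather than a single label — is the main additional work.
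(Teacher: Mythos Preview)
Your treatment of the vanishing regime \eqref{eq:lower_bound_z_star_vanishing} is correct and essentially identical to the paper's proof: both chain Lemmas~\ref{lemma:testing_problem_z_star} and~\ref{lemma:opt_test_error_z_star}, maximize over $S$, and then choose $\delta_n$ so that $\log(1/\delta_n)=o(R_n)$ in order to absorb the prefactor into the $(1+o(1))$ in the exponent.

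For the bounded regime \eqref{eq:lower_bound_z_star_constant}, however, you are working much harder than necessary. The paper does not introduce a co-membership matrix or per-pair testing at all; it simply observes that the very same chain of lemmas already yields, for \emph{every} $o(1)$ sequence $\delta_n$,
\[
\inf_{\bhz^\star}\sup_{\bz^\star\in\calP_n}\bbE\,\calL(\bhz^\star,\bz^\star)\ \gtrsim\ \delta_n,
\]
with implied constant independent of $\delta_n$ (because $\min_S\globinfo_S=O(1)$ makes the exponential factor in Lemma~\ref{lemma:opt_test_error_z_star} bounded below by a fixed constant). Now suppose for contradiction that $M_n:=\inf_{\bhz^\star}\sup_{\bz^\star}\bbE\,\calL$ is itself $o(1)$; taking $\delta_n=\sqrt{M_n}$ gives $M_n\gtrsim\sqrt{M_n}$, i.e.\ $\sqrt{M_n}\gtrsim 1$, a contradiction. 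That two-line argument is the paper's entire proof of \eqref{eq:lower_bound_z_star_constant}. Your preliminary observation that the minimizing $S$ must be $[L]$ when $R_n=O(1)$ is correct but not needed, and what you flag as the ``main obstacle'' --- establishing a per-pair co-membership testing lower bound analogous to Lemmas~\ref{lemma:testing_problem_z_star}--\ref{lemma:opt_test_error_z_star} --- is entirely bypassed.
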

\begin{proof}
	If $\min_{S\subseteq[L]} \globinfo_S\to\infty$, then we can find a sequence $\delta_n = o(1)$ such that 
	$
	\log (\delta_n^{-1}) \ll
	\min_{S\subseteq[L]} \globinfo_S.
	$	
	Then, invoking Lemmas \ref{lemma:testing_problem_z_star} and \ref{lemma:opt_test_error_z_star}, there exists $\delta_n' = o(1)$ such that
	\begin{align*}
		& \inf_{\bhz^\star} \sup_{\bz^\star\in \calP_n} \bbE \calL(\bhz^\star, \bz^\star) 
		 \gtrsim \delta_n \exp\big\{-(1+\delta_n')\min_{S\subseteq[L]} \globinfo_S\big\}
		% & \delta_n  \exp\bigg\{-(1+{\delta_n'}) \bigg(\min_{\substack{S\subseteq[L] \\ |S^c| \textnormal{ even}}}|S^c|J_\rho + \psi_S^\star(0)\bigg) \land \bigg(\min_{\substack{S\subseteq[L]\\ |S^c|\textnormal{ odd}}}(|S^c| + 1)J_\rho + \psi_S^\star(-2J_\rho)\bigg)\bigg\}\\
		= \exp\big\{-(1+{\delta_n' + \delta_n''}) \min_{S\subseteq[L]} \globinfo_S \big\},
		% \bigg(\min_{\substack{S\subseteq[L] \\ |S^c| \textnormal{ even}}}|S^c|J_\rho + \psi_S^\star(0)\bigg) \land \bigg(\min_{\substack{S\subseteq[L]\\ |S^c|\textnormal{ odd}}}(|S^c| + 1)J_\rho + \psi_S^\star(-2J_\rho)\bigg)\bigg\},
	\end{align*}
	where
	$
		\delta_n'' = {\log({\delta_n}^{-1})}/{\min_{S\subseteq[L]} \globinfo_S} = o(1)
		% {\bigg[\bigg(\min_{\substack{S\subseteq[L] \\ |S^c| \textnormal{ even}}}|S^c|J_\rho + \psi_S^\star(0)\bigg) \land \bigg(\min_{\substack{S\subseteq[L]\\ |S^c|\textnormal{ odd}}}(|S^c| + 1)J_\rho + \psi_S^\star(-2J_\rho)\bigg)\bigg]} = o(1)
	$
	by construction. Choosing $\underline{\delta}_n = \delta_n' + \delta_n''$ gives \eqref{eq:lower_bound_z_star_vanishing}. 

	On the other hand, if $\min_{S\subseteq[L]} \globinfo_S=\calO(1)$, then repeating the above arguments gives $\inf_{\bhz^\star} \sup_{\bz^\star\in \calP_n} \bbE \calL(\bhz^\star, \bz^\star) \gtrsim \delta_n$ for any $o(1)$ sequence $\delta_n$. 
	% \begin{align*}
	% 	& \inf_{\bhz^\star} \sup_{\bz^\star\in \calP_n} \bbE \calL(\bhz^\star, \bz^\star) \\
	% 	& \gtrsim \delta_n  \exp\bigg\{-(1+{\delta_n'}) \bigg(\min_{\substack{S\subseteq[L] \\ |S^c| \textnormal{ even}}}|S^c|J_\rho + \psi_S^\star(0)\bigg) \land \bigg(\min_{\substack{S\subseteq[L]\\ S\textnormal{ odd}}}(|S^c| + 1)J_\rho + \psi_S^\star(-2J_\rho)\bigg)\bigg\}\\
	% 	& \gtrsim \delta_n,
	% \end{align*}
	% meaning that $\inf_{\bhz^\star} \sup_{\bz^\star\in \calP_n} \bbE \calL(\bhz^\star, \bz^\star) $ is larger than any $o(1)$ sequence. 
	If $\inf_{\bhz^\star} \sup_{\bz^\star\in \calP_n} \bbE \calL(\bhz^\star, \bz^\star)$ is itself $o(1)$, then we would have
	$$
		\inf_{\bhz^\star} \sup_{\bz^\star\in \calP_n} \bbE \calL(\bhz^\star, \bz^\star)  \gtrsim \sqrt{\inf_{\bhz^\star} \sup_{\bz^\star\in \calP_n} \bbE \calL(\bhz^\star, \bz^\star)},
	$$
	a contradiction. Hence \eqref{eq:lower_bound_z_star_constant} follows.
\end{proof}

As an immediate corollary, we have the following result for the homogeneous case $(\rho=0)$.
\begin{corollary}[Minimax lower bound for \globest~under homogeneity]
	\label{cor:lower_bound_z_star_hom}
	Under the setup of Theorem \ref{thm:lower_bound_z_star}, assume in addition that $\rho=0$. If 
	$
	% \begin{equation}
		% \label{eq:exponent_lower_bound_z_star_hom}
		n\sum_{\ell\in[L]} \III{\ell}_{1/2} \to \infty,
		 % \bigg(\min_{\substack{S\subseteq[L] \\ |S^c| \textnormal{ even}}}|S^c|J_\rho + \psi_S^\star(0)\bigg) \land \bigg(\min_{\substack{S\subseteq[L]\\ |S^c|\textnormal{ odd}}}(|S^c| + 1)J_\rho + \psi_S^\star(-2J_\rho)\bigg) \to \infty,
	% \end{equation}	
	$
	then there exists a sequence $\underline{\delta}_n = o(1)$ such that
	\begin{align}
		\label{eq:lower_bound_z_star_vanishing_hom}
		& \inf_{\bhz^\star} \sup_{\bz^\star\in \calP_n} \bbE \calL(\bhz^\star, \bz^\star) \geq \exp\big\{-(1+\underline{\delta}_n) \frac{n}{2}\sum_{\ell\in[L]} \III{\ell}_{1/2}\big\}.
		% & \qquad \geq \exp\bigg\{-(1+\underline{\delta}_n) \bigg(\min_{\substack{S\subseteq[L] \\ |S^c| \textnormal{ even}}}|S^c|J_\rho + \psi_S^\star(0)\bigg) \land \bigg(\min_{\substack{S\subseteq[L]\\ |S^c|\textnormal{ odd}}}(|S^c| + 1)J_\rho + \psi_S^\star(-2J_\rho)\bigg)\bigg\};
		% & \qquad \lor \exp\bigg\{-(1+\underline{\delta}_n) \max_{\substack{S\subseteq[L]\\ S\textnormal{ odd}}}\bigg((|S^c| + 1)J_\rho + \psi_S^\star(-2J_\rho)\bigg)\bigg\}.
	\end{align}
	On the other hand, if $n\sum_{\ell\in[L]} \III{\ell}_{1/2}  = \calO(1)$, then there exists some $c' >0$ such that
	\begin{equation}
		\label{eq:lower_bound_z_star_constant_hom}
		\inf_{\bhz^\star} \sup_{\bz^\star\in \calP_n} \bbE \calL(\bhz^\star, \bz^\star) \geq c'.
	\end{equation}
\end{corollary}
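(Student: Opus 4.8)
The plan is to obtain Corollary~\ref{cor:lower_bound_z_star_hom} directly from Theorem~\ref{thm:lower_bound_z_star}, by showing that in the degenerate regime $\rho=0$ the minimax exponent $\min_{S\subseteq[L]}\globinfo_S$ collapses onto the single term $\globinfo_{[L]}=\tfrac{n}{2}\sum_{\ell\in[L]}\III{\ell}_{1/2}$. The first thing I would note is that the constant sequence $\rho\equiv 0$ does satisfy $\rho=o(1)$, so all hypotheses of Theorem~\ref{thm:lower_bound_z_star} remain in force under the stated ``setup $+$ $\rho=0$'', and what is left is a purely algebraic simplification of the quantity $\globinfo_S$ from \eqref{eq:globinfo}.

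To carry out that simplification I would argue as follows. At $\rho=0$ one has $J_\rho=-\log 2\sqrt{\rho(1-\rho)}=+\infty$. For $S=[L]$ we have $|S^c|=0$, which is even, and — using the convention $0\cdot\infty=0$, i.e.\ that there is no $J_\rho$-penalty when $S^c=\emptyset$ — \eqref{eq:globinfo} gives $\globinfo_{[L]}=\psi_{[L]}^\star(0)$; combined with the identity $\psi_S^\star(0)=-\psi_S(1/2)=\tfrac{n}{2}\sum_{\ell\in S}\III{\ell}_{1/2}$ recorded just after \eqref{eq:cum_gen_fun}, this is exactly $\tfrac{n}{2}\sum_{\ell\in[L]}\III{\ell}_{1/2}$. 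For any proper subset $S\subsetneq[L]$ we have $|S^c|\geq 1$: if $|S^c|$ is even then $\globinfo_S=|S^c|J_\rho+\psi_S^\star(0)=+\infty$ because $\psi_S^\star(0)\geq 0$; if $|S^c|$ is odd then $\globinfo_S=(|S^c|+1)J_\rho+\psi_S^\star(-2J_\rho)$, and since a direct evaluation gives $\III{\ell}_0=0$ for every $\ell$, hence $\psi_S(0)=0$ and therefore $\psi_S^\star(a)\geq a\cdot 0-\psi_S(0)=0$ for every $a$ (in particular at $a=-2J_\rho$), the leading $(|S^c|+1)J_\rho$ term forces $\globinfo_S=+\infty$. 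Consequently $\min_{S\subseteq[L]}\globinfo_S=\globinfo_{[L]}=\tfrac{n}{2}\sum_{\ell\in[L]}\III{\ell}_{1/2}$.

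With this identity in hand the corollary is immediate. If $n\sum_{\ell\in[L]}\III{\ell}_{1/2}\to\infty$ then $\min_{S\subseteq[L]}\globinfo_S\to\infty$, so \eqref{eq:lower_bound_z_star_vanishing} of Theorem~\ref{thm:lower_bound_z_star} applies and, after substituting the value just computed, yields \eqref{eq:lower_bound_z_star_vanishing_hom} with the same $o(1)$ sequence $\underline{\delta}_n$. If instead $n\sum_{\ell\in[L]}\III{\ell}_{1/2}=\calO(1)$ then $\min_{S\subseteq[L]}\globinfo_S=\calO(1)$, so \eqref{eq:lower_bound_z_star_constant} applies and yields \eqref{eq:lower_bound_z_star_constant_hom}. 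The only genuinely delicate point — and the closest thing to an obstacle in an otherwise routine deduction — is the evaluation of $\globinfo_S$ at $\rho=0$: one has to make sense of the formal product $0\cdot\infty$ appearing for $S=[L]$ (resolved by the convention that the $J_\rho$-penalty is dropped when $S^c=\emptyset$), and one has to rule out a compensating $-\infty$ coming from $\psi_S^\star(-2J_\rho)$ in the odd-$|S^c|$ branch, which is exactly what the elementary fact $\III{\ell}_0=0$ guarantees.
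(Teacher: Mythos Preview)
Your proposal is correct and follows exactly the same route as the paper: the paper's proof is the single sentence ``if $\rho=0$, then $J_\rho=\infty$ and thus the set $S$ that minimizes $\globinfo_S$ is $S=[L]$,'' and you have simply unpacked this with care, including the verification that $\psi_S^\star(-2J_\rho)\geq 0$ and the handling of the $0\cdot\infty$ term when $S=[L]$.
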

\begin{proof}
	This follows from the fact that if $\rho = 0$, then $J_\rho=\infty$ and thus the set $S$ that minimizes $\globinfo_S$ is $S=[L]$. 
\end{proof}

% \begin{remark}
	% In the presence of inhomogeneity, determining the minimizer $S$ of $\globinfo_S$ is a difficult task in general. 
	% However, in certain scenarios, the best $S$ admits explicit formulas, and we refer
	% % explicitly determined in general. However, in certain scenarios, 
	% % the optimal $S$ in the lower bound \eqref{eq:lower_bound_z_star_vanishing} can be identified explicitly. For example, if $\rho = 0$, then $J_\rho = \infty$ and thus the optimal $S$ is simply $[L]$, under which case the lower bound \eqref{eq:lower_bound_z_star_vanishing} reduces to
	% % $
	% 	% e^{-(1+\underline{\delta}_n) \sum_{\ell\in [L]} n\III{\ell}_{1/2}/2}.
	% % $
	% the readers to Section \ref{subsec:minimax_optimality} for a detailed exposition. 
% \end{remark}

\subsection{Minimax Lower Bound for \IndEst}
We now derive minimax lower bound for estimating individual $\zzz{\ell}$'s. Let us again consider the idealized setup \eqref{eq:idealized_setup}.
 % under which we are faced with an analytically intractable hypothesis testing problem, similar to the one in \eqref{eq:hypothesis_z_star_intractable}. So we need more work.

% \smallskip
\subsubsection{Two testing problems from two sources of errors} 
Suppose that we additionally know the value of $\bz^\star_0$, say $\bz^\star_0 = +1$. 
Since $\zzz{\ell}_0$ is independent of $\zzz{-\ell}_0:= \{\zzz{r}_0: r\neq \ell\}$, the only information that's useful in determining $\zzz{\ell}_0$ comes from the following ``label sampling'' model:
\begin{equation}
	\label{eq:rho_flipping_model}
	\zzz{\ell}_i \sim  \bz^\star_i\times \big[2\Bern(1-\rho)-1\big], \qquad \AAA{\ell}\sim \textnormal{SBM}(\zzz{\ell}, p_\ell, q_\ell).
\end{equation}
% , where the data is generated via $\zzz{\ell}_i \sim \textnormal{Rad}(1-\rho) \cdot \bz^\star_i$ and $\AAA{\ell}\sim \textnormal{SBM}(\zzz{\ell}, p_\ell, q_\ell)$. 
Now, for any estimator $\hzzz{\ell}_0$ of $\zzz{\ell}_0$, the error probability reads
$$
	\bbP(\hzzz{\ell}_0 \neq \zzz{\ell}_0) = (1-\rho) \cdot \bbP(\hzzz{\ell}_0 = -1 ~|~ \zzz{\ell}_0 = +1) + \rho \cdot \bbP(\hzzz{\ell}_0  = +1 ~|~ \zzz{\ell}_0 = -1),
$$
which can be regarded as the $(1-\rho)\times \textnormal{type-I error} + \rho \times \textnormal{type-II error}$ of testing $H_0: \zzz{\ell}_0 = +1$ v.s. $H_1: \zzz{\ell}_0 = -1$ in a vanilla two-block SBM, which is almost equivalent to \eqref{eq:hypothesis_z_star_vanilla_sbm}. This intuition is formalized by the following lemma.
% Let us take a closer look at the testing problem \eqref{eq:hypothesis_z_star_vanilla_sbm}, which characterizes the difficulty of estimating $\zzz{\ell}_0$ purely from $\AAA{\ell}$. A test function for \eqref{eq:hypothesis_z_star_vanilla_sbm} invokes a type-I error if it declares $\zzz{\ell}_0 = -1$ (reject $H_0$) when the truth is $\zzz{\ell}_0 = +1$. On the other hand, it invokes a type-II error if it declares $\zzz{\ell}_0 = +1$ (not reject $H_0$) when the truth is $\zzz{\ell}_0 = -1$. Since we know $\bz^\star_0 = +1$, we are more certain that $\zzz{\ell}_0 = +1$ (since $\rho$ is small), and thus we would need stronger evidence to declare a rejection than the unknown $\bz^\star_0$ case. Hence, knowing $\bz^\star_0$ brings the prior information that the two types of error are \emph{asymmetric} in the sense that making a type-I error is more ``costly'' than making a type-II error --- we make a false rejection even if we known we should be more conservative on declaring a rejection, so ``something disastrous'' must have happened. This intuition is formalized in the following lemma:
%, because the rejection happens only when we have stronger evidence against the null.
% WE KNOW WE SHOULD BE MORE CONSERVATIVE OF DECLARING A REJECTION. NOW THAT WE ARE ALREADY CONSERVATIVE, BUT WE STILL MAKE A FALSE REJECTION, WE KNOW SOMETHING DISASTROUS MUST HAVE HAPPENED.
\begin{lemma}[The fundamental testing problem for \indest, Part \RN{1}]
  \label{lemma:testing_problem_z_l_part_I}
  {Assume there exist constants $c_1, c_2\in(0, 1)$ such that $\rho \leq 1/2-c_1$ and $\log L \ll n^{c_2}$.} Then there exists sequence $\delta_n = o(1)$ satisfying $(1+\delta_n)n/2 \in \bbN$, such that for any $\ell\in[L]$, 
  \begin{equation}
  \label{eq:lower_bound_by_testing_problem_z_l_part_I}
  \inf_{\hzzz{\ell}} \sup_{\bz^\star\in \calP_n} \bbE \calL(\hzzz{\ell}, \zzz{\ell}) \gtrsim \inf_{\phi}\bigg((1-\rho)\cdot\bbE_{H_0}[\phi] + \rho\cdot\bbE_{H_1}[1-\phi] \bigg),
\end{equation}
where $\phi$ is a testing function of the following problem:
\begin{align}
  \label{eq:fund_testing_problem_z_l_part_I}
  & H_0:  \bigotimes_{i = 1}^{(1+\delta_n)n/2} \textnormal{Bern}(p_\ell)\otimes \textnormal{Bern}(q_\ell)
  \ \  \
  \textnormal{v.s. } \ \ \ 
  H_1:  \bigotimes_{i = 1}^{(1+\delta_n)n/2} \textnormal{Bern}(q_\ell)\otimes \textnormal{Bern}(p_\ell).
\end{align}
\end{lemma}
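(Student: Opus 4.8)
The plan is to follow the same Assouad-type reduction used for the global estimation lower bound in Lemma~\ref{lemma:testing_problem_z_star}, but now applied coordinate-wise to $\zzz{\ell}$ rather than to $\bz^\star$. First I would fix a coordinate $i_0$ (say $i_0=0$ as in the idealized setup \eqref{eq:idealized_setup}) and condition on the event that the true global label $\bz^\star_0$ equals $+1$; since the prior \eqref{eq:label_flip} generates $\zzz{\ell}_0$ independently across $\ell$, conditioning on $\bz^\star_0=+1$ makes $\zzz{\ell}_0$ a $\Bern(1-\rho)$-type random sign that is \emph{independent} of all $\zzz{r}_0$ for $r\ne\ell$ and of the whole global structure on the remaining $2m$ nodes. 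This is exactly the ``label sampling'' submodel \eqref{eq:rho_flipping_model}. Hence the only data relevant to recovering $\zzz{\ell}_0$ is $\AAA{\ell}$ restricted to the edges incident to node $0$, which (given $\zzz{\ell}$) is a vector of independent Bernoulli's: roughly $m$ of them are $\Bern(p_\ell)$ and $m$ are $\Bern(q_\ell)$ when $\zzz{\ell}_0=+1$, with $p_\ell$ and $q_\ell$ swapped when $\zzz{\ell}_0=-1$. That is precisely the pair of product measures in \eqref{eq:fund_testing_problem_z_l_part_I}, up to replacing $m$ by $(1+\delta_n)n/2$ to absorb the cluster-size fluctuation built into $\calP_n$ via $\beta=1+o(1)$.

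The key steps, in order, would be: (i) lower bound the worst-case risk $\sup_{\bz^\star\in\calP_n}\bbE\calL(\hzzz{\ell},\zzz{\ell})$ by an average over a suitable prior on $\bz^\star$ (uniform over balanced configurations), and then by the single-coordinate risk $\bbP(\hzzz{\ell}_0\ne\zzz{\ell}_0)$ for a typical coordinate, using that the loss \eqref{eq:loss_func} is an average of per-coordinate $0/1$ errors (one must also argue the sign ambiguity in $\calL$ does not help, which is standard since $\bz^\star_0$ is pinned down in the reduction); (ii) write $\bbP(\hzzz{\ell}_0\ne\zzz{\ell}_0)=(1-\rho)\bbP(\hzzz{\ell}_0=-1\mid\zzz{\ell}_0=+1)+\rho\,\bbP(\hzzz{\ell}_0=+1\mid\zzz{\ell}_0=-1)$, which is exactly the weighted type-I/type-II error of a test $\phi$ for \eqref{eq:fund_testing_problem_z_l_part_I}, and take the infimum over $\phi$ to get \eqref{eq:lower_bound_by_testing_problem_z_l_part_I}; (iii) handle the bookkeeping: choosing $\delta_n=o(1)$ with $(1+\delta_n)n/2\in\bbN$, checking that the conditioning event $\{\bz^\star_0=+1\}$ has probability $\tfrac12$ so only a constant factor is lost (absorbed in the $\gtrsim$), and verifying that the ``extra'' information from the other layers $r\ne\ell$ is genuinely useless for $\zzz{\ell}_0$ because of the conditional independence just noted.

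The main obstacle I expect is step (iii) rather than (i)--(ii): making rigorous the claim that conditioning on $\bz^\star_0$ leaves \emph{no} cross-layer information about $\zzz{\ell}_0$. A priori one worries that the other layers $\{\AAA{r}\}_{r\ne\ell}$ reveal information about $\bz^\star$ on the remaining nodes, which then correlates (through the prior) with $\zzz{\ell}$ on those nodes, which then—via $\AAA{\ell}$—correlates with $\zzz{\ell}_0$. The point is that once $\bz^\star_0$ is fixed, $\zzz{\ell}_0\sim 2\Bern(1-\rho)-1$ is drawn independently of everything else in the hierarchy, so no amount of information about $\bz^\star$ or about $\{\zzz{r}_0\}_{r\ne\ell}$ changes its conditional law; the reduction is therefore to a genuinely two-point problem. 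One also has to be a little careful that an optimal estimator is allowed to use $\zzz{-\ell}_0$-type side information, but since that side information is conditionally independent of $(\zzz{\ell}_0,\AAA{\ell})$ given $\bz^\star_0$, it can be integrated out without loss. The remaining pieces—relating $\sup_{\bz^\star}$ to an average and then to a single coordinate, and the $\delta_n$ rounding—are the same mechanics already carried out in Appendix~\ref{subappend:prf_testing_problem_z_star} for Lemma~\ref{lemma:testing_problem_z_star}, so I would cite/adapt that argument rather than redo it.
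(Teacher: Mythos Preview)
Your approach would work, but it is more circuitous than the paper's. The paper does \emph{not} run an Assouad reduction on $\bz^\star$ for this lemma; instead it restricts to a \emph{singleton} sub-parameter space $\calP_n^1=\{\btz^\star\}$ with one fixed balanced $\btz^\star$. Once $\bz^\star$ is completely known to the estimator (not merely $\bz^\star_0$), the prior \eqref{eq:label_flip} makes $\zzz{\ell}$ independent of $\{\zzz{r}\}_{r\ne\ell}$, and the paper asserts in one line that the optimal $\hzzz{\ell}_i$ may be taken to depend on $\AAA{\ell}$ alone. This dissolves your ``main obstacle'' (iii) entirely: there is no cross-layer leakage to analyze because $\bz^\star$ is not random in the reduction. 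From there the paper conditions on the realization $\zzz{\ell}_{-i}=\xi\odot\bz^\star_{-i}$, writes the per-coordinate error as the weighted type-I/type-II error of the likelihood-ratio test, applies a weighted Neyman--Pearson lemma plus a data-processing step to replace the random cluster sizes $m^{(\ell)}_\pm$ by their maximum $m$, and uses a Chernoff bound on $m^{(\ell)}_\pm$ to get $m=(1+\delta_n)n/2$. So the mechanics of Appendix~\ref{subappend:prf_testing_problem_z_star} that you propose to reuse are not what the paper invokes here.

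What you buy by imitating Lemma~\ref{lemma:testing_problem_z_star} is conceptual uniformity with the global case; what the paper buys by fixing $\bz^\star$ outright is a shorter argument in which your independence concern never arises. One further point: your stated device for neutralizing the sign ambiguity in $\calL$ (``$\bz^\star_0$ is pinned down'') is not the paper's. The paper instead uses that the \emph{known} $\btz^\star$ is itself a legitimate estimator of $\zzz{\ell}$; on the high-probability event $\{n^{-1}d_\ham(\btz^\star,\zzz{\ell})\le 1/2\ \forall\ell\}$ the optimal $\hzzz{\ell}$ must then also satisfy $n^{-1}d_\ham(\hzzz{\ell},\zzz{\ell})\le 1/2$, so $\calL$ reduces to the unsigned Hamming average and the per-coordinate decomposition goes through.
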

\begin{proof}
	See Section \ref{subappend:prf_testing_problem_z_l_part_I}.
\end{proof}
% \remark Note that instead of searching for a test that minimizes type-I plus type-II error, the lower bound in \eqref{eq:lower_bound_by_testing_problem_z_l_part_I} concerns the test that minimizes $(1-\rho)\times \textnormal{type-I error} + \rho\times \textnormal{type-II error}$, justifying our previous intuition that the type-I error is more ``costly''. 

Recall that Lemma \ref{lemma:testing_problem_z_l_part_I} reflects the situation when $\bz^\star_0$ is known to us. In practice, we need to estimate $\bz^\star_0$ from the data, giving rise to a testing problem similar to the one presented in Lemma \ref{lemma:testing_problem_z_star}.
\begin{lemma}[The fundamental testing problem for \indest, Part \RN{2}]
  \label{lemma:testing_problem_z_l_part_II}
  {Assume there exist constants $c_1, c_2 \in (0, 1)$ such that $\rho \leq 1/2-c_1$ and $\log L \ll n^{c_2}$.} Then for any sequence $\delta_n = o(1)$, {there exists another sequence $\delta_n' = o(1)$ satisfying $(1+\delta_n')n/2 \in \bbN$}, such that for any $\ell\in[L]$ and any $S\subseteq[L]\setminus\{\ell\}$, we have
  \begin{equation}
  \label{eq:lower_bound_by_testing_problem_z_l_part_II}
  \inf_{\hzzz{\ell}} \sup_{\bz^\star\in \calP_n} \bbE \calL(\hzzz{\ell}, \zzz{\ell}) \gtrsim \delta_n \inf_{\phi}\bigg(\bbE_{H_0}[\phi] + \bbE_{H_1}[1-\phi] \bigg),
\end{equation}
where $\phi$ is a testing function of the following problem:
\begin{align}
  & H_0: \bigg(\bigotimes_{r\in (S\cup\{\ell\})^c} [2 \textnormal{Bern}(1-\rho) - 1]\bigg) \otimes \bigg(\bigotimes_{r\in S\cup\{\ell\}} \bigotimes_{i = 1}^{(1+\delta'_n)n/2} \textnormal{Bern}(p_\ell)\otimes \textnormal{Bern}(q_\ell)\bigg)\nonumber\\
  \label{eq:fund_testing_problem_z_l_part_II}
  & \qquad \textnormal{v.s. } H_1: \bigg(\bigotimes_{r\in (S\cup\{\ell\})^c} [2 \textnormal{Bern}(\rho) - 1]\bigg) \otimes \bigg(\bigotimes_{r\in S\cup\{\ell\}} \bigotimes_{i = 1}^{(1+\delta'_n)n/2} \textnormal{Bern}(q_\ell)\otimes \textnormal{Bern}(p_\ell)\bigg).
\end{align}
\end{lemma}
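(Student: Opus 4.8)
The plan is to mimic the proof of Lemma~\ref{lemma:testing_problem_z_star} with one structural twist: because $\zzz{\ell}$ is the quantity being estimated, we are not allowed to hand $\zzz{\ell}_0$ to the statistician as side information, so the $\ell$-th layer can never be placed in the ``revealed-label'' block and must always sit inside $S\cup\{\ell\}$ --- this is precisely why \eqref{eq:fund_testing_problem_z_l_part_II} is indexed by $S\subseteq[L]\setminus\{\ell\}$ rather than by arbitrary subsets of $[L]$. The argument has two reductions.

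\emph{Step 1: per-coordinate (Assouad-type) reduction.} Exactly as for Lemma~\ref{lemma:testing_problem_z_star}, restrict $\bz^\star$ to a subfamily of perfectly balanced labelings in which all but a $\delta_n$-fraction of the coordinates are frozen to the pattern \eqref{eq:idealized_setup}, and place the uniform prior on it. Since the number of freed nodes is $o(n)$, the edges among freed nodes carry a negligible $O(\delta_n)$ fraction of the signal, so for each freed node the loss is, up to $1-o(1)$, the Bayes error of recovering its individual label from the edges to the frozen nodes; averaging over the freed nodes and over the prior then gives
\[
\inf_{\hzzz{\ell}}\ \sup_{\bz^\star\in\calP_n}\ \bbE\,\calL(\hzzz{\ell},\zzz{\ell})\ \gtrsim\ \delta_n\cdot\inf_{\hzzz{\ell}_0}\bbP\!\big(\hzzz{\ell}_0\neq\zzz{\ell}_0\big),
\]
where on the right the labels of nodes $1,\dots,2m$ are known, $\bz^\star_0$ is uniform on $\{\pm1\}$, and restricting to the typical event on which cluster sizes and cross-layer flip counts concentrate produces the integer-valued $(1+\delta_n')n/2$. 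The global sign ambiguity in $\calL$ is handled as in Lemma~\ref{lemma:testing_problem_z_star}.

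\emph{Step 2: from single-coordinate recovery to the test \eqref{eq:fund_testing_problem_z_l_part_II}.} We now feed side information to the statistician; each disclosure can only lower the Bayes error, hence keeps the lower bound. (i) For every layer $r$, disclose the individual labels $\{\zzz{r}_i\}_{i\ge1}$ of the (globally) frozen nodes; then, conditionally on $\zzz{r}_0$, the edge vector from node $0$ in layer $r$ is a product of $\approx n/2$ $\Bern(p_r)$'s and $\approx n/2$ $\Bern(q_r)$'s, oriented by $\zzz{r}_0$, the $O(\rho n)$ count discrepancy being absorbed into $\delta_n'$. (ii) Disclose the flip $\xi_\ell\in\{\pm1\}$ tying $\zzz{\ell}_0=\bz^\star_0\xi_\ell$; recovering $\zzz{\ell}_0$ is then equivalent to recovering $\bz^\star_0$, and (after absorbing the known sign $\xi_\ell$ into a relabeling of the two halves) the cleaned layer-$\ell$ edge vector is a $\bz^\star_0$-oriented SBM test --- this is the step that locks $\ell$ into the adjacency block. (iii) For the prescribed $S\subseteq[L]\setminus\{\ell\}$, disclose $\zzz{r}_0$ itself for every $r\in(S\cup\{\ell\})^c$ (strictly more than the layer-$r$ edge data). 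What remains is to decide $\bz^\star_0\in\{\pm1\}$ from $\zzz{r}_0\sim\bz^\star_0\cdot[2\Bern(1-\rho)-1]$ for $r\in(S\cup\{\ell\})^c$ together with a cleaned SBM edge vector for each $r\in S\cup\{\ell\}$; for $r\in S$ this edge vector still carries the residual flip $\xi_r$, but a standard mixture-contamination estimate (the $\mathrm{TV}$ of the contaminated pair is $(1-2\rho)$ times that of the clean pair) shows that deleting the flip only makes the test easier, so the resulting problem is exactly \eqref{eq:fund_testing_problem_z_l_part_II}, whose Bayes error equals $\tfrac12\inf_\phi\big(\bbE_{H_0}[\phi]+\bbE_{H_1}[1-\phi]\big)$.

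Chaining Steps 1 and 2 gives \eqref{eq:lower_bound_by_testing_problem_z_l_part_II}, valid for every $S\subseteq[L]\setminus\{\ell\}$; the bound will be optimized over $S$ in Section~\ref{sec:minimax_rate}, with the heuristic of Remark~\ref{rmk:opt_S_lower_bound_z_star} (put the high-SNR layers in $(S\cup\{\ell\})^c$, the low-SNR ones in $S\cup\{\ell\}$), and combined with Lemma~\ref{lemma:testing_problem_z_l_part_I}, which isolates the complementary source of error (uncertainty in $\zzz{\ell}_0$ when $\bz^\star_0$ is known), to produce the two-term individualized rate \eqref{eq:resbm_ind_rate}. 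I expect the principal difficulty here to be bookkeeping rather than new ideas: making $(1+\delta_n')n/2$ genuinely integer-valued, controlling the $O(\rho n)$ count imbalances and the contamination deletion uniformly over $S$ so that the accumulated $1-o(1)$ losses remain $1-o(1)$, and verifying the mixture-domination claim at the level of Hellinger/Chernoff quantities for the product over layers --- all of which can be done with the tools already used for Lemma~\ref{lemma:testing_problem_z_star}.
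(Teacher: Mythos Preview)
Your proposal is correct and tracks the paper's route closely: the Assouad-type reduction on a subfamily with most global labels frozen, followed by a chain of disclosures (data-processing and conditioning) that reduces to the product test \eqref{eq:fund_testing_problem_z_l_part_II}. The paper organizes the conditioning in a slightly different order --- it first conditions on $\xi_r$ for $r\in S$, then on the layer-$\ell$ flips $\{\zeta_j\}_{j\neq 0}$ of the frozen nodes, then splits on $\zzz{\ell}_0=\pm\bz^\star_0$ (your step (ii)), and only afterwards invokes data-processing to pass to the joint law --- but the net effect of the disclosures matches your steps (i)--(iii).

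The one place your write-up can be streamlined is the ``mixture-contamination'' handling of layers $r\in S$. The single-layer identity $\mathrm{TV}(\text{contaminated})=(1-2\rho)\,\mathrm{TV}(\text{clean})$ is correct, but it does not tensorize across $\bigotimes_{r\in S}$, so if you proceed that way you would indeed need the product-level Hellinger/Chernoff bookkeeping you flag. The paper sidesteps this entirely --- and you can too --- by simply including $\xi_r$ for $r\in S$ among the disclosures (this is the paper's move $\inf\,\sum_{\xi}\bbP(\xi)\,\cdots\ \ge\ \sum_{\xi}\bbP(\xi)\,\inf\,\cdots$). Since $\xi_r$ has the \emph{same} law $2\Bern(1-\rho)-1$ under both $H_0$ and $H_1$, it is ancillary: revealing it allows the statistician to undo the relabeling but carries no information about $\bz^\star_0$, so the conditional test given $\{\xi_r\}_{r\in S}$ is exactly the clean per-layer SBM product, with error independent of $\xi$. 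This replaces your contamination step by a one-line data-processing argument and makes the anticipated Hellinger verification unnecessary.
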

\begin{proof}
	See Section \ref{subappend:prf_testing_problem_z_l_part_II}.
\end{proof}

% \remark 
The testing problem in \eqref{eq:fund_testing_problem_z_l_part_II} differs from the one in \eqref{eq:fund_testing_problem_z_star} in that the layer $\ell$ is never involved in the term regarding $\textnormal{Bern}(1-\rho)$ and $\textnormal{Bern}(\rho)$. This makes sense, because according to our intuition in Section \ref{subsec:lower_bound_z_star}, this term reflects the case when $\zzz{\ell}$ is (nearly) known to us, which can never happen since $\zzz{\ell}$ itself is the estimating target.

To characterize the optimal testing error for the two testing problems given in Lemmas \ref{lemma:testing_problem_z_l_part_I} and \ref{lemma:testing_problem_z_l_part_II}, apart from the SNR for \globest~$\globinfo_S$ defined in \eqref{eq:globinfo}, we additionally define the corresponding \emph{SNR for \indest}:
% The corresponding SNR for \emph{\indest}~is defined by
\begin{equation}
	\label{eq:indinfo}
	\indinfo_S:= 
	\begin{cases}
		|S| J_\rho + \psi^\star_S(0) & \textnormal{ if } |S| \textnormal{ is even}, \\
		(|S| + 1) J_\rho + \psi^\star_S(-2J_\rho) & \textnormal{ if } |S| \textnormal{ is odd}. 
	\end{cases}
\end{equation}

A careful analysis on the error incurred by the likelihood ratio test gives the following result.
\begin{lemma}[Optimal testing error for \indest]
\label{lemma:opt_test_error_z_l}
Assume $\rho = o(1)$ and that {there exist constants $C_1, C_2 > 1, c\in (0, 1)$ such that $C_1 q_\ell \leq p_\ell \leq (C_2 q_\ell)\land (1-c), \forall\ell\in[L]$.} 
Then there exists a sequence $\delta''_n=o(1)$ such that for any {$\ell\in[L], S \subseteq[L]\setminus \{\ell\}$}, the optimal $(1-\rho)\times \textnormal{type-I error} + \rho\times \textnormal{type-II error}$ of the testing problem in \eqref{eq:fund_testing_problem_z_l_part_I} is lower bounded by
\begin{equation}
  \label{eq:opt_test_error_z_l_part_I}
  C\cdot \exp\big\{ - (1+\delta_n'')\indinfo_{\{\ell\}}\big\} ,
  % \exp\bigg\{-(1+\delta_n'')\bigg(2J_\rho + \psi_{\{\ell\}}^\star(-2J_\rho)\bigg)\bigg\} ,
\end{equation}
and the optimal type-I plus type-II error of the testing problem in \eqref{eq:fund_testing_problem_z_l_part_II} is lower bounded by
\begin{equation}
  \label{eq:opt_test_error_z_l_part_II}
  C\cdot \exp\big\{-(1+\delta_n'')\globinfo_{S\cup\{\ell\}}\big\} ,
\end{equation}
where $C>0$ is an absolute constant.
\end{lemma}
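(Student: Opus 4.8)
The plan is to prove the two displayed lower bounds separately. The bound \eqref{eq:opt_test_error_z_l_part_II} reduces directly to Lemma~\ref{lemma:opt_test_error_z_star}: the testing problem \eqref{eq:fund_testing_problem_z_l_part_II} is an instance of the fundamental testing problem \eqref{eq:fund_testing_problem_z_star} for \globest\ with the subset $S$ there taken to be $S\cup\{\ell\}$ here, so that layer $\ell$ now sits in the ``SBM block'' rather than the ``label block''. Repeating the Neyman--Pearson computation that led to \eqref{eq:error_of_LR_test_z_star}, and using that the problem is symmetric under a relabeling involution (hence the likelihood-ratio test has Type-I and Type-II errors of the same order), one obtains that the optimal Type-I plus Type-II error of \eqref{eq:fund_testing_problem_z_l_part_II} equals, up to an absolute constant, the probability \eqref{eq:error_of_LR_test_z_star} with $S$ replaced by $S\cup\{\ell\}$ throughout. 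Lemma~\ref{lemma:opt_test_error_z_star} applied to the subset $S\cup\{\ell\}$ (whose hypotheses $\rho=o(1)$ and $p_\ell\asymp q_\ell=o(1)$ are in force) then yields $C\exp\{-(1+\delta''_n)\globinfo_{S\cup\{\ell\}}\}$, and the parity case split in \eqref{eq:globinfo} is automatic since $\globinfo_T$ is defined for every $T\subseteq[L]$.

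For \eqref{eq:opt_test_error_z_l_part_I}, note that $|\{\ell\}|=1$ is odd, so by \eqref{eq:indinfo} $\indinfo_{\{\ell\}}=2J_\rho+\psi^\star_{\{\ell\}}(-2J_\rho)=\sup_{0\le t\le 1}\{2J_\rho(1-t)+\tfrac n2\III{\ell}_t\}$. By Neyman--Pearson, the optimal $(1-\rho)\times\textnormal{Type-I}+\rho\times\textnormal{Type-II}$ error of \eqref{eq:fund_testing_problem_z_l_part_I} equals the Bayes risk $\int\min\{(1-\rho)\diff P_0,\rho\diff P_1\}$, with $P_0,P_1$ the two product Bernoulli measures there. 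I would lower bound it by the sharp large-deviation (change-of-measure/tilting) argument: writing $\Lambda=\log(\diff P_1/\diff P_0)$ and letting $Q_t$ be the tilted law $\diff Q_t\propto(\diff P_0)^{1-t}(\diff P_1)^t$, restrict the integral to the event $\{\Lambda\approx\log\frac{1-\rho}{\rho}\}$ over a window of width of order $\mathrm{sd}(\Lambda)$, observe that there $\min\{(1-\rho)\diff P_0,\rho\diff P_1\}\asymp(1-\rho)\diff P_0$, and change measure to $Q_{t^\star}$ with $t^\star$ the maximizer, so that a local-limit/Berry--Esseen estimate under $Q_{t^\star}$ bounds the remaining mass from below. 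This yields a bound $c\cdot(\textnormal{prefactor})\cdot\exp\{-\sup_{t\in[0,1]}[(1-t)\log\tfrac1{1-\rho}+t\log\tfrac1\rho+(1+\delta_n)\tfrac n2\III{\ell}_t]\}$, whose exponent is the Legendre dual combining the prior-imbalance term $(1-t)\log\tfrac1{1-\rho}+t\log\tfrac1\rho$ with the single-layer Chernoff exponent $-\log\int(\diff P_0)^{1-t}(\diff P_1)^t=(1+\delta_n)\tfrac n2\III{\ell}_t$. Using the symmetry $\III{\ell}_t=\III{\ell}_{1-t}$ to reflect $t\mapsto1-t$ in the supremum, together with $\log\frac1{1-\rho}=o(1)$ and $2J_\rho=\log\frac1\rho-\log4+o(1)$ as $\rho=o(1)$, this exponent equals $(1+o(1))\indinfo_{\{\ell\}}+O(1)$; and $p_\ell\asymp q_\ell=o(1)$ forces $\mathrm{sd}(\Lambda)=O(\sqrt{\indinfo_{\{\ell\}}})$, so the prefactor contributes only $\exp\{-o(\indinfo_{\{\ell\}})\}$ and all lower-order terms can be absorbed into $C$ and $(1+\delta''_n)$, giving \eqref{eq:opt_test_error_z_l_part_I}. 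The degenerate case $\log\frac{1-\rho}{\rho}>\mathbb{E}_{P_1}[\Lambda]$ (where $t^\star$ hits a boundary) is handled separately by the trivial bound $\int\min\{(1-\rho)\diff P_0,\rho\diff P_1\}\ge\rho\,\mathbb{P}_{P_1}(\Lambda\le\log\tfrac{1-\rho}{\rho})\gtrsim\rho\ge e^{-\indinfo_{\{\ell\}}-O(1)}$, using $\indinfo_{\{\ell\}}\ge2J_\rho\ge\log\tfrac1\rho-O(1)$. (Alternatively, a symmetrization identity rewrites the Bayes risk as half the equal-weight Type-I plus Type-II error of \eqref{eq:fund_testing_problem_z_star} with $S=\{\ell\}$ and a single phantom label coordinate, reducing to the $|S^c|=1$ case of Lemma~\ref{lemma:opt_test_error_z_star}; the parity heuristic of Remark~\ref{rmk:parity} then forces that coordinate to equal $-1$, which explains the ``odd'' branch of $\indinfo$.)

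The main obstacle is the sharp large-deviation \emph{lower} bound used above, which also sits at the heart of Lemma~\ref{lemma:opt_test_error_z_star}. While the Chernoff \emph{upper} bound is a one-line H\"older estimate, the matching lower bound demands the tilt to $Q_{t^\star}$ followed by an anti-concentration estimate that is uniform over the sparse regime $p_\ell\asymp q_\ell=o(1)$ and over the possibly polynomially growing number of layers $L\lesssim n^c$, with the prefactor and the boundary cases controlled precisely enough that the exponent is recovered with a genuine $1+o(1)$ factor rather than a constant multiplicative loss. Tracking how the two small parameters $\rho$ and $p_\ell\vee q_\ell$ jointly enter the window width, the boundary analysis, and the prefactor is where the real technical work lies.
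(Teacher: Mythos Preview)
Your proposal is correct. For \eqref{eq:opt_test_error_z_l_part_II} you match the paper exactly: the testing problem \eqref{eq:fund_testing_problem_z_l_part_II} is an instance of \eqref{eq:fund_testing_problem_z_star} with $S$ replaced by $S\cup\{\ell\}$, so Lemma~\ref{lemma:opt_test_error_z_star} applies verbatim.

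For \eqref{eq:opt_test_error_z_l_part_I} your primary route (a direct Cram\'er tilting lower bound on the weighted Bayes risk $\int\min\{(1-\rho)\,dP_0,\rho\,dP_1\}$, with a separate boundary case) is valid, but it redoes the hard large-deviation work from scratch. The paper's proof is much shorter because it recycles the odd-$|S^c|$ analysis already carried out in the proof of Lemma~\ref{lemma:opt_test_error_z_star}. Concretely, the proof of Lemma~\ref{lemma:testing_problem_z_l_part_I} shows that the weighted Bayes risk equals
\[
\bbP\Bigl(\textstyle\sum_{i=1}^m X^{(\ell)}_i\log\tfrac{q_\ell(1-p_\ell)}{p_\ell(1-q_\ell)}+\sum_{i=1}^m Y^{(\ell)}_i\log\tfrac{p_\ell(1-q_\ell)}{q_\ell(1-p_\ell)}\ge Z^{(\ell)}\log\tfrac{1-\rho}{\rho}\Bigr),
\]
with $Z^{(\ell)}\sim 2\Bern(1-\rho)-1$. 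Conditioning on $Z^{(\ell)}=-1$ gives the lower bound $\rho\cdot\bbP(\cdots\ge\log\tfrac{\rho}{1-\rho})$; the second factor is precisely the probability \eqref{eq:target_of_lower_bound_z_star_odd} with $S=\{\ell\}$, whose sharp lower bound (through Cases A/B/C) was already established. Writing $\rho=e^{-(1+o(1))2J_\rho}$ finishes. Your parenthetical ``phantom label coordinate'' alternative is essentially this reduction, and is the simplest way to get the result without repeating the anti-concentration analysis.
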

\begin{proof}
	See Section \ref{subappend:prf_opt_test_error_z_l}.
\end{proof}
% \remark 
For the same reason as explained in Remark \ref{rmk:parity}, the optimal testing error for \eqref{eq:fund_testing_problem_z_l_part_II} depends on the parity of $|(S\cup\{\ell\})^c|$.

% Note that the SNR for \indest~$\indinfo_S$ enters the bound \eqref{eq:opt_test_error_z_l_part_I} with $S=\{\ell\}$, and 
% Curious readers may wonder why we define the SNR for \indest~$\indinfo_{S}$ for every subset $S\subseteq[L]$ --- after all, it only enters the exponent of \eqref{eq:opt_test_error_z_l_part_I} with $S=\{\ell\}$. The reason is that 

% \smallskip
\subsubsection{Minimax lower bound for \indest} 
We are now ready to present the main result in this subsection.
\begin{theorem}[Minimax lower bound for \indest]
	\label{thm:lower_bound_z_l}
	Under the same setup as Theorem \ref{thm:lower_bound_z_star},
	if for a fixed $\ell\in[L]$, it holds that %\sxc{divergence of $\indinfo$ not needed?}
	$
	% \begin{align}
		% \bigg(2J_\rho + \psi_{\{\ell\}}^\star(-2J_\rho)\bigg) & \land 
		% \label{eq:exponent_lower_bound_z_l}
		\min_{S\subseteq[L]\setminus\{\ell\}} \globinfo_{S\cup\{\ell\}} \land \indinfo_{\{\ell\}} \to \infty,
		% \bigg(\min_{\substack{S\subseteq[L]\setminus\{\ell\} \\ |(S\cup\{\ell\})^c| \textnormal{ even}}}|(S\cup\{\ell\})^c|J_\rho + \psi_{S\cup\{\ell\}}^\star(0)\bigg) %\nonumber\\
		% & \land \bigg(\min_{\substack{S\subseteq[L]\setminus\{\ell\}\\ |(S\cup\{\ell\})^c|\textnormal{ odd}}}(|(S\cup\{\ell\})^c| + 1)J_\rho + \psi_{S\cup\{\ell\}}^\star(-2J_\rho)\bigg) \to \infty,
	% \end{align}	
	$
	then there exists a sequence $\underline{\delta}_n = o(1)$, independent of $\ell$, such that 
	\begin{align}
		\label{eq:lower_bound_z_l_vanishing}
		& \inf_{\hzzz{\ell}} \sup_{\bz^\star\in \calP_n} \bbE \calL(\hzzz{\ell}, \zzz{\ell}) \geq \exp\big\{ -(1+\underline{\delta}_n) \min_{S\subseteq[L]\setminus\{\ell\}} \globinfo_{S\cup\{\ell\}} \big\} + \exp\big\{-(1+\underline{\delta}_n) \indinfo_{\{\ell\}}\big\}.
		% & \qquad \geq \exp\bigg\{-(1+\underline{\delta}_n) 
		% \bigg(2J_\rho + \psi_{\{\ell\}}^\star(-2J_\rho)\bigg)  \land \bigg(\min_{\substack{S\subseteq[L]\setminus\{\ell\} \\ |(S\cup\{\ell\})^c| \textnormal{ even}}}|(S\cup\{\ell\})^c|J_\rho + \psi_{S\cup\{\ell\}}^\star(0)\bigg) \nonumber\\
		% & \qquad\qquad\qquad  \land \bigg(\min_{\substack{S\subseteq[L]\setminus\{\ell\}\\ |(S\cup\{\ell\})^c|\textnormal{ odd}}}(|(S\cup\{\ell\})^c| + 1)J_\rho + \psi_{S\cup\{\ell\}}^\star(-2J_\rho)\bigg) \bigg\}.
		% & \qquad \lor \exp\bigg\{-(1+\underline{\delta}_n) \max_{\substack{S\subseteq[L]\\ S\textnormal{ odd}}}\bigg((|S^c| + 1)J_\rho + \psi_S^\star(-2J_\rho)\bigg)\bigg\}.
	\end{align}
	On the other hand, if $\min_{S\subseteq[L]\setminus\{\ell\}} \globinfo_{S\cup\{\ell\}} \land \indinfo_{\{\ell\}}=\calO(1)$, then there exists $c'>0$ such that
	\begin{equation}
		\label{eq:lower_bound_z_l_constant}
		\inf_{\hzzz{\ell}} \sup_{\bz^\star\in \calP_n} \bbE \calL(\hzzz{\ell}, \zzz{\ell})\geq c'.
	\end{equation}
\end{theorem}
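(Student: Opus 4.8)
The plan is to mirror the proof of Theorem~\ref{thm:lower_bound_z_star}, now combining the two distinct sources of error isolated in Lemmas~\ref{lemma:testing_problem_z_l_part_I} and~\ref{lemma:testing_problem_z_l_part_II} with the optimal testing errors supplied by Lemma~\ref{lemma:opt_test_error_z_l}. First I would treat the divergent regime $\min_{S\subseteq[L]\setminus\{\ell\}}\globinfo_{S\cup\{\ell\}}\wedge\indinfo_{\{\ell\}}\to\infty$, which in particular forces both $\indinfo_{\{\ell\}}\to\infty$ and $\min_S\globinfo_{S\cup\{\ell\}}\to\infty$, and establish the two exponential lower bounds separately, then add them.

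For the $\indinfo_{\{\ell\}}$ term I would apply Lemma~\ref{lemma:testing_problem_z_l_part_I}: the minimax risk $\inf_{\hzzz{\ell}}\sup_{\bz^\star\in\calP_n}\bbE\calL(\hzzz{\ell},\zzz{\ell})$ is bounded below, up to an absolute constant, by the optimal $(1-\rho)\times$type-I $+\ \rho\times$type-II error of the testing problem \eqref{eq:fund_testing_problem_z_l_part_I}, which by \eqref{eq:opt_test_error_z_l_part_I} is at least $C\exp\{-(1+\delta_n'')\indinfo_{\{\ell\}}\}$; since $\indinfo_{\{\ell\}}\to\infty$, the constant $C$ is absorbed into the exponent. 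For the $\globinfo_{S\cup\{\ell\}}$ term I would use Lemma~\ref{lemma:testing_problem_z_l_part_II} exactly as Lemma~\ref{lemma:testing_problem_z_star} was used for Theorem~\ref{thm:lower_bound_z_star}: pick $\delta_n=o(1)$ with $\log(\delta_n^{-1})\ll\min_S\globinfo_{S\cup\{\ell\}}$, obtain $\inf\sup\bbE\calL\gtrsim\delta_n\exp\{-(1+\delta_n')\globinfo_{S\cup\{\ell\}}\}$ for each fixed $S\subseteq[L]\setminus\{\ell\}$ via \eqref{eq:opt_test_error_z_l_part_II}, absorb $\delta_n$ into the exponent using $\log(\delta_n^{-1})/\globinfo_{S\cup\{\ell\}}=o(1)$, and then take the infimum over admissible $S$ to get $\exp\{-(1+o(1))\min_{S\subseteq[L]\setminus\{\ell\}}\globinfo_{S\cup\{\ell\}}\}$. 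Because the minimax risk dominates each term, it dominates their maximum, hence at least half their sum; as both exponents diverge, the factor $\tfrac12$ is again absorbed, and taking $\underline{\delta}_n$ to be the maximum of the two $o(1)$ sequences (which, by the lemmas, do not depend on $\ell$) yields \eqref{eq:lower_bound_z_l_vanishing} with $\underline{\delta}_n$ independent of $\ell$.

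For the bounded regime $\min_S\globinfo_{S\cup\{\ell\}}\wedge\indinfo_{\{\ell\}}=\calO(1)$, at least one of the two quantities stays bounded along a subsequence. If $\indinfo_{\{\ell\}}=\calO(1)$ there, then \eqref{eq:opt_test_error_z_l_part_I} combined with Lemma~\ref{lemma:testing_problem_z_l_part_I} directly gives a constant lower bound; if instead $\min_S\globinfo_{S\cup\{\ell\}}=\calO(1)$ there, then repeating the Lemma~\ref{lemma:testing_problem_z_l_part_II} argument shows $\inf\sup\bbE\calL\gtrsim\delta_n$ for every $o(1)$ sequence, whence the self-bounding contradiction $\inf\sup\bbE\calL\gtrsim\sqrt{\inf\sup\bbE\calL}$ used in Theorem~\ref{thm:lower_bound_z_star} forces $\inf\sup\bbE\calL\geq c'$. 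Either way \eqref{eq:lower_bound_z_l_constant} follows.

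The main obstacle at the level of the theorem is not any single hard estimate — those have been pushed into Lemma~\ref{lemma:opt_test_error_z_l}, whose proof must control the large-deviation rate of the \eqref{eq:error_of_LR_test_z_star}-type probabilities with the parity-dependent optimizer — but rather the fact that the two error sources must be combined \emph{additively} rather than through a single testing reduction. This is precisely why Lemmas~\ref{lemma:testing_problem_z_l_part_I} and~\ref{lemma:testing_problem_z_l_part_II} are stated as two separate reductions (one with $\bz^\star_0$ revealed, one estimating $\bz^\star_0$ from data), and why the final rate is a sum of two exponentials; some care is also needed so that the various $o(1)$ sequences can be merged into a single $\underline{\delta}_n$ that is uniform in $\ell$.
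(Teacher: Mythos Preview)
Your proposal is correct and matches the paper's approach exactly: the paper's proof of Theorem~\ref{thm:lower_bound_z_l} reads in its entirety ``Given Lemma~\ref{lemma:testing_problem_z_l_part_I}, \ref{lemma:testing_problem_z_l_part_II} and~\ref{lemma:opt_test_error_z_l}, the proof is essentially the same as the proof of Theorem~\ref{thm:lower_bound_z_star}, and we omit the details.'' You have spelled out precisely what those omitted details are --- combining the two reductions via $\max\geq\tfrac12\,\mathrm{sum}$ and absorbing constants into the diverging exponents, plus the self-bounding contradiction for the bounded regime --- so your writeup is in fact more complete than the paper's own.
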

\begin{proof}
	Given Lemma \ref{lemma:testing_problem_z_l_part_I}, \ref{lemma:testing_problem_z_l_part_II} and \ref{lemma:opt_test_error_z_l}, the proof is essentially the same as the proof of Theorem \ref{thm:lower_bound_z_star}, and we omit the details.
\end{proof}

% \remark 
Under homogeneity, we have $\zzz{\ell}=\bz^\star ,\forall\ell\in[n]$, and the lower bound in the above theorem should coincide with  \eqref{eq:lower_bound_z_star_vanishing_hom}. 
Indeed, 
when $\rho = 0$, the only way to make the exponent finite is to choose $S = [L]\setminus\{\ell\}$, in which case we have 
$$
% \begin{equation}
	% \label{eq:best_S_hom_ind}
\min_{S\subseteq [L]\setminus\{\ell\}}\globinfo_{S\cup\{\ell\}} \land \indinfo_{\{\ell\}} = \globinfo_{[L]}\land \indinfo_{\{\ell\}} = \psi_{[L]}^\star(0)\land \infty = \psi_{[L]}^\star(0),
% \end{equation}
$$
and hence Corollary \ref{cor:lower_bound_z_star_hom} can be alternatively derived from Theorem \ref{thm:lower_bound_z_l}. 

% \remark 
Based on the intuitions built from Lemmas \ref{lemma:testing_problem_z_l_part_I} and \ref{lemma:testing_problem_z_l_part_II}, the interpretations of the two terms in the lower bound \eqref{eq:lower_bound_z_l_vanishing} should be clear: $\indinfo_{\{\ell\}}$ is the error incurred by the label sampling model \eqref{eq:rho_flipping_model}, which we cannot avoid even if we know the ground truth $\bz^\star$, whereas $\min_{S\subseteq[L]\setminus\{\ell\}}\globinfo_{S\cup\{\ell\}}$ represents the error incurred by empirically estimating $\bz^\star$. 
% Again, in certain scenarios, the optimal $S$ for $\globinfo_{S\cup\{\ell\}}$ have explicit expressions, and we refer the readers to Section \ref{subsec:minimax_optimality} for details.

\section{A Two-Stage Algorithm}\label{sec:alg}
% \sxc{mention what's the prior distribution}
Recall that the \modelname~is a hierarchical model, where the individual assignments $\zzz{\ell}$'s are independent realizations from the ``prior'' distribution \eqref{eq:label_flip} which is parametrized by $\bz^\star$ and $\rho$.
We start by writing down the posterior density, which is proportional to
\begin{align}
	& \prod_{\ell\in[L]} \prod_{i\in[n]} (1-\rho)^{\indicator{\zzz{\ell}_i = \bz^{\star}_i}} \rho^{\indicator{\zzz{\ell}_i \neq \bz^{\star}_i}} \nonumber\\
	\label{eq:posterior}
	& ~~\times \prod_{\ell\in[L]} \prod_{i< j} \bigg(p_\ell^{\AAA{\ell}_{ij}} (1-p_\ell)^{1 - \AAA{\ell}_{ij}} \cdot \indicator{\zzz{\ell}_i = \zzz{\ell}_j}\\ 
	& \hskip 8em + q_\ell^{\AAA{\ell}_{ij}} (1-q_\ell)^{1-\AAA{\ell}_{ij}}  \cdot \indicator{\zzz{\ell}_i \neq \zzz{\ell}_j} \bigg).\nonumber
\end{align}
Computing the vanilla MAP estimator requires searching over a discrete set with cardinality $2^{n(L+1)}$, a hopeless task for even moderately-sized $n$ and $L$. 

{Now, supposed that for a fixed $i\in[n]$, we are given a collection of estimators $\{\tzzz{\ell}_{j}: \ell\in[L], j\neq i\}$ for the individual assignments $\{\zzz{\ell}_j: \ell\in[L], j\neq i\}$.} 
On the event that $\tzzz{\ell}_j$'s ($j\neq i$) agree with the ground truth parameters, the posterior density given in \eqref{eq:posterior}, as a function of $(\bz^\star_i, \{\zzz{\ell}_i\}_{\ell=1}^L)$, reduces to a constant multiple of 
\begin{align}
	& \prod_{\ell\in[L]} (1-\rho)^{\indicator{\zzz{\ell}_i = \bz^{\star}_i}} \rho^{\indicator{\zzz{\ell}_i \neq \bz^{\star}_i}} \nonumber\\
	\label{eq:posterior_reduced}
	& ~~\times \prod_{\ell\in[L]} \prod_{j\neq i} \bigg(p_\ell^{\AAA{\ell}_{ij}} (1-p_\ell)^{1 - \AAA{\ell}_{ij}} \cdot \indicator{\zzz{\ell}_i = \tzzz{\ell}_j}\\
	&  \hskip 8em + q_\ell^{\AAA{\ell}_{ij}} (1-q_\ell)^{1-\AAA{\ell}_{ij}}  \cdot \indicator{\zzz{\ell}_i \neq \tzzz{\ell}_j} \bigg). \nonumber
\end{align}
With some algebra, one finds that maximizing the above display over $(\zzz{\star}_i, \{\zzz{\ell}_i\}_{\ell=1}^L)$ is equivalent to maximizing the following objective function:
\begin{align}
	\label{eq:log_posterior_reduced}
	\sum_{\ell\in[L]} \bigg\{\log\bigg(\frac{1-\rho}{\rho}\bigg) \cdot \indicator{\zzz{\ell}_i = \bz^{\star}_i} + \sum_{\substack{j\neq i :\tzzz{\ell}_j = \zzz{\ell}_i}} \bigg[ \log\bigg(\frac{p_\ell(1-q_\ell)}{q_\ell(1-p_\ell)}\bigg) \AAA{\ell}_{ij} + \log \bigg(\frac{1-p_\ell}{1-q_\ell}\bigg)\bigg]\bigg\}.
\end{align}
This is already simpler than the original one of maximizing \eqref{eq:posterior}, because the search space now has cardinality $2^{L+1} \ll 2^{n(L+1)}$. %, in the sense that the cardinality of the search space is reduced to $2^{L+1}$. 

A closer look at \eqref{eq:log_posterior_reduced} reveals that this function can be maximized in linear (in $L$) time. 
Indeed, if we fix $\bz^{\star}_i$, the problem of seeking for optimal $\zzz{\ell}$'s is \emph{decoupled} into $L$ subproblems. That is, it suffices to maximize
\begin{align}
	\label{eq:log_posterior_decoupled}
	\log\bigg(\frac{1-\rho}{\rho}\bigg) \cdot \indicator{\zzz{\ell}_i = \zzz{\star}_i} + \sum_{\substack{j\neq i :\tzzz{\ell}_j = \zzz{\ell}_i}}\bigg[ \log\bigg(\frac{p_\ell(1-q_\ell)}{q_\ell(1-p_\ell)}\bigg) \AAA{\ell}_{ij} + \log \bigg(\frac{1-p_\ell}{1-q_\ell}\bigg)\bigg]
\end{align}	
for each $\ell\in[L]$. Note that each subproblem can be efficiently solved, since one only needs to search over a space with cardinality two (i.e., $\zzz{\ell}_i\in\{\pm 1\}$). Thus, to obtain the global maximizer of \eqref{eq:log_posterior_reduced}, one can proceed as follows:
\begin{enumerate}
	\item Solve $L$ subproblems \eqref{eq:log_posterior_decoupled} with $\bz^{\star}_i = + 1$, and record the objective value of \eqref{eq:log_posterior_reduced};
	\item Repeat Step 1 with $\bz^{\star}_i = -1$;
	\item Obtain the global maximizer of \eqref{eq:log_posterior_reduced} by comparing the two objective values in the previous two steps.
\end{enumerate}	

The foregoing discussion shows that the MAP estimator of $(\bz^{\star}_i, \{\zzz{\ell}_i\}_{\ell = 1}^L)$ can be efficiently computed, provided \emph{the remaining parameters are given}.
This observation motivates the main algorithm of this paper, which is a two-stage procedure that first obtains initial estimators of $\{\zzz{\ell}\}_{1}^L$ via spectral clustering, and then refines the initial estimators in a node-wise fashion using MAP estimation.

\subsection{Stage \texorpdfstring{\RN{1}}{I}: Initialization via Spectral Clustering}
While our analysis in Section \ref{sec:minimax_rate} reveals that any consistent initialization would work, we will focus on a specific initialization scheme in this subsection: spectral clustering. 

If $\rho$ is of order $o(1)$, then the proportion of flips in $\zzz{\ell}$ from $\bz^{\star}$ will also be of order $o(1)$ with high probability. 
Hence, as long as a consistent estimator $\btz^{\star}$ of the global assignment $\bz^{\star}$ is given, consistent \indest~is automatic by setting $\tzzz{\ell} = \btz^{\star}$. In the rest of this subsection, we restrict ourselves to \globest.

% Though spectral clustering has already been extensively applied to community detection problems in single-layer networks, it's application to multi-layer networks, to the best of our knowledge, is still scarce. While initial attempts have been made in the literature (see, e.g., \cite{paul2020spectral,bhattacharyya2020general}), the model under their consideration corresponds to the simpler $\rho = 0$ case. 

Let $\weightvec = (\weight_1, \hdots, \weight_L)$ be an arbitrary positive (i.e., $\omega_\ell > 0,\forall \ell\in[L]$) weight vector, and let us consider the following weighted adjacency matrix 
\begin{equation}
	\label{eq:wted_adjacency}
	\bar A := \sum_{\ell\in [L]} \weight_\ell \AAA{\ell}.
\end{equation}
In the case of $\rho = 0$, one readily checks that all the information in $\bz^{\star}$ is contained in the top two eigenvectors of $\bbE[\bar A]$.
% \begin{equation}
% 	\label{eq:pop_wted_adjacency}
%       \bbE[\bar A] := \sum_{\ell\in[L]} \omega_\ell \bbE[\AAA{\ell}],
% \end{equation}      
% where $\weightvec = (\weight_1, \hdots, \weight_L)$ is an arbitrary positive (i.e., $\omega_\ell > 0~\forall \ell\in[L]$) weight vector. 
A natural proposal is then to take top two eigenvectors of $\bar A$, and apply $k$-means clustering to them.

In the case of a small $\rho > 0$, we expect spectral clustering to continue to work well for estimating $\bz^{\star}$, provided it exhibits a certain level of \emph{stability} to the additional ``noise'' induced by $\rho$. Our later analysis in Section \ref{subsec:init} shows that this is indeed the case.
% This is indeed the case according to our analysis in Section \ref{subsec:init}. %, and such a stability result is the main novelty of our spectral clustering algorithm compared to existing works which operate under the $\rho=0$ case (see, e.g., \cite{paul2020spectral,bhattacharyya2020general}). 
% Moreover, since $\rho = o(1)$ under our working assumptions, consistent estimation of $\zzz{\star}$ seamlessly translates to consistent estimation of $\zzz{\ell}$'s. 

\begin{algorithm}[t]
\KwIn{Adjacency matrices $\{\AAA{\ell}\}_1^L$, weight vector $\weightvec$, intra-cluster connecting probabilities $\{\pell\}_1^L$, trimming parameter $\gamma > 1$}
\KwOut{Initial global estimator $\btz^{\star}$}
Identify nodes $I\subseteq[n]$ such that $\sum_{j\in[n]}\barA_{ij} > \gamma n \sum_{\ell\in[L]}\weight_\ell p_\ell , \forall i\in I$\;
For any $i\in  (I\times[n])\cup ([n]\times I)$, set $\barA_{ij}$ to zero and call the resulting matrix $\tau(\bar A)$\;
% Compute the trimmed weighted adjacency matrix $\tau(A)$, which satisfies
% \begin{equation}
% \label{eq:regularized_wted_adjacency}
% 	[\tau(A)]_{ij} = 0 \qquad \forall (i, j)\in \calE,
% \end{equation}	
Compute $U\in\R^{n\times2}$, the first two eigenvectors of $\tau(\bar A)$\;
Solve the $(1+\ep)$-approximate $k$-means objective \eqref{equ: approx kmeans} on $U$ to get $\hat Z$ and set 
$$
\btz^\star_i =  \Indc\{\hat Z_{i,1} = 1\} - \Indc\{\hat Z_{i,2} = 1\}
$$ 
for any $i\in[n]$\;
\Return{$\btz^\star$}\;
\caption{Stage \RN{1}: initialization via spectral clustering}
\label{alg: specc}
\end{algorithm}

The overall initialization scheme is detailed in Algorithm \ref{alg: specc}. 
There are two subtleties in this algorithm. First, instead of applying spectral clustering to $\barA$, we apply it to a {trimmed version}, $\tau(\barA)$, which is obtained by setting the ``larger-than-average'' entries of $\barA$ to zero. 
As shown in Section \ref{subsec:init}, such a trimming operation can significantly improve the concentration of $\bar A$, especially when the signal-to-noise ratio is low. 
{When $\{p_\ell\}_{\ell=1}^L$ are unknown, one can replace them with conservative estimators; see Section \ref{subappend:est_deg} for details. Alternatively, one can replace them with the sample average connecting probabilities but with a larger $\gamma$.}
The second subtlety is a computational one: since exactly solving the $k$-means objective is NP-hard, we instead find the solution of an $(1+\ep)$-approximation of it \citep{kumar2004simple}. Specifically, letting $U\in \bbR^{n\times 2}$ be the top two eigenvectors of $\tau(\bar A)$, we seek for $(\hat Z, \hat X)$ such that
\begin{align}
\|\hat Z\hat X- U\|_F^2\leq(1+\ep)\min_{Z,\, X}\|ZX-U\|_F^2,
\label{equ: approx kmeans}
\end{align}
where the the minimum is over all $n\times 2$ assignment matrix $Z$ (i.e., each row of $Z$ is a canonical basis of $\bbR^2$) and all $2\times 2$ matrix $X$. The initial estimator $\btz^{\star} = \tzzz{\ell}$ is then taken to be the clustering induced by $\hat Z$.

\subsection{Stage \texorpdfstring{\RN{2}}{II}: Node-Wise Refinement via MAP Estimation}\label{subsec:refine}

According to our discussion in the previous subsection, once an initial global estimator $\btz^{\star}$ is given, we can also take that to be the initial individualized estimator. Now, in view of the MAP objective functions \eqref{eq:posterior}---\eqref{eq:log_posterior_decoupled}, we propose to solve
% maximize the following objective function over $(s_\star, s_1, \hdots, s_\ell)\in \{\pm 1\}^{L+1}$:
% \begin{align}
% 	\label{eq:joint_refinement_full}
% 	\sum_{\ell\in[L]} \bigg\{\log\bigg(\frac{1-\rho}{\rho}\bigg) \cdot \indicator{s_\ell = s_\star} + \sum_{\substack{j\neq i :\btz^{\star}_j = s_\ell}} \bigg[\log\bigg(\frac{p_\ell(1-q_\ell)}{q_\ell(1-p_\ell)}\bigg) \AAA{\ell}_{ij} + \log \bigg(\frac{1-p_\ell}{1-q_\ell}\bigg)\bigg]\bigg\}.
% \end{align}
% Now, by the discussion in the beginning of this section, the above optimization problem can be equivalently stated as:
\begin{equation}
	\label{eq:joint_refinement}
	(\bhz^\star_i, \hzzz{1}_i, ..., \hzzz{L}_i) = \underset{\substack{s_\star\in\{\pm 1\},  \\ s_\ell \in \{\pm 1\} ~\forall\ell\in[L]}}{\arg\max} \sum_{\ell\in[L]} f^{(\ell)}_i(s_\star, s_\ell, \btz^{\star}),
\end{equation}
where
\begin{equation}
	\label{eq:local_map_objective}
	f^{(\ell)}_i(s_\star, s_\ell, \btz^{\star}) = \log\bigg(\frac{1-\rho}{\rho}\bigg) \cdot \indicator{s_\ell = s_\star} + \sum_{j\neq i: \btz^{\star}_j = s_\ell} \bigg[\log\bigg(\frac{p_\ell(1-q_\ell)}{q_\ell(1-p_\ell)}\bigg) \AAA{\ell}_{ij} + \log \bigg(\frac{1-p_\ell}{1-q_\ell}\bigg)\bigg],
\end{equation}
By our discussion at the beginning of this section, the above optimization problem can be solved in linear time. A detailed description is given in Algorithm \ref{alg: generic_refinement}. 
{When $(p_\ell, q_\ell)_{\ell=1}^L$ are unknown, one can replace them with their estimators; see Section \ref{subappend:est_params} for details.}

\begin{algorithm}[t]
\SetNoFillComment
% \SetKwFunction{Initialize}{Initialize}
\KwIn{Initial global estimator $\btz^{\star}$, adjacency matrices $\{\AAA{\ell}\}_1^L$, connecting probabilities $\{(p_\ell, q_\ell)\}_1^L$, flipping probability $\rho$}
\KwOut {Global estimator $\bhz^\star$, individualized estimator $\{\hzzz{\ell}\}_1^L$}
% $\{\tzzz{\ell}\}_1^L\leftarrow$\Initialize($\{\AAA{\ell}, p_\ell, q_\ell,\rho\}_1^L$) \tcp*[r]{initial estimators of $\{\zzz{\ell}\}_1^L$}
% \tcc{\textcolor{blue}{\centering Alignment of initial estimators}}
% \For{$\ell = 2, \hdots, L$}{
	% \lIf{$\langle\tzzz{\ell}, \tzzz{1}\rangle <  0$ }{$\tzzz{\ell}\leftarrow -\tzzz{\ell}$}
% }
% \tcc{\textcolor{blue}{\centering MAP-based refinement}}
\For{$i = 1, \hdots, n$}{
	% Set $\tzzz{\ell} = \btz^{\star}$\;
	\For{$\ell = 1, \hdots, L$}{
		$\sfz(\ell, +1) \leftarrow \argmax_{s\in\{\pm1\}} f^{(\ell)}_{i}(+1, s, \btz^{\star})$\tcp*[r]{$f^{(\ell)}_i$ is defined in \eqref{eq:local_map_objective}}  
		$\sfz(\ell, -1)\leftarrow \argmax_{s\in\{\pm1\}} f^{(\ell)}_{i}(-1, s, \btz^{\star})$\; 
	}
	$\bhz^\star_i \leftarrow \argmax_{s_\star\in\{\pm 1\}} \sum_{\ell\in [L]} f^{(\ell)}_i\big(s_\star, \sfz(\ell, s_\star), \btz^{\star}\big)$\tcp*[r]{final global estimator}
	\For{$\ell=1, \hdots, L$}{
		$\hzzz{\ell}_i \leftarrow \sfz(\ell, \bhz^\star_i)$\tcp*[r]{final individualized estimators}
	}
}
\Return{$\bhz^\star, \{\hzzz{\ell}\}_1^L$}\;
\caption{Stage \RN{2}: Node-wise refinement via MAP estimation}
\label{alg: generic_refinement}
\end{algorithm} 

We conclude this section by remarking that our proposed algorithm is naturally a {distributed} one: the two for loops in Algorithm \ref{alg: generic_refinement} can be easily parallelized.

\section{Performance of the Two-Stage Algorithm}
\label{sec:minimax_rate}

In this section, we present theoretical results on the two-stage algorithm introduced in Section \ref{sec:alg}. Specifically, the performance of spectral clustering is presented in Section \ref{subsec:init}, followed by an analysis of  MAP-based refinement in Section \ref{subsec:theory_refine}. The minimax optimality of the two-stage algorithm is proved in Section \ref{subsec:minimax_optimality}. 
Throughout this section, the high probability error bounds are uniform with respect to probability measures defined in the parameter space \eqref{eq:param_space}. In particular, the ``$\bbP$'' symbol represents the probability after marginalizing over the realizations of the $\zzz{\ell}$'s.

\subsection{Performance of Spectral Clustering}\label{subsec:init}

In this subsection, we analyze theoretical properties of Algorithm \ref{alg: specc}.
An important degree of freedom in Algorithm \ref{alg: specc} is the choice of the weight vector $\weightvec$, and it is restricted by the following assumption.
\begin{assump}[Balanced weights across layers]  
\label{assump:balanced_wts}
Assume $\omega_\ell > 0 ,\forall \ell\in[L]$ and $\sum_{\ell\in[L]}\weight_\ell = 1$. Moreover, assume that there exist two absolute constants $c_0 > 0$ and $c_1 \geq 1$ such that the following two inequalities hold:
\begin{align}
     \max_{\ell\in[L]} \{\weight_\ell\} \cdot \sum_{\ell\in[L]} \weight_\ell p_\ell \leq c_0 \sum_{\ell\in[L]} \weight_\ell^2 p_\ell , \qquad  \max_{\ell\in[L]} \{\weight_\ell\} \cdot \sum_{\ell\in[L]} p_\ell \leq c_1 \sum_{\ell} \weight_\ell q_\ell.
\end{align}  
\end{assump}
% \begin{assump}[Balanced weights across layers] 
% \label{assump:balanced_weights}
% Assume there exists absolute constants $c_1, c_2, c_3, c_4 >0 $ such that the followings hold:
% \begin{align*}
%       % \label{eq:assump_on_weights_1}
%       & \frac{\max_{\ell} \{\weight_\ell\} \cdot \sum_{\ell\in[L]}\weight_\ell \pell}{\sum_{\ell} \omega_\ell^2 \pell}  \leq c_1 , 
%       \qquad \frac{ \max_{\ell}\{\weight_\ell\}\cdot \sum_{\ell}\pell}{\sum_{\ell\in[L]} \weight_\ell \qell} \leq c_2 ,\\
%       & \frac{ \max_{\ell}\{ \weight_\ell(\pell-\qell)\} \cdot \sum_{\ell} \weight_\ell (\pell-\qell)}{\sum_{\ell}\weight_\ell^2 (p_\ell- \qell)^2} \leq c_3 , 
%       \qquad   \frac{L \cdot \max_\ell\{\weight_\ell (\pell-\qell) \}}{\sum_\ell \weight_\ell(\pell-\qell)} \leq c_4.
% \end{align*}    
% \end{assump}

The above assumption essentially states that $\weightvec$ should be relatively balanced across layers. In particular, if $\weight_\ell \asymp 1/L ,\forall \ell\in[L]$, then this assumption holds.

We are now ready to state the main theorem of this subsection. We emphasize that the following theorem does not require $\rho = o(1)$ or $\beta = 1+o(1)$.
{\begin{theorem}[Performance of spectral clustering]
\label{thm:specc}
Let Assumption \ref{assump:balanced_wts} hold with $c_0>0, c_1\geq 1$. Let the input to Algorithm \ref{alg: specc} be an instance generated by an $\textnormal{\modelname}\in \calP_n(\rho, \{p_\ell\}_1^L, \{q_\ell\}_1^L, \beta)$ and assume
\begin{equation}
      \label{eq:spectral_gap}
      \frac{\bar p}{\bar p - \bar q} \leq c\cdot \frac{2(1-2\rho)^2}{\beta - \beta^{-1} + 4n^{-1}} + 2(\rho - \rho^2),
\end{equation}      
where $c \in [0, 1)$ is an absolute constant and $\bar p = \sum_{\ell\in[L]} \weight_\ell p_\ell, \bar q = \sum_{\ell\in[L]} \weight_\ell q_\ell$ are the weighted averages of connecting probabilities. 
Fix any $r\geq 1$ and choose the regularization parameter to be $\gamma > e^{c_1}$. Then, there exist constants $c_2 = c_2(\gamma)$ and $C = C(c, c_0 , c_1, r)$ such that with probability at least $1 - 11n^{-r} - c_2^{-n}$, the output $\btz^{\star}$ of Algorithm \ref{alg: specc} satisfies
\begin{equation}
      \label{eq:error_specc}
      \calL(\btz^{\star}, \bz^{\star}) \leq  \frac{C(2+\ep)  (\Delta_1 + \Delta_2)}{n^2(1-2\rho)^4 (\bar p - \bar q)^2} ,
\end{equation}
where
\begin{align}
      \label{eq:error_specc_1}
      % \Delta_1& = \big[\bar p -2\rho(1-\rho)(\bar p - \bar q)\big]^2, \qquad 
      \Delta_1 & = n\sum_{\ell\in[L]} \weight_\ell^2 p_\ell,\\
      \label{eq:error_specc_2}
      \Delta_2& = \max_{\ell\in[L]} \{\weight_\ell^2 (p_\ell-q_\ell)^2\} \cdot  [L^2\rho^2 + n^2L\rho + n\log n + (\log L)^2] \\
      & \qquad  + \rho^2(1-\rho)^2 (\bar p - \bar q)^2.\nonumber
\end{align}
\end{theorem}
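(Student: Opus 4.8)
The plan is to follow the standard three-part template for analyzing spectral clustering—(i) a population/signal analysis showing the leading eigenspace of $\bbE[\tau(\bar A)\mid \{\zzz{\ell}\}]$ separates the two clusters, (ii) a concentration bound $\|\tau(\bar A) - \bbE[\bar A\mid\{\zzz{\ell}\}]\|_{\mathrm{op}}$, and (iii) a Davis–Kahan $\sin\Theta$ argument followed by a $k$-means rounding lemma—but with the crucial twist that the relevant ``population'' here is $\bbE[\bar A]$ after also marginalizing over the label flips, so that $\rho$ enters everywhere as an extra noise source. Concretely, I would first condition on the realizations $\{\zzz{\ell}\}_1^L$ and write $\bar A = \bbE[\bar A\mid\{\zzz{\ell}\}] + \bar E$ where $\bar E$ is a weighted sum of independent centered Bernoulli matrices; then take a further expectation to relate $\bbE[\bar A\mid\{\zzz{\ell}\}]$ to the rank-two ``grand mean'' $M := \bbE[\bar A]$, whose off-diagonal entries equal $\bar q + (1-2\rho)^2(\bar p - \bar q)\,\indc{\bz^\star_i = \bz^\star_j}$ up to the $2\rho(1-\rho)(\bar p - \bar q)$ correction visible in $\Delta_1$. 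The second-smallest-in-modulus eigenvalue gap of $M$ is then of order $n(1-2\rho)^2(\bar p - \bar q)$ (using the $\beta$-balancedness of the clusters), which explains the denominator $n^2(1-2\rho)^4(\bar p-\bar q)^2$ in \eqref{eq:error_specc}, and the spectral-gap hypothesis \eqref{eq:spectral_gap} is exactly what guarantees this informative eigenvalue dominates the ``nuisance'' eigenvalue coming from $\bar p$ so that the top two eigenvectors of $M$ (hence of $\tau(\bar A)$) carry the community signal.

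The heart of the argument is the operator-norm control, which I would split as $\|\tau(\bar A) - M\|_{\mathrm{op}} \le \|\tau(\bar A) - \bbE[\bar A\mid\{\zzz{\ell}\}]\|_{\mathrm{op}} + \|\bbE[\bar A\mid\{\zzz{\ell}\}] - M\|_{\mathrm{op}}$. For the first term I would invoke the new concentration inequality advertised in the introduction (the multilayer generalization of the Le–Levina–Vershynin graph-decomposition bound), which after trimming gives $\|\tau(\bar A) - \bbE[\bar A\mid\{\zzz{\ell}\}]\|_{\mathrm{op}} \lesssim \sqrt{n\sum_\ell \weight_\ell^2 p_\ell} + \sqrt{\log n}\max_\ell\{\weight_\ell\sqrt{p_\ell}\}$ with the stated failure probability $11n^{-r} + c_2^{-n}$; squaring this produces $\Delta_2$ together with the $n\log n$ piece inside $\Delta_3$, and the regularization threshold $\gamma > e^{c_1}$ is precisely what the trimming lemma needs so that removing high-degree rows does not also delete genuine signal. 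For the second term—the genuinely new, inhomogeneity-driven contribution—I would bound $\|\bbE[\bar A\mid\{\zzz{\ell}\}] - M\|_{\mathrm{op}}$ by first passing to the Frobenius norm and then using a Bernstein-type bound on the number of ``flipped'' nodes in each layer: each entry of $\bbE[\bar A\mid\{\zzz{\ell}\}] - M$ differs from zero by $\pm\weight_\ell(p_\ell - q_\ell)$ only on pairs where layer $\ell$ disagrees with the global structure, and the random count of such pairs concentrates around $2\rho(1-\rho)n^2$ with fluctuations of order $n\sqrt{L}\,\rho^{1/2}$ and $L\rho$ in the relevant regimes; this is the source of the $\max_\ell\{\weight_\ell^2(p_\ell-q_\ell)^2\}(L^2\rho^2 + n^2 L\rho)$ and $\rho^2(1-\rho)^2(\bar p-\bar q)^2$ terms in $\Delta_3$ as well as the $2\rho(1-\rho)(\bar p - \bar q)$ bias in $\Delta_1$. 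The key technical point is that this entire inhomogeneity penalty is additive and polynomial in $\rho$—the ``stability'' claim—so that at $\rho = 0$ one recovers the sharp homogeneous rate.

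Having assembled $\|\tau(\bar A) - M\|_{\mathrm{op}}^2 \lesssim \Delta_1 + \Delta_2 + \Delta_3$, I would conclude with the usual two-step rounding: Davis–Kahan gives $\|U U^\top - U_M U_M^\top\|_F^2 \lesssim \|\tau(\bar A)-M\|_{\mathrm{op}}^2 / (\text{eigengap of }M)^2$, and then the standard lemma for $(1+\ep)$-approximate $k$-means (e.g. the argument of Lei–Rinaldo / Kumar–Sabharwal–Sen) converts the eigenvector perturbation into a misclustering count, contributing the $(2+\ep)$ factor in \eqref{eq:error_specc}; the $C(c,c',c_0,c_1,r)$ constant absorbs the approximation ratio, the gap constant from \eqref{eq:spectral_gap}, the balancedness constants, and the tail exponent $r$. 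I expect the main obstacle to be the second operator-norm term $\|\bbE[\bar A\mid\{\zzz{\ell}\}] - M\|_{\mathrm{op}}$: one must get the dependence on $L$, $n$, and $\rho$ simultaneously right (in particular the cross-over between the $L^2\rho^2$ and $n^2 L\rho$ regimes, and the fact that the per-layer flip-count deviations must be union-bounded over $L \lesssim n^c$ layers while keeping the failure probability at the polynomial level $n^{-r}$), and to do so one cannot simply use $\|\cdot\|_F$ crudely but must exploit the near-low-rank block structure of the deviation matrix together with matrix-Bernstein or a direct $\ep$-net argument on the unit sphere; a secondary subtlety is verifying that the spectral-gap condition \eqref{eq:spectral_gap} indeed forces the correct pair of eigenvectors of $\tau(\bar A)$ to be selected even after the $\rho$-perturbation shifts the spectrum.
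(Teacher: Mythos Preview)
Your overall architecture is exactly the paper's: the structural lemma (population eigengap of order $n(1-2\rho)^2(\bar p-\bar q)$, Davis--Kahan, $(1+\ep)$-approximate $k$-means) matches Lemma~\ref{lem: structure lem}, and the decomposition $\|\tau(\bar A)-\bbE\bar A\|\le \|\tau(\bar A)-\bbE^{(1:L)}\bar A\|+\|\bbE^{(1:L)}\bar A-\bbE\bar A\|$ is precisely \eqref{eq: decomp_spec_clustering}. The first concentration piece is likewise handled via the multilayer Le--Levina--Vershynin bound (Corollary~\ref{cor:concentration_trimmed_mat}).

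Where you diverge is in the second piece, $\|\bbE^{(1:L)}\bar A-\bbE\bar A\|$, and in your bookkeeping of which term produces what. The paper does \emph{not} use Frobenius norm, matrix Bernstein, or an $\ep$-net. Instead it computes the layer-wise deviation exactly as a rank-one object,
\[
\bbE^{(1:L)}\AAA{\ell}-\bbE\AAA{\ell}=\tfrac12(p_\ell-q_\ell)\bigl(\zzz{\ell}{\zzz{\ell}}^\top-(1-2\rho)^2\bz^\star{\bz^\star}^\top-4\rho(1-\rho)I_n\bigr),
\]
so that after pulling out $\max_\ell\{\weight_\ell(p_\ell-q_\ell)\}$ one must bound $\bigl\|\sum_\ell\zzz{\ell}{\zzz{\ell}}^\top-(1-2\rho)^2\bz^\star{\bz^\star}^\top\bigr\|$. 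Expanding $\zzz{\ell}=(\zzz{\ell}-\bbE\zzz{\ell})+\bbE\zzz{\ell}$ reduces this to (i) $\|Z\|^2$ for the $L\times n$ matrix $Z$ with rows $(\zzz{\ell}-\bbE\zzz{\ell})^\top$, and (ii) $2\sqrt n\,\|\sum_\ell(\zzz{\ell}-\bbE\zzz{\ell})\|$. Both are then handled by \emph{reusing the same untrimmed concentration inequality} (Corollary~\ref{cor:concentration_w/o_reg}) on suitable symmetrized Bernoulli$(\rho)$ matrices. This is where the $(L\rho+n\sqrt{L\rho}+\sqrt{n\log n})$ and hence, after squaring, the $L^2\rho^2+n^2L\rho+n\log n$ inside $\Delta_3$ come from. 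Your proposal misattributes the $n\log n$ term to the trimmed conditional concentration; in fact trimming kills the $\sqrt{\log n}$ there entirely (that is the point of trimming), and the $n\log n$ enters only through the untrimmed bound applied to the $\rho$-Bernoulli flip matrix---which also explains the ``discontinuity at $\rho=0$'' the paper remarks on. Your matrix-Bernstein or $\ep$-net route could in principle recover the same rate, but the paper's reduction is both cleaner and what makes the constants and failure probability $11n^{-r}+c_2^{-n}$ line up exactly.
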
}
\begin{proof}
      See Section \ref{prf:thm:specc}.
\end{proof}

\begin{remark}
      Under our working assumption that $\beta = 1+o(1)$, the first term on the righthand side of \eqref{eq:spectral_gap} tends to infinity, and so the inequality holds if $p_\ell / (p_\ell - q_\ell)$ is uniformly bounded for all $\ell\in[L]$.
\end{remark}

{
In the upper bound \eqref{eq:error_specc}, the two terms $\Delta_{1}$ and $\Delta_2$ come from 
% different sources. 
% The first term, $\Delta_1$, is induced by our model assumption that the diagonal entries of the adjacency matrices $\{A^{(\ell)}\}_{\ell=1}^L$ are all zero (i.e., there are no self-loops). The latter two terms come from 
the fact that in our proof, we relate the misclustering error to the deviation (in spectral norm) of the trimmed weighted adjacency matrix $\tau(\bar A)$ from the expectation of $\barA$. More explicitly, $\Delta_1$ is induced by the concentration of $\tau(\bar A)$ around $\bbE[\barA ~|~ \{\zzz{\ell}\}_1^L]$, the \emph{conditional mean} of $\barA$, conditioning on the realization of $\zzz{\ell}$'s, whereas $\Delta_2$ is induced by the concentration of $\bbE[\bar A ~|~ \{\zzz{\ell}\}_1^L]$ around the \emph{marginal mean} $\bbE[\bar A]$.}

It turns out that bounding $\|\tau(\barA) - \bbE [\barA]\|$ is closely related to bounding $\|\tau(\bar B) - \bbE[\bar B]\|$, where $\bar B = \sum_{\ell\in[L]}\omega_\ell \BBB{\ell}$ and $\BBB{\ell}$'s are $n\times n$ independent Bernoulli random matrices with independent $\Bern(\ppp{\ell}_{ij})$ entries.
In order to have a tight control of $\|\bar B - \bbE[\bar B]\|$, we give a non-trivial generalization of the results in \cite{le2017concentration} to the multilayer setup in Appendix \ref{append:concentration}, which roughly states the following: if the weight vector is sufficiently ``balanced'', then with high probability, for the trimmed version of $\bar B$, we have
\begin{align}
      \label{eq:main_concentration_reg}
      \|\tau(\bar B) - \bbE[\bar B]\| \lesssim \sqrt{n\sum_{\ell\in[L]}\weight_\ell^2 \max_{i, j}\ppp{\ell}_{ij}},
\end{align}
and without any trimming operation, we have
\begin{align}
      \label{eq:main_concentration_no_reg}
      \|\bar B - \bbE[\bar B]\| \lesssim \sqrt{n\sum_{\ell\in[L]}\weight_\ell^2 \max_{i, j}\ppp{\ell}_{ij}} + \max_{\ell\in[L]}\{\weight_\ell\} \cdot \sqrt{\log n}.
\end{align}

% \sxc{I'm confused about the comment that ``the bound is independent of the scaling of the $\omega_\ell$'s''.}

% \begin{remark}
{
If $\rho = 0$, then the conditional mean $\bbE[\bar A~|~\{\zzz{\ell}\}_1^L]$ coincides with the marginal mean $\bbE[\barA]$, and thus  \eqref{eq:error_specc} holds with $\Delta_2 = 0$.}
% \end{remark}

% \begin{remark}
{
Curious readers may wonder why the expression of $\Delta_2$ given in \eqref{eq:error_specc_2} does not vanish as $\rho$ tends to zero. In particular, there is an additive term of $n\log n$. This is related to an interesting phenomenon regarding the concentration of Bernoulli random matrices. It happens that the problem of bounding $\big\|\bbE[\bar A~|~\{\zzz{\ell}\}_1^L]- \bbE[\barA]\big\|$ can be related to bounding $\|B - \bbE[B]\|$, where $B\in\{0, 1\}^{n\times n}$ has i.i.d. $\Bern(\rho)$ entries. The expression of $\Delta_2$ in \eqref{eq:error_specc_2} is based on that
$
      \|B - \bbE [B]\| \lesssim \sqrt{n\rho} + \sqrt{\log n}
$
with high probability, which does not vanish as $\rho$ tends to zero. 
In fact, such a ``discontinuity at zero'' is unavoidable: it has been shown in \cite{krivelevich2003largest} that if $\rho \lesssim 1/n$, then with probability tending to one, 
$
      \|B - \bbE[B]\| = \oone \sqrt{{\log n}/{\log\log n}},
$
which diverges as $n$ tends to infinity.}
% \end{remark}

To have a better understanding on the magnitude of the bound \eqref{eq:error_specc}, let us choose $\omega_\ell = 1/L , \forall \ell\in[L]$ and consider the following scaling of the connecting probabilities:
\begin{equation}
      \label{eq:hom_scaling_connecting_prob}
      p_\ell = \frac{a\log n}{nL}, \qquad q_\ell = \frac{b\log n}{nL}, \qquad \forall \ell\in[L],
\end{equation}
where $a>b>0$ are two constants. 
{With some algebra, it follows that
\begin{align*}
      % \Delta_1 \asymp \frac{a^2(\log n)^2}{n^2L^2}, ~~~~ 
      \Delta_1 = \frac{a\log n}{L^2}, \qquad \Delta_2 \asymp \frac{(a-b)^2 (\log n)^2}{n^2L^2} \cdot \bigg(\rho^2 + \frac{n^2\rho}{L} + \frac{n\log n}{L^2} + \frac{(\log L)^2}{L^2}\bigg).
\end{align*}
% Note that under the current scaling, Equation \eqref{eq:spectral_gap} trivially holds when $\beta = 1+o(1)$, 
It is clear that $\Delta_1 + \Delta_2\asymp \frac{\log n}{L^2} \cdot (1+ \frac{\rho \log n}{L})$, and from \eqref{eq:error_specc} we arrive at 
\begin{equation}
      \label{eq:error_specc_hom_scaling}
      \calL(\btz^\star, \bz^\star) \lesssim \frac{1}{(1-2\rho)^4 (a-b)^2} \cdot \bigg(\frac{1}{\log n} + \frac{\rho}{L}\bigg).
\end{equation}
In summary, consistent estimation of $\bz^\star$ by spectral clustering is possible when the connecting probabilities are as small as $\Omega\big(\frac{\log n}{nL}\big)$.}

% $\Delta_2$ is induced by the randomness in the SBM model for a fixed clustering assignment $\{\zzz{\ell}\}_{\ell=1}^L$; $\Delta_3$ is induced by the $\rho$-flipping model.

Since $\calL(\btz^{\star}, \zzz{\ell}) \leq \calL(\btz^{\star}, \bz^{\star}) + \calL(\bz^{\star}, \zzz{\ell})$, we can obtain performance guarantees of Algorithm \ref{alg: specc} for \indest~by bounding the number of flips at each layer, as detailed in the following corollary.
\begin{corollary}[Spectral clustering for \indest]
      \label{cor:specc_ind_est}
      Under the setup of Theorem \ref{thm:specc}, for any $\rho' \in(0, 1-\rho)$, with probability at least $1 - 11n^{-r} - c_2^{-n} - L e^{-n D_\kl(\rho+\rho'\|\rho)}$, we have
      \begin{equation}
            \label{eq:error_specc_ind_est}
            \max_{\ell\in[L]} \calL(\btz^{\star}, \zzz{\ell}) \leq  \frac{C(2+\ep)  (\Delta_1 + \Delta_2)}{n^2(1-2\rho)^4 (\bar p - \bar q)^2} + \rho + \rho'.
      \end{equation}
      where $D_\kl(p\|q)$ is the Kullback-Leibler divergence between $\textnormal{Bern}(p)$ and $\textnormal{Bern}(q)$. 
\end{corollary}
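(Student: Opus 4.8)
The plan is to combine the spectral-clustering error bound of Theorem~\ref{thm:specc} with a tail bound on the number of label flips in each layer, linked by the triangle inequality for the quotient loss $\calL$.

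First I would record that $\calL$ is the metric induced on the sign-flip quotient of $(\{\pm1\}^n, d_\ham)$, hence obeys $\calL(\btz^\star,\zzz{\ell}) \le \calL(\btz^\star,\bz^\star) + \calL(\bz^\star,\zzz{\ell})$ for every $\ell\in[L]$. Because $\calL(\btz^\star,\bz^\star)$ does not depend on $\ell$, taking the maximum over layers gives $\max_{\ell\in[L]}\calL(\btz^\star,\zzz{\ell}) \le \calL(\btz^\star,\bz^\star) + \max_{\ell\in[L]}\calL(\bz^\star,\zzz{\ell})$. Theorem~\ref{thm:specc} handles the first summand directly: on an event $\calE_1$ with $\Prob(\calE_1)\ge 1-11n^{-r}-c_2^{-n}$, it is at most $C(2+\ep)(\Delta_1+\Delta_2+\Delta_3)/\big(n^2(1-2\rho)^4(\bar p-\bar q)^2\big)$.

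Next I would bound $\max_{\ell}\calL(\bz^\star,\zzz{\ell})$ by counting flips. Since $\calL(\bz^\star,\zzz{\ell}) \le d_\ham(\bz^\star,\zzz{\ell})/n = N_\ell/n$ with $N_\ell := \sum_{i\in[n]}\indc{\zzz{\ell}_i\ne\bz^\star_i}$, and the model~\eqref{eq:label_flip} makes $N_\ell\sim\mathrm{Binomial}(n,\rho)$ with the $N_\ell$'s independent across $\ell$, a Chernoff bound for the upper tail of a binomial yields $\Prob(N_\ell\ge n(\rho+\rho'))\le \exp\{-nD_\kl(\rho+\rho'\|\rho)\}$ for any $\rho'\in(0,1-\rho)$. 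A union bound over the $L$ layers then produces an event $\calE_2$ with $\Prob(\calE_2)\ge 1-L\exp\{-nD_\kl(\rho+\rho'\|\rho)\}$ on which $\max_{\ell\in[L]}\calL(\bz^\star,\zzz{\ell})\le\rho+\rho'$.

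Finally I would intersect the two events. On $\calE_1\cap\calE_2$, which has probability at least $1-11n^{-r}-c_2^{-n}-L\exp\{-nD_\kl(\rho+\rho'\|\rho)\}$ by one more union bound, the two previous bounds add up to exactly~\eqref{eq:error_specc_ind_est}. I do not expect a genuine obstacle here: the only nontrivial ingredients beyond Theorem~\ref{thm:specc} are the triangle inequality for $\calL$ (standard for the sign-flip quotient metric) and the binomial tail estimate, both routine; if anything needs care it is simply confirming that the event from Theorem~\ref{thm:specc} and the flip-counting event are defined on the same probability space so that the union bound applies.
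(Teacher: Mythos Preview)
Your proposal is correct and follows essentially the same route as the paper: the paper uses the triangle inequality $\calL(\btz^{\star}, \zzz{\ell}) \leq \calL(\btz^{\star}, \bz^{\star}) + \calL(\bz^{\star}, \zzz{\ell})$ (stated just before the corollary), invokes Theorem~\ref{thm:specc} for the first term, applies the Chernoff--Hoeffding bound $\bbP(\calL(\bz^{\star}, \zzz{\ell}) \geq \rho + \rho') \leq e^{-n D_\kl(\rho + \rho' \| \rho)}$ for the second, and finishes with a union bound over $\ell\in[L]$. Your write-up is slightly more explicit about the quotient-metric property and the intersection of events, but the argument is the same.
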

\begin{proof}
      Classical Chernoff--Hoeffding bound \cite{chernoff1952measure,hoeffding1963probability} gives that for each fixed $\ell\in[L]$, 
      \begin{equation}
            \label{eq:large_dev_bern_sum}
            \bbP\bigg(\calL(\bz^{\star}, \zzz{\ell}) \geq \rho + \rho'\bigg) \leq e^{-n D_\kl(\rho + \rho' \| \rho)},
      \end{equation}      
      and the desired result follows by taking a union bound over $\ell\in[L]$.
\end{proof}

It is well-known that \eqref{eq:large_dev_bern_sum}, which is obtained by computing the rate function of Bernoulli random variables, is asymptotically tight (see, e.g, \cite{varadhan1984large}). If the goal is merely to ensure consistency (i.e., $\rho' = o(1)$), then we can use the following standard weakening of \eqref{eq:large_dev_bern_sum}:
$$
      \bbP\bigg( \calL(\bz^\star, \zzz{\ell}) \geq \rho + \rho'\bigg) \leq e^{-2n(\rho')^2}.
$$
{If $\log L \ll n^c$ for some $c\in(0, 1)$, then we can choose $\rho' = n^{-(1-c)/2}$, so that \eqref{eq:error_specc_ind_est} holds with $\rho' = o(1)$ with probability at least $1-11n^{-r} - c_2^{-n} - e^{-2{n}^c + \log L}$, and one can bound $e^{-2{n}^c + \log L}\leq e^{-c_3 {n}^c}$ for some absolute constant $c_3 > 0$ because $\log L \ll n^{c}$.}

\subsection{Performance of MAP-Based Refinement}\label{subsec:theory_refine}
The refinement procedure as introduced in Section \ref{subsec:refine}, in its current form (Algorithm \ref{alg: generic_refinement}), is highly flexible in that no assumption is imposed on the initial estimator $\btz^{\star}$ other than consistency. 
While such a flexibility is favored in practice, it brings some unnecessary complications to its theoretical analysis. 
In addition, for a fixed $i$, the initial estimators $\btz^{\star}_{-i}$ may have arbitrary dependence structures with $\{\AAA{\ell}_{ij}: j\neq i, \ell\in[L]\}$, which makes the analysis intractable.

\begin{algorithm}[t]
\SetNoFillComment
\SetKwFunction{Initialize}{Initialize}
\KwIn{Adjacency matrices $\{\AAA{\ell}\}_1^L$, connection probabilities $\{(p_\ell, q_\ell)\}_1^L$, flipping probability $\rho$}
\KwOut {Global estimator $\bhz^\star$, individualized estimator $\{\hzzz{\ell}\}_1^L$}
\tcc{\textcolor{blue}{Stage \RN{1}: Leave-one-out initialization}}
% Create an all-zero matrix $\bsZ = \boldsymbol{0}_{L\times n}$\;
\For{$i = 1, \hdots, n$}{
	% \tcc{compute initial estimators of $\{\zzz{\ell}_{-i}\}_1^L$ using only $\{\AAA{\ell}_{-i}\}_1^L$}
	$\tzzz{\star, -i} \leftarrow \boldsymbol{0}_n$\;
	$\tzzz{\star, -i}_{-i}\leftarrow$ \Initialize\big($\{\AAA{\ell}_{-i, -i}, p_\ell, q_\ell\}_{\ell=1}^L, \rho$\big)\tcp*[r]{initial estimator of $\bhz^\star_{-i}$}
	% Set the $i$-th coordinate of the leave-$i$-out estimator to be zero: $\tzzz{\ell, -i}_i \leftarrow 0$\;
	% Initialize the rest of the coordinates: $\{\tzzz{\ell, -i}_{-i}\}_1^L\leftarrow$\Initialize($\{\AAA{\ell}_{-i}, p_\ell, q_\ell,\rho\}_1^L$) \;
	% \For{$\ell = 2, \hdots, L$}{
	% \lIf{$\langle\tzzz{\ell, -i}, \tzzz{1, -i}\rangle <  0$ }{$\tzzz{\ell, -i}\leftarrow -\tzzz{\ell, -i}$} 
	% }
}
Set $\tzzz{\ell, -i} \leftarrow \tzzz{\star,-i}, \forall \ell\in[L]$\;
% \tcc{\textcolor{blue}{Stage \RN{1.b}: Alignment}}
% \For{$i = 2, \hdots, n$}{
% 	\lIf{$\langle\tzzz{\star, -i}, \tzzz{\star, -1}\rangle <  0$ }{$\tzzz{\star, -i}\leftarrow -\tzzz{\star, -i}$} 
% }	
\tcc{\textcolor{blue}{Stage \RN{2}: MAP-based refinement}}
% Create vectors $\barzzz{\star, -i}_{-i} = \tzzz{\star, -i}_{-i}$\;
\For{$i = 1, \hdots, n$}{
	\For{$\ell = 1, \hdots [L]$}{
		% Create $\barzzz{\star, -i}$ and set $\barzzz{\star, -i}_{-i} = \tzzz{\star, -i}_{-i}$\;
		% Create $\barzzz{}$
		$\sfz(\ell, +1) \leftarrow \argmax_{s\in\{\pm1\}} f^{(\ell)}_{i}(+1, s, \tzzz{\star, -i})$\; % \tcp*[r]{$f^{(\ell)}_i$ defined in \eqref{eq:local_map_objective}}  
		$\sfz(\ell, -1) \leftarrow \argmax_{s\in\{\pm1\}} f^{(\ell)}_{i}(-1, s, \tzzz{\star, -i})$\;
	}
	\tcp{final estimator of $\bhz^\star_i$, not aligned}
	$\tzzz{\star, -i}_{i} \leftarrow \argmax_{s_\star\in\{\pm 1\}} \sum_{\ell\in [L]} f^{(\ell)}_i \big(s_\star, \sfz(\ell, s_\star), \tzzz{\star, -i}\big)$\;
	\For{$\ell = 1, \hdots, [L]$}{
		$\tzzz{\ell, -i}_i \leftarrow \sfz(\ell, \tzzz{\star, -i}_i)$\tcp*[r]{final estimator of $\zzz{\ell}_i$, not aligned}
	}
}
\tcc{\textcolor{blue}{Stage \RN{3}: Alignment}}
Set $\bhz^{\star}_1 \leftarrow \tzzz{\star, -1}_{1}$ and $\hzzz{\ell}_1 \leftarrow \tzzz{\ell, -1}_i , \forall \ell\in[L]$\;
\For{$i =2,\hdots, n$}{
	$\bhz^\star_i = \argmax_{s_\star\in\{\pm 1\}} \# \bigg\{ \{j\in[n]: \tzzz{\star, -1}_j = s_\star\} \bigcap \{j\in[n]: \tzzz{\star, -i}_j = \tzzz{\star, -i}_i\} \bigg\}$\; 
	\For{$\ell = 1, \hdots ,L$}{
		$\hzzz{\ell}_i = \argmax_{s\in\{\pm 1\}} \# \bigg\{\{j\in[n]: \tzzz{\ell, -1}_j = s\} \bigcap \{j\in[n]: \tzzz{\ell, -i}_j = \tzzz{\ell, -i}_i\} \bigg\}$\; 
	}
}
\Return{$\bhz^\star, \{\hzzz{\ell}\}_1^L$}\;
% \tcc{\textcolor{blue}{Stage \RN{3}: Alignment}}
\caption{A provable version of Algorithm \ref{alg: generic_refinement}}
\label{alg: provable_refinement}
\end{algorithm}

To facilitate the analysis, we propose a modified version as shown in Algorithm \ref{alg: provable_refinement}. Instead of taking an arbitrary initial estimator as input (as done in Algorithm \ref{alg: generic_refinement}), we consider a {leave-one-out} initialization scheme.
In Stage \RN{1}, for each fixed $i$, the initial estimator $\tzzz{\star, -i}$ of $\bz^{\star}_{-i}$ are computed using only $\{\AAA{\ell}_{-i}\}_1^L:=\{\AAA{\ell}_{jk}: j, k\neq i, \ell \in[L]\}$, which ensures the conditional (on the realization of $\{\zzz{\ell}\}_1^L$) independence between $\{\tzzz{\ell, -i}\}_1^L$ and $\{\AAA{\ell}_{ij}: j\neq i, \ell\in[L]\}$, thus simplifying the analysis, though the final analysis still turns out to be highly nontrivial. 

In Stage \RN{2}, for each $i\in[n]$, we conduct MAP-based refinement using the initial estimators $\{\tzzz{\ell, -i}_{-i}\}_{\ell=1}^L$ (which are all equal to $\tzzz{\star, -i}_{-i}$) of $\zzz{\ell}_{-i}$, and the ``diagonal slots'' $(\tzzz{\star, -i}_i, \{\tzzz{\ell, -i}_i\}_{\ell=1}^L)$  are all zeros before Stage \RN{2} by our construction.
These ``diagonal slots'' are then filled in by the refined estimators of $(\bz^\star_i, \{\zzz{\ell}_i\}_{\ell=1}^L)$.

After Stage \RN{2}, it is temping to directly output $\bhz^\star_i = \tzzz{\star, -i}_i$ and $\hzzz{\ell}_i = \tzzz{\ell, -i}_i$ as the final estimators. 
However, a subtlety arises due to the leave-one-out initialization. Since the initial estimators $\{\tzzz{\star ,-i}_{-i}\}_{i=1}^n$ are not necessarily {aligned}, the refined estimators $\{\tzzz{\star ,-i}_{i}\}_{i=1}^n$ and $\{\tzzz{\ell, -i}_{i}: i\in[n], \ell\in[L]\}$ can have different {orientations}. 
For example, it could happen that $\tzzz{\star, -1}_1$ is estimating $\bz^{\star}_1$, but $\tzzz{\star, -2}_2$ is estimating $-\bz^{\star}_{2}$. 
This is where the extra Stage \RN{3} of Algorithm \ref{alg: provable_refinement} comes into play. By using an {alignment} procedure, all coordinates of $\bhz^\star$ and $\{\hzzz{\ell}\}_1^L$ will have the same orientation with high probability. 

We shall remark that Algorithm \ref{alg: provable_refinement} is mostly of theoretical interest, and similar strategies have appeared in \cite{gao2017achieving,gao2018community}. 
Our simulation in Section \ref{sec:exp} indicates that the estimation accuracy of Algorithm \ref{alg: generic_refinement} is indistinguishable from that of Algorithm \ref{alg: provable_refinement}, while Algorithm \ref{alg: generic_refinement} is much faster in speed. 
Such a near perfect match in accuracy between the two algorithms is itself an interesting phenomenon, which we leave for future work. A promising approach for analyzing Algorithm \ref{alg: generic_refinement} is the ``leave-one-out'' analysis such as that used in \cite{ma2018implicit}.

% We are now ready to present the main result of this subsection. We will assume the initialization is consistent, as detailed below.
Before we present the main result of this subsection, we introduce the following assumption on consistent initialization.
\begin{assump}[Consistent initialization]
	\label{assump:consistent_init}
	 Assume the \textnormal{\texttt{Initialize}} procedure used in Algorithm \ref{alg: provable_refinement} takes an instance generated by an $\textnormal{\modelname} \in \calP_n(\rho, \{p_\ell\}_1^L, \{q_\ell\}_1^L, \beta)$ as its input and outputs a $\btz^{\star}$ satisfying
	\begin{equation}
		\label{eq:consistent_init} 
		\bbP\big(\calL(\btz^\star, \bz^\star) \geq \delta_{\init,n} \big) \lesssim n^{-(1+\ep_\init)}
	\end{equation}
	for some $\delta_{\init,n}=o(1)$ and $\ep_\init > 0$.
\end{assump}

% \smallskip
\subsubsection{Performance for \globest}
The performance of Algorithm \ref{alg: provable_refinement} for \globest~is given by the following theorem.
\begin{theorem}[Performance of MAP-based refinement for \globest]
  \label{thm:refine_glob}
  Let the input to Algorithm \ref{alg: provable_refinement} be an instance generated by an $\textnormal{\modelname} \in \calP_n(\rho, \{p_\ell\}_1^L, \{q_\ell\}_1^L, \beta)$ satisfying $\rho = o(1)$, {$q_\ell < p_\ell \leq (C q_\ell) \land (1-c), \forall \ell\in[L]$, $\beta = 1+o(1)$ and $\log L\ll n^{c'}$, where $C > 1$ and $c, c' \in (0, 1)$ are absolute constants.} 
  Let Assumption \ref{assump:consistent_init} hold and assume that for any $\delta_n = o(1)$, the following holds:
  \begin{align}
    \label{eq:diverging_glob_snr_sum}
    \lim_{n\to\infty } \sum_{S\subseteq[L]} e^{-(1-\delta_n) \globinfo_S} =0,
  \end{align}
  where $\globinfo_S$ is defined in \eqref{eq:globinfo}.
  Then, there exist two sequences $\overline{\delta}_n, \overline{\delta}_n' = o(1)$ such that
  \begin{equation}
    \label{eq:refine_glob}
    \lim_{n\to \infty} \inf_{\bz^\star \in \calP_n} \bbP\bigg[ \calL(\bhz^\star , \bz^\star) \leq \bigg(\sum_{S\subseteq[L]} e^{-(1-\overline{\delta}_n)\globinfo_S}\bigg)^{1-\overline{\delta}_n'} \bigg]  = 1.
  \end{equation}
\end{theorem}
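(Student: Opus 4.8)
The plan is to analyze Algorithm~\ref{alg: provable_refinement} node by node, exploiting its leave-one-out construction, and then to pass from the resulting in-expectation bound to the high-probability statement \eqref{eq:refine_glob} via Markov's inequality. \emph{Step 1 (good events and decoupling).} I would first fix the favorable event $\mathcal{E}$ on which (i) every leave-one-out initializer is consistent, $\max_{i\in[n]}\calL(\tzzz{\star,-i}_{-i},\bz^\star_{-i})\le\delta_{\init,n}$, and (ii) every layer has few flips, $\max_{\ell\in[L]}\calL(\bz^\star,\zzz{\ell})\le\rho_n'$ for a suitable $\rho_n'=o(1)$. By Assumption~\ref{assump:consistent_init} applied to the $(n-1)$-node subnetworks and a union bound over $i$, together with \eqref{eq:large_dev_bern_sum} and a union bound over the $L\lesssim n^c$ layers, one has $\bbP(\mathcal{E}^c)=o(1)$ uniformly over $\calP_n$, and on $\mathcal{E}$ also $\max_{i,\ell}\calL(\tzzz{\star,-i}_{-i},\zzz{\ell}_{-i})\le\delta_{\init,n}+\rho_n'=:\eta_n=o(1)$. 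The structural fact that makes everything go through is that, conditionally on $\{\zzz{\ell}\}_1^L$ and on $\{\AAA{\ell}_{jk}:j,k\ne i\}$, the vector $\tzzz{\star,-i}$ is deterministic while $\{\AAA{\ell}_{ij}:j\ne i,\ \ell\in[L]\}$ are still independent with $\AAA{\ell}_{ij}\sim\Bern(p_\ell)$ if $\zzz{\ell}_j=\zzz{\ell}_i$ and $\Bern(q_\ell)$ otherwise. Since on $\mathcal{E}$ each $\tzzz{\star,-i}_{-i}$ lies within Hamming distance $\eta_n n<n/2$ of $\pm\bz^\star_{-i}$, a short overlap computation shows the Stage~\RN{3} alignment places all coordinates in a single orientation, so that $n\cdot\calL(\bhz^\star,\bz^\star)$ simply counts the nodes misclassified by Stage~\RN{2}, and it suffices to bound $\bbP(\text{Stage~\RN{2} errs at }i\mid\mathcal{E})$ for each $i$.

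\emph{Step 2 (reduction to a clipped likelihood-ratio test).} Fix $i$; by symmetry I may assume $\bz^\star_i=+1$ and that $\tzzz{\star,-i}_{-i}$ estimates $\bz^\star_{-i}$ in the correct orientation. From \eqref{eq:local_map_objective}, for each candidate $s_\star\in\{\pm1\}$ the inner maximum over $s_\ell$ is attained at $\sgn(T_\ell+s_\star c)$, where $c:=\log\frac{1-\rho}{\rho}$ and
\[
T_\ell:=\sum_{j\ne i}\tzzz{\star,-i}_j\Big[\log\tfrac{p_\ell(1-q_\ell)}{q_\ell(1-p_\ell)}\AAA{\ell}_{ij}+\log\tfrac{1-p_\ell}{1-q_\ell}\Big],
\]
and the attained value equals an $s_\star$-free constant plus $\tfrac12|T_\ell+s_\star c|$. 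Hence the Stage~\RN{2} estimate of $\bz^\star_i$ disagrees with the truth iff $\sum_{\ell\in[L]}\phi_c(T_\ell)<0$, where $\phi_c(t):=(t\wedge c)\vee(-c)$ is truncation to $[-c,c]$. Writing $T_\ell^\circ$ for the same statistic with $\tzzz{\star,-i}_j$ replaced by the true label $\zzz{\ell}_j$, the two differ only through the $\le\eta_n n$ coordinates where $\tzzz{\star,-i}$ and $\zzz{\ell}$ disagree; since a $1+o(1)$ fraction of the summands coincide and $p_\ell\asymp q_\ell=o(1)$, the cumulant generating functions of the layer statistics entering the Chernoff bound below are perturbed only by a $1+o(1)$ multiplicative factor, uniformly in the tilt parameter over $[0,1]$ and in the subset $S$ indexing the decomposition (one also checks, via a Bernstein bound, that the discrepant nodes do not move the relevant thresholds by more than $o(\globinfo_S)$, using $\psi^\star_S(0)\lesssim\globinfo_S$). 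It therefore suffices to bound, up to a $1+o(1)$ factor in the exponent, the idealized testing error $\bbP\big(\sum_\ell\phi_c(T_\ell^\circ)<0\big)$; under the conditioning this is exactly the quantity governing the vanilla-SBM testing problems of Lemmas~\ref{lemma:testing_problem_z_star} and \ref{lemma:opt_test_error_z_star}, now with $(1+o(1))n/2$ nodes per side and $\zzz{\ell}_i=\bz^\star_i[2\Bern(1-\rho)-1]$.

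\emph{Step 3 (the large-deviation bound --- the main obstacle).} The heart of the proof is to show $\bbP\big(\sum_\ell\phi_c(T_\ell^\circ)<0\big)\le\sum_{S\subseteq[L]}e^{-(1-o(1))\globinfo_S}$, with the probability also over the flip pattern $(Z^{(\ell)})_{\ell}$, $Z^{(\ell)}=\zzz{\ell}_i$. I would decompose the error event by the subset $S\subseteq[L]$ of layers whose truncated statistic is \emph{not} pinned at $+c$: the layers of $S^c$ each add $c$ to a budget that the layers of $S$ must overcome, so after dropping the (favorable) constraints on $S^c$ the task reduces, for each $S$, to bounding $\bbP(\sum_{\ell\in S}\phi_c(T_\ell^\circ)<-c|S^c|)$. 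A Chernoff bound with tilt $t\in[0,1]$ --- using that $\log\bbE\,e^{-tT_\ell^\circ}=-\tfrac n2\III{\ell}_t$, so that the weakly informative layers of $S$ contribute $\psi_S(t)$ of \eqref{eq:cum_gen_fun}, that each flipped layer of $S^c$ is absorbed with its prior probability $\rho$ into a factor $e^{-J_\rho}$, and that the worst configuration has $\sum_{\ell\in S^c}Z^{(\ell)}$ non-positive and closest to $0$ (equal to $0$ if $|S^c|$ is even and $-1$ if $|S^c|$ is odd, by the mechanism of Remark~\ref{rmk:parity}) --- then produces exactly the two branches of \eqref{eq:globinfo}, $|S^c|J_\rho+\psi^\star_S(0)$ versus $(|S^c|+1)J_\rho+\psi^\star_S(-2J_\rho)$. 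The delicate point, which I expect to be the main obstacle, is to carry out this tilt optimization \emph{uniformly over the exponentially many subsets} $S$ with only $o(1)$ slack; this forces an interchange of the supremum over the tilt with the aggregation over subsets and flip patterns, which is precisely where Sion's minimax theorem \cite{sion1958general} is invoked, together with careful bookkeeping of the parity-dependent term $\psi_S^\star(0)$ versus $\psi_S^\star(-2J_\rho)$.

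\emph{Step 4 (conclusion).} Assembling Steps 1--3, $\bbP(\text{Stage~\RN{2} errs at }i\mid\mathcal{E})\le(1+o(1))\sum_{S\subseteq[L]}e^{-(1-\overline{\delta}_n)\globinfo_S}$ for some $\overline{\delta}_n=o(1)$ uniform in $i$ and over $\calP_n$, hence $\bbE[\calL(\bhz^\star,\bz^\star)\,\Indicator_{\mathcal{E}}]\le(1+o(1))\overline{\calL}_n$ with $\overline{\calL}_n:=\sum_{S\subseteq[L]}e^{-(1-\overline{\delta}_n)\globinfo_S}$, and $\overline{\calL}_n\to0$ by \eqref{eq:diverging_glob_snr_sum}. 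Markov's inequality then gives $\bbP\big(\calL(\bhz^\star,\bz^\star)>(\overline{\calL}_n)^{1-\overline{\delta}_n'}\mid\mathcal{E}\big)\le(1+o(1))(\overline{\calL}_n)^{\overline{\delta}_n'}$, and choosing $\overline{\delta}_n'\to0$ slowly enough that $(\overline{\calL}_n)^{\overline{\delta}_n'}\to0$ (possible since $\overline{\calL}_n\to0$), together with $\bbP(\mathcal{E}^c)=o(1)$, yields \eqref{eq:refine_glob} uniformly over $\calP_n$.
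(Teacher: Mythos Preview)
Your overall architecture (leave-one-out, node-wise bound, Markov) matches the paper, and your $\phi_c$ reformulation of the Stage~\RN{2} decision in Step~2 is correct and elegant. The trouble is entirely in Step~3, where two things go wrong.

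\textbf{The decomposition does not produce the $J_\rho$ factors.} You partition by $S=\{\ell:T_\ell^\circ<c\}$ and then assert that ``each flipped layer of $S^c$ is absorbed with its prior probability $\rho$ into a factor $e^{-J_\rho}$''. But your $S^c$ is defined through the \emph{data} (whether $T_\ell^\circ\ge c$), not through the \emph{flip pattern} $Z^{(\ell)}=\zzz{\ell}_i/\bz^\star_i$; after you drop the constraints on $S^c$ there is nothing left that carries the factor $\rho$, so the $|S^c|J_\rho$ term has no source. The paper's decomposition is different: it partitions by the set $S=\{\ell:\hzzz{\ell}_i\ne\zzz{\ell}_i\}$ of layers whose \emph{individualized estimate} is wrong (given $\bhz^\star_i$ wrong). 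On that event the MAP inequality
\[
\sum_{\ell\in S}\fff{\ell}_i(-\bz^\star_i,-\zzz{\ell}_i,\cdot)+\sum_{\ell\in S^c}\fff{\ell}_i(-\bz^\star_i,\zzz{\ell}_i,\cdot)\ \ge\ \sum_{\ell\in[L]}\fff{\ell}_i(\bz^\star_i,\zzz{\ell}_i,\cdot)
\]
separates cleanly: the $S^c$ part is exactly $c\big(\#\{\ell\in S^c:\zzz{\ell}_i\ne\bz^\star_i\}-\#\{\ell\in S^c:\zzz{\ell}_i=\bz^\star_i\}\big)$, which depends only on the flips; conditioning on its value $x$ produces the binomial weight $\binom{|S^c|}{(|S^c|+x)/2}\rho^{(|S^c|-x)/2}(1-\rho)^{(|S^c|+x)/2}$ and hence the $|S^c|J_\rho$ (and the parity dependence of Remark~\ref{rmk:parity}). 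The $S$ part is a data statistic, and only now does one apply a Chernoff bound with tilt $t=t_{S,x}$, yielding the $\psi_S$ piece. The subtle work is not an interchange of optima but rather controlling, uniformly in $S$, the multiplicative errors coming from (i) the slight imbalance $\mmm{\ell,-i}_+-\mmm{\ell,-i}_-$ and (ii) the $\le\eta_n n$ mislabeled coordinates in the initializer; the paper isolates these as three explicit factors and bounds each by $e^{o(1)\cdot n\III{\ell}_{1/2}}$.

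\textbf{Sion's theorem plays no role in this proof.} Because the conclusion \eqref{eq:refine_glob} already carries the sum $\sum_{S\subseteq[L]}e^{-(1-\overline{\delta}_n)\globinfo_S}$, there is no need to interchange a supremum over tilts with the aggregation over subsets: one simply chooses, for each fixed $S$ and each value of $x$, its own optimal tilt, and the uniform $o(1)$ slack is obtained by elementary bounds on the three factors above. Sion's minimax theorem enters only later (Theorem~\ref{thm:opt_glob_est}) when one wants to replace the sum over $S$ by the minimum over $S$; that is a separate statement.
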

\begin{proof}
	See Section \ref{prf:thm:refine_glob}.
\end{proof}

% \begin{remark}
% \label{rmk:max_and_sum}
Note that the lower bound given in \eqref{eq:lower_bound_z_star_vanishing} takes the form of the {maximum} of $2^L$ terms indexed by $S\subseteq[L]$, whereas the upper bound given in \eqref{eq:refine_glob} is a {summation} of $2^L$ terms. Our later analysis in Section \ref{subsec:minimax_optimality} shows that under slightly stronger conditions on the SNR, the upper and lower bounds match asymptotically.
% \end{remark}

Under homogeneity ($\rho = 0$), we have $\globinfo_S = \infty$ for every $S$ but $S = [L]$. So we have 
\begin{equation}
	\label{eq:refine_glob_hom_informal}
	\sum_{S\subseteq[L]} e^{-(1-\delta_n)\globinfo_S} = e^{-(1-\delta_n)\globinfo_{[L]}} = \exp\big\{-(1-\delta_n) \frac{n}{2}\sum_{\ell\in[L]} \III{\ell}_{1/2}\big\},
\end{equation}	
and the upper bound in \eqref{eq:refine_glob} matches the lower bound provided by Corollary \ref{cor:lower_bound_z_star_hom}. 

However, the derivation of \eqref{eq:refine_glob_hom_informal} is not fully rigorous, because the layer-wise objective function $\fff{\ell}_i$ defined in \eqref{eq:local_map_objective} becomes infinity when $\rho = 0$, which makes the optimization problem in \eqref{eq:joint_refinement} ill-defined. 
To address this issue, let us note that when $\rho = 0$, the ``regularization term'' in $\fff{\ell}_i$, namely $\log\big((1-\rho)/\rho\big)\cdot \indc{s_\ell = s_\star}$, essentially requires $s_\ell$ to \emph{exactly agree} with $s_\star$. Thus, we can shift from solving \eqref{eq:joint_refinement} to solving the following problem:
\begin{align}
	\label{eq:mle_refine}
	\bhz^\star_i = \argmax_{s_\star \in\{\pm 1\}} \sum_{\ell\in[L]} \sum_{j\neq i: \btz^{\star}_j = s_\star} \bigg[\log\bigg(\frac{p_\ell(1-q_\ell)}{q_\ell(1-p_\ell)}\bigg) \AAA{\ell}_{ij} + \log \bigg(\frac{1-p_\ell}{1-q_\ell}\bigg)\bigg].
\end{align}
With the above optimization formulation, Algorithm \ref{alg: provable_refinement} can be modified in a \emph{mutatis mutandis} fashion, and the upper bound in \eqref{eq:refine_glob_hom_informal} can be made rigorous, as detailed in the following corollary.
\begin{corollary}[Performance of MAP-based refinement for \globest~under homogeneity]
  \label{cor:refine_glob_hom}
  Consider again Algorithm \ref{alg: provable_refinement}, except that we change its Stage \RN{2} from MAP-based refinement \eqref{eq:joint_refinement} to maximum-likelihood-based refinement \eqref{eq:mle_refine}.  
  Let the input be an instance generated by an $\textnormal{\modelname} \in \calP_n(\rho, \{p_\ell\}_1^L, \{q_\ell\}_1^L, \beta)$ satisfying $\rho = 0$, {$q_\ell < p_\ell \leq (C q_\ell) \land (1-c), \forall \ell\in[L]$, $\beta = 1+o(1)$ and $\log L\ll n^{c'}$, where $C > 1$ and $c, c' \in (0, 1)$ are absolute constants.}
  Let Assumption \ref{assump:consistent_init} hold and assume $n\sum_{\ell\in[L]}\III{\ell}_{1/2} \to \infty$.  
  Then, there exists a sequence $\overline{\delta}_n= o(1)$ such that
  \begin{equation}
    \label{eq:refine_glob_hom}
    \lim_{n\to \infty} \inf_{\bz^\star\in\calP_n}\bbP\bigg( \calL(\bhz^\star , \bz^\star) \leq \exp\big\{-(1-\overline{\delta}_n) \frac{n}{2}\sum_{\ell\in[L]} \III{\ell}_{1/2}\big\}  \bigg)  = 1.
  \end{equation}
\end{corollary}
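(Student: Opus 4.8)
The plan is to obtain Corollary~\ref{cor:refine_glob_hom} as the $\rho = 0$ specialization of Theorem~\ref{thm:refine_glob}, the only subtlety being that the MAP objective \eqref{eq:joint_refinement} is ill-defined at $\rho = 0$ and must be replaced by its natural limit \eqref{eq:mle_refine}. First I would record the degeneracy precisely: in $\fff{\ell}_i$ of \eqref{eq:local_map_objective} the term $\log\big((1-\rho)/\rho\big)\cdot\indicator{s_\ell = s_\star}$ equals $+\infty$ on $\{s_\ell = s_\star\}$ when $\rho = 0$, so every maximizer of \eqref{eq:joint_refinement} is forced to satisfy $s_\ell = s_\star$ for all $\ell\in[L]$; substituting this constraint back into \eqref{eq:joint_refinement} collapses it exactly to \eqref{eq:mle_refine}. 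Hence the modified Stage~\RN{2} is the well-defined $\rho\to 0$ limit of the original, and Stages~\RN{1} and \RN{3} of Algorithm~\ref{alg: provable_refinement} are unchanged; we then re-run the proof of Theorem~\ref{thm:refine_glob} with \eqref{eq:mle_refine} in place of \eqref{eq:joint_refinement}.

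Next I would verify the hypotheses and read off the conclusion. When $\rho = 0$, \eqref{eq:J_rho} gives $J_\rho = -\log 2\sqrt{\rho(1-\rho)} = \infty$, so in \eqref{eq:globinfo} we get $\globinfo_S = \infty$ for every $S\subsetneq[L]$ (both branches carry a factor $|S^c|\,J_\rho$ or $(|S^c|+1)\,J_\rho$ that is infinite), whereas $\globinfo_{[L]} = \psi_{[L]}^\star(0) = -\psi_{[L]}(1/2) = \tfrac{n}{2}\sum_{\ell\in[L]}\III{\ell}_{1/2}$ by \eqref{eq:cum_gen_fun} and \eqref{eq:layerwise_info}. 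Therefore, for any $\delta_n = o(1)$,
\begin{equation*}
	\sum_{S\subseteq[L]} e^{-(1-\delta_n)\globinfo_S} \;=\; \exp\Big\{-(1-\delta_n)\,\tfrac{n}{2}\sum_{\ell\in[L]}\III{\ell}_{1/2}\Big\},
\end{equation*}
and this tends to $0$ for every such $\delta_n$ precisely when $n\sum_{\ell\in[L]}\III{\ell}_{1/2}\to\infty$; this is exactly condition \eqref{eq:diverging_glob_snr_sum} under the present assumptions. Plugging this single-term identity into the conclusion \eqref{eq:refine_glob}, the bound $\big(\sum_S e^{-(1-\overline\delta_n)\globinfo_S}\big)^{1-\overline\delta_n'}$ becomes $\exp\{-(1-\overline\delta_n)(1-\overline\delta_n')\tfrac{n}{2}\sum_\ell\III{\ell}_{1/2}\}$, which is \eqref{eq:refine_glob_hom} after merging the two $o(1)$ sequences into one; the $(1-\overline\delta_n')$-power is precisely what absorbs the $\log n$ slack from the union/Markov step over the $n$ nodes, exactly as in the proof of Theorem~\ref{thm:refine_glob}.

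It remains to check that the proof of Theorem~\ref{thm:refine_glob} survives the substitution of \eqref{eq:mle_refine} for \eqref{eq:joint_refinement}. Under homogeneity $\zzz{\ell} = \bz^\star$ for all $\ell$, so there is no label-flip noise, and the analysis simplifies rather than complicates: the leave-one-out initialization step is controlled by Assumption~\ref{assump:consistent_init} exactly as before, and the Stage~\RN{3} alignment argument is unaffected. The one place the objective enters is the node-wise step, where the event $\{\bhz^\star_i \ne \bz^\star_i\}$ (given a consistent $\tzzz{\star, -i}$ for $\bz^\star_{-i}$) becomes the event that the aggregated log-likelihood-ratio statistic $\sum_{\ell\in[L]}\sum_{j\ne i:\,\btz^\star_j = \bz^\star_i}\big[\log\tfrac{p_\ell(1-q_\ell)}{q_\ell(1-p_\ell)}\AAA{\ell}_{ij} + \log\tfrac{1-p_\ell}{1-q_\ell}\big]$ points the wrong way. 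This is the $S = [L]$ (equivalently $S^c = \emptyset$) specialization of the fundamental testing problem behind Lemma~\ref{lemma:opt_test_error_z_star}, and the same Chernoff/rate-function computation --- valid since $p_\ell\asymp q_\ell = o(1)$ --- bounds its probability by $\exp\{-(1-\delta_n)\tfrac{n}{2}\sum_{\ell\in[L]}\III{\ell}_{1/2}\}$ with $\delta_n = o(1)$ uniform over $\calP_n$. I expect the main obstacle to be purely bookkeeping: confirming that each lemma invoked in the proof of Theorem~\ref{thm:refine_glob} is insensitive to replacing the MAP penalty by its $\rho\to 0$ limit. Since the homogeneous case never triggers the parity-dependent odd-$|S^c|$ branch of \eqref{eq:globinfo} and leaves $S = [L]$ as the only relevant configuration, no genuinely new estimate is required; compare also the lower bound in Corollary~\ref{cor:lower_bound_z_star_hom}, which \eqref{eq:refine_glob_hom} now matches.
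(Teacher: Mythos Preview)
Your proposal is correct and follows exactly the approach the paper indicates: it is a straightforward adaptation of the proof of Theorem~\ref{thm:refine_glob}, specializing to $\rho=0$ so that only the $S=[L]$ term survives in the sum and the MAP objective collapses to the MLE objective \eqref{eq:mle_refine}. In fact you have spelled out more detail than the paper itself, which simply asserts the adaptation and omits the details.
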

\begin{proof}
	The proof is a straightforward adaptation of the proof of Theorem \ref{thm:refine_glob}, and we omit the details.
\end{proof}

Minimax optimal algorithms for community detection in a homogeneous \mlsbm~have appeared in the literature \cite{paul2016consistent,xu2020optimal}. The procedure in \cite{paul2016consistent} is based on exactly solving the maximum likelihood objective, which is computationally infeasible. The algorithm in \cite{xu2020optimal} is computable in polynomial-time and it operates on a variant of \sbm, called {weighted \sbm}, of which the homogeneous multilayer SBM is a special case. The corollary above gives another polynomial-time minimax optimal algorithm for community detection in homogeneous multilayer SBMs.

% and the desired result follows from Theorem \ref{thm:refine_glob}. 

% As an immediate corollary, we have:

% \begin{corollary}[Performance of MAP-based refinement for \globest~under homogeneity]
% 	\label{cor:refine_glob_hom}
% 	Under the setup of Theorem \ref{thm:refine_glob}, assume in addition that $\rho = 0$. If 
% 	$
% 		n\sum_{\ell\in[L]}\III{\ell}_{1/2} \to \infty,
% 	$
% 	then there exist a sequence $\overline{\delta}_n = o(1)$ such that
% 	\begin{equation}
% 		\label{eq:refine_glob_hom}
% 		\lim_{n\to \infty}\bbP\bigg( \calL(\bhz^\star , \bz^\star) \leq \exp\big\{-(1-\overline{\delta}_n) \frac{n}{2}\sum_{\ell\in[L]} \III{\ell}_{1/2}\big\}  \bigg)  = 1.
% 	\end{equation}
% \end{corollary}
% \begin{proof}
% 	When $\rho = 0$, we have $\globinfo_S = \infty$ for every $S$ but $S = [L]$. So we have 
% 	$$
% 	\sum_{S\subseteq[L]} e^{-(1-\delta_n)\globinfo_S} = e^{-(1-\delta_n)\globinfo_{[L]}} = \exp\big\{-(1-\delta_n) \frac{n}{2}\sum_{\ell\in[L]} \III{\ell}_{1/2}\big\},
% 	$$
% 	and the desired result follows from Theorem \ref{thm:refine_glob}. 
% \end{proof}

% \smallskip
\subsubsection{Performance for \indest}
The performance guarantee of Algorithm \ref{alg: provable_refinement} for \indest~is given by the following theorem.

\begin{theorem}[Performance of MAP-based refinement for \indest]
  \label{thm:refine_ind}
  Let the input to Algorithm \ref{alg: provable_refinement} be an instance generated by an $\textnormal{\modelname} \in \calP_n(\rho, \{p_\ell\}_1^L, \{q_\ell\}_1^L, \beta)$ satisfying $\rho = o(1)$, {$q_\ell < p_\ell \leq (C q_\ell) \land (1-c), \forall \ell\in[L]$, $\beta = 1+o(1)$ and $\log L\ll n^{c'}$, where $C > 1$ and $c, c' \in (0, 1)$ are absolute constants.}
  % satisfying $\rho = o(1)$, $p_\ell\asymp q_\ell = o(1) , \forall \ell\in[L]$, $\beta = 1+o(1)$ and $L\leq n^c$ for some $c > 0$. 
  Let Assumption \ref{assump:consistent_init} hold and assume that for a fixed $\ell\in[L]$ and for any $\delta_n = o(1)$, the following holds:
  \begin{align}
    \label{eq:diverging_ind_snr_sum}
    \lim_{n\to\infty } \sum_{S\subseteq[L]\setminus\{\ell\}} \Big( e^{-(1-\delta_n) \globinfo_{S\cup\{\ell\}}} + e^{-(1-\delta_n)\indinfo_{S\cup\{\ell\}}} \Big) =0,
  \end{align}
  where $\globinfo_{S\cup\{\ell\}}$ and $\indinfo_{S\cup\{\ell\}}$ are defined in \eqref{eq:globinfo} and \eqref{eq:indinfo}, respectively. 
  Then, there exist two sequences $\overline{\delta}_n, \overline{\delta}_n' = o(1)$, independent of $\ell$, such that
  \begin{equation}
    \label{eq:refine_ind}
    \lim_{n\to \infty}\inf_{\bz^\star\in\calP_n}\bbP\bigg[ \calL(\hzzz{\ell} , \zzz{\ell}) \leq \bigg(\sum_{S\subseteq[L]\setminus\{\ell\}} \Big[ e^{-(1-\overline{\delta}_n)\globinfo_{S\cup\{\ell\}}} + e^{-(1-\overline{\delta}_n) \indinfo_{S\cup\{\ell\}}} \Big]
    \bigg)^{1-\overline{\delta}_n'}
     \bigg]  = 1.
  \end{equation}
\end{theorem}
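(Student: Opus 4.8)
The proof follows the template of the proof of Theorem~\ref{thm:refine_glob}, with one additional layer of analysis devoted to the target layer $\ell$; I describe the plan. The first step is a reduction to per-node error probabilities. Work on the event $\calE$ on which every leave-one-out initializer $\tzzz{\star,-i}_{-i}$ is $\delta_{\init,n}$-consistent with $\bz^\star_{-i}$, every layer $\ell'\in[L]$ has at most $(\rho+\rho_n')n$ flipped nodes for a suitable $\rho_n'=o(1)$, and the two clusters of every $\zzz{\ell'}$ have sizes $(1+o(1))n/2$; by Assumption~\ref{assump:consistent_init}, a Chernoff bound as in Corollary~\ref{cor:specc_ind_est}, the balancedness of $\bz^\star$, and a union bound over $i\in[n]$ and $\ell'\in[L]$ (affordable since $L\le n^c$), $\bbP(\calE^c)=o(1)$. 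The leave-one-out construction makes $\{\AAA{\ell'}_{ij}:j\neq i,\ \ell'\in[L]\}$ mutually independent Bernoulli variables that are independent of $\tzzz{\star,-i}_{-i}$ and of the flip pattern, so the refined coordinate $(\bhz^\star_i,\{\hzzz{\ell}_i\}_{\ell})$ (before alignment) is a fixed function of those variables; on $\calE$ each $\tzzz{\star,-i}$ agrees with $\bz^\star$ up to a vanishing fraction, which is preserved by the refinement, so the overlap-maximizing Stage~\RN{3} aligns all coordinates to a common orientation with probability $1-o(1)$. Writing $e_n(\ell):=\sum_{S\subseteq[L]\setminus\{\ell\}}\big(e^{-(1-\overline\delta_n)\globinfo_{S\cup\{\ell\}}}+e^{-(1-\overline\delta_n)\indinfo_{S\cup\{\ell\}}}\big)$, it suffices to prove $\tfrac1n\sum_{i\in[n]}\bbP(\hzzz{\ell}_i\neq\zzz{\ell}_i)\lesssim e_n(\ell)$ (after alignment); Markov's inequality then gives $\bbP\big(\calL(\hzzz{\ell},\zzz{\ell})>e_n(\ell)^{1-\overline\delta_n'}\big)\lesssim e_n(\ell)^{\overline\delta_n'}=o(1)$, where $e_n(\ell)=o(1)$ is precisely assumption~\eqref{eq:diverging_ind_snr_sum} and $\overline\delta_n'=o(1)$ is chosen so that $\overline\delta_n'\,\lvert\log e_n(\ell)\rvert\to\infty$.

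For the per-node bound, fix $i$ with $\bz^\star_i=+1$ (the case $\bz^\star_i=-1$ being symmetric), put $R:=\log\tfrac{1-\rho}{\rho}$ and $\mathrm{clip}(x):=\max(-R,\min(R,x))$, and let $D_{\ell'}$ be the layer-$\ell'$ log-likelihood-ratio increment $\sum_{j\neq i:\,\btz^\star_j=+1}\big[\log\tfrac{p_{\ell'}(1-q_{\ell'})}{q_{\ell'}(1-p_{\ell'})}\AAA{\ell'}_{ij}+\log\tfrac{1-p_{\ell'}}{1-q_{\ell'}}\big]-\sum_{j\neq i:\,\btz^\star_j=-1}[\,\cdots\,]$. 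A direct computation from \eqref{eq:local_map_objective} shows $\sfz(\ell',s_\star)=\sgn(D_{\ell'}+s_\star R)$ (ties toward $+1$) and $\bhz^\star_i=\sgn\big(\sum_{\ell'\in[L]}\mathrm{clip}(D_{\ell'})\big)$, so $\hzzz{\ell}_i=\sfz(\ell,\bhz^\star_i)$ and
\[
	\{\hzzz{\ell}_i\neq\zzz{\ell}_i\}\ \subseteq\ \{\bhz^\star_i=\bz^\star_i,\ \hzzz{\ell}_i\neq\zzz{\ell}_i\}\ \cup\ \{\bhz^\star_i\neq\bz^\star_i,\ \hzzz{\ell}_i\neq\zzz{\ell}_i\}.
\]
On the first event $\bhz^\star_i=+1$, hence $\hzzz{\ell}_i=\sgn(D_\ell+R)$; after replacing the $\btz^\star$-labels inside $D_\ell$ by the true layer-$\ell$ labels $\zzz{\ell}$ (an approximation costing at most $(\delta_{\init,n}+2\rho+o(1))n$ extra Bernoulli terms, which $\calE$ controls), this is the Bayes-optimal test of $\zzz{\ell}_i$ in the label-sampling model~\eqref{eq:rho_flipping_model}, i.e., the test of Lemma~\ref{lemma:testing_problem_z_l_part_I}; a layerwise Chernoff bound on its $(1-\rho)\times\textnormal{type-I}+\rho\times\textnormal{type-II}$ error gives, through Lemma~\ref{lemma:opt_test_error_z_l}, the bound $e^{-(1-\delta_n)\indinfo_{\{\ell\}}}\le\sum_{S\subseteq[L]\setminus\{\ell\}}e^{-(1-\delta_n)\indinfo_{S\cup\{\ell\}}}$, using $\indinfo_{\{\ell\}}=\min_S\indinfo_{S\cup\{\ell\}}$.

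On the second event $\bhz^\star_i=-1$, hence $\hzzz{\ell}_i=\sgn(D_\ell-R)$; so $\hzzz{\ell}_i\neq\zzz{\ell}_i$ forces $\mathrm{clip}(D_\ell)$ to be strictly below its full-signal value $+R$, while $\bhz^\star_i=-1$ forces $\sum_{\ell'}\mathrm{clip}(D_{\ell'})\le 0$: layer $\ell$ is ``deficient'' and the remaining layers must overcompensate. Peeling over the flip pattern of the layers $\ell'\neq\ell$ and over the value of $D_\ell$ --- where, by the parity mechanism of Remark~\ref{rmk:parity}, the set $S^c$ of layers contributing a saturated $\pm R$ has signed count non-positive and closest to zero --- and applying a Chernoff bound layer by layer bounds this probability by $\sum_{S\subseteq[L]\setminus\{\ell\}}e^{-(1-\delta_n)\globinfo_{S\cup\{\ell\}}}$. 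Crucially $\ell$ always lands in the ``noisy'' group $S$ (never in $S^c$, since $\zzz{\ell}$ is the estimand and never effectively known), which is why only subsets containing $\ell$ appear, in agreement with Lemmas~\ref{lemma:testing_problem_z_l_part_I}--\ref{lemma:testing_problem_z_l_part_II}. Summing the two bounds over $i$ yields $\tfrac1n\sum_i\bbP(\hzzz{\ell}_i\neq\zzz{\ell}_i)\lesssim e_n(\ell)$, and the first paragraph concludes; the sequences $\overline\delta_n,\overline\delta_n'=o(1)$ arise from tracking the $o(1)$ slacks and are independent of $\ell$ since every bound above is uniform in $\ell$.

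The main obstacle is twofold. First, one must show that replacing the $\btz^\star$-based statistics $D_{\ell'}$ by their $\zzz{\ell'}$-based counterparts --- which adds a sum of $O\big((\delta_{\init,n}+\rho)n\big)$ Bernoulli$(p_{\ell'})$- or Bernoulli$(q_{\ell'})$-terms --- degrades each Chernoff exponent by a factor of only $1+o(1)$ under $p_{\ell'}\asymp q_{\ell'}=o(1)$, $\rho=o(1)$, $\beta=1+o(1)$; this inhomogeneity-induced bookkeeping is substantially heavier than in the homogeneous case. Second, and more delicate, one must identify the raw Chernoff exponents produced above with $\globinfo_{S\cup\{\ell\}}$ and $\indinfo_{S\cup\{\ell\}}$ of \eqref{eq:globinfo}--\eqref{eq:indinfo}: the Chernoff bound yields an infimum over a tilting parameter composed with a supremum coming from the Legendre transform defining $\psi_S^\star$, and recognizing the exponent requires interchanging the two. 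I would carry this out by verifying the hypotheses of Sion's minimax theorem~\cite{sion1958general} for the relevant saddle function on the product of the interval $[0,1]$ of tilting parameters and the convex hull of admissible flip configurations, exactly as in the proof of Theorem~\ref{thm:refine_glob}.
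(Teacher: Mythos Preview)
Your proposal follows the same architecture as the paper's proof: reduce to per-node error probabilities via the leave-one-out construction and alignment (Lemma~\ref{lemma:align}), then decompose $\{\hzzz{\ell}_i\neq\zzz{\ell}_i\}$ according to whether $\bhz^\star_i$ hits or misses $\bz^\star_i$, bounding the two pieces separately by Chernoff. The paper packages the per-node computation as Proposition~\ref{prop:refine_ind_nodewise}, whose proof peels over all $S\subseteq[L]\setminus\{\ell\}$ for \emph{both} cases: the ``global wrong'' case re-uses the global-estimation calculation verbatim with layer $\ell$ forced into the wrong set, yielding $\sum_S e^{-(1-o(1))\globinfo_{S\cup\{\ell\}}}$; the ``global right, layer wrong'' case is handled by a fresh Markov/Chernoff computation on the objective-function gap, yielding $\sum_S e^{-(1-o(1))\indinfo_{S\cup\{\ell\}}}$. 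Your treatment of the first event is slightly cleaner than the paper's --- you bound it directly by the singleton term $e^{-(1-o(1))\indinfo_{\{\ell\}}}$ (dropping the conditioning on $\bhz^\star_i=+1$) and then trivially enlarge to the sum --- which is fine and arguably tighter. One caution: Lemma~\ref{lemma:opt_test_error_z_l} supplies only \emph{lower} bounds on the testing error, so citing it for the upper bound is a misnomer; you need a direct Chernoff argument (the exponent does match).

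The one genuine misconception is in your final paragraph. You anticipate needing Sion's minimax theorem to ``interchange'' an infimum over the tilting parameter $t$ with a supremum defining $\psi_S^\star$, and you cite the proof of Theorem~\ref{thm:refine_glob} as precedent. But no such interchange occurs there: in the paper's proof of Proposition~\ref{prop:refine_glob_nodewise} (and the individualized analogue), for each fixed $S$ one simply \emph{chooses} the best $t=t_S$ in the Chernoff bound, and a parity-based case analysis (Case A: $|S^c|$ even or $\log e|S^c|\ge\sqrt{J_\rho}$, take $t=1/2$; Case B: otherwise, take $t$ maximizing $-2tJ_\rho-\psi_S(t)$) shows the resulting exponent is $(1+o(1))\globinfo_S$ or $(1+o(1))\indinfo_S$. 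The sum over $S$ is then just a sum. Sion's theorem enters only in the proofs of Theorems~\ref{thm:opt_glob_est} and~\ref{thm:opt_ind_est}, where one must collapse the sum over $S$ to a single minimum and hence needs $\sup_t\min_S=\min_S\sup_t$; that is a separate statement with a stronger hypothesis. For the present theorem this machinery is unnecessary, and the first obstacle you list (the $o(1)$ bookkeeping from replacing $\btz^\star$-labels by $\zzz{\ell}$-labels) is the real work --- it is exactly what the paper's $\mathscr{T}_{1,\ell}$ and $\mathscr{T}_{3,\ell}$ terms handle.
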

\begin{proof}
	See Section \ref{prf:thm:refine_ind}.
\end{proof}
% \begin{remark}
In this upper bound, the terms involving $\indinfo_{S\cup\{\ell\}}$'s come from estimating $\zzz{\ell}$ {given the knowledge of $\bz^\star$} (i.e., error from the label sampling model defined in \eqref{eq:rho_flipping_model}), whereas the terms involving $\globinfo_{S\cup\{\ell\}}$'s come from {empirically estimating $\bz^\star$}. 
% \end{remark}

% \begin{remark}
	Similar to Theorem \ref{thm:refine_glob}, the bound given in this theorem is a {summation} of $2\times 2^{L-1}$ terms, whereas the corresponding lower bound in \eqref{eq:lower_bound_z_l_vanishing} is the {maximum} of $2^{L-1} + 1$ terms. We will show in Section \ref{subsec:minimax_optimality} that the two bounds asymptotically coincide under slightly stronger assumptions on both global and individualized SNRs.
% \end{remark}

% \begin{remark}
	If $\rho = 0$, then $\globinfo_{S\cup\{\ell\}}$ is infinity for every $S$ but $S = [L]\setminus \{\ell\}$. On the other hand, $\indinfo_{S\cup\{\ell\}}$ is infinity for any $S \subseteq[L]\setminus\{\ell\}$. It follows that the upper bound in \eqref{eq:refine_ind} becomes
	$$
		e^{-\big(1-o(1)\big)\globinfo_{[L]}} = \exp\big\{-\big(1-o(1)\big) \frac{n}{2}\sum_{\ell\in[L]} \III{\ell}_{1/2}\big\},
	$$
	which agrees with the upper bound in \eqref{eq:refine_glob_hom}. 
	This makes sense as global and \indest~coincide under homogeneity.
% \end{remark}

\subsection{Minimax Optimality}\label{subsec:minimax_optimality}
% In this subsection, we establish minimax optimality of Algorithm \ref{alg: provable_refinement} by showing that the upper bounds in Theorems \ref{thm:refine_glob} and \ref{thm:refine_ind} match the lower bounds in Theorems \ref{thm:lower_bound_z_star} and \ref{thm:lower_bound_z_l}. 
% Note that the bounds presented in Theorems \ref{thm:refine_glob} and \ref{thm:refine_ind} both take the form of the \emph{summation} of \emph{exponentially many} terms indexed by some subset $S$, whereas the corresponding lower bounds in Theorems \ref{thm:lower_bound_z_star} and \ref{thm:lower_bound_z_l} are in the form of the \emph{maximum} of that many terms. 
Recall that the two upper bounds in Theorems \ref{thm:refine_glob} and \ref{thm:refine_ind} are both {summations} of {exponentially many} (in $L$) terms indexed by some subset $S\subseteq[L]$, whereas the corresponding lower bounds in Theorems \ref{thm:lower_bound_z_star} and \ref{thm:lower_bound_z_l} are both {maxima} of that many terms. Thus, a priori, there is no reason to believe that the upper and lower bounds should match, especially when $L$ tends to infinity with $n$. 
However, in this subsection, we show that this is indeed the case under mild regularity conditions, establishing {asymptotic minimaxity} of Algorithm \ref{alg: provable_refinement} for both global and individualized estimation.

% \smallskip
\subsubsection{Minimax optimality for \globest}
Based on \eqref{eq:refine_glob}, a naive argument would upper bound $\sum_{S\subseteq[L]}e^{-(1-\overline{\delta}_n)\globinfo_S}$ by $2^L \exp\{-(1-\overline{\delta}_n)\min_{S\subseteq[L]}\globinfo_S\}$. 
In order to match the lower bound \eqref{eq:lower_bound_z_star_vanishing}, we need to assume that
% \begin{equation}
% \label{eq:req_on_L_glob_naive}
$
L \ll \min_{S\subseteq[L]}\globinfo_S.
$
% \end{equation}
It turns out such a requirement on the growth rate of $L$ can be substantially relaxed, as detailed in the next theorem. 
% The next theorem greatly relaxes this requirement. 
% The following theorem demonstrates the tightness of the upper bound in Theorem \ref{thm:refine_glob}: %\eqref{eq:upper_bound_z_star}:
\begin{theorem}[Minimax optimality of MAP-based refinement for \globest]
\label{thm:opt_glob_est}
Consider the setup of Theorem \ref{thm:refine_glob}, except that instead of assuming \eqref{eq:diverging_glob_snr_sum}, we now assume
% Let the input to Algorithm \ref{alg: provable_refinement} be an $\textnormal{\modelname} \in \calP_n(\rho, \{p_\ell\}_1^L, \{q_\ell\}_1^L, \beta)$ satisfying $\rho = o(1)$, $p_\ell\asymp q_\ell = o(1) \ \forall \ell\in[L]$, $\beta = 1+o(1)$ and $L\leq n^c$ for some $c > 0$. 
% Let Assumption \ref{assump:consistent_init} hold and assume that 
\begin{equation}
	\label{eq:req_for_opt_glob_est}
	\log L + L e^{-c'J_\rho} \ll \min_{S\subseteq[L]} \globinfo_S \to \infty \qquad \textnormal{as } n\to\infty
\end{equation}
for some $c'\in(0, 1)$. Then, there exists a sequence $\overline{\delta}_n = o(1)$ such that
\begin{equation}
	\label{eq:tight_upper_bound_z_star}
	\lim_{n\to \infty}\inf_{\bz^\star\in\calP_n}\bbP\bigg( \calL(\bhz^\star , \bz^\star) \leq \exp\big\{-(1-\overline{\delta}_n) \min_{S\subseteq[L]}\globinfo_S\big\}  \bigg)  = 1.
\end{equation}
\end{theorem}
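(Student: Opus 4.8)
The plan is to deduce the statement from Theorem~\ref{thm:refine_glob} by showing that, under \eqref{eq:req_for_opt_glob_est}, the sum of $2^L$ terms appearing in \eqref{eq:refine_glob} collapses, up to a factor subexponential in $\globinfo_{\min}:=\min_{S\subseteq[L]}\globinfo_S$, onto its smallest term $e^{-(1-\delta_n)\globinfo_{\min}}$. First I would note that the deterministic estimate established below, together with $\globinfo_{\min}\to\infty$, already yields $\sum_{S\subseteq[L]}e^{-(1-\delta_n)\globinfo_S}\to0$ for every $\delta_n=o(1)$, which is hypothesis \eqref{eq:diverging_glob_snr_sum}; hence Theorem~\ref{thm:refine_glob} applies and \eqref{eq:refine_glob} holds. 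It then suffices to prove
\[
  \log\Bigl(\textstyle\sum_{S\subseteq[L]}e^{-(1-\delta_n)\globinfo_S}\Bigr)\le-\bigl(1-o(1)\bigr)\,\globinfo_{\min}\qquad\text{for every }\delta_n=o(1),
\]
and substitute it into \eqref{eq:refine_glob}, absorbing the resulting $o(1)$'s into a single $\overline\delta_n$.

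The main device is a modular surrogate. Writing $a_\ell:=\tfrac n2\III{\ell}_{1/2}$, so that $\psi_S^\star(0)=\sum_{\ell\in S}a_\ell$ by \eqref{eq:cum_gen_fun}, set $\wt\globinfo_S:=|S^c|J_\rho+\psi_S^\star(0)=\psi_{[L]}^\star(0)+\sum_{\ell\in S^c}(J_\rho-a_\ell)$, an additive function of $S^c$. Since $a\mapsto\psi_S^\star(a)$ is convex and $1$-Lipschitz (it is the Legendre transform of $\psi_S$ over $[0,1]$), \eqref{eq:globinfo} gives $\globinfo_S=\wt\globinfo_S$ when $|S^c|$ is even, $|\globinfo_S-\wt\globinfo_S|\le J_\rho$ when $|S^c|$ is odd, and $\globinfo_S\ge|S^c|J_\rho$ always; in particular $|\globinfo_{\min}-\wt\globinfo_{\min}|\le J_\rho$ with $\wt\globinfo_{\min}:=\min_S\wt\globinfo_S=\sum_\ell\min(a_\ell,J_\rho)$. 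Re-indexing $S^c=T^\star_{\mathrm{mod}}\triangle D$ with $T^\star_{\mathrm{mod}}:=\{\ell:a_\ell>J_\rho\}$ the unconstrained minimizer, one has $\wt\globinfo_S=\wt\globinfo_{\min}+\sum_{\ell\in D}|J_\rho-a_\ell|$ and the factorization
\[
  \sum_{S\subseteq[L]}e^{-(1-\delta_n)\wt\globinfo_S}=e^{-(1-\delta_n)\wt\globinfo_{\min}}\prod_{\ell\in[L]}\Bigl(1+e^{-(1-\delta_n)|J_\rho-a_\ell|}\Bigr)\le\exp\Bigl\{-(1-\delta_n)\wt\globinfo_{\min}+\textstyle\sum_{\ell}e^{-(1-\delta_n)|J_\rho-a_\ell|}\Bigr\}.
\]
The key point is that $\sum_\ell e^{-(1-\delta_n)|J_\rho-a_\ell|}=o(\globinfo_{\min})$: fixing $c''\in(c',1)$ and splitting the layers by whether $|J_\rho-a_\ell|\le c''J_\rho$, the ``far'' layers contribute at most $Le^{-(1-\delta_n)c''J_\rho}\le Le^{-c'J_\rho}=o(\globinfo_{\min})$ (as $(1-\delta_n)c''\to c''>c'$), while each ``near'' layer has $a_\ell\ge(1-c'')J_\rho$ hence contributes $\ge(1-c'')J_\rho$ to $\wt\globinfo_{\min}\le\globinfo_{\min}+J_\rho$, so the number of near layers is $\le\min\{L,\,(\globinfo_{\min}+J_\rho)/((1-c'')J_\rho)\}=o(\globinfo_{\min})$ --- the last bound by a short subsequence argument: if it stayed $\ge\epsilon\globinfo_{\min}$ along a subsequence then $J_\rho=O(1)$ there, forcing $Le^{-c'J_\rho}\gtrsim L\ge\epsilon\globinfo_{\min}$, contradicting \eqref{eq:req_for_opt_glob_est}.

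Next I would assemble the bound. Splitting $\sum_S=\sum_{|S^c|\text{ even}}+\sum_{|S^c|\text{ odd}}$, the even part equals $\sum_{|S^c|\text{ even}}e^{-(1-\delta_n)\wt\globinfo_S}$; writing $\wt\globinfo_{\min}^{\mathrm{ev}}:=\min_{|S^c|\text{ even}}\wt\globinfo_S=\min_{|S^c|\text{ even}}\globinfo_S\ge\globinfo_{\min}$, one has for even $|S^c|$ that $\wt\globinfo_S-\wt\globinfo_{\min}^{\mathrm{ev}}\ge0$ and, crucially, every term in the factorized expansion is bounded by $e^{-(1-\delta_n)\wt\globinfo_{\min}^{\mathrm{ev}}}\le e^{-(1-\delta_n)\globinfo_{\min}}$ (flipping any single layer away from $T^\star_{\mathrm{mod}}$ already restores the required parity, at cost $\ge\wt\globinfo_{\min}^{\mathrm{ev}}-\wt\globinfo_{\min}$). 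Since all but $O(1)$ of the layers are ``far'', flipping them is expensive, so the number of expansion terms that are not negligible relative to $e^{-(1-\delta_n)\globinfo_{\min}}$ is only $\mathrm{poly}(L)$, and the even part is $\le\mathrm{poly}(L)\cdot e^{-(1-\delta_n)\globinfo_{\min}}=e^{-(1-o(1))\globinfo_{\min}}$ using $\log L\ll\globinfo_{\min}$. The odd part is handled in parallel: when $J_\rho=o(\globinfo_{\min})$ one simply uses $\globinfo_S\ge\wt\globinfo_S-J_\rho$ with the factorization (since then $\wt\globinfo_{\min}\ge(1-o(1))\globinfo_{\min}$); when $J_\rho=\Omega(\globinfo_{\min})$ one uses instead $\globinfo_S\ge(|S^c|+1)J_\rho$ on odd-$|S^c|$ sets and the identity $\sum_{k\ge1}\binom Lk x^k\le e^{Lx}-1$ at $x=e^{-(1-\delta_n)J_\rho}$, noting $Lx\to0$ by \eqref{eq:req_for_opt_glob_est}.

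I expect the third paragraph --- making precise that in the even-parity sum only $\mathrm{poly}(L)$ terms fail to be negligible relative to $e^{-(1-\delta_n)\globinfo_{\min}}$, equivalently that the true minimum $\globinfo_{\min}$ is governed by $\wt\globinfo_{\min}^{\mathrm{ev}}$ up to an $o(\globinfo_{\min})$ error when $L$ grows --- to be the main obstacle; it hinges on cleanly isolating the $O(1)$ layers whose signal $a_\ell$ is within a constant factor of $J_\rho$ from the remaining ``far'' layers, and on the quantitative strict inequality $\psi_S^\star(-2J_\rho)<\psi_S^\star(0)$, valid whenever $S$ contains a positive-signal layer (i.e.\ always, since $p_\ell>q_\ell$ forces $a_\ell>0$ on $\calP_n$). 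The remaining ingredients --- the factorization, the $o(\globinfo_{\min})$ count of near-threshold layers, and the odd-parity bookkeeping --- are comparatively routine.
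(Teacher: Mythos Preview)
Your modular surrogate $\wt\globinfo_S$ and the factorization in paragraph~2 are exactly right, and your bound $\sum_\ell e^{-(1-\delta_n)|J_\rho-a_\ell|}=o(\globinfo_{\min})$ is both correct and the same threshold--splitting estimate the paper uses (the subsequence argument is unnecessary: $\rho=o(1)$ forces $J_\rho\to\infty$, so $\globinfo_{\min}/((1-c'')J_\rho)=o(\globinfo_{\min})$ directly). The trouble is in paragraph~3 and in the odd-parity bookkeeping.

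For the even part, what paragraph~2 establishes is that the number of ``near'' layers is $o(\globinfo_{\min})$, not $O(1)$; your paragraph~3 silently upgrades this to $O(1)$, and the $\mathrm{poly}(L)$ claim does not follow. The real obstacle is that the full factorization only yields $e^{-(1-\delta_n)\wt\globinfo_{\min}+o(\globinfo_{\min})}$, and when $|T^\star_{\mathrm{mod}}|$ is odd one has $\globinfo_{\min}=\globinfo_{\min}^{\mathrm{ev}}=\wt\globinfo_{\min}+\min_\ell|J_\rho-a_\ell|$, a gap that can be of order $J_\rho$ and hence not $o(\globinfo_{\min})$. The paper closes this gap with a different device: it writes the even and odd sums via the identities $\sum_{\mathrm{even}}\pm\sum_{\mathrm{odd}}=\prod_\ell(e^{-a_\ell}\pm e^{-J_\rho})$, bounds $\prod(1+e_\ell)-\prod(1-e_\ell)$ by a telescoping lemma, and then pulls out the \emph{single largest} residual $e_{\ell^\star}$ from the resulting sum. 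Absorbing $e_{\ell^\star}$ into the leading exponential shifts $T^\star_{\mathrm{mod}}$ by one layer, restoring even parity and producing exactly $e^{-(1-o(1))\globinfo_{\min}^{\mathrm{ev}}}$ times a factor $L$.

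Your odd-parity case has a parallel gap: the dichotomy $J_\rho=o(\globinfo_{\min})$ versus $J_\rho=\Omega(\globinfo_{\min})$ leaves the regime $J_\rho\in(\epsilon\globinfo_{\min},\tfrac12\globinfo_{\min})$ uncovered, since the first branch loses a $J_\rho$ in the exponent while the second branch only delivers $e^{-(2-o(1))J_\rho}$. The paper's route is quite different here: for each fixed $t\in[0,1]$ it uses the additive lower bound $\globinfo_S\ge(|S^c|+1-2t)J_\rho+\sum_{\ell\in S}\tfrac n2\III{\ell}_t$, factorizes the odd sum against this $t$-dependent surrogate exactly as you do at $t=1/2$, and then invokes Sion's minimax theorem to identify the optimal $t$ with the one realizing $\psi_S^\star(-2J_\rho)$, showing $\sup_t\min_S=\min_S\sup_t$. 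This is the step that handles all $J_\rho$ regimes simultaneously and is the main technical novelty of the proof; you will want it rather than the case split.
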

\begin{proof}
	See Section \ref{prf:thm:opt_glob_est}.
\end{proof}

Consider the case where $\rho = n^{-c''}$ for some constant $c''>0$. In this case, we have $J_\rho = \oone \frac{1}{2}\log \frac{1}{\rho} = \oone \frac{c''}{2}\log n$, and thus the requirement in \eqref{eq:req_for_opt_glob_est} becomes $\log L + L n^{-\oone c'c''/2}\ll \min_{S\subseteq[L]}\globinfo_S$, a vast improvement over the naive requirement of $L \ll \min_{S\subseteq[L]}\globinfo_S$.
In the homogeneous case of $\rho = 0$, the requirement in \eqref{eq:req_for_opt_glob_est} becomes $\log L \ll \globinfo_{[L]} = \frac{n}{2}\sum_{\ell\in[L]} \III{\ell}_{1/2}$.

In the proof, we need to identify the optimal $S\subseteq[L]$ such that $\globinfo_S$ is minimized. While it is easy to do so when $\rho = 0$, this task turns out to be challenging in the presence of inhomogeneity, and our proof is based on a nontrivial application of a generalization of Von Neumann's minimax theorem \cite{neumann1928theorie} due to \citet{sion1958general}.

% Among all $S$'s with even cardinality, the one with the minimal $\globinfo_S$ can be explicitly characterized, and is approximately equal to $\{\ell\in[L]: n\III{\ell}_{1/2}/2 < J_\rho\}$ in an appropriate sense. This agrees with our intuition in Remark \ref{rmk:opt_S_lower_bound_z_star} that the optimal $S$ should be layers with low SNR where the major source of error comes from the testing problem given by \eqref{eq:hypothesis_z_star_vanilla_sbm}.

% Among all $S$'s with odd cardinality, an analytical characterization of the optimal $S$ is impossible, largely due to the fact that the optimal $t$ that gives rise to $\psi_S^\star(-2J_\rho)= \sup_{t\in[0, 1]}-2tJ_\rho - \psi_S(t)$ is analytically intractable. Nevertheless, 

% \smallskip
\subsubsection{Minimax optimality for \indest.}
Similar to the case of \globest, in order to prove the tightness of \eqref{eq:refine_ind}, a naive argument would require $L \ll \big(\min_{S\subseteq[L]\setminus\{\ell\}}\globinfo_{S\cup\{\ell\}}\big)\land \big(\min_{S\subseteq[L]\setminus\{\ell\}}\indinfo_{S\cup\{\ell\}}\big)$, and this is relaxed in the following theorem.
  % a na\"ive argument based on \eqref{eq:refine_ind} would require
% The following theorem shows the minimax optimality of Algorithm \ref{alg: provable_refinement} for \indest:
\begin{theorem}[Minimax optimality of MAP-based refinement for \indest]
\label{thm:opt_ind_est}
Consider the setup of Theorem \ref{thm:refine_ind}, except that instead of assuming \eqref{eq:diverging_ind_snr_sum}, we now assume
\begin{equation}
	\label{req_for_ind_est}
	\log L + L e^{-c' J_\rho} \ll 
	\min_{S\subseteq[L]\setminus\{\ell\}} \globinfo_{S\cup\{\ell\}} 
	\land \indinfo_{\{\ell\}}\to \infty \qquad \textnormal{as } n \to \infty
\end{equation}
for some $c'\in(0, 1)$. Then, there exists a sequence $\overline{\delta}_n = o(1)$ such that
\begin{align}
	\label{eq:tight_upper_bound_z_l}
	\lim_{n\to \infty}\inf_{\bz^\star\in\calP_n}\bbP\bigg( \calL(\hzzz{\ell} , \zzz{\ell}) \leq \exp\big\{-(1-\overline{\delta}_n) \min_{S\subseteq[L]\setminus\{\ell\}}\globinfo_{S\cup\{\ell\}}\big\} + \exp\big\{-(1-\overline{\delta}_n)\indinfo_{\{\ell\}}\big\} 
	% \bigg(\sum_{S\subseteq[L]\setminus\{\ell\}} e^{-(1-\overline{\delta}_n)\globinfo_{S\cup\{\ell\}}} + e^{-(1-\overline{\delta}_n) \indinfo_{S\cup\{\ell\}}}\bigg)^{1-\overline{\delta}_n'} 
	\bigg)  = 1.
\end{align}
\end{theorem}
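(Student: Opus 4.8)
The plan is to derive the theorem directly from the upper bound already proved in Theorem~\ref{thm:refine_ind}. Since hypothesis \eqref{req_for_ind_est} is stronger than \eqref{eq:diverging_ind_snr_sum}, that theorem applies and gives, with probability tending to one, $\calL(\hzzz{\ell},\zzz{\ell}) \le (T_1+T_2)^{1-\overline{\delta}_n'}$, where $T_1 = \sum_{S\subseteq[L]\setminus\{\ell\}} e^{-(1-\overline{\delta}_n)\globinfo_{S\cup\{\ell\}}}$ and $T_2 = \sum_{S\subseteq[L]\setminus\{\ell\}} e^{-(1-\overline{\delta}_n)\indinfo_{S\cup\{\ell\}}}$. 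Using subadditivity of $x\mapsto x^{1-\overline{\delta}_n'}$ on $[0,\infty)$ (valid for $n$ large, since $\overline{\delta}_n'=o(1)$) and the fact that raising $e^{-(1-o(1))M}$ to the power $1-\overline{\delta}_n'$ keeps it of the form $e^{-(1-o(1))M}$, it suffices to prove separately that $T_1 \le \exp\{-(1-o(1))\min_{S\subseteq[L]\setminus\{\ell\}}\globinfo_{S\cup\{\ell\}}\}$ and $T_2 \le \exp\{-(1-o(1))\indinfo_{\{\ell\}}\}$. Feeding these back recovers exactly the two exponentials in \eqref{eq:tight_upper_bound_z_l}. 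So the whole task is to collapse each exponentially long sum to its dominant term.

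For $T_2$ I would first establish a monotonicity estimate for the individualized SNR. From $\psi_{S\cup\{\ell\}}(t) = \psi_{\{\ell\}}(t) - \tfrac{n}{2}\sum_{\ell'\in S}\III{\ell'}_t \le \psi_{\{\ell\}}(t)$ for every $t$ we get $\psi^\star_{S\cup\{\ell\}}(a) \ge \psi^\star_{\{\ell\}}(a)$ for all $a$, and since $-2J_\rho t \le 0$ on $[0,1]$ (here $J_\rho > 0$ because $\rho = o(1)$) we also have $\psi^\star_S(0) \ge \psi^\star_S(-2J_\rho)$. Substituting into the definition \eqref{eq:indinfo} and checking the two parities of $|S|$ gives, uniformly, $\indinfo_{S\cup\{\ell\}} \ge \indinfo_{\{\ell\}} + (|S|-1)_+ J_\rho$. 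Hence $T_2 \le e^{-(1-\overline{\delta}_n)\indinfo_{\{\ell\}}} \sum_{k=0}^{L-1}\binom{L-1}{k} e^{-(1-\overline{\delta}_n)(k-1)_+ J_\rho}$, and a routine binomial bound (with $e^x-1\le xe^x$ and $1-\overline{\delta}_n \ge c'$ for $n$ large) shows the sum is at most $\exp\{O(\log L + L e^{-c'J_\rho})\}$. By \eqref{req_for_ind_est}, $\log L + L e^{-c'J_\rho} = o(\indinfo_{\{\ell\}})$, so $T_2 \le \exp\{-(1-o(1))\indinfo_{\{\ell\}}\}$. (When $J_\rho$ stays bounded this reduces to the naive requirement $L \ll \indinfo_{\{\ell\}}$, consistent with the discussion in the text.)

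The term $T_1$ is the crux, and it is handled exactly as the analogous sum in the proof of Theorem~\ref{thm:opt_glob_est}, now restricted to the subfamily $\{S\cup\{\ell\} : S\subseteq[L]\setminus\{\ell\}\}$. The obstacle is that $S \mapsto \globinfo_{S\cup\{\ell\}}$ is \emph{not} monotone: moving one layer across the partition flips the parity of the complement --- switching the argument of $\psi^\star$ between $0$ and $-2J_\rho$ --- while also trading a $J_\rho$-penalty for a signal-penalty, so the ``peel off one layer at a time'' argument used for $T_2$ fails. The plan is to write $\psi^\star_S$ through its variational definition \eqref{eq:cum_gen_fun} and invoke Sion's minimax theorem \cite{sion1958general} to interchange the discrete minimization over $S$ with the supremum over $t$, exactly as in the proof of Theorem~\ref{thm:opt_glob_est}; this both identifies the minimizing set $S^\star$ and yields a separation bound of the shape $\globinfo_{S\cup\{\ell\}} \ge \min_{S'}\globinfo_{S'\cup\{\ell\}} + (1-o(1)) J_\rho \cdot \#\bigl(S \triangle S^\star,\text{ counted over layers whose per-layer signal is bounded away from the threshold }J_\rho\bigr)$. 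Armed with this, the same binomial union bound as for $T_2$ gives $T_1 \le e^{-(1-\overline{\delta}_n)\min_S \globinfo_{S\cup\{\ell\}}}\exp\{O(\log L + L e^{-c'J_\rho})\}$, and \eqref{req_for_ind_est} closes the argument. I expect the separation/minimax step behind $T_1$ --- essentially reusing and slightly adapting the hardest lemma from Theorem~\ref{thm:opt_glob_est} --- to be the main difficulty; once it is in place, everything else is the bookkeeping sketched above, and the $o(1)$ sequences can be taken independent of $\ell$.
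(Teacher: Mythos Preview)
Your overall strategy matches the paper's: both split the sum in Theorem~\ref{thm:refine_ind} into the $\globinfo$-part $T_1$ and the $\indinfo$-part $T_2$ and collapse each to its dominant term under \eqref{req_for_ind_est} (this also verifies \eqref{eq:diverging_ind_snr_sum}, so invoking Theorem~\ref{thm:refine_ind} is legitimate).

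For $T_2$, your monotonicity bound $\indinfo_{S\cup\{\ell\}}\ge\indinfo_{\{\ell\}}+(|S|-1)_+J_\rho$ followed by a binomial sum is correct and is a genuinely different, more elementary route than the paper's. The paper instead splits by parity of $|S\cup\{\ell\}|$, writes each piece via the subset-product identity $\sum_{S}\prod_{\ell'}=\prod_{\ell'}(1\pm\cdot)$ (using that for fixed $t$ the exponent is additive in $\ell'$), and controls the resulting difference of products directly. Your approach avoids the parity split entirely.

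For $T_1$, both you and the paper defer to the argument behind Theorem~\ref{thm:opt_glob_est}, so at that level you are aligned. But your sketch of \emph{how} that argument runs differs from what the paper does. The paper does not prove a per-set separation bound of the form $\globinfo_{S\cup\{\ell\}}\ge\min_{S'}\globinfo_{S'\cup\{\ell\}}+cJ_\rho\cdot|S\triangle S^\star|$; such a bound is fragile when many layers have $\tfrac{n}{2}\III{\ell'}_{1/2}$ within $o(J_\rho)$ of the threshold $J_\rho$, since flipping those layers changes $\globinfo$ by only $o(J_\rho)$ and a binomial count over them does not absorb. Instead, the paper again uses the product factorization: for a fixed $t$ it bounds $\sum_{S}\exp\{-(1-\overline{\delta}_n)(|S^c|J_\rho+(1-2t)J_\rho+\sum_{\ell'\in S}\tfrac{n}{2}\III{\ell'}_t)\}$ via $\prod_{\ell'}(e^{-(1-\overline{\delta}_n)J_\rho}\pm e^{-(1-\overline{\delta}_n)\tfrac{n}{2}\III{\ell'}_t})$, and then invokes Sion's minimax theorem to show $\sup_t\min_S=\min_S\sup_t$, so optimizing $t$ \emph{after} collapsing the sum loses nothing. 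That is how Sion enters here --- to swap $\min_S$ and $\sup_t$, not to identify $S^\star$ and then prove separation. If you follow the paper's product-identity route for $T_1$, the rest of your plan goes through as written.
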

\begin{proof}
	See Section \ref{prf:thm:opt_ind_est}.
\end{proof}

In the proof, we identify that the optimal $S$ that minimizes $\indinfo_{S\cup\{\ell\}}$ is precisely the empty set. On the other hand, the identification of the set that minimizes $\globinfo_{S\cup\{\ell\}}$ is done in a similar fashion as in the proof of Theorem \ref{thm:opt_glob_est}.

\section{Numerical Experiments}\label{sec:exp}
% \subsection{Simulated Examples}\label{subsec:sim}

In this section, we conduct simulation studies to corroborate our theoretical results. Since we focus on the symmetric case, we can without loss of generality assume $\bz^\star_i = +1$ for $1\leq i\leq \lfloor n/2\rfloor$ and $\bz^\star_i = -1$ for $\lfloor n/2\rfloor + 1\leq i\leq n$.
% and we refer interested readers to Appendix \ref{subappend:more_sims} for simulations under the asymmetric setup. 
The $L$ layers are divided into three disjoint groups:
\begin{enumerate}
	\item \emph{Weak layers.} For $1\leq \ell\leq \floor{0.3L}$, we let $p_\ell = {c}/(nL), q_\ell = 1/(nL)$ for some constant $c$ that controls the amount of information. Since $\III{\ell}_{1/2} \asymp 1/(nL)$, in view of the lower bound \eqref{eq:lower_bound_z_star_vanishing}, these layers, even when pooled together, cannot consistently estimate $\bz^\star$. 
	
	\item \emph{Intermediate layers.} For $\floor{0.3L}+1\leq \ell \leq \floor{0.95 L}$, we let $p_\ell = c(\log n)/(nL), q_\ell = (\log n)/(nL)$ where $c$ is the same constant as that appears in the weak layers. Note that $\III{\ell}_{1/2} \asymp \log(n)/(nL)$. Thus, while each individual layer does not contain sufficient information for consistent estimation of its own $\zzz{\ell}$, consistent estimation of $\bz^\star$ becomes possible if information is aggregated across these layers.
	
	\item \emph{Strong layers.} For $\floor{0.95 L} + 1\leq \ell \leq L$, we let $p_\ell = c\log(n)/n, q_\ell = \log(n)/n$ again for the same $c$ as above. As $\III{\ell}_{1/2}\asymp \log(n)/n$, these layers are capable of consistently estimating their $\zzz{\ell}$'s, even when treated individually without aggregation. 
\end{enumerate}

The rationale behind the above partition is to simulate the behaviors that are likely to appear in real world multilayer networks. For example, let us consider the case where layers are distinct 
% federated learning setup \sxc{mention in the introduction?} where layers are 
``participants'' 
collaborating with each other, with the hope that they can borrow information from others to better estimate their own $\zzz{\ell}$'s. Such a setup is also known as ``federated learning'' in the machine learning literature \cite{kairouz2019advances}. The intermediate layers are participants with the most incentive in the collaboration, as ``united they stand, divided they fall''. In comparison, the weak layers may not be as incentivized as the intermediate layers, because they would ``fall even when united''. Nevertheless, they may still want to participate as ``hitchhikers''. Finally, the strong layers are participants that would ``stand even when divided'', and the only reason for them to participate is the hope for even more accurate estimation of $\zzz{\ell}$'s. A small proportion of strong layers (0.05 in our case) reflects our belief that the strong layers are relatively scarce. 

In this section, we will always use Algorithm \ref{alg: specc} as the initialization scheme. Our experiment in Section \ref{subappend:weight_choice} shows that using uniform weights (i.e., $\weight_\ell = 1/L,\forall \ell\in[L]$) and setting the trimming threshold $\gamma = 5$ work well in a wide range of scenarios, and we will always use such a hyperparameter choice in the following discussion. 

\begin{figure}[t]
		\begin{subfigure}{.43\textwidth}
			\centering
			\includegraphics[width=1\linewidth]{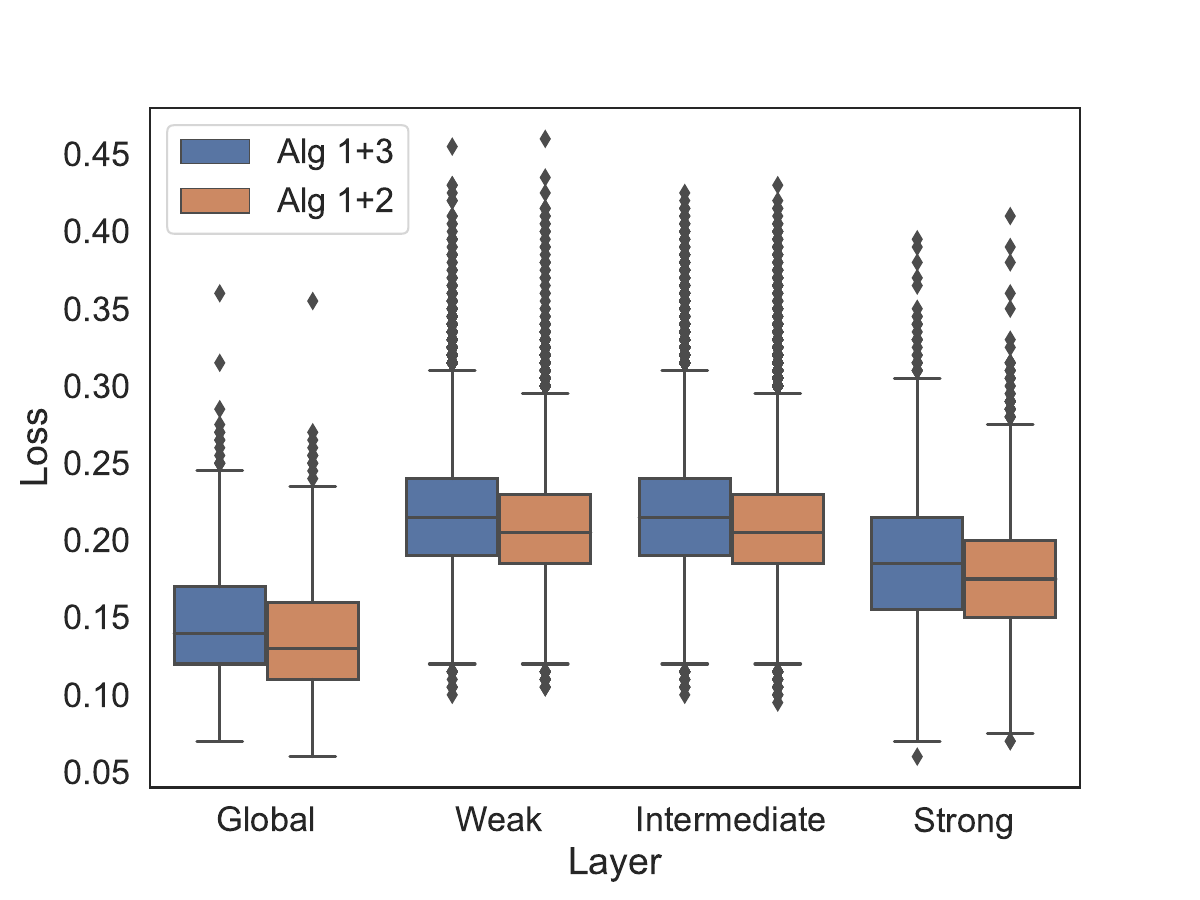}
			% \caption{$c = 2, \rho = 0.1$.}
			% \label{fig:compare_algs_c_2}
		\end{subfigure}
		\begin{subfigure}{.43\textwidth}
			\centering
			\includegraphics[width=1\linewidth]{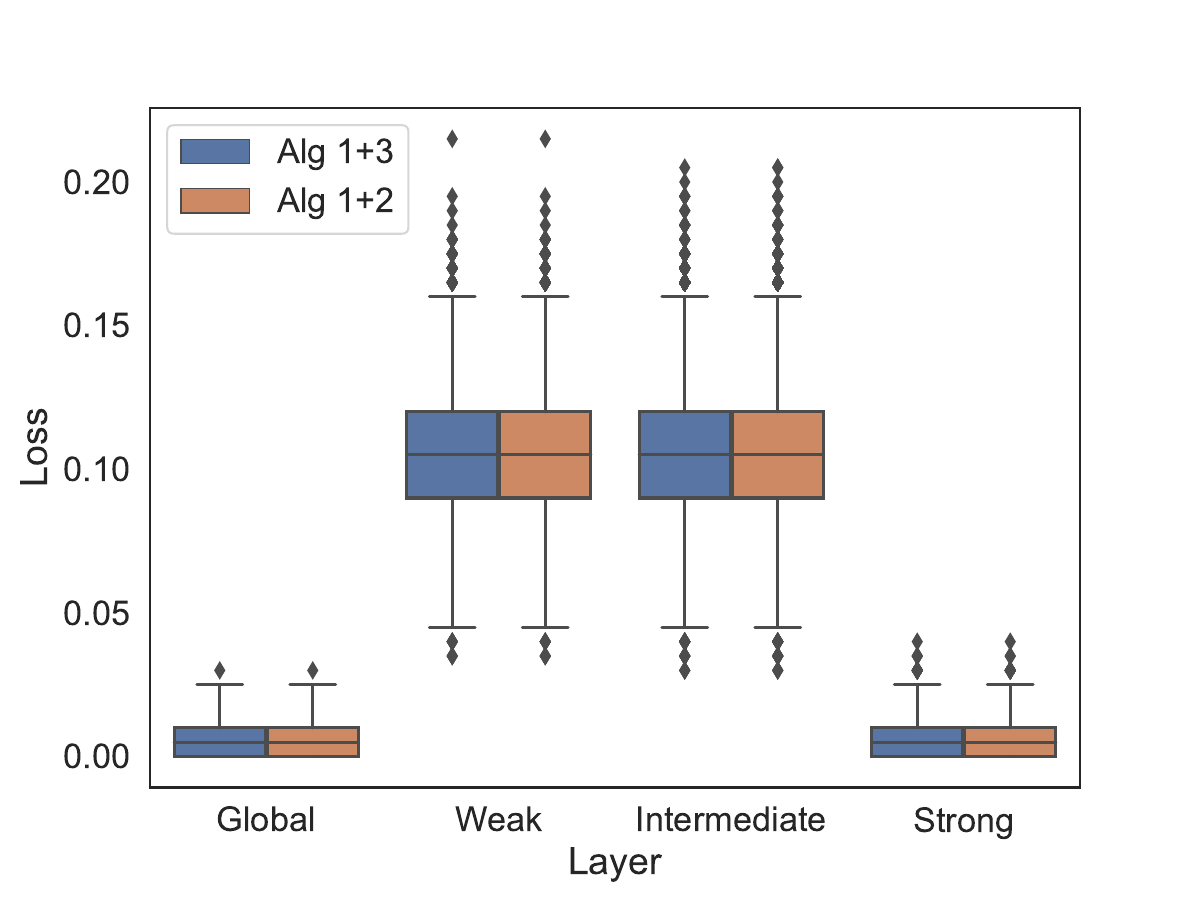}
			% \caption{$c = 5, \rho = 0.1$.}
			% \label{fig:compare_algs_c_5}
		\end{subfigure}
        \caption{\small Comparison between estimation accuracies of the generic version (Algorithm \ref{alg: generic_refinement}) and the provable version (Algorithm \ref{alg: provable_refinement}) when $c = 2$ (left) and $c = 5$ (right). In each panel, the four groups from left to right correspond to global estimation and individualized estimation in weak, intermediate, and strong layers, respectively.}
		\label{fig:compare_algs}
\end{figure}

% \smallskip
\subsection{Comparison between Algorithms \ref{alg: generic_refinement} and \ref{alg: provable_refinement}} 
Recall that we have developed two versions of the same algorithm: Algorithm \ref{alg: generic_refinement} is fast but we were not able to establish any theoretical guarantees, whereas Algorithm \ref{alg: provable_refinement} is slower but provably optimal. 
We have argued that these two versions should perform similarly in terms of estimation accuracy, and we now empirically justify this claim. 
We set $n = 200, L = 100, \rho = 0.1$ and let $c$ be either $2$ or $5$. 
We then run the two algorithms over $500$ instances of the model (assuming $\{p_\ell\}, \{q_\ell\}, \rho$ are known) and record the misclustering proportions for \globest~(\texttt{Layer=Global}), \indest~in weak layers (\texttt{Layer=Weak}), intermediate layers (\texttt{Layer=Intermediate}) and strong layers (\texttt{Layer=Strong}).

The results are presented in Figure \ref{fig:compare_algs}. We see that the performances of the two versions are indeed similar, and they even become indistinguishable when $c=5$. Thus, in the rest of this section, we always use Algorithm \ref{alg: generic_refinement}.

\begin{figure}[t]
\centering
		\begin{subfigure}{.4\textwidth}
			\centering
			\includegraphics[width=1.1\linewidth]{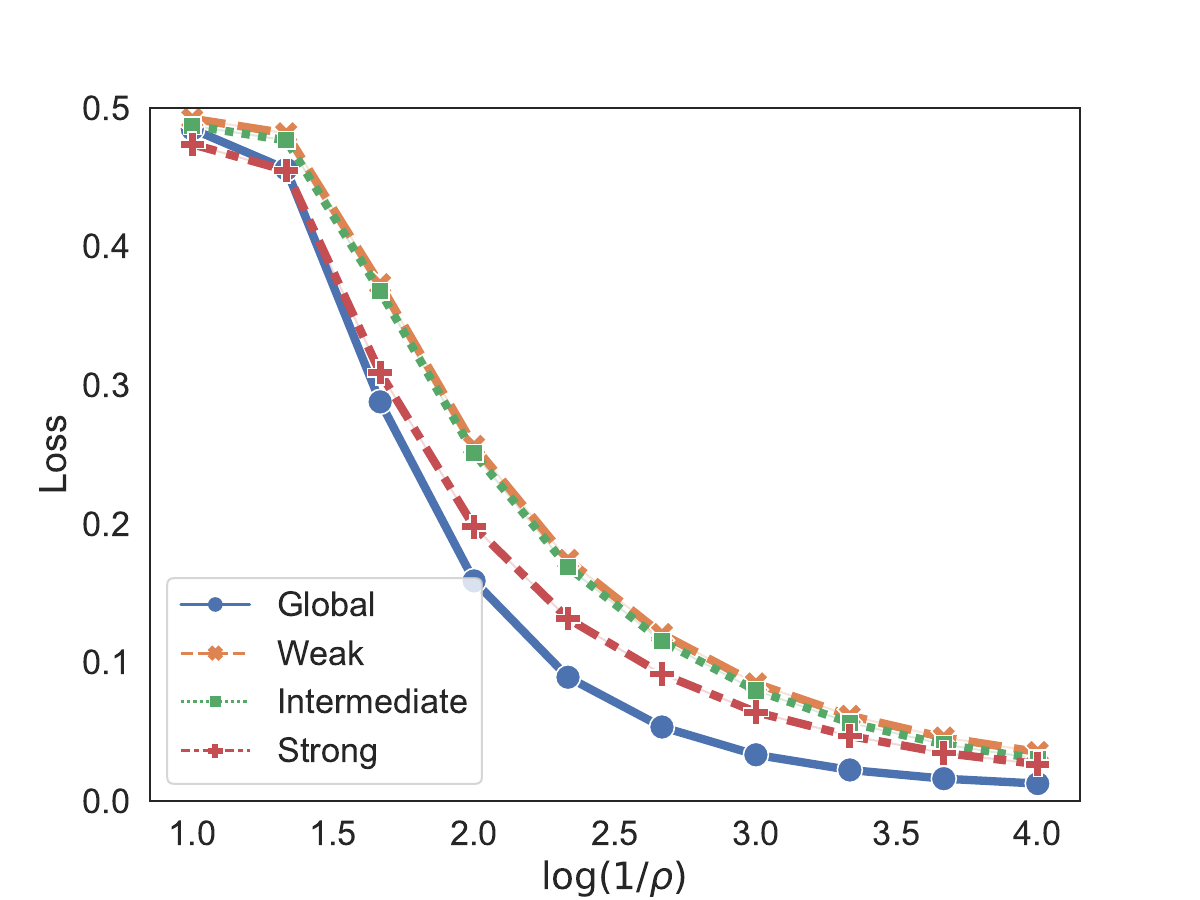}
			% \caption{Fix $c = 2$ and vary $\log(1/\rho)$.}
			% \label{fig:vary_rho}
		\end{subfigure}
		% \hfill
		\begin{subfigure}{.4\textwidth}
			\centering
			\includegraphics[width=1.1\linewidth]{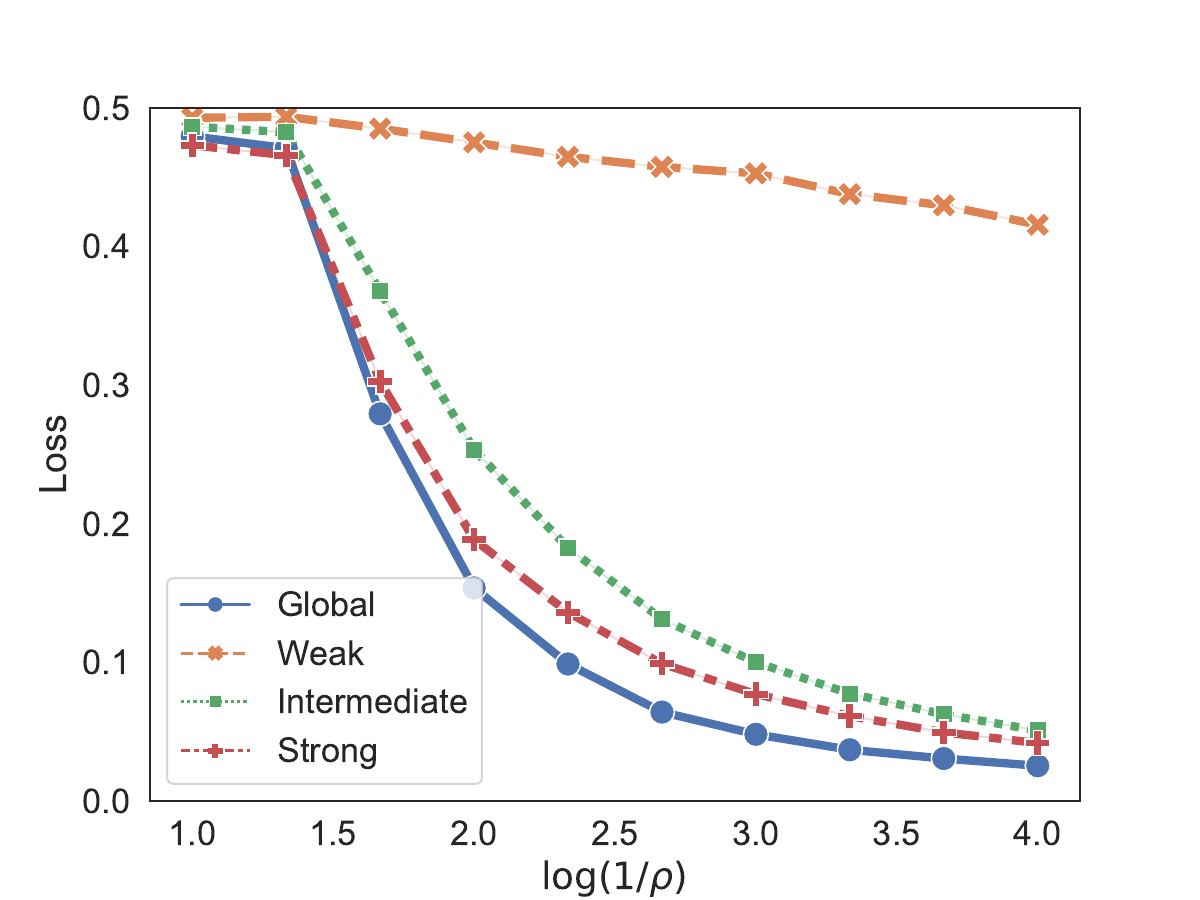}
			% \caption{Fix $\rho=0.1$ and vary $c$.}
			% \label{fig:vary_c}
		\end{subfigure}
        \vskip -0.12\baselineskip
		\begin{subfigure}{.4\textwidth}
			\centering
			\includegraphics[width=1.1\linewidth]{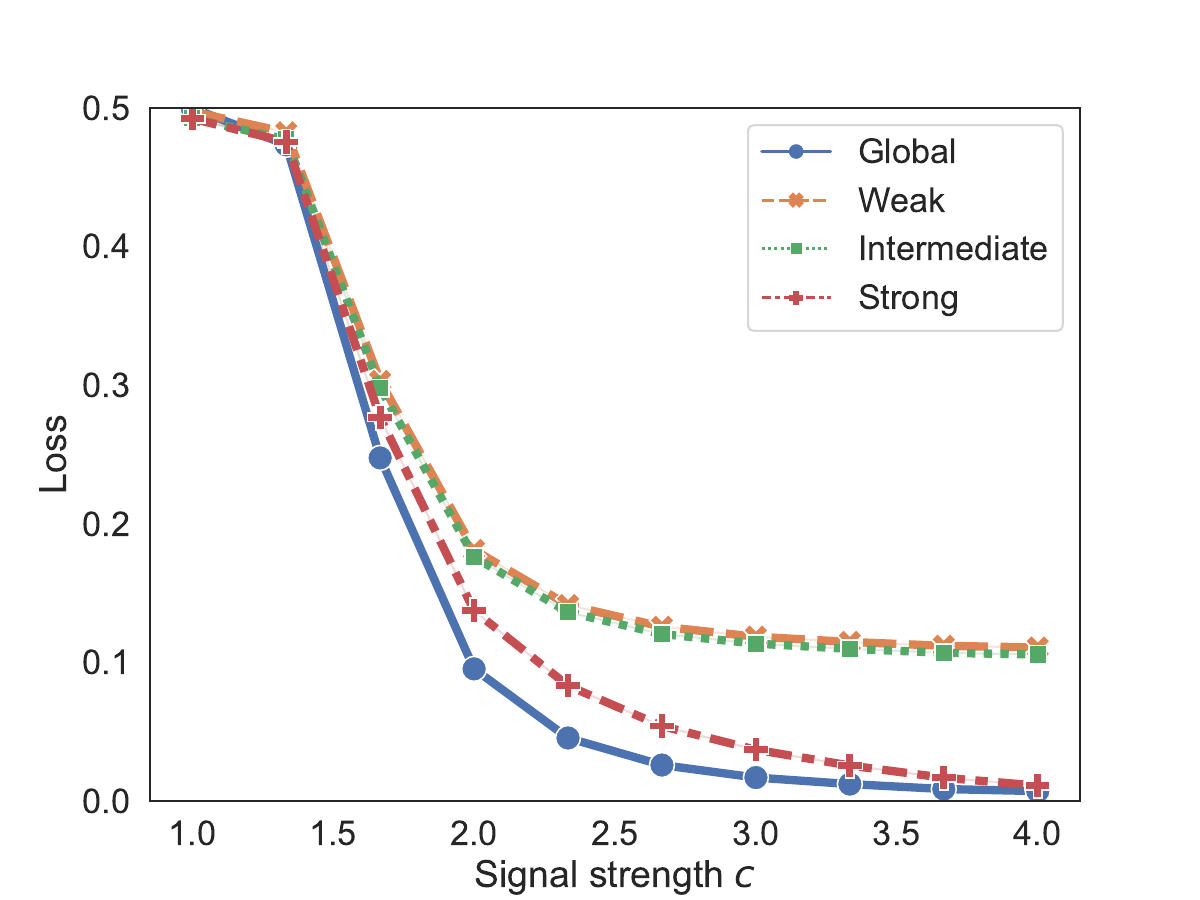}
			\caption{Our method: Algorithm \ref{alg: specc}+\ref{alg: generic_refinement}}
			\label{fig:our_method}
		\end{subfigure}
		% \hfill
		\begin{subfigure}{.4\textwidth}
			\centering
			\includegraphics[width=1.1\linewidth]{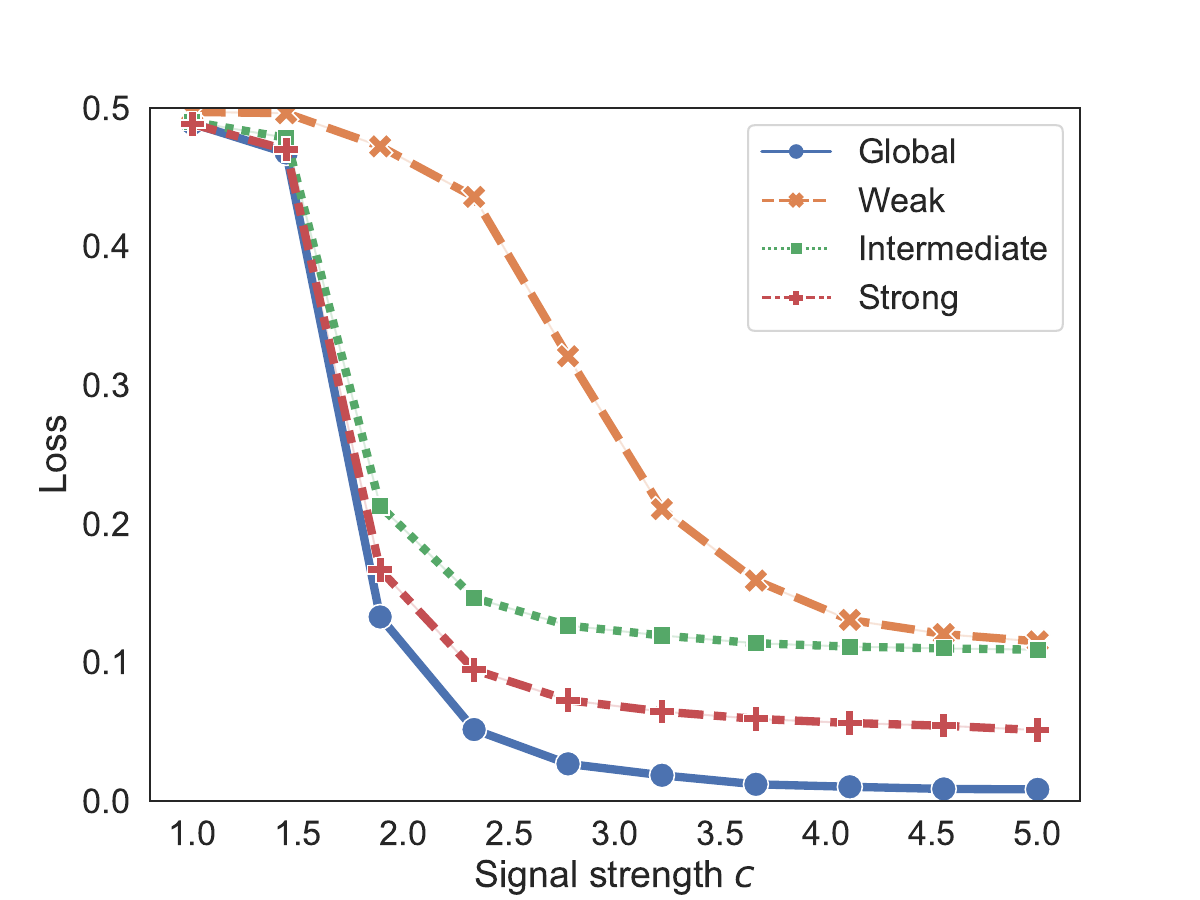}
			\caption{Co-regularized spectral clustering}
			\label{fig:PC20}
		\end{subfigure}
        \caption{\small Performance of our method (left panel) and co-regularized spectral clustering (right panel) \citep{kumar2011co,paul2020spectral} under different SNRs. The top two figures consider the setup where $c = 2$ is fixed and $\log(1/\rho)$ varies, and the bottom two figures consider the setup where $\rho=0.1$ is fixed and $c$ varies.
        % The shaded areas represent confidence intervals obtained by computing the $2.5\%$ and $97.5\%$ quantiles of $1000$ bootstrap samples of the misclustering errors.
        }
		\label{fig:effect_of_snr}
\end{figure}

% \smallskip
\subsection{Effects of SNRs and comparison with co-regularized spectral clustering} \label{subsec:effect_of_snr}
Recall that the minimax rates for \globest~and \indest~both rely on two information theoretic quantities: $J_\rho = \oone \cdot\frac{1}{2}\log(1/\rho)$ and $\III{\ell}_t =  \oone\cdot (c^t -1)(c^{1-t}-1)\times(\textnormal{scaling of } q_\ell)$. We now experiment on how these two quantities influence the performance of our proposed algorithm. We set $n = 1000, L = 100$, and we either fix $\rho = 0.1$ and vary $c$, or fix $c = 2$ and vary $\log(1/\rho)$. 
We run Algorithm \ref{alg: generic_refinement} over $500$ instances of the model (again assuming $\{p_\ell\}, \{q_\ell\}, \rho$ are known) and record the misclustering proportions for both \globest~and \indest. 
As a comparison, we implement the \emph{co-regularized spectral clustering} algorithm, a popular algorithm for clustering in multilayer networks originally proposed by \cite{kumar2011co} and later shown to be consistent in the $\rho=0$ case by \cite{paul2020spectral}. 
Since co-regularized spectral clustering requires running multiple ``coordinate ascent'' steps, each involving computing the eigen-decomposition of an $n\times n$ matrix, it is substantially slower than our method, and we only run this algorithm for {$50$} instances of the model. 
We refer the readers to Section \ref{subappend:co_reg} for details on this algorithm.

Figure \ref{fig:effect_of_snr} shows the results of this simulation. We see that our method significantly outperforms co-regularized spectral clustering in all scenarios considered. 
% agreeing with our theoretical results,
% The advantage of our method is especially notable when the SNR is low.
By the top-left plot in Figure \ref{fig:effect_of_snr}, the misclustering proportions of our method for both \globest~and \indest~tend to zero if we fix $c=2$ and increase the value of $J_\rho$. 
In contrast, by the bottom-left plot in Figure \ref{fig:effect_of_snr},
 % tells a slightly different story.
% On the one hand, we see that,
for fixed $\rho = 0.1$, misclustering proportions for $\bz^\star$ tend to zero as $c$ becomes large, which is as expected. 
However, for \indest, while errors of strong layers still tend to zero, 
errors of intermediate and weak layers both tend to $\rho = 0.1$. 
This behavior actually is well explained by our theory. 
Note that the minimax rate \eqref{eq:tight_upper_bound_z_l} for \indest~consists of two terms, where the first term represents the error from label sampling \eqref{eq:rho_flipping_model} and scales as $e^{-(1+o(1))\indinfo_{\{\ell\}}} = \rho^{1+o(1)}\times e^{-(1+o(1))\psi^\star_{\{\ell\}} (-2J_\rho)}$, whereas the second term comes from empirically estimating $\bz^\star$, which tends to zero much faster than the first term in the current setting. 
Different behaviors of individualized estimation errors in strong layers and in intermediate/weak layers occur since $\psi^\star_{\{\ell\}}(-2J_\rho)$ is quite large when the $\ell$-th layer is strong while it is nearly zero when the layer only has intermediate or weak signal.

\begin{figure}[t]
		\centering
		\includegraphics[width=0.5\textwidth]{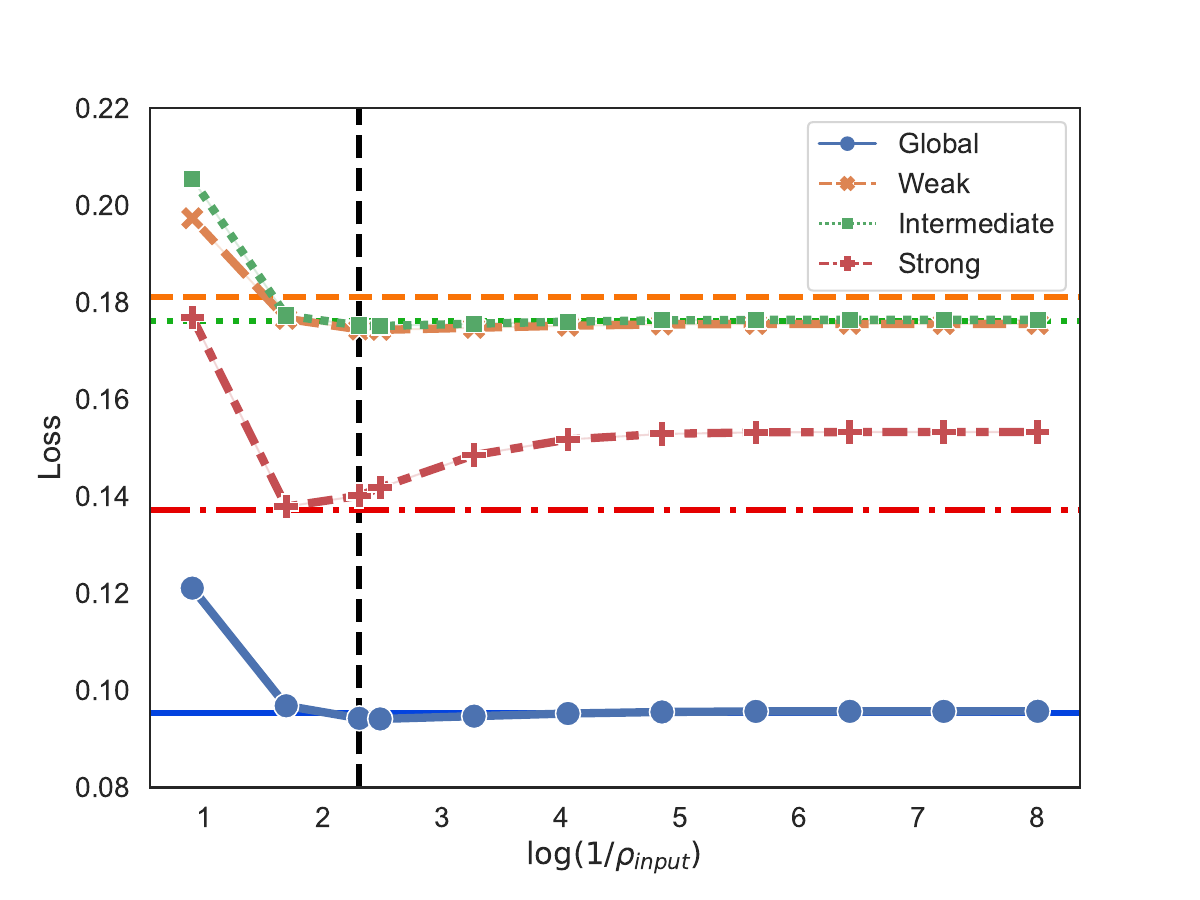}
        \caption{\small Average misclustering proportions against $\log(1/\rho_{\textnormal{input}})$ when $\{p_\ell\}, \{q_\ell\}$ are estimated from data. 
		The black dashed vertical line represents the location of the true $\rho$ that generates the data, and the horizontal lines represent the errors made by Algorithm \ref{alg: generic_refinement} when $\{p_\ell\}, \{q_\ell\}, \rho$ are all known. 
        % The shaded areas represent confidence intervals obtained by computing the $2.5\%$ and $97.5\%$ quantiles of $1000$ bootstrap bootstrap samples of the misclustering errors.
        }
		\label{fig:sens2params}
\end{figure}

% \smallskip
\subsection{Sensitivity to inexact parameter specifications} 
The optimality of our proposed algorithm has been established assuming knowledge of the true $\{p_\ell\}, \{q_\ell\}$ and $\rho$. 
In practice, we need to estimate them from data. 
Estimating $\{p_\ell\}$ and $\{q_\ell\}$ is relatively easy --- we could first obtain a crude estimate of $p_\ell$'s using method of moment (see Section \ref{subappend:est_deg} for details), then input that to Algorithm \ref{alg: specc} to obtain an initial estimator $\tilde \bz$ of $\bz^\star$, and finally compute the \emph{intra-cluster} and \emph{inter-cluster} average of edges in each layer, which will be our final estimator of $\{p_\ell\}$ and $\{q_\ell\}$. 

Estimating $\rho$ is, however, a nontrivial task. 
Alternatively, we could treat the input $\rho$ (denoted as $\rho_\textnormal{input}$) to Algorithm \ref{alg: generic_refinement} as a hyperparameter. In this simulation, we examine the sensitivity of the algorithm to the estimated (hence inexact) $\{p_\ell\}, \{q_\ell\}$ and $\rho$. We again set $n = 1000, L = 100$, and we fix $c = 2, \rho = 0.1$. 
We run Algorithm \ref{alg: generic_refinement} with estimated $\{p_\ell\}$ and $\{q_\ell\}$ over $500$ instances of the model with different (misspecified) input values of $\rho$ and plot misclustering proportions for both \globest~and \indest~in Figure \ref{fig:sens2params}.

From Figure \ref{fig:sens2params}, we see that our algorithm is robust to inexact parameters. 
As long as we do not set $\rho_{\textnormal{input}}$ to be too large, the performance of our algorithm with estimated $\{p_\ell\}$ and $\{q_\ell\}$ only slightly degrades compared to when true parameter values are used.

\section{Extension to Multi-Cluster and Asymmetric Cases} 
\label{sec:multi_cluster}

\begin{algorithm}[t]
\SetNoFillComment
% \SetKwFunction{Initialize}{Initialize}
\KwIn{Number of communities $K$, initial global estimator $\btz^{\star}$, adjacency matrices $\{\AAA{\ell}\}_1^L$, connecting probabilities $\{(p_\ell, q_\ell)\}_1^L$, flipping probability $\rho$}
\KwOut {Global estimator $\bhz^\star$, individualized estimator $\{\hzzz{\ell}\}_1^L$}
% $\{\tzzz{\ell}\}_1^L\leftarrow$\Initialize($\{\AAA{\ell}, p_\ell, q_\ell,\rho\}_1^L$) \tcp*[r]{initial estimators of $\{\zzz{\ell}\}_1^L$}
% \tcc{\textcolor{blue}{\centering Alignment of initial estimators}}
% \For{$\ell = 2, \hdots, L$}{
  % \lIf{$\langle\tzzz{\ell}, \tzzz{1}\rangle <  0$ }{$\tzzz{\ell}\leftarrow -\tzzz{\ell}$}
% }
% \tcc{\textcolor{blue}{\centering MAP-based refinement}}
\For{$i = 1, \hdots, n$}{
  % Set $\tzzz{\ell} = \btz^{\star}$\;
  \For{$\ell = 1, \hdots, L$}{
    \For{$k=1,\ldots,K$}{
    $\sfz(\ell, k) \leftarrow \argmax_{s\in[K]} f^{(\ell)}_{i}(k, s, \btz^{\star})$
    \tcp*[r]{$f^{(\ell)}_i$ is defined in \eqref{eq:local_map_objective multiclass}}  
  }
    % $\sfz(\ell, -1)\leftarrow \argmax_{s\in\{\pm1\}} f^{(\ell)}_{i}(-1, s, \btz^{\star})$\; 
  }
  $\bhz^\star_i \leftarrow \argmax_{s_\star\in[K]} \sum_{\ell\in [L]} f^{(\ell)}_i\big(s_\star, \sfz(\ell, s_\star), \btz^{\star}\big)$\tcp*[r]{final global estimator}
  \For{$\ell=1, \hdots, L$}{
    $\hzzz{\ell}_i \leftarrow \sfz(\ell, \bhz^\star_i)$\tcp*[r]{final individualized estimators}
  }
}
\Return{$\bhz^\star, \{\hzzz{\ell}\}_1^L$}\;
\caption{Node-wise refinement via MAP estimation for multi-class IMLSBM}
\label{alg: generic_refinement multiclass}
\end{algorithm}
{In this section, we adapt Algorithm \ref{alg: generic_refinement} to accommodate cases where the number of communities could be more than two and the community sizes could be considerably different.}

We start by describing a canonical generalization of two-block IMLSBM (i.e., the model described by \eqref{eq:label_flip} and \eqref{eq:layerwise_sbm}) below.
Suppose the global community assignment vector is $\bz^\star\in\{1,2,\ldots,K\}^n$, where $K\geq 2$ is the number of communities. 
For each layer $\ell \in [L]$ and each node $i \in [L]$, the individual community assignment $\bz^{(\ell)}_i$ independently follows the multinomial distribution
\begin{align*}
\mathbb{P}\left(\bz^{(\ell)}_i=k\right)=(1-\rho)\cdot \indc{\bz^\star_i=k} + \frac{\rho}{K-1} \cdot \indc{\bz^\star_i\neq k},\quad 1\leq k\leq K
\end{align*}
for some $\rho \in (0, 1)$.
That is, $\bz^{(\ell)}_i$ agrees with $\bz^\star_i$ with probability $1-\rho$, and flips to other communities with equal probabilities. The layer-wise adjacency matrices are still generated according to \eqref{eq:layerwise_sbm}.

\begin{figure}
\centering 
\begin{subfigure}{.45\textwidth}
\includegraphics[width=\textwidth]{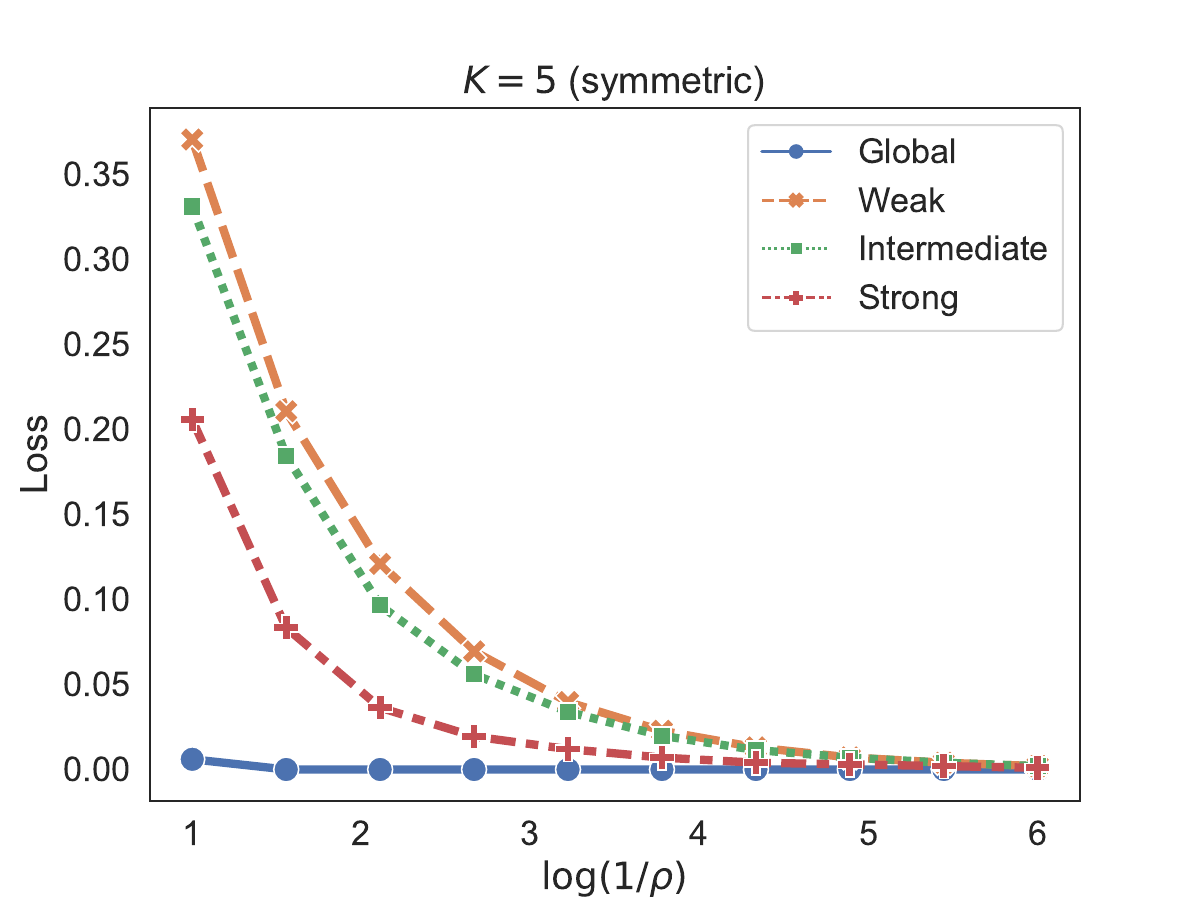}
\end{subfigure}
\begin{subfigure}{.45\textwidth}
\includegraphics[width=\textwidth]{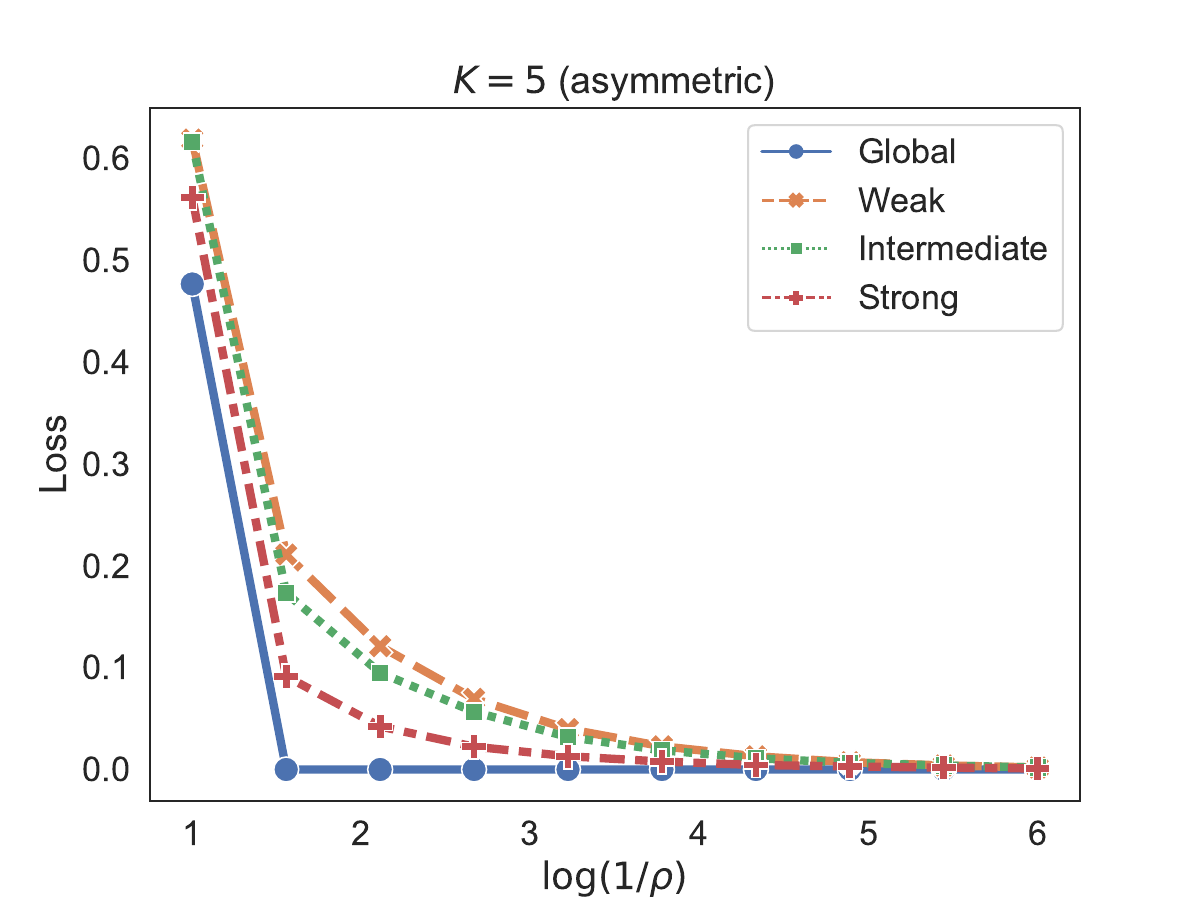}
\end{subfigure}
\begin{subfigure}{.45\textwidth}
\includegraphics[width=\textwidth]{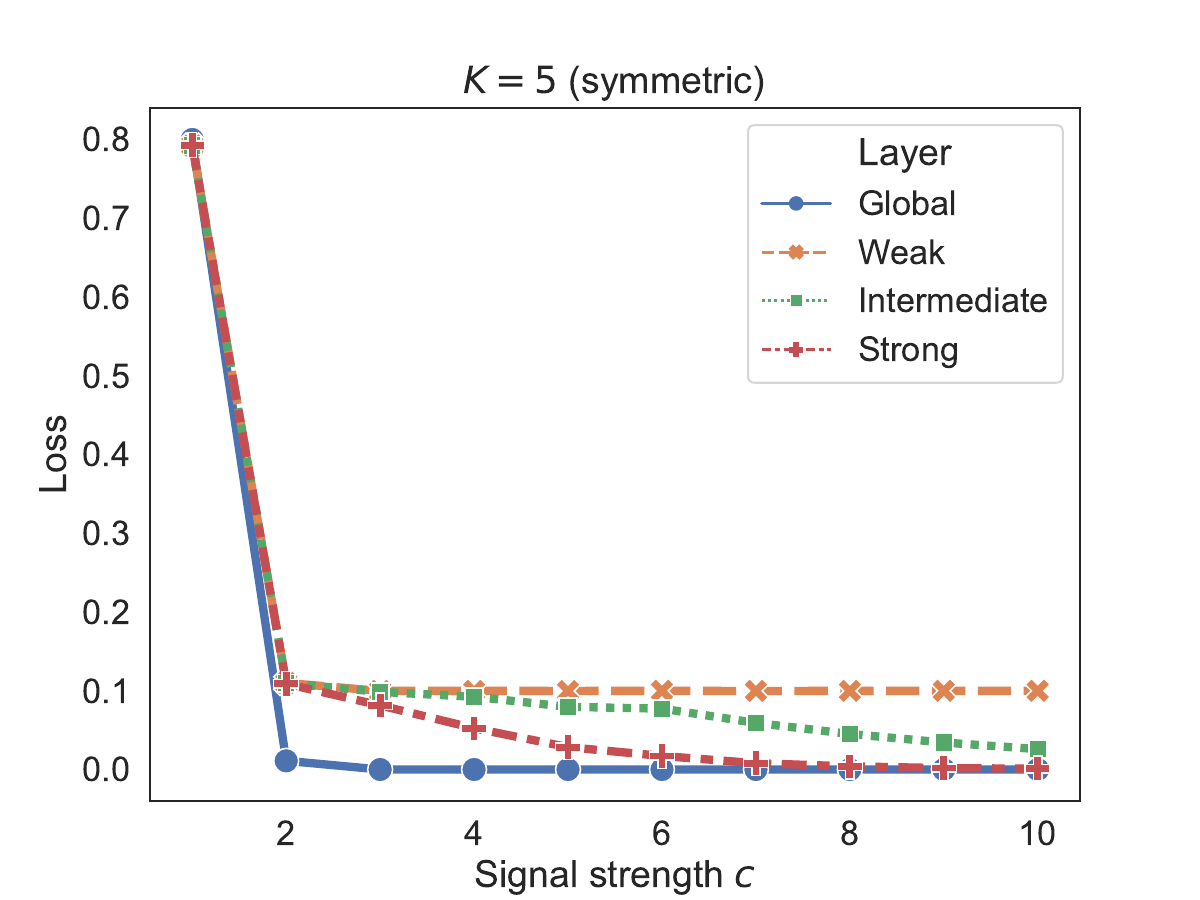}
\end{subfigure}
\begin{subfigure}{.45\textwidth}
\includegraphics[width=\textwidth]{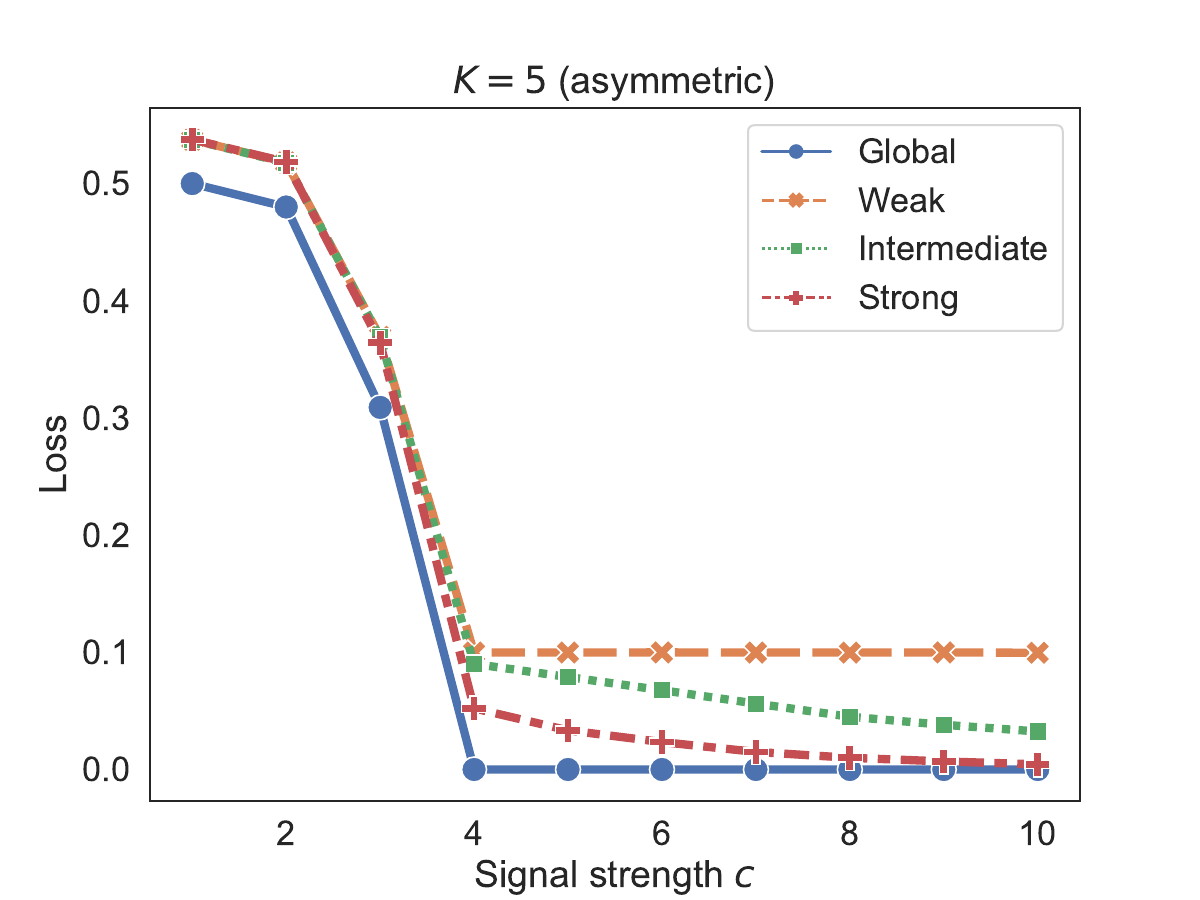}
\end{subfigure}
\caption{Performance of Algorithm \ref{alg: generic_refinement multiclass} for $K=5$ under different SNRs. The top two figures fix $c=5$ and vary $\log(1/\rho)$ from 1 to 6. The bottom two figures fix $\rho=0.1$ and vary $c$ from 1 to 10. The left two figures have symmetric communities, meaning that all 5 communities have the same size in $\bz^\star$. The right two figures have asymmetric communities, where the proportions of the 5 communities are $\frac{1}{2},\frac{1}{4},\frac{1}{12},\frac{1}{12},\frac{1}{12}$.
}
\label{fig: simu multiclass}
\end{figure}

Algorithm \ref{alg: generic_refinement} seamlessly generalizes to the current setting --- one can simply write down the MAP objective function and perform node-wise refinement. All we need to do is to modify \eqref{eq:joint_refinement} to
\begin{align}
  f^{(\ell)}_i(s_\star, s_\ell, \btz^{\star}) & = \log\bigg(\frac{1-\rho}{\rho/(K-1)}\bigg) \cdot \indicator{s_\ell = s_\star} \nonumber\\
  \label{eq:local_map_objective multiclass}
  & \qquad + \sum_{j\neq i: \btz^{\star}_j = s_\ell} \bigg[\log\bigg(\frac{p_\ell(1-q_\ell)}{q_\ell(1-p_\ell)}\bigg) \AAA{\ell}_{ij} + \log \bigg(\frac{1-p_\ell}{1-q_\ell}\bigg)\bigg],
\end{align}
See Algorithm \ref{alg: generic_refinement multiclass} for a detailed description.

We conduct a small scale simulation study to verify the effectiveness of Algorithm \ref{alg: generic_refinement multiclass}. 
In the simulation shown in Figure \ref{fig: simu multiclass}, we take $n=1000$, $L=100$, and $K=5$. We still divide the layers into weak, intermediate and strong according to the description in Section \ref{sec:exp}, except that for intermediate layers, we set $p_\ell = 35 c(\log n)/nL$ and $q_\ell = 35 (\log n) / (nL)$. 
We either fix $c=5$ and vary $\rho$ (top), or fix $\rho=0.1$ and vary $c$ (bottom). 
We consider both symmetric and asymmetric community sizes. 
Let $n_k=\sum_{i=1}^n\indc{\bz^\star_i=k}$ denote the size the $k$-th community in the global assignment. 
For the symmetric case, all $5$ communities in $\bz^\star$ have the same size (i.e., $n_1=\ldots=n_5=200$). 
For the asymmetric case, we take $n_1=500$, $n_2=250$, and the other three communities have sizes $83$, $83$ and $84$.

In this simulation, we observe similar patterns as those in Figure \ref{fig:effect_of_snr} which focused on the symmetric two-block case. 
When we fix $c$ and send $\rho$ to zero, both the loss in the global estimator and the loss in the individualized estimators tend to zero. When $\rho$ is fixed at $0.1$ and $c$ grows, the loss in the individualized estimators in the weak layers approaches $0.1$. 
Our algorithm also performs well in the asymmetric case.

\section{A Real Data Example}\label{sec:real_data}
{Development of biotechnologies have enabled biologists to simultaneously profile multiple modalities at single-cell resolution. 
% \cite{stoeckius2017simultaneous,cao2018joint,chen2019high,gaiti2019epigenetic,rodriques2019slide,vickovic2019high}.
Here we illustrate our algorithm on a multi-modal single cell data  \citep{hao2021integrated} obtained via the CITE-seq technology \citep{stoeckius2017simultaneous}.}

This dataset contains 211000 human peripheral blood mononuclear cells (PBMCs). For each cell, $20729$ RNA markers and $224$ protein antibody panels were measured and their expression levels were recorded. 
The dataset also contains expert-annotated labels that categorize each cell into one of eight cell types: Monocyte (\texttt{Mono}), \texttt{CD4 T} cell, \texttt{CD8 T} cell, \texttt{other T} cell, Natural killer cell (\texttt{NK}), \texttt{B} cell, Dendritic cell (\texttt{DC}), and \texttt{other} (undefined).

We randomly subsampled $5000$ cells and removed the two smallest clusters: \texttt{DC} (size$=127$) and \texttt{other} (size=$104$). Among the remaining $4769$ cells, we aggregated \texttt{CD4 T}, \texttt{CD8 T}, and \texttt{other T} cells as a single \texttt{T} cell group. 
Thus, we obtained two data matrices $X^{(1)}\in \bbR^{4769\times 20,729}, X^{(2)}\in \bbR^{4769\times 224}$ with RNA and protein information, respectively. 
The rows (cells) are classified into four clusters: \texttt{Mono} (size$=1547$), \texttt{T} (size$=2224$), \texttt{NK} (size$=581$), and \texttt{B} (size$=417$). 
For RNA data, we retained the top $50$\% variable columns (according to standard deviation) and computed a pairwise distance matrix $D^{(1)}\in\bbR^{(4769\times 4769)}$, 
where $D^{(1)}_{i, j}$ is one minus the Pearson correlation coefficient between the $i$-th row and the $j$-th row of $X^{(1)}$. 
We then obtained a  matrix $A^{(1)}_{\texttt{row}}$ by dichotomizing rows of $D^{(1)}$: setting the smallest $3\%$ entries of each row to be one and the rest zero. 
Similarly, we obtain $A^{(1)}_{\texttt{col}}$ by dichotomizing columns of $D^{(1)}$. 
The final adjacency matrix for RNA information is given by $A^{(1)} = (A^{(1)}_{ij})$ where $A^{(1)}_{ii}=0$ and for any $i$ and $A^{(1)}_{ij} = \max\{ (A^{(1)}_{\texttt{row}}\odot A^{(1)}_{\texttt{col}})_{ij}, (A^{(1)}_{\texttt{row}}\odot A^{(1)}_{\texttt{col}})_{ji}\}$ for any $i\neq j$. 
Here $\odot$ stands for element-wise product. 
The adjacency matrix $A^{(2)}$ for protein information was similarly constructed, except that we picked the top $95\%$ variable columns due to limited number of protein panels.

We applied the multi-class version of our algorithm (i.e., Algorithm \ref{alg: specc}+\ref{alg: generic_refinement multiclass}) to the two-layer network $(A^{(1)}, A^{(2)})$ with $K= 4$ and $\rho=0$. Note that $\rho=0$ reveals our prior knowledge that the two individual label vectors should exactly agree. The misclustering proportion of the resulting global estimator is $5.89\%$. 
In contrast, spectral clustering with local likelihood refinement applied to RNA network alone and protein network alone gave estimators whose misclustering proportions are $8.68\%$ and $10.99\%$, respectively.
The lower misclustering proportion achieved by applying our algorithm to the two-layer network reveals that our algorithm successfully integrated the two sources of information.

To illustrate our algorithm with non-zero $\rho$, we randomly sampled an index set $\calS$ with $|\calS| = 1906$ among the $4769$ cells. 
We then randomly partitioned $\calS$ into two equally-sized disjoint subsets $\calS_1 \cup \calS_2$ (i.e., $|\calS_1| = |\calS_2| = 953$), and let $\tilde X^{(1)} = X^{(1)}_{\calS_1\cup \calS^c, \bigcdot}\in \bbR^{3816\times 20,729}$ and $\tilde X^{(2)} = X^{(2)}_{\calS_2\cup \calS^c, \bigcdot} \in \bbR^{3816 \times 224}$.
After the operation, about $20\%$ of the cells have different cluster labels between the two modalities. Such a mismatch could happen, for example, when two modalities are not row-wise aligned a priori and a matching algorithm has been applied to partially align the cells \citep{haghverdi2018batch,barkas2019joint,stuart2019comprehensive,zhu2021robust}. 
We then applied the same dichotomization procedure to $(\tilde X^{(1)}, \tilde X^{(2)})$ and obtain a two-layer network $(\tilde A^{(1)}, \tilde A^{(2)})$, to which we applied our algorithm with $K=4$ and $\rho = 0.2$. 
The misclustering proportions of the two individualized estimators (for RNA labels and protein labels) are $8.12\%$ and $5.71\%$, respectively. 
In contrast, the misclustering proportions of the ``spectral clustering + local refinement'' estimators obtained by separately working with RNA network and  protein network are $8.49\%$ and $11.45\%$, respectively. 
Such a noticeable difference suggests that our algorithm produces individualized estimators that not only borrow information across multiple layers, but also retain modality-specific characteristics.

\small{
\setlength{\bibsep}{0.2pt plus 0.3ex}
\bibliographystyle{abbrvnat}
\bibliography{references}}

\newpage
\appendixtitleon
\appendixtitletocon
\addtocontents{toc}{\protect\setcounter{tocdepth}{1}}
\begin{appendices}

\section{Additional Theoretical Results} \label{append:additional_theories}

\subsection{Non-Vanishing Label Sampling Probability} \label{subappend:const_rho}

In this section, we derive theoretical results when the label sampling probability $\rho$ does not vanish as $n$ tends to infinity.

\paragraph{Lower bounds.}
The following lower bound is the counterpart of Theorem \ref{thm:lower_bound_z_star}.

\begin{theorem}[Minimax lower bound for \globest~with non-vanishing $\rho$]
  \label{thm:lower_bound_z_star_const_rho}
  Assume there exist constants $C_1, C_2 > 1$ and $c_1, c_2, c_3 \in (0, 1)$ such that $1\lesssim \rho \leq 1-c_1$, $C_1 q_\ell \leq p_\ell \leq (C_2 q_\ell) \land (1-c_2), \forall \ell\in[L]$ and $\log L\ll  n^{c_3}$. If 
  $
    \min_{S\subseteq[L]} \{|S^c|(J_\rho+\log\sqrt{2}) + \psi_S^\star(0)\} \to \infty,
  $
  then there exists a sequence $\underline{\delta}_n = o(1)$ such that 
  \begin{align}
    \label{eq:lower_bound_z_star_vanishing_const_rho}
    & \inf_{\bhz^\star} \sup_{\bz^\star\in \calP_n} \bbE \calL(\bhz^\star, \bz^\star) \geq \exp\big\{-(1+\underline{\delta}_n) \min_{S\subseteq[L]} \{|S^c|(J_\rho+\log\sqrt{2}) + \psi_S^\star(0)\}\big\}.
  \end{align}
  On the other hand, if $\min_{S\subseteq[L]} \{|S^c|(J_\rho+\log \sqrt{2}) + \psi_S^\star(0)\}=\calO(1)$, then there exists some $c' >0$ such that
  \begin{equation}
    \label{eq:lower_bound_z_star_constant_const_rho}
    \inf_{\bhz^\star} \sup_{\bz^\star\in \calP_n} \bbE \calL(\bhz^\star, \bz^\star) \geq c'.
  \end{equation}
\end{theorem}
\begin{proof}
  See Section \ref{prf:thm:lower_bound_z_star_const_rho}.
\end{proof}

Compared to the lower bound in the case of $\rho = o(1)$, the exponent in the above lower bound does not differ according to the parity of $|S^c|$.

The following lower bound is the counterpart of Theorem \ref{thm:lower_bound_z_l}.

\begin{theorem}[Minimax lower bound for \indest~with non-vanishing $\rho$]
  \label{thm:lower_bound_z_l_const_rho}
  Under the same setup as Theorem \ref{thm:lower_bound_z_star_const_rho},
  if for a fixed $\ell\in[L]$, it holds that %\sxc{divergence of $\indinfo$ not needed?}
  $
    \psi_{\{\ell\}}^\star(0) \to \infty,
  $
  then there exists a sequence $\underline{\delta}_n = o(1)$, independent of $\ell$, such that 
  \begin{align}
    \label{eq:lower_bound_z_l_vanishing_const_rho}
    & \inf_{\hzzz{\ell}} \sup_{\bz^\star\in \calP_n} \bbE \calL(\hzzz{\ell}, \zzz{\ell}) \geq \exp\big\{-(1+\underline{\delta}_n) \psi_{\{\ell\}}^\star(0)\big\}.
    % & \qquad \geq \exp\bigg\{-(1+\underline{\delta}_n) 
    % \bigg(2J_\rho + \psi_{\{\ell\}}^\star(-2J_\rho)\bigg)  \land \bigg(\min_{\substack{S\subseteq[L]\setminus\{\ell\} \\ |(S\cup\{\ell\})^c| \textnormal{ even}}}|(S\cup\{\ell\})^c|J_\rho + \psi_{S\cup\{\ell\}}^\star(0)\bigg) \nonumber\\
    % & \qquad\qquad\qquad  \land \bigg(\min_{\substack{S\subseteq[L]\setminus\{\ell\}\\ |(S\cup\{\ell\})^c|\textnormal{ odd}}}(|(S\cup\{\ell\})^c| + 1)J_\rho + \psi_{S\cup\{\ell\}}^\star(-2J_\rho)\bigg) \bigg\}.
    % & \qquad \lor \exp\bigg\{-(1+\underline{\delta}_n) \max_{\substack{S\subseteq[L]\\ S\textnormal{ odd}}}\bigg((|S^c| + 1)J_\rho + \psi_S^\star(-2J_\rho)\bigg)\bigg\}.
  \end{align}
  On the other hand, if $\psi_{\{\ell\}}^\star(0)=\calO(1)$, then there exists $c'>0$ such that
  \begin{equation}
    \label{eq:lower_bound_z_l_constant_non_vanish_rho}
    \inf_{\hzzz{\ell}} \sup_{\bz^\star\in \calP_n} \bbE \calL(\hzzz{\ell}, \zzz{\ell})\geq c'.
  \end{equation}
\end{theorem}
\begin{proof}
  See Section \ref{prf:thm:lower_bound_z_l_const_rho}.
\end{proof}

Note that in the constant $\rho$ regime, the lower bound for individualized estimation agrees with the minimax rate for a single-layer SBM. Intuitively, this is because even when the global label $\bz^\star$ is exactly known, this knowledge does not help if the goal is to consistently estimate $\zzz{\ell}$ --- the knowledge of $\bz^\star$ can only identify a constant proportion (namely $1-\rho$) of $\zzz{\ell}$.

\paragraph{Upper bounds.}
We now derive theoretical guarantees for Algorithm \ref{alg: provable_refinement} under the constant $\rho$ regime.
Note that in this regime, the difference between the global parameter $\zzz{\star}$ and the individualized parameters $\zzz{\ell}$ is not negligible. As the result, in Stage \RN{1} of Algorithm \ref{alg: provable_refinement}, na\"{i}vely setting $\tzzz{\ell, -i}\leftarrow \tzzz{\star, -i}$ does not give rise to a consistent estimator of $\zzz{\ell}$. To establish meaningful performance guarantees for Algorithm \ref{alg: provable_refinement}, we need to modify its Stage \RN{1} so that the following assumption holds.

\begin{assump}[Consistent layer-wise initialization]
  \label{assump:consistent_layerwise_init}
   For each $i\in[n], \ell \in[L]$, let Stage \RN{1} of Algorithm \ref{alg: provable_refinement} produce $\tzzz{\ell, -i} \leftarrow \mathtt{Initialize}_\ell(\{\AAA{\ell'}_{-i, -i}, p_{\ell'}, q_{\ell'}\}_{\ell'=1}^L, \rho)$. Assume those initial estimators satisfy
    \begin{equation}
    \label{eq:consistent_layerwise_init} 
    \max_{i\in[n]}\bbP\big(\calL(\tzzz{\ell, -i}, \zzz{\ell}) \geq \delta_{\init,n}, \forall \ell\in[L] \big) \lesssim n^{-(1+\ep_\init)}
  \end{equation}
  for some $\delta_{\init,n}=o(1)$ and $\ep_\init > 0$.
\end{assump}

The above assumption can be satisfied if the information in each layer is strong enough for consistent estimation of $\zzz{\ell}$.

The following theorem is the counterpart of Theorem \ref{thm:refine_glob}.
\begin{theorem}[Performance of MAP-based refinement for \globest~with non-vanishing $\rho$]
  \label{thm:refine_glob_const_rho}
  Consider the variant of Algorithm \ref{alg: provable_refinement} whose Stage \RN{1} is modified to satisfy Assumption \ref{assump:consistent_layerwise_init}.
  Let the input to this algorithm be an instance generated by an $\textnormal{\modelname} \in \calP_n(\rho, \{p_\ell\}_1^L, \{q_\ell\}_1^L, \beta)$ satisfying $1\lesssim \rho < 1/2$, $q_\ell < p_\ell \leq (C q_\ell) \land (1-c), \forall \ell\in[L]$, $\beta = 1+o(1)$ and $\log L\ll n^{c'}$, where $C > 1$ and $c, c' \in (0, 1)$ are absolute constants. 
  Assume that for any $\delta_n = o(1)$, the following holds:
  \begin{align}
    \label{eq:diverging_glob_snr_sum_const_rho}
    \lim_{n\to\infty } \sum_{S\subseteq[L]} e^{-(1-\delta_n) [|S^c|J_\rho + \psi_{S}^\star(0)]} =0.
  \end{align}
  Then, there exist two sequences $\overline{\delta}_n, \overline{\delta}_n' = o(1)$ such that
  \begin{equation}
    \label{eq:refine_glob_const_rho}
    \lim_{n\to \infty} \inf_{\bz^\star \in \calP_n} \bbP\bigg[ \calL(\bhz^\star , \bz^\star) \leq \bigg(\sum_{S\subseteq[L]} e^{-(1-\overline{\delta}_n)[|S^c|J_\rho + \psi_{S}^\star(0)]}\bigg)^{1-\overline{\delta}_n'} \bigg]  = 1.
  \end{equation}
\end{theorem}
\begin{proof}
  See Section \ref{prf:thm:refine_glob_const_rho}.
\end{proof}

Similar to the derivation of Theorem \ref{thm:opt_glob_est}, one can give sufficient conditions under which the above upper bound can be strengthened to
\begin{equation}
\label{eq:refine_glob_const_rho_strong}
  \exp\bigg\{-(1-o(1))\min_{S\subseteq[L]} \bigg(|S^c|J_\rho + \psi_S^\star(0)\bigg) \bigg\}.
\end{equation}
We sketch a calculation below.
The error of global estimation can be written as
\begin{align*}
  &\bigg[\prod_{\ell\in[L]} \bigg(e^{-(1 - \overline{\delta}_n)J_\rho} + e^{-(1-\overline{\delta}_n)n\III{\ell}_{1/2}/2}\bigg)\bigg]^{1-\overline{\delta}_n'}.
\end{align*}
Note that
\begin{align*}
  & \prod_{\ell\in[L]} \bigg(e^{-(1 - \overline{\delta}_n)J_\rho} + e^{-(1-\overline{\delta}_n)n\III{\ell}_{1/2}/2}\bigg)\\
  & = \exp\bigg\{-(1-\overline{\delta}_n)\min_{S\subseteq[L]}\bigg(|S^c|J_\rho + \psi_{S}^\star(0)\bigg)\bigg\} 
  \cdot \prod_{\ell\in[L]} \bigg[1 + \exp\bigg\{-(1-\overline{\delta}_n)\bigg( J_\rho \lor \psi^\star_{\{\ell\}}(0) - J_\rho \land \psi^\star_{\{\ell\}}(0) \bigg)\bigg\}\bigg].
\end{align*}
% By the lower bound, in order to achieve $o(1)$ error, we necessarily need
% $$
%   L J_\rho \to \infty, \qquad n\sum_{\ell\in[n]}\III{\ell}_{1/2}/2\to\infty.
% $$
A naive upper bound is
$$
    2^L \cdot \exp\bigg\{-(1-\overline{\delta}_n) \min_{S\subseteq[L]}\bigg(|S^c|J_\rho + \psi_{S}^\star(0)\bigg)\bigg\}.
$$
This naive upper bound is tight in the special case of $n\III{\ell}_{1/2}/2 = J_\rho$. Recall the lower bound is
$$
\exp\bigg\{-(1-\underline{\delta}_n)\min_{S\subseteq[L]}\bigg(|S^c|(J_\rho+\log\sqrt{2}) + \psi_{S}^\star(0)\bigg)\bigg\} .
$$
To match the lower bound as much as possible, we need $J_\rho$ and $\psi^\star_{\{\ell\}}(0) = n\III{\ell}_{1/2}/2$ to be as ``separate'' as possible. 

For example, let us consider the case where individualized estimation is possible for all the layers. According to the lower bound, this translates to $n\III{\ell}_{1/2}/2 \to \infty$ for any $\ell\in[L]$. Then we have
\begin{align*}
  & \prod_{\ell\in[L]} \bigg[1 + \exp\bigg\{-(1-\overline{\delta}_n)\bigg( J_\rho \lor \psi^\star_{\{\ell\}}(0) - J_\rho \land \psi^\star_{\{\ell\}}(0) \bigg)\bigg\}\bigg]\\
  & =\prod_{\ell\in[L]} \bigg[1 + \exp\bigg\{-(1-\overline{\delta}_n)\bigg( \psi^\star_{\{\ell\}}(0) - J_\rho \bigg)\bigg\}\bigg]  \\
  & =\prod_{\ell\in[L]} \bigg[1 + \exp\bigg\{-(1-\overline{\delta}_n')\psi^\star_{\{\ell\}}(0)\bigg\}\bigg]  \\
  & \leq \exp\bigg\{\sum_{\ell\in[L]}e^{-(1-\overline{\delta}_n')\psi^\star_{\{\ell\}}(0)}\bigg\}\\
  & \leq \exp\bigg\{L e^{-(1-\overline{\delta}_n')\min_{\ell\in[L]} \psi^\star_{\{\ell\}}(0)}\bigg\}
\end{align*}
Meanwhile,
$$
  \min_{S\subseteq[L]} \bigg(|S^c|J_\rho + \psi_S^\star(0)\bigg) = LJ_\rho.
$$
Thus, we get an upper bound of the form
$$
  \exp\bigg\{-(1-o(1))\min_{S\subseteq[L]} \bigg(|S^c|J_\rho + \psi_S^\star(0)\bigg) \bigg\}
$$
as long as
$$
  e^{-(1-\overline{\delta}_n')\min_{\ell\in[L]} \psi^\star_{\{\ell\}}(0)} \ll J_\rho,
$$
which holds by design. In general, we would need assumptions like
$$
  \sum_{\ell\in[L]} \exp\bigg\{-(1-\overline{\delta}_n)\bigg( J_\rho \lor \psi^\star_{\{\ell\}}(0) - J_\rho \land \psi^\star_{\{\ell\}}(0) \bigg)\bigg\} \ll \min_{S\subseteq[L]} \bigg(|S^c|J_\rho + \psi_S^\star(0)\bigg).
$$
Note that this such assumptions cannot not hold in the worst case, e.g., $J_\rho = \psi^\star_{\{\ell\}}(0)$.

{Depending on the choice of optimal $S$ as well as the relationship between $|S^c|(J_\rho + \log \sqrt{2})$ and $\psi_S^\star(0)$, the upper bound \eqref{eq:refine_glob_const_rho_strong} may or may not match the lower bound in Theorem \ref{thm:lower_bound_z_star_const_rho}.}

The following theorem is the counterpart of Theorem \ref{thm:refine_ind}.

\begin{theorem}[Performance of MAP-based refinement for \indest~with non-vanishing $\rho$]
  \label{thm:refine_ind_const_rho}
  Consider the variant of Algorithm \ref{alg: provable_refinement} whose Stage \RN{1} is modified to satisfy Assumption \ref{assump:consistent_layerwise_init}.
  Let the input to this algorithm be an instance generated by an $\textnormal{IMLSBM} \in \calP_n(\rho, \{p_\ell\}_1^L, \{q_\ell\}_1^L, \beta)$ satisfying $1\lesssim \rho < 1/2$, $q_\ell < p_\ell \leq (C q_\ell) \land (1-c), \forall \ell\in[L]$, $\beta = 1+o(1)$ and $\log L\ll n^{c'}$, where $C > 1$ and $c, c' \in (0, 1)$ are absolute constants. 
  Assume that for a fixed $\ell\in[L]$ and for any $\delta_n = o(1)$, the following holds:
  \begin{align}
    \label{eq:diverging_ind_snr_sum_const_rho}
    \lim_{n\to\infty } \sum_{S\subseteq[L]\setminus\{\ell\}} e^{-(1-\delta_n)\psi_{S\cup\{\ell\}}^\star(0)}\cdot  \Big( e^{-(1-\delta_n)|(S\cup \{\ell\})^c|J_\rho} + e^{-(1-\delta_n) |S\cup\{\ell\}|J_\rho} \Big) =0.
  \end{align}
  Then, there exist two sequences $\overline{\delta}_n, \overline{\delta}_n' = o(1)$, independent of $\ell$, such that
  \begin{align}
    & \lim_{n\to \infty}\inf_{\bz^\star\in\calP_n}\bbP\bigg[ \calL(\hzzz{\ell} , \zzz{\ell}) \leq\nonumber \\
    \label{eq:refine_ind_const_rho}
    & \bigg(\sum_{S\subseteq[L]\setminus\{\ell\}} e^{-(1-\overline{\delta}_n)\psi^\star_{S\cup\{\ell\}}(0)} \Big[ e^{-(1-\overline{\delta}_n)|(S\cup\{\ell\})^c|J_\rho} + e^{-(1-\overline{\delta}_n) |S\cup\{\ell\}|J_\rho} \Big]
    \bigg)^{1-\overline{\delta}_n'}
     \bigg]  = 1.
  \end{align}
\end{theorem}
\begin{proof}
  See Appendix \ref{prf:thm:refine_ind_const_rho}.
\end{proof}

Similar to the derivation of Theorem \ref{thm:opt_ind_est}, one can give sufficient conditions under which the above upper bound can be written in the form of 
$$
  \exp\{-(1+o(1))\psi^\star_{\{\ell\}}(0)\},
$$
which matches the lower bound in Theorem \ref{thm:lower_bound_z_l_const_rho}. We give a sketch of calculations below.
The error of individualized estimation can be written as the superposition of two terms. The first term is given by
\begin{align*}
  & \sum_{S\subseteq[L]\setminus\{\ell\}} \exp\bigg\{-(1-\overline{\delta}_n) \bigg(|(S\cup\{\ell\})^c| J_\rho + n\sum_{\ell'\in S\cup\{\ell\}}\III{\ell'}_{1/2}/2\bigg)\bigg\}\\
  & = \exp\bigg\{-(1-\overline{\delta}_n)\psi^\star_{\{\ell\}}(0)\bigg\} 
  \prod_{\ell' \in [L]\setminus\{\ell\}}\bigg(e^{-(1-\overline{\delta}_n)J_\rho} + e^{-(1-\overline{\delta}_n)\psi^\star_{\{\ell'\}}(0)}\bigg).
\end{align*}  
To match the lower bound, it suffices to require
$$
  \sum_{\ell'\in[L]\setminus\{\ell\}}\log \bigg(e^{-(1-\overline{\delta}_n)J_\rho} + e^{-(1-\overline{\delta}_n)\psi^\star_{\{\ell'\}}(0)}\bigg) \ll \psi^\star_{\{\ell\}}(0).
$$
The second term is given by
\begin{align*}
  & \sum_{S\subseteq[L]\setminus\{\ell\}} \exp\bigg\{-(1-\overline{\delta}_n)\bigg(|S\cup\{\ell\}|J_\rho + n \sum_{\ell'\in S\cup\{\ell\}}\III{\ell}_{1/2}/2\bigg)\bigg\}\\
  & = \exp\bigg\{-(1-\overline{\delta}_n)(J_\rho + \psi^\star_{\{\ell\}}(0))\bigg\}
  \prod_{\ell'\in[L]\setminus\{\ell\}} \bigg(1 + e^{-(1-\overline{\delta}_n)[J_\rho + \psi^\star_{\{\ell'\}}(0)]}\bigg).
\end{align*}
To match the lower bound, it suffices to require
$$
  \sum_{\ell'\in[L]\setminus\{\ell\}} \log\bigg(1 + e^{-(1-\overline{\delta}_n)[J_\rho + \psi^\star_{\{\ell'\}}(0)]}\bigg) \ll \psi^\star_{\{\ell\}}(0).
$$
So as long as the SNR of the other layers are strong enough, our algorithm is minimax optimal for individualized estimation, and the rate is the same as if we only observe a single layer SBM (we get a multiplier of $e^{-J_\rho} = \calO(1)$).

\subsection{Asymmetric Partition} \label{subappend:asym_partition}
In this section, we consider the performance of Algorithm \ref{alg: provable_refinement} when the two clusters are asymmetric, i.e., $\beta \neq 1 + o(1)$.
To quantify the effect of asymmetry, we introduce
$$
  \tilde{I}^{(\ell)}_t := \III{\ell}_t - (\beta - \beta^{-1})\cdot\bigg|\log \frac{p_\ell^t q_\ell^{1-t} + (1-p_\ell)^t (1-q_\ell)^{1-t}}{p_\ell^{1-t}q_\ell^t + (1-p_\ell)^{1-t} (1-q_\ell)^t}\bigg|.
$$
Then, for any $S\subseteq[L]$, we let
$$
  \tilde \psi_S(t) := -\frac{n}{2}\sum_{\ell \in S} \tilde{I}^{(\ell)}_t, \qquad    \tilde\psi_S^\star(a) := \sup_{0\leq t \leq 1} at-\tilde\psi_S(t).
$$
We finally define
\begin{align*}
  \tilde\globinfo_S & := 
  \begin{cases}
    |S^c| J_\rho + \tilde \psi_S^\star(0) & \textnormal{ if } |S^c| \textnormal{ is even},\\
    (|S^c| + 1) J_\rho + \tilde \psi^\star_S(-2 J_\rho) & \textnormal{ if } |S^c| \textnormal{ is odd},
  \end{cases}\\
    \tilde\indinfo_S & := 
  \begin{cases}
    |S| J_\rho + \tilde \psi^\star_S(0) & \textnormal{ if } |S| \textnormal{ is even}, \\
    (|S| + 1) J_\rho + \tilde \psi^\star_S(-2J_\rho) & \textnormal{ if } |S| \textnormal{ is odd}. 
  \end{cases}
\end{align*}  

The following theorem summarizes the performance of Algorithm \ref{alg: provable_refinement} when $\beta \neq 1 + o(1)$.
\begin{theorem}[Performance of MAP-based refinement under asymmetry]
\label{thm:refine_glob_and_ind_cluster_imbalance}
Theorems \ref{thm:refine_glob} and \ref{thm:refine_ind} still holds when $\beta$ is not $1+o(1)$ (but is of constant order), after replacing any occurrence of $\globinfo_S$ and $\indinfo_S$ with $\tilde\globinfo_S$ and $\tilde \indinfo_S$.
\end{theorem}
\begin{proof}
  See Section \ref{prf:thm:refine_glob_and_ind_cluster_imbalance}.
\end{proof}

Compared to the upper bounds in the symmetric case, the above theorem is weaker in the sense that $\tilde{I}^{\ell}_t$ is smaller than $\III{\ell}_t$ by an additive factor of $(\beta - \beta^{-1})\cdot\bigg|\log \frac{p_\ell^t q_\ell^{1-t} + (1-p_\ell)^t (1-q_\ell)^{1-t}}{p_\ell^{1-t}q_\ell^t + (1-p_\ell)^{1-t} (1-q_\ell)^t}\bigg|$. Note that when $t=1/2$ (which corresponds to the $|S^c|$ even case in the definition of $\calI_S$ and $|S|$ even case in the definition of $\calJ_S$), this additive factor is zero. Thus, in certain cases, the upper bounds is the same as the upper bounds in the symmetric case.

\subsection{MAP-Based Refinement with Estimated Parameters} \label{subappend:est_params}
In this section, we derive performance guarantees for Algorithm \ref{alg: provable_refinement} when the nuisance parameters are unknown and need to be estimated from the data. 

To start with, let us note that it is possible to complete the leave-one-out initialization stage in Algorithm \ref{alg: provable_refinement} without prior knowledge on $(p_\ell, q_\ell)_{\ell=1}^L$ and $\rho$. For example, one can take the average of the adjacency matrices and apply the un-regularized version of spectral clustering (i.e., Lines 3 and 4 in Algorithm \ref{alg: specc}) on it. One can derive similar (but slightly worse) consistency results to those in Theorem \ref{thm:specc} by the concentration of the un-regularized average adjacency matrix (see Corollary \ref{cor:concentration_w/o_reg}).

Once Stage \RN{1} of Algorithm \ref{alg: provable_refinement} is done, we can use the initial estimators to estimate the connection probabilities $(p_\ell, q_\ell)_{\ell=1}^L$ as follows.
For each $i\in[n]$, let
$$
  \hat\calC^{(-i)}_{\pm} = \{j \neq i: \tzzz{\star, -i}_j = \pm 1\}
$$
be the two clusters according to the leave-$i$-out initial estimator $\tzzz{\star, -i}$.
Let $\hat \calE^{(\ell, -i)}_{\textsf{l}, \textsf{l'}}$ be the edges between $\hat\calC^{(-i)}_{\textsf{l}}$ and $\hat\calC^{(-i)}_{\textsf{l'}}$ for $\textsf{l, l'} \in \{+, -\}$ in the $\ell$-th layer. The leave-$i$-out estimators of $p_\ell$ and $q_\ell$ are given by
\begin{equation}
  \label{eq:estimated_connect_prob}
  \hat p^{(-i)}_\ell = \frac{2 |\hat\calE^{(-\ell, i)}_{++}|}{|\hat \calC^{(-i)}_{+}|(|\hat \calC^{(-i)}_{+}| - 1)} \land \frac{2 |\hat\calE^{(-\ell, i)}_{--}|}{|\hat \calC^{(-i)}_{-}|(|\hat \calC^{(-i)}_{-}| - 1)},
  \qquad 
  \hat q^{(-i)}_\ell = \frac{\hat \calE^{(\ell, -i)}_{+-}}{|\hat \calC^{(-i)}_{+}||\hat \calC^{(-i)}_{-}|},
\end{equation}  
respectively. 

In contrast to the estimation of the connection probabilities, the estimation of the label sampling probability $\rho$ is a considerably harder task. Here, we consider a misspecified (but fixed) $\rho^\dagger$ which potentially differs from the ground truth $\rho$. To quantify the cost of misspecification of $\rho$, we introduce
$$
  J_\rho^\dagger = J_\rho - \frac{1}{2}\bigg|\log {\frac{(1-\rho)/(1-\rho^\dagger)}{\rho / \rho^\dagger}}\bigg|.
$$

The following theorem gives the performance guarantee for a variant of Algorithm \ref{alg: provable_refinement} where $(p_\ell, q_\ell)_{\ell=1}^L$ are estimated and $\rho$ is misspecified.

\begin{theorem}[MAP refinement for global estimation with estimated parameters]
  \label{thm:refine_glob_with_est_param}
  Consider the variant of Algorithm \ref{alg: provable_refinement} where in Stage \RN{2}, we change the definition of $f^{(\ell)}_i$ from \eqref{eq:local_map_objective} to
\begin{align*}
  & f^{(\ell)}_i(s_\star, s_\ell, \btz^{\star})  = \log\bigg(\frac{1-\rho^\dagger}{\rho^\dagger}\bigg) \cdot \indicator{s_\ell = s_\star} \\
  \label{eq:local_map_objective_est_param}
  & ~~ + \sum_{j\neq i: \btz^{\star}_j = s_\ell} \bigg[\log\bigg(\frac{\hat p_\ell^{(-i)}(1-\hat q_\ell^{(-i)})}{\hat q_\ell^{(-i)}(1-\hat p_\ell^{(-i)})}\bigg) \AAA{\ell}_{ij} 
  + \log \bigg(\frac{ \hat p_\ell^{(-i)} \Big(\frac{\hat q_\ell^{(-i)} (1-\hat p_\ell^{(-i)})}{\hat p_\ell^{(-i)} (1- \hat q_\ell^{(-i)})}\Big)^{1/2} +  1-\hat p_\ell^{(-i) }}{\hat q_\ell^{(-i)} \Big(\frac{\hat p_\ell^{(-i)} (1 - \hat q_\ell^{(-i)})}{\hat q_\ell^{(-i)}(1-\hat p_\ell^{(-i)})}\Big)^{1/2} + 1-\hat q_\ell^{(-i)}}\bigg)\bigg].\numberthis
  \end{align*}
  Let the input to this algorithm be an instance generated by an $\textnormal{\modelname} \in \calP_n(\rho, \{p_\ell\}_1^L, \{q_\ell\}_1^L, \beta)$ satisfying $\rho = o(1), q_\ell < p_\ell \leq (C q_\ell) \land (1-c), \forall \ell\in[L]$, $\beta = 1+o(1)$ and $ L \lesssim n^{c'}$, where $C > 1, c \in (0, 1), c' \geq 0$ are absolute constants. 
  Let Assumption \ref{assump:consistent_init} hold and assume that for any $\delta_n = o(1)$, the following holds:
    \begin{align}
    \label{eq:diverging_glob_snr_sum_est_param}
    \lim_{n\to\infty } \sum_{S\subseteq[L]} e^{-(1-\delta_n) [|S^c|J_\rho^\dagger + \psi_{S}^\star(0)]} =0.
  \end{align}
  Then, there exist two sequences $\overline{\delta}_n, \overline{\delta}_n' = o(1)$ such that
  \begin{equation}
    \label{eq:refine_glob_est_param}
    \lim_{n\to \infty} \inf_{\bz^\star \in \calP_n} \bbP\bigg[ \calL(\bhz^\star , \bz^\star) \leq \bigg(\sum_{S\subseteq[L]} e^{-(1-\overline{\delta}_n)[|S^c|J_\rho^\dagger + \psi_{S}^\star(0)]}\bigg)^{1-\overline{\delta}_n'} \bigg]  = 1.
  \end{equation}
\end{theorem}
\begin{proof}
  See Section \ref{prf:thm:refine_glob_with_est_param}.
\end{proof}

Note that in the above theorem, we need $L \lesssim n^{c'}$, a stronger condition than what was assumed in earlier results (i.e., $\log L \ll n^{c'}$ for some $c'\in(0, 1)$). This is because in the proof of the above theorem, we show that for each $\ell$, we can produce consistent estimators of $p_\ell, q_\ell$ with probability at least $1 - n^{-r}$ for some $r$. We then apply a union bound over $[L]$ to show that consistent estimation of all $(p_\ell, q_\ell)_{\ell=1}^L$ is possible with probability at least $1- L n^{-r}\geq 1 - n^{-(r-c)}$.

The corresponding result for individualized estimation is given below.
\begin{theorem}[MAP refinement for individualized estimation with estimated parameters]
\label{thm:refine_ind_with_est_param}
Consider the variant of Algorithm \ref{alg: provable_refinement} discussed in Theorem \ref{thm:refine_glob_with_est_param}. 
Let the input to this algorithm be an instance generated by an $\textnormal{\modelname} \in \calP_n(\rho, \{p_\ell\}_1^L, \{q_\ell\}_1^L, \beta)$ satisfying $\rho = o(1), q_\ell < p_\ell \leq (C q_\ell) \land (1-c), \forall \ell\in[L]$, $\beta = 1+o(1)$ and $ L \lesssim n^{c'}$, where $C > 1, c \in (0, 1), c' \geq 0$ are absolute constants. 
Let Assumption \ref{assump:consistent_init} hold and assume that for a fixed $\ell\in[L]$ and for any $\delta_n = o(1)$, the following holds:
  \begin{align}
    \label{eq:diverging_ind_snr_sum_with_est_param}
    \lim_{n\to\infty } \sum_{S\subseteq[L]\setminus\{\ell\}} e^{-(1-\delta_n)\psi_{S\cup\{\ell\}}^\star(0)}\cdot  \Big( e^{-(1-\delta_n)|(S\cup \{\ell\})^c|J_\rho^\dagger} + e^{-(1-\delta_n) |S\cup\{\ell\}|J_\rho^\dagger} \Big) =0.
  \end{align}
  Then, there exist two sequences $\overline{\delta}_n, \overline{\delta}_n' = o(1)$, independent of $\ell$, such that
  \begin{align}
    & \lim_{n\to \infty}\inf_{\bz^\star\in\calP_n}\bbP\bigg[ \calL(\hzzz{\ell} , \zzz{\ell}) \leq\nonumber \\
    \label{eq:refine_ind_with_est_param}
    & \bigg(\sum_{S\subseteq[L]\setminus\{\ell\}} e^{-(1-\overline{\delta}_n)\psi^\star_{S\cup\{\ell\}}(0)} \Big[ e^{-(1-\overline{\delta}_n)|(S\cup\{\ell\})^c|J_\rho^\dagger} + e^{-(1-\overline{\delta}_n) |S\cup\{\ell\}|J_\rho^\dagger} \Big]
    \bigg)^{1-\overline{\delta}_n'}
     \bigg]  = 1.
  \end{align}
\end{theorem}
\begin{proof}
  See Section \ref{prf:thm:refine_ind_with_est_param}.
\end{proof}

The above two theorems are weaker than their counterparts when $\rho, p_\ell, q_\ell$ are known in the sense that (1) $J_\rho^\dagger \leq J_\rho$ and (2) the exponents do not differ according to the parity of the most informative layers. 

\begin{figure}[t]
    \centering
    \includegraphics[width=0.6\textwidth]{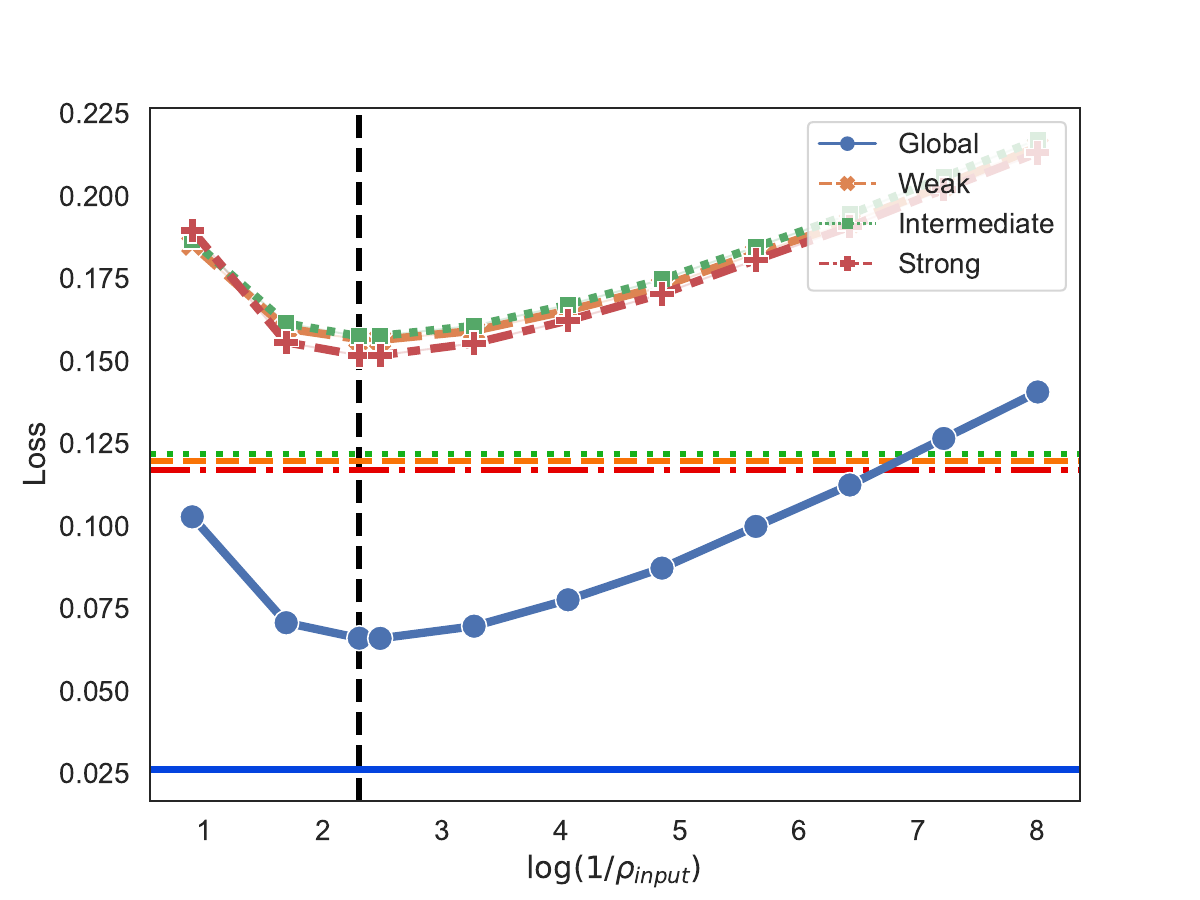}
        \caption{\small Average misclustering proportions against $\log(1/\rho_{\textnormal{input}})$ for the variant of Algorithm \ref{alg: generic_refinement} considered in Theorem \ref{thm:refine_glob_with_est_param}. 
        The black dashed vertical line represents the location of the true $\rho$ that generates the data, and the horizontal lines represent the errors made by Algorithm \ref{alg: generic_refinement} when $\{p_\ell\}, \{q_\ell\}, \rho$ are all known. 
        % The shaded areas represent confidence intervals obtained by computing the $2.5\%$ and $97.5\%$ quantiles of $1000$ bootstrap bootstrap samples of the misclustering errors.
        }
    \label{fig:sens2params_provable}
\end{figure}

To test the performance of the variant of Algorithm \ref{alg: provable_refinement}, we do a simulation similar to that in Figure \ref{fig:sens2params}, except that we now take $n=200, L = 100$ and we set $c=3$. Figure \ref{fig:sens2params_provable} shows the average misclustering proportions of the algorithm described in Theorem \ref{thm:refine_glob_with_est_param} against $\log(1/\rho_{\textnormal{input}})$. One can see that there is a relatively small, yet non-trivial performance gap between the adaptive algorithm with the oracle algorithm (i.e., Algorithm \ref{alg: generic_refinement}).

\subsection{Fixed \texorpdfstring{$n$}{n} Large \texorpdfstring{$L$}{L} Asymptotics} \label{subappend:large_L}
In this section, we consider a different asymptotic regime, where we assume $n = \calO(1)$ and $L\to \infty$.

By construction, if the mis-clustering error drops below $1/n$, then one gets perfect recovery. Under the current asymptotic regime, we have $1/n \gtrsim 1$, which means that as long as the mis-clustering error vanishes as $L\to\infty$, it becomes exactly zero. This further reveals that once we get consistent initial estimators (say, via spectral clustering), the MAP refinement step (i.e., Stage \RN{2} of Algorithm \ref{alg: provable_refinement}) cannot give further improvement, as the mis-clustering error is already zero for large $L$. Hence, in the following, we only give performance guarantees for spectral clustering.

\begin{theorem}[Performance of spectral clustering]
\label{thm:specc_large_L}
Consider the variant of Algorithm \ref{alg: specc}, where we omit the trimming step (i.e., Lines 1 and 2) so that $\tau(\bar A) = \bar A$.
Let the input to this algorithm be an instance generated by an $\textnormal{\modelname}\in \calP_n(\rho, \{p_\ell\}_1^L, \{q_\ell\}_1^L, \beta)$ with $n = \calO(1), L\to\infty$ and assume \eqref{eq:spectral_gap} holds.
If 
\begin{align}
  \label{eq:assump_specc_large_L}
  \lim_{L\to \infty} \frac{\max_\ell \omega_\ell }{\sqrt{ \sum_\ell \omega_\ell^2 q_\ell }} 
  \lor \frac{\max_\ell \omega_\ell(p_\ell - q_\ell) }{\sqrt{\rho\sum_{\ell} \omega_\ell^2 (p_\ell-q_\ell)^2}} 
  \lor \frac{\sum_\ell \omega_\ell^2 p_\ell^2}{(1-2\rho)^4 (\bar p - \bar q)^2} 
  \lor \frac{\rho\sum_\ell \omega_\ell^2 (p_\ell - q_\ell)^2}{(1-2\rho)^4 (\bar p - \bar q)^2} & = 0,
\end{align}
then there exists a sequence $\ep_L \to 0$ as $L\to\infty$, such that with probability $1 - \ep_L$, we have $\calL(\tilde\bz^\star, \bz^\star) = 0$ for large $L$.
\end{theorem}
\begin{proof}
  See Appendix \ref{prf:thm:specc_large_L}.
\end{proof}

Consider the simplified setup where $p_\ell = q, q_\ell = q$ for any $\ell \in [L]$ and $\weight_\ell = 1/L$. Then, the condition in \eqref{eq:assump_specc_large_L} becomes
$$
  \frac{1}{\sqrt{qL}} \lor \frac{1}{\sqrt{\rho L}} \lor \frac{p^2}{(1-2\rho)^2L(p-q)^2} \lor \frac{\rho}{(1-2\rho)^4 L} \to 0.
$$
When $\rho < 1/2 - c$ for some absolute constant $c\in(0, 1/2)$, the above condition is further simplified to 
$$
  qL \land \rho L \land \frac{(p-q)^2}{p^2} L \to \infty.
$$

\section{Proofs of Lower Bounds}\label{append:prf_lower_bound}
\subsection{Proof of Lemma \ref{lemma:testing_problem_z_star}}\label{subappend:prf_testing_problem_z_star}
Fix any $\btz^\star \in \calP_n$ with $n_+^\star(\btz^\star) = \lfloor n/2 \rfloor$ and $n_-^\star(\btz^\star) = n - \lfloor n/2\rfloor$. Let
$$
  C_+(\btz^\star):= \{i\in[n]: \btz^\star_i = +1\} ,\qquad C_-(\btz^\star):= \{i\in[n]: \btz^\star_i = -1\}.
$$
Now fix any $\delta_n = o(1)$ and choose any $\tilde C_+ \subset C_+(\btz^\star), \tilde C_- \subset C_-(\btz^\star)$ such that 
$$
  |\tilde C_+| = |\tilde C_-| = \lfloor n/2\rfloor - \lceil \delta_n n/2 \rceil.
$$
Denoting $T = \tilde C_+ \cup \tilde C_-$, we define
\begin{equation}
  \label{eq:clusters_in_sub_param_space_z_star}
  \calZ_T := \{\bz\in \{\pm 1\}^n: \bz_i = \btz^\star_i , \forall i \in T\}. 
\end{equation}  
Further, we define
\begin{equation}
  \label{eq:sub_param_space_z_star}
  \calP_n^0 := \bigg\{ \textnormal{\sname}(\bz^\star, \rho, \{p_\ell\}_1^L, \{q_\ell\}_1^L): \bz^\star \in \calZ_T, p_\ell > q_\ell ~ \forall \ell\in[L]\bigg\} \subseteq \calP_n.
\end{equation}  
Note that $\inf_{\bhz^*} \sup_{\bz^* \in \calP_n} \bbE \calL(\bhz^*, \bz^*) \geq \inf_{\bhz^*} \sup_{\bz^* \in \calP_n^0} \bbE \calL(\bhz^*, \bz^*)$. For any two $\bz, \bz' \in \calP_n^0$, we have
\begin{align*}
  \calL(\bz, \bz') = \frac{1}{n} \sum_{i\in[n]} \indc{\bz_i \neq \bz'_i},
\end{align*}
because 
$$
  \frac{1}{n} \sum_{i\in[n]} \indc{\bz_i \neq \bz'_i} \leq \frac{n - |T|}{n} = \frac{n - 2 \lfloor n/2 \rfloor + 2 \lceil \delta_n n/2 \rceil}{n} \leq \frac{4}{n} + \delta_n  < 1/2
$$
for large $n$. Hence, we can proceed by
\begin{align*}
\inf_{\bhz^*} \sup_{\bz^* \in \calP_n} \bbE \calL(\bhz^*, \bz^*) &  \geq \inf_{\bhz^*} \sup_{\bz^* \in \calZ_T} \frac{1}{n} \sum_{i\in[n]} \bbP(\bhz^*_i \neq \bz^*_i)\\
& = \frac{|T^c|}{n} \inf_{\bhz^*} \sup_{\bz^* \in \calZ_T} \frac{1}{|T^c|} \sum_{i\in T^c} \bbP(\bhz^*_i \neq \bz^*_i) \\
& \geq \frac{1}{n} \sum_{i\in T^c}  \inf_{\bhz^*_i} \underset{\bz^*\in \calZ_T}{\textnormal{ave}} \ \bbP(\bhz^*_i \neq \bz^*_i),
\end{align*}
where 
% in the last step we've used $|T^c| \geq 2 \lceil \delta_n n/2\rceil \geq 1$ and 
``$\textnormal{ave}$'' denotes the expectation if we assume $\bz^\star$ has a uniform distribution over $\calZ_T$. We are to show that all $\inf_{\bhz^*_i} \underset{\bz^*\in \calZ_T}{\textnormal{ave}} \ \bbP(\bhz^*_i \neq \bz^*_i)$'s are lower bounded by the same quantity, which is the type-I plus type-II error of the testing problem \eqref{eq:fund_testing_problem_z_star}, so that for any $i\in T^c$ we would have
\begin{equation}
  \label{eq:same_error_across_i_lower_bound_z_star}
  \inf_{\bhz^*} \sup_{\bz^* \in \calP_n} \bbE \calL(\bhz^*, \bz^*) \geq \frac{|T^c|}{n}\cdot \inf_{\bhz^*_i} \underset{\bz^*\in \calZ_T}{\textnormal{ave}} \ \bbP(\bhz^*_i \neq \bz^*_i) \gtrsim \delta_n \cdot \inf_{\bhz^*_i} \underset{\bz^*\in \calZ_T}{\textnormal{ave}} \ \bbP(\bhz^*_i \neq \bz^*_i),
\end{equation}  
which is the desired result. 

Now let us fix any $i\in T^c$ and $S\subseteq[L]$. We then have
\begin{align*}
  & \inf_{\bhz^\star_i} \underset{\bz^\star\in \calZ_T}{\textnormal{ave}} \ \bbP(\bhz^\star_i \neq \bz^\star_i) \\
  & = \inf_{\bhz^\star_i} \underset{\bz^\star_{-i}}{\textnormal{ave}} \ \underset{\bz^\star_{i}}{\textnormal{ave}}\sum_{\xi\in \{\pm 1\}^S} \bbP( \bhz^\star_i \neq \bz^\star \ | \ \zzz{\ell}_i = \xi_\ell \bz_i^\star \ \forall \ell\in S) \cdot \bbP(\zzz{\ell}_i = \xi_\ell \bz_i^\star \ \forall \ell\in S) \\
  & \geq \underset{\bz^\star_{-i}}{\textnormal{ave}}\inf_{\bhz^\star_i} \sum_{\xi\in \{\pm 1\}^S} \underset{\bz^\star_{i}}{\textnormal{ave}} \ \bbP( \bhz^\star_i \neq \bz^\star \ | \ \zzz{\ell}_i = \xi_\ell \bz_i^\star \ \forall \ell\in S) \cdot \bbP(\zzz{\ell}_i = \xi_\ell \bz_i^\star \ \forall \ell\in S) \\
  & = \frac{1}{2} \cdot  \underset{\bz^\star_{-i}}{\textnormal{ave}}\inf_{\bhz^\star_i} \sum_{\xi\in \{\pm 1\}^S} \bigg(\bbP( \bhz^\star_i =-1 \ | \ \bz_i^\star = 1,  \zzz{\ell}_i = \xi_\ell \bz_i^\star \ \forall \ell\in S) \cdot \bbP(\zzz{\ell}_i = \xi_\ell \bz_i^\star \ \forall \ell\in S \ | \ \bz_i^\star = 1) \\
  & \qquad  \qquad  \qquad  + \bbP( \bhz^\star_i =1 \ | \ \bz_i^\star = -1,  \zzz{\ell}_i = \xi_\ell \bz_i^\star \ \forall \ell\in S) \cdot \bbP(\zzz{\ell}_i = \xi_\ell \bz_i^\star \ \forall \ell\in S \ | \ \bz_i^\star = -1)  \bigg).
\end{align*}
Because
\begin{align*}
  \bbP(\zzz{\ell}_i = \xi_\ell \bz_i^\star \ \forall \ell\in S \ | \ \bz_i^\star = 1)  & = \bbP(\zzz{\ell}_i = \xi_\ell \bz_i^\star \ \forall \ell\in S \ | \ \bz_i^\star = 1) = \bbP(\zzz{\ell}_i = \xi_\ell \bz_i^\star \ \forall \ell\in S),
\end{align*}
we can write
\begin{align*}
  & \inf_{\bhz^\star_i} \underset{\bz^\star}{\textnormal{ave}} \ \bbP(\bhz^\star_i \neq \bz^\star_i) \\
  & \geq \frac{1}{2} \cdot  \underset{\bz^\star_{-i}}{\textnormal{ave}}\sum_{\xi\in \{\pm 1\}^S}  \bbP(\zzz{\ell}_i = \xi_\ell \bz_i^\star \ \forall \ell\in S)\\
  & \qquad \times  \inf_{\bhz^\star_i}\bigg(\bbP( \bhz^\star_i =-1 \ | \ \bz_i^\star = 1,  \zzz{\ell}_i = \xi_\ell \bz_i^\star \ \forall \ell\in S)  + \bbP( \bhz^\star_i =1 \ | \ \bz_i^\star = -1,  \zzz{\ell}_i = \xi_\ell \bz_i^\star \ \forall \ell\in S)  \bigg).
\end{align*}
Note that the term inside the parenthesis is the type-I plus type-II error of the following binary hypothesis testing problem:
\begin{equation}
\label{eq: test_without_knowing_community_in_each_layer}
H_0: \bz^\star_i = 1 \ \ \ \textnormal{v.s.} \ \ \ H_1: \bz^\star_i = -1,
\end{equation}
under the \emph{conditional law} of $\{\AAA{\ell}\} \ | \ \{\zzz{\ell}_i = \xi_\ell z_i^\star \ \forall \ell\in S\}$. Hence, this probability can be written as (with $d_{TV}$ denoting the total variation distance)
\begin{align*}
  & 1 - d_{TV} \bigg[ \textnormal{law}\bigg(\{\AAA{\ell}\} \ \bigg| \{\bz^\star_i = 1, \zzz{\ell}_i = \xi_\ell \bz_i^\star \ \forall \ell\in S\}\bigg),  \textnormal{law}\bigg(\{\AAA{\ell}\} \ \bigg| \{\bz^\star_i = -1, \zzz{\ell}_i = \xi_\ell \bz_i^\star \ \forall \ell\in S\}\bigg)\bigg] \\
  & \geq 1 - d_{TV} \bigg[ \textnormal{law}\bigg(\{\AAA{\ell}, \zzz{\ell}\} \ \bigg| \{\bz^\star_i = 1, \zzz{\ell}_i = \xi_\ell \bz_i^\star \ \forall \ell\in S\}\bigg) , \textnormal{law}\bigg(\{\AAA{\ell}, \zzz{\ell}\} \ \bigg| \{\bz^\star_i = -1, \zzz{\ell}_i = \xi_\ell \bz_i^\star \ \forall \ell\in S\}\bigg)\bigg],
\end{align*}
where the inequality is by data-processing inequality for total variation: for any random variables $X, Y, X', Y'$, we have
$$
  d_{TV} \bigg(\textnormal{law}(X), \textnormal{law}(X')\bigg) \leq d_{TV} \bigg(\textnormal{law}(X, Y), \textnormal{law}(X', Y')\bigg).
$$
Now the lower bound is the type-I plus type-II error of the same binary hypothesis testing problem, but under the \emph{joint law} of $\{\AAA{\ell}, \zzz{\ell}\} \ | \ \{\zzz{\ell}_i = \xi_\ell \bz_i^\star \ \forall \ell\in S\}$. 

We now begin to lower bound the optimal testing error of \eqref{eq: test_without_knowing_community_in_each_layer} under the joint law. For $j\neq i$, either (1) $j\in T$, so that $\bz^\star_j = \pm 1$ depending on their membership of $\calC_{\pm}$; or (2) $j\in T^c$, so that $\bz^\star_j$ is unknown to us. Let $m_\pm$ be the number of positive (resp. negative) nodes apart from $i$. By construction, most of the positive nodes come from $\calC_+$ and most of the negative nodes come from $\calC_-$, where the word ``most'' is justified by $m_- = (1+o(1))n/2$ and $m_+ = (1+o(1))n/2$. Let $m = m_- \lor m_+ = (1+o(1))n/2$. By data-processing inequality for total variation, the testing error of \eqref{eq: test_without_knowing_community_in_each_layer} is further lower bounded by the testing error of the same test, but \emph{with $m$ positive nodes and $m$ negative nodes} (excluding $i$). 

For notational simplicity, we consider the following equivalent setup: we have $2m +1$ nodes in total, where nodes $1, \hdots, m$ are labeled as $+1$, nodes $m +1,\hdots, 2m+ 1$ are labeled as $-1$, and the node labeled as $0$ (which is originally labeled as $i$) is the node whose community is to be decided. This is exactly the idealized setup in \eqref{eq:idealized_setup}.

Under the current notations, the joint density of $\{\AAA{\ell}, \zzz{\ell}\}$ is given by
\begingroup
\allowdisplaybreaks
\begin{align*}
  &(1-\rho)^{\#\{\ell\in[L], 0\leq i\leq 2m: \zzz{\ell}_i = \bz^\star_i\}} \rho^{nL - \#\{\ell\in[L], 0\leq i\leq 2m: \zzz{\ell}_i = \bz^\star_i\}}  \\
  & \qquad \times \prod_{\ell\in[L]} \prod_{i\neq j} p_\ell^{\AAA{\ell}_{ij}} (1-p_\ell)^{1 - \AAA{\ell}_{ij}} \indc{\zzz{\ell}_i = \zzz{\ell}_j} + q_\ell^{\AAA{\ell}_{ij}} (1-q_\ell)^{1 - \AAA{\ell}_{ij}} \indc{\zzz{\ell}_i \neq \zzz{\ell}_j} \\
  & = \rho^{nL} \bigg(\frac{1-\rho}{\rho}\bigg)^{\#\{\ell\in[L], 1\leq i\leq 2m: \zzz{\ell}_i = \bz^\star_i\}} \cdot \bigg(\frac{1-\rho}{\rho}\bigg)^{\#\{\ell\in S: \zzz{\ell}_0 = \bz^\star_0\} + \#\{\ell\in S^c: \zzz{\ell}_0 = \bz^\star_0\}} \\
  & \qquad \times \prod_{\ell\in[L]} \prod_{\substack{i\neq j \\ i\neq 0 \\ j\neq 0}} p_\ell^{\AAA{\ell}_{ij}} (1-p_\ell)^{1 - \AAA{\ell}_{ij}} \indc{\zzz{\ell}_i = \zzz{\ell}_j} + q_\ell^{\AAA{\ell}_{ij}} (1-q_\ell)^{1 - \AAA{\ell}_{ij}} \indc{\zzz{\ell}_i \neq \zzz{\ell}_j} \\
  & \qquad \times \prod_{\ell\in S} \prod_{\substack{j\neq 0 \\  \zzz{\ell}_j = \zzz{\ell}_0 }} p_\ell^{\AAA{\ell}_{0j}} (1-p_\ell)^{1 - \AAA{\ell}_{0j}}  \prod_{\substack{j\neq 0 \\  \zzz{\ell}_j \neq \zzz{\ell}_0 }} q_\ell^{\AAA{\ell}_{0j}} (1-q_\ell)^{1 - \AAA{\ell}_{0j}}\\
  & \qquad \times \prod_{\ell\in S^c} \prod_{\substack{j\neq 0 \\  \zzz{\ell}_j = \zzz{\ell}_0 }} p_\ell^{\AAA{\ell}_{0j}} (1-p_\ell)^{1 - \AAA{\ell}_{0j}}  \prod_{\substack{j\neq 0 \\  \zzz{\ell}_j \neq \zzz{\ell}_0 }} q_\ell^{\AAA{\ell}_{0j}} (1-q_\ell)^{1 - \AAA{\ell}_{0j}}.
\end{align*}
\endgroup
Hence, conditional on $\{\zzz{\ell}_0 = \xi_\ell z_0^\star \ \forall \ell\in S\}$, the density of $\{\AAA{\ell}, \zzz{\ell}\}$ becomes
\begingroup
\allowdisplaybreaks
\begin{align*}
  & \rho^{nL} \bigg(\frac{1-\rho}{\rho}\bigg)^{\#\{\ell\in[L], 1\leq i\leq 2m: \zzz{\ell}_i = \bz^\star_i\}} \cdot \bigg(\frac{1-\rho}{\rho}\bigg)^{\#\{\ell\in S: \xi_\ell = 1\} + \#\{\ell\in S^c: \zzz{\ell}_0 = \bz^\star_0\}} \\
  & \qquad \times \prod_{\ell\in[L]} \prod_{\substack{i\neq j \\ i\neq 0 \\ j\neq 0}} p_\ell^{\AAA{\ell}_{ij}} (1-p_\ell)^{1 - \AAA{\ell}_{ij}} \indc{\zzz{\ell}_i = \zzz{\ell}_j} + q_\ell^{\AAA{\ell}_{ij}} (1-q_\ell)^{1 - \AAA{\ell}_{ij}} \indc{\zzz{\ell}_i \neq \zzz{\ell}_j} \\
  & \qquad \times \prod_{\ell\in S} \prod_{\substack{j\neq 0 \\  \zzz{\ell}_j = \xi_\ell \bz^\star_0 }} p_\ell^{\AAA{\ell}_{0j}} (1-p_\ell)^{1 - \AAA{\ell}_{0j}}  \prod_{\substack{j\neq 0 \\  \zzz{\ell}_j  = - \xi_\ell \bz^\star_0}} q_\ell^{\AAA{\ell}_{0j}} (1-q_\ell)^{1 - \AAA{\ell}_{0j}}\\
  & \qquad \times \prod_{\ell\in S^c} \prod_{\substack{j\neq 0 \\  \zzz{\ell}_j = \zzz{\ell}_0 }} p_\ell^{\AAA{\ell}_{0j}} (1-p_\ell)^{1 - \AAA{\ell}_{0j}}  \prod_{\substack{j\neq 0 \\  \zzz{\ell}_j  \neq \zzz{\ell}_0 }} q_\ell^{\AAA{\ell}_{0j}} (1-q_\ell)^{1 - \AAA{\ell}_{0j}}.
\end{align*}
\endgroup
Let $L_0$ be the likelihood under the null and let $L_1$ be the likelihood under the alternative. The likelihood ratio is given by
\begin{align*}
  \frac{L_0}{L_1} & =   \bigg(\frac{1-\rho}{\rho}\bigg)^{\#\{\ell\in S^c: \zzz{\ell}_0 = 1 \} - \#\{\ell\in S^c: \zzz{\ell}_0 = -1 \}} \\
  & \qquad \times \prod_{\ell\in S} \prod_{\substack{j\neq 0 \\  \zzz{\ell}_j = \xi_\ell }} p_\ell^{\AAA{\ell}_{0j}} (1-p_\ell)^{1 - \AAA{\ell}_{0j}}  \prod_{\substack{j\neq 0 \\  \zzz{\ell}_j  = - \xi_\ell }} q_\ell^{\AAA{\ell}_{0j}} (1-q_\ell)^{1 - \AAA{\ell}_{0j}} \\
  & \qquad \times \bigg(\prod_{\ell\in S} \prod_{\substack{j\neq 0 \\  \zzz{\ell}_j = -\xi_\ell }} p_\ell^{\AAA{\ell}_{0j}} (1-p_\ell)^{1 - \AAA{\ell}_{0j}}  \prod_{\substack{j\neq 0 \\  \zzz{\ell}_j  = \xi_\ell }} q_\ell^{\AAA{\ell}_{0j}} (1-q_\ell)^{1 - \AAA{\ell}_{0j}}\bigg)^{-1}.
\end{align*}
By Neyman-Pearson lemma, 
\begin{align*}
  & \inf_{\bhz^\star_0}\bigg(\bbP( \bhz^\star_0 =-1 \ | \ \bz_0^\star = 1,  \zzz{\ell}_0 = \xi_\ell \bz_0^\star \ \forall \ell\in S)  + \bbP( \bhz^\star_0 =1 \ | \ \bz_0^\star = -1,  \zzz{\ell}_0 = \xi_\ell \bz_0^\star \ \forall \ell\in S)  \bigg) \\
  & = \bbP\bigg( \frac{L_0}{L_1}\leq 1 \ \bigg| \ \bz_0^\star = 1,  \zzz{\ell}_0 = \xi_\ell \bz_0^\star \ \forall \ell\in S\bigg)  + \bbP\bigg( \frac{L_0}{L_1}\geq 1 \ | \ \bz_0^\star = -1,  \zzz{\ell}_0 = \xi_\ell \bz_0^\star \ \forall \ell\in S\bigg).
\end{align*}
By symmetry, the two terms in the right-hand side above are equal to each other, so we focus on the first term. For notational simplicity, we let $\bbP_{H_0, S, \xi}$ to denote the conditional law of $\{\AAA{\ell}, \zzz{\ell}\} \ | \ \{\bz_0^\star = 1,  \zzz{\ell}_0 = \xi_\ell \bz_0^\star \ \forall \ell\in S\}$. We then have
\begingroup
\allowdisplaybreaks
\begin{align*}
  \bbP_{H_0, S, \xi} \bigg(\frac{L_0}{L_1} \leq 1\bigg) 
  & = \bbP_{H_0, S, \xi}\bigg[ \log\bigg(\frac{\rho}{1-\rho}\bigg)\cdot \sum_{\ell \in S^c} \bigg(\indc{\zzz{\ell}_0 = 1} - \indc{\zzz{\ell}_0 = -1} \bigg)\\
  & \qquad\qquad \qquad  + \sum_{\ell \in S} \sum_{\substack{j\neq 0 \\ \zzz{\ell}_j = \xi_\ell}} \AAA{\ell}_{0j} \log \bigg(\frac{q_\ell(1-p_\ell)}{p_\ell(1-q_\ell)}\bigg) + \log \bigg(\frac{1-q_\ell}{1-p_\ell}\bigg)\\
  & \qquad\qquad \qquad  + \sum_{\ell \in S} \sum_{\substack{j\neq 0 \\ \zzz{\ell}_j = -\xi_\ell}} \AAA{\ell}_{0j} \log \bigg(\frac{p_\ell(1-q_\ell)}{q_\ell(1-p_\ell)}\bigg) + \log \bigg(\frac{1-p_\ell}{1-q_\ell}\bigg)\geq 0\bigg] \\
  & = \bbE_{\{\zzz{\ell}_{-0}: \ell\in S\}} \bigg\{\bbP_{H_0, S, \xi}\bigg[ \log\bigg(\frac{\rho}{1-\rho}\bigg)\cdot \sum_{\ell \in S^c}\bigg( \indc{\zzz{\ell}_0 = 1} - \indc{\zzz{\ell}_0 = -1} \bigg)\\
  & \qquad \qquad \qquad + \sum_{\ell \in S} \sum_{\substack{j\neq 0 \\ \zzz{\ell}_j = \xi_\ell}} \AAA{\ell}_{0j} \log \bigg(\frac{q_\ell(1-p_\ell)}{p_\ell(1-q_\ell)}\bigg) + \log \bigg(\frac{1-q_\ell}{1-p_\ell}\bigg)\\
  & \qquad\qquad \qquad  + \sum_{\ell \in S} \sum_{\substack{j\neq 0 \\ \zzz{\ell}_j = -\xi_\ell}} \AAA{\ell}_{0j} \log \bigg(\frac{p_\ell(1-q_\ell)}{q_\ell(1-p_\ell)}\bigg) + \log \bigg(\frac{1-p_\ell}{1-q_\ell}\bigg)\geq 0 \ \bigg| \ \{\zzz{\ell}_{-0}\}\bigg]\bigg\}.
\end{align*}
\endgroup
The conditional probability $\bbP_{H_0, S, \xi}(\cdot \ | \ \{\zzz{\ell}_{-0}: \ell\in S\})$ in the right-hand side above is then equal to
\begin{align}
  & \bbP\bigg( \log\bigg(\frac{\rho}{1-\rho}\bigg)\cdot \sum_{\ell \in S^c} \ZZZ{\ell} + \sum_{\ell \in S} \sum_{i = 1}^{\mmm{\ell}_1} \XXX{\ell}_i \cdot \log \bigg(\frac{q_\ell(1-p_\ell)}{p_\ell(1-q_\ell)}\bigg) + \sum_{\ell \in S}\sum_{i = 1}^{\mmm{\ell}_2} \YYY{\ell}_i \cdot \log \bigg(\frac{p_\ell(1-q_\ell)}{q_\ell(1-p_\ell)}\bigg)   \nonumber\\
  \label{eq:prf_lr_test_z_star}
  &\qquad  - \sum_{\ell \in S} (\mmm{\ell}_1- \mmm{\ell}_2) \cdot \log \bigg(\frac{1-p_\ell}{1-q_\ell}\bigg) \geq 0\bigg),
\end{align}
where
\begin{align*}
  & \XXX{\ell}_i \overset{\textnormal{i.i.d.}}{\sim} \Bern(p_\ell), \ \ \ \YYY{\ell}_i \overset{\textnormal{i.i.d.}}{\sim} \Bern(q_\ell),  \ \ \ \ZZZ{\ell} \overset{\textnormal{i.i.d.}}{\sim} 2\Bern(1-\rho)-1, \\
  & \mmm{\ell}_1 = \#\{j\neq 0: \zzz{\ell}_j = \xi_\ell\} ,  \ \ \  \mmm{\ell}_2= \#\{j\neq 0: \zzz{\ell}_j = -\xi_\ell\},
\end{align*}
and $\XXX{\ell}_i$'s, $\YYY{\ell}_i$'s and $\ZZZ{\ell}$'s are jointly independent. 
Note that $\mmm{\ell}_1, \mmm{\ell}_2$ are treated as fixed when we condition on $\{\zzz{\ell}_{-0}: \ell\in S\}$.

With some algebra, one recognizes that \eqref{eq:prf_lr_test_z_star} is the type-I plus type-II error incurred by the likelihood ratio test of the following binary hypothesis testing problem:
\begin{align*}
  H_0' & : W \sim \bigg(\bigotimes_{\ell \in S^c} 2\Bern(1-\rho) -1  \bigg) \otimes \bigg(\bigotimes_{\ell \in S} \bigotimes_{i = 1}^{\mmm{\ell}_1}\Bern(p_\ell)  \bigg) \otimes \bigg(\bigotimes_{\ell \in S} \bigotimes_{i = 1}^{\mmm{\ell}_2}\Bern(q_\ell)  \bigg)  \\
  H_1' & : W \sim \bigg(\bigotimes_{\ell \in S^c} 2\Bern(\rho) -1  \bigg) \otimes \bigg(\bigotimes_{\ell \in S} \bigotimes_{i = 1}^{\mmm{\ell}_1}\Bern(q_\ell)  \bigg) \otimes \bigg(\bigotimes_{\ell \in S} \bigotimes_{i = 1}^{\mmm{\ell}_2}\Bern(p_\ell)  \bigg) .
\end{align*}
By data processing equality for the total variation distance, we know that the optimal testing error of $H_0'$ v.s. $H_1'$ is lower bounded by the optimal testing error of the following testing problem:
\begin{align*}
  H_0'' & : W \sim \bigg(\bigotimes_{\ell \in S^c} 2\Bern(1-\rho) -1  \bigg) \otimes \bigg(\bigotimes_{\ell \in S} \bigotimes_{i = 1}^{\mmm{\ell}}\Bern(p_\ell)  \bigg) \otimes \bigg(\bigotimes_{\ell \in S} \bigotimes_{i = 1}^{\mmm{\ell}}\Bern(q_\ell)  \bigg)  \\
  H_1'' & : W \sim \bigg(\bigotimes_{\ell \in S^c} 2\Bern(\rho) -1  \bigg) \otimes \bigg(\bigotimes_{\ell \in S} \bigotimes_{i = 1}^{\mmm{\ell}}\Bern(q_\ell)  \bigg) \otimes \bigg(\bigotimes_{\ell \in S} \bigotimes_{i = 1}^{\mmm{\ell}}\Bern(p_\ell)  \bigg),
\end{align*}
where we have let
$$
  \mmm{\ell} = \mmm{\ell}_1 \lor \mmm{\ell}_2. 
$$
This means that the probability in \eqref{eq:prf_lr_test_z_star} can be lower bounded by
\begin{align}
  & \bbP\bigg( \log\bigg(\frac{\rho}{1-\rho}\bigg)\cdot \sum_{\ell \in S^c} \ZZZ{\ell} + \sum_{\ell \in S} \sum_{i = 1}^{\mmm{\ell}} \XXX{\ell}_i \cdot \log \bigg(\frac{q_\ell(1-p_\ell)}{p_\ell(1-q_\ell)}\bigg) + \sum_{\ell \in S}\sum_{i = 1}^{\mmm{\ell}} \YYY{\ell}_i \cdot \log \bigg(\frac{p_\ell(1-q_\ell)}{q_\ell(1-p_\ell)}\bigg)   \geq 0\bigg)\nonumber,
\end{align}
% Thus, we can lower bound the optimal testing error of \eqref{eq: test_without_knowing_community_in_each_layer} by
Thus, we can lower bound $\inf_{\bhz^\star_0} \textnormal{ave}_{\bz^\star} \bbP(\bhz^\star_0 \neq \bz^\star_0)$ by taking the expectation w.r.t. $\{\zzz{\ell}: \ell \in S\}$:
\begingroup
\allowdisplaybreaks
\begin{align*}
  \inf_{\bhz^\star_0} \underset{\bz^\star}{\textnormal{ave}}~ \bbP(\bhz^\star_0 \neq \bz^\star_0) 
  & \geq \bbE_{\{\zzz{\ell}_0: \ell \in S\}} \bbE_{\{\zzz{\ell}_{-0}: \ell \in S\}} \bigg[ \bbP\bigg( \log\bigg(\frac{\rho}{1-\rho}\bigg)\cdot \sum_{\ell \in S^c} \ZZZ{\ell} + \sum_{\ell \in S} \sum_{i = 1}^{\mmm{\ell}_1} \XXX{\ell}_i \cdot \log \bigg(\frac{q_\ell(1-p_\ell)}{p_\ell(1-q_\ell)}\bigg) \\
  & \qquad \qquad + \sum_{\ell \in S}\sum_{i = 1}^{\mmm{\ell}_2} \YYY{\ell}_i \cdot \log \bigg(\frac{p_\ell(1-q_\ell)}{q_\ell(1-p_\ell)}\bigg)  
  - \sum_{\ell \in S} (\mmm{\ell}_1- \mmm{\ell}_2) \cdot \log \bigg(\frac{1-p_\ell}{1-q_\ell}\bigg) \geq 0\bigg) \bigg] \\
  & \geq \bbE_{\{\zzz{\ell}_0: \ell \in S\}} \bbE_{\{\zzz{\ell}_{-0}: \ell \in S\}} \bigg[ \bbP\bigg( \log\bigg(\frac{\rho}{1-\rho}\bigg)\cdot \sum_{\ell \in S^c} \ZZZ{\ell} + \sum_{\ell \in S} \sum_{i = 1}^{\mmm{\ell}} \XXX{\ell}_i \cdot \log \bigg(\frac{q_\ell(1-p_\ell)}{p_\ell(1-q_\ell)}\bigg) \\
  & \qquad \qquad + \sum_{\ell \in S}\sum_{i = 1}^{\mmm{\ell}} \YYY{\ell}_i \cdot \log \bigg(\frac{p_\ell(1-q_\ell)}{q_\ell(1-p_\ell)}\bigg)  
  \geq 0\bigg) \bigg] \\
  & \geq  \bbE_{\{\zzz{\ell}: \ell \in S\}} \bigg[\Indc_{E_\ep} \cdot \bbP\bigg( \log\bigg(\frac{\rho}{1-\rho}\bigg)\cdot \sum_{\ell \in S^c} \ZZZ{\ell} + \sum_{\ell \in S} \sum_{i = 1}^{\mmm{\ell}} \XXX{\ell}_i \cdot \log \bigg(\frac{q_\ell(1-p_\ell)}{p_\ell(1-q_\ell)}\bigg) \\
  & \qquad \qquad + \sum_{\ell \in S}\sum_{i = 1}^{\mmm{\ell}} \YYY{\ell}_i \cdot \log \bigg(\frac{p_\ell(1-q_\ell)}{q_\ell(1-p_\ell)}\bigg)  
  \geq 0\bigg) \bigg],
\end{align*}
\endgroup
where the event $E_\ep$ is defined as
\begin{align*}
  E_\ep:= \bigg\{ \bigg|\frac{\mmm{\ell}_1}{m} - 1\bigg| \lor \bigg|\frac{\mmm{\ell}_2}{m} - 1\bigg| \leq \ep \ \forall \ell \in S \bigg\}.
\end{align*}
Note that for a fixed $\ell \in [L]$, if $\zzz{\ell}_0 = \bz^\star_0 = 1$ (i.e., $\xi_\ell = 1$), then we have
\begin{align*}
  \mmm{\ell}_1 = \sum_{j = 1}^m \indc{\zzz{\ell}_j = 1} + \sum_{j = m+1}^{2m}\indc{\zzz{\ell}_j = 1},
\end{align*}
where for $1\leq j \leq m$, we have $\indc{\zzz{\ell}_j = 1} \overset{\textnormal{i.i.d.}}{\sim} \Bern(1-\rho)$, and for $m+1\leq j\leq 2m$, we have $\indc{\zzz{\ell}_j = 1} \overset{\textnormal{i.i.d.}}{\sim} \Bern(\rho)$. This gives
$
  \bbE[\mmm{\ell}_1] = m,
$
and by Chernoff bound, we have
$$
  \bbP( |\mmm{\ell}_1 - m |\geq m\ep) \leq 2e^{-\ep^2 m}.
$$
Since $\mmm{\ell}_1 + \mmm{\ell}_2 = 2m$, if $|\mmm{\ell}_1 - m |\leq m\ep$, then we automatically have $|\mmm{\ell}_2 - m |\leq m\ep$. Hence, we get
$$
  \bbP( |\mmm{\ell}_1 - m | \lor |\mmm{\ell}_2 - m | \leq m\ep) \geq 1- 2e^{-\ep^2 m}.
$$
The above inequality also holds for the case of $\xi_\ell = -1$. Now, taking a union bound over $L$ and recalling $\log L\ll n^c$ for some $c\in (0, 1)$, we have
\begin{equation}
  \label{eq:concentration_of_number_of_pos_and_neg_nodes}
  \bbP(E_\ep) \geq 1- 2L e^{-\ep^2 m} = 1 - 2 e^{- \Theta(\ep^2 n) + o(n^c)}.
\end{equation}  
Choosing $\ep = n^{-(1-c)/2} = o(1)$, we have $\bbP(E_\ep) = 1-2e^{-\Theta(n^c) + o(n^c)} = 1 -o(1)$. That is, with probability tending to one, we have $\mmm{\ell} = (1+o(1))m$ uniformly over $\ell \in S$. Hence, by further applying data-processing inequality if necessary, we can lower bound $\inf_{\bhz^\star_0} \textnormal{ave}_{\bz^\star} \bbP(\bhz^\star_0 \neq \bz^\star_0)$ by
% the optimal testing error of \eqref{eq: test_without_knowing_community_in_each_layer} by
\begin{align*}
  &\bbE_{\{\zzz{\ell}: \ell \in S\}} \bigg[\Indc_{E_\ep} \cdot \bbP\bigg( \log\bigg(\frac{\rho}{1-\rho}\bigg)\cdot \sum_{\ell \in S^c} \ZZZ{\ell} + \sum_{\ell \in S} \sum_{i = 1}^{m'} \XXX{\ell}_i \cdot \log \bigg(\frac{q_\ell(1-p_\ell)}{p_\ell(1-q_\ell)}\bigg) \\
  & \qquad \qquad + \sum_{\ell \in S}\sum_{i = 1}^{m'} \YYY{\ell}_i \cdot \log \bigg(\frac{p_\ell(1-q_\ell)}{q_\ell(1-p_\ell)}\bigg)  
   \geq 0\bigg) \bigg],
\end{align*}
where $m' = \max_{\ell\in S} \mmm{\ell} = (1+o(1))m = (1+o(1))n/2$. In the above display, the randomness of $\{\zzz{\ell}: \ell\in S\}$ only appears in the event $E_\ep$. So the above display is equal to 
\begin{align*}
  &\bbP(E_\ep)  \cdot \bbP\bigg( \log\bigg(\frac{\rho}{1-\rho}\bigg)\cdot \sum_{\ell \in S^c} \ZZZ{\ell} + \sum_{\ell \in S} \sum_{i = 1}^{m'} \XXX{\ell}_i \cdot \log \bigg(\frac{q_\ell(1-p_\ell)}{p_\ell(1-q_\ell)}\bigg) + \sum_{\ell \in S}\sum_{i = 1}^{m'} \YYY{\ell}_i \cdot \log \bigg(\frac{p_\ell(1-q_\ell)}{q_\ell(1-p_\ell)}\bigg)  \geq 0\bigg) \\
  & \gtrsim \bbP\bigg( \log\bigg(\frac{\rho}{1-\rho}\bigg)\cdot \sum_{\ell \in S^c} \ZZZ{\ell} + \sum_{\ell \in S} \sum_{i = 1}^{m'} \XXX{\ell}_i \cdot \log \bigg(\frac{q_\ell(1-p_\ell)}{p_\ell(1-q_\ell)}\bigg) + \sum_{\ell \in S}\sum_{i = 1}^{m'} \YYY{\ell}_i \cdot \log \bigg(\frac{p_\ell(1-q_\ell)}{q_\ell(1-p_\ell)}\bigg)  \geq 0\bigg),
\end{align*}
where the inequality is by $\bbP(E_\ep) = 1-o(1)$. Finally, we conclude the proof by recalling   \eqref{eq:same_error_across_i_lower_bound_z_star} and noting that the right-hand side above is the error incurred by the likelihood ratio test for the testing problem \eqref{eq:fund_testing_problem_z_star} with $m' = (1+\delta_n')n/2$.

\subsection{Proof of Lemma \ref{lemma:opt_test_error_z_star}}\label{subappend:prf_opt_test_error_z_star}
We will give a lower bound for
\begin{align}
  \label{eq:target_of_lower_bound_z_star}
  \bbP\bigg(\sum_{\ell\in S} \log \bigg(\frac{q_\ell(1-p_\ell)}{p_\ell(1-q_\ell)}\bigg) \cdot \sum_{i=1}^{m} (\XXX{\ell}_i-\YYY{\ell}_i) \geq \log \bigg(\frac{1-\rho}{\rho}\bigg) \cdot \sum_{\ell\in S^c}\ZZZ{\ell}\bigg),
\end{align}
where $m = (1+\delta'_n)n/2 = (1+o(1))n/2$. We begin by decomposing the above probability as
\begin{align*}
  \bbE_{\ZZZ{S^c}}\bigg[\bbP\bigg(\sum_{\ell \in S} \sum_{i = 1}^{m} \XXX{\ell}_i \cdot \log \bigg(\frac{q_\ell(1-p_\ell)}{p_\ell(1-q_\ell)}\bigg) + \YYY{\ell}_i \cdot \log \bigg(\frac{p_\ell(1-q_\ell)}{q_\ell(1-p_\ell)}\bigg)  \geq \log\bigg(\frac{1-\rho}{\rho}\bigg)\cdot \sum_{\ell\in S^c}\ZZZ{\ell} \ \bigg| \ \ZZZ{S^c} \bigg)\bigg].
\end{align*}
Consider the moment generating function
\begin{align*}
  \phi_S(t) & = \bbE\bigg[\exp\bigg\{ t \times \sum_{\ell \in S} \sum_{i = 1}^{m} \XXX{\ell}_i \cdot \log \bigg(\frac{q_\ell(1-p_\ell)}{p_\ell(1-q_\ell)}\bigg) + \YYY{\ell}_i \cdot \log \bigg(\frac{p_\ell(1-q_\ell)}{q_\ell(1-p_\ell)}\bigg) \bigg\}\bigg]\\
  & = \prod_{\ell \in S} \prod_{i\in[m]} \bigg[ p_\ell \bigg(\frac{q_\ell(1-p_\ell)}{p_\ell (1-q_\ell)}\bigg)^t + (1-p_\ell)\bigg] \bigg[q_\ell \bigg(\frac{p_\ell(1-q_\ell)}{q_\ell(1-p_\ell)}\bigg)^t + (1-q_\ell)\bigg] \\
  \label{eq:mgf_opt_test_z_star}
  & = \prod_{\ell\in S} \bigg(p_\ell^{1-t}q_\ell^t + (1-p_\ell)^{1-t}(1-q_\ell)^{t}\bigg)^m\bigg(p_\ell^t q_\ell^{1-t} + (1-p_\ell)^t (1-q_\ell)^{1-t}\bigg)^m.\numberthis
\end{align*}
The information-theoretic quantity $\psi_S(t)$ defined in \eqref{eq:cum_gen_fun} is the corresponding cumulant generating function (with $m$ replaced by $n/2$):
\begin{align}
  \label{eq:cgf_opt_test_z_star}
  \psi_S(t) &  = \sum_{\ell \in S} \frac{n}{2} \cdot \bigg[\log\bigg(p_\ell^{1-t}q_\ell^t + (1-p_\ell)^{1-t}(1-q_\ell)^{t}\bigg) + \log\bigg(p_\ell^t q_\ell^{1-t} + (1-p_\ell)^t (1-q_\ell)^{1-t}\bigg)\bigg].
\end{align} 
And we recall that 
\begin{align*}
  \psi_S^\star(a) = \sup_{0\leq t\leq 1} \{ at - \psi_S(t)\}.
\end{align*}
Now, treating $\ZZZ{S^c}$ as fixed, we let 
\begin{align}
  \label{eq:original_law}
  \mu &= \textnormal{Law} \bigg[ \sum_{\ell \in S} \sum_{i = 1}^{m} \XXX{\ell}_i \cdot \log \bigg(\frac{q_\ell(1-p_\ell)}{p_\ell(1-q_\ell)}\bigg) + \YYY{\ell}_i \cdot \log \bigg(\frac{p_\ell(1-q_\ell)}{q_\ell(1-p_\ell)}\bigg) \bigg] \\
  \label{eq:tilted_law}
  \tilde \mu_t & =  \Conv_{\ell\in S} \Conv_{i = 1}^{m} \textnormal{Law} (\tXXX{\ell}_t \conv \tYYY{\ell}_t),
\end{align} 
where $\conv$ denotes the convolution of random variables and 
\begin{equation}
\label{eq:tilted_rvs}
  \tXXX{\ell}_t + \tYYY{\ell}_t = 
  \begin{cases}
    \log \frac{q_\ell(1-p_\ell)}{p_\ell(1-q_\ell)} & \textnormal{w.p. } \frac{[p_\ell(1-q_\ell)]^{1-t} [q_\ell(1-p_\ell)]^t }{p_\ell q_\ell + (1-p_\ell)(1-q_\ell) + [p_\ell(1-q_\ell)]^{1-t} [q_\ell(1-p_\ell)]^t + [p_\ell(1-q_\ell)]^{t} [q_\ell(1-p_\ell)]^{1-t}} \\
    \log \frac{p_\ell(1-q_\ell)}{q_\ell(1-p_\ell)} & \textnormal{w.p. } \frac{[p_\ell(1-q_\ell)]^{t} [q_\ell(1-p_\ell)]^{1-t} }{p_\ell q_\ell + (1-p_\ell)(1-q_\ell) + [p_\ell(1-q_\ell)]^{1-t} [q_\ell(1-p_\ell)]^t + [p_\ell(1-q_\ell)]^{t} [q_\ell(1-p_\ell)]^{1-t}}\\
    0 & \textnormal{w.p. } \frac{p_\ell q_\ell + (1-p_\ell)(1-q_\ell)}{p_\ell q_\ell + (1-p_\ell)(1-q_\ell) + [p_\ell(1-q_\ell)]^{1-t} [q_\ell(1-p_\ell)]^t + [p_\ell(1-q_\ell)]^{t} [q_\ell(1-p_\ell)]^{1-t}}.
  \end{cases}
\end{equation}  
Here, the random variables $\tXXX{\ell}_t$ and $\tYYY{\ell}_t$ are exponentially tilted version of $\XXX{\ell}_i \cdot \log \frac{q_\ell(1-p_\ell)}{p_\ell (1-q_\ell)}$ and $\YYY{\ell}_i \cdot \log \frac{p_\ell(1-q_\ell)}{q_\ell(1-p_\ell)}$, respectively. As discussed in Remark \ref{rmk:parity}, the lower bound for \eqref{eq:target_of_lower_bound_z_star} depends on the parity of $|S^c|$.

\subsubsection{The case of even \texorpdfstring{$|S^c|$}{Sc}}
Assume $|S^c|$ is even. If $S^c = \varnothing$, then we have
\begin{align*}
  &\bbE_{\ZZZ{S^c}}\bigg[\bbP\bigg(\sum_{\ell \in S} \sum_{i = 1}^{m} \XXX{\ell}_i \cdot \log \bigg(\frac{q_\ell(1-p_\ell)}{p_\ell(1-q_\ell)}\bigg) + \YYY{\ell}_i \cdot \log \bigg(\frac{p_\ell(1-q_\ell)}{q_\ell(1-p_\ell)}\bigg)  \geq \log\bigg(\frac{1-\rho}{\rho}\bigg)\cdot \sum_{\ell\in S^c}\ZZZ{\ell} \ \bigg| \ \ZZZ{S^c} \bigg)\bigg]\\
  & = \bbP\bigg(\sum_{\ell \in S} \sum_{i = 1}^{m} \XXX{\ell}_i \cdot \log \bigg(\frac{q_\ell(1-p_\ell)}{p_\ell(1-q_\ell)}\bigg) + \YYY{\ell}_i \cdot \log \bigg(\frac{p_\ell(1-q_\ell)}{q_\ell(1-p_\ell)}\bigg)  \geq 0 \bigg).
\end{align*}
If $S^c \neq \varnothing$, we can write
\begin{align*}
  &\bbE_{\ZZZ{S^c}}\bigg[\bbP\bigg(\sum_{\ell \in S} \sum_{i = 1}^{m} \XXX{\ell}_i \cdot \log \bigg(\frac{q_\ell(1-p_\ell)}{p_\ell(1-q_\ell)}\bigg) + \YYY{\ell}_i \cdot \log \bigg(\frac{p_\ell(1-q_\ell)}{q_\ell(1-p_\ell)}\bigg)  \geq \log\bigg(\frac{1-\rho}{\rho}\bigg)\cdot \sum_{\ell\in S^c}\ZZZ{\ell} \ \bigg| \ \ZZZ{S^c} \bigg)\bigg]\\
  & = \sum_{x \in \{-|S^c|+ 2j: 0\leq j\leq |S^c|\}}\binom{|S^c|}{\frac{|S^c| + x}{2}} (1-\rho)^{\frac{|S^c| + x}{2}} \rho^{\frac{|S^c| - x}{2}}\\
  & \qquad \times \bbP\bigg(\sum_{\ell \in S} \sum_{i = 1}^{m} \XXX{\ell}_i \cdot \log \bigg(\frac{q_\ell(1-p_\ell)}{p_\ell(1-q_\ell)}\bigg) + \YYY{\ell}_i \cdot \log \bigg(\frac{p_\ell(1-q_\ell)}{q_\ell(1-p_\ell)}\bigg)  \geq \log\bigg(\frac{1-\rho}{\rho}\bigg)\cdot x \bigg)\\
  & \geq \binom{|S^c|}{\frac{|S^c|}{2}} \bigg((1-\rho)\rho\bigg)^{\frac{|S^c|}{2}}\times \bbP\bigg(\sum_{\ell \in S} \sum_{i = 1}^{m} \XXX{\ell}_i \cdot \log \bigg(\frac{q_\ell(1-p_\ell)}{p_\ell(1-q_\ell)}\bigg) + \YYY{\ell}_i \cdot \log \bigg(\frac{p_\ell(1-q_\ell)}{q_\ell(1-p_\ell)}\bigg)  \geq 0\bigg)\\
  & \geq 2^{\frac{|S^c|}{2}} \cdot \exp\bigg\{-(1+o(1))|S^c|J_\rho \bigg\}\times \bbP\bigg(\sum_{\ell \in S} \sum_{i = 1}^{m} \XXX{\ell}_i \cdot \log \bigg(\frac{q_\ell(1-p_\ell)}{p_\ell(1-q_\ell)}\bigg) + \YYY{\ell}_i \cdot \log \bigg(\frac{p_\ell(1-q_\ell)}{q_\ell(1-p_\ell)}\bigg)  \geq 0\bigg)\\
  & \geq \exp\bigg\{-(1+o(1))|S^c|J_\rho \bigg\}\times \bbP\bigg(\sum_{\ell \in S} \sum_{i = 1}^{m} \XXX{\ell}_i \cdot \log \bigg(\frac{q_\ell(1-p_\ell)}{p_\ell(1-q_\ell)}\bigg) + \YYY{\ell}_i \cdot \log \bigg(\frac{p_\ell(1-q_\ell)}{q_\ell(1-p_\ell)}\bigg)  \geq 0\bigg),
\end{align*}
where we have used the fact that $|S^c|$ is even, $\binom{n}{k}\geq (n/k)^k$, and 
$$
J_\rho := -\log2\sqrt{\rho(1-\rho)} = -(1+o(1))\log\sqrt{\rho(1-\rho)}
$$
when $\rho = o(1)$. Thus, in both cases,   \eqref{eq:target_of_lower_bound_z_star} can be lower bounded by
$$
\exp\bigg\{-(1+o(1))|S^c|J_\rho \bigg\}\times \bbP\bigg(\sum_{\ell \in S} \sum_{i = 1}^{m} \XXX{\ell}_i \cdot \log \bigg(\frac{q_\ell(1-p_\ell)}{p_\ell(1-q_\ell)}\bigg) + \YYY{\ell}_i \cdot \log \bigg(\frac{p_\ell(1-q_\ell)}{q_\ell(1-p_\ell)}\bigg)  \geq 0\bigg).
$$
Hence, we focus on lower bounding the following probability:
\begin{equation}
\label{eq:target_of_lower_bound_z_star_even}
  \bbP\bigg(\sum_{\ell \in S} \sum_{i = 1}^{m} \XXX{\ell}_i \cdot \log \bigg(\frac{q_\ell(1-p_\ell)}{p_\ell(1-q_\ell)}\bigg) + \YYY{\ell}_i \cdot \log \bigg(\frac{p_\ell(1-q_\ell)}{q_\ell(1-p_\ell)}\bigg)  \geq 0\bigg) = \mu([0, \infty)).
\end{equation}  

By a standard exponential tilting argument (also known as the Cram\'er-Chernoff argument, a technique commonly used in proving large deviation principles), we have
\begin{align*}
  \mu([ 0,  \infty))& = e^{-(1+o(1))\psi_S(t)} \cdot \bbE_{W \sim  \tilde\mu_t}\bigg[e^{-tW} \cdot \indc{W \geq 0}\bigg]\\
  & \geq \exp\bigg\{-(1+o(1))\psi_S(t)-t\xi\bigg\}  \times \bbP_{W\sim \tilde\mu_t}(0\leq W\leq \xi),
\end{align*}
where the $(1+o(1))$ term comes from $m = (1+o(1))n/2$.
Choosing $t = 1/2$, we arrive at the following lower bound:
\begin{align*}
  \mu([ 0,  \infty))&  \geq \exp\bigg\{-(1+o(1))\sum_{\ell\in S}m\III{\ell}_{1/2}-\xi/2\bigg\}  \times \bbP_{W\sim \tilde\mu_{1/2}}(0\leq W\leq \xi).
\end{align*}
We need the following lemma.
\begin{lemma}
  \label{lemma: exp_and_var_of_tilted_bernoulli}
  Assume there exist constants $C > 1, c\in(0, 1)$ such that $q_\ell < p_\ell \leq (C q_\ell) \land (1-c)$ for any $\ell\in[L]$.
  Then for any $t\in[0, 1]$, we have
  \begin{align*}
    % \bbE[\tXXX{\ell}_t + \tYYY{\ell}_t] &\asymp \III{\ell}_{1/2} \cdot \frac{p_\ell^t q_\ell^{1-t} - p_\ell^{1-t}q_\ell^t}{p_\ell - q_\ell},\\
    \bbE[(\tXXX{\ell}_t + \tYYY{\ell}_t)^2]
    \asymp \Var(\tXXX{\ell}_t + \tYYY{\ell}_t) \asymp \III{\ell}_{1/2}.
  \end{align*}
\end{lemma}
\begin{proof}
The first two moments of $\tXXX{\ell}_t + \tYYY{\ell}_t$ are given by
\begin{align*}
  & \bbE[\tXXX{\ell}_t + \tYYY{\ell}_t]\\
  & =  \bigg(\log \frac{p_\ell(1-q_\ell)}{q_\ell(1-p_\ell)}\bigg) 
  \cdot \frac{[p_\ell(1-q_\ell)]^{t} [q_\ell(1-p_\ell)]^{1-t}  - [p_\ell(1-q_\ell)]^{1-t} [q_\ell(1-p_\ell)]^t }{p_\ell q_\ell + (1-p_\ell)(1-q_\ell) + [p_\ell(1-q_\ell)]^{1-t} [q_\ell(1-p_\ell)]^t + [p_\ell(1-q_\ell)]^{t} [q_\ell(1-p_\ell)]^{1-t}},
\end{align*}
and
\begin{align*}
  & \bbE[(\tXXX{\ell}_t + \tYYY{\ell}_t)^2] \\
  & = \bigg(\log\frac{p_\ell(1-q_\ell)}{q_\ell(1-p_\ell)}\bigg)^2 \cdot \frac{[p_\ell(1-q_\ell)]^t[q_\ell(1-p_\ell)]^{1-t} + [p_\ell(1-q_\ell)]^{1-t}[q_\ell(1-p_\ell)]^{t}}{p_\ell q_\ell + (1-p_\ell)(1-q_\ell) + [p_\ell(1-q_\ell)]^{1-t} [q_\ell(1-p_\ell)]^t + [p_\ell(1-q_\ell)]^{t} [q_\ell(1-p_\ell)]^{1-t}},
\end{align*}
respectively.
Note that
\begin{align*}
  \log\frac{p_\ell(1-q_\ell)}{q_\ell(1-p_\ell)} 
  & = \log \bigg(1 + \frac{p_\ell- q_\ell}{q_\ell}\bigg) + \log \bigg(1 + \frac{p_\ell - q_\ell}{1-p_\ell}\bigg) \\
  & \leq (p_\ell - q_\ell) \cdot \bigg(\frac{1}{q_\ell} + \frac{1}{1-p_\ell}\bigg) \\
  & \leq  C \cdot \frac{p_\ell-q_\ell}{p_\ell} + \frac{p_\ell}{1-p_\ell} \cdot \frac{p_\ell - q_\ell}{p_\ell}\\
  & \leq (C + c^{-1})\cdot \frac{p_\ell-q_\ell}{p_\ell}, 
\end{align*}  
where the first inequality is by $\log(1+x)\leq x$ and the last two inequalities is by $q_\ell < p_\ell \leq (C q_\ell)\land (1-c)$.
Meanwhile, we have
\begin{align*}
  \log\frac{p_\ell(1-q_\ell)}{q_\ell(1-p_\ell)} 
  & \geq \log\bigg(1 + \frac{p_\ell-q_\ell}{q_\ell}\bigg) \geq \frac{(p_\ell-q_\ell)/q_\ell}{1 + (p_\ell-q_\ell)/q_\ell} = \frac{p_\ell-q_\ell}{p_\ell},
\end{align*}
where the second inequality is by $\log(1+x)\geq x/(1+x)$. Thus, we have
\begin{align}
  \label{eq: exp_and_var_of_tilted_bernoulli_intermediate_result}
  \log\frac{p_\ell(1-q_\ell)}{q_\ell(1-p_\ell)}  \asymp \frac{p_\ell-q_\ell}{p_\ell}.
\end{align}
Now, we claim that
$$
\frac{[p_\ell(1-q_\ell)]^t[q_\ell(1-p_\ell)]^{1-t} + [p_\ell(1-q_\ell)]^{1-t}[q_\ell(1-p_\ell)]^{t}}{p_\ell q_\ell + (1-p_\ell)(1-q_\ell) + [p_\ell(1-q_\ell)]^{1-t} [q_\ell(1-p_\ell)]^t + [p_\ell(1-q_\ell)]^{t} [q_\ell(1-p_\ell)]^{1-t}} \asymp p_\ell.
$$
Indeed, since $q_\ell < p_\ell < 1-c$, the denominator is $\Theta(1)$, and since $q_\ell < p_\ell \leq C q_\ell$, the numerator satisfies 
$$
  [p_\ell(1-q_\ell)]^t[q_\ell(1-p_\ell)]^{1-t} + [p_\ell(1-q_\ell)]^{1-t}[q_\ell(1-p_\ell)]^{t} = \Theta(p_\ell) \cdot [(1-q_\ell)^{t} (1-p_\ell)^{1-t} + (1-q_\ell)^{1-t}(1-p_\ell)^t] \asymp p_\ell.
$$
Thus, we have
\begin{align*}
  & \bbE[(\tXXX{\ell}_t + \tYYY{\ell}_t)^2] \asymp  \frac{(p_\ell-q_\ell)^2}{p_\ell^2} \cdot p_\ell =  \frac{(p_\ell-q_\ell)^2}{p_\ell} \asymp \III{\ell}_{1/2},
\end{align*}
where the last inequality is by Lemma \ref{lemma:asymp_equiv_of_I_t}. To show the asymptotic equivalence of the variance, from the formula for the first two moments, we have
\begin{align*}
  & \Var(\tXXX{\ell}_t + \tYYY{\ell}_t) \\
  & = \bigg(\log \frac{p_\ell(1-q_\ell)}{q_\ell(1-p_\ell)}\bigg)^2 \cdot 
  \bigg[
    \frac{[p_\ell(1-q_\ell)]^t[q_\ell(1-p_\ell)]^{1-t} + [p_\ell(1-q_\ell)]^{1-t}[q_\ell(1-p_\ell)]^{t}}{p_\ell q_\ell + (1-p_\ell)(1-q_\ell) + [p_\ell(1-q_\ell)]^{1-t} [q_\ell(1-p_\ell)]^t + [p_\ell(1-q_\ell)]^{t} [q_\ell(1-p_\ell)]^{1-t}}\\
    &\qquad - 
    \bigg(\frac{[p_\ell(1-q_\ell)]^{t} [q_\ell(1-p_\ell)]^{1-t}  - [p_\ell(1-q_\ell)]^{1-t} [q_\ell(1-p_\ell)]^t }{p_\ell q_\ell + (1-p_\ell)(1-q_\ell) + [p_\ell(1-q_\ell)]^{1-t} [q_\ell(1-p_\ell)]^t + [p_\ell(1-q_\ell)]^{t} [q_\ell(1-p_\ell)]^{1-t}}\bigg)^2
  \bigg]\\
  & = \bigg(\log \frac{p_\ell(1-q_\ell)}{q_\ell(1-p_\ell)}\bigg)^2 \cdot 
  \bigg(\frac{1}{p_\ell q_\ell + (1-p_\ell)(1-q_\ell) + [p_\ell(1-q_\ell)]^{1-t} [q_\ell(1-p_\ell)]^t + [p_\ell(1-q_\ell)]^{t} [q_\ell(1-p_\ell)]^{1-t}}\bigg)^2\\
  & \qquad \times 
  \bigg[
    \bigg([p_\ell(1-q_\ell)]^t[q_\ell(1-p_\ell)]^{1-t} + [p_\ell(1-q_\ell)]^{1-t}[q_\ell(1-p_\ell)]^{t}\bigg)\\
    & \qquad \qquad \cdot \bigg({p_\ell q_\ell + (1-p_\ell)(1-q_\ell) + [p_\ell(1-q_\ell)]^{1-t} [q_\ell(1-p_\ell)]^t + [p_\ell(1-q_\ell)]^{t} [q_\ell(1-p_\ell)]^{1-t}}\bigg) \\
    &\qquad\qquad - \bigg([p_\ell(1-q_\ell)]^{t} [q_\ell(1-p_\ell)]^{1-t}  - [p_\ell(1-q_\ell)]^{1-t} [q_\ell(1-p_\ell)]^t\bigg)^2
  \bigg]\\
  & = \bigg(\log \frac{p_\ell(1-q_\ell)}{q_\ell(1-p_\ell)}\bigg)^2 \cdot 
  \bigg(\frac{1}{p_\ell q_\ell + (1-p_\ell)(1-q_\ell) + [p_\ell(1-q_\ell)]^{1-t} [q_\ell(1-p_\ell)]^t + [p_\ell(1-q_\ell)]^{t} [q_\ell(1-p_\ell)]^{1-t}}\bigg)^2\\
  & \qquad \times 
  \bigg[
    \bigg([p_\ell(1-q_\ell)]^t[q_\ell(1-p_\ell)]^{1-t} + [p_\ell(1-q_\ell)]^{1-t}[q_\ell(1-p_\ell)]^{t}\bigg)\bigg(p_\ell q_\ell + (1-p_\ell)(1-q_\ell)\bigg)\\
    & \qquad \qquad + \bigg([p_\ell(1-q_\ell)]^t[q_\ell(1-p_\ell)]^{1-t} + [p_\ell(1-q_\ell)]^{1-t}[q_\ell(1-p_\ell)]^{t}\bigg)^2  \\
    &\qquad\qquad - \bigg([p_\ell(1-q_\ell)]^{t} [q_\ell(1-p_\ell)]^{1-t}  - [p_\ell(1-q_\ell)]^{1-t} [q_\ell(1-p_\ell)]^t\bigg)^2
  \bigg]\\
  & \geq \bigg(\log \frac{p_\ell(1-q_\ell)}{q_\ell(1-p_\ell)}\bigg)^2 \cdot 
  \bigg(\frac{1}{p_\ell q_\ell + (1-p_\ell)(1-q_\ell) + [p_\ell(1-q_\ell)]^{1-t} [q_\ell(1-p_\ell)]^t + [p_\ell(1-q_\ell)]^{t} [q_\ell(1-p_\ell)]^{1-t}}\bigg)^2\\
  & \qquad \times 
    \bigg([p_\ell(1-q_\ell)]^t[q_\ell(1-p_\ell)]^{1-t} + [p_\ell(1-q_\ell)]^{1-t}[q_\ell(1-p_\ell)]^{t}\bigg)\bigg(p_\ell q_\ell + (1-p_\ell)(1-q_\ell)\bigg).
\end{align*}
There are four terms in the right-hand side above. We have shown that the first term is $\Theta\left(\frac{(p_\ell-q_\ell)^2}{p_\ell^2}\right)$, the second term is $\Theta(1)$, and the third term is $\Theta(p_\ell)$. It is clear that the fourth term is $\Omega(1)$. Thus, we have
$$
  \Var(\tXXX{\ell}_t + \tYYY{\ell}_t) \gtrsim \frac{(p_\ell-q_\ell)^2}{p_\ell} \asymp \III{\ell}_{1/2}.
$$
The reverse inequality is trivial as $\Var(\tXXX{\ell}_t + \tYYY{\ell}_t) \leq \bbE[(\tXXX{\ell}_t + \tYYY{\ell}_t)^2]$. The proof is concluded.
\end{proof}

Now, by lemma \ref{lemma: exp_and_var_of_tilted_bernoulli} and Chebyshev's inequality, we have
$$
  \bbP_{W\sim \tilde\mu_{1/2}} \bigg(|W|>c \sqrt{\frac{\sum_{\ell\in S} m\III{\ell}_{1/2}}{\delta}}\bigg) \leq \delta,
$$
where $c$ is some absolute constant. Note that with $t = 1/2$, the random variable $\tXXX{\ell}_{1/2} + \tXXX{\ell}_{1/2}$ is actually \emph{symmetric about zero}. This gives
$$
  \bbP_{W\sim \tilde \mu_{1/2}}\bigg(0\leq W\leq c \sqrt{\frac{\sum_{\ell\in S} m \III{\ell}_{1/2}}{\delta}}\bigg) \geq \frac{1-\delta}{2}. 
$$
Hence, by properly choosing $\delta$, we have
\begin{align*}
  \mu([0, \infty)) \gtrsim \bigg\{-(1+o(1)) \bigg(\sum_{\ell \in S} m\III{\ell}_{1/2} - c \sqrt{\sum_{\ell \in S}m\III{\ell}_{1/2}}\bigg)\bigg\}.
\end{align*}
This gives the following lower bound for   \eqref{eq:target_of_lower_bound_z_star}:
\begin{align*}
  & C \cdot \exp\bigg\{-\bigl(1+o(1)\bigr) \cdot \bigg(|S^c| J_\rho + \sum_{\ell \in S}m\III{\ell}_{1/2} + c\sqrt{\sum_{\ell\in S}m\III{\ell}_{1/2}}\bigg) \bigg\}.%\\
  %& = \exp\bigg\{-\bigl(1+o(1)\bigr) \cdot \bigg(|S^c| J_\rho + \sum_{\ell \in S}m\III{\ell}_{1/2} + c\sqrt{\sum_{\ell\in S}m\III{\ell}_{1/2}}\bigg) \bigg\},
\end{align*}
Consider the following three cases.
\begin{enumerate}
  \item In this case, we assume $S^c = \varnothing$. Now, if $\sum_{\ell \in S}m\III{\ell}_{1/2} = \sum_{\ell \in [L]}m\III{\ell}_{1/2}\to \infty$, then the lower bound for   \eqref{eq:target_of_lower_bound_z_star} becomes
  $$
     C\cdot\exp\bigg\{-\bigl(1+o(1)\bigr) \cdot \sum_{\ell \in [L]}m\III{\ell}_{1/2}\bigg\}.
  $$
  On the other hand, if $\sum_{\ell \in [L]}m\III{\ell}_{1/2} = O(1)$, then the corresponding lower bound is again
  $$
    C\cdot\exp\bigg\{-\bigl(1+o(1)\bigr) \cdot \sum_{\ell \in [L]}m\III{\ell}_{1/2}\bigg\},
  $$
  because $\sqrt{\sum_{\ell\in[L]} m\III{\ell}_{1/2}} = O(1)$.
  \item In this case, we assume $S^c \neq \varnothing$ and $\sum_{\ell \in S} m\III{\ell}_{1/2} \geq J_\rho$. Since $J_\rho \to\infty$, $\sum_{\ell\in S} m\III{\ell}_{1/2}\to\infty$. Now the lower bound for   \eqref{eq:target_of_lower_bound_z_star} becomes 
  \begin{align*}
  & c\cdot \exp\bigg\{-\bigl(1+o(1)\bigr)|S^c| J_\rho + \bigg(1+o(1) + \frac{c}{\sqrt{\sum_{\ell\in S}m\III{\ell}_{1/2}}}\bigg)\cdot \sum_{\ell \in S}m\III{\ell}_{1/2}\bigg] \bigg\}\\
  & =c\cdot \exp\bigg\{-\bigl(1+o(1)\bigr) \cdot \bigg(|S^c| J_\rho + \sum_{\ell \in S}m\III{\ell}_{1/2}\bigg) \bigg\}.
  \end{align*}
  \item If $S^c \neq \varnothing$ and $\sum_{\ell \in S}m\III{\ell}_{1/2} < J_\rho$, then the lower bound for   \eqref{eq:target_of_lower_bound_z_star} becomes
  \begin{align*}
   & C\cdot \exp\bigg\{-\bigg(1+o(1) + \frac{c\sqrt{\sum_{\ell\in S}m\III{\ell}_{1/2}}}{J_\rho}\bigg)|S^c| J_\rho + \bigl(1+o(1)\bigr) \cdot \sum_{\ell \in S}m\III{\ell}_{1/2}\bigg] \bigg\}\\
  & = C\cdot \exp\bigg\{-\bigl(1+o(1)\bigr) \cdot \bigg(|S^c| J_\rho + \sum_{\ell \in S}m\III{\ell}_{1/2} \bigg) \bigg\}.
  \end{align*}
\end{enumerate}
Hence, for any $|S^c|$ even, we have the following lower bound for   \eqref{eq:target_of_lower_bound_z_star}:
\begin{align*}
   & C\cdot \exp\bigg\{-\bigl(1+o(1)\bigr) \cdot \bigg(|S^c| J_\rho + \sum_{\ell \in S}mI_\ell \bigg) \bigg\} = C\cdot\exp\bigg\{-\bigl(1+o(1)\bigr) \cdot \bigg(|S^c| J_\rho + \psi_S^\star(0)\bigg) \bigg\},
\end{align*}
where we remark that the $o(1)$ term does not depend on $S$, thus proving the first part of \eqref{eq:opt_test_error_z_star}.

\subsubsection{The case of odd \texorpdfstring{$|S^c|$}{Sc}}\label{subsubappend:prf_opt_test_error_z_star_odd_Sc}
Now consider the case where $|S^c|$ is odd. Similar to the previous case, we start by writing
\begingroup
\allowdisplaybreaks
\begin{align*}
  &\bbE_{\ZZZ{S^c}}\bigg[\bbP\bigg(\sum_{\ell \in S} \sum_{i = 1}^{m} \XXX{\ell}_i \cdot \log \bigg(\frac{q_\ell(1-p_\ell)}{p_\ell(1-q_\ell)}\bigg) + \YYY{\ell}_i \cdot \log \bigg(\frac{p_\ell(1-q_\ell)}{q_\ell(1-p_\ell)}\bigg)  \geq \log\bigg(\frac{1-\rho}{\rho}\bigg)\cdot \sum_{\ell\in S^c}\ZZZ{\ell} \ \bigg| \ \ZZZ{S^c} \bigg)\bigg]\\
  & \geq \sum_{x \in \{-|S^c|+ 2j: 0\leq j\leq |S^c|\}}\binom{|S^c|}{\frac{|S^c| + x}{2}} (1-\rho)^{\frac{|S^c| + x}{2}} \rho^{\frac{|S^c| - x}{2}}\\
  & \qquad \times \bbP\bigg(\sum_{\ell \in S} \sum_{i = 1}^{m} \XXX{\ell}_i \cdot \log \bigg(\frac{q_\ell(1-p_\ell)}{p_\ell(1-q_\ell)}\bigg) + \YYY{\ell}_i \cdot \log \bigg(\frac{p_\ell(1-q_\ell)}{q_\ell(1-p_\ell)}\bigg)  \geq \log\bigg(\frac{1-\rho}{\rho}\bigg)\cdot x \bigg)\\
  & \geq \binom{|S^c|}{\frac{|S^c + 1|}{2}} \bigg(\rho(1-\rho)\bigg)^{\frac{|S^c|}{2}} \cdot \bigg(\frac{1-\rho}{\rho}\bigg)^{-1/2} \\
  & \qquad \times \bbP\bigg(\sum_{\ell \in S} \sum_{i = 1}^{m} \XXX{\ell}_i \cdot \log \bigg(\frac{q_\ell(1-p_\ell)}{p_\ell(1-q_\ell)}\bigg) + \YYY{\ell}_i \cdot \log \bigg(\frac{p_\ell(1-q_\ell)}{q_\ell(1-p_\ell)}\bigg)  \geq \log\bigg(\frac{\rho}{1-\rho}\bigg) \bigg)\\
  & \geq \exp\bigg\{- \bigl(1+o(1)\bigr) \bigg(|S^c| + 1\bigg) J_\rho \bigg\}\\
  & \qquad \times \bbP\bigg(\sum_{\ell \in S} \sum_{i = 1}^{m} \XXX{\ell}_i \cdot \log \bigg(\frac{q_\ell(1-p_\ell)}{p_\ell(1-q_\ell)}\bigg) + \YYY{\ell}_i \cdot \log \bigg(\frac{p_\ell(1-q_\ell)}{q_\ell(1-p_\ell)}\bigg)  \geq \log\bigg(\frac{\rho}{1-\rho}\bigg) \bigg).
\end{align*}
\endgroup
where the second inequality is by choosing $x = -1$ (note that we can do so because $|S^c|$ is odd). 
Hence, we focus on lower bounding the following probability:
\begin{equation}
\label{eq:target_of_lower_bound_z_star_odd}
  \bbP\bigg(\sum_{\ell \in S} \sum_{i = 1}^{m} \XXX{\ell}_i \cdot \log \bigg(\frac{q_\ell(1-p_\ell)}{p_\ell(1-q_\ell)}\bigg) + \YYY{\ell}_i \cdot \log \bigg(\frac{p_\ell(1-q_\ell)}{q_\ell(1-p_\ell)}\bigg)  \geq \log\bigg(\frac{\rho}{1-\rho}\bigg) \bigg). 
\end{equation}  
With some algebra, we have
\begin{align*}
  \sum_{\ell\in S} \sum_{i=1}^{m}\bbE[\tXXX{\ell}_0 + \tYYY{\ell}_0] = -\sum_{\ell\in S} m(p_\ell - q_\ell) \cdot \log\frac{p_\ell(1-q_\ell)}{q_\ell(1-p_\ell)} = - \Theta\bigg(\sum_{\ell\in S}m \III{\ell}_{1/2}\bigg),
\end{align*}
where the asymptotic equivalence is by \eqref{eq: exp_and_var_of_tilted_bernoulli_intermediate_result} and Lemma \ref{lemma:asymp_equiv_of_I_t}. We divide our discussion into three cases.

~\\
{\bf Case A.}
In this case, we assume
  $$
    \sum_{\ell\in S} m (p_\ell - q_\ell) \cdot \log\frac{p_\ell(1-q_\ell)}{q_\ell(1-p_\ell)} \leq \sqrt{J_\rho}. 
  $$
  From \eqref{eq: exp_and_var_of_tilted_bernoulli_intermediate_result} and Lemma \ref{lemma:asymp_equiv_of_I_t}, the left-hand side above is $\Theta(\sum_{\ell \in S} m \III{\ell}_{1/2})$. This means that in this case, we have
  $$
    \sum_{\ell \in S} m\III{\ell}_{1/2} \lesssim \sqrt{J_\rho}. 
  $$
  We now have
  \begingroup
  \allowdisplaybreaks
  \begin{align*}
    & \bbP\bigg(\sum_{\ell \in S} \sum_{i = 1}^{m} \XXX{\ell}_i \cdot \log \bigg(\frac{q_\ell(1-p_\ell)}{p_\ell(1-q_\ell)}\bigg) + \YYY{\ell}_i \cdot \log \bigg(\frac{p_\ell(1-q_\ell)}{q_\ell(1-p_\ell)}\bigg)  \geq \log\bigg(\frac{\rho}{1-\rho}\bigg) \bigg)\\
    &  = 1 - \bbP\bigg(\sum_{\ell \in S} \sum_{i = 1}^{m} \XXX{\ell}_i \cdot \log \bigg(\frac{q_\ell(1-p_\ell)}{p_\ell(1-q_\ell)}\bigg) + \YYY{\ell}_i \cdot \log \bigg(\frac{p_\ell(1-q_\ell)}{q_\ell(1-p_\ell)}\bigg)  <\log\bigg(\frac{\rho}{1-\rho}\bigg) \bigg)\\
    &  = 1 - \bbP\bigg(-\sum_{\ell \in S} \sum_{i = 1}^{m}\bigg(\tXXX{\ell}_0 + \tYYY{\ell}_0\bigg) > \log\bigg(\frac{1-\rho}{\rho}\bigg) \bigg)\\
    & \geq 1- \exp\bigg\{- \frac{1}{2}\log \bigg(\frac{1-\rho}{\rho}\bigg)\bigg\} \cdot \prod_{\ell\in S} \prod_{i=1}^{m} \bbE \exp\bigg\{ -\frac{\tXXX{\ell}_0 + \tYYY{\ell}_0}{2}\bigg\} \\
    & = 1 - e^{-\bigl(1+o(1)\bigr)J_\rho} \cdot \prod_{\ell\in S} \prod_{i =  1}^{m} \frac{\bigl(p_\ell(1-q_\ell)\bigr)^{3/2}}{\bigl(q_\ell(1-p_\ell)\bigr)^{1/2}} +  \frac{\bigl(q_\ell(1-p_\ell)\bigr)^{3/2}}{\bigl(p_\ell(1-q_\ell)\bigr)^{1/2}} + 1 - p_\ell(1-q_\ell) - q_\ell(1-p_\ell),
  \end{align*}
  \endgroup
  where the fourth line is by Markov's inequality.
  Note that
  \begin{align*}
  & \frac{\bigl(p_\ell(1-q_\ell)\bigr)^{3/2}}{\bigl(q_\ell(1-p_\ell)\bigr)^{1/2}} +  \frac{\bigl(q_\ell(1-p_\ell)\bigr)^{3/2}}{\bigl(p_\ell(1-q_\ell)\bigr)^{1/2}} + 1 - p_\ell(1-q_\ell) - q_\ell(1-p_\ell) \\
  & = 1 - p_\ell - q_\ell + 2p_\ell q_\ell +  \frac{\bigl(p_\ell(1-q_\ell)\bigr)^{3/2}}{\bigl(q_\ell(1-p_\ell)\bigr)^{1/2}} +  \frac{\bigl(q_\ell(1-p_\ell)\bigr)^{3/2}}{\bigl(p_\ell(1-q_\ell)\bigr)^{1/2}}\\
  & = 1 - \bigl(\sqrt{p_\ell} -\sqrt{q_\ell}\bigr)^2 - 2 \sqrt{p_\ell q_\ell} + 2p_\ell q_\ell +  \frac{\bigl(p_\ell(1-q_\ell)\bigr)^{3/2}}{\bigl(q_\ell(1-p_\ell)\bigr)^{1/2}} +  \frac{\bigl(q_\ell(1-p_\ell)\bigr)^{3/2}}{\bigl(p_\ell(1-q_\ell)\bigr)^{1/2}}.
  \end{align*}
  Now, 
  \begingroup
  \allowdisplaybreaks
  \begin{align*}
    & \log\bigg(\frac{\bigl(p_\ell(1-q_\ell)\bigr)^{3/2}}{\bigl(q_\ell(1-p_\ell)\bigr)^{1/2}} +  \frac{\bigl(q_\ell(1-p_\ell)\bigr)^{3/2}}{\bigl(p_\ell(1-q_\ell)\bigr)^{1/2}} + 1 - p_\ell(1-q_\ell) - q_\ell(1-p_\ell)\bigg) \\
    & \leq  - \bigl(\sqrt{p_\ell} -\sqrt{q_\ell}\bigr)^2 - 2 \sqrt{p_\ell q_\ell} + 2p_\ell q_\ell +  \frac{\bigl(p_\ell(1-q_\ell)\bigr)^{3/2}}{\bigl(q_\ell(1-p_\ell)\bigr)^{1/2}} +  \frac{\bigl(q_\ell(1-p_\ell)\bigr)^{3/2}}{\bigl(p_\ell(1-q_\ell)\bigr)^{1/2}}\\
    &  = \frac{1}{\sqrt{p_\ell q_\ell}} \cdot
    \bigg(
      -2p_\ell q_\ell + p_\ell^2 \cdot \frac{(1-q_\ell)^{3/2}}{(1-p_\ell)^{1/2}} + q_\ell^2 \cdot \frac{(1-p_\ell)^{3/2}}{(1-q_\ell)^{1/2}} + 2 (p_\ell q_\ell)^{3/2} 
    \bigg)\\
    & \leq \frac{1}{\sqrt{p_\ell q_\ell}} \cdot
    \bigg(
      (p_\ell-q_\ell)^2 + \bigg(\frac{(1-q_\ell)^{3/2}}{(1-p_\ell)^{1/2}} - 1\bigg) p_\ell^2+  2 (p_\ell q_\ell)^{3/2} 
    \bigg)\\
    & \lesssim \frac{1}{p_\ell}  \cdot
    \bigg(
      (p_\ell-q_\ell)^{2} + \calO(p_\ell^2)
    \bigg).
  \end{align*}
  \endgroup
  Since $p_\ell^2 / (p_\ell-q_\ell)^2 = \frac{(p_\ell/q_\ell)^2}{[(p_\ell/q_\ell) - 1]^2} \leq C_2^2/(C_1-1)^2 \lesssim 1$, the right-hand side above is $\calO\left(\frac{(p_\ell-q_\ell)^2}{p_\ell}\right) = \III{\ell}_{1/2}$,
  where we have invoked Lemma \eqref{lemma:asymp_equiv_of_I_t}.
  Hence, for some constant $c, c'>0$ we have
  \begingroup
  \allowdisplaybreaks
  \begin{align*}
    & \bbP\bigg(\sum_{\ell \in S} \sum_{i = 1}^{m} \XXX{\ell}_i \cdot \log \bigg(\frac{q_\ell(1-p_\ell)}{p_\ell(1-q_\ell)}\bigg) + \YYY{\ell}_i \cdot \log \bigg(\frac{p_\ell(1-q_\ell)}{q_\ell(1-p_\ell)}\bigg)  \geq \log\bigg(\frac{\rho}{1-\rho}\bigg) \bigg)\\
    & = 1 - e^{-\bigl(1+o(1)\bigr)J_\rho} \cdot \prod_{\ell\in S} \prod_{i =  1}^{m} e^{c\III{\ell}_{1/2}}\\
    & = 1 - \exp\bigg\{- \bigl( 1 + o(1)\bigr)\cdot \bigg(J_\rho - c\sum_{\ell\in S}m \III{\ell}_{1/2}\bigg)\bigg\} \\
    & \geq 1 - \exp\bigg\{- \bigl( 1 + o(1)\bigr)\cdot \bigg(J_\rho - c' \sqrt{J_\rho}\bigg)\bigg\} \\
    & = 1 - e^{-\bigl(1+o(1)\bigr)J_\rho}\\
    & \geq 1/2,
  \end{align*}
  \endgroup
  where the third line is by our assumption that $\sum_{\ell\in S}m \III{\ell}_{1/2} \lesssim \sqrt{J_\rho}$ and the last line is by $J_\rho \to\infty$.
  This means that   \eqref{eq:target_of_lower_bound_z_star} can be lower bounded by
  \begin{align*}
    \frac{1}{2}\cdot \exp\bigg\{- \bigl(1+o(1)\bigr) \bigg(|S^c| + 1\bigg) J_\rho \bigg\} 
    & = \exp\bigg\{- \bigl(1+o(1)\bigr) \bigg(|S^c| + 1\bigg) J_\rho \bigg\}\\
    & \geq \exp\bigg\{- \bigl(1+o(1)\bigr) \bigg(\bigl(|S^c| + 1\bigr) J_\rho + \psi_S^\star(-2J_\rho)\bigg)  \bigg\},
  \end{align*}
  where the last inequality is by $\psi_S^\star(-2J_\rho) \geq 0$, as shown in   \eqref{eq:ineq_for_cvx_conjugate}.

~\\
{\bf Case B.}
In this case, we assume
  $$
    \sqrt{J_\rho} < \sum_{\ell\in S} m (p_\ell - q_\ell) \cdot \log\frac{p_\ell(1-q_\ell)}{q_\ell(1-p_\ell)} \leq \log \frac{1-\rho}{\rho} = \bigl(1+o(1)\bigr) 2J_\rho. 
  $$
  From \eqref{eq: exp_and_var_of_tilted_bernoulli_intermediate_result} and Lemma \ref{lemma:asymp_equiv_of_I_t}, we have $ \sum_{\ell\in S} m (p_\ell - q_\ell) \cdot \log\frac{p_\ell(1-q_\ell)}{q_\ell(1-p_\ell)}  \asymp \sum_{\ell \in S} m \III{\ell}_{1/2}$, and thus
  $$
    \sqrt{J_\rho}\lesssim \sum_{\ell\in S} m\III{\ell}_{1/2}  \to \infty,
  $$
  and
  $$
    \sum_{\ell\in S} m\III{\ell}_{1/2} \asymp \Var \bigg(\sum_{\ell \in S}\sum_{i = 1}^{m} \tXXX{\ell}_0 + \tYYY{\ell}_0\bigg)  \to \infty.
  $$
  We now have
  \begin{align*}
    & \bbP\bigg(\sum_{\ell \in S} \sum_{i = 1}^{m} \XXX{\ell}_i \cdot \log \bigg(\frac{q_\ell(1-p_\ell)}{p_\ell(1-q_\ell)}\bigg) + \YYY{\ell}_i \cdot \log \bigg(\frac{p_\ell(1-q_\ell)}{q_\ell(1-p_\ell)}\bigg)  \geq \log\bigg(\frac{\rho}{1-\rho}\bigg) \bigg)\\
    & = \bbP\bigg(\sum_{\ell\in S}\sum_{i=1}^{m} \tXXX{\ell}_0 + \tYYY{\ell}_0 - \bbE[\tXXX{\ell} + \tYYY{\ell}]  \geq \log \bigg(\frac{\rho}{1-\rho}\bigg) + \sum_{\ell\in S} m (p_\ell - q_\ell) \cdot \log\frac{p_\ell(1-q_\ell)}{q_\ell(1-p_\ell)}\bigg) \\
    & \geq \bbP\bigg(\sum_{\ell\in S}\sum_{i=1}^{m} \tXXX{\ell}_0 + \tYYY{\ell}_0 - \bbE[\tXXX{\ell} + \tYYY{\ell}]  \geq 0\bigg). 
  \end{align*}
  Note that 
  \begin{align*}
    \bigg(\tXXX{\ell}_0 + \tYYY{\ell}_0 - \bbE[\tXXX{\ell} + \tYYY{\ell}] \bigg)^2 \leq  4 \bigg(\log \frac{p_\ell(1-q_\ell)}{q_\ell (1-p_\ell)}\bigg)^2 = \calO(1).
  \end{align*}
  Hence, by Lindeberg-Feller CLT, we have
  $$
    \frac{\sum_{\ell\in S}\sum_{i=1}^{m} \tXXX{\ell}_0 + \tYYY{\ell}_0 - \bbE[\tXXX{\ell} + \tYYY{\ell}]}{\Var \bigg(\sum_{\ell\in S}\sum_{i=1}^{m} \tXXX{\ell}_0 + \tYYY{\ell}_0 - \bbE[\tXXX{\ell} + \tYYY{\ell}]\bigg)} \Rightarrow \calN(0, 1).
  $$
  This gives
  \begin{align*}
    & \bbP\bigg(\sum_{\ell \in S} \sum_{i = 1}^{m} \XXX{\ell}_i \cdot \log \bigg(\frac{q_\ell(1-p_\ell)}{p_\ell(1-q_\ell)}\bigg) + \YYY{\ell}_i \cdot \log \bigg(\frac{p_\ell(1-q_\ell)}{q_\ell(1-p_\ell)}\bigg)  \geq \log\bigg(\frac{\rho}{1-\rho}\bigg) \bigg) \geq C 
  \end{align*}
  for large enough $n$ where $C > 0$ is some absolute constant. Hence, in this case, we have the following lower bound for   \eqref{eq:target_of_lower_bound_z_star}:
  \begin{align*}
    C\cdot \exp\bigg\{- \bigl(1+o(1)\bigr) \bigg(|S^c| + 1\bigg) J_\rho \bigg\}
    & \geq C\cdot\exp\bigg\{- \bigl(1+o(1)\bigr) \bigg(\bigl(|S^c| + 1\bigr) J_\rho + \psi_S^\star(-2J_\rho)\bigg)  \bigg\}.
  \end{align*}

~\\
{\bf Case C.}
In this case, we assume
  $$
    \sum_{\ell\in S} m (p_\ell - q_\ell) \cdot \log\frac{p_\ell(1-q_\ell)}{q_\ell(1-p_\ell)} > \log \frac{1-\rho}{\rho} = \bigl(1+ o(1)\bigr)2J_\rho. 
  $$
  We let
  $$
    \tilde \psi_S(t) = -m\sum_{\ell\in S} \III{\ell}_t = \frac{\psi_S(t)}{n/2} \cdot m = (1+o(1)) \psi_S(t).
  $$
  That is, we replace $n/2$ in the definition of $\psi_S(t)$ with $m = (1+o(1))n/2$. Meanwhile, we let $t^\star$ be the maximizer of
  $$
    \sup_{t\in[0, 1]} -t \log\frac{1-\rho}{\rho} - \tilde\psi_S(t).
  $$
  By construction, we have
  $$
    \sum_{\ell\in S} \sum_{i=1}^{m} \bbE[\tXXX{\ell}_{t} + \tYYY{\ell}_{t}] = \frac{d}{dt} \tilde\psi_S(t).
  $$
  and $\tilde\psi_S(t)$ is a convex function in $t$ (see Lemma \ref{eq:convexity_of_cum_gen_func} for details).  
  By convexity, the optimal $t^\star$ is such that
  \begin{align*}
    -\log \frac{1-\rho}{\rho} & = \sum_{\ell\in S} \sum_{i=1}^{m} \bbE[\tXXX{\ell}_{t^\star} + \tYYY{\ell}_{t^\star}],
  \end{align*}
  and the right-hand side is an increasing function in $t$.
  % \sxc{We can also directly check:
  % Recall that 
  % \begin{align*}
  %   \sum_{\ell\in S} \sum_{i=1}^{m} \bbE[\tXXX{\ell}_{t} + \tYYY{\ell}_{t}]& = \sum_{\ell\in S} m \bigg(\log \frac{p_\ell(1-q_\ell)}{q_\ell(1-p_\ell)}\bigg) \\
  % & \qquad \times \frac{[p_\ell(1-q_\ell)]^{t} [q_\ell(1-p_\ell)]^{1-t}  - [p_\ell(1-q_\ell)]^{1-t} [q_\ell(1-p_\ell)]^t }{p_\ell q_\ell + (1-p_\ell)(1-q_\ell) + [p_\ell(1-q_\ell)]^{1-t} [q_\ell(1-p_\ell)]^t + [p_\ell(1-q_\ell)]^{t} [q_\ell(1-p_\ell)]^{1-t}}.
  % \end{align*}  
  % Note that
  % \begin{align*}
  %   & [p_\ell(1-q_\ell)]^{t} [q_\ell(1-p_\ell)]^{1-t}  - [p_\ell(1-q_\ell)]^{1-t} [q_\ell(1-p_\ell)]^t \\
  %   & = \bigg(\frac{p_\ell(1-q_\ell)}{q_\ell(1-p_\ell)}\bigg)^t q_\ell(1-p_\ell) - \bigg(\frac{p_\ell(1-q_\ell)}{q_\ell(1-p_\ell)}\bigg)^{-t} p_\ell(1-q_\ell),
  % \end{align*}  
  % which is increasing for $t\geq 0$. On the other hand, by Cauchy-Schwartz, the quantity
  % $$
  %   [p_\ell(1-q_\ell)]^{1-t} [q_\ell(1-p_\ell)]^t + [p_\ell(1-q_\ell)]^{t} [q_\ell(1-p_\ell)]^{1-t}
  % $$
  % is decreasing for $t \in [0, 1/2]$ and increasing for $t \in [1/2, 1]$. Hence, overall, the quantity
  % $$
  %   \sum_{\ell\in S} \sum_{i=1}^{m} \bbE[\tXXX{\ell}_{t} + \tYYY{\ell}_{t}]
  % $$
  % is increasing as a function of $t$ for $t \in [0, 1/2]$.} 
  This gives
  $$
    - \sum_{\ell\in S} m (p_\ell - q_\ell) \cdot \log\frac{p_\ell(1-q_\ell)}{q_\ell(1-p_\ell)}  \leq \sum_{\ell\in S} \sum_{i=1}^{m} \bbE[\tXXX{\ell}_{t} + \tYYY{\ell}_{t}] \leq   0
  $$
  for $t \in [0, 1/2]$, where the left-hand side above is attained at $t = 0$ and right-hand side is attained at $t = 1/2$. Since $\sum_{\ell\in S}\sum_{i\in[m]}\bbE[\tXXX{\ell}_t + \tYYY{\ell}_t]$ is continuous and increasing in $t$ for $t\in[0, 1/2]$, it attains every value between the two sides in the above display. On the other hand, the optimal $t^\star$ is such that the expectation is exactly equal to $-\log\frac{1-\rho}{\rho}$, which is between the two sides in the above display under our current assumption. As a result, we have
  $$
    t^\star \in [0, 1/2].
  $$
  Now note that
  $$
    \sup_{t\in[0, 1]} -t \log\frac{1-\rho}{\rho} - \tilde\psi_S(t) = \sup_{t\in[0, 1]} (1+o(1))\bigg(-2tJ_\rho - \psi_S(t)\bigg) = (1+o(1))\psi_S^\star(-2J_\rho).
  $$

  Hence, with a standard tilting argument, we get
  \begin{align*}
    \mu\bigg(\bigg[  \log\frac{\rho}{1-\rho},  \infty\bigg)\bigg)& \geq \exp\bigg\{- (1+o(1))\psi_S^\star(-2J_\rho) - t^\star\xi\bigg\} \\
    & \qquad \times \bbP_{W\sim \tilde\mu_{t^\star}}\bigg(  \log\frac{\rho}{1-\rho}\leq W\leq \xi + \log\frac{\rho}{1-\rho}\bigg).
  \end{align*}
  By Lemma \ref{lemma: exp_and_var_of_tilted_bernoulli}, $\Var(W)\asymp \sum_{\ell\in S}m\III{\ell}_{1/2}\to \infty$, and with a similar argument as the previous case, an application of Lindeberg-Feller CLT gives 
  $$
    \frac{W - \log\frac{\rho}{1-\rho}}{\sqrt{\Var(W)}} \Rightarrow \calN(0 , 1).
  $$
  Hence, choosing $\xi = \sqrt{\Var(W)} \asymp \sqrt{\sum_{\ell\in S}m\III{\ell}_{1/2}}$, we get 
  \begin{align*}
    \mu\bigg(\bigg[  \log\frac{\rho}{1-\rho},  \infty\bigg)\bigg)& \geq C \cdot \exp\bigg\{- (1+o(1))\psi_S^\star(-2J_\rho) - ct^\star\sqrt{\sum_{\ell\in S}m\III{\ell}_{1/2}}\bigg\},
  \end{align*}
  and we arrive at the following lower bound for   \eqref{eq:target_of_lower_bound_z_star}:
  \begin{align*}
    C\cdot \exp\bigg\{- \bigl(1+o(1)\bigr) \bigg(\bigl(|S^c| + 1\bigr) J_\rho + \psi_S^\star(-2J_\rho) + ct^\star\sqrt{\sum_{\ell\in S}m\III{\ell}_{1/2}}\bigg)  \bigg\}.
  \end{align*}
  % Recall that
  % \begin{align*}
  %   \psi^\star_S(-2J_\rho) & = \sup_{t\in[0, 1]} -2tJ_\rho - \psi_S(t) \\
  %   & = \sup_{t\in[0, 1/2]} -2tJ_\rho + \psi_S(t)\\
  %   & \geq -J_\rho - \sum_{\ell\in S}mI_\ell.
  % \end{align*}  
  By the inequality in \eqref{eq:ineq_for_cvx_conjugate}, we have
  $$
    \bigl(|S^c| + 1\bigr) J_\rho + \psi_S^\star(-2J_\rho) \geq |S^c| J_\rho + \sum_{\ell \in S} m\III{\ell}_{1/2} \gg \sqrt{\sum_{\ell \in S}m\III{\ell}_{1/2}}.
  $$
  This gives
  $$
    \bigl(|S^c| + 1\bigr) J_\rho + \psi_S^\star(-2J_\rho) + ct^\star\sqrt{\sum_{\ell\in S}mI_\ell} = \bigl(1 + o(1)\bigr) \cdot \bigg(\bigl(|S^c| + 1\bigr) J_\rho + \psi_S^\star(-2J_\rho)\bigg).
  $$
  As a result, we get the following lower bound for   \eqref{eq:target_of_lower_bound_z_star}:
  \begin{align*}
    C\cdot \exp\bigg\{- \bigl(1+o(1)\bigr) \bigg(\bigl(|S^c| + 1\bigr) J_\rho + \psi_S^\star(-2J_\rho)\bigg)  \bigg\}.
  \end{align*}

~\\
{\bf Summary.}
Combining the above three cases and noting that the $o(1)$ term does not depend on $S$ proves the second part of \eqref{eq:opt_test_error_z_star}. Thus the proof of Lemma \ref{lemma:opt_test_error_z_star} is concluded.

% In summary, when $|S^c|$ is odd, we have the following lower bound:
% \begin{align*}
%     \inf_{\bhz^\star}\sup_{\bz^\star}\bbE\calL(\bhz^\star, \bz^\star)& \gtrsim \exp\bigg\{- \bigl(1+o(1)\bigr) \bigg(\bigl(|S^c| + 1\bigr) J_\rho + \psi_S^\star(-2J_\rho)\bigg)  \bigg\}.
% \end{align*}

% \subsubsection{Summarizing the two terms}
% In summary, we have the following lower bound:
% \begin{align*}
%   \inf_{\bhz^\star}\sup_{\bz^\star}\bbE\calL(\bhz^\star, \bz^\star) & \gtrsim \max_{S\subseteq[L]: |S^c|\textnormal{ even}} \exp\bigg\{- \bigl(1+o(1)\bigr) \bigg(|S^c| J_\rho + \psi_S^\star(0)\bigg)  \bigg\}\\
%   & \qquad \lor \max_{S\subseteq[L]: |S^c| \textnormal{ odd}} \exp\bigg\{- \bigl(1+o(1)\bigr) \bigg(\bigl(|S^c| + 1\bigr) J_\rho + \psi_S^\star(-2J_\rho)\bigg)  \bigg\}.
% \end{align*}

\subsection{Proof of Lemma \ref{lemma:testing_problem_z_l_part_I}}\label{subappend:prf_testing_problem_z_l_part_I}
% Before we proceed to the proof, we represent two useful lemmas:

% We now present the proof of Lemma \ref{lemma:testing_problem_z_l_part_I}.
Fix $\btz^\star \in \calP_n$ with $n^\star_+(\btz^\star) = \lfloor n/2 \rfloor$ and $n_-^\star(\btz^\star) = n - \lfloor n/2 \rfloor$. Consider the following parameter space, which consists of a single clustering vector:
\begin{equation}
  \label{eq:sub_param_space_z_star_known}
  \calP_n^{1}:= \bigg\{ \textnormal{\sname}(\bz^\star, \rho, \{p_\ell\}_1^L, \{q_\ell\}_1^L): \bz^\star = \btz^\star, p_\ell > q_\ell \ \forall \ell\in[L] \bigg\}.
\end{equation}
We then have 
\begin{align*}
\inf_{\hzzz{\ell}} \sup_{\bz^\star \in \calP_n} \bbE\calL(\hzzz{\ell}, \zzz{\ell}) & \geq \inf_{\hzzz{\ell}} \sup_{\bz^\star \in \calP_n^1} \bbE\calL(\hzzz{\ell}, \zzz{\ell}) \geq \inf_{\hzzz{\ell}} \sup_{\bz^\star \in \calP_n^1} \bbE[\calL(\hzzz{\ell}, \zzz{\ell})\cdot \indc{E}],
\end{align*}
where the event $E$ is defined as
$$
  E = \bigg\{\frac{1}{n}\sum_{i\in[n]} \indc{\btz^\star_i \neq \zzz{\ell}_i} \leq \frac{1}{2} ~\forall \ell \in[L]\bigg\}.
$$
Since $\btz^\star$ is known to us, it is a legitimate estimator of $\zzz{\ell}$. Hence, for the optimal estimator $\hzzz{\ell}$ of $\zzz{\ell}$, on the event $E$, we necessarily have
$$
  \frac{1}{n}\sum_{i\in[n]} \indc{\hzzz{\ell}_i \neq \zzz{\ell}_i} \leq \frac{1}{n}\sum_{i\in[n]} \indc{\btz^\star_i \neq \zzz{\ell}_i} \leq \frac{1}{2}.
$$
This gives
\begin{align*}
  \inf_{\hzzz{\ell}} \sup_{\bz^\star \in \calP_n} \bbE\calL(\hzzz{\ell}, \zzz{\ell}) & \geq \inf_{\hzzz{\ell}} \bbE[ \frac{1}{n}\sum_{i\in[n]} \indc{\hzzz{\ell}_i \neq \zzz{\ell}_i}\cdot \indc{E}] \geq \frac{1}{n}\sum_{i\in[n]} \inf_{\hzzz{\ell}_i} \bbP(\hzzz{\ell}_i \neq \zzz{\ell}_i , E).
\end{align*}  
For some $\ep = o(1)$ whose value will be determined later, we define
$$
  E_i := \bigg\{\frac{1}{n}\sum_{j\neq i} \indc{\btz^\star_j \neq \zzz{\ell}_j}\leq \rho + \ep~\forall \ell \in [L]\bigg\} \subseteq E,
$$
where the inclusion is by
$$
  \frac{1}{n} \sum_{j\in[n]}\indc{\btz^\star_j \neq \zzz{\ell}_j} \leq \frac{1}{n} \sum_{j\neq i}\indc{\btz^\star_j \neq \zzz{\ell}_j}  + \frac{1}{n} \leq \rho + \ep + \frac{1}{n} \leq 1/2 - c_1 + o(1) \leq \frac{1}{2}
$$
for large $n$. 
By Hoeffding's inequality, we have
\begin{align*}
  \bbP\bigg(\frac{1}{n}\sum_{j\neq i} \indc{\btz^\star_j \neq  \zzz{\ell}_j}> \rho + \ep\bigg)
  \leq \bbP \bigg(\frac{1}{n-1}\sum_{j\neq i} [\Bern(\rho) - \rho] > \ep \bigg)
  \leq e^{-2(n-1)\ep^2},
\end{align*}
and thus
$$
  \bbP(E_i^c) \leq  e^{-\calO(n\ep^2) + \log L} = e^{-\calO(n\ep^2) + o(n^{c_2})}.
$$
By choosing $\ep = n^{-(1-c_2)/2} = o(1)$, we get $\bbP(E_i^c) = o(1)$.
In addition, let us define
$$
  F_i := \bigg\{ \bigg|\#\{j\neq i: \zzz{\ell}_j = \bz^\star_i\} - \frac{n}{2}\bigg| \lor\bigg|\#\{j\neq i: \zzz{\ell}_j = -\bz^\star_i\} - \frac{n}{2}\bigg| \leq \frac{n}{2}\cdot \ep ~\forall \ell\in[L]\bigg\}.
$$
By the same arguments as those that led to \eqref{eq:concentration_of_number_of_pos_and_neg_nodes}, we can choose $\ep = o(1)$ so that $\bbP(F_i^c) = 1-o(1)$. Hence, invoking an union bound, we get $\bbP(E_i\cap F_i) = 1 -o(1)$.
% Since $\rho < 1/2$ and $L<n^c$, by Chernoff bound and a union bound, we know that 
% $$
%   \bbP(E^c)\leq e^{-c'n + c\log n}  \leq e^{-c'' n},
% $$
% where $c', c''>0$ are some absolute constants. Hence, by Borel-Cantelli lemma, the event $E^c$ happens only finitely often. 
Now, we have 
\begin{align*}
  \inf_{\hzzz{\ell}} \sup_{\bz^\star \in \calP_n} \bbE\calL(\hzzz{\ell}, \zzz{\ell}) & \geq \frac{1}{n}\sum_{i\in[n]} \inf_{\hzzz{\ell}_i} \bbP(\hzzz{\ell}_i \neq \zzz{\ell}_i, E_i\cap F_i).
\end{align*}
Fix any $i\in[n]$. Without loss of generality we assume $\btz^\star_i = +1$. Since the event $E_i\cap F_i$ only depends on $\zzz{\ell}_{-i} := \{\zzz{\ell}_j: j\neq i\}$, we have
\begin{align*}
  &\inf_{\hzzz{\ell}_i} \bbP(\hzzz{\ell}_i \neq \zzz{\ell}_i, E_i\cap F_i) \\
  & \geq \inf_{\hzzz{\ell}_i} \sum_{\xi\in\{\pm 1\}^{n-1}} \bbP(\zzz{\ell}_j = \xi_j \bz^\star_j ~\forall j\neq i) \cdot \Indc\bigg\{\frac{1}{n}\sum_{j\neq i} \indc{\xi_j = -1} \leq \rho + \ep, |\mmm{\ell}_+ - n/2| \lor |\mmm{\ell}_- - n/2|  \leq \frac{n}{2}\cdot \ep\bigg\} \\
  & \qquad \times \bigg((1-\rho)\cdot  \bbP(\hzzz{\ell}_i =-1 ~|~ \zzz{\ell}_i = +1, \zzz{\ell}_j = \xi_j \bz^\star_j ~\forall j\neq i) 
  + \rho \cdot \bbP(\hzzz{\ell}_i = +1 ~|~ \zzz{\ell}_i = -1, \zzz{\ell}_j = \xi_j \bz^\star_j ~\forall j\neq i) \bigg) \\
  & \geq \sum_{\xi\in\{\pm 1\}^{n-1}} \bbP(\zzz{\ell}_j = \xi_j \bz^\star_j ~\forall j\neq i) \cdot \Indc\bigg\{\frac{1}{n}\sum_{j\neq i} \indc{\xi_j = -1} \leq \rho + \ep, |\mmm{\ell}_+ - n/2| \lor |\mmm{\ell}_- - n/2|  \leq \frac{n}{2}\cdot \ep\bigg\} \\
  & \qquad \times \inf_{\hzzz{\ell}_i}  \bigg((1-\rho)\cdot  \bbP(\hzzz{\ell}_i =-1 ~|~ \zzz{\ell}_i = +1, \zzz{\ell}_j = \xi_j \bz^\star_j ~\forall j\neq i) + \rho \cdot \bbP(\hzzz{\ell}_i = +1 ~|~ \zzz{\ell}_i = -1, \zzz{\ell}_j = \xi_j \bz^\star_j ~\forall j\neq i) \bigg),
\end{align*}
where
$$
  \mmm{\ell}_+ = \#\{j\neq i: \xi_j\bz^\star_j = + 1\}, \ \ \ \mmm{\ell}_- = \#\{j\neq i: \xi_j\bz^\star_j = -1\}.
$$
Since we know $\bz^\star = \btz^\star$, by independence, we can without loss of generality restrict ourselves to $\hzzz{\ell}_i$'s that are only functions of $\AAA{\ell}$ alone. Thus, the ``$\inf$'' term in the right-hand side above can be regarded as the $(1-\rho)\times \textnormal{type-I error} + \rho \times \textnormal{type-II error}$ of the following binary hypothesis testing problem:
$$
  H_0: \zzz{\ell}_i = +1 \ \ \ \textnormal{v.s.} \ \ \ H_1: \zzz{\ell}_i = -1,
$$
where the data is a single adjacency matrix $\AAA{\ell}$ sampled from a vanilla two-block SBM.

We now focus on lower bounding this ``$\inf$'' term. By Lemma \ref{lemma: optimal_test}, the optimal test is given by the likelihood ratio test with cutoff being $\rho/(1-\rho)$. Let $L_0$ and $L_1$ be the likelihood function under $H_0$ and $H_1$, respectively. With some algebra, we have
\begin{align*}
  \frac{L_0}{L_1} & = \prod_{\substack{j\neq i\\ \xi_j\bz^\star_j = 1}} p_\ell^{\AAA{\ell}_{ij}} (1-p_\ell)^{1-\AAA{\ell}_{ij}}\prod_{\substack{j\neq i\\ \xi_j\bz^\star_j = -1}} q_\ell^{\AAA{\ell}_{ij}} (1-q_\ell)^{1-\AAA{\ell}_{ij}} \\
  & \qquad \times \bigg(\prod_{\substack{j\neq i\\ \xi_j\bz^\star_j = -1}} p_\ell^{\AAA{\ell}_{ij}} (1-p_\ell)^{1-\AAA{\ell}_{ij}}\prod_{\substack{j\neq i\\ \xi_j\bz^\star_j= 1}} q_\ell^{\AAA{\ell}_{ij}} (1-q_\ell)^{1-\AAA{\ell}_{ij}}\bigg)^{-1}.
\end{align*}
Thus, the type-I error of the optimal test is given by
% $$
%   (1-\rho) \cdot \RN{1} + \rho \cdot \RN{2},
% $$
% where
\begin{align*}
  &  \bbP\bigg(\sum_{i=1}^{\mmm{\ell}_+} \XXX{\ell}_i \cdot \log \frac{q_\ell(1-p_\ell)}{p_\ell(1-q_\ell)} + \sum_{i = 1}^{\mmm{\ell}_-} \YYY{\ell}_i \cdot \log\frac{p_\ell(1-q_\ell)}{q_\ell(1-p_\ell)}\geq \log \frac{1-\rho}{\rho}\bigg),
\end{align*}
and the type-II error of the optimal test is given by
\begin{align*}
  &  \bbP\bigg(\sum_{i=1}^{\mmm{\ell}_+} \XXX{\ell}_i \cdot \log \frac{q_\ell(1-p_\ell)}{p_\ell(1-q_\ell)} + \sum_{i = 1}^{\mmm{\ell}_-} \YYY{\ell}_i \cdot \log\frac{p_\ell(1-q_\ell)}{q_\ell(1-p_\ell)}\geq -\log \frac{1-\rho}{\rho}\bigg),
\end{align*}
where $\XXX{\ell}_i, \YYY{\ell}_i$'s are defined as 
\begin{align*}
  & \XXX{\ell}_i \overset{\textnormal{i.i.d.}}{\sim} \textnormal{Bern}(p_\ell), \ \ \ \YYY{\ell}_i \overset{\textnormal{i.i.d.}}{\sim} \textnormal{Bern}(q_\ell).
  % & \mmm{\ell}_+ = \#\{j\neq i: \xi_j\bz^\star_j = + 1\}, \ \ \ \mmm{\ell}_- = \#\{j\neq i: \xi_j\bz^\star_j = -1\}.
\end{align*}
So overall, the weighted testing error is given by
\begin{align*}
  &  \bbP\bigg(\sum_{i=1}^{\mmm{\ell}_+} \XXX{\ell}_i \cdot \log \frac{q_\ell(1-p_\ell)}{p_\ell(1-q_\ell)} + \sum_{i = 1}^{\mmm{\ell}_-} \YYY{\ell}_i \cdot \log\frac{p_\ell(1-q_\ell)}{q_\ell(1-p_\ell)}\geq \ZZZ{\ell}\log \frac{1-\rho}{\rho}\bigg),
\end{align*}
where $\ZZZ{\ell}\sim 2\textnormal{Bern}(1-\rho)-1$, which is independent of $\XXX{\ell}_i$ and $\YYY{\ell}_i$'s.
By Lemma \ref{lemma: general_data_processing}, the above probability can be lower bounded by
\begin{align*}
  &  \bbP\bigg(\sum_{i=1}^{m} \XXX{\ell}_i \cdot \log \frac{q_\ell(1-p_\ell)}{p_\ell(1-q_\ell)} + \sum_{i = 1}^{m} \YYY{\ell}_i \cdot \log\frac{p_\ell(1-q_\ell)}{q_\ell(1-p_\ell)}\geq \ZZZ{\ell}\log \frac{1-\rho}{\rho}\bigg),
\end{align*}
for $m = \mmm{\ell}_+ \lor \mmm{\ell}_-$, which is $(1+o(1))n/2$ under $F_i$. 

In summary, we have
\begin{align*}
  & \inf_{\hzzz{\ell}} \sup_{\bz^\star \in \calP_n} \bbE\calL(\hzzz{\ell}, \zzz{\ell}) \\
  & \geq \sum_{\xi\in\{\pm 1\}^{n-1}} \bbP(\zzz{\ell}_j = \xi_j \bz^\star_j ~\forall j\neq i) \cdot \Indc\bigg\{\frac{1}{n}\sum_{j\neq i} \indc{\xi_j = -1} \leq \rho + \ep, |\mmm{\ell}_+ - n/2| \lor |\mmm{\ell}_- - n/2|  \leq \frac{n}{2}\cdot \ep \bigg\} \\
  & \qquad \times \bbP\bigg(\sum_{i=1}^{m} \XXX{\ell}_i \cdot \log \frac{q_\ell(1-p_\ell)}{p_\ell(1-q_\ell)} + \sum_{i = 1}^{m} \YYY{\ell}_i \cdot \log\frac{p_\ell(1-q_\ell)}{q_\ell(1-p_\ell)}\geq \ZZZ{\ell}\log \frac{1-\rho}{\rho}\bigg)\\
  & = \bbP(E_i\cap F_i) \cdot \bbP\bigg(\sum_{i=1}^{m} \XXX{\ell}_i \cdot \log \frac{q_\ell(1-p_\ell)}{p_\ell(1-q_\ell)} + \sum_{i = 1}^{m} \YYY{\ell}_i \cdot \log\frac{p_\ell(1-q_\ell)}{q_\ell(1-p_\ell)}\geq \ZZZ{\ell}\log \frac{1-\rho}{\rho}\bigg) \\
  & \gtrsim \bbP\bigg(\sum_{i=1}^{m} \XXX{\ell}_i \cdot \log \frac{q_\ell(1-p_\ell)}{p_\ell(1-q_\ell)} + \sum_{i = 1}^{m} \YYY{\ell}_i \cdot \log\frac{p_\ell(1-q_\ell)}{q_\ell(1-p_\ell)}\geq \ZZZ{\ell}\log \frac{1-\rho}{\rho}\bigg),
\end{align*}
where the last line is by $\bbP(E_i \cap F_i ) = 1-o(1)$. We finish the proof by noting that the right-hand side above is the $(1-\rho)\times \textnormal{type-I error} + \rho \times \textnormal{type-II error}$ of the testing problem \eqref{eq:fund_testing_problem_z_l_part_I} with $(1+\delta_n)n/2 = m$ where $\delta_n = \ep = o(1)$.

\subsection{Proof of Lemma \ref{lemma:testing_problem_z_l_part_II}}\label{subappend:prf_testing_problem_z_l_part_II}
The proof has a similar flavor to the proof of Lemma \ref{lemma:testing_problem_z_star}. Recall the sub-parameter space $\calP_n^0$ defined in \eqref{eq:sub_param_space_z_star}, which consists of clustering vectors that agree with $\btz^\star$ on $T$. We have
\begin{align*}
  \inf_{\hzzz{\ell}}  \sup_{\bz^\star \in \calP_n} \bbE\calL(\hzzz{\ell}, \zzz{\ell}) & \geq \inf_{\hzzz{\ell}}  \sup_{\bz^\star \in \calP_n^0} \bbE\calL(\hzzz{\ell}, \zzz{\ell}).%\geq \inf_{\hzzz{\ell}}  \sup_{\bz^\star \in \calP_n^0} \bbE[\calL(\hzzz{\ell}, \zzz{\ell})\cdot \indc{E}],
\end{align*}
Since $\btz^\star$ is known to us, it is a legitimate estimator of $\zzz{\ell}$. Thus, for the optimal estimator $\hzzz{\ell}$ of $\zzz{\ell}$, it necessarily satisfies
$$
  \frac{1}{n} \sum_{i\in[n]} \indc{\hzzz{\ell}_i \neq \zzz{\ell}_i} \leq \frac{1}{n} \sum_{i\in[n]} \indc{\btz^\star_i \neq \zzz{\ell}_i} \leq \frac{1}{n} \sum_{i\in T} \indc{\bz^\star_i \neq \zzz{\ell}_i} + \frac{|T^c|}{n} \leq \frac{1}{n} \sum_{i\in T} \indc{\bz^\star_i \neq \zzz{\ell}_i} + \frac{4}{n} + \delta_n.
$$
Define the event $E$ to be 
$$
  E:= \bigg\{\frac{1}{n}\sum_{i\in [n]} \indc{\bz^\star_i \neq \zzz{\ell}_i} \leq \rho + \ep ~ \forall \ell\in[L] \bigg\}
$$
for some $\ep$ satisfying $1/n \ll \ep \ll 1$ whose value will be specified later.
On the event $E$, for large $n$ we have
$$
  \frac{1}{n} \sum_{i\in[n]} \indc{\hzzz{\ell}_i \neq \zzz{\ell}_i} \leq \rho + \ep + \frac{4}{n} + \delta_n \leq 1/2 - c_1 + o(1)\leq  \frac{1}{2},
$$
and hence
$$
  \calL(\hzzz{\ell}, \zzz{\ell}) = \frac{1}{n} \sum_{i\in[n]} \bbP(\hzzz{\ell}_i \neq \zzz{\ell}_i).
$$
% By Chernoff bound and a union bound, the event $E$ happens w.p. at least $1-e^{-c'n}$, and hence happens almost surely by Borel-Cantelli lemma. 
Let us define
$$
  E_i = \bigg\{ \frac{1}{n}\sum_{j\neq i} \indc{\bz^\star_i \neq \zzz{\ell}_i} \leq \rho + \ep - \frac{1}{n} ~\forall \ell\in[L]\bigg\} \subseteq E
$$
for large $n$. In addition, we let
$$
  F_i := \bigg\{ \bigg|\#\{j\neq i: \zzz{\ell}_j = \bz^\star_i\} - \frac{n}{2}\bigg| \lor\bigg|\#\{j\neq i: \zzz{\ell}_j = -\bz^\star_i\} - \frac{n}{2}\bigg| \leq \frac{n}{2}\cdot \ep ~\forall\ell\in[L] \bigg\}.
$$
By nearly identical arguments as those appeared in the proof of Lemma \ref{lemma:testing_problem_z_l_part_I}, we can choose $\ep = n^{-(1-c_2)/2}$ so that $\bbP(E_i\cap F_i) = 1-o(1)$.
We then proceed by
\begin{align*}
  \inf_{\hzzz{\ell}}  \sup_{\bz^\star \in \calP_n} \bbE\calL(\hzzz{\ell}, \zzz{\ell}) & \geq \inf_{\hzzz{\ell}}  \sup_{\bz^\star \in \calP_n^0} \bbE[\calL(\hzzz{\ell}, \zzz{\ell})\cdot \indc{E}]\\
  & = \inf_{\hzzz{\ell}}  \sup_{\bz^\star \in \calP_n^0} \frac{1}{n} \sum_{i\in[n]}\bbP(\hzzz{\ell}_i \neq \zzz{\ell}_i, E)\\
  & = \inf_{\hzzz{\ell}}  \sup_{\bz^\star \in \calP_n^0} \frac{1}{n} \sum_{i\in[n]}\bbP(\hzzz{\ell}_i \neq \zzz{\ell}_i, E_i\cap F_i)\\
  & \geq \frac{|T^c|}{n} \inf_{\hzzz{\ell}} \sup_{\bz^\star \in \calZ_T} \frac{1}{|T^c|} \sum_{i\in T^c} \bbP(\hzzz{\ell}_i \neq \zzz{\ell}_i, E_i\cap F_i)\\
  & \geq \frac{1}{n} \sum_{i\in T^c} \inf_{\hzzz{\ell}_i} \underset{\bz^\star\in \calZ_T}{\textnormal{ave}}~ \bbP(\hzzz{\ell}_i \neq \zzz{\ell}_i, E_i\cap F_i).
\end{align*}
% where in the last step we've used $|T^c|\geq 2 \lceil \delta_n n/2\rceil \geq 1$. 
We are to show that all the summands in the right-hand side above are lower bounded by the same quantity, which is the type-I plus type-II error of the testing problem \eqref{eq:fund_testing_problem_z_l_part_II}, so that for any $i\in T^c$ we would have
$$
  \inf_{\hzzz{\ell}}  \sup_{\bz^\star \in \calP_n} \bbE\calL(\hzzz{\ell}, \zzz{\ell}) \gtrsim \delta_n \inf_{\hzzz{\ell}_i} \underset{\bz^\star\in \calZ_T}{\textnormal{ave}}~ \bbP(\hzzz{\ell}_i \neq \zzz{\ell}_i, E_i\cap F_i),
$$
which is the desired result.

In the following discussion, we without loss of generality assume $\ell = 1$. 
Now for any $i\in T^c$ and $S \subseteq\{2, \hdots, L\}$, we have 
\begin{align*}
  & \inf_{\hzzz{1}_i} \underset{\bz^\star\in \calZ_T}{\textnormal{ave}}~ \bbP(\hzzz{1}_i \neq \zzz{1}_i, E_i\cap F_i) \\
  & = \inf_{\hzzz{1}_i} \underset{\bz^\star_{-i}}{\textnormal{ave}}~\underset{\bz^\star_i}{\textnormal{ave}}~ \sum_{\xi\in \{\pm 1\}^S}  \bbP(\hzzz{1}_i \neq \zzz{1}_i , E_i\cap F_i~|~ \zzz{\ell}_i = \xi_\ell \bz^\star_i ~\forall \ell\in S)\cdot \bbP(\zzz{\ell}_i = \xi_\ell \bz^\star_i ~\forall \ell\in S)\\ 
  & \geq \underset{\bz^\star_{-i}}{\textnormal{ave}}~ \sum_{\xi\in \{\pm 1\}^S}\bbP(\zzz{\ell}_i = \xi_\ell \bz^\star_i ~\forall \ell\in S) \cdot \inf_{\hzzz{1}_i}\underset{\bz^\star_i}{\textnormal{ave}}~\bbP(\hzzz{1}_i \neq \zzz{1}_i, E_i\cap F_i ~|~ \zzz{\ell}_i = \xi_\ell \bz^\star_i ~\forall \ell\in S),
\end{align*}
where in the last inequality we can pull $\bbP(\zzz{\ell}_i = \xi_\ell \bz^\star_i ~\forall \ell\in S)$ in front of $\textnormal{ave}_{\bz^\star_i}$ because
$$
  \bbP(\zzz{\ell}_i = \xi_\ell \bz^\star_i ~\forall \ell\in S)  = \bbP(\zzz{\ell}_i = \xi_\ell \bz^\star_i ~\forall \ell\in S ~|~ \bz^\star_i = +1) = \bbP(\zzz{\ell}_i = \xi_\ell \bz^\star_i ~\forall \ell\in S ~|~ \bz^\star_i = -1).
$$
Since $E_i\cap F_i$ only depends on $\{\zzz{\ell}_{-i}\}$, we can decompose the error probability according to whether $\zzz{1}_j$ is flipped or not:
\begingroup
\allowdisplaybreaks
\begin{align*}
  & \inf_{\hzzz{1}_i} \underset{\bz^\star_i}{\textnormal{ave}} \  \bbP ( \hzzz{1}_i \neq \zzz{1}_i, E_i\cap F_i \ | \ \zzz{\ell}_i = \xi_\ell \bz^\star_i \  \forall \ell \in S ) \\
  & = \frac{1}{2} \inf_{\hzzz{1}_i}  \bbP ( \hzzz{1}_i \neq \zzz{1}_i , E_i\cap F_i\ | \ \bz^\star_i = 1,  \zzz{\ell}_i = \xi_\ell \bz^\star_i \  \forall \ell \in S ) + \bbP ( \hzzz{1}_i \neq \zzz{1}_i, E_i\cap F_i \ | \ \bz^\star_i = -1,  \zzz{\ell}_i = \xi_\ell \bz^\star_i \  \forall \ell \in S ) \\
  & \geq \frac{1}{2} \inf_{\hzzz{1}_0} \sum_{\zeta\in\{\pm 1\}^{n-1}} \bbP(\zzz{1}_j = \zeta_j \bz^\star_j ~\forall j\neq i) \cdot \Indc\bigg\{ \frac{1}{n}\sum_{j\neq i}\indc{\zeta_j = -1} \leq \rho + \ep -n^{-1}, |\mmm{1}_1- n/2|\lor |\mmm{1}_2 - n/2|\leq \frac{n}{2}\cdot\ep\bigg\} \\
  & \qquad \times\bigg( (1-\rho) \cdot \bbP ( \hzzz{1}_i \neq \zzz{1}_i \ | \ \bz^\star_i = 1, \zzz{1}_i = \bz^\star_i, \zzz{1}_j = \zeta_j \bz^\star_j ~\forall j\neq i,  \zzz{\ell}_i = \xi_\ell \bz^\star_i \  \forall \ell \in S ) \\
  & \qquad\qquad \qquad   + \rho \cdot \bbP ( \hzzz{1}_i \neq \zzz{1}_i\ | \ \bz^\star_i = 1, \zzz{1}_i = -\bz^\star_i, \zzz{1}_j = \zeta_j \bz^\star_j ~\forall j\neq i,  \zzz{\ell}_i = \xi_\ell \bz^\star_i \  \forall \ell \in S )  \\
  & \qquad\qquad \qquad + (1-\rho) \cdot \bbP ( \hzzz{1}_i \neq \zzz{1}_i \ | \ \bz^\star_i = -1, \zzz{1}_i = \bz^\star_i, \zzz{1}_j = \zeta_j \bz^\star_j ~\forall j\neq i,  \zzz{\ell}_i = \xi_\ell \bz^\star_i \  \forall \ell \in S ) \\
  & \qquad\qquad \qquad   + \rho \cdot \bbP ( \hzzz{1}_i \neq \zzz{1}_i \ | \ \bz^\star_i = -1, \zzz{1}_i = -\bz^\star_i, \zzz{1}_j = \zeta_j \bz^\star_j ~\forall j\neq i,  \zzz{\ell}_i = \xi_\ell \bz^\star_i \  \forall \ell \in S ) \bigg),
\end{align*}
\endgroup
where
$$
  \mmm{1}_1 = \#\{j\neq i: \zeta_j\bz^\star_j = + 1\}, \ \ \ \mmm{1}_2 = \#\{j\neq i: \zeta_j\bz^\star_j = -1\}.
$$
Here $\zeta_j$ indicates whether $\zzz{1}_j$ is flipped or not. We then have
\begin{align*}
  & \inf_{\hzzz{1}_i} \underset{\bz^\star_i}{\textnormal{ave}} \  \bbP ( \hzzz{1}_i \neq \zzz{1}_i, E_i\cap F_i \ | \ \zzz{\ell}_i = \xi_\ell \bz^\star_i \  \forall \ell \in S ) \\
  & \geq \sum_{\zeta\in\{\pm 1\}^{n-1}} \bbP(\zzz{1}_j = \zeta_j \bz^\star_j ~\forall j\neq i) \cdot \Indc\bigg\{ \frac{1}{n}\sum_{j\neq i}\indc{\zeta_j = -1} \leq \rho + \ep -n^{-1}, |\mmm{1}_1 - n/2|\lor |\mmm{1}_2 - n/2|\leq \frac{n}{2}\cdot \ep\bigg\}\\
  & \qquad \times \frac{1-\rho}{2} \cdot \inf_{\hzzz{1}_i}  \bigg\{\bbP ( \hzzz{1}_i = - 1 \ | \ \bz^\star_i = 1, \zzz{1}_i = \bz^\star_i, \zzz{1}_j = \zeta_j \bz^\star_j ~\forall j\neq i,  \zzz{\ell}_i = \xi_\ell \bz^\star_i \  \forall \ell \in S )  \\
  & \qquad \qquad\qquad\qquad+ \bbP ( \hzzz{1}_i = +1\ | \ \bz^\star_i = -1, \zzz{1}_i = \bz^\star_i, \zzz{1}_j = \zeta_j \bz^\star_j ~\forall j\neq i,  \zzz{\ell}_i = \xi_\ell \bz^\star_i \  \forall \ell \in S ) \bigg\} \\
  & \qquad + \frac{\rho}{2} \cdot \inf_{\hzzz{1}_i}  \bigg\{\bbP ( \hzzz{1}_i  = +1 \ | \ \bz^\star_i = 1, \zzz{1}_i = -\bz^\star_i, \zzz{1}_j = \zeta_j \bz^\star_j ~\forall j\neq i,  \zzz{\ell}_i = \xi_\ell \bz^\star_i \  \forall \ell \in S )  \\
  & \qquad \qquad \qquad \qquad +  \bbP ( \hzzz{1}_i = -1 \ | \ \bz^\star_i = -1, \zzz{1}_i = -\bz^\star_i, \zzz{1}_j = \zeta_j \bz^\star_j ~\forall j\neq i,  \zzz{\ell}_i = \xi_\ell \bz^\star_i \  \forall \ell \in S )\bigg\}.
\end{align*}
Let $\RN{1}, \RN{2}$ be the first and the second ``$\inf$'' term in the right-hand side above, respectively. 

We first deal with term $\RN{1}$. Note that in $\RN{1}$, $\hzzz{1}_i$ can be regarded as the testing function for
$$
  H_0: \bz^\star_i = 1, \qquad H_1: \bz^\star_i = -1
$$
under the conditional law of $\{\AAA{\ell}\} \ | \ \{\zzz{1}_i = \bz^\star_i, \zzz{1}_j = \zeta_j \bz^\star_j ~\forall j\neq i, \zzz{\ell}_i = \xi_\ell \bz^\star_i \ \forall \ell \in S\}$. By the same arguments as those in the proof of Lemma \ref{lemma:testing_problem_z_star}, $\RN{1}$ is lower bounded by the testing error of $H_0$ v.s. $H_1$ under the joint law of $\{\AAA{\ell}, \zzz{\ell}\} \ | \ \{\zzz{1}_i = \bz^\star_i,\zzz{1}_j = \zeta_j \bz^\star_j ~\forall j\neq i, \zzz{\ell}_i = \xi_\ell \bz^\star_i \ \forall \ell \in S\}$ with $m$ positive nodes and $m$ negative nodes (excluding node $i$), where $m = (1+o(1))n/2$. 

For notational simplicity, we again consider the following equivalent setup: we have $2m+1$ nodes in total, where nodes $1, \hdots, m$ are labeled as $+1$, nodes $m+1, \hdots, 2m$ are labeled as $-1$, and the node labeled as $0$ (which is originally labeled as $i$) is the node whose community is to be decided. 

Under the current notations, the density of $\{\AAA{\ell}, \zzz{\ell}\} \ | \ \{\zzz{1}_0 = \bz^\star_0, \zzz{1}_j = \zeta_j \bz^\star_j ~\forall j\neq 0, \zzz{\ell}_0 = \xi_\ell \bz^\star_0 \ \forall \ell \in S\}$ is given by
\begingroup
\allowdisplaybreaks
\begin{align*}
  & \rho^{nL} \bigg(\frac{1-\rho}{\rho}\bigg)^{\#\{\ell\in[L], 1\leq i\leq 2m: \zzz{\ell}_i = \bz^\star_i\}} \cdot \bigg(\frac{1-\rho}{\rho}\bigg)^{1+ \#\{\ell\in S: \xi_{\ell} = 1\} + \#\{\ell\notin S\cup\{1\}: \zzz{\ell}_0 = \bz^\star_0\}} \\
  & \qquad \times \prod_{\ell\in[L]} \prod_{\substack{i\neq j \\ i\neq 0 \\ j\neq 0}} p_\ell^{\AAA{\ell}_{ij}} (1-p_\ell)^{1 - \AAA{\ell}_{ij}} \indc{\zzz{\ell}_i = \zzz{\ell}_j} + q_\ell^{\AAA{\ell}_{ij}} (1-q_\ell)^{1 - \AAA{\ell}_{ij}} \indc{\zzz{\ell}_i \neq \zzz{\ell}_j} \\
  & \qquad \times \prod_{\substack{j\neq 0 \\  \zeta_j\bz^\star_j = \bz^\star_0 }} p_1^{\AAA{1}_{0j}} (1-p_1)^{1 - \AAA{1}_{0j}}  \prod_{\substack{j\neq 0 \\  \zeta_j \bz^\star_j \neq \bz^\star_0 }} q_1^{\AAA{1}_{0j}} (1-q_1)^{1 - \AAA{1}_{0j}}  \\
  & \qquad \times \prod_{\ell\in S} \prod_{\substack{j\neq 0 \\  \zzz{\ell}_j =\xi_\ell\bz^\star_0  }} p_\ell^{\AAA{\ell}_{0j}} (1-p_\ell)^{1 - \AAA{\ell}_{0j}}  \prod_{\substack{j\neq 0 \\  \zzz{\ell}_j \neq \xi_\ell \bz^\star_0 }} q_\ell^{\AAA{\ell}_{0j}} (1-q_\ell)^{1 - \AAA{\ell}_{0j}}\\
  & \qquad \times \prod_{\ell\notin S\cup\{1\}} \prod_{\substack{j\neq 0 \\  \zzz{\ell}_j = \zzz{\ell}_0 }} p_\ell^{\AAA{\ell}_{0j}} (1-p_\ell)^{1 - \AAA{\ell}_{0j}}  \prod_{\substack{j\neq 0 \\  \zzz{\ell}_j \neq \zzz{\ell}_0 }} q_\ell^{\AAA{\ell}_{0j}} (1-q_\ell)^{1 - \AAA{\ell}_{0j}}.
\end{align*}
\endgroup
The likelihood ratio is then given by
\begingroup
\allowdisplaybreaks
\begin{align*}
  \frac{L_0}{L_1} & =   \bigg(\frac{1-\rho}{\rho}\bigg)^{\#\{\ell\notin S\cup\{1\}: \zzz{\ell}_0 = 1\} - \#\{\ell\notin S\cup\{1\}: \zzz{\ell}_0 = -1\}} \\
  & \qquad \times \prod_{\substack{j\neq 0 \\  \zeta_j\bz^\star_j = 1 }} p_1^{\AAA{1}_{0j}} (1-p_1)^{1 - \AAA{1}_{0j}}  \prod_{\substack{j\neq 0 \\  \zeta_j\bz^\star_j =-1 }} q_1^{\AAA{1}_{0j}} (1-q_1)^{1 - \AAA{1}_{0j}}  \\
  & \qquad \times \bigg( \prod_{\substack{j\neq 0 \\  \zeta_j\bz^\star_j= -1 }} p_1^{\AAA{1}_{0j}} (1-p_1)^{1 - \AAA{1}_{0j}}  \prod_{\substack{j\neq 0 \\  \zeta_j\bz^\star_j =1 }} q_1^{\AAA{1}_{0j}} (1-q_1)^{1 - \AAA{1}_{0j}} \bigg)^{-1}\\
  & \qquad \times \prod_{\ell\in S} \prod_{\substack{j\neq 0 \\  \zzz{\ell}_j =\xi_\ell }} p_\ell^{\AAA{\ell}_{0j}} (1-p_\ell)^{1 - \AAA{\ell}_{0j}}  \prod_{\substack{j\neq 0 \\  \zzz{\ell}_j = - \xi_\ell }} q_\ell^{\AAA{\ell}_{0j}} (1-q_\ell)^{1 - \AAA{\ell}_{0j}} \\
  & \qquad \times \bigg( \prod_{\ell\in S} \prod_{\substack{j\neq 0 \\  \zzz{\ell}_j = - \xi_\ell }} p_\ell^{\AAA{\ell}_{0j}} (1-p_\ell)^{1 - \AAA{\ell}_{0j}}  \prod_{\substack{j\neq 0 \\  \zzz{\ell}_j =  \xi_\ell }} q_\ell^{\AAA{\ell}_{0j}} (1-q_\ell)^{1 - \AAA{\ell}_{0j}} \bigg)^{-1}.
\end{align*}
\endgroup
By Neyman-Pearson lemma, the term $\RN{1}$ is given by
\begin{align*}
  &  \bbP\bigg(\frac{L_0}{L_1} \leq  1 \ \bigg| \ \bz^\star_0 = 1, \zzz{1}_0 = \bz^\star_0, \zzz{1}_j = \zeta_j \bz^\star_j ~\forall j\neq 0, \zzz{\ell}_0 = \xi_\ell \bz^\star_0 \ \forall \ell \in S \bigg) \\
  & \qquad + \bbP\bigg(\frac{L_0}{L_1} \geq 1 \ \bigg| \ \bz^\star_0 = -1, \zzz{1}_0 = \bz^\star_0, \zzz{1}_j = \zeta_j \bz^\star_j ~\forall j\neq 0, \zzz{\ell}_0 = \xi_\ell \bz^\star_0 \ \forall \ell \in S \bigg).
\end{align*}  
By symmetry, the two terms above are equal to each other, and we calculate the first term. With a slight abuse of notation, let $\bbP_{H_0, S,\xi, \zeta}$ be the conditional law of $\{\AAA{\ell}, \zzz{\ell}\} \ | \ \{\bz^\star_0 = 1, \zzz{1}_0 = \bz^\star_0,\zzz{1}_j = \zeta_j \bz^\star_j ~\forall j\neq 0, \zzz{\ell}_0 = \xi_\ell \bz^\star_0 \ \forall \ell \in S\}$. Then the quantity of interest is 
\begingroup
\allowdisplaybreaks
\begin{align*}
  &\bbP_{H_0, S, \xi, \zeta} \bigg(\frac{L_0}{L_1} \leq 1\bigg)\\ 
  & = \bbP_{H_0, S, \xi,\zeta} \bigg[ \log \bigg(\frac{\rho}{1-\rho}\bigg) \cdot \sum_{\ell \notin S\cup\{1\}} \indc{\zzz{\ell}_0 = 1} - \indc{\zzz{\ell}_0 = -1} \\
  & \qquad \qquad + \sum_{\substack{j\neq 0 \\ \zeta_j\bz^\star_j= 1}}  \AAA{1}_{0j} \log \bigg(\frac{q_1 (1-p_1)}{p_1(1-q_1)}\bigg) + \log \bigg(\frac{1-q_1}{1-p_1}\bigg)\\
  &  \qquad \qquad + \sum_{\substack{j\neq 0 \\ \zeta_j\bz^\star_j = -1}}  \AAA{1}_{0j} \log \bigg(\frac{p_1 (1-q_1)}{q_1(1-p_1)}\bigg) + \log \bigg(\frac{1-p_1}{1-q_1}\bigg) \\
  & \qquad \qquad + \sum_{\ell \in S}\sum_{\substack{j\neq 0 \\ \zzz{\ell}_j = \xi_\ell}}  \AAA{\ell}_{0j} \log \bigg(\frac{q_\ell (1-p_\ell)}{p_\ell(1-q_\ell)}\bigg) + \log \bigg(\frac{1-q_\ell}{1-p_\ell}\bigg)\\
  & \qquad \qquad + \sum_{\ell \in S}\sum_{\substack{j\neq 0 \\ \zzz{\ell}_j = -\xi_\ell}}  \AAA{\ell}_{0j} \log \bigg(\frac{p_\ell (1-q_\ell)}{q_\ell(1-p_\ell)}\bigg) + \log \bigg(\frac{1-p_\ell}{1-q_\ell}\bigg) \geq 0 \bigg] \\
  & = \bbE_{\{\zzz{\ell}_{-0}: \ell\in S\}}\bigg\{ \bbP_{H_0, S, \xi, \zeta} \bigg[ \log \bigg(\frac{\rho}{1-\rho}\bigg) \cdot \sum_{\ell \notin S\cup\{1\}} \indc{\zzz{\ell}_0 = 1} - \indc{\zzz{\ell}_0 = -1} \\
  & \qquad \qquad\qquad\qquad + \sum_{\substack{j\neq 0 \\ \zeta_j\bz^\star_j = 1}}  \AAA{1}_{0j} \log \bigg(\frac{q_1 (1-p_1)}{p_1(1-q_1)}\bigg) + \log \bigg(\frac{1-q_1}{1-p_1}\bigg)\\
  &  \qquad \qquad\qquad\qquad + \sum_{\substack{j\neq 0 \\ \zeta_j\bz^\star_j = -1}}  \AAA{1}_{0j} \log \bigg(\frac{p_1 (1-q_1)}{q_1(1-p_1)}\bigg) + \log \bigg(\frac{1-p_1}{1-q_1}\bigg) \\
  & \qquad \qquad\qquad\qquad + \sum_{\ell \in S}\sum_{\substack{j\neq 0 \\ \zzz{\ell}_j = \xi_\ell}}  \AAA{\ell}_{0j} \log \bigg(\frac{q_\ell (1-p_\ell)}{p_\ell(1-q_\ell)}\bigg) + \log \bigg(\frac{1-q_\ell}{1-p_\ell}\bigg)\\
  & \qquad \qquad\qquad\qquad + \sum_{\ell \in S}\sum_{\substack{j\neq 0 \\ \zzz{\ell}_j = -\xi_\ell}}  \AAA{\ell}_{0j} \log \bigg(\frac{p_\ell (1-q_\ell)}{q_\ell(1-p_\ell)}\bigg) + \log \bigg(\frac{1-p_\ell}{1-q_\ell}\bigg) \geq 0  \ \bigg| \ \{\zzz{\ell}_{-0}: \ell\in S\}\bigg] \bigg\}.
\end{align*}
\endgroup
The conditional probability above is equal to
\begingroup
\allowdisplaybreaks
\begin{align*}
  & \bbP\bigg( \log\bigg(\frac{\rho}{1-\rho}\bigg)\cdot \sum_{\ell \notin S\cup\{1\}} \ZZZ{\ell} + \sum_{\ell \in S\cup\{1\}} \sum_{i = 1}^{\mmm{\ell}_1} \XXX{\ell}_i \cdot \log \bigg(\frac{q_\ell(1-p_\ell)}{p_\ell(1-q_\ell)}\bigg) + \sum_{\ell \in S\cup\{1\}}\sum_{i = 1}^{\mmm{\ell}_2} \YYY{\ell}_i \cdot \log \bigg(\frac{p_\ell(1-q_\ell)}{q_\ell(1-p_\ell)}\bigg)   \\
  &\qquad  - \sum_{\ell \in S\cup\{1\}} (\mmm{\ell}_1- \mmm{\ell}_2) \cdot \log \bigg(\frac{1-p_\ell}{1-q_\ell}\bigg) \geq 0\bigg),
\end{align*}
\endgroup
where
\begin{align*}
  & \XXX{\ell}_i \overset{\textnormal{i.i.d.}}{\sim} \Bern(p_\ell), \ \ \ \YYY{\ell}_i \overset{\textnormal{i.i.d.}}{\sim} \Bern(q_\ell),  \ \ \ \ZZZ{\ell} \overset{\textnormal{i.i.d.}}{\sim} 2\Bern(1-\rho)-1, \\
  & \mmm{\ell}_1 = \#\{j\neq 0: \zzz{\ell}_j = \xi_\ell\} ,  \ \ \  \mmm{\ell}_2= \#\{j\neq 0: \zzz{\ell}_j = -\xi_\ell\} \ \ \ \forall \ell \in S,\\
  & \mmm{1}_1 = \#\{j\neq 0: \zeta_j\bz^\star_j = 1\}, \ \ \ \mmm{1}_2 = \#\{j\neq 0: \zeta_j \bz^\star_j = -1\}.
\end{align*}
and $\XXX{\ell}_i$'s, $\YYY{\ell}_i$'s and $\ZZZ{\ell}$'s are jointly independent. By data-processing inequality for total variation, with $\mmm{\ell}:= \mmm{\ell}_1 \lor \mmm{\ell}_2$, the above probability can be further lower bounded by
\begin{align*}
  & \bbP\bigg( \log\bigg(\frac{\rho}{1-\rho}\bigg)\cdot \sum_{\ell \notin S\cup\{1\}} \ZZZ{\ell} + \sum_{\ell \in S\cup\{1\}} \sum_{i = 1}^{\mmm{\ell}} \XXX{\ell}_i \cdot \log \bigg(\frac{q_\ell(1-p_\ell)}{p_\ell(1-q_\ell)}\bigg) + \sum_{\ell \in S\cup\{1\}}\sum_{i = 1}^{\mmm{\ell}} \YYY{\ell}_i \cdot \log \bigg(\frac{p_\ell(1-q_\ell)}{q_\ell(1-p_\ell)}\bigg)  \geq 0\bigg).
\end{align*}
Thus, we have 
\begin{align*}
  \RN{1} & \geq \bbE_{\{\zzz{\ell}_{-0}: \ell\in S\}}\bbP\bigg( \log\bigg(\frac{\rho}{1-\rho}\bigg)\cdot \sum_{\ell \notin S\cup\{1\}} \ZZZ{\ell} + \sum_{\ell \in S\cup\{1\}} \sum_{i = 1}^{\mmm{\ell}} \XXX{\ell}_i \cdot \log \bigg(\frac{q_\ell(1-p_\ell)}{p_\ell(1-q_\ell)}\bigg) \\
  &\qquad \qquad \qquad  \qquad + \sum_{\ell \in S\cup\{1\}}\sum_{i = 1}^{\mmm{\ell}} \YYY{\ell}_i \cdot \log \bigg(\frac{p_\ell(1-q_\ell)}{q_\ell(1-p_\ell)}\bigg)  \geq 0\bigg).
\end{align*}
The term $\RN{2}$ is treated similarly. 
% and we get
% \begin{align*}
%   \RN{1} & \geq \bbE_{\{\zzz{\ell}_{-0}: \ell\in S\}}\bbP\bigg( \log\bigg(\frac{\rho}{1-\rho}\bigg)\cdot \sum_{\ell \notin S\cup\{1\}} \ZZZ{\ell} + \sum_{\ell \in S\cup\{1\}} \sum_{i = 1}^{\mmm{\ell}} \XXX{\ell}_i \cdot \log \bigg(\frac{q_\ell(1-p_\ell)}{p_\ell(1-q_\ell)}\bigg) \\
%   &\qquad \qquad \qquad  \qquad + \sum_{\ell \in S\cup\{1\}}\sum_{i = 1}^{\mmm{\ell}} \YYY{\ell}_i \cdot \log \bigg(\frac{p_\ell(1-q_\ell)}{q_\ell(1-p_\ell)}\bigg)  \geq 0\bigg).
% \end{align*}

In summary, by further taking expectation w.r.t. $\{\zzz{\ell}_0: \ell\in S\cup\{1\}\}$ (note that the expression $(1-\rho)\cdot\RN{1} + \rho \cdot\RN{2}$ is taking the expectation w.r.t. $\zzz{1}_0$), we have 
\begin{align*}
  & \inf_{\hzzz{1}_0} \underset{\bz^\star\in \calZ_T}{\textnormal{ave}}~ \bbP(\hzzz{1}_0 \neq \zzz{1}_0, E_0\cap F_0) \\
  & \geq \bbE_{\{\zzz{\ell}: \ell\in S\cup\{1\}\}} \bigg[ \indc{E_0\cap F_0} \cdot \bbP\bigg( \log\bigg(\frac{\rho}{1-\rho}\bigg)\cdot \sum_{\ell \notin S\cup\{1\}} \ZZZ{\ell} + \sum_{\ell \in S\cup\{1\}} \sum_{i = 1}^{\mmm{\ell}} \XXX{\ell}_i \cdot \log \bigg(\frac{q_\ell(1-p_\ell)}{p_\ell(1-q_\ell)}\bigg) \\
  &\qquad \qquad \qquad \qquad  \qquad + \sum_{\ell \in S\cup\{1\}}\sum_{i = 1}^{\mmm{\ell}} \YYY{\ell}_i \cdot \log \bigg(\frac{p_\ell(1-q_\ell)}{q_\ell(1-p_\ell)}\bigg)  \geq 0\bigg)\bigg].
\end{align*}
The rest of the proof is exactly the same as the proof of Lemma \ref{lemma:testing_problem_z_star}. Since $\bbP(E_0\cap F_0) = 1-o(1)$, we can find some $m' = (1+o(1))n/2$ such that the right-hand side above is lower bounded by a constant multiple of
\begin{align*}
  \bbP\bigg( \log\bigg(\frac{\rho}{1-\rho}\bigg)\cdot \sum_{\ell \notin S\cup\{1\}} \ZZZ{\ell} + \sum_{\ell \in S\cup\{1\}} \sum_{i = 1}^{m'} \XXX{\ell}_i \cdot \log \bigg(\frac{q_\ell(1-p_\ell)}{p_\ell(1-q_\ell)}\bigg)+ \sum_{\ell \in S\cup\{1\}}\sum_{i = 1}^{m'} \YYY{\ell}_i \cdot \log \bigg(\frac{p_\ell(1-q_\ell)}{q_\ell(1-p_\ell)}\bigg)  \geq 0\bigg),
\end{align*}
and the above probability is the error incurred by the likelihood ratio test for the testing problem \eqref{eq:fund_testing_problem_z_l_part_II} with $m' = (1+\delta_n')n/2$.

\subsection{Proof of Lemma \ref{lemma:opt_test_error_z_l}}\label{subappend:prf_opt_test_error_z_l}
The optimal testing error for \eqref{eq:fund_testing_problem_z_l_part_II} (i.e.,   \eqref{eq:opt_test_error_z_l_part_II}) follows directly from Lemma \ref{lemma:opt_test_error_z_star}. 
On the other hand, as shown in the proof of Lemma \ref{lemma:testing_problem_z_l_part_I}, the optimal (weighted) testing error for $\eqref{eq:fund_testing_problem_z_l_part_I}$ is given by
\begin{align*}
  &  \bbP\bigg(\sum_{i=1}^{m} \XXX{\ell}_i \cdot \log \frac{q_\ell(1-p_\ell)}{p_\ell(1-q_\ell)} + \sum_{i = 1}^{m} \YYY{\ell}_i \cdot \log\frac{p_\ell(1-q_\ell)}{q_\ell(1-p_\ell)}\geq \ZZZ{\ell}\log \frac{1-\rho}{\rho}\bigg)\\
  & \geq \rho \cdot \bbP\bigg(\sum_{i=1}^{m} \XXX{\ell}_i \cdot \log \frac{q_\ell(1-p_\ell)}{p_\ell(1-q_\ell)} + \sum_{i = 1}^{m} \YYY{\ell}_i \cdot \log\frac{p_\ell(1-q_\ell)}{q_\ell(1-p_\ell)}\geq \log \frac{\rho}{1-\rho}\bigg).
\end{align*}
The probability in the right-hand side above is calculated in the proof of Lemma \ref{lemma:opt_test_error_z_star} (see Appendix \ref{subsubappend:prf_opt_test_error_z_star_odd_Sc},   \eqref{eq:target_of_lower_bound_z_star_odd} with $S = \{\ell\}$). Now recognizing $\rho = e^{-(1+o(1))2J_\rho}$ gives   \eqref{eq:opt_test_error_z_l_part_I}.

\subsection{Proof of Theorem \ref{thm:lower_bound_z_star_const_rho}}\label{prf:thm:lower_bound_z_star_const_rho}
Throughout the proof, we let
$$
  J'_\rho = - \log \sqrt{2\rho(1-\rho)} = J_\rho + \log\sqrt{2} \approx J_\rho + 0.35.
$$
The proof relies on the following lemma, which is the counterpart of Lemma \ref{lemma:opt_test_error_z_star}.
\begin{lemma}[Optimal testing error for \globest~with non-vanishing $\rho$]
\label{lemma:opt_test_error_z_star_const_rho}
Assume $1\lesssim \rho < \frac{1}{2}$ and that there exist constants $C_1, C_2 > 1, c\in(0, 1)$ such that $C_1 q_\ell \leq p_\ell \leq (C_2 q_\ell) \land (1-c)$ for any $\ell\in[L]$. 
Then, there exists a sequence $\delta''_n=o(1)$ which is independent of $S$, such that the probability \eqref{eq:error_of_LR_test_z_star} is lower bounded by
\begin{equation}
  \label{eq:opt_test_error_z_star_const_rho}
  C\cdot \exp\bigg\{- (1+\delta_n'') \cdot \bigg(|S^c|J_\rho' + \psi_S^\star(0) + C' [\psi_S^\star(0)]^{1/2}\bigg)\bigg\},
  % \begin{cases}
  %   \exp\bigg\{-(1+\delta_n'')\bigg(|S^c|J_\rho + \psi_S^\star(0)\bigg)\bigg\} & \textnormal{ if }|S^c| \textnormal{ even}\\
  %   \exp\bigg\{-(1+\delta_n'')\bigg((|S^c| + 1)J_\rho + \psi_S^\star(-2J_\rho)\bigg)\bigg\} & \textnormal{ if }|S^c| \textnormal{ odd},
  % \end{cases}
\end{equation}
where $C, C'>0$ are absolute constants and $\psi_S^\star(\cdot)$ is defined in \eqref{eq:cum_gen_fun}. 
\end{lemma}
\begin{proof}
  Following the proof of Lemma \ref{lemma:opt_test_error_z_star}, we can lower bound \eqref{eq:error_of_LR_test_z_star} by
  $$
  \exp\bigg\{-(1+o(1))|S^c|J_\rho \bigg\}\times \bbP\bigg(\sum_{\ell \in S} \sum_{i = 1}^{m} \XXX{\ell}_i \cdot \log \bigg(\frac{q_\ell(1-p_\ell)}{p_\ell(1-q_\ell)}\bigg) + \YYY{\ell}_i \cdot \log \bigg(\frac{p_\ell(1-q_\ell)}{q_\ell(1-p_\ell)}\bigg)  \geq 0\bigg),
  $$
  if $S^c = \varnothing$ and by
  \begin{align*}
    & \sum_{x \in \{-|S^c|+ 2j: 0\leq j\leq |S^c|\}}\binom{|S^c|}{\frac{|S^c| + x}{2}} (1-\rho)^{\frac{|S^c| + x}{2}} \rho^{\frac{|S^c| - x}{2}}\\
  & \qquad \times \bbP\bigg(\sum_{\ell \in S} \sum_{i = 1}^{m} \XXX{\ell}_i \cdot \log \bigg(\frac{q_\ell(1-p_\ell)}{p_\ell(1-q_\ell)}\bigg) + \YYY{\ell}_i \cdot \log \bigg(\frac{p_\ell(1-q_\ell)}{q_\ell(1-p_\ell)}\bigg)  \geq \log\bigg(\frac{1-\rho}{\rho}\bigg)\cdot x \bigg)
  \end{align*}
  otherwise.
  If $S^c \neq \varnothing$ and $|S^c|$ is even, by only keeping the $x = 0$ term, the above display can be further lower bounded by
  \begin{align*}
  & \binom{|S^c|}{\frac{|S^c|}{2}} \bigg((1-\rho)\rho\bigg)^{\frac{|S^c|}{2}}\times \bbP\bigg(\sum_{\ell \in S} \sum_{i = 1}^{m} \XXX{\ell}_i \cdot \log \bigg(\frac{q_\ell(1-p_\ell)}{p_\ell(1-q_\ell)}\bigg) + \YYY{\ell}_i \cdot \log \bigg(\frac{p_\ell(1-q_\ell)}{q_\ell(1-p_\ell)}\bigg)  \geq 0\bigg)\\
  & \geq 2^{|S^c|/2} \cdot \exp\bigg\{\frac{|S^c|}{2}\log(\rho(1-\rho))\bigg\} \times
   \bbP\bigg(\sum_{\ell \in S} \sum_{i = 1}^{m} \XXX{\ell}_i \cdot \log \bigg(\frac{q_\ell(1-p_\ell)}{p_\ell(1-q_\ell)}\bigg) + \YYY{\ell}_i \cdot \log \bigg(\frac{p_\ell(1-q_\ell)}{q_\ell(1-p_\ell)}\bigg)  \geq 0\bigg)\\
  & = e^{-|S^c|J_\rho'}\times  \bbP\bigg(\sum_{\ell \in S} \sum_{i = 1}^{m} \XXX{\ell}_i \cdot \log \bigg(\frac{q_\ell(1-p_\ell)}{p_\ell(1-q_\ell)}\bigg) + \YYY{\ell}_i \cdot \log \bigg(\frac{p_\ell(1-q_\ell)}{q_\ell(1-p_\ell)}\bigg)  \geq 0\bigg),
  \end{align*}
  where the first inequality is by $\binom{n}{k}\geq (n/k)^k$.
  If $S^c \neq \varnothing$ and $|S^c|$ is odd, by only keeping the $x = -1$ term, the lower bound reads
  \begin{align*}
    & \binom{|S^c|}{\frac{|S^c|-1}{2}} \bigg((1-\rho)\rho\bigg)^{\frac{|S^c|}{2}} \bigg(\frac{\rho}{1-\rho}\bigg)^{1/2} \times  \bbP\bigg(\sum_{\ell \in S} \sum_{i = 1}^{m} \XXX{\ell}_i \cdot \log \bigg(\frac{q_\ell(1-p_\ell)}{p_\ell(1-q_\ell)}\bigg) + \YYY{\ell}_i \cdot \log \bigg(\frac{p_\ell(1-q_\ell)}{q_\ell(1-p_\ell)}\bigg)  \geq 0\bigg)\\
    & \geq 2^{\frac{|S^c|-1}{2}} \exp\bigg\{\frac{|S^c|}{2}\log(\rho(1-\rho))\bigg\} \bigg(\frac{\rho}{1-\rho}\bigg)^{1/2} \\
    & \qquad \times \bbP\bigg(\sum_{\ell \in S} \sum_{i = 1}^{m} \XXX{\ell}_i \cdot \log \bigg(\frac{q_\ell(1-p_\ell)}{p_\ell(1-q_\ell)}\bigg) + \YYY{\ell}_i \cdot \log \bigg(\frac{p_\ell(1-q_\ell)}{q_\ell(1-p_\ell)}\bigg)  \geq 0\bigg)\\
    & \gtrsim e^{-|S^c|J_\rho'} \times  \bbP\bigg(\sum_{\ell \in S} \sum_{i = 1}^{m} \XXX{\ell}_i \cdot \log \bigg(\frac{q_\ell(1-p_\ell)}{p_\ell(1-q_\ell)}\bigg) + \YYY{\ell}_i \cdot \log \bigg(\frac{p_\ell(1-q_\ell)}{q_\ell(1-p_\ell)}\bigg)  \geq 0\bigg),
  \end{align*}
  where the first inequality is by $\binom{n}{k}\geq (n/k)^k$ and the second inequality is by $\rho \gtrsim 1$. Summarizing the above cases, \eqref{eq:error_of_LR_test_z_star} can be lower bounded by a constant multiple of
  $$
  e^{-|S^c|J_\rho'} \times  \bbP\bigg(\sum_{\ell \in S} \sum_{i = 1}^{m} \XXX{\ell}_i \cdot \log \bigg(\frac{q_\ell(1-p_\ell)}{p_\ell(1-q_\ell)}\bigg) + \YYY{\ell}_i \cdot \log \bigg(\frac{p_\ell(1-q_\ell)}{q_\ell(1-p_\ell)}\bigg)  \geq 0\bigg)
  $$
  Recall that we have shown in the proof of Lemma \ref{lemma:opt_test_error_z_star} that 
  \begin{align*}
    & \bbP\bigg(\sum_{\ell \in S} \sum_{i = 1}^{m} \XXX{\ell}_i \cdot \log \bigg(\frac{q_\ell(1-p_\ell)}{p_\ell(1-q_\ell)}\bigg) + \YYY{\ell}_i \cdot \log \bigg(\frac{p_\ell(1-q_\ell)}{q_\ell(1-p_\ell)}\bigg)  \geq 0\bigg)\\
    & \gtrsim \exp\bigg\{-(1+o(1))\bigg(\sum_{\ell\in S} m\III{\ell}_{1/2} + c \sqrt{\sum_{\ell\in S} m\III{\ell}_{1/2}}\bigg)\bigg\},
  \end{align*}  
  where $m = n/2$ and $c> 0$ is an absolute constant. The proof is concluded.
\end{proof}

We now finish the proof of Theorem \ref{thm:lower_bound_z_star_const_rho}.
If $\min_{S\subseteq[L]} \{|S^c|J_\rho' + \psi_S^\star(0)\}\to\infty$, then we can find a sequence $\delta_n = o(1)$ such that 
$
\log (\delta_n^{-1}) \ll
\min_{S\subseteq[L]} \{|S^c|J_\rho' + \psi_S^\star(0) + C'[\psi_S^\star(0)]^{1/2}\}.
$ 
Then, invoking Lemmas \ref{lemma:testing_problem_z_star} and \ref{lemma:opt_test_error_z_star_const_rho}, there exists $\delta_n' = o(1)$ such that
\begin{align*}
  & \inf_{\bhz^\star} \sup_{\bz^\star\in \calP_n} \bbE \calL(\bhz^\star, \bz^\star) 
   \gtrsim \delta_n 
   \exp\bigg\{-(1+\delta_n')\min_{S\subseteq[L]} \bigg(|S^c|J_\rho' + \psi_S^\star(0) + C' [\psi_S^\star(0)]^{1/2}\bigg)\bigg\}\\
  & = \exp\bigg\{-(1+{\delta_n' + \delta_n''}) \min_{S\subseteq[L]}\bigg(|S^c|J_\rho' + \psi_S^\star(0) + C' [\psi_S^\star(0)]^{1/2}\bigg)\bigg\},
\end{align*}
where
$
  \delta_n'' = {\log({\delta_n}^{-1})}/\min_{S\subseteq[L]} \{|S^c|J_\rho' + \psi_S^\star(0) + C'[\psi_S^\star(0)]^{1/2}\} = o(1)
  % {\bigg[\bigg(\min_{\substack{S\subseteq[L] \\ |S^c| \textnormal{ even}}}|S^c|J_\rho + \psi_S^\star(0)\bigg) \land \bigg(\min_{\substack{S\subseteq[L]\\ |S^c|\textnormal{ odd}}}(|S^c| + 1)J_\rho + \psi_S^\star(-2J_\rho)\bigg)\bigg]} = o(1)
$
by construction. The right-hand side above can be written as
\begin{align*}
  & \exp\bigg\{-(1+{\delta_n' + \delta_n''}) \min_{S\subseteq[L]}\bigg(|S^c|J_\rho' + \psi_S^\star(0)\bigg)\cdot \bigg(1 + \frac{C' [\psi_S^\star(0)]^{1/2}}{|S^c|J_\rho' + \psi_S^\star(0)}\bigg) \bigg\}\\
  & \geq 
  \exp\bigg\{-(1+{\delta_n' + \delta_n''}) \min_{S\subseteq[L]}\bigg(|S^c|J_\rho' + \psi_S^\star(0)\bigg)\cdot \bigg(1 + C' [|S^c|J_\rho' + \psi_S^\star(0)]^{-1/2}\bigg) \bigg\} \\
  & \geq 
  \exp\bigg\{-(1+{\delta_n' + \delta_n''}) \min_{S\subseteq[L]}\bigg(|S^c|J_\rho' + \psi_S^\star(0)\bigg)\cdot \bigg[1 + C' \bigg(\min_{S\subseteq[L]}|S^c|J_\rho' + \psi_S^\star(0)\bigg)^{-1/2}\bigg] \bigg\} \\
  & = \exp\bigg\{-(1+{\delta_n' + \delta_n''})\cdot (1+ o(1)) \min_{S\subseteq[L]}\bigg(|S^c|J_\rho' + \psi_S^\star(0)\bigg) \bigg\},
\end{align*}
where the last inequality is by $\min_{S\subseteq[L]} \{|S^c|J_\rho' + \psi_S^\star(0)\}\to\infty$. Thus, \eqref{eq:lower_bound_z_star_vanishing_const_rho} follows.

Now, if $\min_{S\subseteq[L]} \{|S^c|J_\rho' + \psi_S^\star(0)\}= \calO(1)$, we know that $\min_{S\subseteq[L]} \{|S^c|J_\rho' + \psi_S^\star(0) + C'[\psi_S^\star(0)]^{1/2}\}$ is also $\calO(1)$. Then Lemmas \ref{lemma:testing_problem_z_star} and \ref{lemma:opt_test_error_z_star_const_rho} give that
$\inf_{\bhz^\star} \sup_{\bz^\star\in \calP_n} \bbE \calL(\bhz^\star, \bz^\star) \gtrsim \delta_n$ for any $o(1)$ sequence $\delta_n$. 
If
$\inf_{\bhz^\star} \sup_{\bz^\star\in \calP_n} \bbE \calL(\bhz^\star, \bz^\star)$ is itself $o(1)$, then we would have
$$
  \inf_{\bhz^\star} \sup_{\bz^\star\in \calP_n} \bbE \calL(\bhz^\star, \bz^\star)  \gtrsim \sqrt{\inf_{\bhz^\star} \sup_{\bz^\star\in \calP_n} \bbE \calL(\bhz^\star, \bz^\star)},
$$
a contradiction. Hence \eqref{eq:lower_bound_z_star_constant_const_rho} follows.

\subsection{Proof of Theorem \ref{thm:lower_bound_z_l_const_rho}}\label{prf:thm:lower_bound_z_l_const_rho}
We first present the following counterpart of Lemma \ref{lemma:opt_test_error_z_l}.
\begin{lemma}[Optimal testing error for \indest~with non-vanishing $\rho$]
\label{lemma:opt_test_error_z_l_const_rho}
Assume there exist constants $C_1, C_2 > 1, c\in (0, 1)$ such that $1\lesssim \rho < 1/2$ and $C_1 q_\ell \leq p_\ell \leq (C_2 q_\ell)\land (1-c), \forall\ell\in[L]$. Then there exists a sequence $\delta''_n=o(1)$ such that for any $\ell\in[L], S \subseteq[L]$, the optimal $(1-\rho)\times \textnormal{type-I error} + \rho\times \textnormal{type-II error}$ of the testing problem in \eqref{eq:fund_testing_problem_z_l_part_I} is lower bounded by
\begin{equation}
  \label{eq:opt_test_error_z_l_part_I_const_rho}
  C\cdot \exp\bigg\{ - (1+\delta_n'')\bigg(\psi_{\{\ell\}}^\star(0) + C'[\psi_{\{\ell\}}^\star(0) ]^{1/2}\bigg)\bigg\} ,
  % \exp\bigg\{-(1+\delta_n'')\bigg(2J_\rho + \psi_{\{\ell\}}^\star(-2J_\rho)\bigg)\bigg\} ,
\end{equation}
and the optimal type-I plus type-II error of the testing problem in \eqref{eq:fund_testing_problem_z_l_part_II} is lower bounded by
\begin{equation}
  \label{eq:opt_test_error_z_l_part_II_const_rho}
  C\cdot \exp\bigg\{-(1+\delta_n'')\bigg(|(S\cup\{\ell\})^c| J_\rho' + \psi_{S\cup\{\ell\}}^\star(0) + C'[\psi_{S\cup\{\ell\}}^\star(0) ]^{1/2}\bigg)\bigg\} ,
  % \begin{cases}
    % \exp\bigg\{-(1+\delta_n'')\bigg(|(S\cup\{\ell\})^c|J_\rho + \psi_{S\cup\{\ell\}}^\star(0)\bigg)\bigg\} & \textnormal{ if }|(S\cup\{\ell\})^c| \textnormal{ even}\\
    % \exp\bigg\{-(1+\delta_n'')\bigg((|(S\cup\{\ell\})^c| + 1)J_\rho + \psi_{S\cup\{\ell\}}^\star(-2J_\rho)\bigg)\bigg\} & \textnormal{ if }|(S\cup\{\ell\})^c| \textnormal{ odd},
  % \end{cases}
\end{equation}
where $C, C'>0$ are absolute constants.
\end{lemma}
\begin{proof}
The optimal testing error for \eqref{eq:fund_testing_problem_z_l_part_II} (i.e., \eqref{eq:opt_test_error_z_l_part_II_const_rho}) follows directly from Lemma \ref{lemma:opt_test_error_z_star_const_rho}. 
On the other hand, as shown in the proof of Lemma \ref{lemma:testing_problem_z_l_part_I}, the optimal (weighted) testing error for $\eqref{eq:fund_testing_problem_z_l_part_I}$ is given by
\begin{align*}
  &  \bbP\bigg(\sum_{i=1}^{m} \XXX{\ell}_i \cdot \log \frac{q_\ell(1-p_\ell)}{p_\ell(1-q_\ell)} + \sum_{i = 1}^{m} \YYY{\ell}_i \cdot \log\frac{p_\ell(1-q_\ell)}{q_\ell(1-p_\ell)}\geq \ZZZ{\ell}\log \frac{1-\rho}{\rho}\bigg)\\
  & \geq \rho \cdot \bbP\bigg(\sum_{i=1}^{m} \XXX{\ell}_i \cdot \log \frac{q_\ell(1-p_\ell)}{p_\ell(1-q_\ell)} + \sum_{i = 1}^{m} \YYY{\ell}_i \cdot \log\frac{p_\ell(1-q_\ell)}{q_\ell(1-p_\ell)}\geq \log \frac{\rho}{1-\rho}\bigg) \\
  & \gtrsim \bbP\bigg(\sum_{i=1}^{m} \XXX{\ell}_i \cdot \log \frac{q_\ell(1-p_\ell)}{p_\ell(1-q_\ell)} + \sum_{i = 1}^{m} \YYY{\ell}_i \cdot \log\frac{p_\ell(1-q_\ell)}{q_\ell(1-p_\ell)}\geq 0\bigg),
\end{align*}
where the last inequality is by $1\lesssim \rho < 1/2$.
The probability in the right-hand side above is calculated in the proof of Lemma \ref{lemma:opt_test_error_z_star_const_rho}, and the proof is concluded.
\end{proof}

To finish the proof, note that \eqref{eq:opt_test_error_z_l_part_II_const_rho} is always smaller than \eqref{eq:opt_test_error_z_l_part_I_const_rho}, because
$$
  |(S\cup\{\ell\})^c| J_\rho' + \psi_{S\cup\{\ell\}}^\star(0) + C'[\psi_{S\cup\{\ell\}}^\star(0) ]^{1/2} \geq \psi_{S\cup\{\ell\}}^\star(0) + C'[\psi_{S\cup\{\ell\}}^\star(0) ]^{1/2} \geq \psi^\star_{\{\ell\}}(0).
$$
Thus, the overall lower bound is dominated by \eqref{eq:opt_test_error_z_l_part_I_const_rho}. 
Given Lemma \ref{lemma:testing_problem_z_l_part_I}, \ref{lemma:testing_problem_z_l_part_II} and Equation \eqref{eq:opt_test_error_z_l_part_I_const_rho} in Lemma \ref{lemma:opt_test_error_z_l_const_rho}, the proof is essentially the same as the proof of Theorem \ref{thm:lower_bound_z_star_const_rho}, and we omit the details.

\subsection{Optimal Tests for Minimizing Weighted Type-\texorpdfstring{\RN{1}}{I} and \texorpdfstring{\RN{2}}{II} Errors}

The following lemma is a version of Neyman-Pearson lemma, and it allows us to characterize the optimal test that minimizes the \emph{weighted average} of type-\RN{1} and type-\RN{2} errors.
\begin{lemma}[Neyman--Pearson lemma]
\label{lemma: optimal_test}
  Consider testing $H_0: X\sim \bbP_0$ against $H_1: X\sim \bbP_1$, where $\bbP_0$ and $\bbP_1$ have densities $p_0(x), p_1(x)$ respectively against some dominating measure $\mu(x)$. For any $\omega_0, \omega_1 > 0$, the optimal test that minimizes $\omega_0 \cdot \textnormal{Type-I error} + \omega_1 \cdot \textnormal{Type-II error}$ is given by rejecting $H_0$ when
  $$
    \frac{p_0(x)}{p_1(x)} \leq \frac{\omega_1}{\omega_0}.
  $$
\end{lemma}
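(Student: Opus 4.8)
\textbf{Proof plan for Lemma~\ref{lemma: optimal_test}.} This is the standard Neyman--Pearson lemma stated for the weighted Bayes risk rather than the constrained (fixed size) version, so the proof is an elementary pointwise optimization. The plan is to write the weighted error of an arbitrary (possibly randomized) test $\phi:\calX\to[0,1]$ as an integral against $\mu$, rearrange it into a single integral of $\phi(x)$ against the signed measure with density $\omega_0 p_0(x) - \omega_1 p_1(x)$ plus a constant, and observe that the integrand is minimized pointwise by putting all mass of $\phi$ where this density is negative and no mass where it is positive. The resulting optimal rule is exactly ``reject $H_0$ when $\omega_0 p_0(x) \le \omega_1 p_1(x)$'', i.e. $p_0(x)/p_1(x) \le \omega_1/\omega_0$ (with the convention that the ratio is $+\infty$ when $p_1(x)=0$, so one never rejects there, and the event $\{p_1(x)=0\}$ has $\bbP_1$-measure zero anyway).

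\medskip

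\noindent In more detail, the steps I would carry out are: (i) For a test $\phi$, write $\mathbb{P}_0(\text{reject}) = \int \phi(x)\, p_0(x)\, \mathrm{d}\mu(x)$ and $\mathbb{P}_1(\text{fail to reject}) = \int (1-\phi(x))\, p_1(x)\, \mathrm{d}\mu(x) = 1 - \int \phi(x)\, p_1(x)\,\mathrm{d}\mu(x)$, using that $p_1$ is a density. (ii) Form the objective
\[
R(\phi) := \omega_0 \int \phi(x)\, p_0(x)\, \mathrm{d}\mu(x) + \omega_1 \Big(1 - \int \phi(x)\, p_1(x)\, \mathrm{d}\mu(x)\Big) = \omega_1 + \int \phi(x)\,\big(\omega_0 p_0(x) - \omega_1 p_1(x)\big)\, \mathrm{d}\mu(x).
\]
(iii) Since $0 \le \phi(x) \le 1$, for each fixed $x$ the quantity $\phi(x)\,(\omega_0 p_0(x) - \omega_1 p_1(x))$ is minimized by taking $\phi(x) = 1$ when $\omega_0 p_0(x) - \omega_1 p_1(x) < 0$ and $\phi(x) = 0$ when it is $> 0$ (the value on the tie set $\{\omega_0 p_0 = \omega_1 p_1\}$ is irrelevant to the value of $R$). (iv) Hence any $\phi^\star$ with $\phi^\star(x) = \mathbf{1}\{\omega_0 p_0(x) \le \omega_1 p_1(x)\}$ attains $\inf_\phi R(\phi)$, and this is the claimed test $\tfrac{p_0(x)}{p_1(x)} \le \tfrac{\omega_1}{\omega_0}$ after dividing by $p_1(x) > 0$ (on $\{p_1 = 0\}$ we have $\omega_0 p_0 - \omega_1 p_1 = \omega_0 p_0 \ge 0$, so $\phi^\star = 0$ there, consistent with the convention $p_0/p_1 = +\infty$).

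\medskip

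\noindent There is essentially no obstacle here: the only point requiring the slightest care is the handling of the set $\{p_1(x) = 0\}$ and the tie set, which is why I phrase the optimal test with a ``$\le$'' and note that its behavior on the null set of ties does not affect the risk; one should also remark that the optimum can always be taken to be a deterministic (non-randomized) test. If one wants the infimum to be over deterministic tests only, the same pointwise argument applies verbatim since the minimizer is already an indicator function.
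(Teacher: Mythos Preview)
Your proof is correct and is essentially the same pointwise-minimization argument as the paper's: the paper writes the risk as $\int \big((1-\rho)p_0(x)\psi_1(x)+\rho\, p_1(x)\psi_0(x)\big)\,d\mu$ and lower-bounds the integrand by $\min\{(1-\rho)p_0(x),\rho\, p_1(x)\}$, which is exactly your step (iii) in disguise. The only presentational difference is that the paper records the optimal value $\int \min\{\omega_0 p_0,\omega_1 p_1\}\,d\mu$ explicitly (it is used in the proof of the subsequent data-processing lemma), whereas in your version this formula is implicit.
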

\begin{proof}
  This is a restatement of Problem 3.10 in \cite{lehmann2006testing}, and we provide a proof here for completeness.  
  It suffices to consider $w_0 = 1-\rho, w_1 = \rho$ for some $\rho \in (0, 1)$.
  The optimal error is given by
  \begin{align*}
    \inf_{\psi_0, \psi_1: \psi_0 + \psi_1 = 1}  \int (1-\rho) \cdot p_0(x) \psi_1(x) d\mu(x) + \rho \cdot p_1(x)\psi_0(x) d\mu(x). 
  \end{align*}
  When $(1-\rho)\cdot p_0(x)\geq \rho \cdot p_1(x)$, the integrand is lower bounded by
  $$
    \rho \cdot p_1(x) = (1-\rho) \cdot p_0(x) \land \rho \cdot p_1(x).
  $$
  When $(1-\rho)\cdot p_0(x)\leq \rho \cdot p_1(x)$, the integrand is lower bounded by
  $$
    (1-\rho) \cdot p_0(x) = (1-\rho) \cdot p_0(x) \land \rho \cdot p_1(x).
  $$
  This gives
  \begin{align*}
    & \inf_{\psi_0, \psi_1: \psi_0 + \psi_1 = 1}  \int (1-\rho) \cdot p_0(x) \psi_1(x) d\mu(x) + \rho \cdot p_1(x)\psi_0(x) d\mu(x) \geq \int (1-\rho) \cdot p_0(x) \land \rho \cdot p_1(x) d\mu(x).
  \end{align*}
  On the other hand, if we take $\psi_1(x) = \indc{(1-\rho)\cdot p_0(x)\leq \rho \cdot p_1(x)} $, then it is easy to check that the integrand is exactly equal to $(1-\rho) \cdot p_0(x) \land \rho \cdot p_1(x)$. This gives the desired result.
\end{proof}

The following lemma asserts that the testing error can only be smaller when extra information is present, and can be regarded as an instance of ``data-processing inequalities''.
\begin{lemma}
\label{lemma: general_data_processing}
  Under the setup of Lemma \ref{lemma: optimal_test}, let us additionally consider testing $H'_0: (X, Y)\sim \bbQ_0$ against $H'_1: (X, Y)\sim \bbQ_1$, where 
  the marginal of the first coordinate of $\bbQ_0$ (resp. $\bbQ_1$) agrees with $\bbP_0$ (resp. $\bbQ_1$), and there exist densities $q_0(x, y)$ and $q_1(x, y)$ such that 
  $$
  d \bbQ_0(x, y) = q_0(x, y) d\mu(x)d\mu(y), \ \ \ d \bbQ_1(x, y) = q_1(x, y) d\mu(x)d\mu(y).
  $$  
  Then we have
  $$
    \inf_{\psi}~ w_0 \cdot \bbE_{H_0} \psi + w_1 \cdot \bbE_{H_1} [1-\psi] \geq \inf_{\psi'}~ w_0 \cdot \bbE_{H'_0} \psi' + w_1 \cdot \bbE_{H'_1} [1-\psi'],
  $$
  where $\psi, \psi'$ are testing functions for $H_0$ v.s. $H_1$ and $H_0'$ v.s. $H_1'$, respectively.
\end{lemma}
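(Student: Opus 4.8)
The plan is to exploit the elementary fact that testing on the pair $(X,Y)$ can only be easier than testing on $X$ alone, since $X$ is a deterministic function of $(X,Y)$ and the first-coordinate marginals of $\bbQ_0,\bbQ_1$ coincide with $\bbP_0,\bbP_1$. Concretely, I would argue by \emph{lifting}: take any candidate test for the coarse problem and view it, unchanged, as a test for the fine problem, verify that its weighted error is unaffected, and conclude by comparing infima over a smaller versus a larger family of tests. This route does not even require Lemma~\ref{lemma: optimal_test}.

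Here are the steps. First, fix an arbitrary (randomized) testing function $\psi:\calX\to[0,1]$ for $H_0$ versus $H_1$, and define $\psi':\calX\times\calY\to[0,1]$ by $\psi'(x,y):=\psi(x)$; this is a legitimate test for $H_0'$ versus $H_1'$, measurable by construction. Second, using that the first-coordinate marginal of $\bbQ_0$ equals $\bbP_0$, compute $\bbE_{H_0'}[\psi']=\int \psi(x)\,q_0(x,y)\,d\mu(x)\,d\mu(y)=\int\psi(x)\,p_0(x)\,d\mu(x)=\bbE_{H_0}[\psi]$, and symmetrically $\bbE_{H_1'}[1-\psi']=\bbE_{H_1}[1-\psi]$, so that $w_0\bbE_{H_0'}[\psi']+w_1\bbE_{H_1'}[1-\psi']=w_0\bbE_{H_0}[\psi]+w_1\bbE_{H_1}[1-\psi]$. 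Third, since $\psi'$ ranges over a subset of all tests of the fine problem, $\inf_{\psi'}\big(w_0\bbE_{H_0'}[\psi']+w_1\bbE_{H_1'}[1-\psi']\big)\le w_0\bbE_{H_0}[\psi]+w_1\bbE_{H_1}[1-\psi]$ for every $\psi$; taking the infimum over $\psi$ yields the claimed inequality.

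As an alternative write-up that reuses the machinery already in place, one can invoke Lemma~\ref{lemma: optimal_test} twice: the optimal weighted error of $H_0$ versus $H_1$ is $\int\big[w_0p_0(x)\big]\wedge\big[w_1p_1(x)\big]\,d\mu(x)$ and that of $H_0'$ versus $H_1'$ is $\int\big[w_0q_0(x,y)\big]\wedge\big[w_1q_1(x,y)\big]\,d\mu(x)\,d\mu(y)$; then one applies the pointwise bound $\int\big(\big[w_0q_0(x,y)\big]\wedge\big[w_1q_1(x,y)\big]\big)\,d\mu(y)\le\big(\int w_0q_0(x,y)\,d\mu(y)\big)\wedge\big(\int w_1q_1(x,y)\,d\mu(y)\big)=\big[w_0p_0(x)\big]\wedge\big[w_1p_1(x)\big]$, a trivial instance of $\int(f\wedge g)\le(\int f)\wedge(\int g)$, and integrates in $x$. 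I do not expect a genuine obstacle here; the only care needed is the bookkeeping of which marginal matches which (the symbol ``$\bbQ_1$'' in the hypothesis should read ``$\bbP_1$'', but the intent is clear) and a routine appeal to Fubini under the product dominating measure $d\mu(x)\,d\mu(y)$ that is already assumed in the statement.
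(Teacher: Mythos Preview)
Your proposal is correct. Your \emph{alternative} write-up is precisely the paper's own proof: the paper invokes the optimal-error formula from Lemma~\ref{lemma: optimal_test} on both sides and then uses the pointwise inequality $\int (f\wedge g)\,d\mu(y)\le\big(\int f\,d\mu(y)\big)\wedge\big(\int g\,d\mu(y)\big)$ to compare the two integrals, exactly as you sketch. Your primary argument, however, is a genuinely different and more elementary route: the lifting $\psi'(x,y):=\psi(x)$ shows directly that the family of tests for the fine problem contains (an isomorphic copy of) the family of tests for the coarse problem with identical risk, so the infimum over the larger family is no larger. This avoids any appeal to the Neyman--Pearson characterization or to the existence of densities, and it works verbatim for arbitrary randomized tests and arbitrary dominating measures; the paper's approach, by contrast, leans on the explicit $\int (w_0p_0)\wedge(w_1p_1)$ formula and thus on the density assumptions already baked into the statement. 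Both are short; yours is the cleaner one. Your remark about the typo (``$\bbQ_1$'' should be ``$\bbP_1$'') is also correct.
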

\begin{proof}
  If $\omega_0 = \omega_1$, this lemma immediately follows from the data-processing inequality for total variation distances. But we need more work for the general case.
  We again without loss of generality assume $w_0 = 1-\rho$, $w_1 = \rho$ for some $\rho \in (0, 1)$. By the proof of Lemma \ref{lemma: optimal_test} and the existence of densities, it suffices to show
  $$
    \int (1-\rho) \cdot p_0 (x) \land \rho \cdot p_1(x) d\mu(x) \geq \iint (1-\rho)\cdot q_0(x, y) \land \rho \cdot q_1(x, y) d\mu(x) d\mu(y),
  $$
  which is implied by
  $$
    (1-\rho) \cdot p_0(x) \land \rho \cdot p_1(x) \geq \int (1-\rho) \cdot q_0(x, y) \land \rho \cdot q_1(x, y) d\mu(y).
  $$
  Note that
  \begin{align*}
    \int (1-\rho) \cdot q_0(x, y) \land \rho \cdot q_1(x, y) d\mu(y) & \leq \int (1-\rho) \cdot q_0(x, y) d\mu(y) \land  \int \rho \cdot q_1(x, y) d\mu(y) \\
    & = (1-\rho) \cdot p_0(x) \land \rho \cdot p_0(x),
  \end{align*}  
  which is the desired result.
\end{proof}

\section{Proofs of Upper Bounds}\label{append:prf_upper_bound}

\subsection{Proof of Theorem \ref{thm:specc}}\label{prf:thm:specc}

We start by stating a structural lemma, which relates the misclustering error to the deviation of the trimmed weighted adjacency matrix $\tau(\bar A)$ from the expectation of $\barA$.
\begin{lemma}\label{lem: structure lem}
Assume there exists a constant $c\in[0, 1)$ such that
\begin{equation}
      \label{eq:spectral_gap_structral_lemma}
      \frac{\bar{p}}{\bar{p}-\bar{q}} \leq c\,\frac{2(1-2\rho)^2}{\beta - \beta^{-1} + 4n^{-1}} + 2(\rho-\rho^2).
      % \frac{\bar p}{\bar p - \bar q} \leq c\cdot \frac{2(1-2\rho)^2}{\beta - \beta^{-1}} + 2(\rho - \rho^2).
\end{equation}      
Then, for any instance generated by an $\textnormal{\modelname}\in \calP_n(\rho, \{p_\ell\}_1^L, \{q_\ell\}_1^L, \beta)$, the output of Algorithm \ref{alg: specc}, $\btz^\star$, will satisfy
\begin{equation}
      \label{eq: mistakes_of_spec_clustering}
      \calL(\btz^\star, \bz^\star) 
      % \leq C n \cdot \frac{\| \bar A - \bar P \|_2^2}{\bigl(\frac{n}{2}\sum_{\ell} \omega_\ell (\tilde p_\ell-\tilde q_\ell)\bigr)^2} = 
      \leq C\cdot \frac{ (2+\ep)}{n^2 (1-2\rho)^4}  \cdot \frac{\|\tau(\bar A) - \bbE \barA \|_2^2}{(\bar p-\bar q) ^2},
\end{equation}    
% where $\Delta=\bar p-2\rho(1-\rho)(\bar p-\bar q)$ and 
where $C$ is an absolute constant only depending on $c$.
\end{lemma}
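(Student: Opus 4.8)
The plan is to reduce the clustering error to a perturbation bound for the leading eigenspace of $\tau(\bar A)$ relative to the population matrix $\bbE[\bar A]$. First I would write down the structure of $P := \bbE[\bar A]$: since $\bz^\star$ has roughly balanced clusters and the individual labels $\zzz{\ell}$ are $\rho$-flips of $\bz^\star$, one computes $\bbE[\AAA{\ell}_{ij}] = q_\ell + (p_\ell - q_\ell)\,\bbP(\zzz{\ell}_i = \zzz{\ell}_j)$ for $i\neq j$, and $\bbP(\zzz{\ell}_i = \zzz{\ell}_j) = (1-\rho)^2 + \rho^2$ if $\bz^\star_i = \bz^\star_j$ and $2\rho(1-\rho)$ otherwise. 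Hence off the diagonal, $P$ is a rank-two (up to the usual block structure) matrix of the form $\alpha \mathbf{1}\mathbf{1}^\top + \beta_0 \bz^\star(\bz^\star)^\top$ for appropriate scalars, with the ``signal'' coefficient proportional to $(1-2\rho)^2(\bar p - \bar q)$. The diagonal correction (the $-\Delta$ term, with $\Delta = \bar p - 2\rho(1-\rho)(\bar p-\bar q)$) arises because the true adjacency matrices have zero diagonal while the natural rank-two model would not; this accounts for the additive $\Delta$ inside the norm in \eqref{eq: mistakes_of_spec_clustering}.

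Next I would establish an eigengap lower bound: the second-smallest-in-magnitude relevant eigenvalue of $P$ (restricted appropriately, or the gap between the signal eigenvalue and the bulk) is at least a constant multiple of $n(1-2\rho)^2(\bar p - \bar q)$, provided the condition \eqref{eq:spectral_gap_structral_lemma} holds — this condition is exactly what guarantees that the signal direction $\bz^\star$ is not swamped by the all-ones direction and the diagonal offset, i.e., that the ratio $\bar p / (\bar p - \bar q)$ is small enough relative to $(1-2\rho)^2/(\beta - \beta^{-1})$. I would make this precise by diagonalizing the $2\times 2$ (or $2\times 2$ plus rank-one diagonal perturbation) matrix governing $P$ on the span of $\mathbf{1}$ and $\bz^\star$, bounding its eigenvalues from below via the quadratic formula, and using $\beta = 1 + o(1)$-type balance to control cross terms.

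Then comes the Davis–Kahan step: writing $E = \tau(\bar A) - P$, the sine of the angle between the top-two eigenspace $U$ of $\tau(\bar A)$ and that of $P$ is bounded by $\opnorm{E}/\text{gap} \lesssim (\opnorm{\tau(\bar A) - \bbE[\bar A]} + \Delta)/(n(1-2\rho)^2(\bar p - \bar q))$, where the $\Delta$ enters because $P$ itself differs from the exact rank-two matrix $\widetilde P$ (with its diagonal filled in) by an operator-norm-$\Delta$ amount, so I would triangle-inequality $\opnorm{\tau(\bar A) - \widetilde P} \le \opnorm{\tau(\bar A) - P} + \opnorm{P - \widetilde P}$ and fold $\opnorm{P - \widetilde P} \lesssim \Delta$ into the bound. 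Finally, a standard argument (e.g.\ via the $(1+\ep)$-approximate $k$-means solution applied to $U$, as in Lei–Rinaldo-type analyses) converts the $\ell_2$ eigenspace perturbation into a bound on the fraction of misclustered nodes: the number of errors is $\lesssim (2+\ep)\,\|U - U_0 O\|_F^2$ times the inverse squared minimum separation of cluster centers, which is $\asymp 1/n$; squaring the sine bound gives $\calL(\btz^\star,\bz^\star) \lesssim (2+\ep) (\opnorm{\tau(\bar A) - \bbE \bar A} + \Delta)^2 / (n^2 (1-2\rho)^4 (\bar p - \bar q)^2)$, which is \eqref{eq: mistakes_of_spec_clustering}.

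The main obstacle I anticipate is the careful bookkeeping of the eigengap under inhomogeneity — ensuring that the all-ones direction (with eigenvalue $\asymp n\bar p$, potentially large) does not interfere with identifying the signal direction, which is precisely why the hypothesis \eqref{eq:spectral_gap_structral_lemma} is phrased as an upper bound on $\bar p/(\bar p - \bar q)$. One must either project out $\mathbf{1}$ or argue that the top-two eigenspace of $P$ still recovers the partition, and track the constant $c\in[0,1)$ through the quadratic-formula estimate so that the final constant $C$ depends only on $c$. The trimming operation $\tau(\cdot)$ itself does not appear explicitly in this lemma — it is only used downstream to control $\opnorm{\tau(\bar A) - \bbE\bar A}$ via the concentration inequality \eqref{eq:main_concentration_reg} — so the proof of this structural lemma is purely deterministic given the perturbation input.
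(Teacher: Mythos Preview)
Your proposal is correct and matches the paper's proof essentially step for step: compute the marginal law to see $\bbE\bar A = Z\bar B Z^\top - \Delta I$, lower-bound the smallest eigenvalue of the rank-two matrix $P = Z\bar B Z^\top$ by $(1-c)\,n(1-2\rho)^2(\bar p-\bar q)$ via the quadratic formula and hypothesis \eqref{eq:spectral_gap_structral_lemma}, apply Davis--Kahan to the top-two eigenspace, and convert to misclustering via the $(1+\ep)$-approximate $k$-means lemma of Lei--Rinaldo type. Your anticipated obstacle about the all-ones direction is a non-issue in the paper's argument because the algorithm keeps the top \emph{two} eigenvectors of $\tau(\bar A)$, so both eigendirections of the rank-two $P$ are captured and the only quantity that matters is the smallest nonzero eigenvalue of $P$---no projection is needed.
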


The proof of the above lemma uses the following result, which relates the misclustering error to the geometry of the point cloud.
% We have the following which reduces the clustering error to geometry.
\begin{lemma}%[\cite{gao2018community}, Lemma 5]
\label{lemma: geometry}
Let $\bz^{\star}\in\{\pm1\}^n$ be the global parameter for an \modelname~in the parameter space defined in \eqref{eq:param_space}. Suppose there exists a matrix $V\in\bbR^{n\times 2}$ (to be thought as the ``ground truth Euclidean embedding'' of the nodes) and a constant $b>0$ such that
\[
\min_{\bz^{\star}_i\neq \bz^{\star}_j}\|V_{i\bigcdot}-V_{j\bigcdot}\|\geq 2b,
\]
where $V_{i\bigcdot}\in\bbR^2$ is the $i$-th row of $V$.
Then, for any estimator $\btz^{\star}$, any $\{\tilde v_1, \tilde v_2\} \subset \bbR^2$ (to be thought as the ``estimated centroids'' of the nodes), and any $\tilde V\in \bbR^{n\times 2}$ satisfying $\tilde V_{i\bigcdot} = \tilde v_{\btz^{\star}_i}$ (to be thought as the ``estimated Euclidean embedding'' of the nodes), we have
\begin{align*}
  d_\ham(\btz^\star, \bz^\star) \land d_\ham(-\btz^\star, \bz^\star) \leq C \cdot \# \calS,
% \bigg(\#\{i\in[n]:\btz^{\star}_i\neq \bz^{\star}_i \}\bigg) \land  \bigg(\#\{i\in[n]:\btz^{\star}_i \neq -\bz^{\star}(i)\} \bigg)
 % \leq  C \# \calS,
\end{align*}
where $\calS:=\{i\in[n]:\|\tilde{V}_{i\bigcdot}-V_{i\bigcdot}\|\geq b \}$, $\# \calS$ is the cardinality of $\calS$, and $C$ is an absolute constant. 
% For any $z,\tilde{z}\in\{\pm1\}^n$, and $v_{\pm}\subset[2]^2$. Let $\tilde{V}_i=v_{z(i)}$. For any norm $\|\cdot\|$ satisfying the triangle inequality, if
% \begin{align*}
% \|V_i-V_j\|\geq 2b,\quad\forall z(i)\neq z(j),
% \end{align*}
% then we have
\end{lemma}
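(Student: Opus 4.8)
\textbf{Proof proposal for Lemma \ref{lemma: geometry}.}

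The plan is to argue purely geometrically: I will show that every node outside the ``bad set'' $\calS$ is correctly classified, up to a global relabeling of the two estimated clusters. First I would fix the orientation: since $\btz^\star$ induces a partition of $[n]$ into two groups (the preimages of $\tilde v_1$ and $\tilde v_2$), and $\bz^\star$ induces another such partition, I would choose the labeling of $\{1,2\}$ so that $d_\ham(\btz^\star,\bz^\star)=d_\ham(\btz^\star,\bz^\star)\land d_\ham(-\btz^\star,\bz^\star)$ — i.e. work with the better-aligned orientation. Under this alignment I want to bound $d_\ham(\btz^\star,\bz^\star)$ by $C\cdot\#\calS$.

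The key observation is a separation/contradiction argument. Consider two nodes $i,j\notin\calS$ with $\bz^\star_i\neq\bz^\star_j$. Then $\|\tilde V_{i\bigcdot}-V_{i\bigcdot}\|<b$ and $\|\tilde V_{j\bigcdot}-V_{j\bigcdot}\|<b$, while $\|V_{i\bigcdot}-V_{j\bigcdot}\|\geq 2b$; by the triangle inequality $\|\tilde V_{i\bigcdot}-\tilde V_{j\bigcdot}\|>0$, so $\tilde V_{i\bigcdot}\neq\tilde V_{j\bigcdot}$, which forces $\btz^\star_i\neq\btz^\star_j$. In other words, on $\calS^c$ the estimated assignment $\btz^\star$ refines (respects) the true partition: nodes of $\calS^c$ that are truly in different communities land in different estimated clusters. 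Conversely — and this is where the counting comes in — if two nodes $i,j\in\calS^c$ had $\bz^\star_i=\bz^\star_j$ but $\btz^\star_i\neq\btz^\star_j$, then $\tilde V_{i\bigcdot}$ and $\tilde V_{j\bigcdot}$ are the two distinct estimated centroids $\tilde v_1,\tilde v_2$, each within distance $b$ of the common point $V_{i\bigcdot}=V_{j\bigcdot}$, so $\|\tilde v_1-\tilde v_2\|<2b$. I would then check that this is compatible with the previous paragraph only in degenerate ways; more cleanly, the right bookkeeping is: the two estimated clusters restricted to $\calS^c$ each lie entirely within one true community, so $\calS^c$ contributes at most a constant number (in fact, zero after correct alignment) of misclassified nodes. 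Hence every misclassified node, except possibly a bounded number coming from the alignment/centroid-labeling choice, must lie in $\calS$, giving $d_\ham(\btz^\star,\bz^\star)\land d_\ham(-\btz^\star,\bz^\star)\leq C\,\#\calS$.

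Concretely the steps are: (1) record the elementary triangle-inequality facts relating $\|V_{i\bigcdot}-V_{j\bigcdot}\|$, $\|\tilde V_{i\bigcdot}-V_{i\bigcdot}\|$ and $\|\tilde V_{i\bigcdot}-\tilde V_{j\bigcdot}\|$ for $i,j\in\calS^c$; (2) conclude that the map sending an estimated cluster label to ``the true community containing the $\calS^c$-part of that cluster'' is well defined and injective, hence a bijection between the two estimated labels and the two true labels; (3) use that bijection as the alignment, so that a node $i\in\calS^c$ with $\btz^\star_i$ mapping to $\bz^\star_i$ is correctly labeled; (4) therefore $\{i:\btz^\star_i\neq\bz^\star_i \text{ after alignment}\}\subseteq\calS$, so $d_\ham(\btz^\star,\bz^\star)\land d_\ham(-\btz^\star,\bz^\star)\le \#\calS$ (the constant $C$ absorbs the edge cases where one estimated cluster is empty or where both estimated centroids are close, which can only happen when $\#\calS$ is already comparable to $n$). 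I expect the main obstacle to be the careful handling of degenerate configurations — an estimated cluster being empty, or $\tilde v_1=\tilde v_2$, or the case where $\#\calS$ is so large that $\calS^c$ lies entirely in one true community — and making sure the absolute constant $C$ is genuinely independent of $\rho,\beta,n$ in all of these; the generic, non-degenerate case is essentially immediate from the triangle inequality.
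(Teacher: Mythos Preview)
The paper does not actually prove this lemma: its ``proof'' is a one-line citation to Lemma~5 of \cite{gao2018community} (and Theorem~2 of \cite{chen2018convexified}). So there is no in-paper argument to compare against; your proposal is essentially reconstructing the standard proof that those references contain.

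Your core argument is correct and is exactly the right one. The triangle-inequality step --- for $i,j\in\calS^c$ with $\bz^\star_i\neq\bz^\star_j$, $\|\tilde V_{i\bigcdot}-\tilde V_{j\bigcdot}\|\geq\|V_{i\bigcdot}-V_{j\bigcdot}\|-\|\tilde V_{i\bigcdot}-V_{i\bigcdot}\|-\|\tilde V_{j\bigcdot}-V_{j\bigcdot}\|>0$ --- gives $\btz^\star_i\neq\btz^\star_j$, so on $\calS^c$ each estimated cluster sits inside a single true community. When $\calS^c$ meets both true communities this forces the label map to be a bijection, and after that alignment every misclassified node lies in $\calS$, i.e.\ $C=1$. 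The only case needing attention is when $\calS^c$ lies entirely in one true community; then $\#\calS\geq \min(n^\star_+,n^\star_-)\geq n/(2\beta)$ by the parameter space constraint, while the minimal Hamming distance is trivially at most $n/2$, so a constant depending only on $\beta$ suffices. Since the paper works with $\beta=1+o(1)$ (and the cited lemma in \cite{gao2018community} likewise allows dependence on the balance parameter), your handling of the degenerate case is adequate --- but you should state explicitly that ``absolute'' here means independent of $n$ and the edge probabilities, with $C$ allowed to depend on $\beta$. Your side remark about the possibility $\tilde v_1=\tilde v_2$ is unnecessary to single out: it simply forces $\calS^c$ into one true community and so is already covered by the degenerate branch.
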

\begin{proof}
  This is Lemma 5 in \cite{gao2018community}. See also the proof of Theorem 2 in \cite{chen2018convexified}.
\end{proof}

We now present the proof of Lemma \ref{lem: structure lem}.

% We let $\bz^\star=(\mathbf{1}_{n/2}\tran,-\mathbf{1}_{n/2}\tran)\tran$ be the global assignment vector. We also have the following
% $$
% \bbE \AAA{\ell} = V \begin{pmatrix} \frac{n(\tpell + \tqell)}{2} & 0 \\
% 0 & \frac{n(\tpell - \tqell)}{2} \end{pmatrix} V^\top - \diag(\tpell) , 
% \qquad V = \begin{pmatrix}
%       1/\sqrt{n} & 1/\sqrt{n} \\
%       \vdots & \vdots \\
%       1/\sqrt{n} & 1/\sqrt{n} \\
%       1/\sqrt{n} & -1/\sqrt{n} \\
%       \vdots & \vdots \\
%       1/\sqrt{n} & -1/\sqrt{n}
% \end{pmatrix},
% $$
% Moreover,
% \begin{align*}
% \bbE\barA&=V
% \begin{pmatrix} \sum_\ell\omega_\ell\frac{n}{2}(\tpell+\tqell) & 0 \\
% 0 & \sum_\ell\omega_\ell\frac{n}{2}(\tpell-\tqell) \end{pmatrix} 
% V^\top
% - (\sum_{\ell}\omega_\ell\tpell) I_n \\
% &=:P-(\sum_{\ell}\omega_\ell\tpell) I_n.
% \end{align*}

% Note that $\beta=1$ in this equal-sized case.
\begin{proof}[Proof of Lemma \ref{lem: structure lem}]
Note that for $\bz_i^\star=\bz_j^\star$, marginally we have $\bbP({A^{(\ell)}_{ij}=1})=[\rho^2+(1-\rho)^2]\pell+2\rho(1-\rho)\qell=\pell-2(\pell-\qell)(\rho-\rho^2)$. Similarly, for $\bz_i^\star\neq\bz_j^\star$, we have $\bbP({A^{(\ell)}_{ij}=1})=\qell+2(\pell-\qell)(\rho-\rho^2)$. 
So marginally, we have
$$
A^{(\ell)}_{ij}\sim\Bern({\tilde p}_{\ell}\indc{\bz_i^\star=\bz_j^\star}+{\tilde q}_{\ell}\indc{\bz_i^\star\neq\bz_j^\star}),
$$
where 
$$
      \tpell = \pell-2(\pell-\qell)(\rho-\rho^2) , \qquad \tqell = \qell+2(\pell-\qell)(\rho-\rho^2).
$$
Let $Z$ be the $n\times 2$ assignment matrix such that $Z_{i,1}=\indicator{\bz^\star_i=+1}$ and $Z_{i, 2} = \indicator{\bz^\star_i = -1}$. Then $Z\tran Z=\mathrm{diag}(n^{\star}_+, n^{\star}_-)$, where $n^{\star}_\pm \in[\frac{n}{2\beta}, \frac{n\beta}{2}]$ are the sizes of the two communities. Define
\begin{align*}
B^{(\ell)}=\begin{pmatrix} 
\tpell & \tqell \\ \tqell & \tpell
\end{pmatrix}.
\end{align*}
One readily checks that $\bbE \AAA{\ell}=Z B^{(\ell)} Z\tran - \mathrm{diag}(\tpell)$, and thus $\bbE\bar A=Z \bar B Z\tran-\mathrm{diag}(\sum_\ell\omega_\ell\tpell)$, where $\bar B=\sum_\ell\omega_\ell B^{(\ell)}$. We let $P=Z\bar B Z\tran$, and $G=Z(Z\tran Z)^{-1/2}$. Then $G$ is orthonormal and $P=G X G\tran$, where $X= (Z\tran Z)^{1/2} \bar B (Z\tran Z)^{1/2}$. Let $X=WDW\tran$ be the eigen-decomposition of $X$. Then $P$ has the eigen-decomposition $P=VDV\tran$ with $V=GW=Z(Z\tran Z)^{-1/2}W$. Note that $W$ is a $2\times 2$ orthogonal matrix and $(Z\tran Z)^{-1/2}=\mathrm{diag}[(n^{\star}_+)^{-1/2},(n^{\star}_-)^{-1/2}]$.  So if $Z_{i\bigcdot}\neq Z_{j\bigcdot}$, we have $\|V_{i\bigcdot}-V_{j\bigcdot}\|=\|(n^{\star}_+)^{-1/2}W_{1\bigcdot}-(n^{\star}_-)^{-1/2}W_{2\bigcdot}\|=\sqrt{(n^{\star}_+)^{-1}+(n^{\star}_-)^{-1}}$.

We now invoke Lemma \ref{lem: structure lem} with the ``estimated embedding'' $\tilde V$ being the output of Algorithm \ref{alg: specc} (i.e., the solution of the $(1+\ep)$-approximate $k$-means clustering), and with the ``ground truth'' embedding being $V O$, where $O\in\bbR^{2\times 2}$ is an arbitrary orthonormal matrix. Since $\|(VO)_{i\bigcdot}-(VO)_{j\bigcdot}\|=\sqrt{(n^{\star}_+)^{-1}+(n^{\star}_-)^{-1}}$, we can take $b=\frac{1}{2}\sqrt{(n^{\star}_+)^{-1}+(n^{\star}_-)^{-1}}$.
We bound $\# \calS=\#\{i\in[n]:\|\tilde{V}_{i\bigcdot}-(VO)_{i\bigcdot}\|\geq b \}$ as follows:
\begin{align*}
\# \calS&\leq \frac{1}{b^2} \sum_{i\in\calS}\|\tilde{V}_{i\bigcdot}-(VO)_{i\bigcdot}\|^2 \leq \frac{1}{b^2}\sum_{i=1}^n\|\tilde{V}_{i\bigcdot}-(VO)_{i\bigcdot}\|^2=\frac{1}{b^2}\|\tilde{V}-VO\|_F^2
\end{align*}
Since $\tilde V$ solves the $(1+\ep)$ $k$-means clustering objective, the above display can be further bounded by
\begin{equation}
\label{equ: bound S}
      \# \calS \leq \frac{2}{b^2}(\|\tilde V-U\|_F^2+\|U-VO\|_F^2) \leq \frac{2}{b^2}(2+\ep)\|U-VO\|_F^2=\frac{8(2+\ep)}{(n^{\star}_+)^{-1}+(n^{\star}_-)^{-1}}\|U-VO\|_F^2,
\end{equation}
where we recall that $U$ consists of the top two eigenvectors of the trimmed weighted adjacency matrix $\tau(\barA)$. 
Since $V$ consists of the two leading eigenvectors of $P$, which are also the two leading eigenvectors of $\bbE[\bar A]$, we can invoke Davis-Kahan Sin-$\Theta$ theorem (specifically, the version proved in Lemma 5.1 of \cite{lei2015consistency}) to conclude that, if the smallest eigenvalue $\gamma_n$ of $\bbE[\bar A]$ is strictly positive, we will have
\begin{align*}
% \# \calS\leq 2(2+\ep)n\|U-VO\|_F^2&\leq C_0(2+\ep)n\frac{\|\bar{A}-P\|_2^2}{\gamma_n^2}.\\
\# \calS\leq \frac{8(2+\ep)}{(n^{\star}_+)^{-1}+(n^{\star}_-)^{-1}}\|U-VO\|_F^2\leq 32(2+\ep)\frac{1}{(n^{\star}_+)^{-1}+(n^{\star}_-)^{-1}}\frac{\|\tau(\bar{A})- \bbE[\bar A]\|^2}{\gamma_n^2}
\numberthis
\label{equ: bound Frobenius norm}
\end{align*}
% where 
% $\gamma_n=\frac{n}{2}\sum_{\ell}\omega_\ell({\tilde p}_{\ell}-{\tilde q}_{\ell})=\frac{n}{2}(1-2\rho)^2\sum_\ell\omega_\ell(p^{(\ell)}-q^{(\ell)})$ 
Note that $\gamma_n + \sum_{\ell} \weight_\ell \tilde p_\ell$ is also the smallest singular value of 
\[
X=(Z\tran Z)^{1/2}\bar B(Z\tran Z)^{1/2}=\begin{pmatrix}
n^{\star}_+\bar B_{11} & \sqrt{n^{\star}_+ n^{\star}_-}\bar B_{12}\\ \sqrt{n^{\star}_+ n^{\star}_-}\bar B_{21} & n^{\star}_-\bar B_{22}
\end{pmatrix}.
\]
Solving the characteristic polynomial, one finds that 
\begin{align*}
      2(\gamma_n + \sum_\ell \weight_\ell \tilde p_\ell) & = {n \sum_{\ell}\weight_\ell \tilde p_\ell - \sqrt{(n^{\star}_+ - n^{\star}_-)^2 (\sum_{\ell}\weight_\ell \tilde p_\ell)^2 + 4 n^{\star}_+ n^{\star}_- (\sum_{\ell} \weight_\ell \tilde q_\ell)^2}} \\
      & \geq  n \sum_{\ell}\weight_\ell \tilde p_\ell - |n^{\star}_+ - n^{\star}_-| \sum_{\ell}\weight_\ell \tilde p_\ell - n\sum_\ell \weight_\ell \tilde q_\ell,
\end{align*}      
where the last inequality follows from $\sqrt{a+b}\leq \sqrt{a}+\sqrt{b}$ for $a, b\geq 0$ and $n^{\star}_+n^{\star}_-\leq n^2/4$. Since $n^{\star}_\pm \in [\frac{n}{2\beta}, \frac{\beta n}{2}]$, we can further lower bound the right-hand side above by
\begin{align*}
      2(\gamma_n + \sum_\ell \weight_\ell \tilde p_\ell)& \geq  n \sum_\ell \weight_\ell (\tilde p_\ell - \tilde q_\ell) - \frac{n}{2} (\beta - \beta^{-1}) \sum_{\ell} \weight_\ell \tilde p_\ell,
\end{align*}      
which is equivalent to
$$
      2 \gamma \geq n \sum_\ell \weight_\ell (\tilde p_\ell - \tilde q_\ell) - \frac{n}{2}(\beta - \beta^{-1} + 4 n^{-1}) \sum_\ell \weight_\ell \tilde p_\ell.
$$
We claim that if   \eqref{eq:spectral_gap_structral_lemma} holds, then 
$$
      \frac{n}{2}(\beta -\beta^{-1} + 4n^{-1}) \sum_{\ell} \weight_\ell \tilde p_\ell \leq  c n \sum_{\ell} \weight_\ell (\tilde p_\ell   - \tilde q_\ell).
$$
Indeed, since 
$
      \sum_{\ell} \weight_\ell(\tilde p_\ell - \tilde q_\ell) = (\bar p - \bar q) - 4(\bar p - \bar q)(\rho - \rho^2) = (1-2\rho)^2 (\bar p - \bar q),
$
one readily checks that the above display is equivalent to 
$$
      (\beta - \beta^{-1} + 4n^{-1}) (\bar p - 2(\bar p - \bar q) (\rho-\rho^2)) \leq 2c(\bar p - \bar q) (1 -2\rho)^{2},
$$
and this is exactly   \eqref{eq:spectral_gap_structral_lemma} by rearranging terms. Thus, we arrive at
\begin{equation}
      \label{eq:smallest_eigenval}
      \gamma_n \geq (1-c) n\sum_{\ell} \weight_\ell (\tilde p_\ell - \tilde q_\ell) = (1-c) n (1-2\rho)^2(\bar p - \bar q).
\end{equation}      
% By triangle inequality, we have
% \begin{align*}
% \|\tau(\barA)-P\|&\leq \|\tau(\barA)-\bbE \barA\|+\|\bbE\barA-P\| = \|\tau(\barA)-\bbE \barA\|+ \bar p - 2\rho(1-\rho)(\bar p -\bar q).
% \end{align*}
Plugging \eqref{eq:smallest_eigenval} to \eqref{equ: bound Frobenius norm}, we get
$$
      \calL(\btz^{\star}, \bz^{\star}) \lesssim \frac{(2+\ep)\|\tau(\bar A) - \bbE \bar A\|^2}{(1-c)^2 n^2 (1-2\rho)^4 (\bar p - \bar q)^2},
$$
from which \eqref{eq: mistakes_of_spec_clustering} follows.
\end{proof}

In view of Lemma \ref{lem: structure lem}, what is left is to upper bound the deviation of $\tau(\barA)$ from $\bbE \bar A$. Consider the following decomposition:
\begin{equation}
      \label{eq: decomp_spec_clustering}
      \| \tau(\bar A) - \bbE\barA \|_2 \leq \|\tau(\bar A) - \bbE^{(1:L)} [\bar A] \|_2 + \| \bbE^{(1:L)}[\bar A] - \bbE[\bar A]\|_2, %+ \|\bbE [\bar A] - \bar P \|_2,
\end{equation}
where we let $\bbE$ be the marginal expectation, and $\bbE^{(1:L)}$ be the expectation \emph{conditional} on the realization of $\{\zzz{\ell}\}_1^L$.
The right-hand side of \eqref{eq: decomp_spec_clustering} is the superposition of two terms: the first term is the deviation of $\tau(\bar A)$ from the \emph{conditional mean} of $\bar A$, and the second term is the deviation induced by label sampling.
% In the rest of this subsection, we devote Section \ref{subsubsec:dev_from_cond_mean} to bounding the first term and Section \ref{subsubsec:dev_due_to_flips} to bounding the second term.

~\\
{\bf \noindent Bounding the deviation from the conditional mean.}
Conditional on the realization of $\zzz{\ell}$'s, the layer-wise adjacency matrices become symmetric Bernoulli random matrices with independent entries (on the upper-triangular part). Thus, we can invoke the technical tools we developed in Appendix \ref{append:concentration} to get the following result.
\begin{lemma}[Bounding the deviation from the conditional mean]
      \label{lemma:dev_from_cond_mean}
     Let Assumption \ref{assump:balanced_wts} hold with $c_0 > 0, c_1 \geq 1$ and fix two constants $r\geq 1, \gamma > e^{c_1}$. Define $I:= \{i\in[n]: \sum_{j\in[n]} \barA_{ij} > \gamma n \sum_{\ell\in[L]} \weight_\ell p_\ell\}$. We trim the entries of $\bar A$ in $\calE = (I\times[n])\cup ([n]\times I)$, so that the resulting matrix $\tau(\barA)$ is zero on $\calE$. Then with probability at least $1-3n^{-r} - c_2^{-n}$, we have
     $$
      \|\tau(\barA) - \bbE^{(1:L) }\bar A\| \leq C \cdot \sqrt{n \sum_{\ell\in[L]} \weight_\ell^2 p_\ell},
     $$
     where $c_2$ only depends on $\gamma$, and $C$ only depends on $c_0, c_1, r$. 
\end{lemma}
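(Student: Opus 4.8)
The plan is to condition on the realization of the individual community assignments and then invoke the multilayer Bernoulli concentration bound developed in Appendix \ref{append:concentration}. Let $\calF$ denote the $\sigma$-algebra generated by $\{\zzz{\ell}\}_1^L$. Conditionally on $\calF$, the layer-wise adjacency matrices $\{\AAA{\ell}\}_1^L$ are mutually independent, each $\AAA{\ell}$ is symmetric with independent upper-triangular entries $\AAA{\ell}_{ij}\sim\Bern(\ppp{\ell}_{ij})$ where $\ppp{\ell}_{ij} = p_\ell\indc{\zzz{\ell}_i=\zzz{\ell}_j} + q_\ell\indc{\zzz{\ell}_i\neq\zzz{\ell}_j}$, and all diagonal entries vanish. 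Since $q_\ell < p_\ell$, we have $\max_{i,j}\ppp{\ell}_{ij}\le p_\ell$ for each $\ell$, hence $\sum_{j\in[n]}\bbE^{(1:L)}[\barA_{ij}]\le n\sum_{\ell\in[L]}\weight_\ell p_\ell$ for every $i$. Therefore the trimming set $I=\{i:\sum_{j}\barA_{ij}>\gamma n\sum_\ell\weight_\ell p_\ell\}$ employed by Algorithm \ref{alg: specc} contains all nodes whose observed (conditional) degree exceeds $\gamma$ times its conditional mean, which is precisely the degree-regularization operation required by the concentration result in Appendix \ref{append:concentration}.

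Next I would check the remaining hypotheses of that result. Assumption \ref{assump:balanced_wts} supplies, with absolute constants $c_0>0$ and $c_1\ge1$, the two balancedness inequalities relating $\max_\ell\{\weight_\ell\}$ to $\sum_\ell\weight_\ell^2 p_\ell$ and to $\sum_\ell\weight_\ell q_\ell$; because each conditional mean $\ppp{\ell}_{ij}$ lies between $q_\ell$ and $p_\ell$, these inequalities persist, up to the same constants, when $p_\ell$ is replaced by the conditional means, so the weighted ensemble $\{\AAA{\ell}\}_1^L$ conditioned on $\calF$ meets the input requirements of the appendix theorem, with the threshold condition discharged by the choice $\gamma>e^{c_1}$ (this is where the balancedness constant $c_1$ enters the needed trimming level). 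Applying that theorem conditionally on $\calF$—i.e. the multilayer version of the bound \eqref{eq:main_concentration_reg}—gives, with conditional probability at least $1-3n^{-r}-c_2^{-n}$,
\[
  \|\tau(\barA)-\bbE^{(1:L)}\barA\|\ \le\ C\sqrt{n\sum_{\ell\in[L]}\weight_\ell^2\max_{i,j}\ppp{\ell}_{ij}}\ \le\ C\sqrt{n\sum_{\ell\in[L]}\weight_\ell^2 p_\ell},
\]
where $c_2$ depends only on $\gamma$ and $C$ only on $c_0,c_1,r$. Since neither the bound nor the exceptional probability depends on the value of $\{\zzz{\ell}\}_1^L$, taking expectations over $\calF$ delivers the unconditional statement of the lemma.

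The principal difficulty is organizational rather than conceptual: one must carefully reconcile the degree-trimming rule hard-wired into Algorithm \ref{alg: specc}, which is expressed through the nominal intra-cluster probabilities $\{p_\ell\}$, with the normalized trimming rule assumed by the abstract concentration theorem, which is expressed through the actual conditional row sums, and verify that the balancedness constants survive the passage from $\{p_\ell\}$ to the conditional means $\{\ppp{\ell}_{ij}\}$ without degradation. Once this alignment is established, the bound follows as a direct instance of the multilayer extension of the graph-decomposition argument of \cite{le2017concentration} carried out in Appendix \ref{append:concentration}.
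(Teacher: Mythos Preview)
Your proposal is correct and follows essentially the same route as the paper. The paper's proof is the single line ``This follows from Corollary~\ref{cor:concentration_trimmed_mat} with $d_\ell = np_\ell$,'' which implicitly performs exactly the conditioning on $\{\zzz{\ell}\}_1^L$ and the verification that Assumption~\ref{assump:balanced_wts} yields Assumption~\ref{assump:wts_for_concentration} that you spell out in detail; your write-up simply makes explicit what the paper leaves to the reader.
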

\begin{proof}
    This follows from Corollary \ref{cor:concentration_trimmed_mat} with $d_\ell = np_\ell$. 
\end{proof}

~\\
{\bf \noindent Bounding the deviation due to label sampling.} We have the following lemma.
\begin{lemma}[Bounding the deviation due to label sampling]
      \label{lemma:cluster_flips}
% Assume $L\lesssim n^{c'}$ for some $c'\geq 0$. 
For any $r\geq 1, c>0$, with probability at least $1-6n^{-r} - 2n^{-c}$, we have
\begin{equation*}
      \| \bbE^{(1:L)} \bar A - \bbE \barA\| \leq C \cdot  \bigg[ \max_{\ell\in[L]}\{\weight_\ell (p_\ell - q_\ell)\} \cdot \bigg( L\rho + n\sqrt{L\rho} + \sqrt{n\log n} + \log L\bigg) + \rho(1-\rho) \sum_{\ell} \weight_\ell(p_\ell-q_\ell)\bigg],
\end{equation*}            
where $C$ is an absolute constant only depending on $r, c, c'$.
\end{lemma}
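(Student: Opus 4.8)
The plan is to first write $\bbE^{(1:L)}[\bar A]-\bbE[\bar A]$ in closed form, then bound its spectral norm by splitting it into a rank-two ``first order'' part and a Bernoulli quadratic-form ``second order'' part. Put $\xi^{(\ell)}_i:=\indc{\zzz{\ell}_i\neq\bz^\star_i}$, so that by \eqref{eq:label_flip} the $\xi^{(\ell)}_i$ are i.i.d.\ $\Bern(\rho)$, and $\sigma^{(\ell)}_i:=\zzz{\ell}_i\bz^\star_i=1-2\xi^{(\ell)}_i$. A direct computation from \eqref{eq:layerwise_sbm} shows that $\bbE^{(1:L)}[\bar A]-\bbE[\bar A]$ vanishes on the diagonal, and that after conjugating by $D=\diag(\bz^\star)$ (which preserves the operator norm) it becomes the matrix $\bar M'$ with zero diagonal and off-diagonal entries $\bar M'_{ij}=\sum_{\ell}\omega_\ell\frac{p_\ell-q_\ell}{2}\big(\sigma^{(\ell)}_i\sigma^{(\ell)}_j-(1-2\rho)^2\big)$. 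Writing $\xi^{(\ell)}_i=\rho+\eta^{(\ell)}_i$ with $\eta^{(\ell)}_i$ a centred $\Bern(\rho)$ and expanding the product, $\bar M'$ equals, off the diagonal,
\begin{equation*}
  -(1-2\rho)\big(\ones s^\top + s\,\ones^\top\big) + 2\,\Xi W \Xi^\top ,
\end{equation*}
where $\Xi\in\bbR^{n\times L}$ has independent mean-zero entries $\Xi_{i\ell}=\eta^{(\ell)}_i$, $W=\diag\big(\omega_\ell(p_\ell-q_\ell)\big)_{\ell\in[L]}$, and $s:=\Xi W\ones_L$, i.e.\ $s_i=\sum_\ell\omega_\ell(p_\ell-q_\ell)\eta^{(\ell)}_i$. (Equivalently, the quadratic part may be phrased through $\|\Xi_+-\bbE\Xi_+\|$ where $\Xi_+$ is the $n\times L$ matrix with i.i.d.\ $\Bern(\rho)$ entries $\xi^{(\ell)}_i$, which is the Bernoulli-matrix object alluded to in the discussion after Theorem \ref{thm:specc}.)

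By the triangle inequality it suffices to bound the three pieces. For the quadratic form, $\|\Xi W\Xi^\top\|\le\|W\|\,\|\Xi\|^2=\max_\ell\{\omega_\ell(p_\ell-q_\ell)\}\cdot\|\Xi\|^2$, so the key input is a spectral-norm bound for the random rectangular matrix $\Xi$: since its entries are bounded by $1$ with variance $\rho(1-\rho)$ and $L\lesssim n^{c'}$, a rectangular version of the concentration tools of Appendix \ref{append:concentration} (or a standard bound for random rectangular matrices) gives $\|\Xi\|\lesssim\sqrt{n\rho}+\sqrt{L\rho}+\sqrt{\log n}$ with probability $\ge 1-n^{-r}$, hence $\|\Xi W\Xi^\top\|\lesssim\max_\ell\{\omega_\ell(p_\ell-q_\ell)\}\,(n\rho+L\rho+\log n)$, which lies inside the first bracket of the claimed bound since $n\rho\le n\sqrt{L\rho}$ and $\log n\le\sqrt{n\log n}$. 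For the rank-two part, $\|\ones s^\top+s\ones^\top\|\le 2\sqrt n\,\|s\|$, so the task reduces to bounding $\|s\|$ (and, for the diagonal corrections below, $\|s\|_\infty$). As $s=\Xi(W\ones_L)$ is $\Xi$ applied to the \emph{fixed} vector $W\ones_L$ of squared norm $\kappa^2:=\sum_\ell\omega_\ell^2(p_\ell-q_\ell)^2$, one has $\bbE\|s\|^2=n\rho(1-\rho)\kappa^2$; a Bernstein bound applied coordinatewise with a union bound over $i\in[n]$ controls $\|s\|_\infty$, and a (truncated) Bernstein bound for $\sum_i s_i^2$ then gives $\|s\|\lesssim\sqrt{n\rho}\,\kappa+(\text{lower order})$ with high probability. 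Using $\kappa\le\sqrt L\,\max_\ell\{\omega_\ell(p_\ell-q_\ell)\}$ yields $\sqrt n\,\|s\|\lesssim\max_\ell\{\omega_\ell(p_\ell-q_\ell)\}\,n\sqrt{L\rho}+\cdots$, again inside the claimed bound.

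It remains to handle the two diagonal corrections incurred when passing to off-diagonal parts. The first is $2\|s\|_\infty$, already controlled. For the second, the $i$-th diagonal entry of $\Xi W\Xi^\top$ is $\sum_\ell\omega_\ell(p_\ell-q_\ell)(\eta^{(\ell)}_i)^2$, and the pointwise identity $(\eta^{(\ell)}_i)^2=(1-2\rho)\xi^{(\ell)}_i+\rho^2$ rewrites it as $(1-2\rho)\big(s_i+\rho\sum_\ell\omega_\ell(p_\ell-q_\ell)\big)+\rho^2\sum_\ell\omega_\ell(p_\ell-q_\ell)$, which is $\lesssim\|s\|_\infty+\rho(\bar p-\bar q)$; the term $\rho(\bar p-\bar q)$ is absorbed by the last summand $\rho(1-\rho)(\bar p-\bar q)$ of the claimed bound. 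Collecting these estimates and intersecting the finitely many high-probability events --- for $\|\Xi\|$, for $\|s\|$ and $\|s\|_\infty$, and for the auxiliary Binomial tail bounds --- gives the conclusion with the stated failure probability.

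The main obstacle is obtaining the spectral-norm estimates for $\Xi$ and for $\|s\|$ with the correct dependence on $n,L,\rho$ and, crucially, on $\max_\ell\{\omega_\ell(p_\ell-q_\ell)\}$ rather than on $\sum_\ell\omega_\ell(p_\ell-q_\ell)$: a naive matrix-Bernstein argument introduces spurious $\log$ factors and tends to replace the maximum by the sum, so one must instead invoke the log-free, graph-decomposition-based concentration inequalities of Appendix \ref{append:concentration} (in rectangular form for $\Xi$, and in a fixed-vector form for $s$), exactly as in the proof of Lemma \ref{lemma:dev_from_cond_mean}.
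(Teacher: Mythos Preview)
Your approach is correct and essentially matches the paper's: both compute the closed form $\bar M=\frac{1}{2}\sum_\ell\omega_\ell(p_\ell-q_\ell)\big(\zzz{\ell}(\zzz{\ell})^\top-(1-2\rho)^2\bz^\star(\bz^\star)^\top-4\rho(1-\rho)I_n\big)$, split it into a quadratic ``$\Xi\Xi^\top$'' piece and a rank-two linear piece, and control both via concentration of a rectangular Bernoulli matrix. Your conjugation by $D=\diag(\bz^\star)$ and retention of the weights inside $W$ and $s$ are cosmetic differences; the paper instead pulls out $\max_\ell\{\omega_\ell(p_\ell-q_\ell)\}$ up front and works with the unweighted sums $\sum_\ell(\zzz{\ell}-\bbE\zzz{\ell})(\zzz{\ell}-\bbE\zzz{\ell})^\top$ and $\sum_\ell(\zzz{\ell}-\bbE\zzz{\ell})$.

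One implementation simplification worth noting: rather than developing a separate rectangular version of Appendix~\ref{append:concentration} for $\|\Xi\|$ and running coordinatewise Bernstein for $\|s\|$, the paper handles both by \emph{Hermitian dilation}. For $\|\Xi\|$ it embeds $B=\Xi$ into the symmetric $(n{+}L)\times(n{+}L)$ matrix $\tilde B=\bigl(\begin{smallmatrix}0&B\\B^\top&0\end{smallmatrix}\bigr)$ and applies Corollary~\ref{cor:concentration_w/o_reg} directly; for the linear term it does the same with the $(n{+}1)\times(n{+}1)$ dilation of the column vector $\sum_\ell(B^{(\ell)}-\bbE B^{(\ell)})$. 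This yields $\|\Xi\|\lesssim\sqrt{(n{+}L)\rho}+\sqrt{\log(n{+}L)}$ and $\|\sum_\ell(\zzz{\ell}-\bbE\zzz{\ell})\|\lesssim\sqrt{nL\rho}+\sqrt{\log n}$ in one stroke, with no new rectangular machinery and no coordinatewise union bounds.
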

\begin{proof}
Let
$
      \MMM{\ell} := \bbE^{1:L}[\AAA{\ell}] - \bbE [\AAA{\ell}].
$
We are interested in bounding the spectral norm of 
$
      \bar M := \sum_{\ell\in[L]} \omega_\ell \MMM{\ell}.
$
Note that the diagonal element of $\MMM{\ell}$ is zero. Meanwhile, for $i\neq j$, we have
\begin{align*}
      \MMM{\ell}_{ij}& = (p_\ell - \tilde p_\ell) \indc{\zzz{\ell}_i = \zzz{\ell}_j, \bz^\star_i = \bz^\star_j} +  (q_\ell - \tilde q_\ell)\indc{\zzz{\ell}_i\neq \zzz{\ell}_j, \bz^\star_i \neq \bz^\star_j} \\
      & \qquad   + (p_\ell - \tilde q_\ell) \indc{\zzz{\ell}_i= \zzz{\ell}_j, \bz^\star_i \neq \bz^\star_j} + (q_\ell - \tilde p_\ell) \indc{\zzz{\ell}_i \neq \zzz{\ell}_j, \bz^\star_i = \bz^\star_j}\\
      & = (p_\ell-q_\ell) (2\rho - 2\rho^2) \indc{\zzz{\ell}_i = \zzz{\ell}_j, \bz^\star_i = \bz^\star_j} + (p_\ell - q_\ell)(-2\rho + 2\rho^2) \indc{\zzz{\ell}_i\neq \zzz{\ell}_j, \bz^\star_i \neq \bz^\star_j}\\
      & \qquad  + (p_\ell-q_\ell) (1-2\rho+2\rho^2)\indc{\zzz{\ell}_i= \zzz{\ell}_j, \bz^\star_i \neq \bz^\star_j} + (p_\ell - q_\ell)(-1+2\rho - 2\rho^2) \indc{\zzz{\ell}_i \neq \zzz{\ell}_j, \bz^\star_i = \bz^\star_j} \\
      & = (p_\ell-q_\ell) \bigg[\indc{\bz^\star_i = \bz^\star_j} \cdot \bigg(2\rho(1-\rho) - \indc{\zzz{\ell}_i\neq \zzz{\ell}_j}\bigg)  - \indc{\bz^\star_i\neq \bz^\star_j} \cdot \bigg( 2\rho(1-\rho) - \indc{\zzz{\ell}_i = \zzz{\ell}_j} \bigg) \bigg].
\end{align*}
Thus, one readily checks that
$$
      \MMM{\ell} = \frac{1}{2} (p_\ell - q_\ell) \bigg(\zzz{\ell}{\zzz{\ell}}^\top - (1-2\rho)^2\bz^\star {\bz^\star}^\top - 4\rho(1-\rho) I_n\bigg),
$$
where the $4\rho(1-\rho)I_n$ offsets the diagonal entries so that $\diag(\MMM{\ell}) = 0$. We then have 
\begin{align*}
      \|\bar M\| & = \bigg\|\frac{1}{2} \sum_{\ell\in[L]} \omega_\ell(p_\ell - q_\ell) \bigg(\zzz{\ell}{\zzz{\ell}}^\top - (1-2\rho)^2\bz^\star {\bz^\star}^\top - 4\rho(1-\rho)I_n\bigg)\bigg\| \\
      & \leq \frac{1}{2}\max_{\ell\in[L]} \{\weight_\ell (p_\ell -q_\ell)\} \cdot \bigg\| \sum_{\ell\in[L]}  \zzz{\ell}{\zzz{\ell}}^\top - (1-2\rho)^2\bz^\star {\bz^\star}^\top\bigg\| +  2\rho(1-\rho) \sum_{\ell\in[L]} \weight_\ell(p_\ell -q_\ell).
\end{align*}      
With some algebra, one can show that
\begin{align*}
      & \bigg\| \sum_{\ell\in[L]}  \zzz{\ell}{\zzz{\ell}}^\top - (1-2\rho)^2\bz^\star {\bz^\star}^\top\bigg\| \\
      & = \bigg\|\sum_{\ell\in[L]}  (\zzz{\ell}-\bbE \zzz{\ell})(\zzz{\ell}-\bbE \zzz{\ell})^\top + (\zzz{\ell}-\bbE \zzz{\ell})\bbE[\zzz{\ell}]^\top + \bbE[\zzz{\ell}](\zzz{\ell}-\bbE \zzz{\ell})^\top \bigg\| \\
      & \leq \underbrace{\bigg\|\sum_{\ell\in[L]}  (\zzz{\ell}-\bbE \zzz{\ell})(\zzz{\ell}-\bbE \zzz{\ell})^\top\bigg\|}_{\RN{1}} + \underbrace{2\sqrt{n} \bigg\|\sum_{\ell\in[L]} \zzz{\ell}- \bbE\zzz{\ell}\bigg\|}_{\RN{2}},
\end{align*}      
where in the last inequality we have used $\|\zzz{\ell}\|= \sqrt{n}$. We now bound the two terms in the right-hand side above separately.

To bound Term \RN{1}, let us introduce
\[
      Z := 
      \begin{pmatrix}
            (\zzz{1}-\bbE \zzz{1})^\top \\
            (\zzz{2}-\bbE \zzz{2})^\top \\
            \vdots \\
             (\zzz{L}-\bbE \zzz{L})^\top
      \end{pmatrix} \in \bbR^{L\times n}.
\]
Then, we can write Term \RN{1} as $\|Z^\top Z\|$. 
Let $\BBB{\ell}_i = (1-\zzz{\ell}_i\bz^\star_i)/2$, which is distributed as $\BBB{\ell}_i \sim \Bern(\rho)$. Under current notations, we have
\[
      \zzz{\ell}_i - \bbE \zzz{\ell}_i = -2 \bz^\star_i (\BBB{\ell}_i - \bbE \BBB{\ell}_i).
\]
Let $B$ denote the $L\times n$ matrix with $B_{\ell,i}=B_i^{(\ell)}-\bbE B_i^{(\ell)}$. Then $B$ is a matrix with i.i.d. centered $\Bern(\rho)$ entries. Because $Z_{\ell,i}=(\zzz{\ell}_i-\bbE \zzz{\ell}_i)=-2B_{\ell, i}\bz^\star_i $, we have
$
-Z/2= B \cdot \mathrm{diag}(\bz^\star).
$
Hence, we have
$$
      \| Z\| \leq 2 \|B-\bbE B \|. % B_{\ell, i} = \BBB{\ell}_i - \bbE \BBB{\ell}_i,
$$
% where we have used $\|\diag(\bz^\star)\| = 1$ and the fact that the spectral norm of matrices with positive entries is increasing in its entries. 
% Now, $B \in \{0, 1\}^{L\times n}$ is a wigner matrix with i.i.d. $\Bern(\rho)$ entries. 
We define the $(n+L)\times(n+L)$ matrix $\tB$ as
$$
      \tilde B = 
      \begin{pmatrix}
            \mathbf{0}_{L\times L} & B \\
            B^\top & \mathbf{0}_{n\times n}
      \end{pmatrix}.
$$
It is clear that $\|B - \bbE B \| = \| \tilde B - \bbE\tilde B \|$. Since $\tB$ is a symmetric square matrix with i.i.d. Bernoulli entries in its upper-triangular part, by Corollary \ref{cor:concentration_w/o_reg}, for any $r\geq 1$ and $c>0$, we have
$$
      \|\tilde B - \bbE \tilde B \| \leq C \bigg(\sqrt{(n+L)\rho} + \sqrt{\log(n+L)}\bigg)
$$
with probability at least $1-3n^{-r} - n^{-c}$, where $C$ is a constant only depending on $r$ and $c$.
Thus, on the same high probability event, we have
$$
\RN{1} \leq \|Z\|^2 \leq C' \bigg((n+L)\rho + \log (n+L)\bigg)
$$
for another $C'$ which only depends on $r$ and $c$. 

For Term \RN{2}, we use a similar trick:
$$
      \|\sum_{\ell} \zzz{\ell}-\bbE \zzz{\ell} \|_2
      =
      \bigg\|
      \begin{pmatrix}
            \mathbf{0}_{n\times n} & \sum_{\ell} (\zzz{\ell}-\bbE \zzz{\ell})  \\
            \sum_{\ell}  (\zzz{\ell}-\bbE \zzz{\ell})^\top & \mathbf{0}_{1\times 1}
      \end{pmatrix} \bigg\|,
$$
and the right-hand side above is bounded above by
$$
      2 \cdot \|\hat B \| := 2 \cdot \bigg\|
      \begin{pmatrix}
            \mathbf{0}_{n\times n} & \sum_{\ell} (\BBB{\ell}-\bbE \BBB{\ell})  \\
            \sum_{\ell} (\BBB{\ell}-\bbE \BBB{\ell})^\top & \mathbf{0}_{1\times 1}
      \end{pmatrix} \bigg\|,
$$
where again $\BBB{\ell}_i$'s are i.i.d. $\Bern(\rho)$ random variables. By Corollary \ref{cor:concentration_w/o_reg}, we have
$$
      \|\hat B \| \leq C'' \bigg(\sqrt{n L\rho } +  \sqrt{\log n}\bigg)
$$
with probability at least $1 - 3n^{-r} - n^{-c}$, where $C''$ only depends on $r$ and $c$. This means that on the same high probability event, we have 
$$
      \RN{2} \leq 4 C''\sqrt{n} \cdot (\sqrt{nL\rho} + \sqrt{\log n}).
$$

Combining the bound on $\RN{1}$ and $\RN{2}$, we conclude that with probability at least $1 - 6n^{-r} - 2n^{-c}$, 
\begin{align*}
      \|\bar M\| & \lesssim \max_{\ell\in[L]} \{\weight_\ell(p_\ell-q_\ell)\} \cdot (\RN{1} + \RN{2}) + \rho(1-\rho) \sum_{\ell\in[L]} \weight_\ell(p_\ell-q_\ell) \\
      & \lesssim \max_{\ell\in[L]} \{\weight_\ell(p_\ell-q_\ell)\} \cdot \bigg((n+L)\rho + n\sqrt{L\rho} + \log(n+L) + \sqrt{n \log n}\bigg) + \rho(1-\rho) \sum_{\ell\in[L]} \weight_\ell(p_\ell-q_\ell).
\end{align*}      
The desired result follows by noting that $n\rho \leq n\sqrt{L\rho}, \log (n+L)\leq \log n + \log L$, and $\log n \ll \sqrt{n\log n}$. 
% and $\log(n+L) \ll \sqrt{n\log n}$ when $L \lesssim n^{c'}$ for some $c'\geq 0$. 
\end{proof}

~\\
{\bf \noindent Finishing the proof of Theorem \ref{thm:specc}.}
Theorem \ref{thm:specc} is a direct consequence of Lemma \ref{lem: structure lem}, \ref{lemma:dev_from_cond_mean} and \ref{lemma:cluster_flips}.

\subsection{Proof of Theorem \ref{thm:refine_glob}} \label{prf:thm:refine_glob}
The high-level idea of this proof is that we can bound node-wise errors separately due to the additive form of the loss function. While such an idea has appeared in \cite{gao2017achieving,gao2018community}, the implementations of this idea is considerably more complicated in our case due to the combinatorial structure induced by the presence of inhomogeneity across layers. 

Note that in Stage \RN{2} of Algorithm \ref{alg: provable_refinement}, we modify the $i$-th coordinate of $\tzzz{\star, -i}$ and $\tzzz{\ell, -i}$. To avoid confusions, we let $\tzzz{\star, -i} = \tzzz{\ell, -i}$ be the initial estimators computed in Stage \RN{1} (but before Stage \RN{2}), whose $i$-th coordinates are zero by construction, and we let $\barzzz{\star, -i}, \barzzz{\ell, -i}$ be the estimators computed in Stage \RN{2}, whose $i$-th coordinates satisfy 
\begin{equation}
  \label{eq:provable_refinement_obj_func}
  (\barzzz{\star, -i}_i, \barzzz{1, -i}_i, \hdots, \barzzz{L, -i}_i) = \argmax_{\substack{s_\star \in\{\pm 1\} \\ s_\ell \in \{\pm 1\}~\forall \ell\in[L] }} \sum_{\ell\in[L]} \fff{\ell}_i(s_\star, s_\ell, \tzzz{\star, -i}),
\end{equation}
and whose rest of the coordinates agree with $\tzzz{\star, -i}_{-i}$.

We start by presenting two preliminary results.

\begin{lemma}
  \label{lemma:align}
  Fix any $\bz, \bz'\in\{\pm 1\}^n$ and assume there exists a constant $C\geq 1$ such that
  $$
    \min_{s\in\{\pm 1\}} \# \{i\in[n]: \bz_i = s\} \geq \frac{n}{2C}, ~~~ \min_{s\in\{\pm 1\}} \# \{i\in[n]: \bz_i' = s\} \geq \frac{n}{2C}, ~~~ \min_{\pi\in\{\pm 1\}} d_\ham(\bz,\pi \bz') \leq \frac{1}{2C}.
  $$
  Define $\vartheta: \{\pm 1\} \to \{\pm 1\}$ by
  \begin{equation}
    \label{eq:align_map}
    \vartheta(r) = \argmax_{s\in\{\pm 1\}} \#\bigg\{\{j\in[n]: \bz_j = s\} \bigcap \{j\in[n]: \bz_j' = r\}\bigg\}.
  \end{equation}    
  Then $\vartheta$ is a bijection and hence can be identified by $\vartheta\in\{\pm 1\}$ with $+ 1$ being the identity map. Moreover, we have
  $$
    d_\ham(\bz, \vartheta \bz') = \min_{\pi\in\{\pm 1\}} d_\ham(\bz, \pi \bz').
  $$
\end{lemma}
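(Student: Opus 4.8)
The plan is to show three things in order: that $\vartheta$ is well-defined as a single-valued map (the argmax is unique), that it is a bijection on $\{\pm 1\}$, and finally that $\vartheta$ realizes the minimizing alignment. Throughout, write $A_+ = \{j: \bz_j = +1\}$, $A_- = \{j: \bz_j = -1\}$, $B_r = \{j: \bz'_j = r\}$ for $r\in\{\pm 1\}$, and let $\pi^\star \in \{\pm 1\}$ be a minimizer of $d_\ham(\bz, \pi\bz')$, so by hypothesis $d_\ham(\bz, \pi^\star \bz') \le n/(2C)$. Since $n \le 2C \cdot d_\ham(\bz,\pi^\star\bz')$ fails, actually $\# \{j: \bz_j \ne \pi^\star \bz'_j\} \le n/(2C)$; I would phrase the hypothesis as stating this cardinality is at most $n/(2C)$ (reading $d_\ham$ normalized consistently with the rest of the paper, or else simply carry the factor of $n$). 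The key quantitative fact is: for each $r$, exactly one of $\#(A_+ \cap B_r)$, $\#(A_- \cap B_r)$ exceeds $\#B_r/2$, and in fact it exceeds $\#B_r - n/(2C) \ge n/(2C) - n/(2C) \cdots$; more precisely, WLOG $\pi^\star = +1$, then $\#\{j \in B_r: \bz_j \ne r\} \le d_\ham(\bz,\bz') \le n/(2C)$, so $\#(A_r \cap B_r) \ge \#B_r - n/(2C) \ge n/(2C) - n/(2C)$. To get a strictly positive gap I use $\#B_r \ge n/(2C)$ together with — this needs $n/(2C) > n/(2C)$, which is false, so I must instead compare the two candidates directly: $\#(A_r \cap B_r) - \#(A_{-r}\cap B_r) = \#B_r - 2\#\{j\in B_r: \bz_j \ne r\} \ge \#B_r - 2n/(2C) $. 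This is $> 0$ provided $\#B_r > n/C$; since only $\min_s \#B_s \ge n/(2C)$ is assumed this is not immediate for the smaller block.

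To close this gap cleanly I would argue as follows. Suppose for contradiction $d_\ham(\bz,\bz') = \min_\pi d_\ham(\bz,\pi\bz') =: \delta n$ with $\delta \le 1/(2C)$, and consider the smaller of $B_+, B_-$, say $B_-$ with $\#B_- \ge n/(2C)$. Writing $a = \#(A_-\cap B_-)$ and $b = \#(A_+ \cap B_-)$, we have $a + b = \#B_-$ and $b \le \#\{j: \bz_j \ne \bz'_j\} = \delta n \le n/(2C) \le \#B_-$; but I also need $b < a$, i.e. $b < \#B_-/2$. Since $b \le \delta n$ and $\#B_- \ge n/(2C) \ge 2\delta n$ iff $\delta \le 1/(4C)$ — so under the stated hypothesis the inequality is borderline. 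The honest fix: strengthen the bound on $b$ using that $\min_\pi$ is attained at the identity, which forces $\delta n = b + \#\{j \in B_+ : \bz_j \ne \bz'_j\}$ and the flipped count $\#\{j: \bz_j \ne \bz'_j \text{ after flip}\} = \#B_- - b + \#\{j \in B_+: \bz_j \ne \bz'_j\} \cdots$ wait, flipping $\bz'$ swaps agreement on all of $B_+ \cup B_-$, giving flipped disagreement $= n - \delta n \ge \delta n$, hence $\delta \le 1/2$, which is weaker. So the genuinely available comparison within $B_-$ is $a - b = \#B_- - 2b \ge n/(2C) - 2\delta n$. I expect the paper's hypothesis is meant to be read with $d_\ham$ \emph{normalized} (as the loss $\calL$ is), so $d_\ham(\bz,\pi^\star\bz') \le 1/(2C)$ means the \emph{proportion} is $\le 1/(2C)$, i.e. $\delta \le 1/(2C)$ with $b \le \delta n$, and then $a - b \ge n/(2C) - 2\delta n$; this is $\ge n/(2C) - n/C = -n/(2C)$ — still not obviously positive. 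Given these constant-chasing subtleties, \textbf{the main obstacle is pinning down the exact normalization and constant so that the argmax in \eqref{eq:align_map} is strictly separated on the smaller block}; I would resolve it by the cleanest route: observe $d_\ham(\bz, \bz') < d_\ham(\bz, -\bz')$ strictly (since $d_\ham(\bz,\bz') \le n/(2C) < n/2 \le d_\ham(\bz,-\bz')$, as the two sum to $n$), hence $\pi^\star = +1$ is the \emph{unique} minimizer, and then for each $r$ the count $\#(A_r \cap B_r) \ge \#(A_{-r}\cap B_r)$ must be \emph{strict} because a tie in either $r$ would let us flip that block's contribution without cost — more carefully, $\#(A_+\cap B_+) + \#(A_-\cap B_-) = n - d_\ham(\bz,\bz') > n/2 > \#(A_-\cap B_+) + \#(A_+\cap B_-) = d_\ham(\bz,\bz')$, so \emph{at least one} of the two "matched" intersections strictly dominates its partner; to get both, note that if say $\#(A_+\cap B_+) \le \#(A_-\cap B_+)$ then moving to $B_-$, since the total matched count still exceeds $n/2$, forces $\#(A_-\cap B_-) > \#(A_+\cap B_-)$ with a surplus large enough to also imply $\#(A_+\cap B_+) > \#(A_-\cap B_+)$, a contradiction. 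I will carry out this parity/counting argument in full to conclude $\vartheta(+1) = +1$, $\vartheta(-1) = -1$ when $\pi^\star = +1$ (and the mirror statement when $\pi^\star = -1$), so $\vartheta$ is the bijection sending $\bz'$ to its best alignment.

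Granting that, the final identity is immediate: $\vartheta$ as just computed equals $\pi^\star$ (identifying maps $\{\pm1\}\to\{\pm1\}$ with elements of $\{\pm1\}$), and $d_\ham(\bz,\vartheta\bz') = d_\ham(\bz,\pi^\star\bz') = \min_{\pi\in\{\pm1\}} d_\ham(\bz,\pi\bz')$ by definition of $\pi^\star$. Bijectivity of $\vartheta$ on the two-element set $\{\pm1\}$ follows since $\vartheta$ is either the identity or the swap, both bijections; the identification of $\vartheta$ with an element of $\{\pm1\}$ (with $+1$ the identity) is then just notation. I would write the lemma's proof in roughly this order: (i) strict uniqueness of $\pi^\star$ from $d_\ham(\bz,\bz') + d_\ham(\bz,-\bz') = n$ and the hypothesis; (ii) WLOG $\pi^\star = +1$; (iii) the counting argument on $B_+, B_-$ showing both matched intersections strictly dominate, hence $\vartheta = \mathrm{id}$; (iv) conclude $d_\ham(\bz,\vartheta\bz') = \min_\pi d_\ham(\bz,\pi\bz')$. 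The only real work is step (iii), and the balancedness assumptions $\#B_s \ge n/(2C)$ and $\#A_s \ge n/(2C)$ are exactly what is needed there to rule out degenerate ties.
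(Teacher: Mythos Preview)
The paper does not give its own proof of this lemma: it simply cites Lemma~4 of Gao et al.\ (2017). So there is no ``paper's route'' to compare against, and your plan (i)--(iv) is exactly the standard argument. Steps (i), (ii), (iv) are fine as you describe them.

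Step (iii) as written has a genuine gap. Your ``surplus'' contradiction does not close: from $\#(A_+\cap B_+)\le \#(A_-\cap B_+)$ you correctly deduce $\#(A_-\cap B_-)>\#(A_+\cap B_-)$, but there is no mechanism that feeds this back to force $\#(A_+\cap B_+)>\#(A_-\cap B_+)$. Indeed, the boundary cases you were circling are real: with $d_\ham(\bz,\bz')$ equal to $n/(2C)$ (reading the hypothesis as a bound on the unnormalized Hamming distance of $n/(2C)$, as the context demands) and one block of $\bz'$ of size exactly $n/(2C)$, the argmax in \eqref{eq:align_map} can tie.

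The clean fix is the one piece of the hypothesis you never actually used: the balancedness of $\bz$, not just of $\bz'$. WLOG $\pi^\star=+1$. Suppose $\vartheta(-1)\ne -1$, i.e.\ $\#(A_+\cap B_-)\ge \#(A_-\cap B_-)$. Then
\[
|A_-| \;=\; \#(A_-\cap B_+)+\#(A_-\cap B_-)\;\le\; \#(A_-\cap B_+)+\#(A_+\cap B_-)\;=\; d_\ham(\bz,\bz')\;\le\; \tfrac{n}{2C},
\]
which (with the intended strict inequality, or in the paper's actual applications where $d_\ham=o(n)$ and classes have size $(1\pm o(1))n/2$) contradicts $|A_-|\ge n/(2C)$. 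The symmetric argument handles $\vartheta(+1)$. This one line replaces your entire step (iii) and immediately gives $\vartheta=\mathrm{id}=\pi^\star$, from which (iv) follows.
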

\begin{proof}
This is Lemma 4 in \cite{gao2017achieving}.     
\end{proof}

\begin{proposition}
  \label{prop:refine_glob_nodewise}
  Assume $\rho = o(1)$, $q _\ell < p_\ell \leq (Cq_\ell) \land (1-c), \forall \ell\in[L]$, $\beta= 1+o(1)$, and $\log L\ll n^{c'}$ for some constants $C > 1$ and $c, c' \in (0, 1)$. In addition, assume there exists a sequence $\delta_n=o(1)$ and constants $\ep_{\init}>0, C'>0$ such that $\forall i\in[n], \exists \pi_i\in\{\pm 1\}$ which makes the following holds:
  \begin{equation}
    \label{eq:ind_consistent_init}
    \bbP\bigg(d_\ham(\pi_i \tzzz{\star, -i}_{-i} ,\zzz{\ell}_{-i})\leq n\delta_n ~ \forall \ell\in[n]\bigg) \geq 1 - C' n^{-(1+\ep_{\init})},
  \end{equation}
  Then there exists another sequence $\delta_n'=o(1)$ and an absolute constant $C''>0$ such that for any $i\in[n]$, we have 
    \begin{align}
    \label{eq:upper_bound_z_star}
    \bbP(\pi_i\barzzz{\star, -i}_i \neq  \bz^\star_i) 
    & \leq  C''n^{-(1+\ep_{\init})}+ \sum_{S\subseteq[L]}e^{-(1-\delta_n') \globinfo_S}
    % \sum_{\substack{S\subseteq[L] \\ |S^c| \textnormal{ even}}} \exp\bigg\{-(1-{\delta_n'}) \bigg(|S^c|J_\rho + \psi_S^\star(0)\bigg)\bigg\} \nonumber\\
    % & \qquad + \sum_{\substack{S\subseteq[L]\\ |S^c|\textnormal{ odd}}} \exp\bigg\{-(1-{\delta_n'})\bigg((|S^c| + 1)J_\rho + \psi_S^\star(-2J_\rho)\bigg)\bigg\}.
    % & \qquad \lor \exp\bigg\{-(1+\underline{\delta_n}) \max_{\substack{S\subseteq[L]\\ S\textnormal{ odd}}}\bigg((|S^c| + 1)J_\rho + \psi_S^\star(-2J_\rho)\bigg)\bigg\}.
  \end{align}
\end{proposition}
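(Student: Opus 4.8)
The plan is to prove the node-wise bound \eqref{eq:upper_bound_z_star} directly, then obtain Theorem \ref{thm:refine_glob} by a union bound over $i\in[n]$ together with the alignment step (Lemma \ref{lemma:align}). Fix $i$; by relabelling assume $\bz^\star_i=+1$ and $\pi_i=+1$. The crucial structural fact is the leave-one-out construction: $\tzzz{\star,-i}_{-i}$ is a function of $\{\AAA{\ell}_{jk}:j,k\neq i,\ell\in[L]\}$, so conditionally on $\{\zzz{\ell}\}_1^L$ and on $\tzzz{\star,-i}_{-i}$ the edge variables $\{\AAA{\ell}_{ij}:j\neq i,\ell\in[L]\}$ are fresh independent Bernoulli's. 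Introduce the good event $\mathcal G_i$ on which (i) $d_\ham(\tzzz{\star,-i}_{-i},\zzz{\ell}_{-i})\le n\delta_n$ for all $\ell$ (from \eqref{eq:ind_consistent_init}) and (ii) all communities of $\bz^\star$ and of every $\zzz{\ell}$ are $(1+o(1))n/2$-balanced (from $\beta=1+o(1)$ and a Chernoff bound on the number of flips). Then $\bbP(\mathcal G_i^{\mathrm c})\lesssim n^{-(1+\ep_{\init})}$, which contributes the first term of \eqref{eq:upper_bound_z_star}; on $\mathcal G_i$ we must show $\bbP(\barzzz{\star,-i}_i\neq +1\mid\mathcal G_i)\lesssim\sum_{S}e^{-(1-\delta_n')\globinfo_S}$.

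\textbf{Simplifying the MAP decision.} Write $h_\ell(s):=\sum_{j\neq i:\tzzz{\star,-i}_j=s}\bigl[\log\tfrac{p_\ell(1-q_\ell)}{q_\ell(1-p_\ell)}\AAA{\ell}_{ij}+\log\tfrac{1-p_\ell}{1-q_\ell}\bigr]$, so that $\fff{\ell}_i(s_\star,s_\ell,\tzzz{\star,-i})=\log\tfrac{1-\rho}{\rho}\indc{s_\ell=s_\star}+h_\ell(s_\ell)$. Setting $T_\ell:=h_\ell(+1)-h_\ell(-1)$ and $J_n:=\log\tfrac{1-\rho}{\rho}$, one computes $\max_{s_\ell}\fff{\ell}_i(+1,s_\ell,\tzzz{\star,-i})-h_\ell(-1)=\max\{T_\ell+J_n,0\}$ and $\max_{s_\ell}\fff{\ell}_i(-1,s_\ell,\tzzz{\star,-i})-h_\ell(-1)=\max\{T_\ell,J_n\}$, whence by \eqref{eq:provable_refinement_obj_func} the event $\{\barzzz{\star,-i}_i=-1\}$ is exactly $\{\sum_{\ell\in[L]}\mathrm{clip}_{J_n}(T_\ell)\le 0\}$, where $\mathrm{clip}_J(x)=\mathrm{sign}(x)\min\{|x|,J\}$. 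Note $J_n=(1+o(1))2J_\rho=(1+o(1))\log(1/\rho)$ when $\rho=o(1)$.

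\textbf{Passing to the ideal statistic and Chernoff bound.} Let $T_\ell^{\mathrm{ideal}}$ be $T_\ell$ with $\tzzz{\star,-i}_j$ replaced by $\zzz{\ell}_j$. Since $\mathrm{clip}_{J_n}$ is $1$-Lipschitz, $\bigl|\sum_\ell\mathrm{clip}_{J_n}(T_\ell)-\sum_\ell\mathrm{clip}_{J_n}(T_\ell^{\mathrm{ideal}})\bigr|\le\sum_\ell|T_\ell-T_\ell^{\mathrm{ideal}}|$, and conditionally on $\{\zzz{\ell}\}$ and the initialization the right side is a sum of (at most $n\delta_n$ per layer) fresh independent edge contributions; a Chernoff/union bound shows it is at most $\epsilon_n\cdot\min_S\globinfo_S$ with probability $1-O(n^{-2})$, and this slack is absorbed into a factor $(1-\delta_n')$. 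Now condition further on $\{\zzz{\ell}_i\}_1^L$: for each $\ell$, $\zzz{\ell}_i=+1$ with probability $1-\rho$ (so $T_\ell^{\mathrm{ideal}}$ is a log-likelihood-ratio sum over $m^+_\ell=(1+o(1))n/2$ nodes with intra-probability $p_\ell$) and $\zzz{\ell}_i=-1$ with probability $\rho$ (roles of $p_\ell,q_\ell$ swapped), so after clipping a \emph{saturated} layer contributes $(1+o(1))J_n\cdot Z^{(\ell)}$ with $Z^{(\ell)}\sim 2\Bern(1-\rho)-1$ as in \eqref{eq:def_of_rvs_in_testing}. Splitting $\{\sum_\ell\mathrm{clip}_{J_n}(T_\ell^{\mathrm{ideal}})\le 0\}$ according to the subset $S\subseteq[L]$ of un-saturated layers, and bounding each $S$-term by the two-sided analogue of the probability \eqref{eq:error_of_LR_test_z_star}, a Cram\'er--Chernoff estimate — tilting at $t=1/2$ when $|S^{\mathrm c}|$ is even and at the optimal $t$ when $|S^{\mathrm c}|$ is odd, exactly as in the proof of Lemma \ref{lemma:opt_test_error_z_star} — gives that the $S$-term is $\le e^{-(1-\delta_n')\globinfo_S}$ with $\globinfo_S$ as in \eqref{eq:globinfo}. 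Summing over all $S$ yields \eqref{eq:upper_bound_z_star}.

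\textbf{Main obstacle.} The delicate part is the uniform control in the third paragraph: we must keep the exponent sharp while (i) the clipping level $J_n=(1+o(1))2J_\rho$ may itself diverge, (ii) there are $2^L$ subsets $S$ in the decomposition, and (iii) the ``bad set'' $\Delta_\ell=\{j\neq i:\tzzz{\star,-i}_j\neq\zzz{\ell}_j\}$ is random and correlated with the initialization. The resolution is to condition on $\{\zzz{\ell}\}$ and the init so that each $\Delta_\ell$ is deterministic and of size $\le n\delta_n$ and the edges $\AAA{\ell}_{ij}$ are fresh, bound the perturbation $\sum_\ell\bigl(\log\tfrac{p_\ell}{q_\ell}\bigr)\#\{j\in\Delta_\ell:\AAA{\ell}_{ij}=1\}+\log\tfrac{1-p_\ell}{1-q_\ell}|\Delta_\ell|$ for an \emph{arbitrary} such $\Delta_\ell$, and check that it is $\ll\globinfo_S$ for the relevant $S$; this needs $\delta_n\to 0$ slowly enough relative to the divergence of $\min_S\globinfo_S$, which is available because one may choose $\delta_n$ with $\log(\delta_n^{-1})\ll\min_S\globinfo_S$ under the theorem's hypotheses. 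A secondary point is to confirm that perturbations cannot flip the sign of the discrete quantity contributed by saturated layers, which holds because those layers contribute $\pm J_n$ with a gap of order $J_n$ to the threshold.
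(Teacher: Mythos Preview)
Your reduction of the MAP error event to $\bigl\{\sum_{\ell}\mathrm{clip}_{J_n}(T_\ell)\le 0\bigr\}$ is correct and elegant. The gap is in the next step: you decompose according to the set $S$ of \emph{un-saturated} layers and then assert that a saturated layer contributes $(1+o(1))J_n\,Z^{(\ell)}$. But a saturated layer contributes $J_n\,\mathrm{sign}(T_\ell^{\mathrm{ideal}})$, and $\mathrm{sign}(T_\ell^{\mathrm{ideal}})$ is a function of the edges, not the latent flip indicator $Z^{(\ell)}=\zzz{\ell}_i\bz^\star_i$. They agree only when the layer is strong enough that $T_\ell^{\mathrm{ideal}}$ and $Z^{(\ell)}$ have the same sign whp, and your union bound runs over \emph{all} $S$, including those with weak layers in $S^c$. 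Consequently the $S$-term you write down is not the probability \eqref{eq:error_of_LR_test_z_star}, and the Chernoff calculation from Lemma \ref{lemma:opt_test_error_z_star} does not directly yield $e^{-(1-\delta'_n)\globinfo_S}$.

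The paper fixes this by decomposing differently: it partitions $\{\barzzz{\star,-i}_i=-\bz^\star_i\}$ according to $S=\{\ell:\hzzz{\ell}_i\neq\zzz{\ell}_i\}$, i.e.\ the set of layers whose \emph{individualized output} is wrong, and then compares the MAP objective at the realized output $(-\bz^\star_i,\{\hzzz{\ell}_i\})$ with the objective at the truth $(\bz^\star_i,\{\zzz{\ell}_i\})$. The point is purely algebraic: for $\ell\in S^c$ the $h_\ell$-parts cancel and one is left with exactly $-J_n Z^{(\ell)}$, while for $\ell\in S$ the $J_n$-parts cancel and one is left with $h_\ell(-\zzz{\ell}_i)-h_\ell(\zzz{\ell}_i)$, whose law (conditionally on $\{\zzz{\ell}\}$) is precisely that of the SBM log-likelihood ratio appearing in \eqref{eq:error_of_LR_test_z_star}. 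This is what produces the correct exponent $\globinfo_S$ for every $S$. In your clipped-sum language, the inclusion that is actually being used is
\[
\Bigl\{\textstyle\sum_\ell\mathrm{clip}_{J_n}(T_\ell)\le 0\Bigr\}\subseteq\bigcup_{S\subseteq[L]}\Bigl\{\textstyle\sum_{\ell\in S}Z^{(\ell)}T_\ell+J_n\sum_{\ell\in S^c}Z^{(\ell)}\le 0\Bigr\},
\]
with the witnessing $S$ being the MAP output pattern, not the saturation pattern.

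A secondary point: the paper does not pass to $T_\ell^{\mathrm{ideal}}$ and then control the perturbation by a separate high-probability event. Instead it runs the Chernoff bound directly with $\tzzz{\star,-i}$, which produces an extra factor per layer (the $\mathscr{T}_{3,\ell}$ term) bounded \emph{deterministically on the good event} by $\exp\{O(n\delta_n(p_\ell-q_\ell)^2/p_\ell)\}=\exp\{o(1)\cdot n\III{\ell}_{1/2}\}$. Your Lipschitz-plus-Chernoff route requires $\sum_\ell|T_\ell-T_\ell^{\mathrm{ideal}}|$ to be $o(\globinfo_S)$ for every $S$ with probability $1-O(n^{-2})$, which is delicate when some layers have $p_\ell-q_\ell\ll p_\ell$ or $n\delta_np_\ell$ is bounded; integrating the perturbation into the MGF avoids this.
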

\begin{proof}
  See Appendix \ref{prf:prop:refine_glob_nodewise}. 
  % \sxc{in the proof, change $\bhz^\star_i$ to $\barzzz{\star, -i}_i$}
\end{proof}

By Assumption \ref{assump:consistent_init}, for any $i\in[n]$, there exists $\pi_i\in\{\pm 1\}$ such that 
\begin{equation}
  \label{eq:consist_init_leave_i_out}
  \bbP\bigg(d_\ham(\pi_i \tzzz{\star, -i}_{-i}, \bz^\star_{-i}) \leq n\eta_{\init, n-1}\bigg) \geq 1- \calO\left(n^{-(1+\ep_{\init})}\right).
\end{equation}  
Since $d_\ham(\bz^\star, \zzz{\ell})$ is the i.i.d. sum of $n$ Bernoulli random variables, an application of Chernoff bound gives $d_\ham(\bz^\star, \zzz{\ell})\geq n\rho + nt$ with probability at most $2e^{-\calO(nt^2)}$ for any $t > 0$. Choosing $t = n^{-(1-c')/2}$ and invoking a union bound over all layers, we conclude that
\begin{equation}
  \label{eq:vanishing_flips}
  \bbP\bigg(d_\ham(\bz^\star, \zzz{\ell}) \leq n\cdot (\rho + n^{-(1-c')/2}) ~ \forall \ell\in[L]\bigg) \geq 1 - e^{-\calO(n^{c'}) + \log L} \geq 1 -  \calO\left(n^{-(1+\ep_{\init})}\right),
\end{equation}  
where the last inequality holds by $\log L \ll n^{c'}$. In particular, on the union of the two high probability events in \eqref{eq:consist_init_leave_i_out} and \eqref{eq:vanishing_flips}, for any fixed $i\in[n]$ and uniformly over $\ell\in[L]$, we have
$$
  d_\ham(\pi_i\tzzz{\star, -i}_{-i}, \zzz{\ell}_{-i}) \leq d_\ham(\pi_i \tzzz{\star, -i}_{-i}, \bz^\star_{-i}) + d_\ham(\bz^\star_{-i}, \zzz{\ell}_{-i}) \leq n \cdot(\eta_{\init, n-1} + \rho + n^{-(1-c')/2} + n^{-1}).
$$
Hence, we can invoke Proposition \ref{prop:refine_glob_nodewise} to conclude that for any $i\in[n]$,   \eqref{eq:upper_bound_z_star} holds. 

In the rest of the proof, we assume $\pi_1 = +1$ without loss of generality.
Now, for each $i\in[n]\setminus\{1\}$, we define the map $\vartheta^\star_i:\{\pm 1\}\to \{\pm 1\}$ as in   \eqref{eq:align_map} with $\bz = \barzzz{\star, -1}$ and $\bz' = \barzzz{\star, -i}$. By construction we have
$$
  \bhz^\star_i = \vartheta^\star_i (\barzzz{\star, -i}_i). 
$$
Thus we have
\begin{align*}
  \bbP(\bhz^\star_i\neq \bz^\star_i) & = \bbP(\vartheta^\star_i(\barzzz{\star, -i}_i) \neq \bz^\star_i)\\
  \label{eq:nodewise_error_alignment}
  & \leq \bbP(\pi_i \barzzz{\star, -i}_i \neq \bz^\star_i) + \bbP(\vartheta^\star_i(\barzzz{\star, -i})\neq \pi_i \barzzz{\star, -i}_i).\numberthis
\end{align*}
On the following event:
\begin{equation}
  \label{eq:Ei_cap_E1}
  \bigg\{ d_\ham(\pi_i \barzzz{\star, -i}_{-i}, \bz^\star_{-i})\leq n\eta_{\init, n-1} \bigg\}   \bigcap \bigg\{ d_\ham(\barzzz{\star, -1}_{-1}, \bz^\star_{-1})\leq n\eta_{\init, n-1}\bigg\},
\end{equation}  
we have 
\begin{equation}
  \label{eq:two_layer_alignment}
  d_\ham(\barzzz{\star, -1}, \barzzz{\star, -i})\leq n\cdot (2\eta_{\init, n-1} + 2n^{-1}).
\end{equation}
Invoking Lemma \ref{lemma:align} gives that on the above event, $\vartheta^\star_i$ is a bijection, and 
$$
  \vartheta^\star_i = \argmin_{\pi\in\{\pm 1\}} d_\ham(\barzzz{\star, -1}, \pi \barzzz{\star ,-i}) = \pi_i
$$
where we have regarded $\vartheta^\star_i$ as a $\{\pm 1\}$-valued scaler, with $+ 1$ representing the identity map, and the last equality follows from \eqref{eq:two_layer_alignment}. In particular, we know that $\vartheta^\star_i(\barzzz{\star, -i}) = \pi_i \barzzz{\star, -i}_i$ on the event \eqref{eq:Ei_cap_E1}. Since this event happens with probability at least $1-\calO\left(n^{1+\ep_{\init}}\right)$, from \eqref{eq:nodewise_error_alignment} we get
\begin{align*}
  \bbP(\bhz^\star_i \neq \bz^\star_i) & \leq  C_1 n^{-(1+\ep_{\init})}+ \sum_{S\subseteq[L]} e^{-(1-\overline{\delta}_n) \globinfo_S},
\end{align*}  
where $C_1>0$ is an absolute constant and $\overline{\delta}_n=o(1)$. Let us set
$$
  \overline{\delta}_n' =  \bigg[\log \bigg(\frac{1}{\sum_{S\subseteq [L]} \exp\{-(1-\overline{\delta}_n\globinfo_S)\}}\bigg)\bigg]^{-1/2},
$$
which tends to zero as $n$ tends to infinity by   \eqref{eq:diverging_glob_snr_sum}. By Markov's inequality, we have
\begin{align*}
  & \bbP\bigg(\calL(\bhz^\star, \bz^\star) \geq \big(\sum_{S\subseteq[L]} e^{-(1-\overline{\delta}_n) \globinfo_S}\big)^{1-\overline{\delta}_n'}\bigg)\\
  & \leq \big(\sum_{S\subseteq[L]} e^{-(1-\overline{\delta}_n) \globinfo_S}\big)^{\overline{\delta}_n'-1} \cdot \frac{1}{n}\sum_{i\in[n]} \bbP(\bhz^\star_i \neq \bz^\star_i) \\
  & \leq \big(\sum_{S\subseteq[L]} e^{-(1-\overline{\delta}_n) \globinfo_S}\big)^{\overline{\delta}_n'} + C_1 n^{-(1+\ep_{\init})} \big(\sum_{S\subseteq[L]} e^{-(1-\overline{\delta}_n) \globinfo_S}\big)^{\overline{\delta}_n'-1}\\
  & = \exp\bigg\{- \sqrt{\log \bigg(\frac{1}{\sum_{S\subseteq[L]} e^{-(1-\overline{\delta}_n) \globinfo_S} }\bigg)}\bigg\} 
  + C_1 {n^{-(1+\ep_{\init})}} \cdot \exp\bigg\{ (1-\overline{\delta}_n')\log\bigg(\frac{1}{\sum_{S\subseteq[L]} e^{-(1-\overline{\delta}_n)\globinfo_S}}\bigg) \bigg\} 
\end{align*}
If 
$$
  \exp\bigg\{ (1-\overline{\delta}_n')\log\bigg(\frac{1}{\sum_{S\subseteq[L]} e^{-(1-\overline{\delta}_n)\globinfo_S}}\bigg) \bigg\}  \leq n^{1+\frac{\ep_{\init}}{2}},
$$
then we get
$$
  \bbP\bigg(\calL(\bhz^\star, \bz^\star) \geq \big(\sum_{S\subseteq[L]} e^{-(1-\overline{\delta}_n) \globinfo_S}\big)^{1-\overline{\delta}_n'}\bigg) \leq \exp\bigg\{- \sqrt{\log \bigg(\frac{1}{\sum_{S\subseteq[L]} e^{-(1-\overline{\delta}_n) \globinfo_S} }\bigg)}\bigg\} 
  + C_1 {n^{-\ep_{\init}/2}} \to 0
$$
as $n\to \infty$. Otherwise, we can proceed by
\begin{align*}
  \bbP\bigg(\calL(\bhz^\star, \bz^\star) \geq \big(\sum_{S\subseteq[L]} e^{-(1-\overline{\delta}_n) \globinfo_S}\big)^{1-\overline{\delta}_n'}\bigg)
  & \leq \bbP\bigg(\calL(\bhz^\star, \bz^\star) > 0\bigg) \\
  & \leq \sum_{i\in[n]} \bbP(\bhz^\star_i \neq \bz^\star_i)\\
  & \leq C_1n^{-\ep_{\init}} + n \exp\bigg\{-\log\bigg( \frac{1}{\sum_{S\subseteq[L]} e^{-(1-\overline{\delta}_n) \globinfo_S}}\bigg)\bigg\}\\
  & \leq C_1n^{-\ep_{\init}} + n \exp\bigg\{-(1-\overline{\delta}_n')\log\bigg( \frac{1}{\sum_{S\subseteq[L]} e^{-(1-\overline{\delta}_n) \globinfo_S}}\bigg)\bigg\}\\
  & \leq C_1 n^{-\ep_{\init}} + n^{-\ep_{\init}/2}\\
  & \to 0.
\end{align*}
Thus, in either case,   \eqref{eq:refine_glob} holds, and the proof is concluded.

\subsubsection{Proof of Proposition \ref{prop:refine_glob_nodewise}}\label{prf:prop:refine_glob_nodewise}
Fix $i\in[n]$ and we without loss of generality assume $\pi_i = 1$. 
Let
\begin{align}
  \label{eq:Ei}
  E_i & := \bigg\{d_\ham(\tzzz{\star, -i}_{-i}, \zzz{\ell}_{-i})\leq n\delta_n ~ \forall\ell\in[L]\bigg\}, 
\end{align}
which happens with probability at least $1- Cn^{-(1+\ep_{\init})}$ by assumption. For $\nnn{\ell, -i}_{\pm}:= \sum_{j\neq i}\indc{\zzz{\ell}_j = \pm 1}$, we define
\begin{equation}
  \label{eq:Fi}
  F_i  := \bigg\{\bigg|(\nnn{\ell, -i}_+ - \nnn{\ell, -i}_-) - (1-2\rho)(n^{\star}_+ - n^{\star}_-)\bigg| \leq n\cdot (n^{-(1-c')/2} + n^{-1}) ~ \forall \ell\in[L]\bigg\}.
\end{equation}  
We claim that the event $F_i$ also happens with high probability.
\begin{lemma}
  \label{lemma:high_prob_Fi}
  If $\log L\ll n^{c'}$, then there exists an absolute constant $c''>0$ such that
  $$
    \bbP\big(\bigcap_{i\in[n]} F_i\big) \geq  1 - e^{-c''n^{c'}}.
  $$
\end{lemma}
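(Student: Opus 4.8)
The plan is to exploit the fact that, by the label sampling model \eqref{eq:label_flip}, the centered count difference is a sum of independent bounded random variables. Observe that $\nnn{\ell,-i}_+ - \nnn{\ell,-i}_- = \sum_{j \neq i}(\indc{\zzz{\ell}_j = +1} - \indc{\zzz{\ell}_j = -1}) = \sum_{j\neq i}\zzz{\ell}_j$, a sum of $n-1$ independent $\{\pm 1\}$-valued random variables (conditionally on $\bz^\star$). First I would split
$$
(\nnn{\ell,-i}_+ - \nnn{\ell,-i}_-) - (n^\star_+ - n^\star_-) = \Big(\sum_{j\neq i}\zzz{\ell}_j - \bbE\sum_{j\neq i}\zzz{\ell}_j\Big) + \Big(\bbE\sum_{j\neq i}\zzz{\ell}_j - (n^\star_+ - n^\star_-)\Big)
$$
into a stochastic fluctuation and a deterministic bias. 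Using $\bbE[\zzz{\ell}_j] = (1-2\rho)\bz^\star_j$, the bias equals $(1-2\rho)[(n^\star_+ - n^\star_-) - \bz^\star_i] - (n^\star_+ - n^\star_-) = -2\rho(n^\star_+ - n^\star_-) - (1-2\rho)\bz^\star_i$, whose magnitude is at most $2\rho|n^\star_+ - n^\star_-| + 1 \leq 2\rho n + 1$ since $n^\star_+ + n^\star_- = n$; this is exactly why the tolerance defining $F_i$ in \eqref{eq:Fi} carries a $2\rho$ term in addition to the large-deviation rate, and why no balance assumption on $\beta$ is needed.

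For the fluctuation I would apply Hoeffding's inequality: $\bbP\big(|\sum_{j\neq i}\zzz{\ell}_j - \bbE\sum_{j\neq i}\zzz{\ell}_j| \geq t\big) \leq 2\exp(-t^2/(2(n-1)))$. Taking $t = n^{3/4} = n\cdot n^{-1/4}$ and using $2(n-1) \leq 2n$ gives a failure probability at most $2e^{-\sqrt{n}/2}$ for $n\geq 2$. Combining this with the deterministic bias bound, the event $\{|(\nnn{\ell,-i}_+ - \nnn{\ell,-i}_-) - (n^\star_+ - n^\star_-)| \leq n(n^{-1/4} + n^{-1} + 2\rho)\}$ holds with probability at least $1 - 2e^{-\sqrt{n}/2}$ for each fixed pair $(\ell, i)$.

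Finally, since $L \lesssim n^{c}$, there are at most $nL \lesssim n^{1+c}$ pairs $(\ell,i)$, so a union bound yields $\bbP(\bigcap_{i\in[n]} F_i) \geq 1 - 2n^{1+c}e^{-\sqrt{n}/2}$. Because $(1+c)\log n = o(\sqrt n)$, the polynomial prefactor is absorbed into the exponential for all large $n$, giving $\bbP(\bigcap_{i\in[n]}F_i) \geq 1 - e^{-c'\sqrt n}$ for a suitable absolute constant $c'>0$ (shrinking $c'$ if necessary to cover the finitely many small $n$). There is no genuine obstacle here; the only points needing care are tracking the $O(\rho n)$ deterministic bias and checking that the union-bound prefactor is polynomial, both of which are routine.
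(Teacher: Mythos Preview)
Your proof is correct and follows essentially the same Hoeffding-plus-union-bound approach as the paper. The only cosmetic difference is that the paper first bounds the full-sample quantity $\nnn{\ell}_+ - \nnn{\ell}_-$ (union bounding over the $L$ layers only) and then uses the deterministic identity $|(\nnn{\ell,-i}_+ - \nnn{\ell,-i}_-) - (\nnn{\ell}_+ - \nnn{\ell}_-)| = 1$ to cover all $i$ simultaneously, whereas you union bound directly over the $nL$ pairs $(\ell,i)$; since both prefactors are polynomial in $n$ and absorbed by the $e^{-\Theta(\sqrt n)}$ tail, the conclusions coincide.
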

\begin{proof}
  Recall that 
  $$
    \nnn{\ell}_+ - \nnn{\ell}_- = \sum_{i\in[n]} \indc{\zzz{\ell}_i = +1} - \indc{\zzz{\ell}_i = -1} = \sum_{i: \bz^\star_i = +1} \textnormal{Rad}(1-\rho) + \sum_{i: \bz^\star_i = -1} \textnormal{Rad}(\rho),
  $$
  where $\textnormal{Rad}(p)$ is a Rademacher random variable with positive probability $p$. Since the above display has mean $(1-2\rho)(n^{\star}_+ - n^{\star}_-)$, by Hoeffding's inequality, for any $t>0$ we have
  $$
    \bbP\bigg((\nnn{\ell}_+ - \nnn{\ell}_-) - (1-2\rho)(n^{\star}_+ - n^{\star}_-) \geq nt\bigg) \leq 2 e^{-\calO(nt^2)}.
  $$
  Setting $t = n^{-(1-c')/2}$ and using a union bound over all $\ell\in[L]$, we get 
  $$
    \bbP\bigg((\nnn{\ell}_+ - \nnn{\ell}_-) - (1-2\rho)(n^{\star}_+ - n^{\star}_-) \leq n\cdot n^{-(1-c')/2} ~ \forall \ell\in[L]\bigg) \geq 1-2Le^{-\calO(n^{c'})} \geq 1-e^{-\calO(n^{c'})},
  $$
  where the last inequality is by $\log L\ll n^{c'}$. Note that by construction, we have
  $$
    \bigg|(\nnn{\ell, -i}_+ - \nnn{\ell, -i}_-) - (\nnn{\ell}_+ - \nnn{\ell}_-)\bigg| = 1
  $$
  for any $i\in[n]$. An application of the triangle inequality gives
  $$
    \bigg|(\nnn{\ell, -i}_+, \nnn{\ell, -i}_-) - (1-2\rho)(n^{\star}_+ - n^{\star}_-)\bigg| \leq  1 + n\cdot n^{-(1-c')/2} = n\cdot (n^{-(1-c')/2} + n^{-1})
  $$
  with probability at least $1-e^{-\calO(n^{c'})}$, and this is the desired result.
\end{proof}
The above lemma, along our assumption \eqref{eq:ind_consistent_init}, gives
\begin{equation}
  \label{eq:high_prob_Ei_Fi}
  \bbP(E_i\cap F_i) \geq 1 - \calO\left(n^{-(1+\ep_{\init})}\right) - e^{-\calO(n^{c'})} \geq 1- C'' n^{-(1+\ep_{\init})}
\end{equation}
for some constant $C''>0$. Thus, we have
\begin{equation}
  \label{eq:glob_nodewise_err_decomp}
  \bbP(\barzzz{\star, -i}_i\neq \bz^\star_i) = \bbP(\barzzz{\star, -i}_i = -\bz^\star_i) \leq \bbP(\barzzz{\star, -i}_i = -\bz^\star_i, E_i\cap F_i) + C''n^{-(1+\ep_{\init})}. 
\end{equation}
We can decompose the probability via
\begin{align}
  \label{eq:glob_nodewise_err_decomp_all_subsets}
  \bbP(\barzzz{\star, -i}_i = -\bz^\star_i, E_i\cap F_i)  & = \sum_{S \subseteq [L]} \bbP(\barzzz{\star, -i}_i = -\bz^\star_i , \hzzz{S}_i = -\zzz{S}_i, \hzzz{S^c}_i = \zzz{S^c}_i, E_i\cap F_i),
\end{align}
where we used the shorthand notation $\zzz{S}_i:= \{\zzz{\ell}_i: \ell \in S\}$ for any $S \subseteq[L]$. Occurrence of the event in the right-hand side above implies that 
$$
  \sum_{\ell\in S} \fff{\ell}_i(-\bz^\star_i, -\zzz{\ell}_i, \tzzz{\star, -i}) + \sum_{\ell \in S^c} \fff{\ell}_i(-\bz^\star_i, \zzz{\ell}_i , \tzzz{\star, -i}) \geq \sum_{\ell\in [L]} \fff{\ell}_i(\bz^\star_i, \zzz{\ell}_i, \tzzz{\star, -i}).
$$
Hence, we have
\begin{align*}
  & \bbP(\barzzz{\star, -i}_i = -\bz^\star_i, E_i\cap F_i)  \\
  & \leq \sum_{S \subseteq [L]} 
  \bbP\bigg\{ \log\bigg(\frac{1-\rho}{\rho}\bigg) \cdot \bigg(\#\{\ell\in S^c: \bz^{\star}_i = -\zzz{\ell}_i\} - \#\{\ell\in S^c: \bz^{\star}_i = \zzz{\ell}_i\}\bigg) \\
  & \qquad + \sum_{\ell\in S}\sum_{\substack{j\neq i:  \tzzz{\star, -i}_j = -\zzz{\ell}_i}} \bigg[\log\bigg(\frac{p_\ell (1-q_\ell)}{q_\ell(1-p_\ell)}\bigg) \AAA{\ell}_{ij} + \log\bigg(\frac{1-p_\ell}{1-q_\ell}\bigg) \bigg] \\
  & \qquad - \sum_{\ell\in S} \sum_{j\neq i : \tzzz{\star, -i}_j = \zzz{\ell}_i} \bigg[\log\bigg(\frac{p_\ell(1-q_\ell)}{q_\ell(1-p_\ell)}\bigg) \AAA{\ell}_{ij} + \log\bigg(\frac{1-p_\ell}{1-q_\ell}\bigg)\bigg] \geq  0  ~ \textnormal{and } E_i\cap F_i \bigg\}.
\end{align*}
Note that $E_i$ and $F_i$ are both independent of $\{\zzz{\ell}_i\}_{\ell=1}^L$. So we can decompose the above probability by conditioning on the value of $\#\{\ell\in S^c: \bz^\star_i = -\zzz{\ell}_i\} - \#\{\ell\in S^c: \bz^\star_i = \zzz{\ell}_i\}$:
% according to the orientations of $\zzz{\ell}_i$'s for $\ell\in S^c$:
\begin{align*}
  & \bbP(\barzzz{\star, -i}_i = -\bz^\star_i, E_i\cap F_i)  \\
  & \leq \sum_{S \subseteq [L]} \sum_{x \in\{ -|S^c|+2k: 0\leq k \leq |S^c|\}} \binom{|S^c|}{\frac{|S^c| + x}{2}} \bigg(\frac{1-\rho}{\rho}\bigg)^{{x}/{2}}\bigg(\rho(1-\rho)\bigg)^{|S^c|/2}\\
  & \qquad \times \bbP\bigg\{ -x\log\bigg(\frac{1-\rho}{\rho}\bigg) 
  + \sum_{\ell\in S}\sum_{\substack{j\neq i:  \tzzz{\star, -i}_j = -\zzz{\ell}_i}} \bigg[\log\bigg(\frac{p_\ell (1-q_\ell)}{q_\ell(1-p_\ell)}\bigg) \AAA{\ell}_{ij} + \log\bigg(\frac{1-p_\ell}{1-q_\ell}\bigg)\bigg] \\
  & \qquad \qquad - \sum_{\ell\in S} \sum_{j\neq i : \tzzz{\star, -i}_j = \zzz{\ell}_i} \bigg[\log\bigg(\frac{p_\ell(1-q_\ell)}{q_\ell(1-p_\ell)}\bigg) \AAA{\ell}_{ij} + \log\bigg(\frac{1-p_\ell}{1-q_\ell}\bigg)\bigg] \geq  0  ~ \textnormal{and } E_i\cap F_i \bigg\}.
\end{align*}
We further decompose the above probability according to the orientations of $\zzz{\ell}_i$'s for $\ell\in S$:
\begin{align*}
  & \bbP(\barzzz{\star, -i}_i = -\bz^\star_i, E_i\cap F_i)  \\
  & \leq \sum_{S \subseteq [L]} \sum_{x \in\{ -|S^c|+2k: 0\leq k \leq |S^c|\}} \binom{|S^c|}{\frac{|S^c| + x}{2}} \bigg(\frac{1-\rho}{\rho}\bigg)^{{x}/{2}}\bigg(\rho(1-\rho)\bigg)^{|S^c|/2} \sum_{\xi\in\{\pm 1\}^S}\bbP(\zzz{S}_i =  \bz^\star_i \xi )\\
  & \qquad \times \bbP\bigg\{ -x\log\bigg(\frac{1-\rho}{\rho}\bigg)  
  + \sum_{\ell\in S}\sum_{\substack{j\neq i:  \tzzz{\star, -i}_j = -\xi_\ell \bz^\star_i}} \bigg[\log\bigg(\frac{p_\ell (1-q_\ell)}{q_\ell(1-p_\ell)}\bigg) \AAA{\ell}_{ij} + \log\bigg(\frac{1-p_\ell}{1-q_\ell}\bigg) \bigg]\\
  & \qquad \qquad - \sum_{\ell\in S} \sum_{j\neq i : \tzzz{\star, -i}_j = \xi_\ell\bz^\star_i} \bigg[\log\bigg(\frac{p_\ell(1-q_\ell)}{q_\ell(1-p_\ell)}\bigg) \AAA{\ell}_{ij} + \log\bigg(\frac{1-p_\ell}{1-q_\ell}\bigg)\bigg]
  \geq  0  ~ \textnormal{and } E_i\cap F_i \bigg\} \\
  & = \sum_{S \subseteq [L]} \sum_{x \in\{ -|S^c|+2k: 0\leq k \leq |S^c|\}} \binom{|S^c|}{\frac{|S^c| + x}{2}} \exp\bigg\{ -|S^c|\log\frac{1}{\sqrt{\rho(1-\rho)}} + x\log \sqrt{\frac{1-\rho}{\rho}}\bigg\} \\
  & \qquad \times \sum_{\xi\in\{\pm 1\}^S}\bbP(\zzz{S}_i =  \bz^\star_i \xi ) \cdot \bbE_{\{\zzz{\ell}_{-i}\}_{\ell=1}^L}\bigg[\bbP\bigg(\sakura  ~\bigg|~ \{\zzz{\ell}_{-i}\}_{\ell=1}^L \bigg)\bigg],
\end{align*}
where
\begin{align*}
  \sakura &:=  \bigg\{-x\log\bigg(\frac{1-\rho}{\rho}\bigg)  
  + \sum_{\ell\in S}\sum_{\substack{j\neq i:  \tzzz{\star, -i}_j = -\xi_\ell \bz^\star_i}} \bigg[\log\bigg(\frac{p_\ell (1-q_\ell)}{q_\ell(1-p_\ell)}\bigg) \AAA{\ell}_{ij} + \log\bigg(\frac{1-p_\ell}{1-q_\ell}\bigg)\bigg] \\
  & \qquad - \sum_{\ell\in S} \sum_{j\neq i : \tzzz{\star, -i}_j = \xi_\ell\bz^\star_i} \bigg[\log\bigg(\frac{p_\ell(1-q_\ell)}{q_\ell(1-p_\ell)}\bigg) \AAA{\ell}_{ij} + \log\bigg(\frac{1-p_\ell}{1-q_\ell}\bigg)\bigg] \geq  0  ~ \textnormal{and } E_i\cap F_i\bigg\}.
\end{align*}
Invoking Markov's inequality, for $t = t_{S, x, \xi} \in [0, 1]$ whose value will be specified later, we can bound the conditional probability by
\begin{align*}
  & \bbP\bigg(\sakura~\bigg|~ \{\zzz{\ell}_{-i}\}_{\ell=1}^L\bigg)\\
  & \leq \bbE\bigg\{ \exp\bigg\{ -t x\log \bigg(\frac{1-\rho}{\rho}\bigg)
  + t\sum_{\ell\in S}\sum_{\substack{j\neq i:  \tzzz{\star, -i}_j = -\xi_\ell \bz^\star_i}} \bigg[\log\bigg(\frac{p_\ell (1-q_\ell)}{q_\ell(1-p_\ell)}\bigg) \AAA{\ell}_{ij} + \log\bigg(\frac{1-p_\ell}{1-q_\ell}\bigg)\bigg]\\
  & \qquad -t \sum_{\ell\in S} \sum_{j\neq i : \tzzz{\star, -i}_j = \xi_\ell\bz^\star_i} \bigg[\log\bigg(\frac{p_\ell(1-q_\ell)}{q_\ell(1-p_\ell)}\bigg) \AAA{\ell}_{ij} + \log\bigg(\frac{1-p_\ell}{1-q_\ell}\bigg)\bigg]\bigg\}  \cdot \Indc_{E_i\cap F_i}~ \bigg| ~ \{\zzz{\ell}_{-i}\}_{\ell=1}^L\bigg\}
\end{align*}  
Let us define
\begin{align}
  \label{eq:ml-i-}
  \mmm{\ell, -i}_-  &:= \#\{j\neq i: \tzzz{\star, -i}_j = -\xi_\ell \bz^\star_i\} ,\\
  \label{eq:tml-i-}
  \tmmm{\ell, -i}_- & := \# \{j\neq i: \tzzz{\star, -i}_j = -\xi_\ell \bz^\star_i, \tzzz{\star, -i}_j = \zzz{\ell}_j\} \\
  \label{eq:ml-i+}
  \mmm{\ell, -i}_+ & := \#\{j\neq i: \tzzz{\star, -i}_j = \xi_\ell \bz^\star_i\} \\
  \label{eq:tml-i+}
  \tmmm{\ell, -i} & := \#\{j\neq i: \tzzz{\star, -i}_j = \xi_\ell \bz^\star_i , \tzzz{\star, -i}_j = \zzz{\ell}_j\}.
\end{align}
Then, we have 
\begin{align*}
  & \bbP\bigg(\sakura~\bigg|~ \{\zzz{\ell}_{-i}\}_{\ell=1}^L\bigg)\\
  & \leq \exp\bigg\{-tx \log\bigg(\frac{1-\rho}{\rho}\bigg)\bigg\} \\
  & ~~\times \bbE\bigg[  
  \prod_{\ell \in S} \exp\bigg\{ t \log\bigg(\frac{p_\ell(1-q_\ell)}{q_\ell(1-p_\ell)}\bigg)\\
  & \qquad\qquad\qquad\qquad \times \bigg(
  \sum_{\substack{j\neq i\\ \tzzz{\star, -i}_j = -\xi_\ell \bz^\star_i \\ \tzzz{\star, -i}_{j} = \zzz{\ell}_j}} \AAA{\ell}_{ij}
  + \sum_{\substack{j\neq i\\ \tzzz{\star, -i}_j = -\xi_\ell \bz^\star_i \\ \tzzz{\star, -i}_{j} = -\zzz{\ell}_j}} \AAA{\ell}_{ij}
  - \sum_{\substack{j\neq i\\ \tzzz{\star, -i}_j = \xi_\ell \bz^\star_i \\ \tzzz{\star, -i}_{j} = \zzz{\ell}_j}} \AAA{\ell}_{ij}
  - \sum_{\substack{j\neq i\\ \tzzz{\star, -i}_j = \xi_\ell \bz^\star_i \\ \tzzz{\star, -i}_{j} = -\zzz{\ell}_j}} \AAA{\ell}_{ij}
  \bigg)\\
  & \qquad\qquad\qquad\qquad + t \log\bigg(\frac{1-p_\ell}{1-q_\ell}\bigg)(\mmm{\ell, -i}_- - \mmm{\ell, -i}_+)\bigg\}  \cdot \Indc_{E_i \cap F_i}
  ~\bigg|~ \{\zzz{\ell}_{-i}\}_{\ell=1}^L\bigg]\\
  & = \exp\bigg\{-tx \log\bigg(\frac{1-\rho}{\rho}\bigg)\bigg\}  \\
  & ~~ \times \prod_{\ell\in S}\bbE\bigg\{ \bigg[q_\ell \bigg(\frac{p_\ell(1-q_\ell)}{q_\ell(1-p_\ell)}\bigg)^t + (1-q_\ell)\bigg]^{\tmmm{\ell, -i}_-} 
  \cdot \bigg[p_\ell\bigg(\frac{p_\ell(1-q_\ell)}{q_\ell(1-p_\ell)}\bigg)^t + (1-p_\ell)\bigg]^{\mmm{\ell, -i}_- - \tmmm{\ell, -i}_-}  \\
  & \qquad \times \bigg[p_\ell\bigg(\frac{q_\ell(1-p_\ell)}{p_\ell(1-q_\ell)}\bigg)^t + (1-p_\ell)\bigg]^{\tmmm{\ell, -i}_+}
  \cdot \bigg[ q_\ell\bigg(\frac{q_\ell(1-p_\ell)}{p_\ell(1-q_\ell)}\bigg)^t + (1-q_\ell) \bigg]^{\mmm{\ell, -i}_+ - \tmmm{\ell, -i}_+}\\
  & \qquad \times \bigg( \frac{1-p_\ell}{1-q_\ell} \bigg)^{t(\mmm{\ell, -i}_- - \mmm{\ell, -i}_+)}
  \cdot \Indc_{E_i \cap F_i}
  ~\bigg|~ \{\zzz{\ell}_{-i}\}_{\ell=1}^L\bigg\}\\
  & = \exp\bigg\{-tx \log\bigg(\frac{1-\rho}{\rho}\bigg)\bigg\}  \\
  & ~~ \times \prod_{\ell\in S} \bbE\bigg\{
  \bigg(\frac{1-p_\ell}{1-q_\ell}\bigg)^{t(\mmm{\ell, -i}_- - \mmm{\ell,-i}_+)} 
  \cdot \bigg[ q_\ell \bigg(\frac{p_\ell(1-q_\ell)}{q_\ell(1-p_\ell)}\bigg)^t + (1-q_\ell) \bigg]^{\mmm{\ell, -i}_-}\\
  & \qquad \times \bigg[ p_\ell\bigg(\frac{q_\ell(1-p_\ell)}{p_\ell(1-q_\ell)}\bigg)^t + (1-p_\ell) \bigg]^{\mmm{\ell, -i}_+}
  \cdot \bigg[ \frac{p_\ell \bigg(\frac{p_\ell(1-q_\ell)}{q_\ell(1-p_\ell)}\bigg)^t + (1-p_\ell)}{q_\ell \bigg(\frac{p_\ell(1-q_\ell)}{q_\ell(1-p_\ell)}\bigg)^t + (1-q_\ell)} \bigg]^{\mmm{\ell, -i}_- - \tmmm{\ell, -i}_-}\\
  & \qquad \times \bigg[ \frac{q_\ell \bigg(\frac{q_\ell(1-p_\ell)}{p_\ell(1-q_\ell)}\bigg)^t + (1-q_\ell)}{p_\ell \bigg(\frac{q_\ell(1-p_\ell)}{p_\ell(1-q_\ell)}\bigg)^t + (1-p_\ell)}\bigg]^{\mmm{\ell, -i}_+ - \tmmm{\ell, -i}_+}
  \cdot \Indc_{E_i\cap F_i}
  ~\bigg|~ \{\zzz{\ell}_{-i}\}_{\ell=1}^L\bigg\}\\
  & = \exp\bigg\{-tx \log\bigg(\frac{1-\rho}{\rho}\bigg)\bigg\} 
  \times \prod_{\ell\in S}\bbE\bigg\{ \mathscr{T}_{1, \ell}  \times \mathscr{T}_{2, \ell} \times \mathscr{T}_{3, \ell}
  \cdot \Indc_{E_i\cap F_i}
  ~\bigg|~ \{\zzz{\ell}_{-i}\}_{\ell=1}^L\bigg\},
\end{align*}
where
\begin{align*}
  \mathscr{T}_{1, \ell} & := \bigg(\frac{1-p_\ell}{1-q_\ell}\bigg)^{t(\mmm{\ell, -i}_- - \mmm{\ell, -i}_+)} 
  \times \bigg[q_\ell\bigg(\frac{p_\ell(1-q_\ell)}{q_\ell(1-p_\ell)}\bigg)^t + (1-q_\ell)\bigg]^{\frac{\mmm{\ell, -i}_- - \mmm{\ell, -i}_+}{2}}  \\
  & \qquad 
  \times \bigg[p_\ell \bigg(\frac{q_\ell(1-p_\ell)}{p_\ell(1-q_\ell)}\bigg)^t + (1-p_\ell)\bigg]^{\frac{\mmm{\ell, -i}_+ - \mmm{\ell, -i}_-}{2}} \\
  \mathscr{T}_{2, \ell} & := \bigg[ q_\ell\bigg(\frac{p_\ell(1-q_\ell)}{q_\ell(1-p_\ell)}\bigg)^t + (1-q_\ell) \bigg]^{\frac{\mmm{\ell, -i}_- + \mmm{\ell, -i}_+}{2}} 
  \times \bigg[p_\ell\bigg(\frac{q_\ell(1-p_\ell)}{p_\ell(1-q_\ell)}\bigg)^t + (1-p_\ell)\bigg]^{\frac{\mmm{\ell, -i}_+ + \mmm{\ell, -i}_-}{2}} \\
  \mathscr{T}_{3, \ell} & := \bigg[ \frac{p_\ell \bigg(\frac{p_\ell(1-q_\ell)}{q_\ell(1-p_\ell)}\bigg)^t + (1-p_\ell)}{q_\ell \bigg(\frac{p_\ell(1-q_\ell)}{q_\ell(1-p_\ell)}\bigg)^t + (1-q_\ell)} \bigg]^{\mmm{\ell, -i}_- - \tmmm{\ell, -i}_-}
  \times \bigg[ \frac{q_\ell \bigg(\frac{q_\ell(1-p_\ell)}{p_\ell(1-q_\ell)}\bigg)^t + (1-q_\ell)}{p_\ell \bigg(\frac{q_\ell(1-p_\ell)}{p_\ell(1-q_\ell)}\bigg)^t + (1-p_\ell)}\bigg]^{\mmm{\ell, -i}_+ - \tmmm{\ell, -i}_+}.
\end{align*}
In summary, we arrive at
\begin{align}
  & \bbP(\barzzz{\star, -i}_i = -\bz^\star_i , E_i\cap F_i)  \nonumber\\
  & \leq \sum_{S \subseteq [L]} 
  \sum_{x \in\{ -|S^c|+2k: 0\leq k \leq |S^c|\}} 
  \binom{|S^c|}{\frac{|S^c| + x}{2}} 
  \exp\bigg\{ -|S^c|\log\frac{1}{\sqrt{\rho(1-\rho)}} + x(1-2t)\log \sqrt{\frac{1-\rho}{\rho}}\bigg\} \nonumber\\
  \label{eq:glob_nodewise_err_decom_3terms}
  & ~~ \times \sum_{\xi\in\{\pm 1\}^S}\bbP(\zzz{S}_i =  \bz^\star_i \xi ) 
  \cdot \bbE_{\{\zzz{\ell}_{-i}\}_{\ell=1}^L} \bigg\{
  \prod_{\ell\in S}\bbE\bigg[ \mathscr{T}_{1, \ell}\times \mathscr{T}_{2, \ell} \times \mathscr{T}_{3, \ell} \cdot \Indc_{E_i\cap F_i} ~\bigg|~ \{\zzz{\ell}_{-i}\}_{\ell=1}^L \bigg]\bigg\},
\end{align}
We now bound the three terms $\mathscr{T}_{1, \ell}, \mathscr{T}_{2, \ell}$ and $\mathscr{T}_{3, \ell}$ separately.

~\\
{\noindent \bf Bounding the first term.} We can write
\begin{align*}
  \mathscr{T}_{1, \ell} & = \bigg[ \frac{(1-p_\ell)^{2t}}{(1-q_\ell)^{2t}} 
  \times \frac{q_\ell \bigg(\frac{p_\ell(1-q_\ell)}{q_\ell(1-p_\ell)}\bigg)^t + (1-q_\ell)}{p_\ell\bigg( \frac{q_\ell(1-p_\ell)}{p_\ell(1-q_\ell)} \bigg)^t + (1-p_\ell)} \bigg]^{\frac{\mmm{\ell, -i}_+ - \mmm{\ell, -i}_-}{2}} \\
  & = \bigg[ \frac{p_\ell^t q_\ell^{1-t} + (1-p_\ell)^t (1-q_\ell)^{1-t}}{p_\ell^{1-t}q_\ell^t + (1-p_\ell)^{1-t}(1-q_\ell)^t} \bigg]^{\frac{\mmm{\ell, -i}_+ - \mmm{\ell, -i}_-}{2}} \\
  & = \exp\bigg\{\frac{\mmm{\ell, -i}_+ - \mmm{\ell, -i}_-}{2}
  \times \log \bigg(\frac{p_\ell^t q_\ell^{1-t} + (1-p_\ell)^t (1-q_\ell)^{1-t}}{p_\ell^{1-t}q_\ell^t + (1-p_\ell)^{1-t}(1-q_\ell)^t}\bigg)\bigg\}.
\end{align*}
We need the following two lemmas.
\begin{lemma}
  \label{lemma:glob_nodewise_err_decomp_1st_term_part1}
  Under the setup of Proposition \ref{prop:refine_glob_nodewise}, on the event $E_i$, for any $\ell\in S$, we have
  $$
    \bigg|(\mmm{\ell, -i}_+ - \mmm{\ell, -i}_-) - (\nnn{\ell, -i}_+ - \nnn{\ell, -i}_{-})\bigg| \leq 2n\delta_n
  $$
\end{lemma}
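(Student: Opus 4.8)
Here the plan is to prove the estimate by a pure counting argument; no probabilistic input is needed beyond membership in the event $E_i$ defined in \eqref{eq:Ei}. The underlying elementary fact is that if two vectors $u,v\in\{\pm 1\}^{n-1}$ agree in all but at most $k$ coordinates, then their signed sums satisfy $|\sum_j u_j-\sum_j v_j|=|\sum_j(u_j-v_j)|\le 2k$, since each disagreeing coordinate contributes $\pm 2$ to $u_j-v_j$ and each agreeing coordinate contributes $0$. Equivalently, for every fixed sign $s\in\{\pm 1\}$ the counts $\#\{j : u_j=s\}$ and $\#\{j : v_j=s\}$ differ by at most $k$, because the symmetric difference of the two index sets is contained in the set of disagreements. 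So the whole content is that $v\mapsto\sum_j v_j$ is $2$-Lipschitz with respect to Hamming distance on $\{\pm 1\}^{n-1}$, and the constant $2$ in the statement is exactly this Lipschitz constant.

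First I would unwind the definitions \eqref{eq:ml-i-}--\eqref{eq:tml-i+}: using that $\zzz{\ell}_i=\xi_\ell\bz^\star_i$ (recall we are working inside the sub-sum of the proof of Proposition \ref{prop:refine_glob_nodewise} on which $\zzz{\ell}_i=\xi_\ell\bz^\star_i$ for $\ell\in S$), one has $\mmm{\ell, -i}_+-\mmm{\ell, -i}_-=\xi_\ell\bz^\star_i\sum_{j\neq i}\tzzz{\star, -i}_j$ while $\nnn{\ell, -i}_+-\nnn{\ell, -i}_-=\sum_{j\neq i}\zzz{\ell}_j$. On $E_i$ the two $\pm 1$-valued length-$(n-1)$ vectors $\tzzz{\star, -i}_{-i}$ and $\zzz{\ell}_{-i}$ disagree in at most $n\delta_n$ coordinates. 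Since $\pi_i=+1$ has been fixed at the start of the proof of Proposition \ref{prop:refine_glob_nodewise}, $\tzzz{\star, -i}$ is positively oriented relative to $\zzz{\ell}$, so the cluster $\{j\neq i : \tzzz{\star, -i}_j=\xi_\ell\bz^\star_i\}$ is matched with $\{j\neq i : \zzz{\ell}_j=\xi_\ell\bz^\star_i\}$, whose cardinality is the one of $\nnn{\ell, -i}_\pm$ carrying label $\xi_\ell\bz^\star_i$; likewise $\mmm{\ell, -i}_-$ is matched with the complementary $\zzz{\ell}$-count. Applying the key counting fact to $s=\xi_\ell\bz^\star_i$ and to $s=-\xi_\ell\bz^\star_i$ and subtracting the two bounds, the triangle inequality then gives $\big|(\mmm{\ell, -i}_+-\mmm{\ell, -i}_-)-(\nnn{\ell, -i}_+-\nnn{\ell, -i}_-)\big|\le 2n\delta_n$, which is the claim. (Equivalently: both signed differences are sums over $j\neq i$ of $\pm 1$ values coming from vectors that differ in at most $n\delta_n$ coordinates, so the Lipschitz bound of the first paragraph applies directly.)

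The only point requiring genuine care — and the step I would write out in full detail — is this bookkeeping of orientations: $\mmm{\ell, -i}_\pm$ are counts relative to the sign $\xi_\ell\bz^\star_i=\zzz{\ell}_i$, whereas $\nnn{\ell, -i}_\pm$ are relative to $\pm 1$, so one must verify that in the subtraction $\mmm{\ell, -i}_+$ is being compared against the $\zzz{\ell}$-count of the cluster that bears the label $\xi_\ell\bz^\star_i$ (and $\mmm{\ell, -i}_-$ against the complementary count); this is precisely where the alignment convention and the identity $\xi_\ell\bz^\star_i=\zzz{\ell}_i$ are used. Beyond that there is no obstacle: the inequality is an instance of an $\ell^1$-perturbation estimate for a signed sum, it is uniform in $\ell\in S$ because $E_i$ already controls every layer, and it is exactly the input needed to bound the factor $\mathscr{T}_{1,\ell}$ appearing in \eqref{eq:glob_nodewise_err_decom_3terms}.
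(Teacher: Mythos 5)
Your proof is correct and takes essentially the same route as the paper: on $E_i$ you bound, for each label separately, the discrepancy between the counts with respect to $\tzzz{\star,-i}_{-i}$ and $\zzz{\ell}_{-i}$ by the Hamming distance $n\delta_n$ and then apply the triangle inequality to obtain the factor $2$, which is exactly the paper's indicator-splitting computation packaged as the $2$-Lipschitzness of $v\mapsto\sum_{j\neq i}v_j$. The orientation bookkeeping you single out is treated in the same (slightly loose) way by the paper itself, whose proof silently reads $\nnn{\ell,-i}_+$ as $\#\{j\neq i:\zzz{\ell}_j=\zzz{\ell}_i\}$ rather than as the count of literal $+1$'s, so your argument matches the paper's on this point as well.
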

\begin{proof}
  Note that for $\ell\in S$, we have $\xi_\ell \bz^\star_i = \zzz{\ell}_i$. By definition \eqref{eq:ml-i+}, we have
  \begin{align*}
    \mmm{\ell, -i}_+ - \nnn{\ell, -i}_+ & = \sum_{j\neq i}\bigg(\indc{\tzzz{\star, -i}_j = \zzz{\ell}_i} - \indc{\zzz{\ell}_j = \zzz{\ell}_i}\bigg)\\
    & = \sum_{j\neq i} \bigg(\indc{\tzzz{\star, -i}_j = \zzz{\ell}_i, \tzzz{\star , -i}_j = \zzz{\ell}_j} + \indc{\tzzz{\star, -i}_j = \zzz{\ell}_i , \tzzz{\star, -i} \neq \zzz{\ell}_j} \\
    & \qquad - \indc{\zzz{\ell}_j = \zzz{\ell}_i , \tzzz{\star, -i}_j = \zzz{\ell}_j } - \indc{\zzz{\ell}_j = \zzz{\ell}_i , \tzzz{\star, -i}_j \neq \zzz{\ell}_j} \bigg)\\
    & = \sum_{j\neq i} \bigg(\indc{\tzzz{\star, -i}_j = \zzz{\ell}_i, \tzzz{\star , -i}_j = \zzz{\ell}_j}  - \indc{\zzz{\ell}_j = \zzz{\ell}_i , \tzzz{\star, -i}_j \neq \zzz{\ell}_j} \bigg)\\
    & \leq \sum_{j\neq i}\indc{\tzzz{\star, -i}_j = \zzz{\ell}_i, \tzzz{\star , -i}_j = \zzz{\ell}_j}  \\ 
    & \leq \#\{j\neq i: \tzzz{\ell, -i}_j \neq \zzz{\ell}_j\} \\
    & \leq n\delta_n
  \end{align*}
  where the last inequality holds on the event $E_i$. On the other hand, we have
  \begin{align*}
    \mmm{\ell, -i}_+ - \nnn{\ell, -i}_+ & = \sum_{j\neq i} \bigg(\indc{\tzzz{\star, -i}_j = \zzz{\ell}_i, \tzzz{\star , -i}_j = \zzz{\ell}_j}  - \indc{\zzz{\ell}_j = \zzz{\ell}_i , \tzzz{\star, -i}_j \neq \zzz{\ell}_j}\bigg)\\
    & \geq -\sum_{j\neq i}  \indc{\zzz{\ell}_j = \zzz{\ell}_i , \tzzz{\star, -i}_j \neq \zzz{\ell}_j} \\
    & \geq -\# \{j\neq i: \tzzz{\star, -i}_j \neq \zzz{\ell}_j\} \geq -n\delta_n,
  \end{align*}
  where the last inequality again holds on the event $E_i$. Thus, we arrive at
  $$
    |\mmm{\ell, -i}_+ - \nnn{\ell, -i}_+| \leq n\delta_n
  $$
  on $E_i$. A similar argument shows that $|\mmm{\ell, -i}_- - \nnn{\ell, -i}_-|\leq n\delta_n$ on $E_i$, and the proof is concluded by invoking the triangle inequality.
\end{proof}
\begin{lemma}
  \label{lemma:glob_nodewise_err_decomp_1st_term_part2}
  Under the setups of Proposition \ref{prop:refine_glob_nodewise}, for any $t\in[0, 1]$, we have
  $$
     \bigg|\log \frac{p_\ell^t q_\ell^{1-t} + (1-p_\ell)^t (1-q_\ell)^{1-t}}{p_\ell^{1-t}q_\ell^t + (1-p_\ell)^{1-t}(1-q_\ell)^t}\bigg| \leq \III{\ell}_{t},
  $$
  where $\III{\ell}_t$ is defined in   \eqref{eq:layerwise_info}.
\end{lemma}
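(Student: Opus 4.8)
Write $A_\ell := p_\ell^{1-t}q_\ell^t + (1-p_\ell)^{1-t}(1-q_\ell)^t$ and $B_\ell := p_\ell^t q_\ell^{1-t} + (1-p_\ell)^t(1-q_\ell)^{1-t}$, so that the left-hand side of the claimed inequality is $|\log(B_\ell/A_\ell)|$ and, by definition \eqref{eq:layerwise_info}, $\III{\ell}_t = -\log(A_\ell B_\ell)$. The plan is to reduce the inequality $|\log(B_\ell/A_\ell)| \leq -\log(A_\ell B_\ell)$ to the single elementary fact that $\max\{A_\ell, B_\ell\}\leq 1$. Indeed, if $B_\ell \geq A_\ell$ then $|\log(B_\ell/A_\ell)| = \log B_\ell - \log A_\ell$, and the desired bound $-\log A_\ell - \log B_\ell \geq \log B_\ell - \log A_\ell$ is equivalent to $\log B_\ell \leq 0$; symmetrically, if $B_\ell < A_\ell$ then it is equivalent to $\log A_\ell \leq 0$. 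In either case the larger of $A_\ell, B_\ell$ appears, so it suffices to show $A_\ell \leq 1$ and $B_\ell \leq 1$.

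For the bound $A_\ell \leq 1$, I would apply the weighted arithmetic--geometric mean inequality (equivalently Young's inequality) termwise: for the exponents $1-t$ and $t$ summing to one,
\begin{equation*}
p_\ell^{1-t}q_\ell^t \leq (1-t)p_\ell + t q_\ell, \qquad (1-p_\ell)^{1-t}(1-q_\ell)^t \leq (1-t)(1-p_\ell) + t(1-q_\ell).
\end{equation*}
Adding the two inequalities gives $A_\ell \leq (1-t)p_\ell + tq_\ell + (1-t)(1-p_\ell) + t(1-q_\ell) = (1-t) + t = 1$. The bound $B_\ell \leq 1$ follows in exactly the same way after swapping the roles of the exponents $t$ and $1-t$. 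Combining with the reduction in the previous paragraph yields $|\log(B_\ell/A_\ell)| \leq -\log(A_\ell B_\ell) = \III{\ell}_t$, which is the claim; note this also records $\III{\ell}_t \geq 0$ since $A_\ell B_\ell \leq 1$.

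There is no real obstacle here: the only mildly non-obvious point is the first reduction, namely observing that the two-sided bound collapses, via sign analysis of $\log(B_\ell/A_\ell)$, to the one-sided estimate $\max\{A_\ell,B_\ell\}\leq 1$; once that is seen, the proof is a one-line application of weighted AM--GM. (Alternatively, one can phrase $-\log A_\ell$ and $-\log B_\ell$ as the order-$t$ and order-$(1-t)$ Chernoff-type quantities between $\mathrm{Bern}(p_\ell)$ and $\mathrm{Bern}(q_\ell)$ and invoke their nonnegativity, but the direct AM--GM argument is cleaner and self-contained.)
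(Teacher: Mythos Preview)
Your proof is correct and follows essentially the same approach as the paper: both reduce the inequality to showing $A_\ell\le 1$ and $B_\ell\le 1$, then combine via $|\log B_\ell-\log A_\ell|\le -\log A_\ell-\log B_\ell=\III{\ell}_t$. The only difference is that the paper establishes $A_\ell,B_\ell\le 1$ by a convexity argument (the derivative in $t$ is increasing, so the function is convex with boundary values equal to $1$), whereas you use the weighted AM--GM inequality termwise; your route is a bit more direct.
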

\begin{proof}
  We first show the numerator $p_\ell^t q_\ell^{1-t} + (1-p_\ell)^t (1-q_\ell)^{1-t}\leq 1 $. To do this, we take the derivative w.r.t. $t$:
  $$
    \frac{\partial}{\partial t} \bigg(p_\ell^t q_\ell^{1-t} + (1-p_\ell)^t (1-q_\ell)^{1-t} \bigg)= q_\ell \bigg(\frac{p_\ell}{q_\ell}\bigg)^t \log \frac{p_\ell}{q_\ell} + (1-q_\ell) \bigg(\frac{1-p_\ell}{1-q_\ell}\bigg)^t \log \frac{1-p_\ell}{1-q_\ell}. 
  $$
  Note that since $p_\ell > q_\ell$, the right-hand side above is an increasing function in $t$. So the numerator is a convex function in $t$. This means that its maximum must occur at the boundary, which is at either $t = 0$ or $t = 1$, both of which gives $p_\ell^t q_\ell^{1-t} + (1-p_\ell)^t (1-q_\ell)^{1-t} = 1$. By symmetry, the denominator also satisfies $p_\ell^{1-t}q_\ell^t + (1-p_\ell)^{1-t}(1-q_\ell)^t \leq 1$. Thus, we can proceed by
  \begin{align*}
    & \bigg|\log \frac{p_\ell^t q_\ell^{1-t} + (1-p_\ell)^t (1-q_\ell)^{1-t}}{p_\ell^{1-t}q_\ell^t + (1-p_\ell)^{1-t}(1-q_\ell)^t}\bigg| \\
    & = \bigg|\log\bigg(p_\ell^t q_\ell^{1-t} + (1-p_\ell)^t (1-q_\ell)^{1-t}\bigg) - \log\bigg(p_\ell^{1-t}q_\ell^t + (1-p_\ell)^{1-t}(1-q_\ell)^t\bigg)\bigg| \\
    & \leq - \log \bigg(p_\ell^t q_\ell^{1-t} + (1-p_\ell)^t (1-q_\ell)^{1-t}\bigg) - \log \bigg(p_\ell^{1-t}q_\ell^t + (1-p_\ell)^{1-t}(1-q_\ell)^t\bigg) \\
    & = \III{\ell}_t,
  \end{align*}
  which is the desired result.
\end{proof}

Invoking Lemma \ref{lemma:glob_nodewise_err_decomp_1st_term_part1}, we know the on the event $E_i \cap F_i$, 
\begin{align*}
  & \bigg|(\mmm{\ell, -i}_+ - \mmm{\ell, -i}_-) - (1-2\rho)(n^{\star}_{\zzz{\ell}_i} - n^{\star}_{-\zzz{\ell}_i})\bigg| \\ 
  & \leq \bigg|(\mmm{\ell, -i}_+ - \mmm{\ell, -i}_-) - (\nnn{\ell, -i}_+ - \nnn{\ell, -i}_-)\bigg| + \bigg|(\nnn{\ell, -i}_+ - \nnn{\ell, -i}_-) - (1-2\rho)(n^{\star}_{\zzz{\ell}_i} - n^{\star}_{-\zzz{\ell}_i})\bigg|\\
  & \leq n\cdot ( 2\delta_n + n^{-(1-c')/2} + n^{-1}).
\end{align*}
Combining the above inequality with Lemma \ref{lemma:glob_nodewise_err_decomp_1st_term_part2}, and recalling that $\zzz{\ell}_i = \xi_\ell \bz^\star_i$ for $\ell\in[L]$, we get
\begin{align*}
  & \mathscr{T}_{1, \ell}\cdot \Indc_{E_i\cap F_i} \\
  & \leq \exp\bigg\{\frac{(1-2\rho)(n^{\star}_{\bz^\star_i} - n^{\star}_{-\bz^\star_i})}{2}\cdot \bigg(\indc{\xi_\ell = 1} - \indc{\xi_\ell = -1}\bigg)
  \times \log \bigg(\frac{p_\ell^t q_\ell^{1-t} + (1-p_\ell)^t (1-q_\ell)^{1-t}}{p_\ell^{1-t} q_\ell^t + (1-p_\ell)^{1-t}(1-q_\ell)^t}\bigg)\\
  % \label{eq:glob_nodewise_err_decomp_1st_term}
  & \qquad  \qquad + \frac{ n\cdot ( 2\delta_n + n^{-(1-c')/2} + n^{-1} )}{2} \cdot \III{\ell}_t\bigg\}\\
  & \leq \exp\bigg\{   \frac{n\III{\ell}_t}{2} \cdot \bigg((1-2\rho)(\beta  - \beta^{-1}) + n^{-(1-c')/2} + n^{-1} \bigg)\bigg\} \nonumber\\
  \label{eq:glob_nodewise_err_decomp_1st_term}
  & = \exp\{o(1)\cdot n \III{\ell}_t\}, \numberthis
\end{align*}
where the last inequality is by $\beta = 1+o(1)$.

~\\
{\noindent \bf Bounding the second term.} Since $\mmm{\ell, -i}_+ + \mmm{\ell, -i}_- = n-1$, we have
\begin{align*}
  \mathscr{T}_{2, \ell} & = \exp\bigg\{ \frac{n-1}{2}\cdot \log\bigg(p_\ell q_\ell + (1-p_\ell)(1-q_\ell) + [q_\ell(1-p_\ell)]^{1-t}[p_\ell(1-q_\ell)]^t + [q_\ell(1-p_\ell)]^t [p_\ell(1-q_\ell)]^{1-t}\bigg)\bigg\} \\
  \label{eq:glob_nodewise_err_decomp_2ed_term}
  & = \exp\{-(n-1)\III{\ell}_t/2\} \numberthis
\end{align*}

~\\
{\noindent \bf Bounding the third term.} With some algebra, one can show that
\begin{align*}
   \frac{p_\ell \bigg(\frac{p_\ell(1-q_\ell)}{q_\ell(1-p_\ell)}\bigg)^t + (1-p_\ell)}{q_\ell \bigg(\frac{p_\ell(1-q_\ell)}{q_\ell(1-p_\ell)}\bigg)^t + (1-q_\ell)}  
  & = 1 + \frac{(p_\ell - q_\ell)\bigg[\bigg(1 + \frac{p_\ell - q_\ell}{q_\ell - p_\ell q_\ell}\bigg)^t - 1\bigg]}{1-q_\ell + q_\ell\bigg(1 + \frac{p_\ell - q_\ell}{q_\ell - p_\ell q_\ell}\bigg)^t}.
\end{align*}
Since $(1+ \frac{p_\ell-q_\ell}{q_\ell-p_\ell q_\ell})^t - 1\leq \frac{p_\ell-q_\ell}{q_\ell-p_\ell q_\ell}$ when $t \in[0, 1]$, along with the assumption that $q _\ell < p_\ell \leq 1- c$, the right-hand side above can be bounded by
$$
  \frac{p_\ell \bigg(\frac{p_\ell(1-q_\ell)}{q_\ell(1-p_\ell)}\bigg)^t + (1-p_\ell)}{q_\ell \bigg(\frac{p_\ell(1-q_\ell)}{q_\ell(1-p_\ell)}\bigg)^t + (1-q_\ell)} 
  \leq 1 + \calO\bigg(\frac{(p_\ell-q_\ell)^2}{p_\ell}\bigg) \leq \exp\bigg\{\calO\bigg(\frac{(p_\ell-q_\ell)^2}{p_\ell}\bigg)\bigg\}.
$$
A similar argument shows that 
$$
  \frac{q_\ell \bigg(\frac{q_\ell(1-p_\ell)}{p_\ell(1-q_\ell)}\bigg)^t + (1-q_\ell)}{p_\ell \bigg(\frac{q_\ell(1-p_\ell)}{p_\ell(1-q_\ell)}\bigg)^t + (1-p_\ell)} \leq \exp\bigg\{\calO\bigg(\frac{(p_\ell-q_\ell)^2}{p_\ell}\bigg)\bigg\}.
$$
Now, on the event $E_i$, we have
\begin{align*}
  |\mmm{\ell, -i}_- - \tmmm{\ell, -i}_-| & = \bigg|\sum_{j\neq i}\bigg(\indc{\tzzz{\star, -i}_j = -\zzz{\ell}_i} - \indc{\tzzz{\star, -i}_j = -\zzz{\ell}_i, \tzzz{\star, -i}_j = \zzz{\ell}_j}\bigg)\bigg|\\
  & \leq \sum_{j\neq i} \indc{\tzzz{\star, -i}_j = -\zzz{\ell}_j} \\
  & \leq n\delta_n,
\end{align*}
and the same bound holds for $|\mmm{\ell, -i}_+ - \tmmm{\ell, -i}_+|$. Thus, we get
\begin{align*}
  \label{eq:glob_nodewise_err_decomp_3ed_term}
  \mathscr{T}_{3, \ell}\cdot \Indc_{E_i} \leq \exp\bigg\{\calO\bigg(\frac{n\delta_n (p_\ell-q_\ell)^2}{p_\ell}\bigg)\bigg\} = \exp\bigg\{ o(1) \cdot n\III{\ell}_{1/2} \bigg\}, \numberthis
\end{align*}
where the last inequality is by Lemma \ref{lemma:asymp_equiv_of_I_t}.

~\\
{\bf \noindent Summarizing the three terms.} Plugging   \eqref{eq:glob_nodewise_err_decomp_1st_term}, \eqref{eq:glob_nodewise_err_decomp_2ed_term} and \eqref{eq:glob_nodewise_err_decomp_3ed_term} to   \eqref{eq:glob_nodewise_err_decom_3terms}, we get
\begin{align*}
  & \bbP(\barzzz{\star, -i}_i = -\bz^\star_i , E_i\cap F_i)  \nonumber\\
  & \leq \sum_{S \subseteq [L]} 
  \sum_{x \in\{ -|S^c|+2k: 0\leq k \leq |S^c|\}} 
  \binom{|S^c|}{\frac{|S^c| + x}{2}} 
  \exp\bigg\{ -|S^c|\log\frac{1}{\sqrt{\rho(1-\rho)}} + x(1-2t_{S, x, \xi})\log \sqrt{\frac{1-\rho}{\rho}}\bigg\} \nonumber\\
  & ~~ \times \sum_{\xi\in\{\pm 1\}^S}\bbP(\zzz{S}_i =  \bz^\star_i \xi ) 
  \cdot \bbE_{\{\zzz{\ell}_{-i}\}_{\ell=1}^L} \bigg\{
  \prod_{\ell\in S}\bbE\bigg[ \exp\big\{ -n\III{\ell}_{t_{S, x, \xi}}/2 + o(1)\cdot n(\III{\ell}_{t_{S, x , \xi}} + \III{\ell}_{1/2})  \big\} ~\bigg|~ \{\zzz{\ell}_{-i}\}_{\ell=1}^L \bigg]\bigg\} \\
  & \leq \sum_{S \subseteq [L]} 
  \sum_{x \in\{ -|S^c|+2k: 0\leq k \leq |S^c|\}} 
  \binom{|S^c|}{\frac{|S^c| + x}{2}} 
  \exp\bigg\{ -|S^c|\log\frac{1}{\sqrt{\rho(1-\rho)}} + x(1-2t_{S, x, \xi})\log \sqrt{\frac{1-\rho}{\rho}}\bigg\} \nonumber\\
  \label{eq:nodewise_err_decomp_3terms_simplified}
  & ~~ \times \sum_{\xi\in\{\pm 1\}^S}\bbP(\zzz{S}_i =  \bz^\star_i \xi ) 
  \cdot \exp\bigg\{ -\frac{n}{2}\sum_{\ell\in S}\III{\ell}_{t_{S, x, \xi}}+ o(1)\cdot n\sum_{\ell\in S}\III{\ell}_{1/2}\bigg\},\numberthis
\end{align*}
where the last inequality is by $\III{\ell}_t\leq \III{\ell}_{1/2}$ for any $t\in[0, 1]$, proved in Lemma \ref{lemma:monotonicity_of_I_t}. This is a summation over all $2^L$ subsets of $[L]$, and we now carefully choose $t_{x, S, \xi}$ to make each summand as tight as possible.

~\\
{\bf \noindent Case A: either $\boldsymbol{|S^c|}$ is even, or $\boldsymbol{\log e|S^c| \geq \sqrt{J_\rho}}$.} In this case, we choose $t_{S, x, \xi} = 1/2$. For any fixed $|S^c|$ falling into this case, the corresponding summand in the right-hand side of   \eqref{eq:nodewise_err_decomp_3terms_simplified} becomes
\begin{align*}
  & \sum_{x \in\{ -|S^c|+2k: 0\leq k \leq |S^c|\}} 
  \binom{|S^c|}{\frac{|S^c| + x}{2}} 
  \exp\bigg\{ -|S^c|\log\frac{1}{\sqrt{\rho(1-\rho)}} 
  % + x(1-2 \cdot \frac{1}{2})\log \sqrt{\frac{1-\rho}{\rho}}
  \bigg\} 
    \times \exp\bigg\{-\frac{(1+o(1))n}{2}\sum_{\ell\in S} \III{\ell}_{1/2}\bigg\}\\
  % & = \bbE\bigg[\exp\bigg\{ \bigg( \# \{\ell\in S^c: \zzz{\ell}_i = -\bz^\star_i\} - \# \{\ell\in S^c: \zzz{\ell}_i = \bz^\star_i\} \bigg) \cdot \log \sqrt{\frac{1-\rho}{\rho}}\bigg\}\bigg]\\
  % & \qquad \qquad \times \exp\bigg\{-\frac{(1+o(1))n}{2}\sum_{\ell\in S} \III{\ell}_{1/2}\bigg\}\\
  & = \exp\bigg\{-(1+o(1)) \bigg( |S^c|J_\rho + \sum_{\ell\in S}\III{\ell}_{1/2} \bigg)\bigg\}\\
  & = \exp\bigg\{-(1+o(1))\bigg(|S^c| J_\rho + \psi^\star_S(0)\bigg)\bigg\},
\end{align*}
where the last equality is by Lemma \ref{lemma:ineq_for_cvx_conjugate}. We now make the following claim.
\begin{lemma}
  \label{lemma:nodewise_err_caseA_asymp_equiv}
  Under the setup of Proposition \ref{prop:refine_glob_nodewise}, if $\log e|S^c| \geq \sqrt{J_\rho}$, then we have
  \begin{equation}
    \label{eq:nodewise_err_caseA_asymp_equiv}
    1 - \frac{1}{e^{\sqrt{J_\rho}-1}}\leq \frac{|S^c|J_\rho + \psi^\star_S(0)}{(|S^c|+1)J_\rho + \psi_S(-2J_\rho)} \leq 1.
  \end{equation}  
  % as $n\to \infty$, and the rate of convergence is independent of $S$.
\end{lemma}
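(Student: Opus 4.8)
\textbf{Proof proposal for Lemma \ref{lemma:nodewise_err_caseA_asymp_equiv}.}

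The goal is to show that when $|S^c|$ is moderately large (so that $\log e|S^c| \geq \sqrt{J_\rho}$), the two exponents $\globinfo_S = |S^c|J_\rho + \psi_S^\star(0)$ (the even-parity form) and $\widetilde{\globinfo}_S := (|S^c|+1)J_\rho + \psi_S^\star(-2J_\rho)$ (the odd-parity form) are asymptotically equivalent. The upper bound $\globinfo_S / \widetilde{\globinfo}_S \leq 1$ is the easy half: I would obtain it directly from the convexity inequality already recorded in the excerpt (the inequality labeled \eqref{eq:ineq_for_cvx_conjugate} / Lemma \ref{lemma:ineq_for_cvx_conjugate}), which should give $\psi_S^\star(-2J_\rho) \geq \psi_S^\star(0) - 2J_\rho \cdot (\text{something}) $ — more precisely, since $\psi_S^\star$ is convex and $\psi_S^\star(0) = \sum_{\ell\in S}(n/2)\III{\ell}_{1/2}$, one has $(|S^c|+1)J_\rho + \psi_S^\star(-2J_\rho) \geq |S^c|J_\rho + \psi_S^\star(0)$. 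I expect this is exactly what \eqref{eq:ineq_for_cvx_conjugate} states, so the right-hand inequality is immediate.

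For the left-hand (lower) bound, the plan is to control the gap $\widetilde{\globinfo}_S - \globinfo_S = J_\rho + \big[\psi_S^\star(-2J_\rho) - \psi_S^\star(0)\big]$ from above by something of lower order than $\globinfo_S$ itself. Here I would use two facts about $\psi_S^\star$: first, $\psi_S^\star(0) = \sum_{\ell\in S}(n/2)\III{\ell}_{1/2} \geq 0$; second, a Lipschitz-type or explicit bound on $\psi_S^\star(-2J_\rho) - \psi_S^\star(0)$. Since $\psi_S^\star(a) = \sup_{t\in[0,1]} at - \psi_S(t)$ and the supremizing $t$ lies in $[0,1]$, we get $\psi_S^\star(-2J_\rho) - \psi_S^\star(0) \leq 0$ when the optimizer of $\psi_S^\star(0)$ is used as a feasible point — wait, more carefully: $\psi_S^\star(-2J_\rho) = \sup_t (-2J_\rho t - \psi_S(t)) \leq \sup_t(-\psi_S(t)) = \psi_S^\star(0)$ since $t \geq 0$, so in fact $\psi_S^\star(-2J_\rho) \leq \psi_S^\star(0)$, hence $\widetilde{\globinfo}_S - \globinfo_S \leq J_\rho$. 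Therefore
\[
\frac{\globinfo_S}{\widetilde{\globinfo}_S} = 1 - \frac{\widetilde{\globinfo}_S - \globinfo_S}{\widetilde{\globinfo}_S} \geq 1 - \frac{J_\rho}{\widetilde{\globinfo}_S} \geq 1 - \frac{J_\rho}{(|S^c|+1)J_\rho} = 1 - \frac{1}{|S^c|+1},
\]
which already gives a clean bound. To match the stated form $1 - 1/e^{\sqrt{J_\rho}-1}$, I would then invoke the hypothesis $\log e|S^c| \geq \sqrt{J_\rho}$, i.e. $|S^c| + 1 > |S^c| \geq e^{\sqrt{J_\rho}-1}$, so that $1/(|S^c|+1) \leq 1/e^{\sqrt{J_\rho}-1}$, yielding exactly \eqref{eq:nodewise_err_caseA_asymp_equiv}.

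The main obstacle, such as it is, is bookkeeping: making sure the direction of the inequality $\psi_S^\star(-2J_\rho) \leq \psi_S^\star(0)$ is used correctly (it follows because $-2J_\rho < 0$ and the Legendre variable $t$ ranges over nonnegative values), and checking that $\widetilde{\globinfo}_S \geq (|S^c|+1)J_\rho$ genuinely holds, which requires $\psi_S^\star(-2J_\rho) \geq 0$ — this is again part of the inequality chain \eqref{eq:ineq_for_cvx_conjugate} already invoked elsewhere in the excerpt (e.g. in Case A / Case B / Case C of the proof of Lemma \ref{lemma:opt_test_error_z_star}, where $\psi_S^\star(-2J_\rho) \geq 0$ is used). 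So all ingredients are already available; the proof is a two-line sandwich once the convexity/monotonicity facts about $\psi_S^\star$ are cited. I would present it as: (i) cite Lemma \ref{lemma:ineq_for_cvx_conjugate} for $0 \leq \psi_S^\star(-2J_\rho) \leq \psi_S^\star(0)$ and the upper bound; (ii) deduce $\widetilde{\globinfo}_S - \globinfo_S \in [0, J_\rho]$; (iii) divide and use the hypothesis on $|S^c|$.
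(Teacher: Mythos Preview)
Your proposal is correct and follows essentially the same route as the paper: both use Lemma~\ref{lemma:ineq_for_cvx_conjugate} to sandwich $\psi_S^\star(-2J_\rho)$ between $\psi_S^\star(0)-J_\rho$ and $\psi_S^\star(0)$, deduce that the difference between denominator and numerator lies in $[0,J_\rho]$, and then invoke the hypothesis $\log e|S^c|\ge \sqrt{J_\rho}$ to convert $1-1/|S^c|$ (or your slightly sharper $1-1/(|S^c|+1)$) into $1-1/e^{\sqrt{J_\rho}-1}$. The only cosmetic difference is that the paper writes the lower bound on the ratio as $1-\tfrac{1}{|S^c|}$ while you obtain $1-\tfrac{1}{|S^c|+1}$; both suffice.
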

\begin{proof}
  By Lemma \ref{lemma:ineq_for_cvx_conjugate}, the denominator in the left-hand side of \eqref{eq:nodewise_err_caseA_asymp_equiv} lies between
  $
    -J_\rho + \sum_{\ell\in S} \III{\ell}_{1/2}
  $ 
  and  
  $
  \sum_{\ell \in S} \III{\ell}_{1/2}.
  $
  Thus, the left-hand side of \eqref{eq:nodewise_err_caseA_asymp_equiv} is between
  $$
    \bigg[\frac{|S^c| J_\rho + \sum_{\ell\in S} \III{\ell}_{1/2}}{(|S^c| + 1) J_\rho + \sum_{\ell\in S} \III{\ell}_{1/2}}~ , ~1\bigg].
  $$
  Note that the lower bound in the above display can be further lower bounded by
  $$
    1 - \frac{J_\rho}{(|S^c|+ 1)J_\rho + \sum_{\ell\in S}\III{\ell}_{1/2}} \geq 1 - \frac{1}{|S^c|} \geq 1 - \frac{1}{e^{\sqrt{J_\rho} - 1}},
  $$
  where the last inequality is by our assumption that $\log e|S^c| \geq \sqrt{J_\rho}$.
  % We finish the proof by noting that $J_\rho \to\infty$ as $n\to \infty$. 
\end{proof}
The above lemma tells that under Case A, each summand (for a fixed $S$) in the right-hand side of \eqref{eq:nodewise_err_decomp_3terms_simplified} can be upper bounded by
\begin{align*}
\begin{cases}
  \exp\bigg\{-(1+o(1)) \bigg(|S^c| J_\rho + \psi^\star_S(0)\bigg)\bigg\} & \textnormal{ if } |S^c| \textnormal{ is even},\\
  \exp\bigg\{-(1+o(1)) \bigg((|S^c| + 1) J_\rho + \psi^\star_S(-2J_\rho)\bigg)\bigg\} & \textnormal{ if } |S^c| \textnormal{ is odd}.
\end{cases}
\end{align*}

~\\
{\bf \noindent Case B: $\boldsymbol{|S^c|}$ is odd and $\boldsymbol{\log e|S^c| \leq \sqrt{J_\rho}}$.} With the requirement that $t_{S, x , \xi} = t_{S, x}$ (i.e., $t$ is independent of $\xi$), each summand (for a fixed $S$) in the right-hand side of \eqref{eq:nodewise_err_decomp_3terms_simplified} becomes
\begin{align*}
  & \sum_{x \in\{ -|S^c|+2k: 0\leq k \leq |S^c|\}} 
  \binom{|S^c|}{\frac{|S^c| + x}{2}} 
  \exp\bigg\{ -|S^c|\log\frac{1}{\sqrt{\rho(1-\rho)}} + x(1-2t_{S, x})\log \sqrt{\frac{1-\rho}{\rho}}\bigg\} \\
  & \qquad \times \exp\bigg\{ - \frac{n}{2}\sum_{\ell\in S} \III{\ell}_{t_{S, x}} + o(1)\cdot n\sum_{\ell\in S} \III{\ell}_{1/2}\bigg\} \\
  & \leq 
  \sum_{x \in\{ -|S^c|+2k: 0\leq k \leq |S^c|\}} 
  \exp\bigg\{ |S^c| \log (e|S^c|) - |S^c| \log \frac{1}{\sqrt{\rho(1-\rho)}} + x(1- 2t_{S, x}) \log \sqrt{\frac{1-\rho}{\rho}}\bigg\}\\
  & \qquad \times \exp\bigg\{ - \frac{n}{2}\sum_{\ell\in S} \III{\ell}_{t_{S, x}} + o(1)\cdot n\sum_{\ell\in S} \III{\ell}_{1/2}\bigg\} \\
  & \leq 
  \sum_{x \in\{ -|S^c|+2k: 0\leq k \leq |S^c|\}} 
  \exp\bigg\{|S^c| \bigg(\sqrt{J_\rho} - \log \frac{1}{\sqrt{\rho(1-\rho)}}\bigg) + x(1- 2t_{S, x}) \log \sqrt{\frac{1-\rho}{\rho}}\bigg\}\\
  \label{eq:nodewise_err_decomp_3terms_simplified_caseB}
  & \qquad \times \exp\bigg\{ - \frac{n}{2}\sum_{\ell\in S} \III{\ell}_{t_{S, x}} + o(1)\cdot n \sum_{\ell\in S} \III{\ell}_{1/2}\bigg\},\numberthis
\end{align*}
where the first inequality is by $\binom{n}{k}\leq (en/k)^k$ for any values of $n, k$ such that $1\leq k\leq n$, and the second inequality is by our assumption that $\log e |S^c|\leq \sqrt{J_\rho}$. 

Let us require $t_{x, S}$ to be symmetric about $1/2$:
$$
  \indc{x\geq 0}\cdot t_{S, x}  - \frac{1}{2} = \frac{1}{2} - \indc{x\leq 0} \cdot t_{S, x} .
$$
Under such a requirement, using the fact that $\III{\ell}_t$ is also symmetric about $1/2$ (i.e., $\III{\ell}_{1/2-\delta} = \III{\ell}_{1/2 + \delta}$ for any $\delta\in[0, 1/2]$), the right-hand side of \eqref{eq:nodewise_err_decomp_3terms_simplified_caseB} becomes 
\begin{align*}
  & 2 \times \sum_{x\in\{-|S^c| + 2k: 0\leq k \leq (|S^c|-1)/2\}} 
  \exp\bigg\{|S^c| \bigg(\sqrt{J_\rho} - \log \frac{1}{\sqrt{\rho(1-\rho)}}\bigg) - |x|(1- 2t_{S, x}) \log \sqrt{\frac{1-\rho}{\rho}}\bigg\}\\
  & \qquad \times \exp\bigg\{ - \frac{n}{2}\sum_{\ell\in S} \III{\ell}_{t_{S, x}} + o(1)\cdot n\sum_{\ell\in S} \III{\ell}_{1/2}\bigg\} \\
  & \leq (|S^c| + 1) \exp\bigg\{|S^c| \bigg(\sqrt{J_\rho} - \log \frac{1}{\sqrt{\rho(1-\rho)}}\bigg) - (1- 2t_{S, x}) \log \sqrt{\frac{1-\rho}{\rho}} - \frac{n}{2}\sum_{\ell\in S} \III{\ell}_{t_{S, x}} + o(1)\cdot n\sum_{\ell\in S}\III{\ell}_{1/2}\bigg\},
\end{align*}
where the inequality is because the minimum value that $|x|$ can take is $1$, a consequence of $|S^c|$ being odd. Rearranging terms and using $\log e|S^c|\leq \sqrt{J_\rho}$, the right-hand side above can be further upper bounded by
\begin{align*}
  & \exp\bigg\{ |S^c|\cdot \calO(\sqrt{J_\rho}) + o(1)\cdot n\sum_{\ell\in S}\III{\ell}_{1/2} \\
  & \qquad - |S^c|\log \frac{1}{\sqrt{\rho(1-\rho)}} - \log \sqrt{\frac{1-\rho}{\rho}} - 2t_{S, x}\cdot \log\sqrt{\frac{1-\rho}{\rho}} - \frac{n}{2}\sum_{\ell\in S}\III{\ell}_{t_{S, x}}\bigg\}\\
  & \leq \exp\bigg\{ |S^c|\cdot \calO(\sqrt{J_\rho}) + o(1)\cdot n\sum_{\ell\in S}\III{\ell}_{1/2}  -(1+o(1))\bigg( (|S^c|+1)J_\rho  -  2t_{S, x}J_\rho - \psi_S(t_{S, x}) \bigg) \bigg\},
\end{align*}
where the inequality is by $\rho =o(1)$. Now the optimal choice of $t_{S, x}$ is clear. For $x < 0$, choosing
$
  t_{S, x} = \argmax_{0\leq t\leq 1} -2t J_\rho - \psi_S(t)
$
gives the following upper bound for the right-hand side of \eqref{eq:nodewise_err_decomp_3terms_simplified_caseB}:
\begin{align*}
  & \exp\bigg\{ |S^c|\cdot \calO(\sqrt{J_\rho}) + o(1)\cdot n\sum_{\ell\in S}\III{\ell}_{1/2}- (1+o(1))\bigg( (|S^c| + 1) J_\rho + \psi_S^\star(-2J_\rho)\bigg)\bigg\} \\
  \label{eq:glob_nodewise_err_caseB}
  & \leq \exp\bigg\{- (1+o(1))\bigg( (|S^c| + 1) J_\rho + \psi_S^\star(-2J_\rho)\bigg)\bigg\},\numberthis
\end{align*}
where the last inequality is by $\sqrt{J_\rho}\ll J_\rho$ and $\frac{n}{2}\cdot \sum_{\ell\in S}\III{\ell}_{1/2}\leq \psi_S^\star(-2J_\rho) + J_\rho$, the latter of which is proved in Lemma \ref{lemma:ineq_for_cvx_conjugate}.

~\\
{\bf \noindent Finishing the proof of Proposition \ref{prop:refine_glob_nodewise}.} Summarizing Case A and Case B, we have
\begin{align*}
  \bbP(\barzzz{\star, -i}_i = -\bz^\star_i, E_i\cap F_i) 
  & \leq \sum_{S\subseteq [L]: |S^c|\textnormal{ even}} \exp\bigg\{- (1+o(1)) \bigg(|S^c| J_\rho + \psi^\star_S(0)\bigg)\bigg\} \\ 
  & \qquad + \sum_{S\subseteq[L]: |S^c|\textnormal{ odd}} \exp\bigg\{ -(1+o(1))\bigg((|S^c|+1) J_\rho + \psi_S^\star(-2J_\rho)\bigg)\bigg\},
\end{align*}
where we emphasize that the $o(1)$ terms are independent of $S$. The proof is concluded by plugging the above inequality to   \eqref{eq:glob_nodewise_err_decomp}.

\subsection{Proof of Theorem \ref{thm:refine_ind}}\label{prf:thm:refine_ind}
We use the same notations as those in the proof of Theorem \ref{thm:refine_glob}. 
% As mentioned in Section \ref{prf:thm:refine_glob}, while the high-level idea of the proof has appeared 
The proof is based on the following counterpart to Proposition \ref{prop:refine_glob_nodewise}.
\begin{proposition}
  \label{prop:refine_ind_nodewise}
  Fix $\ell\in[L]$. Under the setup of Proposition \ref{prop:refine_glob_nodewise}, there exists a sequence $\delta_n'=o(1)$ and an absolute constant $C''>0$ such that for any $i\in[n]$, we have 
    \begin{align}
    \label{eq:upper_bound_z_l}
    \bbP(\pi_i\barzzz{\ell, -i}_i \neq  \zzz{\ell}_i) 
    & \leq  C''n^{-(1+\ep_{\init})}+  \sum_{S\subseteq[L]\setminus\{\ell\}} \bigg(e^{-(1-\delta_n') \globinfo_{S\cup\{\ell\}}} + e^{-(1-\delta_n')\indinfo_{S\cup\{\ell\}}}\bigg),
    % \sum_{\substack{S\subseteq[L] \\ |S^c| \textnormal{ even}}} \exp\bigg\{-(1-{\delta_n'}) \bigg(|S^c|J_\rho + \psi_S^\star(0)\bigg)\bigg\} \nonumber\\
    % & \qquad + \sum_{\substack{S\subseteq[L]\\ |S^c|\textnormal{ odd}}} \exp\bigg\{-(1-{\delta_n'})\bigg((|S^c| + 1)J_\rho + \psi_S^\star(-2J_\rho)\bigg)\bigg\}.
    % & \qquad \lor \exp\bigg\{-(1+\underline{\delta_n}) \max_{\substack{S\subseteq[L]\\ S\textnormal{ odd}}}\bigg((|S^c| + 1)J_\rho + \psi_S^\star(-2J_\rho)\bigg)\bigg\}.
  \end{align}
  where $\globinfo_S, \indinfo_S$ are defined in   \eqref{eq:globinfo} and \eqref{eq:indinfo} respectively. 
\end{proposition}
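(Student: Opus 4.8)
\textbf{Proof proposal for Proposition \ref{prop:refine_ind_nodewise}.}

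The plan is to mirror the structure of the proof of Proposition \ref{prop:refine_glob_nodewise}, but now tracking the error in recovering $\zzz{\ell}_i$ rather than $\bz^\star_i$. Fix $i\in[n]$ and assume $\pi_i = 1$ without loss of generality. Recall that in Stage \RN{2} of Algorithm \ref{alg: provable_refinement}, the pair $(\barzzz{\star,-i}_i, \barzzz{\ell,-i}_i)$ is obtained jointly from the maximizer \eqref{eq:provable_refinement_obj_func}, where $\barzzz{\ell,-i}_i = \sfz(\ell, \barzzz{\star,-i}_i)$. The event $\{\barzzz{\ell,-i}_i \neq \zzz{\ell}_i\}$ can be split into two pieces according to whether the \emph{global} coordinate was recovered correctly: if $\barzzz{\star,-i}_i = \bz^\star_i$, the error comes purely from the layer-$\ell$ subproblem (the label sampling model), whereas if $\barzzz{\star,-i}_i = -\bz^\star_i$, the error is inherited from misrecovering $\bz^\star_i$, which is already controlled by Proposition \ref{prop:refine_glob_nodewise}. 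Concretely, I would write
\begin{align*}
	\bbP(\barzzz{\ell,-i}_i \neq \zzz{\ell}_i)
	& \leq \bbP(\barzzz{\star,-i}_i \neq \bz^\star_i) + \bbP(\barzzz{\ell,-i}_i \neq \zzz{\ell}_i,\ \barzzz{\star,-i}_i = \bz^\star_i),
\end{align*}
and the first term is bounded by $C'n^{-(1+\ep_{\init})} + \sum_{S\subseteq[L]} e^{-(1-\delta_n')\globinfo_S}$ via Proposition \ref{prop:refine_glob_nodewise}. Since $\globinfo_S \geq \globinfo_{S\cup\{\ell\}}$ is not generally true, I would instead re-index: every $S\subseteq[L]$ either contains $\ell$ (write $S = S'\cup\{\ell\}$ with $S'\subseteq[L]\setminus\{\ell\}$, contributing an $\globinfo_{S'\cup\{\ell\}}$ term directly) or does not contain $\ell$, in which case I absorb it by noting $\globinfo_S = \globinfo_{(S\setminus\{\ell\})\cup\{\ell\}}$ after re-labelling, or more carefully bound $e^{-(1-\delta_n')\globinfo_S}$ against the corresponding $S\cup\{\ell\}$ term using $J_\rho\to\infty$. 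The cleanest route is to observe that when $\ell\notin S$, moving $\ell$ from $S^c$ into $S$ changes $|S^c|$ by one and shifts $\psi_S^\star$ by a bounded amount relative to $J_\rho$, so each such term is dominated by a term of the desired form up to the $(1+o(1))$ in the exponent; I will state this as a short lemma or fold it into the union bound.

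The substantive work is the second term, $\bbP(\barzzz{\ell,-i}_i \neq \zzz{\ell}_i,\ \barzzz{\star,-i}_i = \bz^\star_i)$. On the high-probability event $E_i\cap F_i$ defined in \eqref{eq:Ei}--\eqref{eq:Fi}, and conditioning on the correct recovery of the global coordinate, the event $\{\barzzz{\ell,-i}_i = -\zzz{\ell}_i\}$ forces $f^{(\ell)}_i(\bz^\star_i, -\zzz{\ell}_i, \tzzz{\star,-i}) \geq f^{(\ell)}_i(\bz^\star_i, \zzz{\ell}_i, \tzzz{\star,-i})$, which after rearranging is exactly a one-layer version of the large-deviation event analyzed in the proof of Proposition \ref{prop:refine_glob_nodewise} — the $\log((1-\rho)/\rho)\indc{s_\ell = s_\star}$ term now plays the role of the $\ZZZ{\ell}$ contributions, but with a \emph{fixed} sign penalty of $\pm\log((1-\rho)/\rho)$ rather than a sum over $S^c$. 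I would then run the same Markov/exponential-tilting argument with tilting parameter $t = t_{S,\ell}$, decomposing over $\xi\in\{\pm1\}^S$ for $S\subseteq[L]\setminus\{\ell\}$ as in \eqref{eq:glob_nodewise_err_decom_3terms}, and controlling the analogues of $\mathscr{T}_{1,\ell}, \mathscr{T}_{2,\ell}, \mathscr{T}_{3,\ell}$ by Lemmas \ref{lemma:glob_nodewise_err_decomp_1st_term_part1}, \ref{lemma:glob_nodewise_err_decomp_1st_term_part2}, and the $E_i$-based bound. The key structural difference is that layer $\ell$ is always ``active'' in the SBM part and never contributes to the $J_\rho$-type counting, which is precisely why the resulting exponent is $\indinfo_{S\cup\{\ell\}}$ (with $|S\cup\{\ell\}|$ determining the parity) rather than $\globinfo$. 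Choosing $t_{S,\ell} = 1/2$ when $|S\cup\{\ell\}|$ is even and $t_{S,\ell} = \argmax_t(-2tJ_\rho - \psi_{S\cup\{\ell\}}(t))$ when odd, exactly as in Cases A and B of the earlier proof, yields the two families of terms $e^{-(1-\delta_n)\indinfo_{S\cup\{\ell\}}}$ and $e^{-(1-\delta_n)\globinfo_{S\cup\{\ell\}}}$ after combining with the global error bound.

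The main obstacle I anticipate is bookkeeping the interaction between the two coordinates $\barzzz{\star,-i}_i$ and $\barzzz{\ell,-i}_i$ cleanly. Because they are chosen jointly in \eqref{eq:provable_refinement_obj_func}, conditioning on $\{\barzzz{\star,-i}_i = \bz^\star_i\}$ is not independent of the layer-$\ell$ data, so I cannot naively factor the probability. The fix is to not condition at all but instead bound directly: the event $\{\barzzz{\ell,-i}_i \neq \zzz{\ell}_i\}\cap\{\barzzz{\star,-i}_i = \bz^\star_i\}$ implies that the \emph{per-layer} optimizer $\sfz(\ell, \bz^\star_i)$ disagrees with $\zzz{\ell}_i$, which is a statement about $\{\AAA{\ell}_{ij}: j\neq i\}$ and $\tzzz{\star,-i}_{-i}$ alone; by the leave-one-out construction these are conditionally independent given $\{\zzz{r}_{-i}\}_{r=1}^L$, restoring tractability. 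I also need to double-check that the $\xi$-decomposition and the alignment argument at the end (passing from $\barzzz{\ell,-i}_i$ to the final aligned $\hzzz{\ell}_i$, paralleling the use of Lemma \ref{lemma:align} in the proof of Theorem \ref{thm:refine_glob}) carry through verbatim, which they should since $\{\hzzz{\ell}_j\}_j$ are aligned using the same bijections $\vartheta^\star_i$ derived from the global estimators. The remaining steps — translating the node-wise bound \eqref{eq:upper_bound_z_l} into the high-probability statement \eqref{eq:refine_ind} via Markov's inequality and the choice of $\overline{\delta}_n'$ — are identical to the end of the proof of Theorem \ref{thm:refine_glob} and I would simply refer to that argument.
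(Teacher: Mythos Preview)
Your second-term analysis is sound in principle: on $\{\barzzz{\star,-i}_i = \bz^\star_i\}$ the layer-$\ell$ estimate is the per-layer optimizer $\sfz(\ell,\bz^\star_i)$, so the error reduces to a single-layer large-deviation event and is bounded by $e^{-(1-\delta_n)\indinfo_{\{\ell\}}}$, which sits inside the target sum. (Your exposition muddles this with a full $S$-decomposition, but either route works for this piece.)

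The genuine gap is in the first term. You upper-bound $\bbP(\barzzz{\star,-i}_i \neq \bz^\star_i)$ by the full sum $\sum_{S\subseteq[L]} e^{-(1-\delta_n')\globinfo_S}$ from Proposition~\ref{prop:refine_glob_nodewise} and then try to absorb the summands with $\ell\notin S$ into terms of the form $e^{-(1-\delta_n)\globinfo_{S\cup\{\ell\}}}$. Your claim that moving $\ell$ from $S^c$ into $S$ ``shifts $\psi_S^\star$ by a bounded amount relative to $J_\rho$'' is false when layer $\ell$ is strong: by Lemma~\ref{lemma:ineq_for_cvx_conjugate}, $\psi^\star_{S\cup\{\ell\}}(0)-\psi^\star_S(0)=\tfrac{n}{2}\III{\ell}_{1/2}$, which can be arbitrarily large compared to $J_\rho$. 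Concretely, take $L=2$, $\ell=1$, with $\tfrac{n}{2}\III{1}_{1/2}\gg J_\rho\gg\tfrac{n}{2}\III{2}_{1/2}$. Then $\globinfo_\varnothing=2J_\rho$ and $\globinfo_{\{2\}}=2J_\rho+\psi^\star_{\{2\}}(-2J_\rho)\le 2J_\rho+\tfrac{n}{2}\III{2}_{1/2}\approx 2J_\rho$, whereas every $\globinfo_{S\cup\{1\}}$ and $\indinfo_{S\cup\{1\}}$ is at least of order $\tfrac{n}{2}\III{1}_{1/2}$. Your bound therefore carries an $e^{-(1-\delta_n')2J_\rho}$ term that is not dominated by anything in the proposition's statement.

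The paper avoids this by never passing to $\bbP(\barzzz{\star,-i}_i\neq\bz^\star_i)$. It decomposes the joint event
\[
\{\barzzz{\ell,-i}_i=-\zzz{\ell}_i\}\cap E_i\cap F_i
=\bigcup_{S\subseteq[L]\setminus\{\ell\}}\bigcup_{s_\star\in\{\pm\bz^\star_i\}}\Big\{\barzzz{\star,-i}_i=s_\star,\ \barzzz{S\cup\{\ell\},-i}_i=-\zzz{S\cup\{\ell\}}_i,\ \barzzz{(S\cup\{\ell\})^c,-i}_i=\zzz{(S\cup\{\ell\})^c}_i\Big\}\cap E_i\cap F_i,
\]
so that layer $\ell$ is always forced into the ``flipped'' set. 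The $s_\star=-\bz^\star_i$ pieces are then literally the summands already bounded in \eqref{eq:glob_nodewise_err_decomp_all_subsets} with flipped set $S\cup\{\ell\}$, yielding $e^{-(1+o(1))\globinfo_{S\cup\{\ell\}}}$ directly. The $s_\star=\bz^\star_i$ pieces lead, after the same Markov/tilting computation, to $e^{-(1+o(1))\indinfo_{S\cup\{\ell\}}}$. No re-indexing is needed because the decomposition is set up from the start to range only over sets containing $\ell$.
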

\begin{proof}
  See Appendix \ref{prf:prop:refine_ind_nodewise}.
\end{proof}

With Lemma \ref{lemma:align} and the above proposition at hand, the rest of the proof is nearly identical to the proof of Theorem \ref{thm:refine_glob}, and we omit the details.

\subsubsection{Proof of Proposition \ref{prop:refine_ind_nodewise}}\label{prf:prop:refine_ind_nodewise}
Without loss of generality we consider the first layer and we assume $\pi_i = +1$. We start by computing
\begin{align*}
  \bbP(\barzzz{1, -i}_i\neq \zzz{1}_i) = \bbP(\barzzz{1, -i}_i = -\zzz{1}_i) \leq \bbP(\barzzz{1, -i}_i = -\zzz{1}_i, E_i\cap F_i) + C' n^{-(1+\ep_{\init})},
\end{align*}
where $E_i, F_i$ are defined in   \eqref{eq:Ei} and \eqref{eq:Fi} respectively, and the last inequality is by   \eqref{eq:high_prob_Ei_Fi}. We now proceed by
\begin{align*}
  & \bbP(\barzzz{1, -i}_i = -\zzz{1}_i, E_i\cap F_i) \\
  & = \sum_{S\subseteq \{2, \hdots, L\}} \bigg(\bbP(\barzzz{\star, -i}_i = -\bz^\star_i, \barzzz{1, -i}_i = -\zzz{1}_i, \barzzz{S, -i}_i = -\zzz{S}_i, \barzzz{S^c, -i}_i = \zzz{S^c}_i, E_i\cap F_i) \\
  \label{eq:ind_nodewise_err_decomp_all_subsets}
  & \qquad + \bbP(\barzzz{\star, -i}_i = \bz^\star_i, \barzzz{1, -i}_i = -\zzz{1}_i, \barzzz{S, -i}_i = -\zzz{S}_i, \barzzz{S^c, -i}_i = \zzz{S^c}_i, E_i\cap F_i)\bigg) ,\numberthis
\end{align*}
where we denoted $\barzzz{S, -i}_i = \{\barzzz{\ell, -i}_i: \ell\in S\}$ and $\zzz{S}_i=\{\zzz{\ell}_i: \ell\in S\}$. The right-hand side above is the superposition of two terms, the first of which has already been calculated in the proof of Proposition \ref{prop:refine_glob_nodewise} (see   \eqref{eq:glob_nodewise_err_decomp_all_subsets}):
\begin{align*}
  & \sum_{S\subseteq\{2, \hdots, L\}}\bbP(\barzzz{\star, -i}_i = -\bz^\star_i, \barzzz{1, -i}_i = -\zzz{1}_i, \barzzz{S, -i}_i = -\zzz{S}_i, \barzzz{S^c, -i}_i = \zzz{S^c}_i, E_i\cap F_i)\\
  \label{eq:ind_nodewise_err_globinfo}
  & \leq \sum_{S\subseteq\{2, \hdots, L\}} e^{-(1+o(1))\globinfo_{S\cup\{1\}}},\numberthis
\end{align*}
where the $o(1)$ term is independent of $S$. For the second term in the right-hand side of \eqref{eq:ind_nodewise_err_decomp_all_subsets}, we have
\begin{align*}
  &\sum_{S\subseteq\{2, \hdots, L\}} \bbP(\barzzz{\star, -i}_i = \bz^\star_i, \barzzz{S\cup\{1\}, -i}_i = -\zzz{S\cup\{1\}}_i, \barzzz{(S\cup\{1\})^c, -i}_i = \zzz{(S\cup\{1\})^c}_i, E_i\cap F_i)\\
  & \leq \sum_{S\subseteq\{2, \hdots, L\}}
  \bbP\bigg\{ \log\bigg(\frac{1-\rho}{\rho}\bigg) \cdot \bigg( \#\{\ell\in S\cup\{1\}: \bz^\star_i = -\zzz{\ell}_i\}-\#\{\ell\in S\cup\{1\}: \bz^\star_i = \zzz{\ell}_i\} \bigg) \\
  & \qquad\qquad \qquad \qquad + \sum_{\ell\in S\cup\{1\}} \sum_{j\neq i: \tzzz{\star, -i}_j = -\zzz{\ell}_i} \bigg[\log\bigg(\frac{p_\ell(1-q_\ell)}{q_\ell(1-p_\ell)}\bigg) \AAA{\ell}_{ij} + \log \bigg(\frac{1-p_\ell}{1-q_\ell}\bigg)\bigg] \\
  &\qquad\qquad \qquad \qquad  - \sum_{\ell\in S\cup\{1\}} \sum_{j\neq i: \tzzz{\star, -i}_j = \zzz{\ell}_i} \bigg[\log\bigg(\frac{p_\ell(1-q_\ell)}{q_\ell(1-p_\ell)}\bigg) \AAA{\ell}_{ij} + \log \bigg(\frac{1-p_\ell}{1-q_\ell}\bigg)\bigg] \geq 0 \textnormal{ and }  E_i\cap F_i
  \bigg\},
\end{align*}
where the inequality is by the fact that the occurrence of the event in each summand of the left-hand side above implies
$$
\sum_{\ell\in S\cup\{1\}} \fff{\ell}_i(\bz^{\star}_i, -\zzz{\ell}_i, \tzzz{\star, -i}) + \sum_{\ell \in (S\cup\{1\})^c} \fff{\ell}_i(\bz^{\star}_i, \zzz{\ell}_i , \tzzz{\star, -i}) \geq \sum_{\ell\in [L]} \fff{\ell}_i(\bz^{\star}_i, \zzz{\ell}_i, \tzzz{\star, -i}).
$$
Since $E_i, F_i$ are both independent of $\{\zzz{\ell}_i\}_{\ell=1}^L$, we can do the following decomposition: 
\begin{align*}
  &\sum_{S\subseteq\{2, \hdots, L\}} \bbP(\barzzz{\star, -i}_i = \bz^\star_i, \barzzz{S\cup\{1\}, -i}_i = -\zzz{S\cup\{1\}}_i, \barzzz{(S\cup\{1\})^c, -i}_i = \zzz{(S\cup\{1\})^c}_i, E_i\cap F_i)\\
  & \leq \sum_{S\subseteq\{2, \hdots, L\}}
  \sum_{\xi\in\{\pm 1\}^{S\cup\{1\}}} \bbP(\zzz{S\cup\{1\}}_i = \bz^\star_i \xi)
  \times \bbP \big(\clover\big)\\
  % \bigg[ \log\bigg(\frac{1-\rho}{\rho}\bigg) \cdot \bigg( \#\{\ell\in S\cup\{1\}: \xi_\ell = -1\}-\#\{\ell\in S\cup\{1\}: \xi_\ell = 1\} \bigg) \\
  % & \qquad\qquad \qquad + \sum_{\ell\in S\cup\{1\}} \sum_{j\neq i: \tzzz{\star, -i} = -\xi_\ell \bz^\star_i} \log\bigg(\frac{p_\ell(1-q_\ell)}{q_\ell(1-p_\ell)}\bigg) \AAA{\ell}_{ij} + \log \bigg(\frac{1-p_\ell}{1-q_\ell}\bigg) \\
  % &\qquad\qquad \qquad  - \sum_{\ell\in S\cup\{1\}} \sum_{j\neq i: \tzzz{\star, -i}_j = \xi_\ell\bz^\star_i} \log\bigg(\frac{p_\ell(1-q_\ell)}{q_\ell(1-p_\ell)}\bigg) \AAA{\ell}_{ij} + \log \bigg(\frac{1-p_\ell}{1-q_\ell}\bigg) \geq 0 \textnormal{ and }  E_i\cap F_i
  % \bigg] \\
  & = \sum_{S\subseteq\{2, \hdots, L\}}
  \sum_{\xi\in\{\pm 1\}^{S\cup\{1\}}} \bbP(\zzz{S\cup\{1\}}_i = \bz^\star_i \xi) \times \bbE_{\{\zzz{\ell}_{-i}\}_{\ell=1}^L}\bigg[\bbP\bigg( \clover ~\bigg|~ \{\zzz{\ell}_{-i}\}_{\ell=1}^L \bigg)\bigg],
\end{align*}
where
\begin{align*}
  \clover &:= \bigg\{ \log\bigg(\frac{1-\rho}{\rho}\bigg) \cdot \bigg( \#\{\ell\in S\cup\{1\}: \xi_\ell = -1\}-\#\{\ell\in S\cup\{1\}: \xi_\ell = 1\} \bigg) \\
  & \qquad \qquad + \sum_{\ell\in S\cup\{1\}} \sum_{j\neq i: \tzzz{\star, -i}_j = -\xi_\ell \bz^\star_i} \bigg[\log\bigg(\frac{p_\ell(1-q_\ell)}{q_\ell(1-p_\ell)}\bigg) \AAA{\ell}_{ij} + \log \bigg(\frac{1-p_\ell}{1-q_\ell}\bigg)\bigg] \\
  &\qquad\qquad  - \sum_{\ell\in S\cup\{1\}} \sum_{j\neq i: \tzzz{\star, -i}_j = \xi_\ell\bz^\star_i} \bigg[\log\bigg(\frac{p_\ell(1-q_\ell)}{q_\ell(1-p_\ell)}\bigg) \AAA{\ell}_{ij} + \log \bigg(\frac{1-p_\ell}{1-q_\ell}\bigg)\bigg] \geq 0 \textnormal{ and }  E_i\cap F_i \bigg\}.
\end{align*}
By Markov's inequality, we have
\begin{align*}
  &\bbP\bigg( \clover ~\bigg|~ \{\zzz{\ell}_{-i}\}_{\ell=1}^L \bigg) \\
  & \leq \bbE\bigg[ \exp\bigg\{t \log \bigg(\frac{1-\rho}{\rho}\bigg)
  \cdot \bigg(\# \{\ell\in S\cup\{1\}: \xi_\ell = -1\} - \#\{\ell\in S\cup\{1\}: \xi_\ell = 1\}\bigg)\\
  & \qquad + t \sum_{\ell\in S\cup\{1\}} \sum_{j\neq i: \tzzz{\star, -i}_j = -\xi_\ell \bz^\star_i} \bigg[\log\bigg(\frac{p_\ell(1-q_\ell)}{q_\ell(1-p_\ell)}\bigg) \AAA{\ell}_{ij} + \log \bigg(\frac{1-p_\ell}{1-q_\ell}\bigg)\bigg]\\
  & \qquad - t \sum_{\ell\in S\cup\{1\}} \sum_{j\neq i: \tzzz{\star, -i}_j = \xi_\ell\bz^\star_i}\bigg[ \log\bigg(\frac{p_\ell(1-q_\ell)}{q_\ell(1-p_\ell)}\bigg) \AAA{\ell}_{ij} + \log \bigg(\frac{1-p_\ell}{1-q_\ell}\bigg)\bigg]
  \bigg\}  \cdot \Indc_{E_i\cap F_i}
  ~\bigg|~ \{\zzz{\ell}_{-i}\}_{\ell=1}^L  \bigg],
\end{align*}
where the value of $t$ will be determined later. Using the same arguments as those that give rise to   \eqref{eq:nodewise_err_decomp_3terms_simplified}, we get
\begin{align*}
  & \bbE\bigg[ \exp\bigg\{ t \sum_{\ell\in S\cup\{1\}} \sum_{j\neq i: \tzzz{\star, -i}_j = -\xi_\ell \bz^\star_i} \bigg[\log\bigg(\frac{p_\ell(1-q_\ell)}{q_\ell(1-p_\ell)}\bigg) \AAA{\ell}_{ij} + \log \bigg(\frac{1-p_\ell}{1-q_\ell}\bigg)\bigg]\\
  & \qquad - t \sum_{\ell\in S\cup\{1\}} \sum_{j\neq i: \tzzz{\star, -i}_j = \xi_\ell\bz^\star_i} \bigg[\log\bigg(\frac{p_\ell(1-q_\ell)}{q_\ell(1-p_\ell)}\bigg) \AAA{\ell}_{ij} + \log \bigg(\frac{1-p_\ell}{1-q_\ell} \bigg\}\bigg] 
  \cdot \Indc_{E_i\cap F_i}
  ~\bigg|~ \{\zzz{\ell}_{-i}\}_{\ell=1}^L\bigg] \\
  &\leq \prod_{\ell\in S\cup\{1\}} \exp\bigg\{-\frac{n}{2}\cdot \III{\ell}_t + o(1)\cdot n\III{\ell}_{1/2}\bigg\}. 
\end{align*}
Hence, we arrive at 
\begin{align*}
  &\sum_{S\subseteq\{2, \hdots, L\}} \bbP(\barzzz{\star, -i}_i = \bz^\star_i, \barzzz{S\cup\{1\}, -i}_i = -\zzz{S\cup\{1\}}_i, \barzzz{(S\cup\{1\})^c, -i}_i = \zzz{(S\cup\{1\})^c}_i, E_i\cap F_i)\\
  & \leq \sum_{S\subseteq \{2, \hdots, L\}} \sum_{\xi\in\{\pm 1\}^{S\cup\{1\}}} \bbP(\zzz{S\cup\{1\}}_i = \bz^\star_i \xi)\\
  &\qquad \times  \exp\bigg\{t_{S, \xi} \log \bigg(\frac{1-\rho}{\rho}\bigg)
  \cdot \bigg(\# \{\ell\in S\cup\{1\}: \xi_\ell = -1\} - \#\{\ell\in S\cup\{1\}: \xi_\ell = 1\}\bigg) \bigg\}\\
  \label{eq:ind_nodewise_err_decomp_indinfo}
  &\qquad \times \exp\bigg\{-\frac{n}{2}\sum_{\ell\in S\cup\{1\}} \III{\ell}_{t_{S, \xi}} + o(1)\cdot n\sum_{\ell\in S\cup\{1\}}\III{\ell}_{1/2} \bigg\}.\numberthis
\end{align*}
We discuss according to two cases.

~\\
{\noindent \bf Case A: either $\boldsymbol{|S\cup\{1\}|}$ is even, or $\boldsymbol{\log e |S\cup\{1\}| \geq \sqrt{J_\rho}}$.} In this case, we choose $t_{S, \xi} = 1/2$. Then for any fixed $S$ satisfying the assumptions made in Case A, the corresponding summand in the right-hand side of   \eqref{eq:ind_nodewise_err_decomp_indinfo} becomes
\begin{align*}
  &\bbE_{\{\zzz{\ell}_i: \ell\in S\cup\{1\}\}} \bigg[\exp\bigg\{\log\sqrt{\frac{1-\rho}{\rho}}\cdot \bigg( \# \{\ell\in S\cup\{1\}: \zzz{\ell}_i = -\bz^\star_i\} - \#\{\ell\in S\cup\{1\}: \zzz{\ell}_i = \bz^\star_i\} \bigg)\bigg\}\bigg] \\
  & \qquad \times \exp\bigg\{-\frac{(1+o(1))\cdot n}{2}\sum_{\ell\in S\cup\{1\}}\III{\ell}_{1/2} \bigg\} \\
  \label{eq:ind_nodewise_err_caseA}
  & = \exp\bigg\{ -|S\cup\{1\}|J_\rho -  \frac{(1+o(1)) \cdot n}{2}\sum_{\ell\in S\cup\{1\}}\III{\ell}_{1/2}\bigg\}\numberthis
\end{align*}
If $|S\cup\{1\}|$ is even, then the right-hand side above is 
$$
  \exp\bigg\{-(1+o(1))\bigg( |S\cup\{1\}| J_\rho - \psi^\star_{S\cup\{1\}}(0)\bigg)\bigg\} = e^{-(1+o(1))\indinfo_{S\cup\{1\}}}.
$$
If $|S\cup\{1\}|$ is odd but $\log e|S\cup\{1\}| \geq \sqrt{J_\rho}$, we have
\begin{align*}
  & (|S\cup\{1\}| + 1) J_\rho + \sup_{0\leq t\leq 1}\bigg\{-2tJ_\rho + \frac{n}{2}\sum_{\ell\in S\cup\{1\}} \III{\ell}_t\bigg\} \\
  &\leq (|S\cup\{1\}| + 1) J_\rho + \frac{n}{2} \sum_{\ell\in S\cup\{1\}} \III{\ell}_{1/2}\\
  & \leq |S\cup\{1\}| J_\rho \cdot ( 1 + e^{-\sqrt{J_\rho} +1} )+ \frac{n}{2}\sum_{\ell\in S\cup\{1\}} \III{\ell}_{1/2} \\
  & \leq (1+o(1)) \bigg(|S\cup\{1\}|J_\rho + \frac{n}{2}\sum_{\ell\in S\cup\{1\}} \III{\ell}_{1/2}\bigg),
\end{align*}
where the first inequality is by Lemma \ref{lemma:monotonicity_of_I_t}, the second inequality is by $\log e|S\cup\{1\}|\geq \sqrt{J_\rho}$ and the third inequality is by $J_\rho \to \infty$ as $n\to\infty$. So we can upper bound the right-hand side of \eqref{eq:ind_nodewise_err_caseA} by
$$
  \exp\bigg\{ (1+o(1))\bigg( (|S\cup\{1\}| + 1) J_\rho + \psi^\star_{S\cup\{1\}}(-2J_\rho)\bigg)\bigg\} = e^{-(1+o(1))\indinfo_{S\cup\{1\}}}.
$$
In summary, for any fixed $S$ satisfying the assumptions made in Case A, the corresponding summand in the right-hand side of \eqref{eq:ind_nodewise_err_decomp_indinfo} can be upper bounded by
$
  e^{-(1+o(1))\indinfo_{S\cup\{1\}}}.
$

~\\
{\noindent \bf Case B: $\boldsymbol{|S\cup\{1\}|}$ is odd, and $\boldsymbol{\log e|S\cup\{1\}| \leq \sqrt{J_\rho}}$.} In this case, we re-write each summand in the right-hand side of \eqref{eq:ind_nodewise_err_decomp_indinfo} by
\begin{align*}
  &\sum_{\xi\in\{\pm 1\}^{S\cup\{1\}}} \bbP(\zzz{S\cup\{1\}}_i = \bz^\star_i \xi)\\
  &\qquad \times  \exp\bigg\{t_{S, \xi} \log \bigg(\frac{1-\rho}{\rho}\bigg)
  \cdot \bigg(\# \{\ell\in S\cup\{1\}: \xi_\ell = -1\} - \#\{\ell\in S\cup\{1\}: \xi_\ell = 1\}\bigg) \bigg\}\\
  &\qquad \times \exp\bigg\{-\frac{n}{2}\sum_{\ell\in S\cup\{1\}} \III{\ell}_{t_{S, \xi}} + o(1)\cdot n\sum_{\ell\in S\cup\{1\}}\III{\ell}_{1/2} \bigg\}\\
  & = \sum_{x\in\{-|S\cup\{1\}| + 2k: 0\leq k\leq |S\cup\{1\}|\}} \binom{|S\cup\{1\}|}{\frac{|S\cup\{1\}| + x}{2}} (1-\rho)^{(|S\cup\{1\}| + x)/2} \rho^{(|S\cup\{1\}| - x)/2} \\
  & \qquad \times \exp\bigg\{ -2x t_{x, S}\log \sqrt{\frac{1-\rho}{\rho}}- \frac{n}{2}\sum_{\ell\in S\cup\{1\}} \III{\ell}_{t_{x, S}} + o(1)\cdot n\sum_{\ell\in S\cup\{1\}} \III{\ell}_{1/2}\bigg\}.
\end{align*}
Using similar arguments as those that give rise to   \eqref{eq:glob_nodewise_err_caseB}, we can bound the right-hand side above by
$$
  \exp\bigg\{ -(1+o(1))\bigg((|S\cup\{1\}| + 1) J_\rho + \psi^\star_{S\cup\{1\}}(-2J_\rho)\bigg)\bigg\} = e^{-(1+o(1))\indinfo_{S\cup\{1\}}}.
$$

~\\
{\bf \noindent Finishing the proof of Proposition \ref{prop:refine_ind_nodewise}}. The proof is concluded by combining Case A and B above.

\subsection{Proof of Theorem \ref{thm:opt_glob_est}}\label{prf:thm:opt_glob_est}
The desired result follows from the following two propositions.
\begin{proposition}
\label{prop:opt_glob_even}
% Assume 
% $$
% \min_{S: |S^c|\textnormal{ even}} \big( |S^c|J_\rho + \psi_S^\star(0)\big) \to \infty.
% $$
For any $\overline{\delta}_n = o(1)$ and any $c\in(0, 1)$, there exists another $\overline{\delta}_n'=o(1)$ such that
\begin{align*}
  & \sum_{S: |S^c|\textnormal{ even}} \exp\big\{- (1-\overline{\delta}_n) \big(|S^c|J_\rho + \psi_S^\star(0)\big)\big\} \\
  & \leq  2\exp\big\{-(1-\overline{\delta}_n') \min_{S:|S^c|\textnormal{ even}}\big(|S^c|J_\rho + \psi_S^\star(0)\big) + \log L + L e^{-(1-\overline{\delta}_n) cJ_\rho}\big\}.
  % & \qquad \times \bigg(|S_{\geq, 1/2}| + (|S_{<, 1/2}|+1)\cdot \exp\bigg\{\sum_{\ell\in S_{<, 1/2}} e^{-(1-\overline{\delta}_n)(J_\rho - n\III{\ell}_{1/2}/2)}\bigg\}\bigg).
\end{align*}  
\end{proposition}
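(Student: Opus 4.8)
The goal is to show that the sum over $S$ with $|S^c|$ even of $\exp\{-(1-\overline\delta_n)(|S^c|J_\rho+\psi_S^\star(0))\}$ is comparable to its largest term, up to a sub-exponential correction that is absorbed into the $o(1)$ adjustment of the exponent. The plan is to first split the sum according to the size $k := |S^c|$, and for each fixed even $k$ bound the number of subsets $S$ with $|S^c|=k$ crudely by $\binom{L}{k}\le L^k$; then within each such group, replace every summand by the maximum over all $S$ with $|S^c|=k$, which is bounded above by $\exp\{-(1-\overline\delta_n)\min_{S:|S^c|\text{ even}}(|S^c|J_\rho+\psi_S^\star(0))\}$ times a factor that accounts for the discrepancy between $|S^c|=k$ and the minimizing cardinality. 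The key structural input is monotonicity-type control of $\psi_S^\star(0)=\tfrac n2\sum_{\ell\in S}\III{\ell}_{1/2}$ in $S$: enlarging $S^c$ by one layer removes a nonnegative term $\tfrac n2\III{\ell}_{1/2}$ from $\psi_S^\star(0)$ but adds $J_\rho$ to $|S^c|J_\rho$, and since layers are added in order of decreasing signal the net effect is controlled. This is exactly where the $Le^{-(1-\overline\delta_n)cJ_\rho}$ term will enter: each extra layer in $S^c$ beyond the minimizer costs at least $J_\rho-\tfrac n2\III{\ell}_{1/2}$, and summing the geometric-like series over $k$ produces a factor $\sum_k L^k e^{-(1-\overline\delta_n)(\text{something})\cdot k}$ which telescopes into $\exp\{\log L + Le^{-(1-\overline\delta_n)cJ_\rho}\}$ after using $\log\binom{L}{k}\le k\log L$ and $1+x\le e^x$.

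More concretely, I would write, with $a_S := |S^c|J_\rho+\psi_S^\star(0)$ and $a^\star := \min_{S:|S^c|\text{ even}} a_S$,
\begin{align*}
\sum_{S:|S^c|\text{ even}} e^{-(1-\overline\delta_n)a_S}
&= e^{-(1-\overline\delta_n)a^\star}\sum_{S:|S^c|\text{ even}} e^{-(1-\overline\delta_n)(a_S-a^\star)}.
\end{align*}
The task reduces to showing $\sum_{S:|S^c|\text{ even}} e^{-(1-\overline\delta_n)(a_S-a^\star)}\le 2\exp\{\log L + Le^{-(1-\overline\delta_n)cJ_\rho}\}$ after slightly shrinking the constant $1-\overline\delta_n$ to $1-\overline\delta_n'$, and absorbing the loss into $\overline\delta_n'$. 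Let $S^\star$ be a minimizer. For a general $S$ with $|S^c|$ even, I would compare $S$ to $S^\star$ by considering $S^c\setminus (S^\star)^c$ and $(S^\star)^c\setminus S^c$; the main lemma to invoke is that for any layer $\ell$, swapping it in or out of $S^c$ changes $a_S$ by at least $J_\rho - \tfrac n2\III{\ell}_{1/2}$ in the appropriate direction, and that by optimality of $S^\star$ every layer $\ell\in S^\star$ (i.e. $\ell\notin (S^\star)^c$) must satisfy $\tfrac n2\III{\ell}_{1/2}\le 2J_\rho$ or so — otherwise moving it into $(S^\star)^c$ would decrease $a$. This gives, for $\ell\notin(S^\star)^c$ added to $S^c$, a net cost bounded below by $(1-c')J_\rho$ for a suitable $c'$; taking $c'$ slightly larger than $c$ and noting the hypothesis $c\in(0,1)$ lets one conclude each such layer contributes a factor at most $e^{-(1-\overline\delta_n)cJ_\rho}$. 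Then
\begin{align*}
\sum_{S:|S^c|\text{ even}} e^{-(1-\overline\delta_n)(a_S-a^\star)}
&\le \Big(\sum_{j\ge 0}\binom{L}{2j} e^{-(1-\overline\delta_n)cJ_\rho\cdot(2j - |(S^\star)^c|)_+}\Big)\cdot(\text{bounded combinatorial factor}),
\end{align*}
and bounding $\binom{L}{2j}\le L^{2j}$, $\sum_j L^{2j}x^{2j}\le (1-L^2x^2)^{-1}\le 2$ once $L^2e^{-2(1-\overline\delta_n)cJ_\rho}<1/2$ (which holds under the hypothesis $Le^{-c'J_\rho}\ll \min_S\globinfo_S\to\infty$), and otherwise using the trivial bound $\le 2^L$ and comparing exponents, yields the claimed $2\exp\{\log L + Le^{-(1-\overline\delta_n)cJ_\rho}\}$ bound. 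The parity constraint "$|S^c|$ even" is irrelevant to the counting and only restricts which $a^\star$ we compare against, so it passes through harmlessly.

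\textbf{Main obstacle.} The delicate point is the bookkeeping in the comparison $a_S - a^\star \ge (1-c)J_\rho\cdot(\text{number of layers in }S^c\setminus(S^\star)^c) - C\cdot(\text{number in }(S^\star)^c\setminus S^c)$: removing layers from $(S^\star)^c$ can only \emph{increase} $a$ (again by optimality of $S^\star$ within the even-cardinality family, provided we can always move in pairs to preserve parity), so the second term should actually have a favorable sign, but making the parity-preserving swap argument airtight — ensuring that one never needs to pass through an odd-cardinality configuration to compare $S$ with $S^\star$ — requires a small amount of care. I expect to handle this by noting that $|S^c|$ and $|(S^\star)^c|$ are both even, so $|S^c\triangle(S^\star)^c|$ is even and can be realized by a sequence of paired add/remove moves, each of which, by the two monotonicity facts above, changes $a$ in a controlled way. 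The rest — the binomial bound, the geometric series, absorbing losses into $\overline\delta_n'$ via $\overline\delta_n' = \overline\delta_n + O\big((\log L + Le^{-cJ_\rho})/\min_S\globinfo_S\big) = o(1)$ using hypothesis \eqref{eq:req_for_opt_glob_est} — is routine.
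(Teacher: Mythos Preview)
Your comparison-to-the-minimizer approach has a real gap at the step where you claim that ``each extra layer in $S^c$ beyond the minimizer costs at least $(1-c')J_\rho$.'' The cost of moving a layer $\ell$ from $S^\star$ into $S^c$ is exactly $J_\rho - \tfrac{n}{2}\III{\ell}_{1/2}$; your optimality argument gives only that this quantity is (roughly) nonnegative, not that it is bounded below by $(1-c')J_\rho$. Nothing prevents many layers from having $\tfrac{n}{2}\III{\ell}_{1/2}$ arbitrarily close to $J_\rho$, in which case the per-layer cost is near zero and your geometric-series bound $\sum_j L^{2j}e^{-2(1-\overline\delta_n)cJ_\rho j}$ fails: each such ``borderline'' layer contributes a factor close to $2$ to the sum, and with $L$ of them you pick up $2^L$, which is not controlled by $Le^{-(1-\overline\delta_n)cJ_\rho}$. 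The fix is to split layers into ``far'' ones with $\tfrac{n}{2}\III{\ell}_{1/2}\le (1-c)J_\rho$ (these do cost $\ge cJ_\rho$ each and feed into the $Le^{-(1-\overline\delta_n)cJ_\rho}$ term) and ``near'' ones with $\tfrac{n}{2}\III{\ell}_{1/2}>(1-c)J_\rho$; for the latter group, each layer contributes at least $(1-c)J_\rho\to\infty$ to $a^\star$ itself, so a multiplicative factor of $2$ per such layer can be absorbed into the $o(1)$ shift from $\overline\delta_n$ to $\overline\delta_n'$. Your proposal does not contain this split, and without it the argument does not close.

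The paper sidesteps this bookkeeping entirely by a different device: since $\psi_S^\star(0)=\tfrac{n}{2}\sum_{\ell\in S}\III{\ell}_{1/2}$ is additive, the full sum over \emph{all} $S\subseteq[L]$ factorizes as $\prod_{\ell}\big(e^{-(1-\overline\delta_n)J_\rho}+e^{-(1-\overline\delta_n)m\III{\ell}_{1/2}}\big)$, and the analogous alternating sum (with sign $(-1)^{|S^c|}$) factorizes with $-e^{-(1-\overline\delta_n)J_\rho}$ in place of $+e^{-(1-\overline\delta_n)J_\rho}$. Half the sum and half the difference give the even and odd parts directly as explicit products. One then pulls out $e^{-(1-\overline\delta_n)\min(J_\rho,m\III{\ell}_{1/2})}$ from each factor, which immediately yields the unconstrained minimum in the exponent; the residual product $\prod_\ell(1+e^{-(1-\overline\delta_n)|J_\rho-m\III{\ell}_{1/2}|})$ is then handled by exactly the near/far layer split described above. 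When the unconstrained minimizer has odd $|S^c|$, an elementary difference-of-products identity is used to extract one more factor and land on an even-cardinality minimizer. This product route makes the parity constraint and the ``borderline layer'' issue completely transparent, whereas your swap argument has to fight both simultaneously.
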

\begin{proof}
  See Appendix \ref{prf:prop:opt_glob_even}.
\end{proof}

\begin{proposition}
\label{prop:opt_glob_odd}
% Let $(S_\star, t_\star)$ be any pair such that
% $$
%   \min_{S: |S^c|\textnormal{ odd}} \big((|S^c|+1)J_\rho + \psi_S^\star(-2J_\rho)\big) = (|S_\star^c| + 1-2t_\star)J_\rho  + \sum_{\ell\in S_\star} n\III{\ell}_{t_\star}/2.
% $$
% and assume
% $$
%   \min_{S: |S^c|\textnormal{ odd}} \big((|S^c|+1)J_\rho + \psi_S^\star(-2J_\rho)\big)\to \infty.
% $$
For any $\overline{\delta}_n = o(1)$ and any $c\in(0, 1)$, there exists another $\overline{\delta}_n' = o(1)$ such that
\begin{align*}
  & \sum_{S: |S^c|\textnormal{ odd}} \exp\big\{- (1-\overline{\delta}_n) \big(|S^c|J_\rho + \psi_S^\star(-2J_\rho)\big)\big\} \\
  &  \leq 2 \exp\big\{-(1-\overline{\delta}_n') \min_{S:|S^c|\textnormal{ odd}}\big(|S^c|J_\rho + \psi_S^\star(-2J_\rho)\big) + \log L + Le^{-(1-\overline{\delta}_n)cJ_\rho}\big\}.
  % & \qquad \times \bigg(|S_{\geq, t_\star}| + (|S_{<, t_\star}|+1)\cdot \exp\bigg\{\sum_{\ell\in S_{<, t_\star}} e^{-(1-\overline{\delta}_n)(J_\rho - n\III{\ell}_{t_\star}/2)}\bigg\}\bigg).
\end{align*}  
\end{proposition}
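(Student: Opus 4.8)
The plan is to mirror the structure of Proposition~\ref{prop:opt_glob_even} but now for odd $|S^c|$, where the relevant per-set quantity is $\globinfo_S = (|S^c|+1)J_\rho + \psi_S^\star(-2J_\rho)$ rather than $|S^c|J_\rho + \psi_S^\star(0)$. The first step is to group the $2^L$ subsets $S$ with $|S^c|$ odd according to the cardinality $k = |S^c|$, which ranges over odd integers in $[1, L]$; there are $\binom{L}{k}$ such sets for each $k$, so $\binom{L}{k} \le L^k$. Within a fixed $k$, I want to show that each summand $\exp\{-(1-\overline{\delta}_n)(kJ_\rho + \psi_S^\star(-2J_\rho))\}$ can be absorbed into $\exp\{-(1-\overline{\delta}_n')\min_{S':|S'^c|\textnormal{ odd}}\globinfo_{S'}\}$ at the cost of a multiplicative factor controlled by $L^k e^{-(1-\overline{\delta}_n)kJ_\rho}$-type terms. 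The key inequality, exactly as in the even case, is that $\psi_S^\star(-2J_\rho)$ is monotone under enlarging $S$ (equivalently, shrinking $S^c$): adding a layer $\ell$ to $S$ only increases the collective signal $\psi_{S}(t)$ in absolute value, hence increases the Legendre transform, so $\psi_{S\cup\{\ell\}}^\star(-2J_\rho) \ge \psi_S^\star(-2J_\rho)$. This lets me compare an arbitrary $S$ with $|S^c|=k$ to one with smaller odd complement, paying $J_\rho$ (actually $2J_\rho$ from the $(|S^c|+1)J_\rho$ prefactor changing, but with the Legendre term only helping) for each pair of layers removed from $S^c$.

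The core estimate I would carry out is the following. Fix the minimizing set $S^\star$ over odd-complement sets, with $m := |{S^\star}^c|$ odd, and write $\globinfo_{S^\star} = \min_{S:|S^c|\textnormal{ odd}} \globinfo_S$. For any $S$ with $|S^c| = k \ge m$ odd, I claim $\globinfo_S \ge \globinfo_{S^\star} + (k-m)\cdot c J_\rho$ up to the $(1-\overline{\delta}_n)$ factors — this uses that removing layers from $S^c$ (to get from $S$ down towards $S^\star$, roughly) costs at least $cJ_\rho$ per layer by the hypothesis $\psi^\star$ is not too degenerate, combined with the prefactor. Actually the cleaner route, following the even-case proof, is: for each $S$ with odd $|S^c| = k$, bound $\exp\{-(1-\overline{\delta}_n)\globinfo_S\} \le \exp\{-(1-\overline{\delta}_n)\globinfo_{S^\star}\} \cdot \exp\{-(1-\overline{\delta}_n)(\globinfo_S - \globinfo_{S^\star})\}$, and then sum over $S$ with $|S^c|=k$ getting a factor $\binom{L}{k} \le L^k$, and over $k$. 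The nontrivial point is that $\globinfo_S - \globinfo_{S^\star} \ge (k-m) c J_\rho - O(\overline{\delta}_n \globinfo_{S^\star})$, which I would derive by the monotonicity of $\psi^\star$ and a careful accounting of the parity-dependent prefactor $(|S^c|+1)$ versus $(m+1)$. Then $\sum_k L^k e^{-(1-\overline{\delta}_n)(k-m)cJ_\rho}$ is a geometric-type series dominated by its first term, giving the $\log L + L e^{-(1-\overline{\delta}_n)cJ_\rho}$ in the exponent after taking logarithms, and the factor $2$ absorbs the geometric sum's constant.

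I would then convert the resulting bound $\exp\{-(1-\overline{\delta}_n)\globinfo_{S^\star}\}\cdot \exp\{\log L + \log 2 + L e^{-(1-\overline{\delta}_n)cJ_\rho}\}$ into the stated form $2\exp\{-(1-\overline{\delta}_n')\globinfo_{S^\star} + \log L + Le^{-(1-\overline{\delta}_n)cJ_\rho}\}$ by defining $\overline{\delta}_n'$ so that $(1-\overline{\delta}_n')\globinfo_{S^\star} \le (1-\overline{\delta}_n)\globinfo_{S^\star}$ with room to spare; since $\globinfo_{S^\star}\to\infty$ by hypothesis \eqref{eq:req_for_opt_glob_est}, any fixed slack can be converted to an $o(1)$ adjustment of the exponent's coefficient, exactly as in the even case. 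The one subtlety specific to the odd case, and the step I expect to be the main obstacle, is handling the interplay between the parity constraint and the monotonicity argument: when I remove layers from $S^c$ to decrease $k$, I must remove them in pairs to preserve oddness, so the per-layer cost bookkeeping ($cJ_\rho$ vs $2cJ_\rho$, and whether the Legendre term $\psi^\star$ moves favorably at each step) needs care. I would resolve this by removing two layers at a time and noting that each pair removal changes the prefactor by exactly $2J_\rho$ while $\psi_S^\star(-2J_\rho)$ can only decrease, so the net change $\globinfo_S - \globinfo_{S\setminus\text{(2 layers)}} \ge 2J_\rho - (\psi_S^\star - \psi_{S\setminus\cdots}^\star)$; bounding the Legendre difference below by, say, $(2-2c)J_\rho$ via the assumption that each layer contributes at least $\Theta(cJ_\rho)$ worth of signal — or more robustly, by using $c J_\rho$ per layer as a safe conservative bound — closes the estimate. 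This is a direct analogue of Appendix~\ref{prf:prop:opt_glob_even}, so I would structure the proof to reuse as many of those lemmas as possible, only flagging the parity adjustment.
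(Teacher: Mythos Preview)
Your approach has a genuine gap, and it stems from a misreading of the even-case proof. The proof of Proposition~\ref{prop:opt_glob_even} does not group by $k=|S^c|$ and compare term-by-term to the minimizer; it exploits the product factorization
\[
\sum_{S}\exp\{-(1-\overline\delta_n)\globinfo_S\}=\prod_{\ell\in[L]}\big(e^{-(1-\overline\delta_n)J_\rho}+e^{-(1-\overline\delta_n)m\III{\ell}_{1/2}}\big)
\]
(together with its signed variant for the parity constraint), which is available because $\psi_S^\star(0)=\sum_{\ell\in S}m\III{\ell}_{1/2}$ is additive in~$\ell$. Your cardinality-grouping argument instead relies on the claim $\globinfo_S\ge\globinfo_{S^\star}+(k-m)\,cJ_\rho$, but this requires a per-layer signal lower bound (``each layer contributes at least $\Theta(cJ_\rho)$'') that is nowhere assumed. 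Without it, nothing prevents $\binom{L}{k}$ sets at a single level $k$ from all having $\globinfo_S$ within $o(1)$ of the minimum, and the crude estimate $\binom{L}{k}\le L^k$ then produces a factor $e^{k\log L}$ that cannot be absorbed into $\log L+Le^{-cJ_\rho}$. You also never handle $k<m$.

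For the odd case, the real obstacle---and the reason the paper's argument is not a ``direct analogue'' of the even case---is that $\psi_S^\star(-2J_\rho)=\sup_{t}\big(-2tJ_\rho+\sum_{\ell\in S}m\III{\ell}_t\big)$ is \emph{not} additive in $\ell$, so the product factorization fails outright. The paper's fix has two parts. First, freeze $t\in[0,1]$ and use the pointwise lower bound $\psi_S^\star(-2J_\rho)\ge-2tJ_\rho+\sum_{\ell\in S}m\III{\ell}_t$; this linearizes the exponent, after which the even-case product argument applies verbatim and yields a bound with $\min_{S}\big[|S^c|J_\rho+(1-2t)J_\rho+\sum_{\ell\in S}m\III{\ell}_t\big]$ in the exponent. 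Second, optimize over $t$: the resulting $\sup_t\min_S(\cdot)$ must be swapped to $\min_S\sup_t(\cdot)=\min_S\big[(|S^c|+1)J_\rho+\psi_S^\star(-2J_\rho)\big]$, and the paper does this via Sion's minimax theorem, applied after replacing the discrete parity-constrained family of subsets by its convex hull (Lemma~\ref{lemma: hidden_convexity_of_the_vertex_set}). This linearize-then-minimax maneuver is the crux of the odd-case proof and has no counterpart in your proposal.
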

\begin{proof}
  See Appendix \ref{prf:prop:opt_glob_odd}.
\end{proof}

\subsubsection{Proof of Proposition \ref{prop:opt_glob_even}}\label{prf:prop:opt_glob_even}
For any $t\in[0, 1]$, let us define
\begin{align}
  \label{eq:sets}
  S_{\geq , t}:= \{\ell\in[L]: n\III{\ell}_t/2 \geq J_\rho\}, \qquad S_{<, t}:= \{\ell\in[L]: n\III{\ell}_t/2 < J_\rho\}.
\end{align}
For notational simplicity we let $m = n/2$. In this proof we will assume both $S_{\geq, 1/2}$ and $S_{<, 1/2}$ are non-empty. The proof when one of them is empty is nearly identical. We begin by noting that
\begin{align*}
  &\sum_{S\subseteq[L]:  |S^c|\textnormal{ even}} \exp\bigg\{ -(1-\overline{\delta}_n)\cdot \bigg( |S^c|J_\rho + \psi_S^\star(0)\bigg) \bigg\} + \sum_{S\subseteq[L]:  |S^c|\textnormal{ odd}} \exp\bigg\{ -(1-\overline{\delta}_n)\cdot \bigg( |S^c|J_\rho + \psi_S^\star(0)\bigg) \bigg\}\\
  \label{eq:opt_glob_even_summation}
  & = \prod_{\ell\in[L]} \bigg(e^{-(1-\overline{\delta}_n) J_\rho} + e^{-(1-\overline{\delta}_n) m\III{\ell}_{1/2}}\bigg)\numberthis,
\end{align*}
and that
\begin{align*}
  &\sum_{S\subseteq[L]:  |S^c|\textnormal{ even}} \exp\bigg\{ -(1-\overline{\delta}_n)\cdot \bigg( |S^c|J_\rho + \psi_S^\star(0)\bigg) \bigg\} - \sum_{S\subseteq[L]:  |S^c|\textnormal{ odd}} \exp\bigg\{ -(1-\overline{\delta}_n)\cdot \bigg( |S^c|J_\rho + \psi_S^\star(0)\bigg) \bigg\}\\
  \label{eq:opt_glob_even_difference}
  & = \prod_{\ell\in[L]} \bigg(-e^{-(1-\overline{\delta}_n) J_\rho} + e^{-(1-\overline{\delta}_n) m\III{\ell}_{1/2}}\bigg). \numberthis
\end{align*}
We split the discussion into two cases.

~\\
{\noindent \bf Case A: $|\boldsymbol{S_{\geq, 1/2}}|$ is even.}  In this case, by   \eqref{eq:opt_glob_even_summation}, we have
\begin{align*}
  & \sum_{S: |S^c|\textnormal{ even}} \exp\big\{- (1-\overline{\delta}_n) \big(|S^c|J_\rho + \psi_S^\star(0)\big)\big\} \\
  & \leq \prod_{\ell\in[L]} \bigg(e^{-(1-\overline{\delta}_n) J_\rho} + e^{-(1-\overline{\delta}_n) m\III{\ell}_{1/2}}\bigg)\\
  & = \exp\bigg\{-(1-\overline{\delta}_n) \bigg(|S_{\geq , 1/2}|J_\rho + \sum_{\ell\in S_{<, 1/2}}m\III{\ell}_{1/2}\bigg)\bigg\}\\
  & \qquad \times \prod_{\ell\in S_{\geq, 1/2}} \bigg(1 + e^{-(1-\overline{\delta}_n)(m\III{\ell}_{1/2} - J_\rho)}\bigg)
  \cdot \prod_{\ell\in S_{<, 1/2}} \bigg(1 + e^{-(1-\overline{\delta}_n)(J_\rho-m\III{\ell}_{1/2})}\bigg)\\
  & \leq\exp\bigg\{-(1-\overline{\delta}_n) \bigg(|S_{\geq , 1/2}|J_\rho + \sum_{\ell\in S_{<, 1/2}}m\III{\ell}_{1/2}\bigg)\bigg\}\\
  & \qquad \times \exp\bigg\{\sum_{\ell\in S_{\geq, 1/2}} e^{-(1-\overline{\delta}_n)(m \III{\ell}_{1/2}-J_\rho)} + \sum_{\ell\in S_{<, 1/2}} e^{-(1-\overline{\delta}_n)(J_\rho - m\III{\ell}_{1/2})}\bigg\}\\
  & {\leq} \exp\bigg\{-(1-\delta_{1,n }) \bigg(|S_{\geq , 1/2}|J_\rho + \sum_{\ell\in S_{<, 1/2}}m\III{\ell}_{1/2}\bigg) + \sum_{\ell\in S_{<, 1/2}} e^{-(1-\overline{\delta}_n)(J_\rho - m\III{\ell}_{1/2})}\bigg\},
  % \label{eq:opt_glob_even_caseA}
  % & \overset{(**)}{\leq} \exp\bigg\{-(1-\delta_{1,n }) \min_{S: |S^c|\textnormal{ even}}\bigg(|S^c|J_\rho + \sum_{\ell\in S}m\III{\ell}_{1/2}\bigg) + \sum_{\ell\in S_{<, 1/2}} e^{-(1-\overline{\delta}_n)(J_\rho - m\III{\ell}_{1/2})}\bigg\}\numberthis,
\end{align*}
where the last inequality is by $e^{-(1-\overline{\delta}_n)(m\III{\ell}_{1/2}-J_\rho)}\leq 1\ll J_\rho$ for any $\ell \in S_{\geq ,1/2}$, and $\delta_{1, n}=o(1)$.
 % and . $(**)$ is by our assumption that $|S_{\geq, 1/2}|$ is even. 
Let
$$
  S_{<, 1/2}^{-} := \{\ell\in S_{<, 1/2}: m\III{\ell}_{1/2}\leq (1-c)J_\rho\}, \qquad S^+_{<, 1/2} := \{\ell\in S_{<, 1/2}: m\III{\ell}_{1/2}>  (1-c)J_\rho\}.
$$
For $\ell\in S_{<, 1/2}^-$, we have $e^{-(1-\overline{\delta}_n)(J_\rho - m\III{\ell}_{1/2})}\leq e^{-(1-\overline{\delta}_n)cJ_\rho}$ and for $\ell\in S_{<,1/2}^+$, we have $e^{-(1-\overline{\delta}_n)(J_\rho - m\III{\ell}_{1/2})}\leq 1$. 
Thus we have
\begin{align*}
  & \sum_{S: |S^c|\textnormal{ even}} \exp\big\{- (1-\overline{\delta}_n) \big(|S^c|J_\rho + \psi_S^\star(0)\big)\big\} \\
  & \overset{(*)}{\leq} \exp\bigg\{-(1-\delta_{1,n})\bigg(|S_{\geq, 1/2}| J_\rho + \sum_{\ell\in S_{<, 1/2}^+}m\III{\ell}_{1/2} \big(1- \frac{1}{m\III{\ell}_{1/2}} \big) + \sum_{\ell\in S_{<,1/2}^-} m\III{\ell}_{1/2}\bigg) + L e^{-(1-\overline{\delta}_n)cJ_\rho}\bigg\}\\
  & \overset{(**)}{\leq} \exp\bigg\{-(1-\delta_{1,n})\bigg(|S_{\geq, 1/2}| J_\rho + \sum_{\ell\in S_{<, 1/2}^+}m\III{\ell}_{1/2} \big(1- \frac{1}{(1-c)J_\rho} \big) + \sum_{\ell\in S_{<,1/2}^-} m\III{\ell}_{1/2}\bigg) + L e^{-(1-\overline{\delta}_n)cJ_\rho}\bigg\}\\
  \label{eq:opt_glob_even_caseA}
  & \overset{(***)}{\leq} \exp\bigg\{-(1-\delta_{2,n }) \min_{S: |S^c|\textnormal{ even}}\bigg(|S^c|J_\rho + \sum_{\ell\in S}m\III{\ell}_{1/2}\bigg) + Le^{-(1-\overline{\delta}_n) cJ_\rho} \bigg\}\numberthis,
\end{align*}
where $(*)$ is by $e^{-(1-\overline{\delta}_n)(J_\rho - m\III{\ell}_{1/2})}\leq 1$ for $\ell\in S_{<, 1/2}^+$ and $e^{-(1-\overline{\delta}_n)(J_\rho - m\III{\ell}_{1/2})}\leq e^{-(1-\overline{\delta}_n)cJ_\rho}$ for $\ell\in S_{<, 1/2}^-$, $(**)$ is by the definition of $S_{<, 1/2}^+$, $(***)$ is by our assumption that $|S_{\geq, 1/2}|$ is even, and $\delta_{2, n} = o(1)$.

~\\
{\noindent \bf Case B: $|\boldsymbol{S_{\geq ,1/2}}|$ is odd.} Invoking   \eqref{eq:opt_glob_even_summation} and \eqref{eq:opt_glob_even_difference}, we have
\begin{align*}
  & \sum_{S: |S^c|\textnormal{ even}} \exp\big\{- (1-\overline{\delta}_n) \big(|S^c|J_\rho + \psi_S^\star(0)\big)\big\} \\
  & = \frac{1}{2} \exp\bigg\{-(1-\overline{\delta}_n)\bigg(|S_{\geq, 1/2}|J_\rho + \sum_{\ell\in S_{<, 1/2}}m\III{\ell}_{1/2}\bigg)\bigg\} \\
  & \qquad \times \bigg[ 
  \prod_{\ell\in S_{\geq ,1/2}} \bigg(1+ e^{-(1-\overline{\delta}_n)(m\III{\ell}_{1/2}-J_\rho)}\bigg) 
  \prod_{\ell\in S_{<, 1/2}} \bigg(1+ e^{-(1-\overline{\delta}_n)(J_\rho - m\III{\ell}_{1/2})}\bigg)\\
  & \qquad \qquad - 
  \prod_{\ell\in S_{\geq ,1/2}} \bigg(1- e^{-(1-\overline{\delta}_n)(m\III{\ell}_{1/2}-J_\rho)}\bigg) 
  \prod_{\ell\in S_{<, 1/2}} \bigg(1- e^{-(1-\overline{\delta}_n)(J_\rho - m\III{\ell}_{1/2})}\bigg)
  \bigg]\\
  & = \frac{1}{2} \exp\bigg\{-(1-\overline{\delta}_n)\bigg(|S_{\geq, 1/2}|J_\rho + \sum_{\ell\in S_{<, 1/2}}m\III{\ell}_{1/2}\bigg)\bigg\}\cdot \prod_{\ell\in S_{\geq, 1/2}}\bigg(1+ e^{-(1-\overline{\delta}_n)(m\III{\ell}_{1/2}-J_\rho)}\bigg)  \\
  & \qquad \times 
  \bigg[ \prod_{\ell\in S_{<, 1/2}}\bigg(1+ e^{-(1-\overline{\delta}_n)(J_\rho - m\III{\ell}_{1/2})}\bigg) - \prod_{\ell\in S_{<, 1/2}}\bigg(1- e^{-(1-\overline{\delta}_n)(J_\rho - m\III{\ell}_{1/2})}\bigg) \bigg]\\
  & \qquad + \frac{1}{2} \exp\bigg\{-(1-\overline{\delta}_n)\bigg(|S_{\geq, 1/2}|J_\rho + \sum_{\ell\in S_{<, 1/2}}m\III{\ell}_{1/2}\bigg)\bigg\}
  \cdot \prod_{\ell\in S_{<, 1/2}}\bigg(1- e^{-(1-\overline{\delta}_n)(J_\rho - m\III{\ell}_{1/2})}\bigg)  \\
  & \qquad \qquad \times 
  \bigg[ \prod_{\ell\in S_{\geq, 1/2}}\bigg(1 + e^{-(1-\overline{\delta}_n)(m\III{\ell}_{1/2}- J_\rho )}\bigg) - \prod_{\ell\in S_{\geq, 1/2}}\bigg(1- e^{-(1-\overline{\delta}_n)(m\III{\ell}_{1/2}-J_\rho )}\bigg) \bigg].
\end{align*}

We need the following estimate.
\begin{lemma}
  \label{lemma: diff_of_products}
  Let $\{x_i: i\in[n]\}, \{y_i:i\in[n]\}$ be two collections of complex numbers. Assume $x_i$'s are outside of the unit disk and $y_i$'s are inside the unit disk, then
  $$
    \bigg|\prod_{i\in[n]} x_i - \prod_{i\in[n]} y_i\bigg| \leq \bigg(\sum_{i\in[n]}|x_i - y_i|\bigg) \cdot  \prod_{i\in[n]} |x_i|.
  $$
\end{lemma}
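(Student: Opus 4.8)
\textbf{Proof plan for Lemma \ref{lemma: diff_of_products}.}

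The plan is to prove the bound by a standard telescoping (hybrid) argument. I would write the difference $\prod_{i\in[n]}x_i - \prod_{i\in[n]}y_i$ as a telescoping sum of $n$ terms, where the $k$-th term swaps $y_k$ for $x_k$ while keeping $x_1,\dots,x_{k-1}$ to the left and $y_{k+1},\dots,y_n$ to the right. Concretely,
\begin{equation*}
	\prod_{i\in[n]} x_i - \prod_{i\in[n]} y_i = \sum_{k=1}^{n} \Bigl(\prod_{i<k} x_i\Bigr)(x_k - y_k)\Bigl(\prod_{i>k} y_i\Bigr).
\end{equation*}
This identity is verified by observing that consecutive partial products cancel in pairs, leaving exactly the first minus the last.

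Next I would apply the triangle inequality and submultiplicativity of the modulus to each summand. For the $k$-th term, $\bigl|\prod_{i<k}x_i\bigr| \le \prod_{i<k}|x_i| \le \prod_{i\in[n]}|x_i|$ since each omitted factor $|x_i|\ge 1$ (here we use the hypothesis that the $x_i$ lie outside the open unit disk, i.e. $|x_i|\ge 1$). Similarly $\bigl|\prod_{i>k} y_i\bigr| \le \prod_{i>k}|y_i| \le 1$ since each $|y_i|\le 1$. Hence each summand is bounded by $|x_k-y_k|\cdot\prod_{i\in[n]}|x_i|$, and summing over $k$ gives
\begin{equation*}
	\Bigl|\prod_{i\in[n]} x_i - \prod_{i\in[n]} y_i\Bigr| \le \Bigl(\sum_{k=1}^n |x_k - y_k|\Bigr)\cdot \prod_{i\in[n]} |x_i|,
\end{equation*}
which is the claim.

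There is no real obstacle here; the only point requiring a little care is making sure the telescoping identity is written with the factors in the correct left/right order so that the modulus bounds $|x_i|\ge 1$ and $|y_i|\le 1$ can be applied cleanly to the appropriate blocks. (In the intended application, $x_i = 1 + e^{-(1-\overline{\delta}_n)(m\III{\ell}_{1/2}-J_\rho)}$ and $y_i = 1 - e^{-(1-\overline{\delta}_n)(m\III{\ell}_{1/2}-J_\rho)}$ over $\ell\in S_{\geq,1/2}$, so that $|x_i-y_i|$ is twice the exponentially small quantity and $\prod|x_i|$ is controlled as in Case~A; the symmetric roles of the two factor families are handled by the same lemma applied with the collections interchanged.)
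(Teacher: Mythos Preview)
Your proposal is correct and follows essentially the same approach as the paper: both establish the telescoping identity $\prod_i x_i - \prod_i y_i = \sum_k (\prod_{i<k}x_i)(x_k-y_k)(\prod_{i>k}y_i)$ and then apply the triangle inequality together with $|x_i|\ge 1$, $|y_i|\le 1$. The only cosmetic difference is that the paper verifies the identity by induction on $n$, whereas you check it directly via telescoping cancellation.
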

\begin{proof}
  We first prove the following algebraic identity:
  $$
    \prod_{i\in[n]} x_i - \prod_{i\in[n]} y_i = \sum_i (x_i - y_i) \cdot \bigg(\prod_{j=1}^{i-1} x_j \bigg) \cdot \bigg(\prod_{j=i+1}^n y_i\bigg)
  $$
  with the convention that $\prod_{i = j}^k x_i = 1$ if $k<j$, and then the desired result follows from triangle inequality. We induct on $n$. The case of $n = 1$ is trivial. Assume the identity holds for $n = k$. Now for $n = k+1$, we have
  \begin{align*}
    \prod_{i=1}^{k+1} x_i - \prod_{i=1}^{k+1} y_i & =  x_{k+1} \prod_{i=1}^k x_i - y_{k+1} \prod_{i=1}^k y_i\\
    & = x_{k+1} \prod_{i=1}^k x_i - y_{k+1} \prod_{i=1}^k x_i + y_{k+1} \prod_{i=1}^k x_i - y_{k+1} \prod_{i=1}^k y_i\\
    & = (x_{k+1}-y_{k+1}) \prod_{i=1}^k x_i + y_{k+1} \cdot \sum_{i=1}^k(x_i - y_i)\cdot \prod_{j=1}^{i-1} x_j \cdot \prod_{j=i+1}^k y_i \\
    & = (x_{k+1}-y_{k+1}) \prod_{i=1}^k x_i +  \sum_{i=1}^k(x_i - y_i)\cdot \prod_{j=1}^{i-1} x_j \cdot \prod_{j=i+1}^{k+1} y_i\\
    & = \sum_{i=1}^{k+1}(x_i - y_i)\cdot \prod_{j=1}^{i-1} x_j \cdot \prod_{j=i+1}^{k+1} y_i,
  \end{align*}
  which finishes the proof.
\end{proof}

By Lemma \ref{lemma: diff_of_products}, we have
\begin{align*}
  & \sum_{S: |S^c|\textnormal{ even}} \exp\big\{- (1-\overline{\delta}_n) \big(|S^c|J_\rho + \psi_S^\star(0)\big)\big\} \\
  & \leq \exp\bigg\{-(1-\overline{\delta}_n)\bigg(|S_{\geq, 1/2}|J_\rho + \sum_{\ell\in S_{<, 1/2}}m\III{\ell}_{1/2}\bigg)\bigg\}
  \cdot \prod_{\ell\in S_{\geq, 1/2}} \bigg(1+ e^{-(1-\overline{\delta}_n)(m\III{\ell}_{1/2}-J_\rho)}\bigg)\\
  & \qquad \times \bigg(\sum_{\ell\in S_{<, 1/2}} e^{-(1-\overline{\delta}_n)(J_\rho - m\III{\ell}_{1/2})}\bigg) \cdot \prod_{\ell\in S_{<, 1/2}} \bigg(1+ e^{-(1-\overline{\delta}_n)(J_\rho - m\III{\ell}_{1/2})}\bigg)\\
  & \qquad + 
  \exp\bigg\{-(1-\overline{\delta}_n)\bigg(|S_{\geq, 1/2}|J_\rho + \sum_{\ell\in S_{<, 1/2}}m\III{\ell}_{1/2}\bigg)\bigg\}\\
  & \qquad \qquad \times \bigg(\sum_{\ell\in S_{\geq, 1/2}} e^{-(1-\overline{\delta}_n)(m \III{\ell}_{1/2}-J_\rho)} \bigg) 
  \cdot \prod_{\ell\in S_{\geq ,1/2}} \bigg(1+ e^{-(1-\overline{\delta}_n)(J_\rho - m \III{\ell}_{1/2})}\bigg)\\
  & \leq \exp\bigg\{-(1-\delta_{2, n})\bigg(|S_{\geq, 1/2}|J_\rho + \sum_{\ell\in S_{<, 1/2}}m\III{\ell}_{1/2}\bigg)\bigg\}\\
  & \qquad \times \bigg(\sum_{\ell\in S_{<, 1/2}} e^{-(1-\overline{\delta}_n)(J_\rho - m\III{\ell}_{1/2})}\bigg) \cdot \prod_{\ell\in S_{<, 1/2}} \bigg(1+ e^{-(1-\overline{\delta}_n)(J_\rho - m\III{\ell}_{1/2})}\bigg)\\
  & \qquad + 
  \exp\bigg\{-(1-\delta_{2, n})\bigg(|S_{\geq, 1/2}|J_\rho + \sum_{\ell\in S_{<, 1/2}}m\III{\ell}_{1/2}\bigg)\bigg\}
  \cdot \bigg(\sum_{\ell\in S_{\geq, 1/2}} e^{-(1-\overline{\delta}_n)(m \III{\ell}_{1/2}-J_\rho)} \bigg), 
\end{align*}
where $\delta_{2,n} = o(1)$ and the last inequality is by $e^{-(1-\overline{\delta}_n)(m\III{\ell}_{1/2}-J_\rho)}\leq 1 \ll J_\rho$ for any $\ell\in S_{\geq, 1/2}$. 
Defining
$$
  \ell_{<, 1/2} := \argmax_{\ell\in S_{<, 1/2}} \frac{m\III{\ell}_{1/2}}{J_\rho}, \qquad \ell_{\geq , 1/2} := \argmin_{\ell\in S_{\geq, 1/2}} \frac{m\III{\ell}_{1/2}}{J_\rho},
$$
we have
\begin{align*}
  & \sum_{S: |S^c|\textnormal{ even}} \exp\big\{- (1-\overline{\delta}_n) \big(|S^c|J_\rho + \psi_S^\star(0)\big)\big\} \\
  & \leq \exp\bigg\{-(1-{\delta_{2, n}})\bigg(|S_{\geq, 1/2}|J_\rho + (J_\rho - m\III{\ell_{<, 1/2}}_{1/2}) + \sum_{\ell\in S_{<, 1/2}}m\III{\ell}_{1/2}\bigg)\bigg\} \\
  & \qquad \times |S_{<, 1/2}| \exp\bigg\{\sum_{\ell \in S_{<, 1/2}} e^{-(1-\overline{\delta}_n)(J_\rho - m\III{\ell}_{1/2})}\bigg\}\\
  & \qquad + \exp\bigg\{-(1-{\delta_{2,n}})\bigg(|S_{\geq, 1/2}|J_\rho + (m\III{\ell_{\geq, 1/2}}_{1/2} - J_\rho ) + \sum_{\ell\in S_{<, 1/2}}m\III{\ell}_{1/2}\bigg)\bigg\} \times |S_{\geq, 1/2}|,
\end{align*}
Using similar argument as those in Case A, we can proceed by
\begin{align*}
  & \sum_{S: |S^c|\textnormal{ even}} \exp\big\{- (1-\overline{\delta}_n) \big(|S^c|J_\rho + \psi_S^\star(0)\big)\big\} \\
  & \leq \exp\bigg\{-(1-{\delta_{3, n}})\bigg(|S_{\geq, 1/2}|J_\rho + (J_\rho - m\III{\ell_{<, 1/2}}_{1/2}) + \sum_{\ell\in S_{<, 1/2}}m\III{\ell}_{1/2}\bigg) + \log L + Le^{-(1-\overline{\delta}_n) c J_\rho}\bigg\} \\
  % & \qquad \times |S_{<, 1/2}| \exp\bigg\{\sum_{\ell \in S_{<, 1/2}} e^{-(1-\overline{\delta}_n)(J_\rho - m\III{\ell}_{1/2})}\bigg\}\\
  & \qquad + \exp\bigg\{-(1-{\delta_{2,n}})\bigg(|S_{\geq, 1/2}|J_\rho + (m\III{\ell_{\geq, 1/2}}_{1/2} - J_\rho ) + \sum_{\ell\in S_{<, 1/2}}m\III{\ell}_{1/2}\bigg) + \log L\bigg\}% \times |S_{\geq, 1/2}|,
\end{align*}
for some $\delta_{3,n}=o(1)$. 
Since $|S_{\geq, 1/2}\cup\{\ell_{<, 1/2}\}|$ and $|S_{\geq, 1/2}\setminus \{\ell_{\geq, 1/2}\}|$ are both even, we conclude that 
\begin{align*}
  & \sum_{S: |S^c|\textnormal{ even}} \exp\big\{- (1-\overline{\delta}_n) \big(|S^c|J_\rho + \psi_S^\star(0)\big)\big\}\\
  \label{eq:opt_glob_even_caseB}
  & \leq 2\exp\bigg\{-(1-\delta_{4,n }) \min_{S: |S^c|\textnormal{ even}}\bigg(|S^c|J_\rho + \sum_{\ell\in S}m\III{\ell}_{1/2}\bigg) + \log L + Le^{-(1-\overline{\delta}_n)cJ_\rho}\bigg\} \numberthis 
  % & \qquad \times \bigg( |S_{\geq, 1/2}| + |S_{<, 1/2}|\cdot \exp\bigg\{\sum_{\ell \in S_{<, 1/2}} e^{-(1-\overline{\delta}_n)(J_\rho - m\III{\ell}_{1/2})}\bigg\} \bigg).\numberthis
\end{align*}
for some $\delta_{4, n}=o(1)$

The proof is concluded by combining   \eqref{eq:opt_glob_even_caseA} and \eqref{eq:opt_glob_even_caseB}.

\subsubsection{Proof of Proposition \ref{prop:opt_glob_odd}}\label{prf:prop:opt_glob_odd}
% \subsubsection{Upper Bounding the Odd Terms}\label{subsubappend:upper_bound_odd_terms_z_star}
Recall the definition of $S_{\geq, t}$ and $S_{<, t}$ in \eqref{eq:sets}. Let $m = n/2$. For any $S\subseteq[L]$, define
$$
  t_S:= \argmax_{t\in[0, 1]} -2t J_\rho + \sum_{\ell \in S}m\III{\ell}_{t}.
$$
% so that $t_\star = t_{S_\star}$.
Fix an arbitrary constant $t\in [0, 1]$. 
Similar to the proof of Proposition \ref{prop:opt_glob_even}, we will assume $S_{\geq, t}$ and $S_{< , t}$ are both non-empty, and the case of one of them being empty is treated similarly.
We have
\begin{align}
  & \sum_{S\subseteq[L]:  |S^c|\textnormal{ odd}} \exp\bigg\{ -(1-\overline{\delta}_n)\cdot \bigg( (|S^c|+1)J_\rho + \psi_S^\star(-2J_\rho)\bigg) \bigg\}\nonumber\\
  & =\sum_{S\subseteq[L]:  |S^c|\textnormal{ odd}} \exp\bigg\{ -(1-\overline{\delta}_n)\cdot \bigg( |S^c|J_\rho + (1-2t_S) J_\rho + \sum_{\ell \in S} m\III{\ell}_{t_S}\bigg) \bigg\}\nonumber\\
  \label{eq:opt_glob_even_general_t}
  & \leq \sum_{S\subseteq[L]:  |S^c|\textnormal{ odd}} \exp\bigg\{ -(1-\overline{\delta}_n)\cdot \bigg( |S^c|J_\rho + (1-2t) J_\rho + \sum_{\ell \in S} m\III{\ell}_{t}\bigg) \bigg\}.
\end{align}
Similar to   \eqref{eq:opt_glob_even_summation} and \eqref{eq:opt_glob_even_difference}, we have
\begin{align*}
  & \sum_{S\subseteq[L]:  |S^c|\textnormal{ odd}} \exp\bigg\{ -(1-\overline{\delta}_n)\cdot \bigg( |S^c|J_\rho + (1-2t) J_\rho + \sum_{\ell \in S} m\III{\ell}_{t}\bigg) \bigg\} \\
  & \qquad +\sum_{S\subseteq[L]:  |S^c|\textnormal{ even}} \exp\bigg\{ -(1-\overline{\delta}_n)\cdot \bigg( |S^c|J_\rho + (1-2t) J_\rho + \sum_{\ell \in S} m\III{\ell}_{t}\bigg) \bigg\}  \\
  % & = e^{-(1-\overline{\delta}_n) (1-2t)J_\rho} \times  \sum_{S\subseteq[L]} \prod_{\ell \in S^c}e^{-(1-\overline{\delta}_n)J_\rho} \prod_{\ell \in S}e^{-(1-\overline{\delta}_n)m\III{\ell}_{t}} \\
  \label{eq:opt_glob_odd_summation}
  & = e^{-(1-\overline{\delta}_n) (1-2t)J_\rho} \prod_{\ell\in[L]} \bigg(e^{-(1-\overline{\delta}_n)J_\rho} + e^{-(1-\overline{\delta}_n) m\III{\ell}_{t}}\bigg),\numberthis
\end{align*}
and
\begin{align*}
  & -\sum_{S\subseteq[L]:  |S^c|\textnormal{ odd}} \exp\bigg\{ -(1-\overline{\delta}_n)\cdot \bigg( |S^c|J_\rho + (1-2t) J_\rho + \sum_{\ell \in S} m\III{\ell}_{t}\bigg) \bigg\} \\
  & \qquad +\sum_{S\subseteq[L]:  |S^c|\textnormal{ even}} \exp\bigg\{ -(1-\overline{\delta}_n)\cdot \bigg( |S^c|J_\rho + (1-2t) J_\rho + \sum_{\ell \in S} m\III{\ell}_{t}\bigg) \bigg\}  \\
  % & = e^{-(1-\overline{\delta}_n) (1-2t)J_\rho} \times  \sum_{S\subseteq[L]} \prod_{\ell \in S^c}\bigg(-e^{-(1-\overline{\delta}_n)J_\rho}\bigg) \prod_{\ell \in S}e^{-(1-\overline{\delta}_n)m\III{\ell}_{t}} \\
  \label{eq:opt_glob_odd_difference}
  & = e^{-(1-\overline{\delta}_n) (1-2t)J_\rho} \prod_{\ell\in[L]} \bigg(-e^{-(1-\overline{\delta}_n)J_\rho} + e^{-(1-\overline{\delta}_n) m\III{\ell}_{t}}\bigg).\numberthis
\end{align*}
With the above two equations at hand, using similar arguments as those in the proof of Proposition \ref{prop:opt_glob_even}, we arrive at
\begin{align*}
  & \sum_{S\subseteq[L]:  |S^c|\textnormal{ odd}} \exp\bigg\{ -(1-\overline{\delta}_n)\cdot \bigg( (|S^c|+1)J_\rho + \psi_S^\star(-2J_\rho)\bigg) \bigg\}\\
  \label{eq:opt_glob_odd_summary_two_cases}
  &  {\leq} 2\exp\bigg\{-(1-\delta_{1,n})\min_{S: |S^c|\textnormal{ odd}}\bigg(|S^c|J_\rho + (1-2t)J_\rho + \sum_{\ell\in S} m\III{\ell}_t\bigg)+ \log L + Le^{-(1-\overline{\delta}_n) cJ_\rho}\bigg\},\numberthis
  % & \qquad \times \bigg(|S_{\geq, t}| + (|S_{<, t}|+1)\cdot \exp\bigg\{\sum_{\ell\in S_{<, t}} e^{-(1-\overline{\delta}_n)(J_\rho - m \III{\ell}_t)}\bigg\} \bigg) ,\numberthis
\end{align*}
where $\delta_{1,n}=o(1)$.

We claim that
\begin{equation}
  \label{eq:minimax_equality_z_star}
  \sup_{t\in[0, 1]}\min_{\substack{S\subseteq[L]\\ |S^c|\textnormal{ odd}}} \big(|S^c|J_\rho + (1-2t)J_\rho + \sum_{\ell\in S} m\III{\ell}_{t} \big)= \min_{\substack{S\subseteq[L]\\ |S^c|\textnormal{ odd}}} \big(\sup_{t\in[0, 1]} |S^c|J_\rho + (1-2t)J_\rho + \sum_{\ell\in S} m\III{\ell}_{t}\big).
\end{equation}  
If this claim holds, then plugging the optimal $t$ to   \eqref{eq:opt_glob_odd_summary_two_cases} gives the desired result.

To prove   \eqref{eq:minimax_equality_z_star}, we need the following theorem.
\begin{theorem}[Sion's minimax theorem]
  \label{thm:minimax}
  Let $X$ be a compact convex subset of a topological vector space and $Y$ be a convex subset of a topological vector space. If $f$ is a real-valued function on $X\times Y$ such that
  \begin{enumerate}
    \item for each $x\in X$, $f(x, \cdot)$ is upper semi-continuous (usc) and quasi-concave on $Y$,
    \item for each $y\in Y$, $f(\cdot, y)$ is lower semi-continuous (lsc) and quasi-convex on $X$,
  \end{enumerate}
  then 
  $$
    \min_{x\in X} \sup_{y\in Y} f(x, y) = \sup_{y\in Y} \min_{x\in X} f(x, y).
  $$
\end{theorem}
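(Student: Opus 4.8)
The plan is to prove the two inequalities separately; this is the classical minimax theorem of \citet{sion1958general}, and I would present a streamlined proof based on Brouwer's fixed point theorem and partitions of unity. The ``easy'' direction $\sup_{y\in Y}\min_{x\in X} f(x,y)\le \min_{x\in X}\sup_{y\in Y} f(x,y)$ is immediate: for any fixed $x_0\in X$ one has $\sup_{y}\min_{x}f(x,y)\le \sup_y f(x_0,y)$, and minimizing over $x_0$ gives the claim. Both minima in the statement are legitimately attained: $f(\cdot,y)$ is lsc on the compact set $X$, and $x\mapsto\sup_y f(x,y)$, being a supremum of lsc functions, is again lsc on $X$. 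So the content is the reverse inequality.

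For the hard direction I would argue by contradiction. Write $\alpha:=\sup_y\min_x f$ and $\beta:=\min_x\sup_y f$, suppose $\alpha<\beta$, and fix $c\in(\alpha,\beta)$; the goal is to produce $\bar y\in Y$ with $f(x,\bar y)\ge c$ for every $x\in X$, which forces $\alpha\ge c$, a contradiction. Since $\sup_y f(x,y)\ge\beta>c$ for each $x$, the sets $U_i:=\{x\in X: f(x,y_i)>c\}$, which are open by lower semicontinuity of $f(\cdot,y_i)$, cover $X$ as $y_i$ ranges over $Y$; by compactness of $X$ finitely many $y_1,\dots,y_n$ suffice, $X=\bigcup_{i=1}^n U_i$. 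It then suffices to find $\mu$ in the simplex $\Delta^{n-1}\subset\reals^n$ with $f\bigl(x,\textstyle\sum_i\mu_i y_i\bigr)\ge c$ for all $x\in X$, since $\sum_i\mu_i y_i\in Y$ by convexity of $Y$.

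To find such a $\mu$, suppose none exists. Then for each $x$ the set $V_x:=\{\mu\in\Delta^{n-1}: f(x,\sum_i\mu_i y_i)<c\}$ is open (the map $\mu\mapsto f(x,\sum_i\mu_i y_i)$ is usc, being the restriction of the usc function $f(x,\cdot)$ along a continuous affine map), and $\{V_x\}_{x\in X}$ covers $\Delta^{n-1}$; by compactness finitely many $V_{x_1},\dots,V_{x_m}$ suffice. Take a continuous partition of unity $\{\phi_j\}_{j=1}^m$ on $\Delta^{n-1}$ subordinate to $\{V_{x_j}\}$ and a continuous partition of unity $\{\psi_i\}_{i=1}^n$ on $X$ subordinate to $\{U_i\}$ (available since a compact subset of a Hausdorff topological vector space is compact Hausdorff, hence normal), and define continuous maps $p:\Delta^{n-1}\to X$, $p(\mu)=\sum_j\phi_j(\mu)x_j$, and $q:X\to\Delta^{n-1}$, $q(x)=\sum_i\psi_i(x)e_i$. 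By Brouwer's fixed point theorem the self-map $q\circ p$ of the simplex has a fixed point $\mu^\star$; set $x^\star=p(\mu^\star)$, so $\mu^\star=q(x^\star)$. On one hand $x^\star$ is a convex combination of those $x_j$ with $\mu^\star\in V_{x_j}$, i.e.\ with $f(x_j,\sum_i\mu^\star_i y_i)<c$; since $f(\cdot,\sum_i\mu^\star_i y_i)$ is quasi-convex, its strict sublevel set $\{f<c\}$ is convex, so $f(x^\star,\sum_i\mu^\star_i y_i)<c$. On the other hand $\mu^\star=q(x^\star)$ is supported on the indices $i$ with $x^\star\in U_i$, i.e.\ with $f(x^\star,y_i)>c$, so $\sum_i\mu^\star_i y_i$ is a convex combination of such $y_i$; since $f(x^\star,\cdot)$ is quasi-concave, its strict superlevel set $\{f>c\}$ is convex, so $f(x^\star,\sum_i\mu^\star_i y_i)>c$ --- a contradiction. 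Hence the desired $\mu$, and then $\bar y$, exists, completing the proof.

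The main obstacle I anticipate is making the two ``sides'' interact correctly in a single fixed-point argument: one must simultaneously exploit quasi-convexity in $x$, quasi-concavity in $y$, and the semicontinuity plus compactness hypotheses, and the directions of these assumptions are precisely what guarantee that the covering sets used on each side ($U_i$ in $X$, $V_{x_j}$ in $\Delta^{n-1}$) are \emph{open} rather than merely convex, so that partitions of unity are available. The delicate technical points are (i) verifying that quasi-convexity/concavity yields convexity of the \emph{strict} level sets $\{f<c\}$ and $\{f>c\}$ --- true because each is an increasing union of the convex ordinary level sets --- and (ii) checking that the fixed point $\mu^\star$ really lands in the intersection of the two level-set conditions, which is where the composition $q\circ p$ is engineered. (An alternative to this route is Sion's original argument by induction on $n$, whose base case $n=2$ replaces Brouwer with a connectedness argument along the segment $[y_1,y_2]$; I expect the Brouwer-based version above to be cleaner to write out.)
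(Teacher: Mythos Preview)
The paper does not actually prove this statement: Theorem~\ref{thm:minimax} is stated as a known result and attributed to \citet{sion1958general}, then invoked as a black box in the proof of Proposition~\ref{prop:opt_glob_odd}. So there is no in-paper argument to compare against.

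Your proof is correct and self-contained. The logic at the fixed point is sound: the support condition on $\phi_j(\mu^\star)$ forces $x^\star$ into the strict sublevel set of $f(\cdot,y^\star)$ via quasi-convexity, while the support condition on $\psi_i(x^\star)$ forces $y^\star$ into the strict superlevel set of $f(x^\star,\cdot)$ via quasi-concavity, and your remark that strict level sets are convex (as increasing unions of ordinary level sets) closes the gap that would otherwise leave only a non-strict contradiction. The one assumption you implicitly add---that the ambient topological vector space is Hausdorff so that $X$ is normal and admits finite partitions of unity---is standard and harmless in this context. Note that your route (Brouwer plus partitions of unity) differs from Sion's original, which proceeds by induction on the size of the finite cover with a connectedness argument for the base case $n=2$; your approach trades that induction for a single fixed-point application, which is arguably cleaner but imports a heavier tool. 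For the paper's purposes either is overkill, since the theorem is only cited, not re-proved.
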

% On the other hand, we have the following lower bound:
% \begin{align*}
%   & \exp\bigg\{-(1+o(1))\bigg( \min_{S\subseteq[L]: |S^c|\textnormal{ odd}}\ |S^c|J_\rho + \psi^\star_S(-2J_\rho)\bigg)\bigg\}\\
%   & = \exp\bigg\{-(1+o(1))\bigg( \min_{S\subseteq[L]: |S^c|\textnormal{ odd}} \sup_{t\in[0, 1]}\ |S^c|J_\rho + (1-2t)J_\rho + \sum_{\ell\in S}m\III{\ell}_{t} \bigg)\bigg\}\\
%   & = \exp\bigg\{-(1+o(1))\bigg( \min_{S\subseteq[L]: |S^c|\textnormal{ odd}} \sup_{t\in[0, 1/2]}\ |S^c|J_\rho + (1-2t)J_\rho + \sum_{\ell\in S}m\III{\ell}_{t} \bigg)\bigg\}.
% \end{align*}
To use the above theorem, we let
$$
  \calV := \bigg\{v\in\{0, 1\}^L: \#\{\ell\in[L]: v_\ell = 0\} \textnormal{ is odd}\bigg\}.
$$
 % $\calV$ be the subset of $\{0, 1\}^L$ such that 
% vectors with who have odd number of zero components. 
Then the right-hand side of \eqref{eq:minimax_equality_z_star} is
\begin{align*}
   \sup_{t\in[0, 1]} \min_{v\in\calV}\ f(v, t),
\end{align*}
where
\begin{align*}
  f(v, t) & = \langle J_\rho \mathbf{1}_L, \mathbf{1}_L - v \rangle + (1-2t)J_\rho + \langle v, m I_{*, t} \rangle\\
  & = LJ_\rho + (1-2t)J_\rho + \langle v, mI_{*, t} - J_\rho \mathbf{1}_L\rangle,
\end{align*}
where we let $I_{*, t}$ be the $L$-dimensional vector whose $\ell$-th entry is $\III{\ell}_{t}$. It suffices to show
$$
  \min_{v\in\calV}\sup_{t\in[0, 1]}\ f(v, t) =  \sup_{t\in[0, 1]} \min_{v\in\calV}\ f(v, t).
$$
We are to invoke a version of minimax theorem, but an immediate difficulty is that $\calV$ is non-convex. Fortunately we have the following lemma.
\begin{lemma}
  \label{lemma: hidden_convexity_of_the_vertex_set}
  For any $t\in[0, 1]$, we have
  $$
    \min_{v\in \calV} f(v, t) = \min_{v\in \textnormal{conv}(\calV)} f(v, t),
  $$
  where $\textnormal{conv}(\calV)$ is the convex hull of $\calV$. 
\end{lemma}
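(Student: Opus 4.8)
The plan is to prove Lemma~\ref{lemma: hidden_convexity_of_the_vertex_set} by exploiting the affine structure of $f(v,t)$ in its first argument together with a parity-preserving ``rounding'' argument. Fix $t\in[0,1]$. Since $f(v,t)$ is an affine function of $v$ (it equals a constant plus $\langle v, mI_{*,t}-J_\rho\mathbf 1_L\rangle$), minimizing it over the polytope $\textnormal{conv}(\calV)$ attains its minimum at a vertex of $\textnormal{conv}(\calV)$, and every vertex of $\textnormal{conv}(\calV)$ is a point of $\calV$ (the extreme points of the convex hull of a finite set are a subset of that finite set). This immediately gives $\min_{v\in\textnormal{conv}(\calV)}f(v,t)=\min_{v\in\calV}f(v,t)$. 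The only subtlety is that $\textnormal{conv}(\calV)$ is a polytope whose extreme points must be verified to lie in $\calV$, which is automatic, so in fact the inequality $\min_{\textnormal{conv}(\calV)}\le\min_\calV$ is trivial and the reverse follows from the vertex characterization.

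The only genuine obstacle is making sure the minimum over $\textnormal{conv}(\calV)$ is actually attained and attained at an extreme point: $\textnormal{conv}(\calV)$ is compact (finite set's convex hull) and $f(\cdot,t)$ is continuous and affine, so the minimum exists and, by the fundamental fact that a linear functional on a compact convex set attains its extremum at an extreme point, it is attained at some extreme point $v^\star$. Then I would invoke the standard fact that $\textnormal{ext}(\textnormal{conv}(\calV))\subseteq\calV$ for finite $\calV$. Putting these together yields
\[
\min_{v\in\textnormal{conv}(\calV)}f(v,t)=f(v^\star,t)\ge\min_{v\in\calV}f(v,t)\ge\min_{v\in\textnormal{conv}(\calV)}f(v,t),
\]
where the last inequality is because $\calV\subseteq\textnormal{conv}(\calV)$. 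Hence all quantities are equal, which is exactly the claim.

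I would close by remarking that this is precisely the ``hidden convexity'' needed downstream: although the constraint set $\calV$ encoding the odd-parity condition on $|S^c|$ is not convex, the affineness of $f$ in $v$ means optimizing over $\calV$ and over its convex hull give the same value, allowing an application of Sion's minimax theorem (Theorem~\ref{thm:minimax}) on $\textnormal{conv}(\calV)\times[0,1]$ to swap $\min$ and $\sup$ and thereby establish \eqref{eq:minimax_equality_z_star}. No nontrivial computation is required; the content is entirely the elementary convex-geometric observation about extreme points of the convex hull of a finite set, combined with affineness of $f(\cdot,t)$.
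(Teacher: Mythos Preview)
Your proof is correct and takes essentially the same approach as the paper, which simply notes in one line that the result follows from $\textnormal{conv}(\calV)$ being a polytope. Your version spells out the standard reasoning behind that sentence---an affine functional on a compact polytope attains its minimum at an extreme point, and the extreme points of the convex hull of a finite set lie in that set---which is exactly the intended argument.
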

\begin{proof}
  This follows from the fact that the convex hull of $\calV$ is a polytope. 
\end{proof}

By Lemma \ref{lemma: hidden_convexity_of_the_vertex_set}, we conclude that 
$$
   \sup_{t\in[0, 1]} \min_{v\in\calV}\ f(v, t) =  \sup_{t\in[0, 1]} \min_{v\in\textnormal{conv}(\calV)}\ f(v, t)
$$
and
$$
   \min_{v\in\calV}\sup_{t\in[0, 1]} \ f(v, t) =   \min_{v\in\textnormal{conv}(\calV)}\sup_{t\in[0, 1]}\ f(v, t).
$$
Hence, it suffices to show 
$$
  \min_{v\in\textnormal{conv}(\calV)}\sup_{t\in[0, 1]}\ f(v, t) =   \sup_{t\in[0, 1]} \min_{v\in\textnormal{conv}(\calV)}\ f(v, t).
$$
We make a few observations:
\begin{enumerate}
  \item $\textnormal{conv}(\calV)$ is a compact convex subset of the topological vector space $\bbR^L$ (with the Euclidean topology);
  \item $[0, 1]$ is a convex subset of the topological vector space $\bbR$ (again with the Euclidean topology);
  \item For each $v\in \textnormal{conv}(\calV)$, the function $f(v, \cdot)$ is continuous and concave (and hence quasi-concave) in $t$;
  \item For each $t\in[0, 1]$, the function $f(\cdot, t)$ is linear (and hence quasi-convex) in $v$. 
\end{enumerate}
Thus, invoking Theorem \ref{thm:minimax}, we obtain the desired equality.

\subsection{Proof of Theorem \ref{thm:opt_ind_est}}\label{prf:thm:opt_ind_est}
This theorem is a consequence of the following two propositions.
\begin{proposition}
\label{prop:opt_ind_given_glob}
% Assume 
% $$
% \min_{S: |S^c|\textnormal{ even}} \big( |S^c|J_\rho + \psi_S^\star(0)\big) \to \infty.
% $$
For any $\overline{\delta}_n = o(1)$, there exists another $\overline{\delta}_n'=o(1)$ such that for any $\ell\in[L]$, we have
\begin{align*}
  & \sum_{\substack{S\subseteq L\setminus\{\ell\} }} e^{-(1-\overline{\delta}_n)\indinfo_{S\cup\{\ell\}}}\leq 2 \exp\big\{-(1-\overline{\delta}_n)\indinfo_{\{\ell\}} + \log L + Le^{-(1-\overline{\delta}_n)J_\rho}\big\}. 
  % &  \exp\big\{-(1-\overline{\delta}_n) \big(|S\cup\{\ell\}|J_\rho + \psi_{S\cup\{\ell\}}^\star(0)\big)\big\}\\
  % & \leq 
  % & \sum_{S\subset: |S^c|\textnormal{ even}} \exp\big\{- (1-\overline{\delta}_n) \big(|S^c|J_\rho + \psi_S^\star(0)\big)\big\} \\
  % & \leq  2\exp\big\{-(1-\overline{\delta}_n') \min_{S:|S^c|\textnormal{ even}}\big(|S^c|J_\rho + \psi_S^\star(0)\big) + \log L + L e^{-(1-\overline{\delta}_n) cJ_\rho}\big\}.
  % & \qquad \times \bigg(|S_{\geq, 1/2}| + (|S_{<, 1/2}|+1)\cdot \exp\bigg\{\sum_{\ell\in S_{<, 1/2}} e^{-(1-\overline{\delta}_n)(J_\rho - n\III{\ell}_{1/2}/2)}\bigg\}\bigg).
\end{align*}  
\end{proposition}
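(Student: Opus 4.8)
Write $m = n/2$ and set $\Sigma_\ell := \sum_{S\subseteq[L]\setminus\{\ell\}} e^{-(1-\overline{\delta}_n)\indinfo_{S\cup\{\ell\}}} = \sum_{T\ni\ell} e^{-(1-\overline{\delta}_n)\indinfo_T}$. The plan is to exploit the fact that, unlike $\globinfo_T$ (where the penalty $|T^c|J_\rho$ pushes toward large $T$ while the signal $\psi_T^\star$ pushes toward small $T$), in $\indinfo_T$ the combinatorial penalty $|T|J_\rho$ grows in the \emph{same} direction as the signal term, so the sum over $T\ni\ell$ factorizes cleanly and is dominated by the singleton $T=\{\ell\}$. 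I would split $\Sigma_\ell = \Sigma_{\mathrm{ev}}+\Sigma_{\mathrm{odd}}$ according to the parity of $|T|$, using that for even $|T|$ one has $\indinfo_T = |T|J_\rho + \psi_T^\star(0) = |T|J_\rho + m\sum_{r\in T}\III{r}_{1/2}$, while for odd $|T|$ one has $\indinfo_T = (|T|+1)J_\rho + \psi_T^\star(-2J_\rho)$. For the odd part I would fix, once and for all, the scalar $t_\star := \argmax_{t\in[0,1]}\big(-2tJ_\rho + m\III{\ell}_{t}\big)$ defining $\psi_{\{\ell\}}^\star(-2J_\rho)$, and use the trivial lower bound $\psi_T^\star(-2J_\rho) \ge -2t_\star J_\rho + m\sum_{r\in T}\III{r}_{t_\star}$, valid for every $T$; using one common $t$ for all $T$ is legitimate since we only want an upper bound on a sum, exactly as in the proof of Proposition~\ref{prop:opt_glob_odd}.

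\textbf{The even part.} Here I would factor out the layer-$\ell$ contribution: writing $T=S\cup\{\ell\}$ with $|S|$ odd, $\Sigma_{\mathrm{ev}} = e^{-(1-\overline{\delta}_n)(J_\rho + m\III{\ell}_{1/2})}\sum_{S\subseteq[L]\setminus\{\ell\},\,|S|\text{ odd}}\prod_{r\in S}a_r$, with $a_r := e^{-(1-\overline{\delta}_n)(J_\rho + m\III{r}_{1/2})}\le e^{-(1-\overline{\delta}_n)J_\rho}$. The inner sum equals $\tfrac12\big(\prod_{r\ne\ell}(1+a_r)-\prod_{r\ne\ell}(1-a_r)\big)$, which by Lemma~\ref{lemma: diff_of_products} is at most $\big(\sum_{r\ne\ell}a_r\big)\prod_{r\ne\ell}(1+a_r)\le L e^{-(1-\overline{\delta}_n)J_\rho}\, e^{Le^{-(1-\overline{\delta}_n)J_\rho}}$. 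Since $\psi_{\{\ell\}}^\star(-2J_\rho)\le m\III{\ell}_{1/2}$ (because $-2tJ_\rho\le 0$ for $t\ge0$ and $\III{\ell}_t$ is maximized at $t=1/2$, cf.\ Lemma~\ref{lemma:monotonicity_of_I_t}), we have $2J_\rho + m\III{\ell}_{1/2}\ge \indinfo_{\{\ell\}}$, and hence $\Sigma_{\mathrm{ev}}\le \exp\{-(1-\overline{\delta}_n)\indinfo_{\{\ell\}} + \log L + Le^{-(1-\overline{\delta}_n)J_\rho}\}$.

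\textbf{The odd part.} With $T=S\cup\{\ell\}$, $|S|$ even, the lower bound on $\psi_T^\star(-2J_\rho)$ with $t=t_\star$ gives $\indinfo_T \ge \big[(2-2t_\star)J_\rho + m\III{\ell}_{t_\star}\big] + \big[|S|J_\rho + m\sum_{r\in S}\III{r}_{t_\star}\big]$, where the first bracket equals $2J_\rho + \psi_{\{\ell\}}^\star(-2J_\rho) = \indinfo_{\{\ell\}}$ by the choice of $t_\star$. Therefore $\Sigma_{\mathrm{odd}}\le e^{-(1-\overline{\delta}_n)\indinfo_{\{\ell\}}}\sum_{S\subseteq[L]\setminus\{\ell\},\,|S|\text{ even}}\prod_{r\in S}b_r \le e^{-(1-\overline{\delta}_n)\indinfo_{\{\ell\}}}\prod_{r\ne\ell}(1+b_r)$, with $b_r := e^{-(1-\overline{\delta}_n)(J_\rho + m\III{r}_{t_\star})}\le e^{-(1-\overline{\delta}_n)J_\rho}$; since $\prod_{r\ne\ell}(1+b_r)\le e^{Le^{-(1-\overline{\delta}_n)J_\rho}}$, this yields $\Sigma_{\mathrm{odd}}\le \exp\{-(1-\overline{\delta}_n)\indinfo_{\{\ell\}} + Le^{-(1-\overline{\delta}_n)J_\rho}\}$. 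Adding the two contributions produces the factor $2$ and the claimed bound, with $\overline{\delta}_n'=\overline{\delta}_n$ (any $o(1)$ sequence suffices).

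\textbf{Main obstacle.} There is no genuine analytic difficulty here; the proposition is the easiest of the optimality estimates precisely because the $|S|J_\rho$ penalty aligns with the signal penalty, so the relevant sum factorizes directly and no case split on the parity of $\{r:\,m\III{r}_{1/2}\ge J_\rho\}$ (as in Proposition~\ref{prop:opt_glob_even}) and no appeal to Sion's minimax theorem (as in Proposition~\ref{prop:opt_glob_odd}) is needed. The care required is purely in the bookkeeping: tracking that the $\log L$ correction enters only through $\sum_{r\ne\ell}a_r\le L e^{-(1-\overline{\delta}_n)J_\rho}$ in the even part (not in the odd part), and applying the two identifications $2J_\rho + m\III{\ell}_{1/2}\ge\indinfo_{\{\ell\}}$ and $(2-2t_\star)J_\rho + m\III{\ell}_{t_\star}=\indinfo_{\{\ell\}}$ in the correct direction. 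The underlying structural fact that $\min_{T\ni\ell}\indinfo_T = \indinfo_{\{\ell\}}$ can also be verified directly from $\psi_T^\star(0)\ge\psi_{T'}^\star(0)$ and $J_\rho+\psi_T^\star(-2J_\rho)\ge\psi_T^\star(0)$ for $T\supseteq T'$ (Lemma~\ref{lemma:ineq_for_cvx_conjugate}), but the factorized estimates above already subsume it.
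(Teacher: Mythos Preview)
Your proposal is correct and follows essentially the same route as the paper: split by the parity of $|S\cup\{\ell\}|$, factor out the layer-$\ell$ contribution, use the product identities and Lemma~\ref{lemma: diff_of_products} for the even part, and fix the single scalar $t_\star=\argmax_t(-2tJ_\rho+m\III{\ell}_t)$ to lower-bound $\psi_T^\star(-2J_\rho)$ for the odd part. Your observation that $\overline{\delta}_n'=\overline{\delta}_n$ already works (the stated $\overline{\delta}_n'$ is vestigial here) and your remark that no parity-of-$S_{\ge,1/2}$ case split or minimax argument is needed are both accurate and match the paper's treatment.
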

\begin{proof}
  See Appendix \ref{prf:prop:opt_ind_given_glob}.
\end{proof}

\begin{proposition}
\label{prop:opt_ind_est_glob}
% Assume 
% $$
% \min_{S: |S^c|\textnormal{ even}} \big( |S^c|J_\rho + \psi_S^\star(0)\big) \to \infty.
% $$
For any $\overline{\delta}_n = o(1)$ and any $c\in(0, 1)$, there exists another $\overline{\delta}_n'=o(1)$ such that for any $\ell\in[L]$, we have
\begin{align*}
  &\sum_{\substack{S\subseteq [L]\setminus\{\ell\}\\ |(S\cup\{\ell\})^c|\textnormal{ even}}} \exp\big\{-(1-\overline{\delta}_n) \big(|(S\cup\{\ell\})^c| J_\rho + \psi_{S\cup\{\ell\}}^\star(0)\big)\big\}\\
  & \leq 2\exp\big\{ -(1-\overline{\delta}_n') \min_{\substack{S\subseteq[L]\setminus\{\ell\} \\ |(S\cup\{\ell\})^c| \textnormal{ even}}} \big( |(S\cup\{\ell\})^c| J_\rho + \psi_{S\cup\{\ell\}}^\star(0) \big) + \log L + Le^{-(1-\overline{\delta}_n)c J_\rho}\big\},
  % & \leq \exp\bigg\{-(1-\delta_2)\min_{\substack{S\subseteq\{2, \hdots, L\}\\ |(S\cup\{1\})^c| \textnormal{ even}}} |(S\cup\{1\})^c|J_\rho + \psi_{S\cup\{1\}}^\star(0)\bigg\},
\end{align*}
and
\begin{align*}
    & \sum_{\substack{S\subseteq[L]\setminus\{\ell\} \\ |(S\cup\{\ell\})^c| \textnormal{ odd}}} \exp\big\{- (1-\overline{\delta}_n) \big(|(S\cup\{\ell\})^c|J_\rho + \psi_{S\cup\{\ell\}}^\star(-2J_\rho)\big)\big\} \\
  &  \leq 2 \exp\big\{-(1-\overline{\delta}_n') \min_{\substack{S\subseteq[L]\setminus\{\ell\}\\ |(S\cup\{\ell\})^c| \textnormal{ odd}}}\big(|(S\cup\{\ell\})^c|J_\rho + \psi_{S\cup\{\ell\}}^\star(-2J_\rho)\big) + \log L + Le^{-(1-\overline{\delta}_n)cJ_\rho}\big\}.
\end{align*}
\end{proposition}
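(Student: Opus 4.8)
The plan is to derive this proposition from Propositions~\ref{prop:opt_glob_even} and~\ref{prop:opt_glob_odd}, exploiting the observation that forcing the fixed layer $\ell$ into the selected set (equivalently, forbidding $\ell$ from the complement) merely pins down one factor in the product‑formula expansions used in those proofs; after pulling that factor out, both sums reduce to the \emph{unconstrained} problem on the smaller ground set $[L]\setminus\{\ell\}$. Throughout I write $m=n/2$, and for $S\subseteq[L]\setminus\{\ell\}$ I denote by $S^c_\ell:=([L]\setminus\{\ell\})\setminus S$ the complement within $[L]\setminus\{\ell\}$, so that $|(S\cup\{\ell\})^c|=|S^c_\ell|$. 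Since $\rho=o(1)$ and $J_\rho\to\infty$ are part of the standing assumptions, Propositions~\ref{prop:opt_glob_even} and~\ref{prop:opt_glob_odd} apply unchanged to any sub‑collection of layers.

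For the even case I would use the additive decomposition $\psi_{S\cup\{\ell\}}^\star(0)=m\III{\ell}_{1/2}+\psi_S^\star(0)$ (valid since $\psi_S^\star(0)=-\psi_S(1/2)=m\sum_{r\in S}\III{r}_{1/2}$), which gives
\[
\sum_{\substack{S\subseteq[L]\setminus\{\ell\}\\|S^c_\ell|\text{ even}}} e^{-(1-\overline{\delta}_n)(|S^c_\ell|J_\rho+\psi_{S\cup\{\ell\}}^\star(0))}
= e^{-(1-\overline{\delta}_n)m\III{\ell}_{1/2}}\sum_{\substack{S\subseteq[L]\setminus\{\ell\}\\|S^c_\ell|\text{ even}}} e^{-(1-\overline{\delta}_n)(|S^c_\ell|J_\rho+\psi_S^\star(0))}.
\]
I would then apply Proposition~\ref{prop:opt_glob_even} to the ground set $[L]\setminus\{\ell\}$ (at most $L$ layers, with $J_\rho$ and the per‑layer quantities $\{\III{r}_t\}$ unchanged) to bound the remaining sum, and recombine: since $m\III{\ell}_{1/2}+\min_S(\cdots)=\min_S(m\III{\ell}_{1/2}+\cdots)$ equals the minimum in the statement, and since $(1-a)x+(1-b)y\geq(1-\max\{a,b\})(x+y)$ for $x,y\geq0$ lets the two $o(1)$ sequences merge, this yields the first claimed bound (using $\log(L-1)\leq\log L$ and $(L-1)e^{-\cdots}\leq Le^{-\cdots}$).

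For the odd case I would follow the proof of Proposition~\ref{prop:opt_glob_odd}: write $\psi_{S\cup\{\ell\}}^\star(-2J_\rho)=-2t_{S\cup\{\ell\}}J_\rho+m\sum_{r\in S\cup\{\ell\}}\III{r}_{t_{S\cup\{\ell\}}}$ and, using that $t_{S\cup\{\ell\}}$ is a maximizer, upper bound each summand by its value at an arbitrary fixed $t\in[0,1]$. This replaces $(|S^c_\ell|+1)J_\rho+\psi_{S\cup\{\ell\}}^\star(-2J_\rho)$ by $|S^c_\ell|J_\rho+(1-2t)J_\rho+m\sum_{r\in S\cup\{\ell\}}\III{r}_t$, and again the factor $e^{-(1-\overline{\delta}_n)m\III{\ell}_t}$ pulls out, leaving a sum over $S\subseteq[L]\setminus\{\ell\}$ with $|S^c_\ell|$ odd of exactly the type bounded inside the proof of Proposition~\ref{prop:opt_glob_odd} before its final $\sup_t$. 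Applying that internal bound on the ground set $[L]\setminus\{\ell\}$ and recombining gives, for \emph{every} fixed $t$, a bound of the original ($t$‑independent) sum in terms of $m\III{\ell}_t+\min_{S:\,|S^c_\ell|\text{ odd}}\{|S^c_\ell|J_\rho+(1-2t)J_\rho+m\sum_{r\in S}\III{r}_t\}$; optimizing the bound over $t$ and swapping $\sup_t$ with $\min_S$ via Sion's minimax theorem (Theorem~\ref{thm:minimax}) identifies the exponent with $\min_{S:\,|(S\cup\{\ell\})^c|\text{ odd}}\big((|(S\cup\{\ell\})^c|+1)J_\rho+\psi_{S\cup\{\ell\}}^\star(-2J_\rho)\big)$. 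The minimax step is the same as in the proof of Proposition~\ref{prop:opt_glob_odd} except that the vertex set $\calV$ is replaced by $\{v\in\{0,1\}^L:\ v_\ell=1,\ \#\{r:v_r=0\}\text{ odd}\}$, whose convex hull is still a polytope, so Lemma~\ref{lemma: hidden_convexity_of_the_vertex_set} and the semicontinuity/quasiconvexity verifications carry over verbatim.

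The only genuinely routine parts are the absorption of the various $o(1)$ sequences and the elementary inequality $e^{-(1-\overline{\delta}_n)m\III{\ell}_t}\leq e^{-(1-\overline{\delta}_n')m\III{\ell}_t}$ for $\overline{\delta}_n'\geq\overline{\delta}_n$, which lets the single factored term share the (slightly enlarged) $o(1)$ sequence produced by Propositions~\ref{prop:opt_glob_even}/\ref{prop:opt_glob_odd}. The main obstacle I anticipate is purely organizational: tracking the ``$\ell$ always present'' constraint consistently through the even/odd parity split — the relevant parity is now that of the complement \emph{within} $[L]\setminus\{\ell\}$ — and confirming that the restricted vertex set still has a polytopal convex hull so that Sion's theorem applies without modification. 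No analytic ingredient beyond those already developed for the global case appears to be needed.
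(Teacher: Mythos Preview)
Your proposal is correct and aligns with the paper's own proof, which simply states that the argument is ``nearly identical to that of Propositions~\ref{prop:opt_glob_even} and~\ref{prop:opt_glob_odd}'' and omits details. Your explicit factoring of the $\ell$-layer contribution (exact for $\psi^\star(0)$, and after fixing $t$ for the odd case) and the observation that the constrained vertex set $\{v\in\{0,1\}^L:v_\ell=1,\ \#\{r:v_r=0\}\text{ odd}\}$ still has polytopal convex hull are precisely the minor adaptations needed to carry those proofs through, so this is the intended route rather than a different one.
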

\begin{proof}
  The proof is nearly identical to that of Propositions \ref{prop:opt_glob_even} and \ref{prop:opt_glob_odd}, so we omit the details.
\end{proof}

\subsubsection{Proof of Proposition \ref{prop:opt_ind_given_glob}}\label{prf:prop:opt_ind_given_glob}
Without loss of generality we consider the first layer, i.e., the $\ell$ in the statement of this proposition is $1$.

~\\
{\bf The even terms.} We first consider the terms whose $|S\cup\{1\}|$'s are even.
Note that
\begingroup
\allowdisplaybreaks
\begin{align*}
  & \sum_{\substack{S\subseteq\{2, \hdots, L\}\\ |S\cup\{1\}|\textnormal{ even}}} \exp\bigg\{ -(1-\overline{\delta}_n)\bigg(|S\cup\{1\}|J_\rho + \sum_{\ell \in S\cup\{1\}}m\III{\ell}_{1/2}\bigg) \bigg\} \\
  & \qquad + \sum_{\substack{S\subseteq\{2, \hdots, L\}\\ |S\cup\{1\}|\textnormal{ odd}}} \exp\bigg\{ -(1-\overline{\delta}_n)\bigg(|S\cup\{1\}|J_\rho + \sum_{\ell \in S\cup\{1\}}m\III{\ell}_{1/2}\bigg) \bigg\} \\
  % & = \sum_{S\subseteq\{2, \hdots, L\}} \exp\bigg\{ -(1-\overline{\delta}_n)\bigg(|S\cup\{1\}|J_\rho + \sum_{\ell \in S\cup\{1\}}m\III{\ell}_{1/2}\bigg) \bigg\}\\
  & = e^{-(1-\overline{\delta}_n)(J_\rho + m\III{1}_{1/2})} \times \prod_{\ell \neq 1} \bigg(1 + e^{-(1-\overline{\delta}_n)(J_\rho + m\III{\ell}_{1/2})}\bigg).
\end{align*}
\endgroup
On the other hand, we have
\begingroup
\allowdisplaybreaks
\begin{align*}
  & \sum_{\substack{S\subseteq\{2, \hdots, L\}\\ |S\cup\{1\}|\textnormal{ even}}} \exp\bigg\{ -(1-\overline{\delta}_n)\bigg(|S\cup\{1\}|J_\rho + \sum_{\ell \in S\cup\{1\}}m\III{\ell}_{1/2}\bigg) \bigg\} \\
  & \qquad - \sum_{\substack{S\subseteq\{2, \hdots, L\}\\ |S\cup\{1\}|\textnormal{ odd}}} \exp\bigg\{ -(1-\overline{\delta}_n)\bigg(|S\cup\{1\}|J_\rho + \sum_{\ell \in S\cup\{1\}}m\III{\ell}_{1/2}\bigg) \bigg\} \\
  % & = e^{-(1-\overline{\delta}_n)(J_\rho + m\III{1}_{1/2})} \sum_{S\subseteq\{2, \hdots, L\}} (-1)^{|S\cup\{1\}|} \times \prod_{\ell \in S} e^{-(1-\overline{\delta}_n)(J_\rho + m\III{\ell}_{1/2})}\\
  & = -e^{-(1-\overline{\delta}_n)(J_\rho + m\III{1}_{1/2})} \prod_{\ell\neq 1} \bigg(1 - e^{-(1-\overline{\delta}_n)(J_\rho + m\III{\ell}_{1/2})}\bigg).
\end{align*}
\endgroup
Hence, we have
\begin{align*}
  & \sum_{\substack{S\subseteq\{2, \hdots, L\}\\ |S\cup\{1\}|\textnormal{ even}}} \exp\bigg\{ -(1-\overline{\delta}_n)\bigg(|S\cup\{1\}|J_\rho + \sum_{\ell \in S\cup\{1\}}m\III{\ell}_{1/2}\bigg) \bigg\} \\
  & = \frac{1}{2} e^{-(1-\overline{\delta}_n)(J_\rho + m\III{1}_{1/2})} \times \bigg[ \prod_{\ell\neq 1} \bigg(1 + e^{-(1-\overline{\delta}_n)(J_\rho + m\III{\ell}_{1/2})}\bigg) - \prod_{\ell\neq 1} \bigg(1 - e^{-(1-\overline{\delta}_n)(J_\rho + m\III{\ell}_{1/2})}\bigg)\bigg]\\
  & \leq e^{-(1-\overline{\delta}_n)(J_\rho + m\III{1}_{1/2})} \cdot \bigg(\sum_{i\neq 1} e^{-(1-\overline{\delta}_n)(J_\rho + m\III{\ell}_{1/2})}\bigg) \cdot \prod_{\ell\neq 1} \bigg(1+ e^{-(1-\overline{\delta}_n)(J_\rho + m\III{\ell}_{1/2})}\bigg)\\
  & \leq e^{-(1-\overline{\delta}_n)(2J_\rho + m\III{1}_{1/2})}\cdot L \cdot \bigg(1+e^{-(1-\overline{\delta}_n)J_\rho}\bigg)^{L}\\ 
  \label{eq:opt_ind_given_glob_even}
  & \leq e^{-(1-\overline{\delta}_n)\indinfo_{\{1\}}} \cdot L \exp\big\{ L e^{-(1-\overline{\delta}_n)J_\rho}\big\},\numberthis
  % & \leq \frac{1}{2} e^{-(1-\overline{\delta}_n)(J_\rho + m\III{1}_{1/2})} \times \bigg[  \bigg(1 + e^{-(1-\overline{\delta}_n) (J_\rho + mI_{\ell_{\min}}) }\bigg)^{L-1} -\bigg(1 - e^{-(1+o(1))(1+\lambda^{(-1)}_{\min})J_\rho }\bigg)^{L-1}\bigg]\\
  % & = \exp\bigg\{-(1+o(1))\bigg(2J_\rho + m\III{1}_{1/2} + mI_{\ell^{(-1)}_{\min}}\bigg)\bigg\} \times \sum_{s = 0}^{L-2} \bigg(1 + e^{-(1+o(1))(1+ \lambda^{(-1)}_{\min})J_\rho }\bigg)^s \bigg(1 - e^{-(1+o(1))(1+\lambda^{(-1)}_{\min})J_\rho }\bigg)^{L-2-s},
\end{align*}
where the third line is by Lemma \ref{lemma: diff_of_products} and the last inequality is by Lemma \ref{lemma:ineq_for_cvx_conjugate}. 

~\\
{\bf\noindent The odd terms.} We now consider the terms whose $|S\cup\{1\}|$'s are odd. For any fixed $t\in[0, 1]$, we have
\begin{align*}
  &\sum_{\substack{S\subseteq\{2, \hdots, L\} \\ |S\cup\{1\}| \textnormal{ odd}}}\exp\bigg\{- (1-\overline{\delta}_n) \bigg(\bigl(|(S\cup\{1\})| + 1\bigr) J_\rho + \psi_{S\cup\{1\}}^\star(-2J_\rho)\bigg)  \bigg\} \\
  & \leq \sum_{\substack{S\subseteq\{2, \hdots, L\} \\ |S\cup\{1\}| \textnormal{ odd}}}\exp\bigg\{- (1-\overline{\delta}_n) \bigg(\bigl(|(S\cup\{1\})| + 1\bigr) J_\rho + (1-2t)J_\rho + \sum_{\ell \in S\cup\{1\}}m\III{\ell}_{t}\bigg)  \bigg\} \\
  & = \exp\bigg\{-(1-\overline{\delta}_n)\bigg(J_\rho + (1-2t)J_\rho + m\III{1}_{t}\bigg)\bigg\} \times \sum_{\substack{S\subseteq\{2, \hdots, L\} \\ |S\cup\{1\}| \textnormal{ odd}}} \exp\bigg\{-(1-\overline{\delta}_n)\bigg(|S|J_\rho + \sum_{\ell\in S} m\III{\ell}_{t}\bigg)\bigg\}.
\end{align*}
Note that
\begin{align*}
  & \sum_{\substack{S\subseteq\{2, \hdots, L\} \\ |S\cup\{1\}| \textnormal{ odd}}} \exp\bigg\{-(1-\overline{\delta}_n)\bigg(|S|J_\rho + \sum_{\ell\in S} m\III{\ell}_{t}\bigg)\bigg\} + \sum_{\substack{S\subseteq\{2, \hdots, L\} \\ |S\cup\{1\}| \textnormal{ even}}} \exp\bigg\{-(1-\overline{\delta}_n)\bigg(|S|J_\rho + \sum_{\ell\in S} m\III{\ell}_{t}\bigg)\bigg\} \\
  % & = \sum_{S\subseteq\{2, \hdots, L\} } \exp\bigg\{-(1-\overline{\delta}_n)\bigg(|S|J_\rho + \sum_{\ell\in S} m\III{\ell}_{t}\bigg)\bigg\} \\
  & = \prod_{\ell\neq 1} \bigg(1+ e^{-(1-\overline{\delta}_n)(J_\rho + m\III{\ell}_{t})}\bigg).
\end{align*}
Meanwhile, we have
\begin{align*}
  & \sum_{\substack{S\subseteq\{2, \hdots, L\} \\ |S\cup\{1\}| \textnormal{ odd}}} \exp\bigg\{-(1-\overline{\delta}_n)\bigg(|S|J_\rho + \sum_{\ell\in S} m\III{\ell}_{t}\bigg)\bigg\} - \sum_{\substack{S\subseteq\{2, \hdots, L\} \\ |S\cup\{1\}| \textnormal{ even}}} \exp\bigg\{-(1-\overline{\delta}_n)\bigg(|S|J_\rho + \sum_{\ell\in S} m\III{\ell}_{t}\bigg)\bigg\} \\
  % & = \sum_{S\subseteq\{2, \hdots, L\} } (-1)^{|S|}\exp\bigg\{-(1-\overline{\delta}_n)\bigg(|S|J_\rho + \sum_{\ell\in S} m\III{\ell}_{t}\bigg)\bigg\} \\
  & = \prod_{\ell\neq 1} \bigg(1- e^{-(1-\overline{\delta}_n)(J_\rho + m\III{\ell}_{t})}\bigg).
\end{align*}
Hence, the odd terms are bounded above by
\begin{align*}
  \exp\bigg\{-(1-\overline{\delta}_n)\bigg(J_\rho + (1-2t)J_\rho + m\III{1}_t\bigg)\bigg\} \times \frac{1}{2} \bigg[ \prod_{\ell\neq 1} \bigg(1+e^{-(1-\overline{\delta}_n)(J_\rho + m\III{\ell}_{t})}\bigg) + \prod_{\ell\neq 1} \bigg(1-e^{-(1-\overline{\delta}_n)(J_\rho + m\III{\ell}_{t})} \bigg)\bigg].
\end{align*}
Choosing $t\in[0, 1]$ such that it maximizes $-2tJ_\rho + m\III{1}_t$, and using 
% we can further upper bound the odd terms by
% . That is, we have $-2tJ_\rho + m\III{1}_t = \psi_{\{1\}}^\star(-2J_\rho)$.
 % by Lemma \ref{lemma:optimal_t_for_cvx_conjugate}. 
\begin{align*}
  & \frac{1}{2} \bigg[ \prod_{\ell\neq 1} \bigg(1+e^{-(1-\overline{\delta}_n)(J_\rho + m\III{\ell}_{t})}\bigg) + \prod_{\ell\neq 1} \bigg(1-e^{-(1-\overline{\delta}_n)(J_\rho + m\III{\ell}_{t})} \bigg)\bigg] \leq \bigg(1 + e^{-(1-\overline{\delta}_n) J_\rho}\bigg)^{L-1},
\end{align*}
we get
% By Assumption \ref{assump:growth_rate_of_L}, we have
\begin{align*}
  &\sum_{\substack{S\subseteq\{2, \hdots, L\} \\ |S\cup\{1\}| \textnormal{ odd}}}\exp\bigg\{- (1-\overline{\delta}_n) \bigg(\bigl(|(S\cup\{1\})| + 1\bigr) J_\rho + \psi_{S\cup\{1\}}^\star(-2J_\rho)\bigg)  \bigg\} \\
  & \leq \exp\bigg\{-(1-\overline{\delta}_n)\bigg(2J_\rho + \psi_{\{1\}}^\star(-2J_\rho)\bigg) + L e^{-(1-\overline{\delta}_n)J_\rho}\bigg\} \\
  \label{eq:opt_ind_given_glob_odd}
  & =\exp\bigg\{-(1-\overline{\delta}_n)\indinfo_{\{1\}} + L e^{-(1-\overline{\delta}_n)J_\rho}\bigg\}\numberthis.
\end{align*}  

The proof is concluded by combining  \eqref{eq:opt_ind_given_glob_even} and \eqref{eq:opt_ind_given_glob_odd}.

\subsection{Proof of Theorem \ref{thm:refine_glob_const_rho}}\label{prf:thm:refine_glob_const_rho}
We adopt the notations in the proof of Theorem \ref{thm:refine_glob}.
We first present a useful proposition that is similar to Proposition \ref{prop:refine_glob_nodewise}.
\begin{proposition}
  \label{prop:refine_glob_nodewise_const_rho}
  Assume $1\lesssim \rho <1/2$, $q _\ell < p_\ell \leq (Cq_\ell) \land (1-c), \forall \ell\in[L]$, $\beta= 1+o(1)$, and $\log L\ll n^{c'}$ for some constants $C > 1$ and $c, c' \in (0, 1)$. In addition, assume there exists a sequence $\delta_n=o(1)$ and constants $\ep_{\init}>0, C'>0$ such that $\forall i\in[n], \exists \pi_i\in\{\pm 1\}$ which makes the following holds:
  \begin{equation*}
    % \label{eq:ind_consistent_init_const_rho}
    \bbP\bigg(d_\ham(\pi_i \tzzz{\star, -i}_{-i} ,\zzz{\ell}_{-i})\leq n\delta_n ~ \forall \ell\in[n]\bigg) \geq 1 - C' n^{-(1+\ep_{\init})},
  \end{equation*}
  Then there exists another sequence $\delta_n'=o(1)$ and an absolute constant $C''>0$ such that for any $i\in[n]$, we have 
    \begin{align*}
    % \label{eq:upper_bound_z_star_const_rho}
    \bbP(\pi_i\barzzz{\star, -i}_i \neq  \bz^\star_i) 
    & \leq  C''n^{-(1+\ep_{\init})}+ \sum_{S\subseteq[L]}e^{-(1-\delta_n') [|S^c|J_\rho + \psi_S^\star(0)]}
  \end{align*}
\end{proposition}
\begin{proof}[Proof of Proposition \ref{prop:refine_glob_nodewise_const_rho}]
Following the proof of Proposition \ref{prop:refine_glob_nodewise}, one can readily check that \eqref{eq:nodewise_err_decomp_3terms_simplified} still holds under the current setting. That is, we have
\begin{align*}
  &\bbP(\pi_i\barzzz{\star, -i}_i \neq  \bz^\star_i) \\
  & \leq C'' n^{-(1+\ep_\init)} + 
  \sum_{S \subseteq [L]} 
  \sum_{x \in\{ -|S^c|+2k: 0\leq k \leq |S^c|\}} \binom{|S^c|}{\frac{|S^c| + x}{2}}\\
  & \qquad  
  \times \exp\bigg\{ -|S^c|\log\frac{1}{\sqrt{\rho(1-\rho)}} + x(1-2t_{S, x, \xi})\log \sqrt{\frac{1-\rho}{\rho}}\bigg\} \nonumber\\
  & \qquad\times \sum_{\xi\in\{\pm 1\}^S}\bbP(\zzz{S}_i =  \bz^\star_i \xi ) 
  \cdot \exp\bigg\{ -\frac{n}{2}\sum_{\ell\in S}\III{\ell}_{t_{S, x, \xi}}+ o(1)\cdot n\sum_{\ell\in S}\III{\ell}_{1/2}\bigg\}.
\end{align*}
Choosing $t_{S, x, \xi} = 1/2$, the right-hand side above becomes 
\begin{align*}
  & C'' n^{-(1+\ep_\init)} + \sum_{S\subseteq[L]} \sum_{x \in\{ -|S^c|+2k: 0\leq k \leq |S^c|\}} \binom{|S^c|}{\frac{|S^c| + x}{2}}
  \exp\bigg\{ -|S^c|\log\frac{1}{\sqrt{\rho(1-\rho)}} \bigg\} \nonumber\\
  & \qquad\times \sum_{\xi\in\{\pm 1\}^S}\bbP(\zzz{S}_i =  \bz^\star_i \xi ) 
  \cdot \exp\bigg\{ -\frac{(1+o(1))n}{2}\sum_{\ell\in S}\III{\ell}_{1/2}\bigg\}\\
  & = C'' n^{-(1+\ep_\init)} + \sum_{S\subseteq[L]} \exp\bigg\{ -|S^c|\log\frac{1}{\sqrt{\rho(1-\rho)}} - \frac{(1+o(1))n}{2}\sum_{\ell\in S}\III{\ell}_{1/2} \bigg\}\\
  & \qquad \times \sum_{x \in\{ -|S^c|+2k: 0\leq k \leq |S^c|\}} \binom{|S^c|}{\frac{|S^c| + x}{2}}\\
  & = C'' n^{-(1+\ep_\init)} + \sum_{S\subseteq[L]} \exp\bigg\{ -|S^c|\log\frac{1}{\sqrt{\rho(1-\rho)}} - \frac{(1+o(1))n}{2}\sum_{\ell\in S}\III{\ell}_{1/2} \bigg\} \cdot 2^{|S^c|}\\
  & = C'' n^{-(1+\ep_\init)} + \sum_{S\subseteq[L]} \exp\bigg\{ -|S^c|J_\rho- \frac{(1+o(1))n}{2}\sum_{\ell\in S}\III{\ell}_{1/2} \bigg\}. 
\end{align*}
The proof is concluded by recognizing $\sum_{S}\III{\ell}_{1/2} = \psi_S^\star(0)$.
\end{proof}
With the above proposition, the rest of the proof is the same as the proof of Theorem \ref{thm:refine_glob}, and we omit the details.

\subsection{Proof of Theorem \ref{thm:refine_ind_const_rho}}\label{prf:thm:refine_ind_const_rho}
  The proof is nearly the same as that of Theorem \ref{thm:refine_ind}, except that we now use we use the following proposition instead of Proposition \ref{prop:refine_ind_nodewise}.
  \begin{proposition}
  \label{prop:refine_ind_nodewise_const_rho}
  Fix $\ell\in[L]$. Under the setup of Proposition \ref{prop:refine_glob_nodewise_const_rho}, there exists a sequence $\delta_n'=o(1)$ and an absolute constant $C''>0$ such that for any $i\in[n]$, we have 
    \begin{align*}
    \bbP(\pi_i\barzzz{\ell, -i}_i \neq  \zzz{\ell}_i) 
    & \leq  C''n^{-(1+\ep_{\init})} \\
    & \qquad + \sum_{S\subseteq[L]\setminus\{\ell\}} e^{-(1-\delta_n')\psi_{S\cup\{\ell\}}^\star(0)}\cdot  \Big( e^{-(1-\delta_n')|(S\cup \{\ell\})^c|J_\rho} + e^{-(1-\delta_n') |S\cup\{\ell\}|J_\rho} \Big).
  \end{align*}
\end{proposition}
\begin{proof}
Without loss of generality we consider the first layer and we assume $\pi_i = +1$. We start by computing
\begin{align*}
  \bbP(\barzzz{1, -i}_i\neq \zzz{1}_i) = \bbP(\barzzz{1, -i}_i = -\zzz{1}_i) \leq \bbP(\barzzz{1, -i}_i = -\zzz{1}_i, E_i\cap F_i) + C'' n^{-(1+\ep_{\init})},
\end{align*}
where $E_i, F_i$ are defined in \eqref{eq:Ei} and \eqref{eq:Fi} respectively, and the last inequality is by \eqref{eq:high_prob_Ei_Fi}. We now proceed by
\begin{align*}
  & \bbP(\barzzz{1, -i}_i = -\zzz{1}_i, E_i\cap F_i) \\
  & = \sum_{S\subseteq \{2, \hdots, L\}} \bigg(\bbP(\barzzz{\star, -i}_i = -\bz^\star_i, \barzzz{1, -i}_i = -\zzz{1}_i, \barzzz{S, -i}_i = -\zzz{S}_i, \barzzz{S^c, -i}_i = \zzz{S^c}_i, E_i\cap F_i) \\
  & \qquad + \bbP(\barzzz{\star, -i}_i = \bz^\star_i, \barzzz{1, -i}_i = -\zzz{1}_i, \barzzz{S, -i}_i = -\zzz{S}_i, \barzzz{S^c, -i}_i = \zzz{S^c}_i, E_i\cap F_i)\bigg) ,
\end{align*}
The right-hand side above is the superposition of two terms, the first of which has already been calculated in the proof of Proposition \ref{prop:refine_glob_nodewise_const_rho}:
\begin{align*}
  & \sum_{S\subseteq\{2, \hdots, L\}}\bbP(\barzzz{\star, -i}_i = -\bz^\star_i, \barzzz{1, -i}_i = -\zzz{1}_i, \barzzz{S, -i}_i = -\zzz{S}_i, \barzzz{S^c, -i}_i = \zzz{S^c}_i, E_i\cap F_i)\\
  & \leq \sum_{S\subseteq\{2, \hdots, L\}} e^{-(1+o(1))[|(S\cup\{1\})^c|J_\rho + \psi_{S\cup\{1\}}^\star(0)]}.
\end{align*}
For the second-term, we use \eqref{eq:ind_nodewise_err_decomp_indinfo} and \eqref{eq:ind_nodewise_err_caseA} (which still hold under the current setting) to conclude that
\begin{align*}
  &\sum_{S\subseteq\{2, \hdots, L\}} \bbP(\barzzz{\star, -i}_i = \bz^\star_i, \barzzz{S\cup\{1\}, -i}_i = -\zzz{S\cup\{1\}}_i, \barzzz{(S\cup\{1\})^c, -i}_i = \zzz{(S\cup\{1\})^c}_i, E_i\cap F_i)\\
  & \leq \sum_{S\subseteq[2, \hdots, L]} \exp\bigg\{ -|S\cup\{1\}|J_\rho -  \frac{(1+o(1)) \cdot n}{2}\sum_{\ell\in S\cup\{1\}}\III{\ell}_{1/2}\bigg\}\\
  & = \sum_{S\subseteq[2, \hdots, L]} e^{-(1+o(1))[|S\cup\{1\}|J_\rho + \psi^\star_{S\cup\{1\}}(0)]}.
\end{align*}
The proof is concluded by summarizing the above two displays.
\end{proof}

\subsection{Proof of Theorem \ref{thm:refine_glob_and_ind_cluster_imbalance}} \label{prf:thm:refine_glob_and_ind_cluster_imbalance}
The proof is almost the same as the proofs of Theorem \ref{thm:refine_glob} and \ref{thm:refine_ind}, except that instead of using \eqref{eq:glob_nodewise_err_decomp_1st_term}, we use
\begin{align*}
  & \mathscr{T}_{1, \ell}\cdot \Indc_{E_i\cap F_i} \\
  & \leq \exp\bigg\{\frac{(1-2\rho)(n^{\star}_{\bz^\star_i} - n^{\star}_{-\bz^\star_i})}{2}\cdot \bigg(\indc{\xi_\ell = 1} - \indc{\xi_\ell = -1}\bigg)
  \times \log \bigg(\frac{p_\ell^t q_\ell^{1-t} + (1-p_\ell)^t (1-q_\ell)^{1-t}}{p_\ell^{1-t} q_\ell^t + (1-p_\ell)^{1-t}(1-q_\ell)^t}\bigg)\\
  % \label{eq:glob_nodewise_err_decomp_1st_term}
  & \qquad  \qquad + \frac{ n\cdot ( 2\delta_n + n^{-(1-c')/2} + n^{-1} )}{2} \cdot \III{\ell}_t\bigg\}\\
  & \leq \exp\bigg\{o(1) \cdot n \III{\ell}_t/2 + (1+o(1)) (\beta - \beta^{-1}) \frac{n}{2} \cdot \bigg|\log \frac{p_\ell^t q_\ell^{1-t} + (1-p_\ell)^t (1-q_\ell)^{1-t}}{p_\ell^{1-t} q_\ell^t + (1-p_\ell)^{1-t}(1-q_\ell)^t}\bigg|\bigg\}.
\end{align*}
So we omit the details.

\subsection{Proof of Theorem \ref{thm:refine_glob_with_est_param}}\label{prf:thm:refine_glob_with_est_param}
We adopt the notations in the proof of Theorem \ref{thm:refine_glob}. 
The following result is analogous to Proposition \ref{prop:refine_glob_nodewise}.

\begin{proposition}
  \label{prop:refine_glob_nodewise_with_est_param}
  Under the setup of Theorem \ref{thm:refine_glob_with_est_param}, if there exists a sequence $\delta_n=o(1)$ and constants $\ep_{\init}>0, C'>0$ such that $\forall i\in[n], \exists \pi_i\in\{\pm 1\}$ which makes the following holds:
  \begin{equation}
    \label{eq:ind_consistent_init_with_est_param}
    \bbP\bigg(d_\ham(\pi_i \tzzz{\star, -i}_{-i} ,\zzz{\ell}_{-i})\leq n\delta_n ~ \forall \ell\in[n]\bigg) \geq 1 - C' n^{-(1+\ep_{\init})},
  \end{equation}
  Then there exists another sequence $\delta_n'=o(1)$ and an absolute constant $C''>0$ such that for any $i\in[n]$, we have 
    \begin{align}
    \label{eq:upper_bound_z_star_with_est_param}
    \bbP(\pi_i\barzzz{\star, -i}_i \neq  \bz^\star_i) 
    & \leq  C''n^{-(1+\ep_{\init})}+ \sum_{S\subseteq[L]}e^{-(1-\delta_n')[|S^c|J_\rho^\dagger + \psi_S^\star(0)]}
    % \sum_{\substack{S\subseteq[L] \\ |S^c| \textnormal{ even}}} \exp\bigg\{-(1-{\delta_n'}) \bigg(|S^c|J_\rho + \psi_S^\star(0)\bigg)\bigg\} \nonumber\\
    % & \qquad + \sum_{\substack{S\subseteq[L]\\ |S^c|\textnormal{ odd}}} \exp\bigg\{-(1-{\delta_n'})\bigg((|S^c| + 1)J_\rho + \psi_S^\star(-2J_\rho)\bigg)\bigg\}.
    % & \qquad \lor \exp\bigg\{-(1+\underline{\delta_n}) \max_{\substack{S\subseteq[L]\\ S\textnormal{ odd}}}\bigg((|S^c| + 1)J_\rho + \psi_S^\star(-2J_\rho)\bigg)\bigg\}.
  \end{align}
\end{proposition}
Given Proposition \ref{prop:refine_glob_nodewise_with_est_param}, the rest of the proof for Theorem \ref{thm:refine_glob_with_est_param} is exactly the same as the proof of Theorem \ref{thm:refine_glob}, so we omit the details.

The proof of Proposition \ref{prop:refine_glob_nodewise_with_est_param} largely follows the proof of Proposition \ref{prop:refine_glob_nodewise}, with some modifications to take into the randomness in the estimated parameters and the misspecified $\rho^\dagger \neq \rho$. 
Recall the events $E_i$ and $F_i$ defined in \eqref{eq:Ei} and \eqref{eq:Fi}, respectively. We have proved in Section \ref{prf:prop:refine_glob_nodewise} that $E_i \cap F_i$ happens with probability at least $1 - \calO(n^{-(1+ \ep_{\mathtt{init}})})$ (see \eqref{eq:high_prob_Ei_Fi}). We introduce another high probability event in the following lemma.

\begin{lemma}
  Let the input to Stage \RN{1} of Algorithm \ref{alg: provable_refinement} be an instance generated by an $\textnormal{IMLSBM} \in \calP_n(\rho, \{p_\ell\}_1^L, \{q_\ell\}_1^L, \beta)$ satisfying $\rho = o(1), q_\ell < p_\ell \leq (C q_\ell) \land (1-c), \forall \ell\in[L], \beta = 1 + o(1)$ and $L \lesssim n^{c'}$, where $C>1, c\in(0, 1), c' \geq 0$ are absolute constants. Let Assumption \ref{assump:consistent_init} hold. Then there exists a sequence $\delta_{\mathtt{est}, n} = o(1)$ and a constant $\ep_{\mathtt{est}} > 0$ such that 
  $$
    \max_{i\in[n]} \bbP(G_i) \lesssim n^{-(1+ \ep_{\mathtt{est}})},
  $$
  where
  \begin{equation}
    \label{eq:Gi}
    G_i = \{|\hat p^{(-i)}_\ell - p_\ell| \lor |\hat q^{(-i)}_\ell - q_\ell| \leq \delta_{\mathtt{est},n} (p_\ell - q_\ell), \forall \ell \in [L]\}.
  \end{equation}  
\end{lemma}
\begin{proof}
 Under the current assumptions on $\rho$ and $L$, by Corollary \ref{cor:specc_ind_est}, we have
 $
  \calL(\tzzz{\star, -i}, \zzz{\ell}_{-i}) = o(1)
 $
 uniformly over $\ell \in[L]$
 with probability at least $1 - \calO(n^{-r})$ where $r$ can be arbitrarily large. Now, by the same arguments as in the proof of Lemma 1 in \cite{gao2017achieving}, we have
 $$
  \bbP\bigg(|\hat p^{(-i)}_\ell - p_\ell| \lor |\hat q^{(-i)}_\ell - q_\ell| \leq \delta_{\mathtt{est},n} (p_\ell - q_\ell)\bigg) \lesssim n^{-r}.
 $$
 The proof is concluded by invoking a union bound over $L\in[L]$.
\end{proof}

Now, we have $\bbP(E_i\cap F_i \cap G_i) \geq 1- C'' n^{-(1+\ep_{\init})}$. Thus, we have
\begin{equation*}
  % \label{eq:glob_nodewise_err_decomp}
  \bbP(\barzzz{\star, -i}_i\neq \bz^\star_i) \leq \bbP(\barzzz{\star, -i}_i = -\bz^\star_i, E_i\cap F_i\cap G_i) + C''n^{-(1+\ep_{\init})}. 
\end{equation*}
Following the proof of Proposition \ref{prop:refine_glob_nodewise}, we arrive the following inequality, which is a counterpart to \eqref{eq:glob_nodewise_err_decom_3terms}:
\begin{align*}
  & \bbP(\barzzz{\star, -i}_i = -\bz^\star_i , E_i\cap F_i \cap G_i)  \nonumber\\
  & \leq \sum_{S \subseteq [L]} 
  \sum_{x \in\{ -|S^c|+2k: 0\leq k \leq |S^c|\}} 
  \binom{|S^c|}{\frac{|S^c| + x}{2}} 
  \exp\bigg\{ -|S^c|\log\frac{1}{\sqrt{\rho(1-\rho)}} + x\log \sqrt{\frac{1-\rho}{\rho}} - 2tx \log \sqrt{\frac{1-\rho^\dagger}{\rho^\dagger}}\bigg\} \nonumber\\
  % \label{eq:glob_nodewise_err_decom_3terms}
  & ~~ \times \sum_{\xi\in\{\pm 1\}^S}\bbP(\zzz{S}_i =  \bz^\star_i \xi ) 
  \cdot \bbE_{\{\zzz{\ell}_{-i}\}_{\ell=1}^L} \bigg\{
  \prod_{\ell\in S}\bbE\bigg[ \hat{\mathscr{T}}_{1, \ell}\times \hat{\mathscr{T}}_{2, \ell} \times \hat{\mathscr{T}}_{3, \ell} \cdot \Indc_{E_i\cap F_i\cap G_i} ~\bigg|~ \{\zzz{\ell}_{-i}\}_{\ell=1}^L \bigg]\bigg\},
\end{align*}
where
\begin{align*}
  \hat{\mathscr{T}}_{1, \ell} & := \bigg(\frac{\hat p^{(-i)}_\ell \Big(\frac{\hat q^{(-i)}_\ell (1-\hat p^{(-i)}_\ell)}{\hat p^{(-i)}_\ell (1-\hat q^{(-i)}_\ell)}\Big)^{1/2} + 1- \hat p^{(-i)}_\ell}{\hat q^{(-i)}_\ell \Big(\frac{\hat p^{(-i)}_\ell (1- \hat q^{(-i)}_\ell)}{\hat q^{(-i)}_\ell (1-\hat p^{(-i)}_\ell)}\Big)^{1/2} + 1-\hat q^{(-i)}_\ell}\bigg)^{t(\mmm{\ell, -i}_- - \mmm{\ell, -i}_+)} \\
  & \qquad \times \bigg[q_\ell\bigg(\frac{\hat p^{(-i)}_\ell(1-\hat q^{(-i)}_\ell)}{\hat q^{(-i)}_\ell(1-\hat p^{(-i)}_\ell)}\bigg)^t + (1-q_\ell)\bigg]^{\frac{\mmm{\ell, -i}_- - \mmm{\ell, -i}_+}{2}}  \\
  & \qquad 
  \times \bigg[p_\ell \bigg(\frac{\hat q^{(-i)}_\ell(1-\hat p^{(-i)}_\ell)}{\hat p^{(-i)}_\ell(1-\hat q^{(-i)}_\ell)}\bigg)^t + (1-p_\ell)\bigg]^{\frac{\mmm{\ell, -i}_+ - \mmm{\ell, -i}_-}{2}}
   \\
  \hat{\mathscr{T}}_{2, \ell} & := \bigg[ q_\ell\bigg(\frac{\hat p^{(-i)}_\ell(1-\hat q^{(-i)}_\ell)}{\hat q^{(-i)}_\ell(1-\hat p^{(-i)}_\ell)}\bigg)^t + (1-q_\ell) \bigg]^{\frac{\mmm{\ell, -i}_- + \mmm{\ell, -i}_+}{2}} 
  \times \bigg[p_\ell\bigg(\frac{\hat q^{(-i)}_\ell(1-\hat p^{(-i)}_\ell)}{\hat p^{(-i)}_\ell(1-\hat q^{(-i)}_\ell)}\bigg)^t + (1-p_\ell)\bigg]^{\frac{\mmm{\ell, -i}_+ + \mmm{\ell, -i}_-}{2}} \\
  \hat{\mathscr{T}}_{3, \ell} & := \bigg[ \frac{p_\ell \bigg(\frac{\hat p^{(-i)}_\ell(1-\hat q^{(-i)}_\ell)}{\hat q^{(-i)}_\ell(1-\hat p^{(-i)}_\ell)}\bigg)^t + (1-p_\ell)}{q_\ell \bigg(\frac{\hat p^{(-i)}_\ell(1-\hat q^{(-i)}_\ell)}{\hat q^{(-i)}_\ell(1-\hat p^{(-i)}_\ell)}\bigg)^t + (1-q_\ell)} \bigg]^{\mmm{\ell, -i}_- - \tmmm{\ell, -i}_-}
  \times \bigg[ \frac{q_\ell \bigg(\frac{\hat q^{(-i)}_\ell(1-\hat p^{(-i)}_\ell)}{\hat p^{(-i)}_\ell(1-\hat q^{(-i)}_\ell)}\bigg)^t + (1-q_\ell)}{p_\ell \bigg(\frac{\hat q^{(-i)}_\ell(1-\hat p^{(-i)}_\ell)}{\hat p^{(-i)}_\ell(1-\hat q^{(-i)}_\ell)}\bigg)^t + (1-p_\ell)}\bigg]^{\mmm{\ell, -i}_+ - \tmmm{\ell, -i}_+}.
\end{align*}

We now prove the counterparts of \eqref{eq:glob_nodewise_err_decomp_1st_term}, \eqref{eq:glob_nodewise_err_decomp_2ed_term}, and \eqref{eq:glob_nodewise_err_decomp_3ed_term} in order.

~\\
{\noindent \bf Bounding the first term.} Taking $t=1/2$, the first term can be expressed as
\begin{align*}
  \hat{\mathscr{T}}_{1, \ell} & = \bigg[\frac{\hat p^{(-i)}_\ell \Big(\frac{\hat q^{(-i)}_\ell (1-\hat p^{(-i)}_\ell)}{\hat p^{(-i)}_\ell (1-\hat q^{(-i)}_\ell)}\Big)^{1/2} + 1- \hat p^{(-i)}_\ell}{p_\ell \Big(\frac{\hat q^{(-i)}_\ell (1-\hat p^{(-i)}_\ell)}{\hat p^{(-i)}_\ell (1-\hat q^{(-i)}_\ell)}\Big)^{1/2} + 1- p_\ell}\bigg]^{\frac{\mmm{\ell, -i}_- - \mmm{\ell, -i}_+}{2}} \\
  & \qquad \times 
  \bigg[\frac{q_\ell \Big(\frac{\hat p^{(-i)}_\ell (1- \hat q^{(-i)}_\ell)}{\hat q^{(-i)}_\ell (1-\hat p^{(-i)}_\ell)}\Big)^{1/2} + 1-q_\ell}{\hat q^{(-i)}_\ell \Big(\frac{\hat p^{(-i)}_\ell (1- \hat q^{(-i)}_\ell)}{\hat q^{(-i)}_\ell (1-\hat p^{(-i)}_\ell)}\Big)^{1/2} + 1-\hat q^{(-i)}_\ell}\bigg]^{\frac{\mmm{\ell, -i}_- - \mmm{\ell, -i}_+}{2}}\\
  & \leq
  \exp\bigg\{\frac{|\mmm{\ell, -i}_- - \mmm{\ell, -i}_+|}{2}\cdot \bigg|\log  \bigg[\frac{\hat p^{(-i)}_\ell \Big(\frac{\hat q^{(-i)}_\ell (1-\hat p^{(-i)}_\ell)}{\hat p^{(-i)}_\ell (1-\hat q^{(-i)}_\ell)}\Big)^{1/2} + 1- \hat p^{(-i)}_\ell}{p_\ell \Big(\frac{\hat q^{(-i)}_\ell (1-\hat p^{(-i)}_\ell)}{\hat p^{(-i)}_\ell (1-\hat q^{(-i)}_\ell)}\Big)^{1/2} + 1- p_\ell}\bigg]\bigg| \\
  & \qquad\qquad + \frac{|\mmm{\ell, -i}_- - \mmm{\ell, -i}_+|}{2}\cdot \bigg|\log \bigg[\frac{q_\ell \Big(\frac{\hat p^{(-i)}_\ell (1- \hat q^{(-i)}_\ell)}{\hat q^{(-i)}_\ell (1-\hat p^{(-i)}_\ell)}\Big)^{1/2} + 1-q_\ell}{\hat q^{(-i)}_\ell \Big(\frac{\hat p^{(-i)}_\ell (1- \hat q^{(-i)}_\ell)}{\hat q^{(-i)}_\ell (1-\hat p^{(-i)}_\ell)}\Big)^{1/2} + 1-\hat q^{(-i)}_\ell}\bigg] \bigg|\bigg\}.
\end{align*}
We have
\begin{align*}
  \frac{\hat p^{(-i)}_\ell \Big(\frac{\hat q^{(-i)}_\ell (1-\hat p^{(-i)}_\ell)}{\hat p^{(-i)}_\ell (1-\hat q^{(-i)}_\ell)}\Big)^{1/2} + 1- \hat p^{(-i)}_\ell}{p_\ell \Big(\frac{\hat q^{(-i)}_\ell (1-\hat p^{(-i)}_\ell)}{\hat p^{(-i)}_\ell (1-\hat q^{(-i)}_\ell)}\Big)^{1/2} + 1- p_\ell}
  & = 1 + \frac{(\hat p^{(-i)}_\ell - p_\ell) \cdot \bigg( \Big( \frac{\hat q^{(-i)}_\ell (1-\hat p^{(-i)}_\ell )}{\hat p^{(-i)}_\ell (1- \hat q^{(-i)}_\ell)} \Big)^{1/2} - 1\bigg)}{p_\ell \Big(\frac{\hat q^{(-i)}_\ell (1-\hat p^{(-i)}_\ell)}{\hat p^{(-i)}_\ell (1-\hat q^{(-i)}_\ell)}\Big)^{1/2} + 1- p_\ell} \\
  & \overset{(*)}{\leq} 1 + o(p_\ell - q_\ell) \cdot \bigg| \Big( \frac{\hat q^{(-i)}_\ell (1-\hat p^{(-i)}_\ell )}{\hat p^{(-i)}_\ell (1- \hat q^{(-i)}_\ell)} \Big)^{1/2} - 1\bigg|\\
  & \leq 1 + o(p_\ell - q_\ell) \cdot \frac{\Big|\frac{\hat q^{(-i)}_\ell (1-\hat p^{(-i)}_\ell )}{\hat p^{(-i)}_\ell (1- \hat q^{(-i)}_\ell)}  - 1\Big|}{\Big(\frac{\hat q^{(-i)}_\ell (1-\hat p^{(-i)}_\ell )}{\hat p^{(-i)}_\ell (1- \hat q^{(-i)}_\ell)} \Big)^{1/2} + 1}\\
  & \leq 1 + o(p_\ell - q_\ell) \cdot \frac{|\hat p_\ell^{(-i)} - \hat q_\ell^{(-i)}|}{\hat p_\ell^{(-i)} (1-\hat q_\ell^{(-i)})}\\
  & \overset{(**)}{\leq} 1 + o(p_\ell-q_\ell) \cdot \frac{(p_\ell - q_\ell)}{p_\ell}\\
  & \leq \exp\bigg\{o\bigg(\frac{(p_\ell - q_\ell)^2}{p_\ell}\bigg)\bigg\},
\end{align*}
where $(*)$ and $(**)$ hold under $G_i$. Hence, under $E_i\cap F_i \cap G_i$, we have
$$
  \exp\bigg\{\frac{|\mmm{\ell, -i}_- - \mmm{\ell, -i}_+|}{2} \cdot \bigg|\log  \bigg[\frac{\hat p^{(-i)}_\ell \Big(\frac{\hat q^{(-i)}_\ell (1-\hat p^{(-i)}_\ell)}{\hat p^{(-i)}_\ell (1-\hat q^{(-i)}_\ell)}\Big)^{1/2} + 1- \hat p^{(-i)}_\ell}{p_\ell \Big(\frac{\hat q^{(-i)}_\ell (1-\hat p^{(-i)}_\ell)}{\hat p^{(-i)}_\ell (1-\hat q^{(-i)}_\ell)}\Big)^{1/2} + 1- p_\ell}\bigg]\bigg| \bigg\}\leq \exp\{o(1) \cdot n \III{\ell}_{1/2}\}.
$$
A similar argument shows that under the same event, 
$$
\exp\bigg\{\frac{|\mmm{\ell, -i}_- - \mmm{\ell, -i}_+|}{2}\cdot \bigg|\log \bigg[\frac{q_\ell \Big(\frac{\hat p^{(-i)}_\ell (1- \hat q^{(-i)}_\ell)}{\hat q^{(-i)}_\ell (1-\hat p^{(-i)}_\ell)}\Big)^{1/2} + 1-q_\ell}{\hat q^{(-i)}_\ell \Big(\frac{\hat p^{(-i)}_\ell (1- \hat q^{(-i)}_\ell)}{\hat q^{(-i)}_\ell (1-\hat p^{(-i)}_\ell)}\Big)^{1/2} + 1-\hat q^{(-i)}_\ell}\bigg] \bigg|\bigg\} \leq \exp\{o(1) \cdot n \III{\ell}_{1/2}\}.
$$
Thus, we get
\begin{equation}
  \label{eq:glob_nodewise_err_decomp_1st_term_est_param}
  \hat{\mathscr{T}}_{1, \ell}\cdot \Indc_{E_i\cap F_i \cap G_i} \leq \exp\{o(1) \cdot n \III{\ell}_{1/2}\}
\end{equation}  
under the choice of $t=1/2$.

~\\
{\noindent \bf Bounding the second term.} 
Since $\mmm{\ell, -i}_+ + \mmm{\ell, -i}_- = n-1$, we have
\begin{align*}
  \hat{\mathscr{T}}_{2, \ell} & = 
  \exp\bigg\{\frac{n-1}{2} \log \bigg[ p_\ell q_\ell + (1-p_\ell)(1-q_\ell) \\
  & \qquad + p_\ell(1-q_\ell)\bigg(\frac{\hat q_\ell^{(-i)} (1 - \hat p^{(-i)}_\ell)}{\hat p^{(-i)}_\ell (1 - \hat q^{(-i)}_\ell) }\bigg)^{t} + q_\ell (1-p_\ell) \bigg(\frac{\hat p^{(-i)}_\ell (1 - \hat q^{(-i)}_\ell)}{\hat q^{(-i)}_\ell (1 - \hat p^{(-i)}_\ell)}\bigg)^t \bigg]\bigg\} \\
  & \overset{(*)}{=} 
  \exp\bigg\{\frac{n-1}{2} \log \bigg[ p_\ell q_\ell + (1-p_\ell)(1-q_\ell) \\
  & \qquad + (1+o(1))\cdot \bigg(
    p_\ell(1-q_\ell)   \bigg(\frac{q_\ell (1-p_\ell)}{p_\ell (1-q_\ell)}\bigg)^t
  \bigg)
  + q_\ell(1-p_\ell)\bigg(\frac{p_\ell (1-q_\ell)}{q_\ell(1-p_\ell)}\bigg)^t \bigg]\bigg\} \\
  & = \exp\bigg\{-\frac{n-1}{2} \cdot (1+o(1)) \III{\ell}_t\bigg\} \\
  \label{eq:glob_nodewise_err_decomp_2ed_term_est_param}
  & = \exp\bigg\{-(1+o(1)) n \III{\ell}_t / 2\bigg\} \numberthis
\end{align*}
where $(*)$ holds under $G_i$.

~\\
{\noindent \bf Bounding the third term.} 
We can write
\begin{align*}
  \frac{p_\ell \bigg(\frac{\hat p^{(-i)}_\ell(1-\hat q^{(-i)}_\ell)}{\hat q^{(-i)}_\ell(1-\hat p^{(-i)}_\ell)}\bigg)^t + (1-p_\ell)}{q_\ell \bigg(\frac{\hat p^{(-i)}_\ell(1-\hat q^{(-i)}_\ell)}{\hat q^{(-i)}_\ell(1-\hat p^{(-i)}_\ell)}\bigg)^t + (1-q_\ell)} & = 
  1 + \frac{(p_\ell - q_\ell) \bigg[ \bigg(1 + \frac{\hat p_\ell^{(-i)} - \hat q_\ell^{(-i)}}{\hat q_\ell^{(-i)} (1- \hat p_\ell^{(-i)})}\bigg)^t - 1\bigg]}{1 - q_\ell + q_\ell \bigg(1 + \frac{\hat p_\ell^{(-i)} - \hat q_\ell^{(-i)}}{\hat q_\ell^{(-i)}(1-\hat p_\ell^{(-i)})}\bigg)^t}.
\end{align*}
Under $G_i$, we have
  $$
    \hat p^{(-i)}_\ell - \hat q^{(-i)}_\ell = (\hat p^{(-i)}_\ell - p_\ell) - (\hat q^{(-i)}_\ell - q_\ell) + (p_\ell - q_\ell) \geq (1-2\delta_{\mathtt{est}, n})(p_\ell - q_\ell) > 0.
  $$
% We have shown in the proof of Lemma \ref{lemma:glob_nodewise_err_decomp_1st_term_part2_est_param} that $\hat p_\ell \geq \hat q_\ell$ under $G_i$. 
Thus, we can further upper bound 
\begin{align*}
  \frac{p_\ell \bigg(\frac{\hat p^{(-i)}_\ell(1-\hat q^{(-i)}_\ell)}{\hat q^{(-i)}_\ell(1-\hat p^{(-i)}_\ell)}\bigg)^t + (1-p_\ell)}{q_\ell \bigg(\frac{\hat p^{(-i)}_\ell(1-\hat q^{(-i)}_\ell)}{\hat q^{(-i)}_\ell(1-\hat p^{(-i)}_\ell)}\bigg)^t + (1-q_\ell)}
  & \leq  1 + \frac{(p_\ell - q_\ell) \cdot \frac{\hat p^{(-i)}_\ell - \hat q^{(-i)}_\ell}{\hat q^{(-i)}_\ell (1 - \hat p^{(-i)}_\ell) }}{1 - q_\ell} \\
  & \leq 1 + \calO\bigg(\frac{(p_\ell-q_\ell)^2}{p_\ell}\bigg) \\
  & \leq \exp\bigg\{\calO\bigg(\frac{(p_\ell - q_\ell)^2}{p_\ell}\bigg)\bigg\},
\end{align*}  
where the second inequality above again holds under $G_i$. A similar argument shows that under $G_i$, we have
$$
  \frac{q_\ell \bigg(\frac{\hat q^{(-i)}_\ell(1-\hat p^{(-i)}_\ell)}{\hat p^{(-i)}_\ell(1-\hat q^{(-i)}_\ell)}\bigg)^t + (1-q_\ell)}{p_\ell \bigg(\frac{\hat q^{(-i)}_\ell(1-\hat p^{(-i)}_\ell)}{\hat p^{(-i)}_\ell(1-\hat q^{(-i)}_\ell)}\bigg)^t + (1-p_\ell)} \leq \exp\bigg\{\calO\bigg(\frac{(p_\ell - q_\ell)^2}{p_\ell}\bigg)\bigg\}.
$$
Thus, we have
\begin{align*}
  \label{eq:glob_nodewise_err_decomp_3ed_term_est_param}
  \hat{\mathscr{T}}_{3, \ell}\cdot \Indc_{E_i} \leq \exp\bigg\{\calO\bigg(\frac{o(n)(p_\ell-q_\ell)^2}{p_\ell}\bigg)\bigg\} = \exp\bigg\{ o(1) \cdot n\III{\ell}_{1/2} \bigg\}, \numberthis
\end{align*}

~\\
{\noindent \bf Summarizing the three terms.} Summarizing \eqref{eq:glob_nodewise_err_decomp_1st_term_est_param}, \eqref{eq:glob_nodewise_err_decomp_2ed_term_est_param}, and \eqref{eq:glob_nodewise_err_decomp_3ed_term_est_param}, we get
\begin{align*}
  & \bbP(\barzzz{\star, -i}_i = -\bz^\star_i , E_i\cap F_i \cap G_i)\\
  & \leq \sum_{S \subseteq [L]} 
  \sum_{x \in\{ -|S^c|+2k: 0\leq k \leq |S^c|\}} 
  \binom{|S^c|}{\frac{|S^c| + x}{2}} 
  \exp\bigg\{ -|S^c|\log\frac{1}{\sqrt{\rho(1-\rho)}} + x\log \sqrt{\frac{1-\rho}{\rho}} - 2tx \log \sqrt{\frac{1-\rho^\dagger}{\rho^\dagger}}\bigg\} \\
  & \qquad \times \exp\{-(1+o(1)) \psi^\star_S(0)\}\\
  & \leq \sum_{S \subseteq [L]} 
  \sum_{x \in\{ -|S^c|+2k: 0\leq k \leq |S^c|\}} 
  \binom{|S^c|}{\frac{|S^c| + x}{2}} 
  \exp\bigg\{ -|S^c|\log\frac{1}{\sqrt{\rho(1-\rho)}} + \frac{1}{2} \bigg|\log {\frac{(1-\rho)/(1-\rho^\dagger)}{\rho / \rho^\dagger}}\bigg|\bigg\} \\
  & \qquad \times \exp\{-(1+o(1)) \psi^\star_S(0)\}\\
  & \leq \sum_{S\subseteq[L]} \exp\bigg\{- J_\rho + \frac{1}{2}\bigg|\log {\frac{(1-\rho)/(1-\rho^\dagger)}{\rho / \rho^\dagger}}\bigg|  - (1+o(1))\psi_S^\star(0)\bigg\} \\
  & = \sum_{S\subseteq [L]} \exp\{-(1+o(1))(J^\dagger_\rho + \psi_S^\star(0))\},
\end{align*}
where the second inequality holds under the choice of $t=1/2$.

\subsection{Proof of Theorem \ref{thm:refine_ind_with_est_param}}\label{prf:thm:refine_ind_with_est_param}
The proof is nearly the same as that of Theorem \ref{thm:refine_ind}, except that we now use we use the following proposition instead of Proposition \ref{prop:refine_ind_nodewise}.
\begin{proposition}
\label{prop:refine_ind_nodewise_const_rho_est_params}
Fix $\ell\in[L]$. Under the setup of Theorem \ref{thm:refine_ind_with_est_param}, there exists a sequence $\delta_n'=o(1)$ and an absolute constant $C''>0$ such that for any $i\in[n]$, we have 
\begin{align*}
    \bbP(\pi_i\barzzz{\ell, -i}_i \neq  \zzz{\ell}_i) 
    & \leq  C''n^{-(1+\ep_{\init})} \\
    & \qquad + \sum_{S\subseteq[L]\setminus\{\ell\}} e^{-(1-\delta_n')\psi_{S\cup\{\ell\}}^\star(0)}\cdot  \Big( e^{-(1-\delta_n')|(S\cup \{\ell\})^c|J_\rho^\dagger} + e^{-(1-\delta_n') |S\cup\{\ell\}|J_\rho^\dagger} \Big).
\end{align*}
\end{proposition}
\begin{proof}
  Without loss of generality we consider the first layer. Following the proof of Proposition \ref{prop:refine_ind_nodewise}, we get
  \begin{align*}
    & \bbP(\pi_i\barzzz{\ell, -i}_i \neq  \zzz{\ell}_i) \\
    & \leq  C''n^{-(1+\ep_{\init})} \\
    & \qquad + 
    \sum_{S\subseteq \{2, \hdots, L\}} \bigg(\bbP(\barzzz{\star, -i}_i = -\bz^\star_i, \barzzz{1, -i}_i = -\zzz{1}_i, \barzzz{S, -i}_i = -\zzz{S}_i, \barzzz{S^c, -i}_i = \zzz{S^c}_i, E_i\cap F_i \cap G_i) \\
    \label{eq:ind_nodewise_err_decomp_all_subsets_with_est_param}
    & \qquad + \bbP(\barzzz{\star, -i}_i = \bz^\star_i, \barzzz{1, -i}_i = -\zzz{1}_i, \barzzz{S, -i}_i = -\zzz{S}_i, \barzzz{S^c, -i}_i = \zzz{S^c}_i, E_i\cap F_i \cap G_i)\bigg),\numberthis
  \end{align*}
  where $E_i, F_i, G_i$ are defined in \eqref{eq:Ei}, \eqref{eq:Fi}, and \eqref{eq:Gi}, respectively.
  According to the proof of Proposition \ref{prop:refine_glob_nodewise_with_est_param}, the second term in the right-hand side above can be upper bounded by
  $$
    \sum_{S \subseteq\{2, \hdots, L\}} \exp\bigg\{-(1+o(1)) \bigg(|(S \cup \{\ell\})^c|J_\rho^\dagger + \psi^\star_{S\cup \{\ell\}}(0)\bigg)\bigg\}.
  $$
  By the arguments that led to \eqref{eq:ind_nodewise_err_decomp_indinfo}, \eqref{eq:glob_nodewise_err_decomp_1st_term_est_param}, \eqref{eq:glob_nodewise_err_decomp_2ed_term_est_param}, and \eqref{eq:glob_nodewise_err_decomp_3ed_term_est_param}, the third term in the right-hand side of \eqref{eq:ind_nodewise_err_decomp_all_subsets_with_est_param} can be upper bounded by
  \begin{align*}
    &\sum_{S\subseteq\{2, \hdots, L\}}
    \bbE_{\{\zzz{\ell}_i: \ell\in S\cup\{1\}\}} \bigg[
    \exp\bigg\{\log\sqrt{\frac{1-\rho}{\rho}}\cdot \bigg( \# \{\ell\in S\cup\{1\}: \zzz{\ell}_i = -\bz^\star_i\} - \#\{\ell\in S\cup\{1\}: \zzz{\ell}_i = \bz^\star_i\} \bigg)\bigg\}\bigg] \\
    & \qquad \times \exp\bigg\{-\frac{(1+o(1))\cdot n}{2}\sum_{\ell\in S\cup\{1\}}\III{\ell}_{1/2} \bigg\}\\
    & = \sum_{S\subseteq\{2, \hdots, L\}}
    \prod_{\ell \in S\cup\{\ell\}}\bbE\bigg[ \exp\bigg\{\log\sqrt{\frac{1-\rho}{\rho}}\cdot \bigg( \Indc\{\zzz{\ell}_i = -\bz^\star_i\} - \Indc\{\zzz{\ell}_i = \bz^\star_i\} \bigg)\bigg\}\bigg]
    \cdot \exp\{-(1+o(1))\psi^\star_{S\cup\{\ell\}}(0)\}\\
    & = \sum_{S\subseteq\{2, \hdots, L\}}
    \prod_{\ell\in S\cup\{\ell\}} \sqrt{\rho(1-\rho)} \cdot \sqrt{\frac{(1 - \rho^\dagger)/(1-\rho)}{\rho^\dagger /\rho}} + \sqrt{\rho(1-\rho)} \cdot \sqrt{\frac{\rho^\dagger/\rho}{(1-\rho^\dagger)/(1-\rho)}}\\
    & \qquad \times \exp\{-(1+o(1))\psi^\star_{S\cup\{\ell\}}(0)\}\\
    & \leq \sum_{S\subseteq\{2, \hdots, L\}}
    \prod_{\ell\in S\cup\{\ell\}} 2\sqrt{\rho(1-\rho)} \cdot \bigg(\sqrt{\frac{(1 - \rho^\dagger)/(1-\rho)}{\rho^\dagger /\rho}} \lor  \sqrt{\frac{\rho^\dagger/\rho}{(1-\rho^\dagger)/(1-\rho)}}\bigg)
    \cdot \exp\{-(1+o(1))\psi^\star_{S\cup\{\ell\}}(0)\}\\
    & = \sum_{S\subseteq\{2, \hdots, L\}} \exp\bigg\{-(1+o(1))\bigg( |S\cup\{\ell\}| J_\rho^\dagger +  \psi_{S\cup\{\ell\}}^\star(0)\bigg)\bigg\}.
  \end{align*}
  The proof is thus concluded.
\end{proof}

\subsection{Proof of Theorem \ref{thm:specc_large_L}} \label{prf:thm:specc_large_L}
In view of Lemma \ref{lem: structure lem}, it suffices to give an upper bound on
$
  \|\bar A - \bbE[\bar A]\|_2^2.
$
Since $n = \calO(1)$, we bound $\|\bar A - \bbE \bar A\|^2_2$ by $\|\bar A - \bbE \bar A\|_F^2$, the later of which can be further controlled by bounding individual terms.
By triangle inequality, we have 
\begin{equation}
  \label{eq:norm-triangle-fixed-n}
\|\bar A - \bbE \bar A\|_F \leq 
\|\bar A - \bbE^{(1:L)} \bar A\|_F 
+ \|\bbE^{(1:L)}\bar A - \bbE \bar A\|_F.
\end{equation}
For any fixed $i<j$, we have
\begin{equation}
  \label{eq:T1-rep}
\bar{A}_{ij} - \bbE^{(1:L)}\bar{A}_{ij} = \sum_{\ell\in [L]} \omega_\ell(A_{ij}^{(\ell)} - \bbE^{(\ell)} A_{ij}^{(\ell)}).
\end{equation}
Then Bennett's inequality implies that for any $t > 0$, 
\begin{align}
  \label{eq:T1-bd1}
\bbP\left( \bar{A}_{ij} - \bbE^{(1:L)}\bar{A}_{ij} > t \right)
\leq \exp\left( -\frac{\sigma^2}{a^2}h\left(\frac{at}{\sigma^2}\right) \right)
\end{align}
where $h(u) = (1+u)\log(1+u) - u$,
\begin{equation*}
a = \max_{\ell} \omega_\ell , \quad\mbox{and}\quad
\sigma^2 = \sum_{\ell\in  [L]} \omega_\ell^2\,\mathrm{Var}^{(1:L)}( A_{ij}^{(\ell)})
\leq \sum_{\ell \in [L]} \omega_\ell^2\,p_{\ell}.
\end{equation*}
Note that when $u\in [0, \frac{1}{2}]$, $\frac{2}{5}u^2\leq  h(u) \leq \frac{1}{2}u^2$, and so we could further bound the right-hand side of 
\eqref{eq:T1-bd1} as
\begin{align*}
\exp\left( -\frac{\sigma^2}{a^2}h\left(\frac{at}{\sigma^2}\right) \right)
\leq 
\exp\left( -\frac{2}{5}\frac{\sigma^2}{a^2}\frac{a^2t^2}{\sigma^4} \right)
= 
\exp\left( -\frac{2t^2}{5\sigma^2} \right)
\end{align*}
as long as $at/\sigma^2 \to 0$.
Let $T_L$ be a sequence whose exact form will be specified later, and define
\begin{equation}
\label{eq:t1}
t_1 = \sqrt{5 T_L \sum_{\ell \in [L]} \omega_\ell^2\,p_{\ell}}.
\end{equation}
Then we obtain 
\begin{equation*}
\bbP\left( \bar{A}_{ij} - \bbE^{(1:L)}\bar{A}_{ij} > t_1 \right)
\leq e^{-T_L}.
% L^{-2}.
\end{equation*}
As long as $T_L\to \infty$ and 
\begin{equation*}
\frac{\max_\ell \omega_\ell }{\sqrt{ \sum_\ell \omega_\ell^2 q_\ell }} \,\sqrt{T_L} \to 0,
\end{equation*}
we have $at_1/\sigma^2 \to 0$.
% \nb{Need to argue that under appropriate condition, $at/\sigma^2 = o_L(1)$.}
Repeating the foregoing argument for $-(\bar{A}_{ij} - \bbE^{(1:L)}\bar{A}_{ij})$ and applying union bound, 
we have with probability at least $1 - 2n^2e^{-T_L}$,
\begin{equation}
  \label{eq:T1-bd-final}
\|\bar A - \bbE^{(1:L)} \bar A\|_F  \leq  nt_1.
\end{equation}
Now let us turn to the second term of \eqref{eq:norm-triangle-fixed-n}.
In view of the proof of Lemma \ref{lemma:cluster_flips}, we need to control the deviation of 
\begin{equation}
\label{eq:T2-rep}
\sum_{\ell} \omega_\ell(p_\ell - q_\ell)[B_\ell - 2\rho(1-\rho)]
\end{equation}
from zero on both sides, where $B_\ell\stackrel{iid}{\sim}\mathrm{Bern}(2\rho(1-\rho))$. 
Applying Bennett's inequality again, we have for any fixed $i<j$ and all $t > 0$,
\begin{equation}
  \label{eq:T2-bd1}
\bbP\left( \bbE^{(1:L)}\bar{A}_{ij} - \bbE\bar{A}_{ij} > t \right)
\leq
\exp\left( -\frac{\sigma^2}{a^2}h\left( \frac{at}{\sigma^2} \right) \right),
\end{equation}
where 
\begin{equation*}
a = \max_\ell \omega_\ell(p_\ell -q_\ell)
\quad\mbox{and}\quad
\sigma^2 = \sum_\ell \omega_\ell^2(p_\ell-q_\ell)^2 2\rho(1-\rho)[1- 2\rho(1-\rho)]
\leq 2\rho\sum_\ell \omega_\ell^2(p_\ell-q_\ell)^2.
\end{equation*}
Under the condition that as $L\to\infty$, $T_L\to\infty$ and
\begin{equation*}
\frac{\max_\ell \omega_\ell(p_\ell - q_\ell) }{\sqrt{\rho\sum_{\ell} \omega_\ell^2 (p_\ell-q_\ell)^2}}\sqrt{T_\ell} \to 0,
\end{equation*}
if we pick
\begin{equation}
  \label{eq:t2}
t_2 = \sqrt{10 T_L \rho\sum_\ell \omega_\ell^2(p_\ell-q_\ell)^2},
\end{equation}
we obtain 
\begin{equation*}
\bbP\left( \bbE^{(1:L)}\bar{A}_{ij} - \bbE\bar{A}_{ij} > t_2 \right)
\leq e^{-T_L}.
\end{equation*}
Repeating the argument for $-(\bbE^{(1:L)}\bar{A}_{ij} - \bbE\bar{A}_{ij})$ and applying union bound, 
we have with probability at least $1 - 2n^2 e^{-T_L}$,
\begin{equation}
  \label{eq:T2-bd-final}
\|\bbE^{(1:L)} \bar A - \bbE \bar A \|_F  \leq nt_2.
\end{equation}
Combining Lemma \ref{lem: structure lem} and Equations \eqref{eq:norm-triangle-fixed-n}, \eqref{eq:T1-bd-final} and \eqref{eq:T2-bd-final},
we obtain that with probability at least $1 - 4n^2 e^{-T_L}$,
\begin{equation*}
\calL(\btz^\star, \bz^\star) 
\leq 
\frac{ C n^2 T_L}{n^2 (1-2\rho)^4 (\bar p-\bar q)^2} 
\left( \sum_\ell \omega_\ell^2 p_l + \rho\sum_\ell \omega_\ell^2(p_\ell-q_\ell)^2 \right) .
\end{equation*}
The righthand side tends to zero as $L\to\infty$ if the following two conditions are satisfied:
\begin{align*}
\frac{T_L \sum_\ell \omega_\ell^2 p_\ell^2}{(1-2\rho)^4 (\bar p - \bar q)^2} \to 0,
\qquad 
\frac{\rho\, T_L \sum_\ell \omega_\ell^2 (p_\ell - q_\ell)^2}{(1-2\rho)^4 (\bar p - \bar q)^2} \to 0.
\end{align*}
In summary, we have shown that if there exists $T_L \to \infty$ such that 
\begin{align}
  \label{eq:assump_on_L_large_L_asymptotics}
  \frac{\max_\ell \omega_\ell }{\sqrt{ \sum_\ell \omega_\ell^2 q_\ell }} \,\sqrt{T_L} 
  \lor \frac{\max_\ell \omega_\ell(p_\ell - q_\ell) }{\sqrt{\rho\sum_{\ell} \omega_\ell^2 (p_\ell-q_\ell)^2}}\sqrt{T_\ell}
  \lor \frac{T_L \sum_\ell \omega_\ell^2 p_\ell^2}{(1-2\rho)^4 (\bar p - \bar q)^2}
  \lor \frac{\rho\, T_L \sum_\ell \omega_\ell^2 (p_\ell - q_\ell)^2}{(1-2\rho)^4 (\bar p - \bar q)^2} \to \infty,
\end{align} 
then with probability $1 - \calO(e^{-T_L}) = 1-o(1)$, we have
$$
  \calL(\tilde \bz^\star, \bz^\star) = o(1) < 1/n
$$
for $L$ large enough. The above inequality implies $\calL(\tilde \bz^\star, \bz^\star) = 0$ by the definition of $\calL$.
We finish the proof by noting that the choice of $T_L$ that makes \eqref{eq:assump_on_L_large_L_asymptotics} holds is possible when \eqref{eq:assump_specc_large_L} holds.

\section{Concentration and Regularization of Multilayer Networks}\label{append:concentration}

In this section, we overload the notation and let $\{\AAA{\ell}\}_1^L$ be adjacency matrices of a ``multilayer'' inhomogeneous Erd\"os-R\'enyi graph, where each $\AAA{\ell}$ is independently generated by 
\begin{equation*}
  % \label{eq:multi_layer_erdos_renyi_graph}
  \AAA{\ell}_{ij} = \AAA{\ell}_{ji} \sim \Bern{(\ppp{\ell}_{ij})}, \qquad \forall i, j \in[n].
\end{equation*}
% independently.  
As usual, we let $\weightvec = (\weight_1, \hdots, \weight_L)$ be a weight vector and define 
\begin{equation*}
  % \label{eq:wted_adjacency_erdos_renyi_graph}
  \bar A = \sum_{\ell\in[L]}\weight_\ell \AAA{\ell}.
\end{equation*}
For $I, J\subseteq[n]$, we let $\barA_{I\times J}\in\bbR^{|I|\times |J|}$ be the submatrix of $\bar A$ with rows indexed by $I$ and columns indexed by $J$. For a generic subset $\calE\subseteq[n]\times[n]$, not necessarily of the form $\calE = I\times J$ for some $I, J \subseteq[n]$, we let $\barA_{\calE}$ denote the submatrix of $\barA$ whose dimension is $|I_\calE|\times |J_\calE|$, where $I_\calE = \{i\in[n]: (i,j)\in\calE \textnormal{ for some } j \in[n]\}, J_\calE = \{j\in[n]: (i,j)\in\calE \textnormal{ for some } i\in[n]\}$, and whose entries are given by
$$
  (\bar A_{\calE})_{i, j} = 
  \begin{cases}
    \bar A_{i, j} & \textnormal{ if } (i, j) \in \calE,\\
    0 & \textnormal{ otherwise.}
  \end{cases}
$$
The performance of spectral clustering is highly contingent upon the concentration behavior of $\barA$ around its expectation. 
% Before we introduce the main results in this section, we need to introduce several more notations. For each $\ell$, we let
Let us define
\begin{equation*}
  d_\ell = \max_{i, j} n\ppp{\ell}_{ij}.
\end{equation*}
Note that $d_\ell$ is an upper bound of the expected degree of the $\ell$-the layer (which is defined as $\max_{i\in[n]}\sum_{j\in[n]}n\ppp{\ell}_{ij}$).
Ideally, we would want the concentration of $\bar A$ happens at an $\calO(L^{-c})$ rate for $c > 0$, because otherwise there is no point in pooling $\AAA{\ell}$'s together. Such a rate, intuitively, would require that the weight we put on each layer is ``relatively balanced'', and this is exactly the intention of the assumption below.
\begin{assump}[Balanced weights]  
\label{assump:wts_for_concentration}
Assume $\weight_\ell > 0,\forall \ell\in[L]$ and $\sum_{\ell \in[L]}\weight_\ell=1$. 
Moreover, assume there exist two absolute constants $c_0 > 0, c_1 \geq 1$ such that the following two inequalities hold:
\begin{align}
      \label{eq: assump_on_weights_general_bernoulli_matrix}
      & \| \weightvec \|_\infty \sum_{\ell\in[L]} \omega_\ell d_\ell \leq c_0 \sum_{\ell\in[L]} \omega_\ell^2 d_\ell , \\
      \label{eq: another_assump_on_weights_general_bernoulli_matrix}
      & \|\weightvec \|_\infty \cdot  \sup_{i\in[n], J\subseteq[n]} \frac{\sum_{\ell\in[L]}\sum_{j\in J} \ppp{\ell}_{ij}}{\sum_{\ell\in[L]}\sum_{j\in J} \omega_\ell \ppp{\ell}_{ij}} \leq c_1.
\end{align}      
\end{assump}

% Denoting $\tau: \bbR^{n\times n}\to \bbR^{n\times n}$ as a generic ``regularization'' operator, whose definition will be clear soon, we similarly define 

We are now ready to present the main theorem of this section.
\begin{theorem}[Concentration of regularized adjacency matrices]
      \label{thm:concentration_reg_mat}
      Let Assumption \ref{assump:wts_for_concentration} hold with $c_0>0, c_1 \geq 1$ and fix two constants $c> 0, r \geq 1$. 
      Let $I\subseteq[n]$ be any subset of nodes with size at most $cn\|\weightvec\|_\infty/\sum_{\ell\in[L]}\omega_\ell d_\ell$. 
      For $\calE = (I\times[n])\cup([n]\times I)$, we {down-weight} (i.e., shrink the elements toward zero) the submatrix $A_\calE$ in an arbitrary way so that the resulting matrix $\tau(A)$ satisfies 
      \begin{equation*}
        % \label{eq:downweight_req}
        0\leq [\tau(\barA)]_\calE\leq \barA_\calE  
      \end{equation*}
      entry-wise.
      Then with probability at least $1-3n^{-r}$, we have
      \begin{equation*}
        % \label{eq:concentration_reg_mat}
            \| \tau(\barA) - \bbE \barA\| \leq C \cdot \bigg(\sqrt{\sum_{\ell\in[L]} \omega_\ell^2 d_\ell } + \sqrt{\| \omega\|_\infty d_\tau}\bigg),
    \end{equation*}            
      where 
      \begin{equation*}
        % \label{eq:regularized_degree}
            d_\tau =\max_{i\in[n]} \sum_{j\in[n]} [\tau(\barA)]_{ij}
    \end{equation*}            
    is the maximum degree of the regularized (i.e., down-weighted) matrix, and $C = C(c_0, c_1, c, r)$ is an absolute constant.
\end{theorem}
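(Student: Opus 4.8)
The plan is to follow the graph-decomposition strategy of \citet{le2017concentration}, adapted to the weighted multilayer setting. Writing $M := \tau(\bar A) - \mathbb{E}\bar A$, the first step is to reduce the operator norm $\|M\|$ to control of the bilinear form $\sum_{i,j} x_i y_j M_{ij}$ over vectors $x,y$ ranging not over a crude $\epsilon$-net but over the Kahn--Szemerédi-type grid $\mathcal{T} = \{0\}\cup\{\pm 2^k/\sqrt{n}\}$, so that each coordinate of a net point is a dyadic multiple of $1/\sqrt{n}$; this granularity is what permits the subsequent case analysis by magnitude. For fixed $x,y$ in the grid I would split $[n]\times[n]$ into \emph{light pairs}, where $|x_i y_j|$ lies below a threshold of order $(1/\sqrt n)\sqrt{\sigma^2/n}$ with $\sigma^2 := \sum_{\ell\in[L]}\omega_\ell^2 d_\ell$, and \emph{heavy pairs}, where it lies above. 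The total estimate is then the sum of the two contributions, and the target $C\bigl(\sqrt{\sigma^2} + \sqrt{\|\weightvec\|_\infty d_\tau}\bigr)$ emerges with the first term coming from the light pairs and the second from the heavy pairs.

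For the light pairs I would apply a Bernstein inequality to $\sum_{(i,j)\,\mathrm{light}} x_i y_j (\bar A_{ij} - \mathbb{E}\bar A_{ij})$, noting that $\bar A_{ij} = \sum_\ell \omega_\ell \AAA{\ell}_{ij}$ is a sum of independent Bernoullis with $0 \le \bar A_{ij}\le 1$, $\Var(\bar A_{ij}) = \sum_\ell \omega_\ell^2 \ppp{\ell}_{ij}(1-\ppp{\ell}_{ij}) \le \sigma^2/n$, and each layer contributing a term of range at most $\|\weightvec\|_\infty$. Truncation at the light-pair threshold controls the effective range $|x_iy_j|$, so Bernstein yields a contribution of order $\sqrt{\sigma^2}$ with probability at least $1-e^{-cn}$ uniformly over the grid (which has cardinality $e^{O(n)}$) after a union bound; because $0\leq [\tau(\bar A)]_\calE\leq \bar A_\calE$, the down-weighting only decreases this contribution, and the hypothesis $|I|\le cn\|\weightvec\|_\infty/\sum_\ell\omega_\ell d_\ell$ ensures the mass carried by pairs meeting the block $\calE$ is also $O(\sqrt{\sigma^2})$.

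The heavy pairs are the crux and are handled deterministically on a high-probability event, the key ingredient being a \textbf{discrepancy lemma}: with probability at least $1-n^{-r}$, for every pair of subsets $I',J'\subseteq[n]$ the weighted edge mass $e_\omega(I',J') := \sum_{i\in I', j\in J'}\bar A_{ij}$ is dominated by a combination of its mean $\mu_\omega(I',J')$ and an entropy term of order $\tfrac{|I'\cup J'|}{n}\log\tfrac{en}{|I'\cup J'|}$, in the spirit of the heavy-pair discrepancy estimate of \citet{le2017concentration}. To prove this in the weighted multilayer case one computes the moment generating function of $e_\omega(I',J')$, which factorizes over layers and edges; Assumption~\ref{assump:wts_for_concentration} --- specifically \eqref{eq: assump_on_weights_general_bernoulli_matrix}, comparing $\|\weightvec\|_\infty \sum_\ell \omega_\ell d_\ell$ with $\sum_\ell \omega_\ell^2 d_\ell$, and \eqref{eq: another_assump_on_weights_general_bernoulli_matrix}, a balancedness of weighted row sums --- is precisely what makes the resulting Chernoff bound carry the variance proxy $\sigma^2$ and range $\|\weightvec\|_\infty$, so that the union bound over the $2^{2n}$ pairs of subsets remains affordable. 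Granting this event together with the bounded-degree property $\max_{i}\sum_j [\tau(\bar A)]_{ij} = d_\tau$ (which is exactly the effect of down-weighting the rows and columns indexed by $I$), the heavy-pair contribution is controlled by summing over dyadic scales of $|x_i|$ and $|y_j|$: at each scale the count of heavy pairs is bounded by the discrepancy lemma, their weighted mass by $d_\tau$, and the geometric decay across scales yields a bound of order $\sqrt{\|\weightvec\|_\infty d_\tau}$. The final step assembles the light- and heavy-pair bounds and intersects the few high-probability events (Bernstein for light pairs, the discrepancy event, and an auxiliary degree-control event), each failing with probability $\lesssim n^{-r}$ after absorbing an $e^{-cn}$ term.

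The main obstacle I anticipate is the discrepancy lemma in its weighted multilayer form. In the single-layer unweighted case $e(I',J')$ is a sum of i.i.d.\ Bernoullis with a classical tail, whereas here $e_\omega(I',J')$ is a weighted sum whose layer-$\ell$ contributions have range $\omega_\ell$ and second moment $\omega_\ell^2 \ppp{\ell}_{ij}$, so one must verify --- this is where Assumption~\ref{assump:wts_for_concentration} is essential --- that the effective Bernstein parameters degrade only to $(\sigma^2, \|\weightvec\|_\infty)$ and not to something worse, and then carry out, uniformly over all $I',J'$, the delicate combinatorial bookkeeping: splitting into the regime where $\mu_\omega(I',J')$ dominates versus where the entropy term dominates, and further splitting the heavy pairs by the ratio $e_\omega/\mu_\omega$. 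Everything downstream --- the net reduction, the Bernstein bound for light pairs, and the dyadic summation for heavy pairs --- is a routine adaptation of \citet{le2017concentration} once the discrepancy lemma is in hand.
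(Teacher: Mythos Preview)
Your proposal is a plausible route, but it is \emph{not} the approach the paper takes, and you have misattributed the method. What you describe --- the Kahn--Szemer\'edi dyadic grid, the light/heavy pair dichotomy, and a discrepancy lemma for $e_\omega(I',J')$ over all pairs of subsets --- is the combinatorial strategy of Feige--Ofek, not the argument of \citet{le2017concentration}. The paper follows the latter quite closely: it first proves concentration in the $\ell_\infty\!\to\!\ell_2$ norm on arbitrary $m\times m$ sub-blocks (Lemma~\ref{lemma: concentration_in_infty_to_two_norm}), then upgrades this to the spectral norm via Grothendieck--Pietsch factorization (Lemma~\ref{lemma: grothendieck_pietsch_factorization}, Lemma~\ref{lemma: concentration_in_spectral_norm}). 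Separate degree-of-subgraph lemmas (Lemmas~\ref{lemma: deg_of_subgraphs} and~\ref{lemma: more_on_deg_of_subgraphs}, powered by a weighted-Bernoulli tail in Lemma~\ref{lemma: concentration_of_weighted_bernoulli_sum}) control the exceptional rows and columns. These ingredients are combined in a one-step block decomposition (Lemma~\ref{lemma: decomp_of_one_block}) that is iterated $O(\log n)$ times to partition $[n]\times[n]$ into a ``good'' block $\calN$ and two residual pieces $\calR,\calC$ with small row/column $\ell_1$ norms (Proposition~\ref{prop:graph_decomp}). The final bound follows by treating $\calN$, $\calR$, $\calC$ separately, using Lemma~\ref{lemma: relation_between_spectral_norm_and_l1_norm} on the residual pieces; the down-weighting on $\calE$ and the size constraint on $I$ enter only in this final assembly.

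The paper in fact remarks explicitly that it chose the Grothendieck--Pietsch route over the Feige--Ofek combinatorial route because it ``is able to deal with non-uniform weights''. Your approach may well be pushed through, but the obstacle you flag --- a weighted discrepancy lemma uniform over all $2^{2n}$ pairs $(I',J')$ --- is real, and you have not indicated how the two inequalities in Assumption~\ref{assump:wts_for_concentration} concretely tame the Chernoff bound for $e_\omega(I',J')$ across all scales of $|I'|,|J'|$. By contrast, the paper's argument uses \eqref{eq: assump_on_weights_general_bernoulli_matrix} only to pass from $\|\weightvec\|_\infty\sum_\ell\omega_\ell d_\ell$ to $\sum_\ell\omega_\ell^2 d_\ell$ inside a Bernstein exponent, and \eqref{eq: another_assump_on_weights_general_bernoulli_matrix} only to invoke the weighted-Bernoulli tail of Lemma~\ref{lemma: concentration_of_weighted_bernoulli_sum}; both uses are localized and transparent.
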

% \begin{remark}
%   This can be regarded as a generalization of Theorem 2.1 in \cite{le2017concentration} to multilayer networks. 
% \end{remark}

In the proof of Theorem \ref{thm:specc}, we will extensively use the following two corollaries of Theorem \ref{thm:concentration_reg_mat}.
\begin{corollary}[Concentration of trimmed adjacency matrices]
    \label{cor:concentration_trimmed_mat}
    Let Assumption \ref{assump:wts_for_concentration} hold with $c_0 > 0, c_1\geq 1$ and fix two constants $\gamma > e^{c_1}, r \geq 1$. 
    Define $I$ to be 
    % \begin{align}
    % \label{eq:trimmed_nodes}
    $$
      I:=\{i\in[n]:\, \sum_{j\in[n]} \barA_{ij} > \gamma \sum_{\ell\in[L]} \omega_\ell d_\ell \}.
    $$
  % \end{align}
    We trim the entries of $\barA$ in $\calE=(I\times[n])\cup([n]\times I)$, so that the resulting matrix $\tau(\barA)$ is zero on $\calE$. Then with probability at least $1 - 3n^{-r} - c_2^{-n}$, we have
    \begin{equation*}
      % \label{eq:concentration_trimmed_mat}
            \| \tau(\barA) - \bbE \barA\| \leq C \cdot \sqrt{\sum_{\ell\in[L]} \omega_\ell^2 d_\ell },
  \end{equation*}            
      where $c_2 = c_2(c_1, \gamma), C = C(c_0, c_1, \gamma, r)$ are two absolute constants. 
\end{corollary}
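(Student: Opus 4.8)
\emph{Proof plan.} The plan is to reduce the statement to Theorem~\ref{thm:concentration_reg_mat}. Observe first that the (data-dependent) set $I = \{i\in[n] : \sum_{j\in[n]}\barA_{ij} > \gamma\sum_{\ell\in[L]}\omega_\ell d_\ell\}$ together with the trimming $\tau$ that zeroes out $\barA$ on $\calE = (I\times[n])\cup([n]\times I)$ forms an admissible pair for that theorem: zeroing out trivially obeys $0 \le [\tau(\barA)]_\calE \le \barA_\calE$, and every row $i\notin I$ satisfies $\sum_{j}[\tau(\barA)]_{ij} = \sum_j \barA_{ij} \le \gamma\sum_\ell\omega_\ell d_\ell$ by definition of $I$, while the rows indexed by $I$ vanish; hence $d_\tau \le \gamma\sum_\ell\omega_\ell d_\ell$. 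Feeding this into the bound of Theorem~\ref{thm:concentration_reg_mat} and applying Assumption~\ref{assump:wts_for_concentration} in the form~\eqref{eq: assump_on_weights_general_bernoulli_matrix} gives
\[
\sqrt{\|\weightvec\|_\infty\, d_\tau} \;\le\; \sqrt{\gamma\,\|\weightvec\|_\infty\sum_{\ell}\omega_\ell d_\ell} \;\le\; \sqrt{\gamma c_0 \sum_{\ell}\omega_\ell^2 d_\ell},
\]
so the two terms in the conclusion of Theorem~\ref{thm:concentration_reg_mat} collapse into a single $C\sqrt{\sum_\ell\omega_\ell^2 d_\ell}$ with $C = C(c_0,c_1,\gamma,r)$, which is exactly the assertion of the corollary --- \emph{provided} $I$ is small enough to fall within the scope of the theorem.

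The main work is therefore to show that $|I|$ does not exceed the threshold $cn\|\weightvec\|_\infty/\sum_\ell\omega_\ell d_\ell$ of Theorem~\ref{thm:concentration_reg_mat} except on an event of probability at most $c_2^{-n}$; here $c>0$ is a constant we are free to choose (it only affects the value of $C$). I would argue as in the single-layer regularization analysis of \cite{le2017concentration}: if $|I| > m := \lceil cn\|\weightvec\|_\infty/\sum_\ell\omega_\ell d_\ell\rceil$, then some set $W$ of exactly $m$ vertices has total weighted degree $\sum_{i\in W}\sum_j\barA_{ij} > \gamma m\sum_\ell\omega_\ell d_\ell$. For a fixed $W$, bound $\sum_{i\in W}\sum_j\barA_{ij} \le \|\weightvec\|_\infty\sum_{i\in W}\sum_j\sum_\ell \AAA{\ell}_{ij}$, which is a scalar multiple of a sum of independent Bernoulli variables whose mean, by Assumption~\ref{assump:wts_for_concentration} in the form~\eqref{eq: another_assump_on_weights_general_bernoulli_matrix} taken with $J = [n]$, is at most $m c_1\sum_\ell\omega_\ell d_\ell/\|\weightvec\|_\infty$; a Chernoff/Poisson tail bound then yields $\bbP\big(\sum_{i\in W}\sum_j\barA_{ij} > \gamma m\sum_\ell\omega_\ell d_\ell\big) \le \exp\!\big(-c' m\,(\gamma/c_1)\log(\gamma/c_1)\,\sum_\ell\omega_\ell d_\ell/\|\weightvec\|_\infty\big)$ for a universal $c'$. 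A union bound over the $\binom{n}{m}\le (en/m)^m$ choices of $W$, together with $m\sum_\ell\omega_\ell d_\ell/\|\weightvec\|_\infty \asymp cn$, leaves an exponent of the form $cn\big[\,O(1) - c'(\gamma/c_1)\log(\gamma/c_1)\,\big]$; since $\gamma > e^{c_1}$ forces $\gamma/c_1 > e$, the bracket is negative once $c$ is chosen as a suitable function of $\gamma$ (hence of $c_1$), so $\bbP(|I|>m) \le c_2^{-n}$ with $c_2 = c_2(c_1,\gamma)$. Combining this failure event with the $3n^{-r}$ event of Theorem~\ref{thm:concentration_reg_mat} by a union bound gives the stated probability $1 - 3n^{-r} - c_2^{-n}$.

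I expect the combinatorial/tail estimate of the second paragraph to be the main obstacle: one has to run the Poisson-type concentration for \emph{weighted} Bernoulli degree sums --- which is precisely where assumption~\eqref{eq: another_assump_on_weights_general_bernoulli_matrix} is needed, to compare them with unweighted edge counts --- and then check that the resulting exponent dominates the entropy term $\binom{n}{m}$ exactly in the regime $\gamma > e^{c_1}$. This is the nontrivial multilayer upgrade of the corresponding single-layer statement in \cite{le2017concentration}; the reduction in the first paragraph and the bookkeeping with the failure probabilities are routine.
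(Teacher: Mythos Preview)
Your reduction to Theorem~\ref{thm:concentration_reg_mat} and the collapse of the $\sqrt{\|\weightvec\|_\infty d_\tau}$ term via~\eqref{eq: assump_on_weights_general_bernoulli_matrix} match the paper exactly; the only difference is in how you control $|I|$.

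The paper does \emph{not} take a union bound over size-$m$ subsets $W$. Instead it applies the weighted-Bernoulli tail Lemma~\ref{lemma: concentration_of_weighted_bernoulli_sum} directly to a single row sum $\sum_j\barA_{ij}$ (this is where~\eqref{eq: another_assump_on_weights_general_bernoulli_matrix} enters, as that lemma's hypothesis), obtaining a per-row failure probability $\mu=(e^{c_1}/\gamma)^{\gamma\sum_\ell\omega_\ell d_\ell/\|\weightvec\|_\infty}$; since row sums are independent, $|I|$ is stochastically dominated by $\mathrm{Binomial}(n,\mu)$, and a second application of the same lemma (with unit weights) gives $\bbP(|I|>m)\le c_2^{-n}$. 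This route makes the threshold $\gamma>e^{c_1}$ transparent --- it is exactly the condition $\mu<1$ --- and avoids the combinatorial union bound altogether. Your approach (the crude domination $\barA_{ij}\le\|\weightvec\|_\infty\sum_\ell A^{(\ell)}_{ij}$, Chernoff on the resulting unweighted count, then union over $\binom{n}{m}$ choices of $W$) is also correct and arguably more elementary, since it never invokes the weighted concentration lemma. Two minor corrections to your bookkeeping: the tail exponent you actually obtain is $\gamma m D\log\bigl(\gamma/(ec_1)\bigr)$ with $D=\sum_\ell\omega_\ell d_\ell/\|\weightvec\|_\infty$, not $m D\,(\gamma/c_1)\log(\gamma/c_1)$; and the entropy $m\log(en/m)$ is at most $n$ (not $cn\cdot O(1)$), so after dividing by $cn$ the bracket you need negative is $1/c-\gamma\log\bigl(\gamma/(ec_1)\bigr)$. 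These are cosmetic --- since $\gamma>e^{c_1}\ge ec_1$ for $c_1\ge 1$, the second term is positive and the argument goes through once $c$ is chosen large as a function of $\gamma,c_1$.
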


\begin{corollary}[Concentration of adjacency matrices without regularization]
      \label{cor:concentration_w/o_reg}
      Let Assumption \ref{assump:wts_for_concentration} hold with $c_0 > 0, c_1\geq 1$ and fix two constants $r \geq 1, c_2 > 0$. Then with probability at least $1 - 3n^{-r} - n^{-c_2}$, we have
      \begin{equation*}
        % \label{eq:concentration_w/o_reg}
            \| \barA - \bbE \barA\| \leq C \cdot  \bigg(\sqrt{\sum_{\ell\in[L]} \omega_\ell^2 d_\ell } + \|\weightvec \|_\infty\sqrt{\log n}\bigg),
    \end{equation*}            
    where $C = C(c_0, c_1, c_2, r)$ is an absolute constant.
\end{corollary}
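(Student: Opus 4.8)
\textbf{Proof proposal for Corollary \ref{cor:concentration_w/o_reg}.} The plan is to deduce the corollary from Theorem \ref{thm:concentration_reg_mat} by taking the trimmed index set to be empty and then controlling the (now untrimmed) maximum weighted degree by a Bernstein-type concentration argument. First I would apply Theorem \ref{thm:concentration_reg_mat} with $I=\varnothing$: then $\calE=\varnothing$, no down-weighting is performed, so $\tau(\barA)=\barA$, and the size constraint $|I|\le cn\|\weightvec\|_\infty/\sum_{\ell}\omega_\ell d_\ell$ holds vacuously. This produces an event $\mathcal{A}_1$ with $\bbP(\mathcal{A}_1)\ge 1-3n^{-r}$ on which
$$
\|\barA-\bbE\barA\|\le C_1\Big(\sqrt{\textstyle\sum_{\ell\in[L]}\omega_\ell^2 d_\ell}+\sqrt{\|\weightvec\|_\infty\, d_\tau}\Big),\qquad d_\tau:=\max_{i\in[n]}\sum_{j\in[n]}\barA_{ij}.
$$
So it remains to show that, with high probability, $d_\tau$ is of order $\sum_{\ell}\omega_\ell d_\ell+\|\weightvec\|_\infty\log n$.

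To that end, fix $i\in[n]$ and write the weighted degree $D_i=\sum_j\barA_{ij}=\sum_{\ell}\omega_\ell\sum_j\AAA{\ell}_{ij}$ as a sum of independent variables $\omega_\ell\AAA{\ell}_{ij}\in[0,\|\weightvec\|_\infty]$. Using $\sum_j\ppp{\ell}_{ij}\le n\max_j\ppp{\ell}_{ij}\le d_\ell$, one has $\bbE D_i\le\sum_{\ell}\omega_\ell d_\ell=:\bar d$ and $\mathrm{Var}(D_i)\le\sum_{\ell}\omega_\ell^2 d_\ell=:v$. Bernstein's inequality then gives $\bbP(D_i\ge\bar d+t)\le\exp\{-\tfrac{t^2/2}{v+\|\weightvec\|_\infty t/3}\}$; choosing $t=C_2(\sqrt{v\log n}+\|\weightvec\|_\infty\log n)$ with $C_2=C_2(c_2)$ large enough makes this at most $n^{-(1+c_2)}$ in both Bernstein regimes, and a union bound over $i\in[n]$ yields an event $\mathcal{A}_2$ with $\bbP(\mathcal{A}_2)\ge 1-n^{-c_2}$ on which $d_\tau\le\bar d+C_2(\sqrt{v\log n}+\|\weightvec\|_\infty\log n)$. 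Since $\omega_\ell^2 d_\ell\le\|\weightvec\|_\infty\omega_\ell d_\ell$ we get $v\le\|\weightvec\|_\infty\bar d$, hence $\sqrt{v\log n}\le\sqrt{\|\weightvec\|_\infty\bar d\log n}\le\tfrac12(\bar d+\|\weightvec\|_\infty\log n)$, so on $\mathcal{A}_2$ we have $d_\tau\lesssim\bar d+\|\weightvec\|_\infty\log n$.

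Finally I would work on $\mathcal{A}_1\cap\mathcal{A}_2$, which has probability at least $1-3n^{-r}-n^{-c_2}$. There, $\|\weightvec\|_\infty d_\tau\lesssim\|\weightvec\|_\infty\bar d+\|\weightvec\|_\infty^2\log n$, and the balanced-weights hypothesis \eqref{eq: assump_on_weights_general_bernoulli_matrix} gives $\|\weightvec\|_\infty\bar d=\|\weightvec\|_\infty\sum_{\ell}\omega_\ell d_\ell\le c_0\sum_{\ell}\omega_\ell^2 d_\ell=c_0 v$. Therefore $\sqrt{\|\weightvec\|_\infty d_\tau}\lesssim\sqrt{v}+\|\weightvec\|_\infty\sqrt{\log n}$, and substituting into the displayed bound for $\|\barA-\bbE\barA\|$ gives $\|\barA-\bbE\barA\|\le C(\sqrt{\sum_{\ell}\omega_\ell^2 d_\ell}+\|\weightvec\|_\infty\sqrt{\log n})$ with $C=C(c_0,c_1,c_2,r)$, as claimed; the second part \eqref{eq: another_assump_on_weights_general_bernoulli_matrix} of the balanced-weights assumption enters only through the invocation of Theorem \ref{thm:concentration_reg_mat}.

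\textbf{Expected main obstacle.} There is no deep difficulty here, since the substantive work is already encapsulated in Theorem \ref{thm:concentration_reg_mat}; the only point requiring care is the degree estimate, namely calibrating the Bernstein deviation level $t$ so that the union bound over the $n$ vertices still leaves failure probability $n^{-c_2}$ while the resulting bound on $d_\tau$ remains tight enough that, after multiplication by $\|\weightvec\|_\infty$ and taking square roots, it is dominated by the target rate. The step that closes this is the elementary inequality $\omega_\ell^2 d_\ell\le\|\weightvec\|_\infty\omega_\ell d_\ell$ combined with \eqref{eq: assump_on_weights_general_bernoulli_matrix}.
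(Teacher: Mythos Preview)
Your proposal is correct and follows the same overall structure as the paper's proof: both apply Theorem \ref{thm:concentration_reg_mat} with $I=\varnothing$ (no trimming), so that $\tau(\barA)=\barA$, and then control the residual term $\sqrt{\|\weightvec\|_\infty d_\tau}$ by bounding the maximum weighted degree $d_\tau=\max_i\sum_j\barA_{ij}$. The difference lies only in how $d_\tau$ is handled. The paper invokes its specialized Chernoff-type Lemma \ref{lemma: concentration_of_weighted_bernoulli_sum} (which uses the second balanced-weights condition \eqref{eq: another_assump_on_weights_general_bernoulli_matrix}) and then splits into two regimes---``not-too-sparse'' ($\sum_\ell\omega_\ell d_\ell\gtrsim\|\weightvec\|_\infty\log n$, where $d_\tau\lesssim\sum_\ell\omega_\ell d_\ell$) and ``very sparse'' (where $d_\tau\lesssim\|\weightvec\|_\infty\log n$)---handling each separately. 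Your single Bernstein bound delivers $d_\tau\lesssim\bar d+\|\weightvec\|_\infty\log n$ in one stroke, avoiding the case split and not requiring \eqref{eq: another_assump_on_weights_general_bernoulli_matrix} for the degree step itself (it still enters through Theorem \ref{thm:concentration_reg_mat}). Your route is slightly more streamlined; the paper's route has the advantage of reusing machinery already set up for Corollary \ref{cor:concentration_trimmed_mat}.
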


Our proofs of the above results are based on a generalization of the \emph{graph decomposition} approach taken by \cite{le2017concentration}, where they proved the above results for $L=1$. The fact that we are dealing with a weighted average of multiple adjacency matrices calls for nontrivial modifications of the original arguments in \cite{le2017concentration}. Compared to the approach of applying matrix Bernstein's inequality (e.g., as done in \cite{paul2020spectral}), our approach, albeit being substantially more technically involved, \emph{gains a poly-log factor} in the final upper bound. Compared to \cite{bhattacharyya2018spectral}, where they adopted the combinatorial approach originally introduced by \cite{feige2005spectral}, our proof is largely probabilistic and is able to deal with \emph{non-uniform weights}. 

The rest of this section is devoted to proving the above results. Before we go into details, let us note that we can without loss of generality assume $\AAA{\ell}_{ij}\sim \Bern(\ppp{\ell}_{ij})$ independently for all $i,j\in[n]$ (i.e., $\AAA{\ell}$ is not necessarily symmetric). Indeed, such a relaxation will give the same upper bound up to a factor of 2 because we can bound the upper and lower triangular parts of the symmetric $\barA $ separately and invoke triangle inequality. Thus, in the rest of this section, we will assume $\AAA{\ell}$'s have independent entries.

\subsection{Step \texorpdfstring{\RN{1}}{I}: Concentration on a Big Block}

We first introduce a technical tool called \emph{Grothendieck-Pietsch factorization}, which allows us to ``upgrade'' an $\ell_\infty$-to-$\ell_2$ norm bound to an $\ell_2$-to-$\ell_2$ norm bound.
\begin{lemma}[Grothendieck-Pietsch factorization, Theorem 3.2 of \cite{le2017concentration}]
\label{lemma: grothendieck_pietsch_factorization}
Let $B\in \bbR^{k\times m}$ and $\delta > 0$. Then there exists $J\subseteq [m]$ with $|J|\geq (1-\delta)m$ such that the following holds:
$$
      \|B_{[k]\times J}\| \leq \frac{2 \|B \|_{\infty\to 2}}{\sqrt{\delta m}}.
$$
\end{lemma}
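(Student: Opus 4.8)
The plan is to prove this by a convexity/minimax argument, which is the standard route to Grothendieck--Pietsch type factorization results. First I would reformulate the conclusion: finding $J\subseteq[m]$ with $|J|\geq(1-\delta)m$ and $\|B_{[k]\times J}\|\leq 2\|B\|_{\infty\to 2}/\sqrt{\delta m}$ is equivalent to producing a probability vector $\mu=(\mu_1,\dots,\mu_m)$ on the columns such that for every $x\in\bbR^m$,
\[
\|Bx\|_2^2 \;\leq\; \frac{4\|B\|_{\infty\to 2}^2}{\delta m}\sum_{j=1}^m \mu_j x_j^2,
\]
together with the observation that at most $\delta m$ coordinates can have $\mu_j > 1/(\delta m)$; restricting to $J=\{j:\mu_j\leq 1/(\delta m)\}$ then gives $|J|\geq(1-\delta)m$ and, for $x$ supported on $J$, $\|B_{[k]\times J}x\|_2^2\le \frac{4\|B\|_{\infty\to2}^2}{\delta m}\cdot\frac1{\delta m}\|x\|_2^2$ — wait, that is off; the correct bookkeeping is to split the $\mu_j$ into ``large'' ($>1/(\delta m)$) and ``small'' and note the large ones number at most $\delta m$, so their complement $J$ has size $\ge(1-\delta)m$, and on $J$ we bound $\sum_{j\in J}\mu_j x_j^2\le \|x\|_2^2$ directly (not with an extra $1/(\delta m)$). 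I should double-check the precise constant chase against the bound stated, $2\|B\|_{\infty\to2}/\sqrt{\delta m}$, since that is where the factor of $2$ and the $\sqrt{\delta m}$ must line up; I expect the weight $\mu$ to be normalized so that $\sum_j\mu_j\|Be_j\|_2^2$ relates to $m\cdot(\text{something})$ and the $\sqrt{m}$ emerges there.

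The heart of the argument is establishing the existence of such a $\mu$. I would set up the function $f(\mu,x)=\|Bx\|_2^2-\frac{C}{\delta m}\sum_j\mu_j x_j^2$ on $\Delta_{m-1}\times S^{m-1}$ (probability simplex times unit sphere) with $C$ the right constant, and apply a minimax theorem: $f$ is linear (hence concave and usc) in $\mu$ for fixed $x$, and convex (hence quasiconvex and lsc) in... actually $\|Bx\|_2^2$ is convex in $x$ and we want to show $\min_\mu\max_x\le 0$, so I'd instead look at $\min_\mu\max_x f(\mu,x)$ and try to swap to $\max_x\min_\mu f(\mu,x)$; for fixed $x$ the inner minimum over $\mu$ puts all mass on $\arg\max_j x_j^2$, giving $\|Bx\|_2^2-\frac{C}{\delta m}\max_j x_j^2$, and since $\max_j x_j^2\ge\|x\|_2^2/m$ and $\|Bx\|_2^2\le\|B\|_{\infty\to2}^2\max_j x_j^2\cdot(\text{not quite})$ — the correct bound here is $\|Bx\|_\infty$-type reasoning: $\|B\|_{\infty\to2}=\max_j\|Be_j\|_2$, so $\|Bx\|_2=\|\sum_j x_j Be_j\|_2\le\sum_j|x_j|\|Be_j\|_2\le\|B\|_{\infty\to2}\|x\|_1$, giving $\|Bx\|_2^2\le\|B\|_{\infty\to2}^2\|x\|_1^2\le m\|B\|_{\infty\to2}^2\|x\|_2^2$ via Cauchy--Schwarz. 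Hmm, that is too lossy by itself; the sharp move is that after the minimax swap the inner problem concentrates on the single largest $|x_j|$, and one balances $\|Bx\|_2^2$ against $\frac{C}{\delta m}\|x\|_\infty^2$ using $\|Bx\|_2\le\|B\|_{\infty\to2}\|x\|_1$ on the worst-case $x$ which is sparse. I would follow the proof in Theorem 3.2 of Le--Levina--Vershynin \cite{le2017concentration} closely here, since the statement is quoted verbatim from that reference.

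The main obstacle I anticipate is getting the constant and the normalization exactly right through the minimax argument — in particular verifying that the compactness/convexity hypotheses of Sion's minimax theorem (Theorem \ref{thm:minimax}, already available in the paper) are met for the function I set up, and that the optimal $\mu$ extracted really does yield the clean column-subset bound with constant $2$ rather than some larger absolute constant. A secondary technical point is the passage from ``$\mu$ with a uniform quadratic-form bound'' to ``$J$ a genuine subset of columns of the stated size'': I must be careful that discarding the columns where $\mu_j$ is large does not degrade the $\ell_2\to\ell_2$ norm bound, which requires that on the retained block the quadratic form $\sum_{j\in J}\mu_j x_j^2$ is controlled by $\|x\|_2^2$ with no extra factor. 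Everything else — independence is not used, and no probabilistic input is needed — is deterministic linear algebra, so once the minimax step is nailed down the lemma follows. Since this result is cited rather than original to the paper, I would present a short self-contained proof only if space permits, and otherwise defer entirely to \cite{le2017concentration}.
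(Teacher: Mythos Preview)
The paper does not prove this lemma at all: it is quoted verbatim as Theorem~3.2 of \cite{le2017concentration} and simply cited. Your closing suggestion --- ``defer entirely to \cite{le2017concentration}'' --- is therefore exactly what the paper does.

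That said, since most of your proposal is an attempted sketch, a few corrections are worth recording. First, your Sion-based minimax setup does not go through as written: the function $x\mapsto \|Bx\|_2^2 - \frac{C}{\delta m}\sum_j\mu_j x_j^2$ is an indefinite quadratic form in $x$, hence neither quasiconvex nor quasiconcave, so Theorem~\ref{thm:minimax} does not apply to it directly. Second, the identity ``$\|B\|_{\infty\to2}=\max_j\|Be_j\|_2$'' is false --- the right-hand side is $\|B\|_{1\to2}$, whereas $\|B\|_{\infty\to2}=\sup_{\|x\|_\infty\le 1}\|Bx\|_2$ (this is exactly how the paper uses it in the proof of Lemma~\ref{lemma: concentration_in_infty_to_two_norm}). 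Third, and more substantively, the reduction you outline (find a probability weight $\mu$ with $\|Bx\|_2^2\le C\sum_j\mu_j x_j^2$, then discard the at most $\delta m$ columns with $\mu_j>1/(\delta m)$) is indeed the right skeleton, but the existence of $\mu$ with $C=K^2\|B\|_{\infty\to2}^2$ is not a triviality you can extract from a one-line minimax swap: it is the little Grothendieck theorem combined with Pietsch domination (proved via Hahn--Banach separation, not Sion), and this is where the constant $K=\sqrt{\pi/2}<2$ comes from. With that input, the thresholding step you describe is correct and yields the stated bound with constant $2$.
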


With the above lemma at hand, the strategy now is to first establish a concentration result in $\ell_\infty\rightarrow\ell_2$ norm and then to upgrade it to the operator norm using Lemma~\ref{lemma: grothendieck_pietsch_factorization}.

\begin{lemma}[Concentration in $\infty$-to-$2$ norm]
\label{lemma: concentration_in_infty_to_two_norm}
      Assume   \eqref{eq: assump_on_weights_general_bernoulli_matrix} holds with $c_0 >0$.
      % Let $1 \leq m \leq n$ and $\alpha \geq m/n.$ 
      For any $r \geq 1$, the following holds with probability at least $1-n^{-r}$: 
      uniformly for any $m\in[n]$ and any block $I\times J\subseteq [n]\times[n]$ with $|I|=|J|=m$, 
      if we let $I'$ be the indices of rows of $\barA_{I\times J}$ whose $\ell_1$-norm is bounded above by $\alpha \sum_{\ell\in[L]}\omega_\ell d_\ell$, where $\alpha$ is any number satisfying $\alpha\geq m/n$, 
      then we have
      \begin{equation}
        \label{eq:concentration_inf_to_two_norm}
            \| (\barA - \bbE \barA)_{I'\times J} \|_{\infty \to 2} \leq C\sqrt{\alpha \cdot (\sum_{\ell\in[L]} \omega_\ell^2 d_\ell) \cdot mr \log (\frac{en}{m})},
  \end{equation}            
      where $C= C(c_0)$ is an absolute constant.
\end{lemma}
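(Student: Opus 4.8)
\textbf{Proof plan for Lemma \ref{lemma: concentration_in_infty_to_two_norm}.}
The plan is to bound the $\infty$-to-$2$ norm by a union bound over all blocks $I\times J$ and, within each block, over the extreme points of the $\ell_\infty$-ball, exploiting the fact that $\|(\bar A-\bbE\bar A)_{I'\times J}\|_{\infty\to 2}=\sup_{x\in\{\pm 1\}^J}\|(\bar A-\bbE\bar A)_{I'\times J}x\|_2$. Concretely, first I would fix $m\in[n]$, a block $I\times J$ with $|I|=|J|=m$, a threshold $\alpha\ge m/n$, a candidate row set $I'\subseteq I$, and a sign vector $x\in\{\pm 1\}^J$. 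The quantity $\|(\bar A-\bbE\bar A)_{I'\times J}x\|_2^2=\sum_{i\in I'}\big(\sum_{j\in J}(\bar A_{ij}-\bbE\bar A_{ij})x_j\big)^2$ is a sum over $i\in I'$ of squared centered random variables, each of the form $\sum_{j\in J}\sum_{\ell}\omega_\ell(\AAA{\ell}_{ij}-\ppp{\ell}_{ij})x_j$. I would control this sum via a Bernstein-type / Bennett bound: conditionally on the event that row $i$ has $\ell_1$-norm at most $\alpha\sum_\ell\omega_\ell d_\ell$ (this is what selecting $I'$ buys us), the variance proxy of the $i$-th term is at most $\|\weightvec\|_\infty\cdot\alpha\sum_\ell\omega_\ell d_\ell$, which by \eqref{eq: assump_on_weights_general_bernoulli_matrix} (the $\|\weightvec\|_\infty\sum_\ell\omega_\ell d_\ell\le c_0\sum_\ell\omega_\ell^2 d_\ell$ inequality, after the appropriate rearrangement) is at most $c_0\alpha\sum_\ell\omega_\ell^2 d_\ell$; summing over $|I'|\le m$ rows gives a variance proxy $\lesssim c_0\alpha m\sum_\ell\omega_\ell^2 d_\ell$. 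A Bernstein bound then yields
\begin{equation*}
\Prob\Big(\|(\bar A-\bbE\bar A)_{I'\times J}x\|_2^2\ge t\Big)\le \exp\Big(-c\cdot\frac{t}{\alpha\sum_\ell\omega_\ell^2 d_\ell}\Big)
\end{equation*}
for $t$ in the relevant (linear, sub-exponential) regime, where I am using that each summand is bounded by $\|\weightvec\|_\infty$ so the Bernstein "range" term is absorbed.

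Second, I would take a union bound. The number of choices of $(I,J)$ with $|I|=|J|=m$ is at most $\binom{n}{m}^2\le(en/m)^{2m}$, the number of sign vectors $x\in\{\pm1\}^J$ is $2^m$, and the number of candidate subsets $I'$ is at most $2^m$. Thus the total combinatorial factor is $\exp(O(m\log(en/m)))$. Choosing $t=C\alpha(\sum_\ell\omega_\ell^2 d_\ell)\,mr\log(en/m)$ with $C$ large (depending on $c_0$) makes $\exp(-ct/(\alpha\sum_\ell\omega_\ell^2 d_\ell))$ dominate all these factors with room to spare, and then summing the resulting bound over $m\in[n]$ and over a suitable discretization of $\alpha$ (e.g.\ dyadic values, at a cost of $O(\log n)$ in the union bound, absorbable) gives the claimed $1-n^{-r}$ probability for \eqref{eq:concentration_inf_to_two_norm} to hold simultaneously for all $m$, all blocks, all admissible $\alpha$, and the induced $I'$. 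Taking the square root converts the bound on $\|(\bar A-\bbE\bar A)_{I'\times J}x\|_2^2$ into the stated bound on $\|(\bar A-\bbE\bar A)_{I'\times J}\|_{\infty\to 2}$.

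The main obstacle I anticipate is getting the variance bookkeeping exactly right so that the union-bound entropy $O(m\log(en/m))$ is genuinely dominated by the exponent, and in particular making the row-thresholding argument rigorous: the set $I'$ is itself data-dependent, so I cannot condition on "row $i$ has small $\ell_1$ norm" in a naive way and still have independence across $i$. The clean fix is to note that $I'$ ranges over at most $2^m$ deterministic subsets, so I prove the concentration inequality for every fixed $I'\subseteq I$ with the additional indicator $\indicator{\text{each } i\in I' \text{ has } \ell_1\text{-norm}\le\alpha\sum_\ell\omega_\ell d_\ell}$ multiplying the squared norm — for a fixed $I'$ the summands over $i$ are independent, and the indicator only decreases the random variable, so the Bernstein bound with the conditional variance proxy applies; then the union over $I'$ recovers the data-dependent statement. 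A secondary technical point is that Bernstein's inequality for the squared-norm random variable $\|\cdot x\|_2^2$ (a sum of squares) needs the sub-exponential version; I would either invoke a standard sum-of-squares tail bound or, more transparently, bound $\|(\bar A-\bbE\bar A)_{I'\times J}x\|_2$ directly via an $\varepsilon$-net over the unit sphere in $\bbR^{I'}$ combined with the scalar Bernstein bound for each fixed unit vector, paying another $\exp(O(m))$ net factor that is again absorbed. Either route is routine once the variance proxy $c_0\alpha m\sum_\ell\omega_\ell^2 d_\ell$ is in hand, which is where assumption \eqref{eq: assump_on_weights_general_bernoulli_matrix} does the essential work of converting an $\|\weightvec\|_\infty$-weighted first-moment quantity into the target second-moment quantity $\sum_\ell\omega_\ell^2 d_\ell$.
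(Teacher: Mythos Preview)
Your proposal is correct and follows essentially the same strategy as the paper: Bernstein for each row sum $X_i=\sum_{j\in J}(\bar A_{ij}-\bbE\bar A_{ij})x_j$, then sub-exponential concentration for $\sum_i X_i^2$, then a union bound over $m$, blocks $I\times J$, and sign vectors $x\in\{\pm1\}^J$. The one difference worth noting is your treatment of the data-dependent row set $I'$. You propose unioning over all $2^m$ deterministic subsets $I'\subseteq I$; the paper instead absorbs $I'$ via the per-row indicator $\xi_i=\indicator{\sum_{j\in J}\bar A_{ij}\le\alpha\sum_\ell\omega_\ell d_\ell}$ and writes $\|(\bar A-\bbE\bar A)_{I'\times J}\|_{\infty\to2}^2=\sum_{i\in I}(X_i\xi_i)^2$. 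Because $\xi_i$ depends only on row $i$, the summands $(X_i\xi_i)^2$ remain independent across $i$, and the deterministic truncation $|X_i\xi_i|\le 2\alpha\sum_\ell\omega_\ell d_\ell$ is exactly what lets the Bernstein range term be absorbed (via \eqref{eq: assump_on_weights_general_bernoulli_matrix}) into the variance, giving sub-Gaussianity of $X_i\xi_i$ with parameter $\asymp\alpha\sum_\ell\omega_\ell^2 d_\ell$ --- no union over $I'$ is needed. Your extra $2^m$ factor is harmless (it fits in the $e^{O(m\log(en/m))}$ entropy budget), but the per-row indicator is cleaner and you were already close to it in your ``indicator only decreases the random variable'' remark. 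Two minor points: the variance of $X_i$ is directly $\le\frac{m}{n}\sum_\ell\omega_\ell^2 d_\ell\le\alpha\sum_\ell\omega_\ell^2 d_\ell$ without routing through $\|\weightvec\|_\infty$; and no discretization of $\alpha$ is needed, since $\alpha$ is a fixed parameter in the statement (the phrase ``any number'' means any \emph{fixed} choice, as the paper's later applications make clear).
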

\begin{proof}[Proof of Lemma \ref{lemma: concentration_in_infty_to_two_norm}]
Let us fix any $m\in[n]$, $\alpha \geq m/n$,
% For the first step in ``graph decomposition'', we will take $m = n, \alpha = 1$. But we need general $m, \alpha$ for later steps in the decomposition. 
and take any block $I\times J\subseteq [n]\times[n]$ with $|I| = |J| = m$. 
% The entries indexed by $I\times J$ is the input of the graph decomposition algorithm. We see for some indices $I'\subseteq I$ such that $\|(A-\bbE A)_{I'\times J}\|_{\infty\to 2}$ is small. 
By definition, we have
\begin{align*}
      \|(\barA - \bbE \barA)_{I'\times J}\|_{\infty \to 2}^2 &= \sup_{\|x\|_\infty \leq 1} \|Ax\|^2_2.
\end{align*}
Since the right-hand side is the supremum of a convex function over a convex set, the supremum is attained at the boundary. Hence we have
\begin{align*}
      \|(\barA - \bbE \barA)_{I'\times J}\|_{\infty \to 2}^2 = \max_{x\in\{\pm 1\}^m} \sum_{i\in I^\prime} \bigg(\sum_{j\in J} (\barA_{ij} - \bbE \barA_{ij})x_j\bigg)^2 
      := \max_{x\in\{\pm 1\}^m} \sum_{i\in I} (X_i \xi_i)^2,
\end{align*}
where we let
\begin{align*}
      X_i & := \sum_{j\in J} (\barA_{ij}-\bbE \barA_{ij})x_j = \sum_{\ell\in[L]} \sum_{j\in J} \weight_\ell(\AAA{\ell}_{ij}-\bbE \AAA{\ell}_{ij})x_j, \\
      \xi_i &:= \indc{i\in I'} = \Indicator\bigg\{\sum_{j\in J} \barA_{ij}\leq \alpha \sum_{\ell\in[L]} \omega_\ell d_\ell \bigg\} = \Indicator \bigg\{\sum_{\ell\in[L]} \sum_{j\in J} \omega_\ell \AAA{\ell}_{ij}\leq \alpha \sum_{\ell\in[L]}\omega_\ell d_\ell \bigg\}.
\end{align*}    
Note that $X_i$ has mean zero and its variance satisfies
\[
\Var{(X_i)}=\sum_{\ell\in[L]}\sum_{j\in J}\omega_\ell^2\ppp{\ell}_{ij}(1-\ppp{\ell}_{ij})\leq\sum_{\ell\in[L]}\sum_{j\in J}\omega_\ell^2\ppp{\ell}_{ij}\leq
\frac{m}{n}\sum_{\ell\in[L]}\omega_\ell^2d_\ell
\]
since $d_\ell=\max_{i,j} np_{ij}^{(\ell)}$.
Meanwhile, we have $|\omega_\ell(\AAA{\ell}_{ij}-\bbE \AAA{\ell}_{ij})x_j|\leq\|\weightvec\|_\infty$.
Invoking Bernstein's inequality, we get
\begin{align*}
      \bbP(|X_i\xi_i| \geq tm)\leq \bbP(|X_i|\geq tm) & \leq 2\exp\bigg\{\frac{-m^2t^2/2}{ (m/n)\cdot \sum_{\ell\in[L]} \omega_\ell^2 d_\ell + \frac{1}{3}mt\|\weightvec\|_\infty} \bigg\}.\numberthis\label{equ: bernstein X_i}
      % & = 2\exp\bigg\{\frac{-mt^2/2}{  \sum_{\ell\in[L]} \omega_\ell^2 d_\ell/n + \frac{1}{3}t\|\weightvec\|_\infty} \bigg\}.
\end{align*}
% We define $\xi_i$ as
% \[
% \xi_i  = \Indicator\bigg\{\sum_{j\in J} A_{ij}\leq \alpha \sum_{\ell\in[L]} \omega_\ell d_\ell \bigg\} = \Indicator \bigg\{\sum_{\ell\in[L]} \sum_{j\in J} \omega_\ell \AAA{\ell}_{ij}\leq \alpha \sum_{\ell\in[L]}\omega_\ell d_\ell \bigg\},
% \]
Note that
\[
      |X_i\xi_i|  \leq \sum_{\ell\in[L]} \sum_{j\in J}  \omega_\ell (\AAA{\ell}_{ij}\xi_i + \bbE \AAA{\ell}_{ij})  \leq \sum_{\ell\in [L]} \sum_{j\in J} \omega_\ell \AAA{\ell}_{ij}\xi_i + \frac{m}{n} \sum_{\ell\in[L]} \omega_{\ell} d_\ell\leq (\alpha+\frac{m}{n})\sum_{\ell\in[L]}\omega_\ell d_\ell,
\]
where the last inequality is by the definition of $\xi_i$. 
Since $m/n\leq\alpha$, the above display translates to $|X_i\xi_i|\leq 2\alpha\sum_{\ell\in[L]}\omega_\ell d_\ell$. This means that if $tm > 2\alpha\sum_{\ell\in[L]}\weight_t d_\ell$, then the probability in the left-hand side of \eqref{equ: bernstein X_i} is zero. On the other hand, if we we assume $tm\leq 2\alpha\sum_{\ell\in[L]}\omega_\ell d_\ell$, then we can further bound the right-hand side of \eqref{equ: bernstein X_i} by
\begin{align*}
\bbP(|X_i\xi_i|\geq tm)
% &\leq 2\exp\bigg\{\frac{-m^2t^2/2}{ \alpha\cdot \sum_{\ell\in[L]} \omega_\ell^2 d_\ell + \frac{1}{3}2\alpha \sum_{\ell\in[L]}\omega_\ell d_\ell \|\weightvec\|_\infty} \bigg\}\\
&\leq 2\exp\bigg\{\frac{-m^2t^2/2}{ \alpha\cdot \sum_{\ell\in[L]} \omega_\ell^2 d_\ell + \frac{2\alpha}{3}  \|\weightvec\|_\infty \sum_{\ell\in[L]}\omega_\ell d_\ell} \bigg\}\\
&\leq 2\exp\bigg\{\frac{-m^2t^2/2}{ (1+\frac{2c_0}{3}) \cdot \alpha\sum_{\ell\in[L]} \omega_\ell^2 d_\ell} \bigg\},
\end{align*}
where the last inequality is due to   \eqref{eq: assump_on_weights_general_bernoulli_matrix}. Combining the two cases, we conclude that the above display holds for all choices of $tm > 0$. 
This means that $X_i\xi_i$ has sub-Gaussian norm $\lesssim \sqrt{\alpha\sum_{\ell\in[L]}\omega^2_\ell d_\ell}$ (see, e.g., Lemma 5.5 of \cite{vershynin2010introduction}), and hence $(X_i\xi_i)^2$ has sub-exponential norm $\lesssim\alpha \sum_{\ell\in[L]} \omega_\ell^2 d_\ell$ (see, e.g., Lemma 5.14 of \cite{vershynin2010introduction}). Invoking Corollary 5.17 of \cite{vershynin2010introduction}, we have
$$
      \bbP\bigg(\sum_{i\in I}(X_i \xi_i)^2 > \ep m {\alpha} \sum_{\ell\in[L]} \omega_\ell^2 d_\ell \bigg) \leq 2\exp\bigg\{ -c (\ep^2 \land \ep) m \bigg\}
$$
for some constant $c$ only depending on $c_0$. Choosing $\ep = (10/c)r \log(en/m)$ for some constant $r\geq 1$, we deduce that with probability at least $1-(en/m)^{-5rm}$, we have
$$
      \sum_{i\in I} (X_i\xi_i)^2 \leq (10/c) r \log (en/m) \cdot m\alpha \sum_{\ell\in[L]} \omega_\ell^2 d_\ell.
$$
Taking a union bound over all possible configurations of $m\in[n], x\in\{\pm 1\}^m$, and $I, J$ with $|I|=|J|=m$, the conclusion of the lemma holds with probability at least
\begin{equation}
  \label{eq:concentration_linf_l2_norm_prob}
      1 - \sum_{m=1}^n 2^m \binom{n}{m}^2 \bigg(\frac{en}{m}\bigg)^{-5rm}\geq 1- \sum_{m=1}^n 2^m \bigg(\frac{en}{m}\bigg)^{-(5r-2)m}\geq 1- \sum_{m=1}^n \bigg(\frac{en}{m}\bigg)^{-(5r-3)m},
\end{equation}      
where the first inequality is by $\binom{n}{m}\leq (en/m)^m$ and the second inequality is by $2\leq en/m$.
We claim that
$
  (en/m)^{-(5r-3)m} \geq  \big(en/(m+1)\big)^{-(5r-3)(m+1)}
$
for any $m\in[n]$.
Indeed, with some algebra, this claim is equivalent to
% $
%   (m+1)/{m} \geq \log\big((m+1)/m\big),
% $
\[
    (m+1)\log(m+1)-m\log n\leq \log(en)
\]
which holds for any $1\leq m\leq n$. Now, the right-hand side of \eqref{eq:concentration_linf_l2_norm_prob} can be further lower bounded by 
$$
  1 - n \cdot (en)^{-(5r-3)} \geq 1-n^{-r},
$$
where we have used $r\geq 1$, and this is exactly the desired result.
\end{proof}

The above lemma, along with Lemma \ref{lemma: grothendieck_pietsch_factorization} (with $\delta =1/4$), gives the following result.
\begin{lemma}[Concentration in spectral norm]
    \label{lemma: concentration_in_spectral_norm}
    Assume   \eqref{eq: assump_on_weights_general_bernoulli_matrix} holds with $c_0 >0$. 
    % Let $1 \leq m \leq n$ and $\alpha \geq m/n.$ 
    Then for any $r \geq 1$, the following holds with probability at least $1-n^{-r}$: 
    uniformly for any $m\in[n]$ and any block $I\times J \subseteq[n]\times [n]$ with $|I|=|J|=m$, if we let $I'$ be the indices of rows of $A_{I\times J}$ whose $\ell_1$-norm is bounded above by $\alpha \sum_{\ell\in[L]}\omega_\ell d_\ell$, where $\alpha$ an arbitrary (but fixed) number satisfying $\alpha \geq m/n$, then there exists a subset $J'\subseteq J$ with $|J'|\geq 3m/4$ such that
    \begin{equation}
      \label{eq:concentration_in_spectral_norm}
            \|(\barA - \bbE \barA)_{I'\times J'}\| \leq C\sqrt{\alpha \cdot (\sum_{\ell\in[L]} \omega_\ell^2 d_\ell) \cdot r \log (\frac{en}{m})},
  \end{equation}            
      where $C=C(c_0)$ is an absolute constant.
\end{lemma}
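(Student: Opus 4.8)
The plan is to obtain the statement directly by splicing together the two results just established, both evaluated on the \emph{same} high-probability event produced by Lemma~\ref{lemma: concentration_in_infty_to_two_norm}. First I would fix the event $\mathcal{G}$ of probability at least $1-n^{-r}$ on which the conclusion of Lemma~\ref{lemma: concentration_in_infty_to_two_norm} holds; recall that this event is already uniform over all $m\in[n]$, all blocks $I\times J$ with $|I|=|J|=m$, and all admissible $\alpha\geq m/n$. Every step after this is deterministic, so the uniformity is inherited automatically and no further union bound is needed.

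On $\mathcal{G}$, fix $m$, a block $I\times J$ with $|I|=|J|=m$, and a number $\alpha\geq m/n$; let $I'$ be the subset of ``good'' rows as in the statement and set $B:=(\bar A-\mathbb{E}\bar A)_{I'\times J}$, a matrix with $|I'|$ rows and $m$ columns. Lemma~\ref{lemma: concentration_in_infty_to_two_norm} gives $\|B\|_{\infty\to 2}\le C\sqrt{\alpha\,(\sum_{\ell}\omega_\ell^2 d_\ell)\,m\,r\log(en/m)}$. Next I would apply the Grothendieck--Pietsch factorization (Lemma~\ref{lemma: grothendieck_pietsch_factorization}) to $B$ with $\delta=1/4$: this produces a subset $J'\subseteq J$ with $|J'|\ge(1-\tfrac14)m=3m/4$ such that
\[
\|(\bar A-\mathbb{E}\bar A)_{I'\times J'}\|=\|B_{I'\times J'}\|\le\frac{2\|B\|_{\infty\to 2}}{\sqrt{(1/4)m}}=\frac{4\|B\|_{\infty\to 2}}{\sqrt{m}}.
\]
Substituting the $\infty\to 2$ bound and cancelling the $\sqrt{m}$ factors yields $\|(\bar A-\mathbb{E}\bar A)_{I'\times J'}\|\le 4C\sqrt{\alpha\,(\sum_{\ell}\omega_\ell^2 d_\ell)\,r\log(en/m)}$, which is exactly \eqref{eq:concentration_in_spectral_norm} with the constant renamed (still a function of $c_0$ only).

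There is essentially no serious obstacle: the content is a clean combination of an $\ell_\infty\to\ell_2$ estimate with a deterministic factorization lemma. The only point requiring a moment's care is the order of quantifiers — the subset $J'$ depends on the data through $B$ — but since Grothendieck--Pietsch is a purely deterministic statement applied pointwise on $\mathcal{G}$, and $\mathcal{G}$ is itself uniform over $m$, $I$, $J$, and $\alpha$, the resulting bound holds simultaneously for all such configurations at no extra cost in probability. I would also note that Lemma~\ref{lemma: grothendieck_pietsch_factorization} imposes no constraint on the number of rows of $B$, so the fact that $I'$ is an arbitrary, data-dependent subset of $I$ causes no difficulty.
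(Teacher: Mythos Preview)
Your proposal is correct and matches the paper's approach exactly: the paper simply states that the lemma follows from Lemma~\ref{lemma: concentration_in_infty_to_two_norm} together with Lemma~\ref{lemma: grothendieck_pietsch_factorization} applied with $\delta=1/4$. Your write-up spells out the quantifier issues and the $\sqrt{m}$ cancellation carefully, but the underlying argument is identical.
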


\subsection{Step \texorpdfstring{\RN{2}}{II}: Restricted \texorpdfstring{$\ell_1$}{l1} Norm}
The following lemma shows that {most of the rows} of $\barA$ have $\ell_1$ norm bounded from above by a constant multiple of $r\alpha\sum_{\ell\in[L]}\omega_{\ell\in[L]} d_\ell$.
\begin{lemma}[Degree of subgraphs]
\label{lemma: deg_of_subgraphs}
    Assume   \eqref{eq: another_assump_on_weights_general_bernoulli_matrix} holds $c_1 \geq 1$. 
    % Let $1\leq m\leq n$ and $\alpha \in [m/n, \sqrt{m/n}]$. 
    Then for any $r\geq 1$, the following holds with probability at least $1-n^{-r}$: 
    uniformly for any $m\in[n]$ and any block $I\times J\subseteq [n]\times[n]$ with $|I|=|J|=m$, all but $m\|\weightvec\|_\infty/(\alpha \sum_{\ell\in[L]}\omega_\ell d_\ell)$ rows of $\barA_{I\times J}$ have $\ell_1$-norm bounded above by $Cr\alpha \sum_{\ell\in[L]}\omega_\ell d_\ell$, where $\alpha$ is an arbitrary (but fixed) number satisfying $\alpha \geq\sqrt{m/n}$ and $C = C(c_1)$ is an absolute constant.
\end{lemma}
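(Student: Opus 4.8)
The plan is to run a union bound over blocks, exploiting that distinct rows of $\bar A_{I\times J}$ are independent, in the spirit of the graph-decomposition argument of \cite{le2017concentration} but adapted to the weighted superposition $\bar A$. Fix a block size $m\in[n]$, a number $\alpha\ge\sqrt{m/n}$, and a block $I\times J$ with $|I|=|J|=m$. Set $D:=\sum_{\ell\in[L]}\weight_\ell d_\ell$, take the threshold $\lambda:=Cr\alpha D$ and the allowed number of exceptional rows $t:=m\|\weightvec\|_\infty/(\alpha D)$ (we may assume $t<m$, otherwise there is nothing to prove), and for $i\in I$ let $X_i:=\sum_{j\in J}\bar A_{ij}=\sum_{\ell\in[L]}\weight_\ell\sum_{j\in J}\AAA{\ell}_{ij}$ be the $\ell_1$-norm of row $i$; under the independent-entry model of this section the $\{X_i\}_{i\in I}$ are mutually independent. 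Since every summand of $X_i$ is at most $\|\weightvec\|_\infty$, we have $\{X_i\ge\lambda\}\subseteq\{\tilde X_i\ge\Lambda\}$, where $\tilde X_i:=\sum_{\ell\in[L]}\sum_{j\in J}\AAA{\ell}_{ij}$ is now a genuine sum of independent Bernoullis and $\Lambda:=\lambda/\|\weightvec\|_\infty$. Assumption~\eqref{eq: another_assump_on_weights_general_bernoulli_matrix} gives $\|\weightvec\|_\infty\,\bbE\tilde X_i\le c_1\sum_{\ell\in[L]}\weight_\ell\sum_{j\in J}\ppp{\ell}_{ij}\le c_1\tfrac{m}{n}D$, hence $\Lambda/\bbE\tilde X_i\ge Cr/c_1$ by $\alpha\ge\sqrt{m/n}$, and a Chernoff bound for sums of independent $[0,1]$-valued variables yields
\[
\bbP(X_i\ge\lambda)\ \le\ \bigl(e\,\bbE\tilde X_i/\Lambda\bigr)^{\Lambda}\ \le\ \bigl(ec_1\sqrt{m/n}/(Cr)\bigr)^{Cr\alpha D/\|\weightvec\|_\infty}.
\]

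Next, the event that more than $t$ rows of $\bar A_{I\times J}$ have $\ell_1$-norm exceeding $\lambda$ is contained in $\bigcup_{I_0\subseteq I,\,|I_0|=\lfloor t\rfloor+1}\{X_i\ge\lambda\ \forall i\in I_0\}$, and by row-independence each such event has probability $\prod_{i\in I_0}\bbP(X_i\ge\lambda)$. Since the base in the last display is $<1$ for $C$ large and $(\lfloor t\rfloor+1)\cdot Cr\alpha D/\|\weightvec\|_\infty\ge Crm$ regardless of whether $t\ge1$, a union bound over the $\binom{m}{\lfloor t\rfloor+1}\le2^m$ choices of $I_0$ bounds the failure probability of the block $I\times J$ by $2^m\bigl(ec_1\sqrt{m/n}/(Cr)\bigr)^{Crm}$. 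Summing over the $\binom{n}{m}^2\le(en/m)^{2m}$ blocks of size $m$ and over $m\in[n]$, and using $(en/m)^2(\sqrt{m/n})^{Cr}=e^2(m/n)^{Cr/2-2}$, the total failure probability is at most
\[
n\sum_{m=1}^{n}\Bigl[\,\tfrac{2e^2(ec_1)^{Cr}}{(Cr)^{Cr}}\,(m/n)^{Cr/2-2}\,\Bigr]^{m}.
\]
For $C$ large enough (depending only on $c_1$, since $r\ge1$) the constant $K:=2e^2(ec_1)^{Cr}/(Cr)^{Cr}$ can be made arbitrarily small; then $(m/n)^{(Cr/2-2)m}\le1$, the terms with $m\le\sqrt n$ are $\le(Kn^{-(Cr/2-2)/2})^{m}$ and those with $m>\sqrt n$ are $\le K^{\sqrt n}$, so the sum is $O(n^{-(Cr/2-2)/2})$ and choosing $C$ with $Cr/2-2\ge2(r+1)$ and $K<1$ forces the whole bound below $n^{-r}$.

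The main obstacle is the final display: one must check that the genuinely polynomial gain $(m/n)^{(Cr/2-2)m}$ — which is exactly what the hypothesis $\alpha\ge\sqrt{m/n}$ buys (a stronger hypothesis $\alpha\ge m/n$ would give an even larger power) — overwhelms the combinatorial factor $(en/m)^{2m}$ \emph{uniformly} over $1\le m\le n$, and that the Chernoff exponent $\asymp Cr\alpha D/\|\weightvec\|_\infty$, obtained after passing from $X_i$ to the unweighted sum $\tilde X_i$ via Assumption~\eqref{eq: another_assump_on_weights_general_bernoulli_matrix}, stays large enough that the union over $I_0$ absorbs both the $2^m$ and the $c_1$-dependence into an absolute constant $C=C(c_1)$. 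The passage to $\tilde X_i$, the independence of distinct rows, and the Chernoff estimate itself are routine.
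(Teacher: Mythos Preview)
Your argument is correct and reaches the same conclusion as the paper, but it differs from the paper's proof in two technical choices.  First, to bound $\bbP(X_i\ge\lambda)$ the paper stays with the weighted sum $D_i=X_i$ and invokes its dedicated weighted--Bernoulli tail bound (Lemma~\ref{lemma: concentration_of_weighted_bernoulli_sum}), which already absorbs Assumption~\eqref{eq: another_assump_on_weights_general_bernoulli_matrix} into the constant $e^{c_1}$; you instead dominate $X_i\le\|\weightvec\|_\infty\tilde X_i$, use~\eqref{eq: another_assump_on_weights_general_bernoulli_matrix} only to control $\bbE\tilde X_i$, and then apply the classical Chernoff bound to the unweighted sum $\tilde X_i$.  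Both routes land on the same tail exponent $\asymp Cr\alpha D/\|\weightvec\|_\infty$, so nothing is lost; your reduction is a bit more elementary in that it sidesteps the weighted concentration lemma entirely.  Second, to control the number $S$ of exceptional rows the paper applies Lemma~\ref{lemma: concentration_of_weighted_bernoulli_sum} once more (treating $S$ as a Bernoulli sum with parameter $\le\mu$) and shows $\bbP(S>t)\le\mu^{t/2}$, whereas you union--bound directly over all subsets $I_0$ of size $\lfloor t\rfloor+1$ and use row independence.  Your cruder $\binom{m}{\lfloor t\rfloor+1}\le 2^m$ is then eaten by the exponent $(\lfloor t\rfloor+1)\Lambda\ge Crm$, and the final union bound over $m$ and blocks matches the paper's.  (The extra prefactor $n$ in your last display appears to be a harmless slip---the sum over $m$ already accounts for everything---but your choice of $C$ absorbs it anyway.)
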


The proof of this lemma relies on the following concentration inequality for the weighted average of Bernoulli random variables, which is a generalization of the classical concentration inequality for the sum of independent Bernoulli random variables proved in \cite{hoeffding1963probability}.
\begin{lemma}[Concentration inequality for weighted Bernoulli sum]
      \label{lemma: concentration_of_weighted_bernoulli_sum}
      Let $\{X_i\}_{1\leq i\leq n}$ be independent random variables, each distributed as $X_i\sim \textnormal{\Bern}(p_i)$.  Let $\weightvec = \{\omega_i\}_{1\leq i\leq n}$ be a weight vector such that $\weight_i>0 ,\forall i\in[n]$. Assume there exists a constant $c_1\geq 1$ such that
      $$
            \|\weightvec \|_\infty \sum_{i\in[n]} p_i \leq c_1 \sum_{i\in[n]} \omega_i p_i.
      $$
      Then for any $t\geq\sum_{i\in[n]}\omega_i p_i$, we have
      $$
            \bbP(\sum_{i\in[n]}\omega_i X_i  \geq t) \leq \bigg(\frac{C \sum_{i\in[n]} \omega_i p_i}{t}\bigg)^{t/\|\weightvec\|_\infty},
      $$
      where $C = e^{c_1}$. 
\end{lemma}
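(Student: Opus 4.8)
\textbf{Proof plan for Lemma \ref{lemma: concentration_of_weighted_bernoulli_sum}.} The plan is to run a Chernoff (exponential moment) argument, but optimized for the fact that the $\omega_i$'s are non-uniform and bounded by $\|\weightvec\|_\infty$. First I would fix $\lambda > 0$ and bound, by Markov's inequality,
\begin{equation}
      \bbP\Big(\sum_{i\in[n]} \omega_i X_i \geq t\Big) \leq e^{-\lambda t}\, \Expect\Big[e^{\lambda \sum_i \omega_i X_i}\Big] = e^{-\lambda t} \prod_{i\in[n]} \big(1 - p_i + p_i e^{\lambda \omega_i}\big),
\end{equation}
using independence of the $X_i$'s. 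Then I would use $1 - p_i + p_i e^{\lambda \omega_i} \leq \exp\{p_i(e^{\lambda \omega_i} - 1)\}$ to get the bound $\exp\{-\lambda t + \sum_i p_i (e^{\lambda \omega_i} - 1)\}$.

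The key step is to control $\sum_i p_i(e^{\lambda\omega_i} - 1)$ using the boundedness $0 < \omega_i \leq \|\weightvec\|_\infty$ and the hypothesis $\|\weightvec\|_\infty \sum_i p_i \leq c_1 \sum_i \omega_i p_i$. Using the elementary convexity inequality $e^{\lambda \omega_i} - 1 \leq \frac{\omega_i}{\|\weightvec\|_\infty}(e^{\lambda \|\weightvec\|_\infty} - 1)$, valid for $0 \leq \omega_i \leq \|\weightvec\|_\infty$ (since $x \mapsto e^{\lambda x} - 1$ is convex and vanishes at $x = 0$), I would obtain
\begin{equation}
      \sum_{i\in[n]} p_i (e^{\lambda \omega_i} - 1) \leq \frac{e^{\lambda\|\weightvec\|_\infty} - 1}{\|\weightvec\|_\infty} \sum_{i\in[n]} \omega_i p_i.
\end{equation}
Wait --- one must be careful: this last bound actually does \emph{not} require the hypothesis; the hypothesis enters if instead one writes $e^{\lambda\omega_i}-1 \le e^{\lambda\|\weightvec\|_\infty}\lambda\omega_i$ or similar. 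Let me restructure: write $\mu := \sum_i \omega_i p_i$, and set $s := \|\weightvec\|_\infty$. The convexity bound above already gives $\Expect[e^{\lambda\sum_i\omega_iX_i}] \le \exp\{\mu (e^{\lambda s}-1)/s\}$, so the probability is at most $\exp\{-\lambda t + \mu(e^{\lambda s}-1)/s\}$. Now optimize over $\lambda$: choosing $e^{\lambda s} = t s / (\mu s) \cdot (\text{const})$, more precisely setting $\lambda s = \log(t/\mu)$ (legitimate since $t \geq \mu$ so $\lambda \geq 0$), the exponent becomes $-\frac{t}{s}\log(t/\mu) + \frac{\mu}{s}(t/\mu - 1) = -\frac{t}{s}\log(t/\mu) + \frac{t-\mu}{s} \leq \frac{t}{s}\big(1 - \log(t/\mu)\big)$. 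This yields
\begin{equation}
      \bbP\Big(\sum_{i} \omega_i X_i \geq t\Big) \leq \Big(\frac{e\mu}{t}\Big)^{t/s} = \Big(\frac{e \sum_i \omega_i p_i}{t}\Big)^{t/\|\weightvec\|_\infty},
\end{equation}
which is the claimed bound with $C = e$, i.e. with $C = e^{c_1}$ whenever $c_1 \geq 1$.

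The main obstacle --- or rather the point requiring the most care --- is pinning down exactly where the hypothesis $\|\weightvec\|_\infty \sum_i p_i \leq c_1 \sum_i \omega_i p_i$ is genuinely needed, since the clean argument sketched above seems to give $C = e$ without it. I expect the hypothesis is in fact \emph{not} needed for the bound as literally stated (the constant $e \le e^{c_1}$), but it becomes essential in the downstream applications (Lemma \ref{lemma: deg_of_subgraphs} and beyond) to ensure the exponent $t/\|\weightvec\|_\infty$ is large enough --- i.e. it controls how small $\|\weightvec\|_\infty$ can be relative to the effective scale $\sum_i \omega_i p_i$, which is what makes the tail genuinely useful. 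So in the write-up I would present the Chernoff computation as above to obtain $C = e$, then remark that under the stated hypothesis this is $\leq (C' \sum_i \omega_i p_i / t)^{t/\|\weightvec\|_\infty}$ with $C' = e^{c_1}$, matching the statement. Alternatively, if the intended proof route is to keep $\lambda = \log(c_1)/s$ or similar fixed non-optimal choice tied to $c_1$, I would substitute that and use the hypothesis to absorb $\sum_i p_i$ into $c_1 \sum_i \omega_i p_i/s$; either way the structure is a one-line Markov bound followed by the convexity estimate and a choice of $\lambda$, with no serious difficulty beyond bookkeeping the constant.
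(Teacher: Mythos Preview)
Your argument is correct and in fact sharper than the paper's: the convexity bound $e^{\lambda\omega_i}-1\le \tfrac{\omega_i}{\|\weightvec\|_\infty}(e^{\lambda\|\weightvec\|_\infty}-1)$ followed by the choice $\lambda\|\weightvec\|_\infty=\log(t/\mu)$ yields $C=e$ outright, with no appeal to the hypothesis $\|\weightvec\|_\infty\sum_ip_i\le c_1\sum_i\omega_ip_i$. Your ``wait --- one must be careful'' is unwarranted; the argument goes through exactly as you first wrote it.

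The paper takes a different route. After the Markov step it applies the AM--GM inequality $\prod_i\bbE e^{\lambda\omega_iX_i}\le\big(\tfrac1n\sum_i\bbE e^{\lambda\omega_iX_i}\big)^n$, then bounds $\sum_ip_i(t/\mu)^{\omega_i/\|\weightvec\|_\infty}\le (t/\mu)\sum_ip_i$, and only at this point invokes the hypothesis to convert $\sum_ip_i$ into $c_1\mu/\|\weightvec\|_\infty$, producing $C=e^{c_1}$. So the hypothesis is genuinely used in the paper's proof, but only because of the AM--GM detour; your chord-below-the-secant bound on $e^{\lambda x}-1$ sidesteps it entirely. Your observation that the hypothesis is really doing work \emph{downstream} (controlling $\|\weightvec\|_\infty$ relative to $\sum_i\omega_ip_i$ so that the exponent $t/\|\weightvec\|_\infty$ is large in the applications) is exactly right.
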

\begin{proof}[Proof of Lemma \ref{lemma: concentration_of_weighted_bernoulli_sum}]
      For any $\lambda > 0$ we have
      \begin{align*}
            \bbP(\sum_{i\in[n]}\omega_i X_i  \geq t) & \leq \exp\{-\lambda t\}       \prod_{i\in[n]} \bbE \exp\{\lambda \omega_i X_i\} \\
            & \leq \exp\{-\lambda t\} \bigg(\frac{\sum_{i\in[n]}\bbE\exp\{\lambda\omega_iX_i\}}{n}\bigg)^n, 
      \end{align*}
      where the second inequality is due to the inequality of arithmetic and geometric means. Since $\exp\{\lambda \omega_i x\}$ is convex in $x$, its graph for $x\in[0, 1]$ is dominated by the line segment connecting the two points $(0, 1)$ and $(1, e^{\lambda\omega_i})$ in $\bbR^2$. Hence we have
      $
            e^{\lambda \omega_i X_i} \leq (e^{\lambda \omega_i } -1)  X_i + 1.
      $
      Taking expectation on both sides, we get
      $
            \bbE e^{\lambda \omega_i X_i} \leq (e^{\lambda\omega_i} -1) p_i + 1.
      $
      This gives
      \begin{align*}
            \bbP(\sum_{i\in[n]} \omega_i X_i \geq t) \leq \exp\{-\lambda t\} \bigg(\frac{\sum_{i\in[n]}\big( p_i e^{\lambda \omega_i}+ (1- p_i)\big)}{n}\bigg)^n.
      \end{align*}
      Taking $\lambda = \|\weightvec\|_\infty^{-1}\log ({t}/{\sum_{i\in[n]}\omega_ip_i})$, the right-hand side above is equal to
      \begin{align*}
            & \bigg(\frac{\sum_{i\in[n]}\omega_i p_i}{t}\bigg)^{t/\|\weightvec\|_\infty} \bigg(1 + \frac{\sum_{i\in[n]}p_i (t/\sum_{i\in[n]} \omega_i p_i)^{w_i/\|\weightvec\|_\infty} }{n} - \frac{\sum_{i\in[n]} p_i}{n}\bigg)^n \\
            & \overset{(1)}{\leq} \bigg(\frac{\sum_{i\in[n]}\omega_i p_i}{t}\bigg)^{t/\|\weightvec\|_\infty} \exp\bigg\{ \sum_{i\in[n]}p_i (t/\sum_{i\in[n]} \omega_i p_i)^{w_i/\|\weightvec\|_\infty}\bigg\} \\
            & \overset{(2)}\leq \bigg(\frac{ \sum_{i\in[n]} \omega_i p_i}{t}\bigg)^{t/\|\weightvec\|_{\infty}} \exp\bigg\{ \sum_{i\in[n]}p_i (t/\sum_{i\in[n]} \omega_i p_i)\bigg\} \\
            & = \bigg(\frac{ \sum_{i\in[n]} \omega_i p_i}{t}\bigg)^{t/\|\weightvec\|_{\infty}} \exp\bigg\{t \cdot \frac{\sum_{i\in[n]} p_i}{\sum_{i\in[n]}\omega_i p_i}\bigg\} \\
            & \overset{(3)}\leq \bigg(\frac{ \sum_{i\in[n]} \omega_i p_i}{t}\bigg)^{t/\|\weightvec\|_{\infty}} (e^{c_1})^{t/\|\weightvec\|_\infty} \\
            & = \bigg(\frac{ e^{c_1}\sum_{i\in[n]} \omega_i p_i}{t}\bigg)^{t/\|\weightvec\|_{\infty}}
      \end{align*}      
      where the (1) is by $1+x \leq e^x$ for any $x\in\bbR$, (2) is by our assumption that $t \geq \sum_{i\in[n]}\omega_i p_i$, and (3) is by our assumption on the weight vector.
\end{proof}

We now present the proof of Lemma \ref{lemma: deg_of_subgraphs}.
\begin{proof}[Proof of Lemma \ref{lemma: deg_of_subgraphs}]
% We now assume $\alpha \geq \sqrt{m/n}$ (for technical reasons). Again, for the first step in the graph decomposition, we can think of $m=n$, so that $\alpha =1$. 
% Recall that in Lemma \ref{lemma: concentration_in_infty_to_two_norm} we find $I$ by selecting the rows of $A$ whose $\ell_1$ norm is bounded above by $\alpha\sum_{\ell\in[L]}\omega_\ell d_\ell$. 
Let the $\ell_1$-norm of the $i$-th row of $\barA_{I\times J}$ be
$$
      D_i = \sum_{j\in J} \barA_{ij} =  \sum_{\ell \in [L]}\sum_{j\in J} \omega_\ell \AAA{\ell}_{ij}.
$$
We have
$$
      \bbE D_i =\sum_{\ell\in[L]} \sum_{j\in J}  \omega_\ell \ppp{\ell}_{ij} \leq \frac{m}{n} \sum_{\ell\in[L]} \omega_\ell d_\ell \leq \alpha \sum_{\ell\in[L]} \omega_\ell d_\ell.
$$
% In fact, we can change the selection criteria by a constant factor (for example, change the $\ell_1$-norm threshold to be $C \alpha\sum_{\ell\in[L]}\omega_\ell d_\ell$). Then, since $D_i$ concentrates around $\bbE D_i$, we would expect that \emph{most rows are selected in $I$}. To make this statement rigorous, 
% To use the above result to prove Lemma \ref{lemma: deg_of_subgraphs}, recall that $D_i=\sum_{j\in J}\sum_{\ell\in[L]}\omega_\ell\AAA{\ell}_{ij}$ is a weighted sum of Bernoulli random variables. Note also that we have the assumption \eqref{eq: another_assump_on_weights_general_bernoulli_matrix}, that is,
% we need to assume the existence of $c_1 \geq 1$ such that
% $$
%       \| \omega\|_\infty \sum_{j\in J} \sum_{\ell \in [L]} \ppp{\ell}_{ij} \leq c_1 \sum_{j\in J} \sum_{\ell\in[L]} \omega_\ell \ppp{\ell}_{ij}.
% $$
% Since we cannot tract $J$ analytically in the graph decomposition algorithm, we impose the following assumption: there exists $c_1 \geq 1$ such that
% \begin{equation}
%       % \label{eq: another_assump_on_weights_general_bernoulli_matrix}
%       \|\omega \|_\infty \cdot  \sup_{i\in[n], J\subseteq[n]} \frac{\sum_{\ell\in[L]}\sum_{j\in J} \ppp{\ell}_{ij}}{\sum_{\ell\in[L]}\sum_{j\in J} \omega_\ell \ppp{\ell}_{ij}} \leq c_1.
% \end{equation}
Using   \eqref{eq: another_assump_on_weights_general_bernoulli_matrix}, for any $J\subseteq[n]$, we have
\[
\|\weightvec\|_\infty \sum_{\ell\in[L]}\sum_{j\in J}\ppp{\ell}_{ij}\leq c_1\sum_{\ell\in[L]}\sum_{j\in J}\omega_\ell\ppp{\ell}_{ij}.
\]
Thus we can invoke Lemma \ref{lemma: concentration_of_weighted_bernoulli_sum} to conclude that for a large enough $C'$,
\begin{align*}
      \bbP(D_i > C'r\alpha \sum_{\ell\in[L]}\omega_\ell d_\ell)  
      % \leq \bigg(\frac{C m\sum_{\ell\in[L]}\omega_\ell \ppp{\ell}_{ij}}{C'r\alpha  \sum_{\ell\in[L]}\omega_\ell d_\ell}\bigg)^{C'r\alpha \sum_{\ell\in[L]}\omega_\ell d_\ell /\|\omega \|_\infty}  \\
      & \leq \bigg(\frac{C (m/n)\sum_{\ell\in[L]}\omega_\ell d_\ell}{ C'r\alpha \sum_{\ell\in[L]}\omega_\ell d_\ell }\bigg)^{C' r\alpha \sum_{\ell\in[L]}\omega_\ell d_\ell /\|\omega \|_\infty} \\
      & \leq \bigg(\frac{C'\alpha n}{Cm }\bigg)^{-C'r\alpha \sum_{\ell\in[L]}\omega_\ell d_\ell /\|\omega \|_\infty} \\
      & =: \mu.
\end{align*}
Let $S$ be the number of rows $i \in I$ such that $D_i >C'r\alpha \sum_{\ell\in[L]}\omega_\ell d_\ell$. Then $S$ is a sum of $m = |I|$ independent Bernoulli random variables, each having head probability at most $\mu$. So invoking Lemma \ref{lemma: concentration_of_weighted_bernoulli_sum} again (with $c_1 = 1$, $\omega_i=1$ for all $i$), we have
$$
      \bbP\bigg( S > \frac{m\|\weightvec\|_\infty}{\alpha \sum_{\ell\in[L]} \omega_\ell d_\ell} \bigg) \leq  \bigg(\frac{em\mu}{m\|\weightvec\|_\infty/(\alpha \sum_{\ell\in[L]}\omega_\ell d_\ell)}\bigg)^{m\|\weightvec\|_\infty/(\alpha \sum_{\ell\in[L]}\omega_\ell d_\ell)}.
$$
We claim that the right-hand side above is at most
$
      \mu^{m\|\weightvec\|_\infty/(2\alpha\sum_{\ell\in[L]}\omega_\ell d_\ell)}
$
for $C'$ large enough.
Indeed, this claim is equivalent to
$$
      \frac{e\alpha\sum_{\ell\in[L]}\omega_\ell d_\ell}{\|\weightvec\|_\infty} \leq \mu^{-1/2} = \bigg(\frac{C'\alpha n}{Cm}\bigg)^{ \frac{C'r\alpha \sum_{\ell\in[L]}\omega_\ell d_\ell}{2\|\weightvec\|_\infty}}.
$$
Since $\alpha n/m \geq 1$, it is true if 
$$
      e \cdot {\alpha\sum_{\ell\in[L]}\omega_\ell d_\ell}/{\|\weightvec\|_\infty} \leq \bigg((C'/C)^{C'r/2}\bigg)^{{\alpha\sum_{\ell\in[L]}\omega_\ell d_\ell}/{\|\weightvec\|_\infty}}.
$$
For a given constant $C$ (which only depends on $c_1$), we can choose $C'$ large enough such that the above inequality holds. Hence, we have
$$
      \bbP\bigg(S > m\| \omega\|_\infty/(\alpha \sum_{\ell\in[L]}\omega_\ell d_\ell)\bigg) \leq \mu^{m\|\weightvec\|_\infty/(2\alpha\sum_{\ell\in[L]}\omega_\ell d_\ell)} = \bigg(\frac{C'\alpha n}{C m}\bigg)^{-C'rm/2}\leq \bigg(\frac{(C')^2n}{C^2m}\bigg)^{-C'rm/4},
$$
where the last inequality is due to $\alpha^2n/m\geq1$.
Taking a union bound over all possible $m\in[n]$ and $I, J$ with $|I|=|J|=m$, we know that $S > m\| \omega\|_\infty/(\alpha\sum_{\ell\in[L]}\omega_\ell d_\ell)$ with probability at least 
$$
      1 - \sum_{m=1}^n \binom{n}{m}^2 \bigg(\frac{(C')^2n}{C^2m}\bigg)^{-C'rm/4} 
      \geq 1 - \sum_{m=1}^n \bigg(\frac{en}{m}\bigg)^{2m} \bigg(\frac{(C')^2n}{C^2m}\bigg)^{-C'rm/4}
      \geq 1 - \sum_{m=1}^n \bigg(\frac{({C'})^2n}{{C^2}m}\bigg)^{-(\frac{C'r}{4}-2)m},
$$
where the last inequality holds by choosing a large enough $C'$. Similar to the proof of Lemma \ref{lemma: concentration_in_infty_to_two_norm}, one readily checks that among the summands in the right-hand side above, the one with $m=1$ is the dominating term, and thus the right-hand side above can be further lower bounded by
\begin{equation}
  \label{eq:deg_of_subgraphs_prob}
  1 - n\cdot \bigg(\frac{({C'})^2 n}{{C}^2}\bigg)^{-(\frac{C'r}{4} - 2)} \geq 1-n^{-r}
\end{equation}  
for $C'$ large enough, and this concludes the proof of Lemma \ref{lemma: deg_of_subgraphs}.
\end{proof}

% \subsubsection{Restricted $\ell_1$ norm on small blocks}

The following lemma shows that if a block has a small number of rows, then {most of its columns} has small $\ell_1$-norm.
\begin{lemma}[More on degrees of subgraphs]
\label{lemma: more_on_deg_of_subgraphs}
   Assume   \eqref{eq: another_assump_on_weights_general_bernoulli_matrix} holds for $c_1 \geq 1$. 
   % Let $1\leq m \leq n$ and $\alpha \geq \sqrt{m/n}$. 
   Then for any $r \geq 1$, the following holds with probability at least $1-n^{-r}$: 
   uniformly for any $m\in[n]$, any 
   $$
   k\leq \bigg(m \|\weightvec\|_\infty/(\alpha \sum_{\ell\in[L]}\omega_\ell d_\ell)\bigg) \land m
   $$ 
   where $\alpha$ is an arbitrary (but fixed) number satisfying $\alpha \geq \sqrt{m/n}$,
   and any block $I \times J \subseteq [n]\times[n]$ with $|I|=k, |J|=m$, all but $m/4$ columns of $\barA_{I\times J}$ have $\ell_1$-norm bounded above by $Cr\|\weightvec\|_\infty$, where $C = C(c_1)$ is an absolute constant.
\end{lemma}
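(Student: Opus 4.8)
The plan is to follow the strategy of the proof of Lemma~\ref{lemma: deg_of_subgraphs}, but transposed: here the number of rows $k=|I|$ is small, so the relevant quantities are the \emph{column} $\ell_1$-norms of $\barA_{I\times J}$, and we must show few of them are large. First I would observe that it suffices to treat, for each $m$, the maximal admissible value of $k$: the constraint on $\alpha$ makes the largest allowed $k$ occur at $\alpha=\sqrt{m/n}$, and restricting the row set $I'\subseteq I$ only decreases every column norm $C_j:=\sum_{i\in I}\barA_{ij}=\sum_{i\in I}\sum_{\ell\in[L]}\omega_\ell\AAA{\ell}_{ij}$, so it is enough to prove the claim for $k=k_m:=\min\{\lfloor \|\weightvec\|_\infty\sqrt{mn}/\sum_{\ell\in[L]}\omega_\ell d_\ell\rfloor,\, m\}$ (the case $k_m=0$ being vacuous). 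Consequently there is \emph{no} union bound over $k$; the union bound runs only over $m$ and over blocks $I\times J$, exactly as in Lemma~\ref{lemma: deg_of_subgraphs}.

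Fix $m$ and $I,J$ with $|I|=k_m$, $|J|=m$, and write $w:=\|\weightvec\|_\infty$. Since $\ppp{\ell}_{ij}\le d_\ell/n$, the bound on $k_m$ gives $\bbE C_j=\sum_{i\in I}\sum_{\ell}\omega_\ell\ppp{\ell}_{ij}\le k_m\sum_\ell\omega_\ell d_\ell/n\le w\sqrt{m/n}$, and one checks this holds whichever term achieves the minimum defining $k_m$; this $\sqrt{m/n}$ gain over the crude bound $\bbE C_j\le w$ is the crux of the argument. Viewing $C_j$ as a weighted sum of $k_m L$ independent Bernoulli variables with weights $\omega_\ell$ — whose hypothesis for Lemma~\ref{lemma: concentration_of_weighted_bernoulli_sum} is precisely \eqref{eq: another_assump_on_weights_general_bernoulli_matrix} read along the column $j$ over the row set $I$, which is legitimate since the probability matrices $\ppp{\ell}$ are symmetric — that lemma with threshold $t=Crw$ yields $\bbP(C_j>Crw)\le (e^{c_1}\bbE C_j/(Crw))^{Cr}\le (e^{c_1}\sqrt{m/n}/(Cr))^{Cr}=:\mu$, so for $C=C(c_1)$ large we get $\mu\le 2^{-Cr}(m/n)^{Cr/2}\le 1/4$. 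As distinct columns involve disjoint blocks of independent entries, the indicators $\indc{C_j>Crw}$, $j\in J$, are independent; if $S$ is the number of ``bad'' columns then $S$ is stochastically dominated by a $\mathrm{Binomial}(m,\mu)$, and a second application of Lemma~\ref{lemma: concentration_of_weighted_bernoulli_sum} with unit weights gives $\bbP(S>m/4)\le(4e\mu)^{m/4}\le\bigl[(4e)^{1/4}2^{-Cr/4}\bigr]^m(m/n)^{Crm/8}$.

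Finally I would union bound: for a given $m$ there are at most $\binom{n}{k_m}\binom{n}{m}\le\binom{n}{m}^2\le(en/m)^{2m}$ blocks, so the failure probability is at most $\sum_{m=1}^n (en/m)^{2m}(4e\mu)^{m/4}\le\sum_{m=1}^n\bigl[e^2(4e)^{1/4}2^{-Cr/4}\bigr]^m(n/m)^{(2-Cr/8)m}$; taking $C=C(c_1)$ large enough (for $r\ge1$ it suffices that $Cr/8-2\ge r$ and that the leading constant is at most $1$) makes each summand decrease in $m$ with the $m=1$ term at most $n^{-r}/2$, exactly the ``$m=1$ dominates'' step used in the proofs of Lemmas~\ref{lemma: concentration_in_infty_to_two_norm} and~\ref{lemma: deg_of_subgraphs}; the range $m>n/2$, where $\mu$ is merely a small constant, is handled separately using $\binom{n}{k_m}\binom nm\le 4^n$ and $\mu^{m/4}\le 2^{-Crn/8}$. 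Summing yields the asserted probability $1-n^{-r}$. I expect the only genuinely delicate point to be this bookkeeping: checking that the $(m/n)^{Cr/2}$ factor in the single-column tail — which exists \emph{precisely because} $\alpha\ge\sqrt{m/n}$ forces $\bbE C_j\le w\sqrt{m/n}$ — is strong enough to beat the $\binom{n}{m}^2$ combinatorial factor uniformly in $m$. Everything else is a routine transcription of the machinery already set up in Lemmas~\ref{lemma: concentration_of_weighted_bernoulli_sum}--\ref{lemma: deg_of_subgraphs}.
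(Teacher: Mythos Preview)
Your proposal is correct and follows essentially the same route as the paper: bound $\bbE C_j\le m\|\weightvec\|_\infty/(\alpha n)$ via the constraint on $k$, apply Lemma~\ref{lemma: concentration_of_weighted_bernoulli_sum} to each column to get $\mu$, apply it again to the count $S$ of bad columns, and union bound over $m$ and blocks using $\binom{n}{k}\binom{n}{m}\le\binom{n}{m}^2$ after reducing to the largest admissible $k$. The paper's proof keeps $\alpha$ generic and only invokes $\alpha\ge\sqrt{m/n}$ at the very end to turn $\mu^{m/6}$ into a power of $n/m$, whereas you specialize to $\alpha=\sqrt{m/n}$ up front; these are equivalent, and your explicit handling of the range $m>n/2$ and your remark that the hypothesis of Lemma~\ref{lemma: concentration_of_weighted_bernoulli_sum} along columns requires symmetry of $\ppp{\ell}$ are points the paper leaves implicit.
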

\begin{proof}[Proof of Lemma \ref{lemma: more_on_deg_of_subgraphs}]
% We still assume $\alpha \geq \sqrt{m/n}$. And we are given a block $I\times J$ such that $|I| = k, |J|=m$, and $k\leq m\|\weightvec\|_\infty/(\alpha\sum_{\ell\in[L]}\omega_\ell d_\ell)$. 
We define the $\ell_1$ norm of the $j$-th column of the matrix $\barA_{I\times J}$ as 
$$
      D_j = \sum_{i\in I} \barA_{ij} =\sum_{\ell \in[L]} \sum_{i\in I} \omega_\ell \AAA{\ell}_{ij}.
$$
Now, $D_j$ is a weighted sum of Bernoulli random variables with
$$
      \bbE D_j = \sum_{\ell\in[L]}\sum_{i\in I} \omega_\ell \ppp{\ell}_{ij} \leq \frac{k}{n} \sum_{\ell\in[L]}\omega_\ell d_\ell \leq \frac{m\|\weightvec\|_\infty}{\alpha n},
$$
where the last inequality is due to $k\leq m\|\weightvec\|_\infty/(\alpha \sum_{\ell\in[L]}\omega_\ell d_\ell)$.
By Lemma \ref{lemma: concentration_of_weighted_bernoulli_sum}, for large enough $C'$ we have
$$
      \bbP(D_j > C'r\|\weightvec\|_\infty) \leq \bigg(\frac{C m\|\weightvec\|_\infty/(\alpha n)}{C'r \|\weightvec\|_\infty}\bigg)^{C'r} = \bigg(\frac{C' r \alpha n}{Cm}\bigg)^{-C'r} =: \mu.
$$
Let $S$ be the number of columns $j\in J$ with $D_j > C'r\|\weightvec\|_\infty$. Then $S$ is a sum of independent Bernoulli random variables, each having success probability at most $\mu$. Applying Lemma \ref{lemma: concentration_of_weighted_bernoulli_sum} (with $c_1 = 1$) gives
$
      \bbP(S > m/4) \leq (4e\mu)^{m/4}.
$
We claim that the above probability is at most $\mu^{m/6}$. This claim is equivalent to
$
      4e < \mu^{-1/3} = \big(\frac{C' r\alpha n }{Cm}\big)^{C'r/3}.
$
Since $\alpha n/m\geq 1$ and $r \geq 1$, it suffices to require
$
      4e < \big(\frac{C' }{C}\big)^{C'r/3},
$
which is true for $C'$ large enough. Hence
$$
      \bbP(S > m/4) \leq \mu^{m/6} = \bigg(\frac{C'r\alpha n}{Cm}\bigg)^{-C'rm/6} \leq \bigg(\frac{C'\alpha n}{Cm}\bigg)^{-C'rm/6} \leq \bigg(\frac{(C')^2n }{C^2 m}\bigg)^{-C'rm/12},
$$
where the last inequality is by $\alpha \geq \sqrt{m/n}$. 
We now take a union bound over all $m, k$ and $I, J$ with $|I| = k, |J| = m$. Note that it suffices to consider the largest possible $k$, which is at most $m$. So $S > m/4$ happens with probability at least 
$$
      1-\sum_{m=1}^n \binom{n}{m}^2  \bigg(\frac{(C')^2n }{C^2 m}\bigg)^{-C'rm/12}\leq 1-n^{-r},
$$
where the last inequality holds by choosing a large $C'$ and using similar arguments as those which lead to \eqref{eq:deg_of_subgraphs_prob}. 
\end{proof}

% \SL{where to put}
% Now we know that the number of rows not included in $I'$ is of order $O(m\|\weightvec\|_\infty/(\alpha \sum_{\ell\in[L]}\omega_\ell d_\ell))$. We proceed to show that most columns in the matrix formed by stacking those rows (so this matrix is a fat one) has small $\ell_1$-norm. If we can show that, then we can conclude that a large block of this fat matrix has small operator norm, by an application of the following result:

\subsection{Step \texorpdfstring{\RN{3}}{III}: Graph Decomposition}

The main idea in \cite{le2017concentration} is to seek for a partition of the set $[n]\times [n]$ into three blocks $\calN$, $\calR$ and $\calC$, where $\calN$ is a big block with good concentration behavior, and the rows of $\calR$ and the columns of $\calC$ have small $\ell_1$ norm. This \emph{graph decomposition} is implemented below.
\begin{proposition}[Graph decomposition]
      \label{prop:graph_decomp}
      Let Assumption \ref{assump:wts_for_concentration} hold with constants $c_0 > 0, c_1 \geq 1$. For any $r\geq 1$, with probability at least $1-3n^{-r}$, we can decompose $[n]\times [n]$ into three classes $\calN, \calR, \calC$ so that the following holds:
      \begin{itemize}
            \item The matrix $\barA$ concentrates well on $\calN$ in the sense that 
            $$
                  \|(\barA-\bbE \barA)_\calN\|\leq C_1 r^{3/2}\sqrt{\sum_{\ell\in[L]} \omega_\ell^2 d_\ell},
            $$
            where $C_1 = C_1(c_0, c_1)$ is an absolute constant; 
            \item Each row of $\barA_\calR$ and each column of $\barA_\calC$ has $\ell_1$-norm bounded above by $C_2r\|\weightvec\|_\infty$, where $C_2 = C_2(c_0, c_1)$ is another absolute constant; % only depending on $c_0, c_1$.
            \item Moreover, $\calR$ intersects at most $n\|\weightvec\|_\infty/{\sum_{\ell\in[L]} \omega_\ell d_\ell}$ columns and $\calC$ intersects at most $n\|\weightvec\|_\infty/{\sum_{\ell\in[L]} \omega_\ell d_\ell}$ rows of $[n]\times[n]$.
      \end{itemize}     
\end{proposition}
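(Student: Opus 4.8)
The plan is to implement an iterative ``peeling'' construction in the spirit of \cite{le2017concentration}, but carefully tracking the weight-dependent quantities that appear in Lemmas \ref{lemma: concentration_in_spectral_norm}, \ref{lemma: deg_of_subgraphs} and \ref{lemma: more_on_deg_of_subgraphs}. First I would fix the high-probability event $\Omega$ on which the conclusions of all three lemmas hold simultaneously; by a union bound this event has probability at least $1-3n^{-r}$, which accounts for the failure probability in the statement. All subsequent reasoning is deterministic on $\Omega$. The normalization $\sum_\ell \omega_\ell = 1$ together with Assumption \ref{assump:wts_for_concentration} will be used throughout to convert bounds of the form $m\|\weightvec\|_\infty/(\alpha\sum_\ell \omega_\ell d_\ell)$ into usable fractions of $n$.

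The core of the argument is a greedy construction of $\calN$. I would start from the full block $[n]\times[n]$ and repeatedly apply Grothendieck--Pietsch factorization (Lemma \ref{lemma: grothendieck_pietsch_factorization}, equivalently Lemma \ref{lemma: concentration_in_spectral_norm}) to peel off a sub-block on which concentration holds, while using Lemma \ref{lemma: deg_of_subgraphs} to discard the few ``heavy'' rows (those of $\ell_1$-norm exceeding $Cr\alpha\sum_\ell\omega_\ell d_\ell$) before each application, since Lemma \ref{lemma: concentration_in_spectral_norm} only controls the light rows $I'$. The choice of the scale parameter $\alpha$ at each stage is dictated by the current block size $m$: taking $\alpha \asymp \sqrt{m/n}$ (which is admissible in all three lemmas once $m\le n$) makes the per-stage spectral bound $\sqrt{\alpha\,(\sum_\ell\omega_\ell^2 d_\ell)\, r\log(en/m)}$ geometrically summable as $m$ halves, and simultaneously makes the number of discarded rows, $m\|\weightvec\|_\infty/(\alpha\sum_\ell\omega_\ell d_\ell)\lesssim \sqrt{mn}\,\|\weightvec\|_\infty/\sum_\ell\omega_\ell d_\ell$, also geometrically decaying. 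Summing the spectral contributions over the $O(\log n)$ peeling stages via $\sum_k 2^{-k/2}\sqrt{\log(2^k)}<\infty$ gives the claimed bound $C_1 r^{3/2}\sqrt{\sum_\ell\omega_\ell^2 d_\ell}$ — note the extra $r^{1/2}$ beyond the $r$ already present in each lemma comes from converting $\log(en/m)$ into $r$ uniformly, or from a slightly more careful accounting of the $\sqrt{r\log(en/m)}$ factors. The residual rows and columns never placed into $\calN$ are collected into $\calR$ (rows) and $\calC$ (columns); by construction each such row was discarded precisely because it was heavy at some stage, so after the peeling terminates at blocks of constant size, one reapplies Lemma \ref{lemma: deg_of_subgraphs} (for rows of $\calR$) and Lemma \ref{lemma: more_on_deg_of_subgraphs} (for columns of $\calC$, using that $\calC$ meets only few rows) to certify that on the final small blocks every remaining row/column has $\ell_1$-norm at most $C_2 r\|\weightvec\|_\infty$, and that $\calR$ meets at most $n\|\weightvec\|_\infty/\sum_\ell\omega_\ell d_\ell$ columns while $\calC$ meets at most $n\|\weightvec\|_\infty/\sum_\ell\omega_\ell d_\ell$ rows, as in the third bullet.

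I expect the main obstacle to be the bookkeeping in the iteration: one must verify that at every stage the hypotheses ``$\alpha\ge m/n$'' (for Lemma \ref{lemma: concentration_in_spectral_norm}/\ref{lemma: deg_of_subgraphs}) and ``$\alpha\ge\sqrt{m/n}$'', ``$k\le m\|\weightvec\|_\infty/(\alpha\sum_\ell\omega_\ell d_\ell)\wedge m$'' (for Lemma \ref{lemma: more_on_deg_of_subgraphs}) remain satisfied with the running choices of $\alpha$, $m$, $k$, and that the accumulated column-intersections of $\calR$ (resp. row-intersections of $\calC$) telescope to the stated $n\|\weightvec\|_\infty/\sum_\ell\omega_\ell d_\ell$ rather than blowing up by the number of stages. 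The delicate point is that the ``few heavy rows'' discarded at stage $k$ become columns that $\calR$ intersects, so one needs the geometric decay in $m$ to dominate the logarithmically many stages; this is exactly where the weight-balance assumptions \eqref{eq: assump_on_weights_general_bernoulli_matrix}--\eqref{eq: another_assump_on_weights_general_bernoulli_matrix} are essential, as they prevent $\|\weightvec\|_\infty$ from being too large relative to $\sum_\ell\omega_\ell^2 d_\ell$ and $\sum_\ell\omega_\ell d_\ell$. Once the decomposition is in hand, Theorem \ref{thm:concentration_reg_mat} and its corollaries follow by controlling $\|(\tau(\barA)-\bbE\barA)_\calR\|$ and $\|\cdot\|_\calC$ through the trivial bound $\|M\|\le\sqrt{\|M\|_{1\to1}\|M\|_{\infty\to\infty}}$ applied to the row/column-sparse pieces, combined additively with the $\calN$ bound.
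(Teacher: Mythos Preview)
Your approach matches the paper's: one packages a single peeling step as Lemma~\ref{lemma: decomp_of_one_block} and then iterates $O(\log n)$ times, exactly as you outline. One point in your description of $\calR$ and $\calC$ would not work as written, however. You say the rows of $\calR$ were ``discarded precisely because [they were] heavy'' and propose to bound their $\ell_1$-norm via Lemma~\ref{lemma: deg_of_subgraphs}. This is inverted: the heavy rows themselves need not have $\ell_1$-norm $\lesssim r\|\weightvec\|_\infty$ (that is precisely why they are heavy), and Lemma~\ref{lemma: deg_of_subgraphs} only counts heavy rows, it does not bound them. In the actual construction, the few heavy rows identified by Lemma~\ref{lemma: deg_of_subgraphs} form the row set of $\calC$, and it is the \emph{columns} of $\calC$ that are controlled, by Lemma~\ref{lemma: more_on_deg_of_subgraphs} applied to this thin block of at most $m\|\weightvec\|_\infty/(\alpha\sum_\ell\omega_\ell d_\ell)$ rows; the columns failing that bound get pushed into the next-stage block $I_1\times J_1$. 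Symmetrically, $\calR$ lives on the few heavy \emph{columns}, and its rows are light via Lemma~\ref{lemma: more_on_deg_of_subgraphs} applied to $\barA^\top$. Thus the $\ell_1$ bounds on $\calR,\calC$ come from Lemma~\ref{lemma: more_on_deg_of_subgraphs} at each stage, not from Lemma~\ref{lemma: deg_of_subgraphs} after termination. A minor arithmetic slip: with $\alpha\asymp\sqrt{m/n}$ and $m\asymp n2^{-k}$ the per-stage spectral contribution is $\asymp 2^{-k/4}\sqrt{k}$, not $2^{-k/2}\sqrt{k}$; either series converges, so this is harmless.
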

The proof of the above result is based on iterative applications of the following lemma.
\begin{lemma}[Decomposition of one block]
      \label{lemma: decomp_of_one_block}
      Let Assumption \ref{assump:wts_for_concentration} hold with constants $c_0 > 0, c_1 \geq 1$.
      % Let $1\leq m \leq n$ and $\alpha \geq \sqrt{m/n}$. 
      Then for $r \geq 1$, the following holds with probability at least $1-3n^{-r}$: 
      uniformly for any $m\in[n]$, any block $I\times J \subseteq [n]\times[n]$ with $|I|=|J| = m$ and an arbitrary (but fixed) number $\alpha$ satisfying $\alpha \geq \sqrt{m/n}$, there exists a sub-block $I_1\times J_1 \subseteq I \times J$ with $|I_1|, |J_1|\leq m/2$, such that the remaining part of the block, namely $(I\times J)\setminus (I_1 \times J_1)$, can be decomposed into three parts, $\calR\subseteq (I\setminus I_1)\times J$, $\calC\subseteq I \times (J\setminus J_1)$, and $\cal N$, so that the following holds:
      \begin{itemize}
            \item The matrix $\barA$ concentrates well on $\calN$ in the sense that 
            $$
            \|(\barA-\bbE \barA)_\calN\|\leq C_1 r^{3/2}\sqrt{\alpha (\sum_{\ell\in[L]} \omega_\ell^2 d_\ell) \log (\frac{en}{m})},
            $$
            where $C_1 = C_1(c_0, c_1)$ is an absolute constant;
            \item Each row of $\barA_\calR$ and each column of $\barA_\calC$ has $\ell_1$-norm bounded above by $C_2r\|\weightvec\|_\infty$, where $C_2 = C_2(c_0, c_1)$ is another absolute constant;
            \item Moreover, $\calR$ intersects at most $m\|\weightvec\|_\infty/(\alpha \sum_{\ell\in[L]}\omega_\ell d_\ell)$ columns and $\calC$ intersects at most $m\|\weightvec\|_\infty/(\alpha \sum_{\ell\in[L]}\omega_\ell d_\ell)$ rows of $I\times J$.
      \end{itemize}     
\end{lemma}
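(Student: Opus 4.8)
The plan is to transport the single–matrix "one–block decomposition" of Le–Levina–Vershynin (\citet{le2017concentration}) to the weighted multilayer setting, feeding in the three auxiliary estimates already proved: the $\ell_2\to\ell_2$ concentration obtained after a Grothendieck–Pietsch restriction (Lemma~\ref{lemma: concentration_in_spectral_norm}), and the two degree–truncation bounds (Lemmas~\ref{lemma: deg_of_subgraphs} and \ref{lemma: more_on_deg_of_subgraphs}). Each of these holds on an event of probability at least $1-n^{-r}$, so I would first union–bound them to obtain the stated $1-3n^{-r}$ and then argue deterministically on the intersection. Throughout, the relevant "strip width" is $k_0:=m\|\weightvec\|_\infty/(\alpha\sum_{\ell}\omega_\ell d_\ell)$, and I would first note it suffices to treat the regime $k_0\le m/4$: when $k_0>m/4$ the assertions that $\calR,\calC$ meet few columns/rows are essentially vacuous and the whole block can be absorbed into the corner.

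First I would split the rows and columns of $\barA_{I\times J}$ into light and heavy: by Lemma~\ref{lemma: deg_of_subgraphs} applied to the block and to its transpose, at most $k_0$ rows $I_R$ and at most $k_0$ columns $J_C$ carry $\ell_1$–norm larger than a fixed multiple of $r\alpha\sum_\ell\omega_\ell d_\ell$. On the narrow blocks $I_R\times J$ and $I\times J_C$ — of width at most $k_0$, which is exactly the hypothesis of Lemma~\ref{lemma: more_on_deg_of_subgraphs} — I would discard at most $m/4$ offending columns (resp. rows) and extract the low–degree strips $\calC$ and $\calR$, on which every column (resp. row) of $\barA$ has $\ell_1$–norm at most $C_2 r\|\weightvec\|_\infty$. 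Then, on the light core $(I\setminus I_R)\times(J\setminus J_C)$ — a block all of whose rows already have $\ell_1$–norm controlled by $O\!\big(r\alpha\sum_\ell\omega_\ell d_\ell\big)$ — I would pad it to a square block of size $m'\in[\tfrac34 m,m]$ and apply Lemma~\ref{lemma: concentration_in_spectral_norm} with the parameter rescaled to $\tilde\alpha:=C_0 r\alpha$ (legitimate since $\tilde\alpha\ge\alpha\ge\sqrt{m/n}\ge m'/n$, and with $C_0$ chosen so its "light" threshold exceeds the previous one, so that no light–core row gets discarded). This yields the concentrated block $\calN$, leaving at most $m'/4\le m/4$ columns over, with the bound $\|(\barA-\mathbb{E}\barA)_\calN\|\le C\sqrt{\tilde\alpha\,(\sum_\ell\omega_\ell^2 d_\ell)\,r\log(en/m')}$, which by $m'\ge\tfrac34 m$ and $\tilde\alpha\asymp r\alpha$ is at most $C_1 r^{3/2}\sqrt{\alpha\,(\sum_\ell\omega_\ell^2 d_\ell)\log(en/m)}$. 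Finally I would collect all residual index sets — the heavy rows $I_R$, the heavy columns $J_C$, and the at–most–$m/4$ rows and columns discarded in the three extraction steps — into the corner $I_1\times J_1$, and verify by a size count (this is where $k_0\le m/4$ is used) that $|I_1|,|J_1|\le m/2$ while $\calN,\calR,\calC$ are pairwise disjoint and, together with $I_1\times J_1$, exhaust $I\times J$.

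The main obstacle is not any single probabilistic estimate — those are precisely the three lemmas already in hand — but the combinatorial bookkeeping needed to make the three size budgets close simultaneously: one must (i) order the removals so that Lemma~\ref{lemma: more_on_deg_of_subgraphs} is only ever invoked on blocks of width $\le k_0$; (ii) arrange the definitions of $\calN,\calR,\calC$ — in particular, excise the heavy rows and columns \emph{before} the Grothendieck–Pietsch step — so that the three sets are disjoint and their union plus the corner covers $I\times J$, the delicate point being that the columns thrown away by Grothendieck–Pietsch, once paired with the many light–core rows, must be pushed into the corner, which is what forces $J_1$ to absorb both $J_C$ and those discarded columns and hence forces $k_0\le m/4$ for $|J_1|\le m/2$; and (iii) propagate the several rescalings of $\alpha$ and the $c_0,c_1$–dependent constants through Lemmas~\ref{lemma: deg_of_subgraphs}, \ref{lemma: concentration_in_spectral_norm}, \ref{lemma: more_on_deg_of_subgraphs} so that the final $C_1,C_2$ depend only on $c_0,c_1$. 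The weighted quantities $\sum_\ell\omega_\ell d_\ell$, $\sum_\ell\omega_\ell^2 d_\ell$ and $\|\weightvec\|_\infty$ must be carried verbatim through every inequality, and it is Assumption~\ref{assump:wts_for_concentration} together with the weighted Chernoff/Bernstein bound (Lemma~\ref{lemma: concentration_of_weighted_bernoulli_sum}) that keeps the $\ell_\infty$–type factors under control at each step.
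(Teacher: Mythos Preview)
Your overall strategy is right—work on the intersection of the three high-probability events from Lemmas~\ref{lemma: concentration_in_spectral_norm}, \ref{lemma: deg_of_subgraphs}, \ref{lemma: more_on_deg_of_subgraphs} and assemble $\calN,\calR,\calC$ from them—and this is exactly what the paper does. But the bookkeeping you sketch does not close. In your scheme $J_1$ must absorb \emph{three} families of columns: the heavy columns $J_C$ (size $\le k_0$), the Grothendieck--Pietsch discards $J_{GP}$ (size $\le m/4$), \emph{and} the bad columns $J_{bad,C}$ thrown away when extracting $\calC$ from the strip $I_R\times J$ (size $\le m/4$). You account only for $J_C\cup J_{GP}$ and then invoke $k_0\le m/4$ to get $|J_1|\le m/2$, but you have dropped $J_{bad,C}$: for any $(i,j)$ with $i\in I_R$ and $j\in J_{bad,C}\setminus J_C$, the entry lies in none of your $\calN$ (heavy row excluded), $\calC$ (bad column excluded), or $\calR$ (column not in $J_C$), so it must sit in the corner, forcing $J_{bad,C}\setminus J_C\subseteq J_1$. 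The honest count is then $|J_1|\le k_0+m/4+m/4\le 3m/4$, not $m/2$.

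The source of the over-budget is your decision to excise heavy \emph{columns} before the Grothendieck--Pietsch step, which makes $\calN$ a single rectangle inside the light core. The paper avoids this by running the construction \emph{twice}—once on $\barA$ and once on $\barA^\top$—and taking $\calN$ to be the \emph{union} of the two resulting rectangles. In the row pass one builds $J_1$ from only two pieces (the GP column-discards and the bad columns on the heavy-\emph{row} strip, each $\le m/4$), never touching the heavy columns; those are instead covered by $\calR$, which is produced in the column pass. Symmetrically $I_1$ is built in the column pass from two pieces of size $\le m/4$. This decoupling is what makes $|I_1|,|J_1|\le m/2$ hold with no side hypothesis like $k_0\le m/4$, and it is also why your case $k_0>m/4$—which you dismiss as vacuous but which in your organization would still force $|I_1|$ or $|J_1|>m/2$—never arises in the paper's argument.
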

\begin{proof}
      The proof is an adaptation of arguments in the proof of Lemma 3.7 of \cite{le2017concentration}. We fix a realization of $\barA$ such that Lemmas \ref{lemma: concentration_in_spectral_norm}, \ref{lemma: deg_of_subgraphs}, and \ref{lemma: more_on_deg_of_subgraphs} hold. Note that this event happens with probability at least $1- 3n^{-r}$. 
      \begin{figure}[t]
      \centering
      \includegraphics[width=\textwidth]{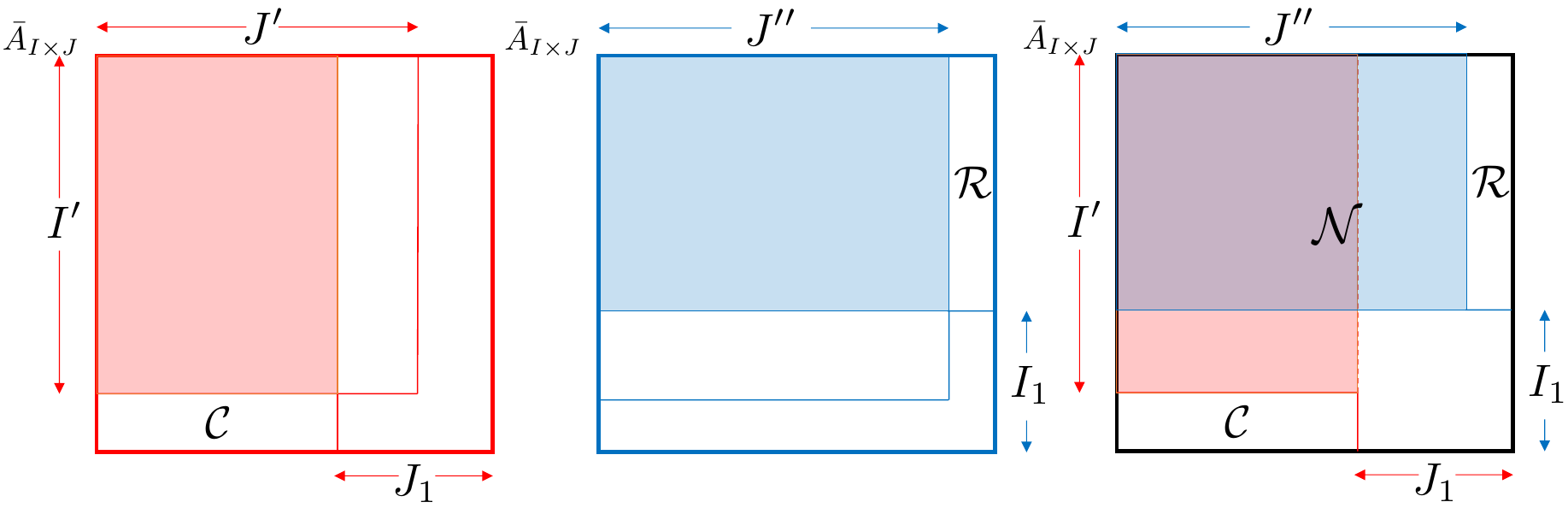}
      \caption{\small A pictorial illustration of one step of graph decomposition in Lemma \ref{lemma: decomp_of_one_block}. In the leftmost figure, we construct from $\bar A_{I\times J}$ the red shaded block which concentrates well, and a block $\calC$ which satisfies the requirements in Lemma \ref{lemma: decomp_of_one_block}. In the middle figure, we apply the same construction on $\barA_{I\times J}^\top$ to get the blue shaded block which concentrates well, and a block $\calR$ which satisfies the requirements in Lemma \ref{lemma: decomp_of_one_block}. In the rightmost figure, we combine the two constructions to obtain the desired decomposition in Lemma \ref{lemma: decomp_of_one_block}. The union of the red and blue shaded block is precisely $\calN$.}
    \label{fig:graph_decomp}
    \end{figure}

      We first construct the ``bad columns'' $J_1$. Fix some $\alpha \geq \sqrt{m/n}$. 
      By Lemma \ref{lemma: deg_of_subgraphs}, all but $m\|\weightvec\|_\infty/(\alpha \sum_{\ell\in[L]}\omega_\ell d_\ell)$ rows of $\barA_{I\times J}$ have $\ell_1$ norm bounded above by $C r\alpha \sum_{\ell\in[L]}\omega_\ell d_\ell$, where $C$ only depends on $c_1$. 
      Let $I'\subseteq I$ be the indices of those rows whose $\ell_1$ norm are bounded above by $Cr\alpha \sum_{\ell\in[L]} \omega_\ell d_\ell$ and $|I\setminus I'|\leq m\|\weightvec\|_\infty/(\alpha \sum_{\ell\in[L]}\omega_\ell d_\ell)$. 
      By Lemma \ref{lemma: concentration_in_spectral_norm} (with $\alpha$ replaced by $Cr\alpha$), we know that there exists a subset $J'\subseteq J$ with $|J'|\geq 3m/4$ such that
      $$
            \|(\barA - \bbE \barA)_{I'\times J'} \| \leq C' r \sqrt{\alpha (\sum_{\ell\in[L]}\omega_\ell^2 d_\ell) \log(\frac{en}{m})},
      $$
      where $C'$ depends on both $c_0$ and $c_1$. 
      For rows in $I\setminus I'$, whose cardinality is bounded above by both $m\|\weightvec\|_\infty/(\alpha \sum_{\ell\in[L]}\omega_\ell d_\ell)$ and $m$, we use Lemma \ref{lemma: more_on_deg_of_subgraphs} to deduce that, all but $m/4$ columns of $\barA_{(I\setminus I')\times J}$ have $\ell_1$ norm bounded above by $C''r\|\weightvec\|_\infty$, where $C''$ only depends on $c_1$. Let $J_1$ be the union of columns in $J\setminus J'$ and the columns of $A_{(I\setminus I')\times J}$ whose $\ell_1$ norm is larger than $C'' r \|\weightvec\|_\infty$. Note that $|J_1|\leq m/4 + m/4 = m/2$ by construction. In summary, we have found row indices $I'$ and column indices $J_1$, such that: 
      \begin{itemize}
            \item The block $(\barA - \bbE \barA)_{I' \times (J\setminus J_1)}$ satisfies the concentration inequality in the last display; 
            \item The block $\calC := (I\setminus I')\times (J\setminus J_1)$ satisfies the property specified in the lemma, i.e., columns of $\barA_\calC$ has $\ell_1$ norm bounded above by $C''r\|\weightvec\|_\infty$, and it intersects at most $m\|\weightvec\|_\infty/(\alpha \sum_{\ell\in[L]}\omega_\ell d_\ell)$ rows of $I\times J$;
            \item The size of $J_1$ is at most $m/2$.
      \end{itemize}
      See the leftmost part of Figure \ref{fig:graph_decomp} for a pictorial illustration.

      Now, we apply the same arguments to $A^\top$, which allows us to find row indices $I_1$ and column indices $J''$, such that
      \begin{itemize}
            \item The block $(\barA - \bbE \barA)_{(I\setminus I_1) \times J''}$ satisfies the concentration inequality in the last display; 
            \item The block $\calR := (I\setminus I_1)\times (J\setminus J'')$ satisfies the property specified in the lemma, i.e., rows of $A_\calR$ has $\ell_1$ norm bounded above by $C'''r\|\weightvec\|_\infty$ for some constant $C'''$ only depending on $c_0, c_1$, and it intersects at most $m\|\weightvec\|_\infty/(\alpha \sum_{\ell\in[L]}\omega_\ell d_\ell)$ columns of $I\times J$;
            \item The size of $I_1$ is at most $m/2$.
      \end{itemize}
      See the middle part of Figure \ref{fig:graph_decomp} for a pictorial illustration.

      To this end, we let $\calN = (I'\times (J\setminus J_1)) \cup ((I\setminus I_1)\times J'')$. See the rightmost part of Figure \ref{fig:graph_decomp} for a pictorial illustration. It is clear that $(A-\bbE A)_\calN$ satisfies the concentration inequality required by the lemma, which completes the proof.
\end{proof}

\begin{proof}[Proof of Proposition \ref{prop:graph_decomp}]
Proposition \ref{prop:graph_decomp} follows by iteratively applying Lemma \ref{lemma: decomp_of_one_block} for $\calO(\log n)$ times. Such arguments are nearly identical to arguments used in the proof of Theorem 2.6 in \cite{le2017concentration}, so we omit the details. 
\end{proof}
% which allows us to iteratively do the above graph decomposition to obtain the graph decomposition theorem \ref{thm: graph_decomp}.

\subsection{Proof of Theorem \ref{thm:concentration_reg_mat}}
First, let us observe that bounded row and column $\ell_1$ norms leads to bounded operator norm.
\begin{lemma}[Lemma 2.7 of \cite{le2017concentration}]
      \label{lemma: relation_between_spectral_norm_and_l1_norm}
      Consider a matrix $B$ in which each row has $\ell_1$-norm at most $a$, and each column has $\ell_1$-norm at most $b$. Then $\|B\|\leq \sqrt{ab}$.
\end{lemma}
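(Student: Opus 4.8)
The plan is to prove the bound directly from the variational characterization $\|B\| = \sup\{\,x^\top B y : \|x\|_2 = \|y\|_2 = 1\,\}$, controlling the bilinear form entrywise. First I would fix unit vectors $x$ and $y$ and apply the triangle inequality to get $|x^\top B y| \le \sum_{i,j} |B_{ij}|\,|x_i|\,|y_j|$. The one nonobvious step is to split each weight symmetrically as $|B_{ij}| = \sqrt{|B_{ij}|}\cdot\sqrt{|B_{ij}|}$, pairing one factor with $|x_i|$ and the other with $|y_j|$, and then apply the Cauchy--Schwarz inequality to the resulting double sum:
\[
|x^\top B y| \;\le\; \Big(\sum_{i,j}|B_{ij}|\,x_i^2\Big)^{1/2}\Big(\sum_{i,j}|B_{ij}|\,y_j^2\Big)^{1/2}.
\]

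Next I would bound the two factors using the hypotheses. In the first factor, summing over $j$ before $i$ gives $\sum_{i,j}|B_{ij}|x_i^2 = \sum_i x_i^2\big(\sum_j |B_{ij}|\big) \le a \sum_i x_i^2 = a$, since every row of $B$ has $\ell_1$-norm at most $a$ and $\|x\|_2 = 1$. Symmetrically, summing over $i$ before $j$ in the second factor and invoking the column $\ell_1$-norm bound yields $\sum_{i,j}|B_{ij}|y_j^2 \le b$. Combining these, $|x^\top B y| \le \sqrt{ab}$ for all unit vectors $x,y$, and taking the supremum gives $\|B\| \le \sqrt{ab}$, which is the claim.

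I do not expect any genuine obstacle: the statement is elementary, and the only ``idea'' is recognizing the balanced splitting of $|B_{ij}|$ that lets the row constraint and the column constraint act on the two Cauchy--Schwarz factors separately. (An alternative route would interpolate between $\|B\|_{\infty\to\infty} \le a$ and $\|B\|_{1\to 1} \le b$ via a Schur-test / Riesz--Thorin argument, but the direct computation above is shorter and self-contained.) This lemma is what is needed to finish the proof of Theorem \ref{thm:concentration_reg_mat}: together with Proposition \ref{prop:graph_decomp} it lets one bound $\|(\barA-\bbE\barA)_\calR\|$ and $\|(\barA-\bbE\barA)_\calC\|$ (each block having bounded row or column $\ell_1$-norm $\lesssim r\|\weightvec\|_\infty$ and only $\lesssim n\|\weightvec\|_\infty/\sum_\ell\omega_\ell d_\ell$ intersected columns/rows), which combine with the concentration estimate on $\calN$ to give the stated operator-norm bound.
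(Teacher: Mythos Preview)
Your proof is correct. The paper does not actually prove this lemma; it is quoted verbatim as Lemma~2.7 of \cite{le2017concentration} and used as a black box in the proof of Theorem~\ref{thm:concentration_reg_mat}. Your Cauchy--Schwarz argument with the splitting $|B_{ij}|=\sqrt{|B_{ij}|}\cdot\sqrt{|B_{ij}|}$ is the standard Schur-test proof and is exactly what one would expect.
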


Following the route taken by Theorem 2.1 in \cite{le2017concentration}, we start with the following decomposition:
$$
      \tau(\barA) -\bbE \barA = (\tau(\barA) - \bbE \barA)_\calN + (\tau(\barA) - \bbE \barA)_\calR + (\tau(\barA) - \bbE \barA)_\calC.
$$
We take a realization of $\barA$ such that the conclusions in Proposition \ref{prop:graph_decomp} hold, which happens with probability at least $1-3n^{-r}$. 

For the well-behaved part, we write
$$
      (\tau(\barA) - \bbE \barA)_\calN = (\barA - \bbE \barA)_\calN - (\barA - \tau(\barA))_\calN.
$$
By Proposition \ref{prop:graph_decomp}, we have
$$
      (\barA - \bbE \barA)_\calN \leq Cr^{3/2}\sqrt{\sum_{\ell\in[L]}\omega_\ell^2 d_\ell}.
$$
On the other hand, since $\tau$ only takes effect on the elements in $\calE$, we have $\barA_{\calE^c} = \tau(\barA)_{\calE^c}$, and hence
$$
      \|(\barA - \tau(\barA))_\calN\|=\|(\barA-\tau(\barA))_{\calN\cap \calE}\| \leq \|\barA_{\calN \cap \calE}\| \leq \|(\barA - \bbE \barA)_{\calN \cap\calE}\| + \|\bbE \barA_{\calN \cap \calE}\|,
$$
where the first inequality is due to $0\leq \barA - \tau(\barA) \leq \barA$ entry-wise (recall that we only do down-weighting in the regularization process). Since $\calE$ is a disjoint union two of rectangular blocks, we have
$$
      \| (\barA - \bbE \barA)_{\calN \cap \calE}\| \leq 2\|(\barA - \bbE \barA)_\calN\| \leq Cr^{3/2}\sqrt{\sum_{\ell\in[L]}\omega_\ell^2 d_\ell}.
$$
Moreover, since the spectral norm of a non-negative matrix can only reduce by restricting onto any subset of $[n]\times[n]$, we get
$$
      \|\bbE \barA_{\calN\cap\calE}\|\leq \|\bbE \barA_\calE\| \leq \|\bbE \barA_{I \times [n]}\| + \|\bbE \barA_{I^c\times I}\|, 
$$
where we recall that $I$ contains the the vertices we choose to regularize. By construction, the $\ell_1$-norm of all rows of $\bbE \barA_{I\times[n]}$ is bounded above by $\sum_{\ell\in[L]}\omega_\ell d_\ell$.  
Meanwhile, by definition, $|I|\leq cn\|\weightvec\|_\infty/(\sum_{\ell\in[L]}\omega_\ell d_\ell)$, and thus the columns of $\bbE \barA_{I\times[n]}$ have $\ell_1$ norm bounded above by $c\|\omega \|_\infty$. By Lemma \ref{lemma: relation_between_spectral_norm_and_l1_norm}, we have 
$$
            \|\bbE \barA_{I\times[n]}\| \leq \sqrt{c \|\omega \|_\infty \sum_{\ell\in[L]}\omega_\ell d_\ell} \leq \sqrt{c c_0 \sum_{\ell\in[L]}\omega_\ell^2 d_\ell},
$$
where the last inequality is by our assumption on the weights \eqref{eq: assump_on_weights_general_bernoulli_matrix}. 
A similar bound holds for $\bbE \barA_{I^c\times I}$, which gives
$$
      \|\bbE \barA_{\calN \cap \calE}\|\lesssim \sqrt{c_0 \sum_{\ell\in[L]}\omega_\ell^2 d_\ell}.
$$
Hence we arrive at
$$
      (\tau(\barA) - \bbE \barA)_\calN \leq C_1 r^{3 / 2} \sqrt{\sum_{\ell\in[L]} \omega_\ell^2 d_\ell},
$$
where $C_1$ only depending on $c_0, c_1$. 

Now we deal with the block $\calR$. We have
$$
      \|(\tau(\barA) - \bbE \barA)_\calR\| \leq \|\tau(\barA)_\calR\| + \|\bbE \barA_\calR\|.
$$
Recall that $0\leq \tau(\barA)_\calR \leq \barA_\calR$ entry-wise because we only do down-weighting. By Proposition \ref{prop:graph_decomp}, each row of $\barA_\calR$, and thus of $\tau(\barA)_\calR$, has $\ell_1$-norm at most $Cr\|\weightvec\|_\infty$. Moreover, by our regularization process, each column of $\tau(\barA)$, and thus of $\tau(\barA)_\calR$, has $\ell_1$-norm at most $d_\tau$. Then, Lemma \ref{lemma: relation_between_spectral_norm_and_l1_norm} gives
$$
      \|\tau(\barA)_\calR \|       \leq \sqrt{C r \|\weightvec\|_\infty d_\tau}. 
$$
For $\bbE \barA_\calR$, by Proposition \ref{prop:graph_decomp}, each row of it has at most $n\|\weightvec\|_\infty/{\sum_{\ell\in[L]} \omega_\ell d_\ell}$ non-zero entries, and all entries are bounded above by $\sum_{\ell\in[L]}\omega_\ell d_\ell/n$. Hence each row of $\bbE \barA_\calR$ has $\ell_1$-norm at most $\|\weightvec\|_\infty$. Meanwhile, each column of $\bbE \barA_\calR$ has $\ell_1$-norm at most $\sum_{\ell\in[L]}\omega_\ell d_\ell$. An application of Lemma \ref{lemma: relation_between_spectral_norm_and_l1_norm} and   \eqref{eq: assump_on_weights_general_bernoulli_matrix} gives 
$$
      \|\bbE \barA_\calR\| \leq \sqrt{c_0 \sum_{\ell\in[L]}\omega_\ell^2 d_\ell},
$$
and hence
$$
      \|(\tau(\barA)-\bbE \barA )_\calR\| \leq \sqrt{C r \|\omega \|_\infty d_\tau} + \sqrt{c_0 \sum_{\ell\in[L]}\omega_\ell^2 d_\ell}.
$$
      A nearly identical argument gives
      $$
            \|(\tau(\barA)-\bbE \barA )_\calC\| \leq  \sqrt{C r \| \omega\|_\infty d_\tau} + \sqrt{c_0 \sum_{\ell\in[L]}\omega_\ell^2 d_\ell}.
      $$

Finally, we combine the bounds above to conclude
$$
      \|\tau(\barA) - \bbE \barA\| \leq C_1 r^{3/2}\sqrt{\sum_{\ell\in[L]} \omega_\ell^2 d_\ell} + 2 \sqrt{C_2 r\|\weightvec\|_\infty d_\tau}\\ \leq C r^{3/2}\left( \sqrt{\sum_{\ell\in[L]} \weight_\ell^2 d_\ell}+\sqrt{\|\weightvec\|_\infty} d_\tau \right),
$$
where $C$ only depends on $c_0,c_1, c$.

\subsection{Proof of Corollary \ref{cor:concentration_trimmed_mat}}
Since $\gamma > e^{c_1} \geq e$, by Lemma \ref{lemma: concentration_of_weighted_bernoulli_sum}, we have
$$
      \bbP(\sum_{j\in[n]} \barA_{ij} > \gamma \sum_{\ell\in[L]} \omega_\ell d_\ell) 
      \leq \bigg(\frac{e^{c_1} \sum_{\ell\in[L]}\sum_{j\in[n]}\omega_\ell p_{ij}^{(\ell)}}{\gamma\sum_{\ell\in[L]}\omega_\ell d_\ell}\bigg)^{\gamma \sum_{\ell\in[L]}\omega_\ell d_\ell / \| \omega\|_\infty} \leq (e^{c_1}/\gamma)^{\gamma \sum_{\ell\in[L]}\omega_\ell d_\ell / \| \omega\|_\infty} =: \mu.
$$
% We fix $\gamma$ to be a constant greater than $e^{c_1}$. 
Invoking Lemma \ref{lemma: concentration_of_weighted_bernoulli_sum} again, for large enough $c$ we have
$$
      \bbP(|I| \geq cn\|\weightvec\|_\infty/\sum_{\ell\in[L]} \omega_\ell d_\ell) \leq \bigg(\frac{en\mu}{cn\| \omega\|_\infty /\sum_{\ell\in[L]}\omega_\ell d_\ell}\bigg)^{cn\| \omega\|_\infty /\sum_{\ell\in[L]}\omega_\ell d_\ell}.
$$
We claim that, we can specify $c$ based only on $c_1$ and $\gamma$, so that the above probability is less than or equal to $\mu^{cn\|\weightvec\|_\infty/2\sum_{\ell\in[L]}\omega_\ell d_\ell}$. Indeed, this claim is equivalent to
$$
      \frac{e \sum_{\ell\in[L]}\omega_\ell d_\ell}{c\|\weightvec\|_\infty} \leq \mu^{-1/2} = \bigg(\frac{\gamma}{e^{c_1}}\bigg)^{\gamma \sum_{\ell\in[L]}\omega_\ell d_\ell / 2\| \omega\|_\infty}.
$$
Since we've chosen $\gamma$ such that $\gamma/e^{c_1}> 1$, it suffices to choose $c$ such that
$$
      \frac{ex}{c} \leq \bigg((\gamma/e^{c_1})^{\gamma/2}\bigg)^x , \qquad \forall x > 0.
$$
This can be done by choosing $c$ only based on $c_1$ and $\gamma$. Hence, with such a choice of $c$, we have
$$
      \bbP(|I| \geq cn\|\weightvec\|_\infty/\sum_{\ell\in[L]} \omega_\ell d_\ell ) \leq \mu^{cn\|\weightvec\|_\infty/2\sum_{\ell\in[L]}\omega_\ell d_\ell} = (\gamma/e^{c_1})^{-\gamma c n /2}.
$$
Thus, an application of Theorem \ref{thm:concentration_reg_mat} and   \eqref{eq: assump_on_weights_general_bernoulli_matrix} gives the desired result.

\subsection{Proof of Corollary \ref{cor:concentration_w/o_reg}}
Recall that in the proof of Corollary \ref{cor:concentration_trimmed_mat}, we have established:
$$
      \bbP(\sum_{j\in[n]} \barA_{ij} > \gamma \sum_{\ell\in[L]} \omega_\ell d_\ell)  \leq (e^{c_1}/\gamma)^{\gamma \sum_{\ell\in[L]}\omega_\ell d_\ell / \| \omega\|_\infty},
$$
where $\gamma$ is some constant satisfying $\gamma > e^{c_1}$. If we are in the not-too-sparse regime, i.e., $\sum_{\ell\in[L]}\omega_\ell d_\ell \geq c_2 \|\omega \|_\infty\log n$ for some constant $c_2 > 0$, then the above probability can be bounded by
$$
      n^{-c_2 \gamma \log (\gamma/e^{c_1})}.
$$
Applying a union bound over $[n]$, we conclude that in this regime, every row of $A$ has $\ell_1$ norm bounded above by $\gamma \sum_{\ell\in[L]}\omega_\ell d_\ell$ with high probability. So without any regularization, we obtain the following guarantee:
$$
      \|A - \bbE A \| \leq C r^{3/2} \sqrt{\sum_{\ell\in[L]} \omega_\ell^2 d_\ell}
$$
with probability at least $1-3n^{-r} - n^{-c_3}$, where $c_3$ only depends on $c_1, c_2$.% (we can choose $\gamma = e^{2c_1}$, for example).

On the other hand, if we are in the very sparse regime, i.e., $\sum_{\ell\in[L]}\omega_\ell d_\ell \leq c_2 \|\omega \|_\infty\log n$, by Lemma \ref{lemma: concentration_of_weighted_bernoulli_sum} we have
\begin{align*}
      \bbP(\sum_{j\in[n]} \barA_{ij} > \gamma \| \omega\|_\infty \log n) & \leq \bigg(\frac{e^{c_1} \sum_{j}\sum_{\ell\in[L]} \omega_\ell p_{ij}^{(\ell)}}{\gamma \| \omega\|_\infty \log n}\bigg)^{\gamma \| \omega\|_\infty \log (n) / \| \omega \|_\infty} \\
      & \leq (c_2 e^{c_1}/\gamma)^{\gamma \log n} \\
      & = n^{-\gamma \log (\gamma / c_2e^{c_1})}.
\end{align*}
By choosing $\gamma$ and taking a union bound over $[n]$, we conclude that in this regime, every row of $A$ has $\ell_1$ norm bounded above by $\gamma \|\omega \|_\infty\log n$ with high probability. Hence, invoking Theorem \ref{thm:concentration_reg_mat}, without regularization, we obtain the following guarantee:
$$
      \|A - \bbE A \| \leq Cr^{3/2} \sqrt{\sum_{\ell\in[L]}\omega_\ell^2 d_\ell} + C_2 \| \omega\|_\infty \sqrt{\log n},
$$
with probability at least $1 - 3n^{-r} - n^{-c_3}$, where $c_3$ and $C_2$ only depend on $c_1$. The desired result follows by combining the two regimes.

\section{Properties of Key Information-Theoretic Quantities} \label{append:properties_of_info}

In this section, we state and prove some useful properties of $\psi_S$ and $\psi_S^\star$, the two key information-theoretic quantities in the minimax rate. 

\begin{lemma}
\label{lemma:monotonicity_of_I_t}
For any $\ell\in[L]$, the quantity $\III{\ell}_t$ is increasing in $t$ for $t\in[0, 1/2]$ and decreasing in $t$ for $t\in[1/2, 1]$. 
\end{lemma}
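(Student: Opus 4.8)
Recall that
\[
\III{\ell}_t = -\log\big[p_\ell^{1-t}q_\ell^t + (1-p_\ell)^{1-t}(1-q_\ell)^t\big] - \log\big[p_\ell^tq_\ell^{1-t} + (1-p_\ell)^t(1-q_\ell)^{1-t}\big].
\]
The plan is to exploit the symmetry of this expression under $t \mapsto 1-t$: swapping $t$ and $1-t$ simply interchanges the two logarithmic terms, so $\III{\ell}_t = \III{\ell}_{1-t}$ for all $t \in [0,1]$. Consequently it suffices to prove that $\III{\ell}_t$ is increasing on $[0,1/2]$; the decreasing behaviour on $[1/2,1]$ then follows from the reflection symmetry. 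Equivalently, writing $\III{\ell}_t = (1-t)D_t(p_\ell\|q_\ell) + tD_{1-t}(p_\ell\|q_\ell)$ as noted in the paper, one could invoke known monotonicity/convexity facts about R\'enyi divergences, but I would prefer the direct self-contained route below.

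First I would introduce, for fixed $\ell$, the two functions
\[
g(t) := p_\ell^{1-t}q_\ell^t + (1-p_\ell)^{1-t}(1-q_\ell)^t, \qquad h(t) := p_\ell^{t}q_\ell^{1-t} + (1-p_\ell)^{t}(1-q_\ell)^{1-t},
\]
so that $\III{\ell}_t = -\log g(t) - \log h(t)$ and $h(t) = g(1-t)$. Each of $g$ and $h$ is a sum of two exponentials $a^{1-t}b^t = a\,e^{t\log(b/a)}$, hence each is a strictly convex function of $t$ (a positive combination of exponentials in $t$). Moreover $g(0) = g(1) = 1$ and likewise $h(0) = h(1) = 1$, since $p_\ell + (1-p_\ell) = 1$ and $q_\ell + (1-q_\ell) = 1$. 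By strict convexity with equal boundary values, $g$ attains a unique interior minimum and $g(t) \le 1$ on $[0,1]$, with $g$ strictly decreasing then strictly increasing; the same holds for $h$. The key additional observation is about the location of these minima: the minimizer $t^*_g$ of $g$ satisfies $g'(t^*_g) = p_\ell^{1-t}q_\ell^t\log(q_\ell/p_\ell) + (1-p_\ell)^{1-t}(1-q_\ell)^t\log\big((1-q_\ell)/(1-p_\ell)\big) = 0$; since $p_\ell > q_\ell$ one has $\log(q_\ell/p_\ell) < 0 < \log\big((1-q_\ell)/(1-p_\ell)\big)$, and a short computation (or a convexity/monotonicity argument on the ratio of the two exponential terms) shows $t^*_g \ge 1/2$, hence by symmetry $t^*_h = 1 - t^*_g \le 1/2$.

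With these facts in hand, differentiate: $\frac{d}{dt}\III{\ell}_t = -g'(t)/g(t) - h'(t)/h(t)$. On the interval $[0,1/2]$: since $t^*_h \le 1/2$, the function $h$ is increasing on $[t^*_h, 1/2] \subseteq [0,1/2]$ — wait, this needs care, so I would instead argue via the cleaner route. Using $h(t) = g(1-t)$, we get $\frac{d}{dt}\III{\ell}_t = -\frac{g'(t)}{g(t)} + \frac{g'(1-t)}{g(1-t)} = -\frac{d}{dt}\log g(t) + \frac{d}{dt}\big|_{1-t}\log g \cdot(-1)\cdot(-1)$; more transparently, $\III{\ell}_t = -\log g(t) - \log g(1-t)$, so $\frac{d}{dt}\III{\ell}_t = -\frac{g'(t)}{g(t)} + \frac{g'(1-t)}{g(1-t)}$. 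Now $\phi(t) := \log g(t)$ is strictly convex (as $\log$ of a log-convex function — indeed $g$, being a sum of exponentials, is log-convex), so $\phi'$ is strictly increasing; therefore $\frac{d}{dt}\III{\ell}_t = \phi'(1-t) - \phi'(t)$, which is $\ge 0$ precisely when $1 - t \ge t$, i.e. $t \le 1/2$, and $\le 0$ when $t \ge 1/2$. This immediately yields the claim, and I would check strictness away from the degenerate case $p_\ell = q_\ell$ (excluded since $p_\ell > q_\ell$). The main obstacle is verifying that $g$ is log-convex, i.e. that $\phi = \log g$ is convex; this follows from the general fact that a sum of log-convex functions is log-convex, applied to the two terms $p_\ell^{1-t}q_\ell^t$ and $(1-p_\ell)^{1-t}(1-q_\ell)^t$, each of which is a pure exponential in $t$ and hence trivially log-convex (its log is linear in $t$). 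So the ``hard part'' reduces to quoting the closure of log-convexity under addition, after which the monotonicity drops out of the convexity of $\phi$ and the symmetric substitution $t \leftrightarrow 1-t$.
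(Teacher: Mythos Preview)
Your argument is correct. After some meandering, the clean line you land on---write $\III{\ell}_t=-\phi(t)-\phi(1-t)$ with $\phi=\log g$, observe that $g$ is log-convex as a sum of two pure exponentials (closure of log-convexity under addition), hence $\phi'$ is increasing, and conclude $\frac{d}{dt}\III{\ell}_t=\phi'(1-t)-\phi'(t)$ has the correct sign on each half of $[0,1]$---is valid and self-contained.

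The paper proceeds a bit differently: it recognizes $-\III{\ell}_t$ as the cumulant generating function of $\Bern(p_\ell)\log\frac{q_\ell(1-p_\ell)}{p_\ell(1-q_\ell)}+\Bern(q_\ell)\log\frac{p_\ell(1-q_\ell)}{q_\ell(1-p_\ell)}$, invokes convexity of CGFs to get concavity of $\III{\ell}_t$, and then locates the maximizer at $t=1/2$ via AM--GM applied to the two cross terms $[p_\ell(1-q_\ell)]^{1-t}[q_\ell(1-p_\ell)]^t$ and $[p_\ell(1-q_\ell)]^{t}[q_\ell(1-p_\ell)]^{1-t}$. So both arguments rest on convexity, but of different objects: the paper uses convexity of $\phi(t)+\phi(1-t)$ (via the CGF interpretation) and a separate AM--GM step to pin down the maximum, whereas you use the slightly stronger fact that $\phi$ itself is convex (via log-convexity closure), after which the symmetry $t\leftrightarrow 1-t$ locates the extremum automatically through the derivative formula. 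Your route avoids the CGF identification and the AM--GM step; the paper's route ties the lemma more directly to the probabilistic structure used elsewhere in the analysis.
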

\begin{proof}
  Note that $-\III{\ell}_{t}$ is the cumulant generating function of the following random variable (see   \eqref{eq:mgf_opt_test_z_star} and \eqref{eq:cgf_opt_test_z_star}):
  $
    \Bern(p_\ell) \cdot \log \frac{q_\ell(1-p_\ell)}{p_\ell(1-q_\ell)} + \Bern(q_\ell) \cdot \log \frac{p_\ell(1-q_\ell)}{q_\ell(1-p_\ell)}.
  $
  Since the cumulant generating function, if it exists, is always convex, we know that $\III{\ell}_t$ is a concave function in $t$. Thus it suffices to show $\III{\ell}_{t}$ attains its maximum at $t = 1/2$. We can expand $\III{\ell}_t$ as
  $$
    \III{\ell}_{t} = -\log\bigg(p_\ell q_\ell + (1-p_\ell)(1-q_\ell) + [p_\ell(1-q_\ell)]^{1-t} [(1-p_\ell)q_\ell]^t + [p_\ell(1-q_\ell)]^{t}[(1-p_\ell)q_\ell]^{-1-t} \bigg).
  $$
  Using the fact that $a+ b\geq 2\sqrt{ab}$ for any $a, b\geq 0$, with equality only if $a = b$, one finds that the maximum of $\III{\ell}_t$ is attained at $t = 1/2$, and the proof is concluded.
\end{proof}

\begin{lemma}
\label{lemma:asymp_equiv_of_I_t}
Assume there exist constants $C > 1, c\in(0, 1)$ such that $q_\ell < p_\ell \leq (C q_\ell) \land (1-c)$ for any $\ell\in[L]$. Then for any $\ell\in[L]$, we have
% \begin{equation}
% % \label{eq:asymp_equiv_of_I_t}
% %   \III{\ell}_{t} = (1+o(1)) (p_\ell^t - q_\ell^t)(p_\ell^{1-t}-q_\ell^{1-t}).
% % \end{equation}
% If $t = 1/2$, we additionally have
\begin{equation}
\label{eq:asymp_equiv_of_I_1/2}
  \III{\ell}_{1/2}\asymp \frac{(p_\ell-q_\ell)^2}{p_\ell}.
\end{equation}
If in addition, $p_\ell = o(1)$ for any $\ell \in [L]$, then 
$$
  \III{\ell}_{1/2} = (1+o(1)) (\sqrt{p_\ell} - \sqrt{q_\ell})^2.
$$
\end{lemma}
\begin{proof}
  This is a direct consequence of Lemma B.1 in \cite{zhang2016minimax}.
\end{proof}

\begin{lemma}[Formula for $\III{\ell}_t$ under simplified setups]
  \label{lemma:formula_I_l_simplified}
  For any $\ell$, assume $p_\ell = a_\ell\eta_\ell, b_\ell = b_\ell\eta_\ell$ where $b_\ell < a_\ell < \infty$ are two constants and $\eta_\ell$ is a positive sequence such that
  \begin{equation*}
    % \label{eq:assump_on_params_simplified_setup}
    \liminf_{n\to\infty} \inf_{\ell\in[L]} a_\ell-b_\ell > 0, \ \ \ \limsup_{n\to\infty} \sup_{\ell\in[L]} a_\ell <\infty, \ \ \ \lim_{n\to\infty}\sup_{\ell\in[L]}\eta_\ell = 0.
  \end{equation*}
  Then, we have
  \begin{equation*}
    % \label{eq: formula_I_l_simplified}
    \lim_{n\to\infty}\sup_{\substack{\ell\in[L]\\ t\in(0, 1/2]}} \bigg|\frac{(a_\ell^t - b_\ell^t)(a_\ell^{1-t}- b_\ell^{1-t})\eta_\ell}{\III{\ell}_t} - 1\bigg| = 0.
  \end{equation*}
  Moreover, since $\III{\ell}_0 = 0$, so the formula $(a_\ell^t - b_\ell^t)(a_\ell^{1-t}- b_\ell^{1-t})\eta_\ell$ is also accurate at $t = 0$. 
\end{lemma}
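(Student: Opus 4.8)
The plan is to reduce everything to a single exact algebraic identity and then run a careful first‑order expansion that keeps track of the $t$‑dependence near $t=0$. Write $P_{1}(t)=p_\ell^{1-t}q_\ell^{t}+(1-p_\ell)^{1-t}(1-q_\ell)^{t}$ and $P_{2}(t)=p_\ell^{t}q_\ell^{1-t}+(1-p_\ell)^{t}(1-q_\ell)^{1-t}$, so that $\III{\ell}_t=-\log\!\big(P_1(t)P_2(t)\big)$. Multiplying out $P_1P_2$ and simplifying, one obtains the identity
\begin{equation*}
1-P_1(t)P_2(t)=\big(u_\ell^{t}-v_\ell^{t}\big)\big(u_\ell^{1-t}-v_\ell^{1-t}\big),\qquad u_\ell:=p_\ell(1-q_\ell),\ v_\ell:=q_\ell(1-p_\ell),
\end{equation*}
using $1-p_\ell q_\ell-(1-p_\ell)(1-q_\ell)=u_\ell+v_\ell$ and $u_\ell^{t}u_\ell^{1-t}+v_\ell^{t}v_\ell^{1-t}-u_\ell^{t}v_\ell^{1-t}-u_\ell^{1-t}v_\ell^{t}=(u_\ell^{t}-v_\ell^{t})(u_\ell^{1-t}-v_\ell^{1-t})$. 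Since $a_\ell>b_\ell$ we have $u_\ell>v_\ell\ge0$, so the right side is nonnegative, and it is bounded by $u_\ell^{t}u_\ell^{1-t}=u_\ell\le p_\ell\le(\sup_\ell a_\ell)(\sup_\ell\eta_\ell)\to0$ uniformly in $\ell,t$. Hence for $n$ large the argument of the logarithm is uniformly bounded away from $1$, and $-\log(1-z)=z(1+O(z))$ gives
\begin{equation*}
\III{\ell}_t=\big(u_\ell^{t}-v_\ell^{t}\big)\big(u_\ell^{1-t}-v_\ell^{1-t}\big)\big(1+O(\textstyle\sup_\ell\eta_\ell)\big)
\end{equation*}
uniformly over $\ell\in[L]$ and $t\in(0,1/2]$.

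It then remains to show $\big(u_\ell^{t}-v_\ell^{t}\big)\big(u_\ell^{1-t}-v_\ell^{1-t}\big)=\eta_\ell\,(a_\ell^{t}-b_\ell^{t})(a_\ell^{1-t}-b_\ell^{1-t})\,(1+o(1))$ uniformly. I would factor $u_\ell^{t}=(a_\ell\eta_\ell)^{t}(1-b_\ell\eta_\ell)^{t}=a_\ell^{t}\eta_\ell^{t}(1+\theta_{1,\ell})$ and $v_\ell^{t}=b_\ell^{t}\eta_\ell^{t}(1+\theta_{2,\ell})$, where by Bernoulli's inequality $0\le1-(1-c\eta_\ell)^{t}\le tc\eta_\ell$, so $|\theta_{1,\ell}|,|\theta_{2,\ell}|\le ta_\ell\eta_\ell$. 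Thus
\begin{equation*}
u_\ell^{t}-v_\ell^{t}=\eta_\ell^{t}\Big[(a_\ell^{t}-b_\ell^{t})+\big(a_\ell^{t}\theta_{1,\ell}-b_\ell^{t}\theta_{2,\ell}\big)\Big],\qquad \big|a_\ell^{t}\theta_{1,\ell}-b_\ell^{t}\theta_{2,\ell}\big|\le C\,t\eta_\ell,
\end{equation*}
with $C$ depending only on $\sup_\ell a_\ell$. The crucial companion estimate is the lower bound $a_\ell^{t}-b_\ell^{t}\ge c_4\,t$ uniformly for $t\in(0,1/2]$, obtained from $a^{t}-b^{t}=t\!\int_b^a x^{t-1}\,dx\ge t(a-b)\min_{x\in(0,\sup a_\ell]}x^{t-1}$ together with $\liminf_n\inf_\ell(a_\ell-b_\ell)>0$ (the case $b_\ell=0$ is the same integral with $\int_0^{a_\ell}x^{t-1}\,dx=a_\ell^{t}/t$). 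Dividing, the relative error $|a_\ell^{t}\theta_{1,\ell}-b_\ell^{t}\theta_{2,\ell}|/(a_\ell^{t}-b_\ell^{t})\le (C/c_4)\eta_\ell\to0$ uniformly, so $u_\ell^{t}-v_\ell^{t}=\eta_\ell^{t}(a_\ell^{t}-b_\ell^{t})(1+O(\sup_\ell\eta_\ell))$. The same argument applied with exponent $1-t\in[1/2,1)$ gives $u_\ell^{1-t}-v_\ell^{1-t}=\eta_\ell^{1-t}(a_\ell^{1-t}-b_\ell^{1-t})(1+O(\sup_\ell\eta_\ell))$, where now $a_\ell^{1-t}-b_\ell^{1-t}\ge c_4/2$ is bounded below by a constant and $|\theta'|\le a_\ell\eta_\ell$ suffices. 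Multiplying the two and using $\eta_\ell^{t}\eta_\ell^{1-t}=\eta_\ell$ yields the claimed comparison, and combining with the displayed expression for $\III{\ell}_t$ gives $\III{\ell}_t=\eta_\ell(a_\ell^{t}-b_\ell^{t})(a_\ell^{1-t}-b_\ell^{1-t})(1+o(1))$ uniformly, which is exactly the assertion after rearranging. The final remark at $t=0$ is immediate since $\III{\ell}_0=0$ and $(a_\ell^{0}-b_\ell^{0})(\cdot)\eta_\ell=0$.

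The main obstacle is the uniformity as $t\downarrow0$: there both $\III{\ell}_t$ and $(a_\ell^{t}-b_\ell^{t})(a_\ell^{1-t}-b_\ell^{1-t})\eta_\ell$ vanish, so a crude additive error bound of size $O(\eta_\ell^{2})$ on $1-P_1P_2$ is useless — one genuinely needs the error in the approximation $u_\ell^{t}-v_\ell^{t}\approx\eta_\ell^{t}(a_\ell^{t}-b_\ell^{t})$ to be $O(t\eta_\ell)$ rather than merely $O(\eta_\ell)$, matched against the lower bound $a_\ell^{t}-b_\ell^{t}\gtrsim t$. The exact identity in the first paragraph is what makes this clean, since it pushes the logarithm's nonlinearity entirely into a single harmless $1+O(\sup_\ell\eta_\ell)$ factor and isolates the delicate cancellation into the elementary quantity $u_\ell^{t}-v_\ell^{t}$; everything else is Bernoulli's inequality and a one‑line mean value estimate.
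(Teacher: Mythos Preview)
Your proof is correct and takes a genuinely different, cleaner route than the paper. The key novelty is your exact factorization
\[
1-P_1(t)P_2(t)=(u_\ell^{t}-v_\ell^{t})(u_\ell^{1-t}-v_\ell^{1-t}),\qquad u_\ell=p_\ell(1-q_\ell),\ v_\ell=q_\ell(1-p_\ell),
\]
which the paper does not use. Instead, the paper expands $P_1P_2$ around the ``naive'' main term $(p_\ell^{t}-q_\ell^{t})(p_\ell^{1-t}-q_\ell^{1-t})=\eta_\ell(a_\ell^{t}-b_\ell^{t})(a_\ell^{1-t}-b_\ell^{1-t})$ and is left with additive correction terms of the form $p_\ell^{t}q_\ell^{1-t}\cdot\big[1-(1-p_\ell)^{1-t}(1-q_\ell)^{t}-p_\ell^{1-t}q_\ell^{t}\big]$; it then spends most of the argument (invoking Newton's generalized binomial theorem) showing that these corrections are $o\big((a_\ell^{t}-b_\ell^{t})(a_\ell^{1-t}-b_\ell^{1-t})\big)$ uniformly in $t$. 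Your identity absorbs all of this into a single comparison $u_\ell^{t}-v_\ell^{t}$ versus $\eta_\ell^{t}(a_\ell^{t}-b_\ell^{t})$, handled by Bernoulli's inequality plus the mean-value lower bound $a_\ell^{t}-b_\ell^{t}=t\int_{b_\ell}^{a_\ell}x^{t-1}\,dx\gtrsim t$. The paper obtains the same lower bound via the power-series expansion $a^{t}-b^{t}\ge t(\log a-\log b)$. Both arguments hinge on exactly the same delicate point you identify---matching an $O(t\eta_\ell)$ error against an $\Omega(t)$ main term as $t\downarrow 0$---but your packaging is shorter and more transparent.
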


The proof of the above lemma will based on the following fact.
\begin{lemma}[Newton's generalized binomial theorem]
  \label{lemma:binom_thm}
  Let $x, t$ be two arbitrary complex numbers. We have
  $$
    (1-x)^t = \sum_{k=0}^\infty (-x)^k \cdot\frac{t (t-1) \cdots (t-k+1)}{k!}.
  $$
\end{lemma}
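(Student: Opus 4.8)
The plan is to prove the identity on the open unit disk $\{|x|<1\}$, which is the natural domain: unless $t$ is a nonnegative integer (in which case the series terminates and the claim reduces to the elementary binomial theorem), the power series has radius of convergence exactly $1$, so the statement should be read with $(1-x)^t$ denoting the principal branch — well-defined and holomorphic for $|x|<1$ because then $\mathrm{Re}(1-x)>0$. Writing $\binom{t}{k}:=t(t-1)\cdots(t-k+1)/k!$ and $f(x):=\sum_{k\ge 0}\binom{t}{k}(-x)^k$, the first step is a ratio test: the modulus of the ratio of consecutive nonzero terms is $|x|\cdot|t-k|/(k+1)\to|x|<1$, so the series converges and $f$ is holomorphic on the disk.

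The second step is to show that both $f$ and $g(x):=(1-x)^t$ solve the linear initial value problem $(1-x)\,y'(x)=-t\,y(x)$ with $y(0)=1$. For $g$ this is immediate from differentiating the principal branch, since $g'(x)=-t(1-x)^{t-1}$. For $f$, I would differentiate term by term (legitimate inside the disk of convergence), reindex, and compute the coefficient of $x^k$ in $(1-x)f'(x)+t f(x)$: for $k\ge 1$ it equals $(-1)^k\big[(t-k)\binom{t}{k}-(k+1)\binom{t}{k+1}\big]$, and for $k=0$ it equals $-t+t=0$. The bracketed expression vanishes because of the elementary recursion $(k+1)\binom{t}{k+1}=(t-k)\binom{t}{k}$. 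Hence $(1-x)f'+tf\equiv 0$ on the disk and $f(0)=\binom{t}{0}=1$.

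The third step is uniqueness. Since $g(x)=(1-x)^t$ is nonvanishing on $\{|x|<1\}$, the quotient $h:=f/g$ is holomorphic there, and $h'=(f'g-fg')/g^2$; substituting $f'=-tf/(1-x)$ and $g'=-tg/(1-x)$ gives $f'g-fg'=0$, so $h'\equiv 0$, hence $h\equiv h(0)=1$, and therefore $f\equiv g$ on the unit disk, which is exactly the claimed identity.

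There is no serious obstacle here; the one point requiring care is the coefficient bookkeeping in the term-by-term differentiation (keeping the index shift coming from $x f'(x)$ straight so that the Pascal-type cancellation is visible), and the one point worth a remark is that the statement as literally worded for ``arbitrary'' complex $x$ is only correct on the unit disk with the principal branch — which is all that the subsequent expansion of $\III{\ell}_t$ requires.
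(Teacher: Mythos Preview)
Your proof is correct. The paper, however, does not prove this lemma at all: it is stated as a known classical fact (Newton's generalized binomial theorem) and invoked without argument in the proof of Lemma~\ref{lemma:formula_I_l_simplified}. So you have supplied strictly more than the paper does. Your remark that the identity should be read on the open unit disk with the principal branch is well taken and is exactly what the paper's application needs, since it only uses the expansion for $(1-p_\ell)^t$ and $(1-q_\ell)^t$ with $p_\ell, q_\ell = o(1)$.
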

% \begin{proof}
%   Let $\Gamma(z)$ be the Gamma function defined by $\Gamma(z) = \int_0^\infty e^{-u}u^{z-1} du$. By binomial theorem, we have
%   \begin{align*}
%     (1-x)^t = \sum_{k =0}^\infty (-x)^k \binom{t}{k},
%   \end{align*}
%   where 
%   \begin{align*}
%     \binom{t}{k} & = \frac{\Gamma(t+1)}{\Gamma(k+1)\Gamma(t-k+1)}.%\\
%   \end{align*}
%   We finish the proof by noting that
%   \begin{align*}
%     \binom{t}{k} & = \frac{1}{k!} \cdot \frac{\Gamma(t+1)}{\Gamma(t-k+1)}\\
%     & = \frac{1}{k!} \cdot \frac{\Gamma(t+1)}{\Gamma(t)} \cdots \frac{\Gamma(t-k+2)}{\Gamma(t-k+1)}\\
%     & = \frac{1}{k!} \cdot t \cdots (t-k+1),
%   \end{align*}
%   where the first line is by $\Gamma(k+1) = k!$ for integer $k$, and the third line is by $\Gamma(z+1) = z\Gamma(z)$ for any complex $z$.
% \end{proof}

We now provide a proof of Lemma \ref{lemma:formula_I_l_simplified}.
\begin{proof}[Proof of Lemma \ref{lemma:formula_I_l_simplified}]
For fixed $\ell\in[L]$ and $t\in(0, 1/2]$, we have
\begingroup
\allowdisplaybreaks
\begin{align*}
    \III{\ell}_t & = - \log \bigg(1 - p_\ell -q_\ell + 2p_\ell q_\ell + p_\ell^{1-t}q_\ell^{t}(1-p_\ell)^{t} (1-q_\ell)^{1-t} + p_\ell^t q_\ell^{1-t}(1-p_\ell)^{1-t}(1-q_\ell)^t\bigg) \\
    & = - \log \bigg( 1 - (p_\ell^t-q_\ell^t)(p_\ell^{1-t} - q_\ell^{1-t}) -p_\ell^t q_\ell^{1-t} - p_\ell^{1-t}q_\ell^t + 2p_\ell q_\ell \\
    & \qquad \qquad +   p_\ell^{1-t}q_\ell^{t}(1-p_\ell)^{t} (1-q_\ell)^{1-t} + p_\ell^t q_\ell^{1-t}(1-p_\ell)^{1-t}(1-q_\ell)^t  \bigg)\\
    & = -\log \bigg(1 - (p_\ell^t-q_\ell^t)(p_\ell^{1-t}- q_\ell^{1-t}) - p_\ell^t q_\ell^{1-t} \cdot \bigl(1 - (1-p_\ell)^{1-t}(1-q_\ell)^t - p_\ell^{1-t}q_\ell^t\bigr) \\
    & \qquad \qquad - p_\ell^{1-t}q_\ell^t \cdot \bigl( 1 - (1-p_\ell)^t(1-q_\ell)^{1-t} - p_\ell^t q_\ell^{1-t}\bigr)\bigg).
\end{align*}
\endgroup
We then proceed by
\begingroup
\allowdisplaybreaks
\begin{align*}
  & 1 - (1-p_\ell)^{1-t}(1-q_\ell)^t - p_\ell^{1-t} q_\ell^t \\
  & = (1-p_\ell) \bigg[1 - \bigg(\frac{1-q_\ell}{1-p_\ell}\bigg)^t\bigg] + p_\ell \bigg[1 - \bigg(\frac{q_\ell}{p_\ell}\bigg)^t\bigg] \\
  & = (1-p_\ell) \cdot \frac{-\frac{p_\ell-q_\ell}{1-p_\ell} + \bigg(\frac{1-q_\ell}{1-p_\ell}\bigg)^{1-t} - \bigg(\frac{1-q_\ell}{1-p_\ell}\bigg)^t}{1 + \bigg(\frac{1-q_\ell}{1-p_\ell}\bigg)^{1-t}} + p_\ell \cdot \frac{\frac{p_\ell-q_\ell}{p_\ell} + \bigg(\frac{q_\ell}{p_\ell}\bigg)^{1-t} - \bigg(\frac{q_\ell}{p_\ell}\bigg)^t}{1 + \bigg(\frac{q_\ell}{p_\ell}\bigg)^{1-t}}\\
  & = \frac{(1-p_\ell)-(1-q_\ell) + (1-p_\ell)^t (1-q_\ell)^{1-t} - (1-p_\ell)^{1-t}(1-q_\ell)^t}{1 + \bigg(\frac{1-q_\ell}{1-p_\ell}\bigg)^{1-t}} + \frac{p_\ell - q_\ell + p_\ell^{t}q_\ell^{1-t} - p_\ell^{1-t}q_\ell^t}{1 + \bigg(\frac{q_\ell}{p_\ell}\bigg)^{1-t}}\\
  & = \bigg((1-p_\ell)^t - (1-q_\ell)^t\bigg) \times \frac{(1-p_\ell)^{1-t} + (1-q_\ell)^{1-t}}{1 + \bigg(\frac{1-q_\ell}{1-p_\ell}\bigg)^{1-t}} + (p_\ell^t - q_\ell^t) \times \frac{p_\ell^{1-t} + q_\ell^{1-t}}{1 + \bigg(\frac{q_\ell}{p_\ell}\bigg)^{1-t}}.
\end{align*}
\endgroup
Thus, we have
\begin{align*}
  & |1 - (1-p_\ell)^{1-t}(1-q_\ell)^t - p_\ell^{1-t} q_\ell^t |
   = \Theta\bigg((1-q_\ell)^t - (1-p_\ell)^t\bigg) + \Theta\bigg((p_\ell^t - q_\ell^t) (p_\ell^{1-t} + q_\ell^{1-t})\bigg),
\end{align*}
where we use $x_n = \Theta(y_n)$ to denote $x_n \asymp y_n$. A similar calculation gives 
\begin{align*}
  |1 - (1-p_\ell)^t(1-q_\ell)^{1-t} - p_\ell^t q_\ell^{1-t}\bigr)| = \Theta\bigg((1-q_\ell)^{t} - (1-p_\ell)^{t}\bigg) + \Theta\bigg((p_\ell^t - q_\ell^t) (q_\ell^{1-t} + p_\ell^{1-t})\bigg).
\end{align*}
Note that under the current assumptions, $p_\ell^t q_\ell^{1-t}\asymp \eta_\ell$ and $p_\ell^{1-t} q_\ell^{t}\asymp\eta_\ell$, uniformly over $\ell$. Thus, the desired result is implied by
% $$
%   \sup_{t\in(0, 1/2]}\frac{[(1-q_\ell)^t - (1-p_\ell)^t] + (p_\ell^t-q_\ell^t)(p_\ell^{1-t} + q_\ell^{1-t})}{(p_\ell^t - q_\ell^t)(p_\ell^{1-t}-q_\ell^{1-t})} \to 0,
% $$
% which, under Assumption \ref{assump:simplified}, is equivalently to
$$
  \sup_{t\in(0, 1/2]}\frac{[(1-b_\ell\eta_\ell)^t - (1-a_\ell\eta_\ell)^t] + (a_\ell^t-b_\ell^t)(a_\ell^{1-t} + b_\ell^{1-t}) \eta_\ell}{ (a_\ell^t-b_\ell^t)(a_\ell^{1-t}-b_\ell^{1-t})} \to 0.
$$
So it suffices to show
\begin{enumerate}
  \item[(A).] $(1-q_\ell)^t - (1-p_\ell)^t \ll (a_\ell^t - b_\ell^t)(a_\ell^{1-t}-b_\ell^{1-t})$ uniformly over $t\in(0, 1/2]$;
  \item[(B).] $(a_\ell^t-b_\ell^t)(a_\ell^{1-t}+b_\ell^{1-t})\eta_\ell\ll (a_\ell^t - b_\ell^t)(a_\ell^{1-t}-b_\ell^{1-t})$ uniformly over $t\in(0, 1/2]$.
\end{enumerate}

We first show (A). By Lemma \ref{lemma:binom_thm}, we have
\begin{align*}
  (1-q_\ell)^t - (1-p_\ell)^t & = \sum_{k=0}^\infty (-1)^k (q_\ell^k - p_\ell^k) \cdot \frac{t(t-1)\cdots (t-k+1)}{k!}.
\end{align*}
Since $t\in(0, 1/2]$, we have
$$
  \bigg|\frac{t(t-1)\cdots (t-k+1)}{k!}\bigg| = t\cdot \prod_{j=2}^k \bigg|\frac{t-j+1}{j}\bigg| \leq  1.
$$
This means that
$$
  (1-q_\ell)^t - (1-p_\ell)^t \leq t \sum_{k=0}^\infty (p_\ell^k - q_\ell^k)  = t \cdot \bigg(\frac{1}{1-p_\ell} - \frac{1}{1-q_\ell}\bigg) \asymp t(p_\ell-q_\ell) = t(a_\ell-b_\ell)\eta_\ell.
$$
So (A) is implied by 
$$
  \sup_{t\in(0, 1/2]} \frac{t(a_\ell-b_\ell) \eta_\ell }{(a_\ell^t - b_\ell^t)(a_\ell^{1-t}-b_\ell^{1-t})} \to 0.
$$
Under the current assumptions, for large $n$, we can find an absolute constant $c$ such that $a_\ell -b_\ell > c >0$. Thus, we have
\begin{align*}
  a_\ell^t - b_\ell^t  &=  e^{t\log a_\ell} -e^{t\log b_\ell} \\
  & = \sum_{k=0}^\infty \frac{t^k [(\log a_\ell)^k-(\log b_\ell)^k] }{k!} \\
  & \geq t(\log a_\ell - \log b_\ell)\\
  & \geq c' t,
\end{align*}
and $a_\ell^{1-t}-b_\ell^{1-t} \geq c'' $ for some $c', c'' >0$. Thus, (A) is implied by
$$
  \sup_{t\in(0, 1/2]} \frac{t(a_\ell-b_\ell) \eta_\ell}{c'c'' t} \to 0,
$$
which trivially holds by $\sup_{\ell\in[L]}\eta_\ell \to 0$. 

Finally we show (B). This is equivalent to
$$
  \sup_{t\in(0, 1/2]} \frac{(a_\ell^{1-t} + b_\ell^{1-t})\eta_\ell}{a_\ell^{1-t}-b_\ell^{1-t}} \to 0,
$$
which holds because $\sup_{\ell\in[L]}\eta_\ell \to 0$ and $a_\ell^{1-t} - b_\ell^{1-t}> c'' > 0$ for large $n$.
\end{proof}

\begin{lemma}
  \label{eq:convexity_of_cum_gen_func}
  For any $S\subseteq[L]$, the function $\psi_S(t)$ is convex in $t$ for $t\geq 0$. Moreover, we have
  \begin{align*}
    \frac{d \psi_S(t)}{dt} 
    & = \frac{n}{2}\sum_{\ell\in S} \bigg(\log \frac{p_\ell(1-q_\ell)}{q_\ell(1-p_\ell)}\bigg) \\
    & ~~\times \frac{[p_\ell(1-q_\ell)]^{t} [q_\ell(1-p_\ell)]^{1-t}  - [p_\ell(1-q_\ell)]^{1-t} [q_\ell(1-p_\ell)]^t }{p_\ell q_\ell + (1-p_\ell)(1-q_\ell) + [p_\ell(1-q_\ell)]^{1-t} [q_\ell(1-p_\ell)]^t + [p_\ell(1-q_\ell)]^{t} [q_\ell(1-p_\ell)]^{1-t}},
  \end{align*}  
  which is also equal to $\bbE[\tilde\mu_t]$, where $\tilde\mu_t$ is the exponentially tilted law defined in \eqref{eq:tilted_law}.
  % \begin{align*}
  %   \sum_{\ell\in S} \sum_{i=1}^{m} \bbE[\tXXX{\ell}_{t} + \tYYY{\ell}_{t}]& = \sum_{\ell\in S} m \bigg(\log \frac{p_\ell(1-q_\ell)}{q_\ell(1-p_\ell)}\bigg) \\
  % & \qquad \times \frac{[p_\ell(1-q_\ell)]^{t} [q_\ell(1-p_\ell)]^{1-t}  - [p_\ell(1-q_\ell)]^{1-t} [q_\ell(1-p_\ell)]^t }{p_\ell q_\ell + (1-p_\ell)(1-q_\ell) + [p_\ell(1-q_\ell)]^{1-t} [q_\ell(1-p_\ell)]^t + [p_\ell(1-q_\ell)]^{t} [q_\ell(1-p_\ell)]^{1-t}}.
  % \end{align*}
\end{lemma}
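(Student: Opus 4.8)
The plan is to recognize $\psi_S$ (up to replacing the $m$ in the various tilting computations by $n/2$) as the \emph{cumulant generating function} of the log-likelihood-ratio statistic in \eqref{eq:error_of_LR_test_z_star}, and then to read off smoothness, convexity, and the derivative formula from standard properties of CGFs together with the explicit exponential tilt \eqref{eq:tilted_rvs}. Concretely, set $m = n/2$ and let $W$ be distributed as $\mu$ in \eqref{eq:original_law}, i.e.\ $W = \sum_{\ell\in S}\sum_{i=1}^m\big(\XXX{\ell}_i\log\frac{q_\ell(1-p_\ell)}{p_\ell(1-q_\ell)} + \YYY{\ell}_i\log\frac{p_\ell(1-q_\ell)}{q_\ell(1-p_\ell)}\big)$ with independent $\XXX{\ell}_i\sim\Bern(p_\ell)$ and $\YYY{\ell}_i\sim\Bern(q_\ell)$. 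The computation in \eqref{eq:mgf_opt_test_z_star} identifies the moment generating function of $W$ as $\phi_S(t) = \prod_{\ell\in S} D_\ell(t)^m$, where $D_\ell(t) := p_\ell q_\ell + (1-p_\ell)(1-q_\ell) + [p_\ell(1-q_\ell)]^{1-t}[q_\ell(1-p_\ell)]^t + [p_\ell(1-q_\ell)]^t[q_\ell(1-p_\ell)]^{1-t}$ is precisely the MGF of a single pair $(\ell,i)$; comparing with \eqref{eq:layerwise_info} gives $\III{\ell}_t = -\log D_\ell(t)$ and hence $\psi_S(t) = \log\phi_S(t)\big|_{m=n/2} = \frac{n}{2}\sum_{\ell\in S}\log D_\ell(t)$.

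Because $W$ takes only finitely many values, $\phi_S$ is finite and real-analytic on all of $\mathbb{R}$, so $\psi_S$ is $C^\infty$; and a CGF satisfies $\psi_S''(t) = \Var_{W\sim\tilde\mu_t}(W) \geq 0$, which gives convexity on $t \geq 0$ (indeed on all of $\mathbb{R}$). Alternatively one may differentiate $\log D_\ell(t)$ twice and recognize the result as a variance of the three-valued random variable in \eqref{eq:tilted_rvs}. For the first-derivative formula I would invoke the standard tilting identity $\psi_S'(t) = \mathbb{E}_{W\sim\tilde\mu_t}[W]$, where $\tilde\mu_t$ is the exponential tilt $d\tilde\mu_t \propto e^{tx}\,d\mu$. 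The one thing to check is that the measure written in \eqref{eq:tilted_law}--\eqref{eq:tilted_rvs} \emph{is} this tilt: each single-pair summand of $\mu$ takes value $0$ with probability $p_\ell q_\ell + (1-p_\ell)(1-q_\ell)$, value $\log\frac{q_\ell(1-p_\ell)}{p_\ell(1-q_\ell)}$ with probability $p_\ell(1-q_\ell)$, and value $\log\frac{p_\ell(1-q_\ell)}{q_\ell(1-p_\ell)}$ with probability $q_\ell(1-p_\ell)$, and multiplying each by $e^{t(\cdot)}/D_\ell(t)$ reproduces precisely the probabilities in \eqref{eq:tilted_rvs}. Therefore $\psi_S'(t) = \sum_{\ell\in S}\sum_{i=1}^{n/2}\mathbb{E}[\tXXX{\ell}_t + \tYYY{\ell}_t]$, and computing the per-pair mean from \eqref{eq:tilted_rvs} gives $\log\frac{p_\ell(1-q_\ell)}{q_\ell(1-p_\ell)}\cdot\frac{[p_\ell(1-q_\ell)]^t[q_\ell(1-p_\ell)]^{1-t} - [p_\ell(1-q_\ell)]^{1-t}[q_\ell(1-p_\ell)]^t}{D_\ell(t)}$, which summed over $\ell\in S$ and over the $n/2$ pairs is exactly the stated formula and, by construction, equals $\mathbb{E}[\tilde\mu_t]$. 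As a sanity check, differentiating $\frac{n}{2}\sum_{\ell\in S}\log D_\ell(t)$ directly and simplifying the numerator yields the same expression.

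There is no serious obstacle here: the content is routine large-deviations bookkeeping. The one place that rewards a little care is the verification that \eqref{eq:tilted_rvs} coincides with the exponential tilt of $\mu$ --- once that is in hand, the mean-of-the-tilt identity yields both the derivative formula and its identification with $\mathbb{E}[\tilde\mu_t]$ verbatim, while convexity and smoothness are immediate from $\psi_S$ being a finite CGF.
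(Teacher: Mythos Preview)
Your proposal is correct and follows essentially the same approach as the paper: recognize $\psi_S$ as the cumulant generating function of the law $\mu$ in \eqref{eq:original_law}, invoke convexity of CGFs, and use the standard tilting identity $\psi_S'(t) = \int x\,d\tilde\mu_t(x)$ together with the explicit form \eqref{eq:tilted_rvs} to read off the derivative. Your write-up is in fact more explicit than the paper's (which simply says the exact formula ``follows from direct computations''), carrying out the verification that \eqref{eq:tilted_rvs} is indeed the exponential tilt and computing the per-pair mean, but the underlying argument is the same.
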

\begin{proof}
  Let $\mu$ be defined in \eqref{eq:original_law}. 
  The convexity of $\psi_S(t)$ follows from the fact that it is the cumulant generating function of $\mu$:
  $
    \psi_S(t) = \log \int e^{t x} d\mu(x).
  $
  Now, by construction, we have
  $
    \frac{d\tilde\mu_t}{d\mu}(x) \propto e^{tx}.
  $
  Hence, we have
  $$
    \frac{d\psi_S(t)}{dt} = \frac{\int x e^{t x}  d\mu(x)}{\int e^{tx} d\mu(x)} = \int x d\tilde\mu_t(x).
  $$
  The exact form of $\tilde\mu_t$ is calculated in   \eqref{eq:tilted_rvs}, and the exact formula of the right-hand side above follows from direct computations, so we omit the details. 
\end{proof}

\begin{lemma}
\label{lemma:ineq_for_cvx_conjugate}
For any $S\subseteq[L]$, the function $\psi_S^\star$ satisfies $\psi_S^\star(0) = \frac{n}{2} \sum_{\ell\in S} \III{\ell}_{1/2}$
% \begin{align}
%   \label{eq:formula_for_cvx_conjugate_even}
%   & \psi_S^\star(0) = \frac{n}{2} \sum_{\ell\in S} \III{\ell}_{1/2},\\
% \end{align}
and
\begin{align}
  \label{eq:ineq_for_cvx_conjugate}
  & 0 \lor \bigg(-J_\rho + \frac{n}{2}\sum_{\ell\in S}\III{\ell}_{1/2}\bigg)\leq \psi_S^\star(-2J_\rho) \leq \frac{n}{2}\sum_{\ell\in S}\III{\ell}_{1/2}.
\end{align}
\end{lemma}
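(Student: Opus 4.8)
The plan is to read off both claims directly from the variational formula $\psi_S^\star(a)=\sup_{0\le t\le 1}\{at-\psi_S(t)\}=\sup_{0\le t\le 1}\{at+\tfrac n2\sum_{\ell\in S}\III{\ell}_t\}$ in \eqref{eq:cum_gen_fun}, using only three elementary inputs: (i) $\III{\ell}_0=0$ for every $\ell$, which is immediate from \eqref{eq:layerwise_info} since $[p_\ell+(1-p_\ell)][q_\ell+(1-q_\ell)]=1$; (ii) $t\mapsto\III{\ell}_t$ attains its maximum over $[0,1]$ at $t=1/2$, by Lemma \ref{lemma:monotonicity_of_I_t}; and (iii) $J_\rho\ge 0$, since $2\sqrt{\rho(1-\rho)}\le 1$ by the AM--GM inequality, so that $J_\rho=-\log 2\sqrt{\rho(1-\rho)}\ge 0$.

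For the identity $\psi_S^\star(0)=\tfrac n2\sum_{\ell\in S}\III{\ell}_{1/2}$, I would set $a=0$: then $\psi_S^\star(0)=\sup_{0\le t\le 1}\tfrac n2\sum_{\ell\in S}\III{\ell}_t$, and since each summand is maximized at $t=1/2$ by (ii), the supremum is achieved there simultaneously, which gives the claimed value.

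For $\psi_S^\star(-2J_\rho)$, the upper bound follows by discarding the nonpositive term $-2J_\rho t$ (legitimate by (iii) and $t\ge 0$) and then applying (ii) termwise: $-2J_\rho t+\tfrac n2\sum_{\ell\in S}\III{\ell}_t\le \tfrac n2\sum_{\ell\in S}\III{\ell}_t\le \tfrac n2\sum_{\ell\in S}\III{\ell}_{1/2}$ for all $t\in[0,1]$, hence $\psi_S^\star(-2J_\rho)\le\tfrac n2\sum_{\ell\in S}\III{\ell}_{1/2}$. For the lower bound I would evaluate the objective at two feasible points: $t=0$ yields $\tfrac n2\sum_{\ell\in S}\III{\ell}_0=0$ by (i), while $t=1/2$ yields $-J_\rho+\tfrac n2\sum_{\ell\in S}\III{\ell}_{1/2}$; since $\psi_S^\star(-2J_\rho)$ is at least the larger of these two, we obtain $\psi_S^\star(-2J_\rho)\ge 0\lor\big(-J_\rho+\tfrac n2\sum_{\ell\in S}\III{\ell}_{1/2}\big)$, which is exactly \eqref{eq:ineq_for_cvx_conjugate}.

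This argument is entirely elementary, so there is no genuine obstacle. The only points that require a pointer rather than an on-the-spot computation are the maximality of $\III{\ell}_t$ at the midpoint (Lemma \ref{lemma:monotonicity_of_I_t}) and the nonnegativity of $J_\rho$; in particular neither the convexity of $\psi_S$ nor any of the exponential-tilting machinery is needed.
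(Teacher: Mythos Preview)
Your argument is correct and matches the paper's proof essentially line for line: both compute $\psi_S^\star(0)$ via Lemma~\ref{lemma:monotonicity_of_I_t}, obtain the two lower bounds by evaluating the variational objective at $t=0$ and $t=1/2$, and obtain the upper bound by dropping the nonpositive term $-2J_\rho t$. Your added remarks that $\III{\ell}_0=0$ and $J_\rho\ge 0$ make explicit what the paper leaves implicit, but otherwise the approaches are identical.
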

\begin{proof}
  By definition, we have
  $$
    \psi_S^\star(0) = \sup_{t\in[0, 1]} -\psi_S(t) = \frac{n}{2} \sup_{t\in[0, 1]}  \sum_{\ell\in S}\III{\ell}_{t} = \frac{n}{2} \sum_{\ell\in S}\III{\ell}_{1/2},
  $$
  where the last equality follows from Lemma \ref{lemma:monotonicity_of_I_t}. On the other hand, we have
  $$
    \psi_S^\star(-2J_\rho) = \sup_{t\in[0, 1]} -2tJ_\rho + \frac{n}{2} \sum_{\ell\in S}\III{\ell}_t
  $$
  Choosing $t = 1/2$ and $t = 0$ in the right-hand side above respectively gives the two lower bounds in   \eqref{eq:ineq_for_cvx_conjugate}. 
  Finally, we can upper bound $\psi^\star_S(-2J_\rho)$ by
  $$
    \sup_{t\in[0, 1]} \frac{n}{2} \sum_{\ell\in S} \III{\ell}_t = \frac{n}{2} \sum_{\ell\in S} \III{\ell}_{1/2},
  $$
  and the proof is concluded.
\end{proof}

\begin{lemma}
  \label{lemma:optimal_t_for_cvx_conjugate}
  The optimal $t$ that gives rise to $\psi_S^\star(-2J_\rho) = \sup_{t\in[0, 1]}  -2t J_\rho -\psi_S(t)$ satisfies $t\in[0, 1/2]$. 
  % More explicitly, we have 
  % \begin{enumerate}
  %   \item if \sxc{?}, then $t = 0$;
  %   \item if \sxc{?}, then $t\in(0, 1/2)$ and satisfies
  %   $$
  %     to-do
  %   $$
  % \end{enumerate}
\end{lemma}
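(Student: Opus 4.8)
The plan is to write $\psi_S^\star(-2J_\rho)=\sup_{t\in[0,1]}g(t)$ with
\[
g(t):=-2tJ_\rho-\psi_S(t)=-2tJ_\rho+\frac{n}{2}\sum_{\ell\in S}\III{\ell}_t ,
\]
and to show that the whole ``upper half'' $[1/2,1]$ of the feasible set is dominated by the ``lower half'' $[0,1/2]$, so that a maximizer may be taken in $[0,1/2]$. First I would record two elementary facts: $g$ is concave on $[0,1]$ (since $\psi_S$ is convex by Lemma \ref{eq:convexity_of_cum_gen_func} and $t\mapsto-2tJ_\rho$ is affine), hence continuous, so the supremum is attained; and $J_\rho=-\log\bigl(2\sqrt{\rho(1-\rho)}\bigr)\ge 0$ because $2\sqrt{\rho(1-\rho)}\le 1$ for all $\rho\in[0,1]$.

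The key step is a reflection about $t=1/2$. From the defining formula \eqref{eq:layerwise_info}, swapping $t\leftrightarrow 1-t$ interchanges the two bracketed factors, so $\III{\ell}_t=\III{\ell}_{1-t}$ for every $\ell$ (the same symmetry underlying Lemma \ref{lemma:monotonicity_of_I_t}). Consequently, for any $t\in[0,1/2]$,
\[
g(1-t)-g(t)=-2(1-t)J_\rho+2tJ_\rho+\frac{n}{2}\sum_{\ell\in S}\bigl(\III{\ell}_{1-t}-\III{\ell}_t\bigr)=-2J_\rho(1-2t)\le 0 .
\]
Hence $\sup_{t\in[1/2,1]}g(t)=\sup_{t\in[0,1/2]}g(1-t)\le\sup_{t\in[0,1/2]}g(t)$, so $\sup_{t\in[0,1]}g(t)=\sup_{t\in[0,1/2]}g(t)$ and the supremum is attained at some $t^\star\in[0,1/2]$. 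Under the standing assumption $\rho=o(1)$ one has $J_\rho\to\infty$, in particular $J_\rho>0$, so the displayed inequality is strict for every $t\in(0,1/2)$; therefore in that regime \emph{every} maximizer of $g$ on $[0,1]$ in fact lies in $[0,1/2]$, which is the asserted statement. (In the degenerate case $J_\rho=0$ one has $g(t)=\tfrac n2\sum_{\ell\in S}\III{\ell}_t$, which by the same symmetry and Lemma \ref{lemma:monotonicity_of_I_t} is maximized at $t=1/2\in[0,1/2]$, so the conclusion still holds.)

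There is essentially no real obstacle; the only point needing care is distinguishing ``\emph{some} optimal $t$ lies in $[0,1/2]$'' from ``\emph{every} optimal $t$ lies in $[0,1/2]$'', which the strictness remark above settles. For completeness one can also give the infinitesimal version of the same argument: $g$ is differentiable, and $\frac{d}{dt}\III{\ell}_t\big|_{t=1/2}=0$ (immediate from $\III{\ell}_t=\III{\ell}_{1-t}$, or from $\III{\ell}_t$ attaining its maximum at $1/2$ by Lemma \ref{lemma:monotonicity_of_I_t}), so $g'(1/2)=-2J_\rho\le 0$; concavity of $g$ then forces $g'\le 0$ on all of $[1/2,1]$, i.e. $g$ is non-increasing there and its maximum over $[1/2,1]$ occurs at the endpoint $t=1/2$.
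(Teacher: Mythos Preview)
Your proof is correct and follows essentially the same approach as the paper: both exploit the symmetry $\psi_S(t)=\psi_S(1-t)$ about $t=1/2$ together with the fact that $-2tJ_\rho$ is non-increasing in $t$ to conclude that any $t_1\in[1/2,1]$ is dominated by its reflection $t_2=1-t_1\in[0,1/2]$. Your version is more thorough (you explicitly verify attainment via concavity and discuss strictness and the degenerate case $J_\rho=0$), but the core argument is identical.
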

\begin{proof}
  Note that $\psi_{S}(t)$ is symmetric over $t=1/2$: $\psi_S(1/2-\delta) = \psi_S(1/2+\delta)$ for any $\delta \in[0, 1/2]$. Thus, for any $t_1 \in [1/2, 1]$, its reflected point $t_1\in[0, 1/2]$ w.r.t. the $t=1/2$ axis always satisfies
  $$
    -2t_2J_\rho - \psi_S(t_2) \geq -2t_1 J_\rho - \psi_S(t_1),
  $$
  from which the desired result follows.
\end{proof}

\section{More Details on Experiments}\label{append:more_exp}

\subsection{Spectral Clustering and Choice of Weights}\label{subappend:weight_choice}
Recall that our Algorithms \ref{alg: generic_refinement} and \ref{alg: provable_refinement} both require an initialization scheme, which by default is set to spectral clustering on the trimmed weighted adjacency matrix $\bar A = \sum_{\ell\in[L]}{\omega_\ell}\AAA{\ell}$ (i.e., Algorithm \ref{alg: specc}). In this experiment, we set the trimming threshold $\gamma = 5$, and we explore three choices of weights: (1) $\omega_\ell \propto 1$ (uniform weight), (2) $\omega_\ell \propto 1/p_\ell$ (scale by variance), and (3) $\omega_\ell \propto 1/\sqrt{p_\ell}$ (scale by standard deviation), where $p_\ell$'s are either known or estimated from the data using the method of moment (see Appendix \ref{subappend:est_deg} for the detailed estimation procedure). We consider the setup in Section \ref{sec:exp}, and we set $n =1000, L = 100, \rho = 0.1$ and vary $c$. 

\begin{figure}[t]
        \centering
        \includegraphics[width=0.6\textwidth]{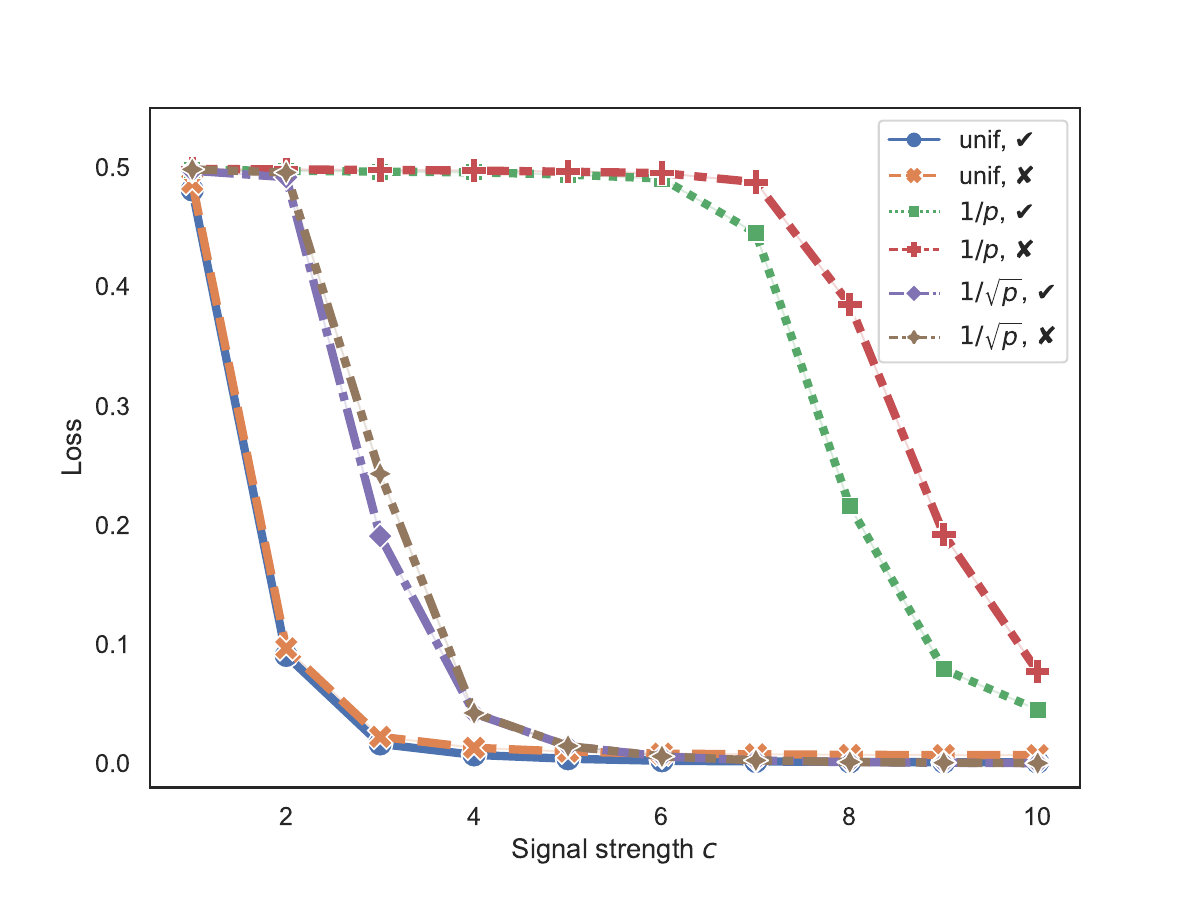}
        \caption{\small Average misclustering proportion against the signal strength $c$ for spectral clustering under different choices of weights. A check mark means the ground truth $p_\ell$'s are used, and a cross mark means $p_\ell$'s are estimated from the data. 
        % The shaded areas represent confidence intervals obtained by computing the $2.5\%$ and $97.5\%$ quantiles of $1000$ bootstrap bootstrap samples of the misclustering errors.
        }
        \label{fig:compare_wts}
\end{figure}

Figure \ref{fig:compare_wts} shows the misclustering proportion (for estimating $\bz^\star$) over $500$ simulations. We see that overall, uniform weight performs the best, regardless of whether $p_\ell$'s are known or not. Scaling by $\sqrt{p_\ell}$ performs slightly better than uniform weight when the signal strength $c$ is large, but is significantly worse when $c$ is small. Scale by $p$ is the worst among the three. In the rest of the experiments, the initialization scheme is always set to spectral clustering with uniform weight.

\subsection{Co-Regularized Spectral Clustering}\label{subappend:co_reg}
{Co-regularized spectral clustering is a popular algorithm for clustering in multilayer networks originally proposed by \cite{kumar2011co}, and it was later shown to be consistent in the $\rho=0$ regime by \cite{paul2020spectral}.}
It solves the following optimization problem:
\[
[\hat U^{(1)},\ldots,\hat U^{(L)},\hat U^\star]=\argmax_{\substack{{U^{(\ell)}}\tran U^{(\ell)}=I,\forall\ell\in[L] \\ {U^\star}\tran U^\star=I }}\sum_{\ell=1}^L\tr{{U^{(\ell)}}\tran \AAA{\ell}U^{(\ell)} } + \gamma_\ell \tr{{U^{\star}}\tran U^{(\ell)}{U^{(\ell)}}\tran U^{\star} }.
\]
In our implementation, the regularization parameter is set to be $\gamma_\ell=\|\AAA{\ell}\|_2$ as suggested by \cite{paul2020spectral}, and we solve the problem by alternating between optimizing $U^{(\ell)}$ and $U^\star$ via eigen-decomposition. The maximum number of iterations is 20. We then apply $k$-means on $\hat U^{\star}$ and $\hat U^{(\ell)}$ to get the global assignment $\bhz^{\star}$ and individual assignments $\{\bhz^{(\ell)}\}$. We emphasize that \cite{paul2020spectral} only proposed to use $\hat U^\star$ to get the global assignment and proved the consistency, and did not propose to use $\hat U^{(\ell)}$ to get the individual assignments.

\subsection{Varying the Number of Layers}
In this subsection, we do a simulation to explore the effect of the number of layers $L$. The setup is similar to the simulation in Section \ref{subsec:effect_of_snr}: we set $n=1000, \rho = 0.1, c = 3$, and we vary $L$ from $20$ to $100$ while maintaining the proportions of weak, intermediate, and strong layers.
We run Algorithm \ref{alg: generic_refinement} over $100$ instances of the model (assuming $\{p_\ell\}, \{q_\ell\}, \rho$ are known) and run co-regularized spectral clustering over $50$ instances of the model and record their misclustering proportions for both \globest~and \indest. 

\begin{figure}[t]
\centering
    \begin{subfigure}{.45\textwidth}
      \centering
      \includegraphics[width=1.1\linewidth]{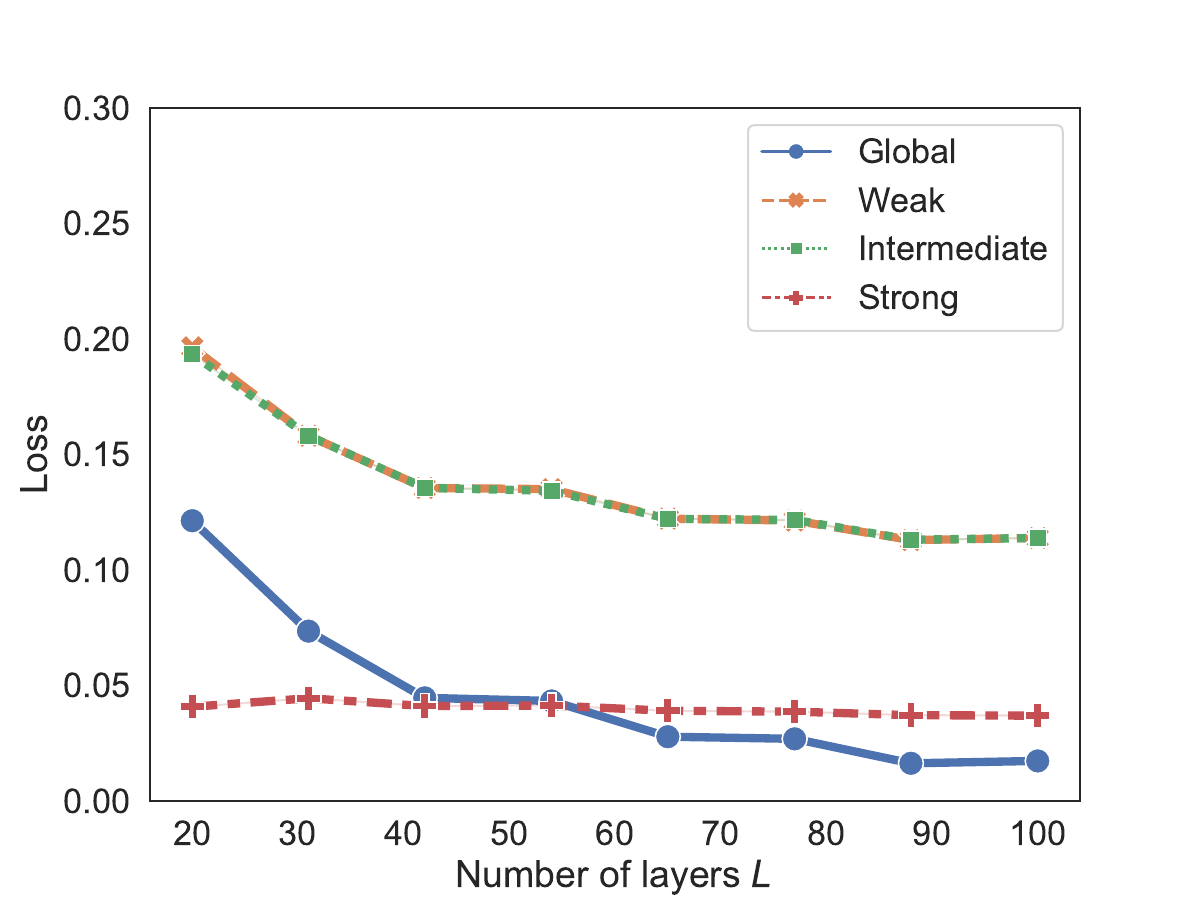}
      \caption{Our method: Algorithm \ref{alg: specc}+\ref{alg: generic_refinement}}
      \label{fig:vary_rho}
    \end{subfigure}
    % \hfill
    \begin{subfigure}{.45\textwidth}
      \centering
      \includegraphics[width=1.1\linewidth]{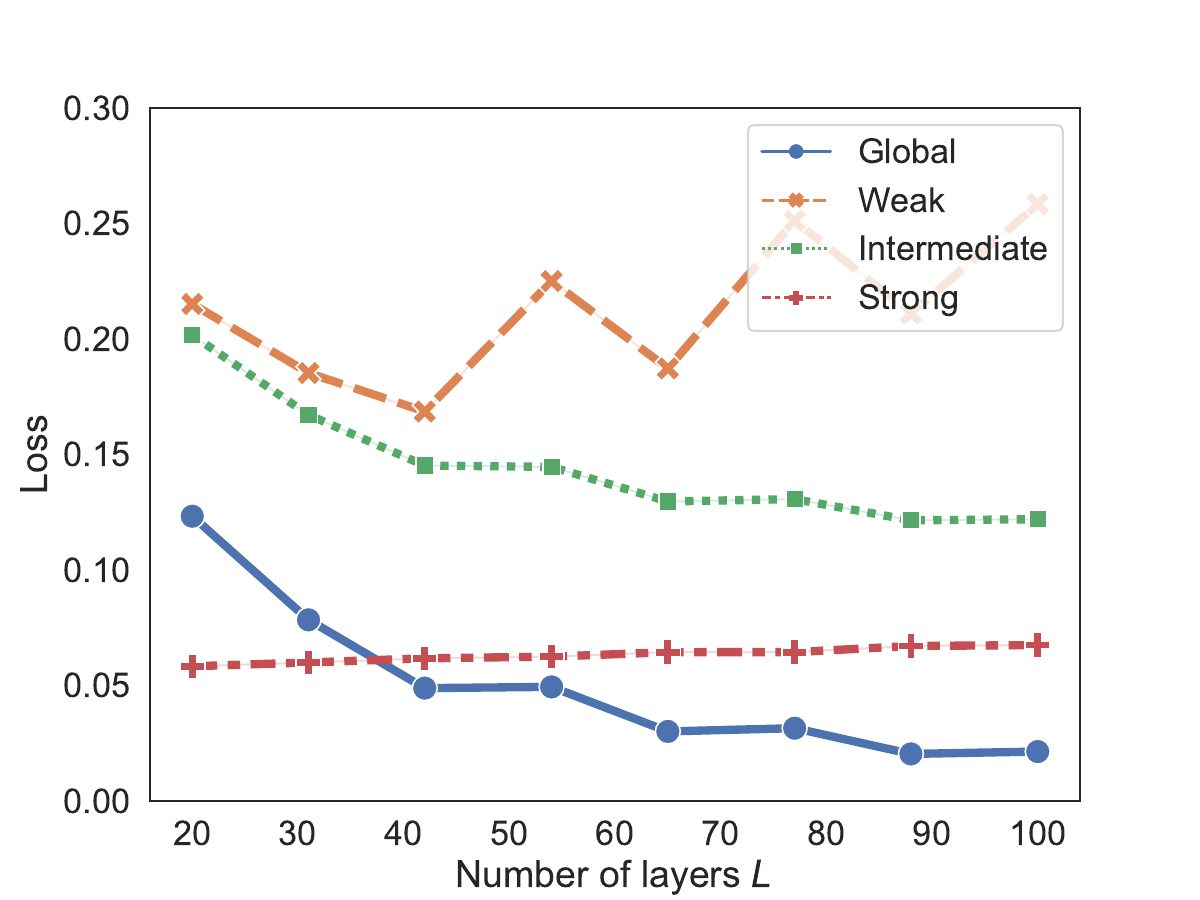}
      \caption{Co-regularized spectral clustering}
      \label{fig:vary_c}
    \end{subfigure}
        \vskip -0.33\baselineskip
        \caption{\small Performance of our method (left panel) and co-regularized spectral clustering (right panel) \cite{kumar2011co,paul2020spectral} as the number of layers vary.
        % The shaded areas represent confidence intervals obtained by computing the $2.5\%$ and $97.5\%$ quantiles of $1000$ bootstrap samples of the misclustering errors.
        }
    \label{fig:effect_of_L}
\end{figure}

The results are shown in Figure \ref{fig:effect_of_L}. From Figure \ref{fig:effect_of_L}(a), we see that for our algorithm, the misclustering proportions for  global, weak and intermediate layers all go down monotonically as $L$ increases. This is because the additional layers introduces more information. 
In contrast, the misclustering proportion for strong layers stays constant, because the benefits from the other layers have saturated. The trends are largely the same for the co-regularized spectral clustering algorithm (Figure \ref{fig:effect_of_L}(b)), except that the misclustering proportions are higher and the weak layers have oscillating and diverging misclustering proportion.

\subsection{Estimating \texorpdfstring{$p_\ell$}{pl}'s}\label{subappend:est_deg}

The inputs to Algorithm \ref{alg: specc} include $\{p_\ell\}$. To estimate them, we begin by recalling that $E[\AAA{\ell}_{ij}] =\tilde p_\ell =  p_\ell - 2(p_\ell-q_\ell)(\rho-\rho^2) \approx p_\ell$ if $\bz^\star_i = \bz^\star_j$ and $E[\AAA{\ell}_{ij}] =\tilde q_\ell= q_\ell + 2(p_\ell-q_\ell)(\rho-\rho^2) \approx q_\ell$ otherwise.
Assuming $\tilde p_\ell = C\tilde q_\ell$, we then have 
$$
\sum_{i<j} \mathbb{E}[A^{\ell}_{ij}] = \tilde{p}_\ell\times \frac{n^2 - n - 2(1-C^{-1})n^\star_+n^\star_-}{2}.
$$
If $n^\star_+ = C'n$, then we have 
$$
\sum_{i<j} \mathbb{E}[\AAA{\ell}_{ij}] = \tilde{p}_\ell\times \frac{[1-2(1-C^{-1})C'(1-C')]n^2 - n}{2}.
$$
Specifically, if $C' \approx 1/2$ (i.e., the two clusters are approximately symmetric), we have
$$
\sum_{i<j} \mathbb{E}[\AAA{\ell}_{ij}] \approx \tilde{p}_\ell\times \frac{(0.5+0.5C^{-1})n^2 - n}{2} \geq \tilde{p}_\ell\times \frac{0.5n^2 - n}{2}.
$$
So a conservative estimator for $p_\ell$ is given by
$$
\hat{p}_\ell = \frac{2\sum_{i<j} \mathbb{E}[\AAA{\ell}_{ij}]}{0.5n^2 -n}.
$$

\end{appendices}

\end{document}